\begin{document}

\title{Quasi-coherent sheaves and $\mathcal{D}$-modules in Derived Differential Supergeometry}

\author{David Carchedi}

\maketitle

\begin{abstract}
Derived geometry provides powerful tools to handle non-transverse intersections and singular moduli problems arising in geometry and theoretical physics. While derived algebraic geometry has been extensively developed, classical field theories---formulated as variational problems involving sections of smooth fiber bundles over manifolds---naturally require the language of differential geometry, infinite-dimensional analysis (e.g., Fréchet manifolds), and  additional geometric structures on spacetime, such as smooth metrics. Moreover, field theories incorporating fermionic matter fields necessitate extending the framework to include supermanifolds. This article is the first in a sequence aimed at rigorously modeling the derived space of solutions to the field equations of Lagrangian gauge theories as derived $\Ci$-stacks. 

While this article does not explicitly discuss physics or field theory, it develops foundational aspects of derived differential geometry which are useful in their own right and contribute to the further development of the field. Moreover, these results provide essential groundwork for subsequent papers rigorously constructing derived spaces of solutions to Euler-Lagrange equations. We establish foundational results extending existing work on derived manifolds into supergeometric and infinite-dimensional contexts, and explicitly relate these constructions to differential operators and PDE theory. This paper an excerpt from a larger manuscript currently in preparation and is made available now to disseminate key foundational developments.

We begin by extending key results about derived manifolds to the supergeometric context, explicitly defining derived supermanifolds. This generalization is crucial for subsequent articles addressing fermionic matter fields. We also carefully clarify the role of infinite-dimensional manifolds in derived $\Ci$-geometry, providing two \emph{different} embeddings of the category of convenient manifolds of Fr\"olicher-Kriegl-Michor into derived stacks, expanding on work of Steffens. 

Moreover, we develop a novel approach to quasi-coherent sheaves, characterizing them as an explicit full subcategory of sheaves of modules on the large site. This formulation provides a concrete and explicit description of quasi-coherent sheaves over general stacks, in contrast to the traditional abstract definition as a limit of $\i$-categories arising from an affine cover. It is particularly well-suited to studying examples such as de Rham stacks, which provide a natural geometric setting for analyzing PDEs. In this direction, we introduce a topos-theoretic definition of infinite jet bundles and nonlinear differential operators via the de Rham stack, and prove that this construction recovers the classical infinite jet bundle of a fiber bundle, including its Fr\'echet manifold structure, extending work of Khavkine-Schreiber.

Finally, we establish an equivalence between the $\i$-category of quasi-coherent sheaves on the de Rham stack $\cM_{dR}$ of any supermanifold $\cM$ and the $\i$-category of $\mathcal{D}_{\cM}$-modules, giving a direct link to linear PDEs. Due to fundamental differences from the algebraic setting---such as the lack of compact generation, the fact that all module sheaves over affine $\Ci$-schemes are quasi-coherent, and that quasi-coherent sheaves form a full subcategory of module sheaves rather than vice versa---this equivalence does not formally follow from analogous results in derived algebraic geometry.
\end{abstract}

\tableofcontents

\section{Introduction}
Given two submanifolds $X$ and $Y$ of a smooth manifold $M,$ if they intersect transversely, $X \times_M Y$ is a closed submanifold. However, without transversality, things go terribly wrong; an arbitrary closed subset can be obtained this way. Even if the intersection is a manifold, it often does not have the expected cohomological properties, e.g. in cobordism theory. The theory of \emph{Derived Differential Geometry} (DDG) is designed to fix these deficiencies and is analogous to DAG. Such a theory provides a geometric framework in which arbitrary pullbacks of smooth maps between smooth manifolds exist as smooth objects that behave well with respect to cohomology, and agree with the ordinary fibered product in the case the maps are transverse. Objects which can be constructed iteratively in this manner are called \emph{derived manifolds}. 

There are various approaches to derived $\Ci$-geometry, \cite{spivak,joyce,derivjustden,dg2,Nuiten,univ,dgder,steffens1, pardon}, but a common theme is, in some way shape or form, derived $\Ci$-geometry is derived \emph{algebraic} geometry over $\Ci$-rings. $\Ci$-rings are commutative $\R$-algebras with extra structure, that allow one to not only interpret polynomials, but also smooth functions. The prototypical example is the ring $\Ci\left(M\right)$ of smooth functions on a manifold. $\Ci$-rings are algebras for a Lawvere theory, and as such there is well established way of discussing derived versions of such algebras as simplicial $\Ci$-rings. Roytenberg and the author offered an alternative equivalent differential graded approach using dg-$\Ci$-algebras \cite{dg2}, and many foundational results about the derived algebraic geometry of such algebras was developed by Nuiten in their thesis \cite{Nuiten}, and developed even further by Steffens \cite{steffens1}. Together with Steffens, we defined derived manifolds via a universal property \cite{univ} and proved they were equivalent to the affine derived schemes of finite type for the algebraic geometry of $\Ci$-rings. In subsequent work, we went on to prove that there is an equivalent description in terms of differential graded manifolds \cite{dgmander}. 

In this article, we define derived supermanifolds in an analogous way as in our work with Steffens, and carefully extend the above two results to this setting. To do so, we use the $2$-sorted algebraic theory of $\SCi$-rings we developed with Roytenberg \cite{dg1,dg2}. We also clarify recent work of Steffens related to the integration of derived geometry with the convenient manifolds of Fr\"{o}licher-Kriegl-Michor.

We go on two develop a theory of modules and quasi-coherent sheaves. This definition is the same as that of Nuiten \cite{Nuiten} and Steffens \cite{steffens1}, but we present a novel approach which is particularly convenient for working with derived stacks which are not affine $\Ci$-schemes. Our approach allows one to view quasi-coherent sheaves over $\cX$ concretely as certain $\O_{\cX}$-module sheaves over the \emph{big} site of $\cX,$ and is based in part on key ideas from \cite{qcglob}.

We then go on to discuss infinite-jet bundles and $\mathcal{D}$-modules in this context. Both of these are discussed in terms of Simpson's \emph{de Rham stack} \cite{simpson}. The case of infinite-jet bundles largely follows work of Khavkine-Schreiber \cite{jett} carefully adapted to the derived setting. We first show that the infinite jet bundle of any fiber bundle over a manifold, complete with its Frech\'et manifold structure, can be reconstructed purely from the de Rham stack. We then study quasi-coherent sheaves over the de Rham stack. Our main result is that for any supermanifold $\cM,$ there is an equivalence of $\i$-categories between $\mathcal{D}$-modules over $\cM$ and quasi-coherent sheaves over the de Rham stack $\cM_{dR}.$  This mirrors classical equivalences established by Gaitsgory and Rozenblyum \cite{GaitsRoz}, but the method of proof is quite different. On one hand, there is a technical challenge that $\QC\left(\cX\right)$ is rarely compactly generated, and there is no good notion of coherent sheaf, making approaches based on ind-coherent sheaves difficult to apply. On the other hand, our framework naturally takes into account the geometry of jets in that the notion of differential operators can be formulated in terms of the Frech\'et manifold structure on the infinite jet bundle.

The field of derived differential is under rapid development, but there is still much to be done. One striking omission from this article is a reformulation in the language of dg-manifolds. This is work in progress.

\subsection{Organization and Main Results:}

In Section \ref{sec:algthy}, we briefly review the general theory of multi-sorted algebraic theories and their homotopy algebras. The main example of interest is the $2$-sorted theory of $\SCi$-algebras of \cite{dg1,dg2}.

In Section \ref{sec:ci}, we summarize important basic properties of $\Ci$-algebras and extend them to $\SCi$-algebras.

In Section \ref{sec:derived_mfd} we review the universal property of derived manifolds from \cite{univ}, and its relationship with $\Ci$-algebras. We also extend this definition to supermanifolds in the obvious way:

\begin{definition}(Definition \ref{dfn:dsmfd})
The $\i$-category  $\mathsf{DSMfd}$ of derived supermanifolds is the (essentially unique) $\i$-category with finite limits, together with a functor $i:\mathsf{SMfd} \to \mathsf{DSMfd}$ which preserves transverse pullbacks and the terminal object, such that for all $\i$-categories $\sC$ with finite limits, composition along $i$ induces an equivalence of $\i$-categories
$$\Fun^{\mathbf{lex}}\left(\mathsf{DSMfd},\sC\right) \stackrel{\sim}{\longlongrightarrow} \Fun^\pitchfork\left(\SMfd,\sC\right)$$
between the $\i$-category of finite limit preserving functors from $\mathsf{DSMfd}$ to $\sC$ and the $\i$-category of functors from $\SMfd$ to $\sC$ which preserves transverse pullbacks and the terminal object.
\end{definition}

In Section \ref{sec:homotopical}, we discuss the homotopy theory and homological algebra of homotopical $\SCi$-algebras. We review the model structure on dg-$\Ci$-superalgebras, discuss the localization of such algebras, and the construction of graded Koszul complexes. As an application, we prove that the morphism of algebraic theories $$\mathbf{SCom}_\R \to \SCi$$ is unramified, and use this to prove:

\begin{corollary}(Corollary \ref{cor:supdman})
There is a canonical equivalence of $\i$-categories
$$\mathsf{DSMfd} \simeq \left(\Alg_{\SCi}\left(\Spc\right)^{\mathbf{fp}}\right)^{op}$$ between the $\i$-category of derived supermanifolds and the opposite of the $\i$-category of finitely presented $\SCi$-algebras in $\Spc.$
\end{corollary}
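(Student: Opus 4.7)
The strategy is to verify that $\left(\Alg_{\SCi}(\Spc)^{\mathbf{fp}}\right)^{op}$, together with the functor sending a supermanifold $\cM$ to its super-smooth function algebra $\SCi(\cM)$, satisfies the universal property from Definition \ref{dfn:dsmfd}; this follows the template of the non-super argument in \cite{univ}. First one checks that finitely presented $\SCi$-algebras in $\Spc$ are closed under finite colimits (being generated under such from the compact projective generators $\SCi(\R^{p|q})$), so the opposite $\infty$-category has finite limits. Preservation of the terminal object is immediate, since $\SCi(\ast) \simeq \R$ is the initial $\SCi$-algebra.

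The essential computation is preservation of transverse pullbacks: for a transverse diagram $\cM \to \mathcal{N} \leftarrow \mathcal{P}$ in $\SMfd$, one must show that the canonical map
$$\SCi(\cM) \otimes^{\mathbb{L}}_{\SCi(\mathcal{N})} \SCi(\mathcal{P}) \longrightarrow \SCi(\cM \times_{\mathcal{N}} \mathcal{P})$$
is an equivalence in $\Alg_{\SCi}(\Spc)$. This is precisely where the unramified property of $\mathbf{SCom}_\R \to \SCi$ established earlier is used: together with the graded Koszul complex construction, it reduces the derived tensor product in $\SCi$-algebras to the analogous one in super-commutative algebras, where transversality directly implies the required Tor-vanishing.

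To verify the universal property itself, given an $\infty$-category $\sC$ with finite limits and $F \in \Fun^{\pitchfork}(\SMfd, \sC)$, one extends $F$ by observing that every finitely presented $\SCi$-algebra admits a finite presentation as an iterated derived pushout of free algebras $\SCi(\R^{p|q})$ along maps killing finite tuples of super-coordinates. Geometrically these pushouts correspond to iterated transverse pullbacks of the form $\R^{p|q} \times_{\R^{m|n}} \ast$, i.e., zero sets of smooth super-functions, so $F$ forces the value of the extension $\widetilde{F}$ on such generators. The main obstacle will be showing that $\widetilde{F}$ so defined is independent of the chosen finite presentation, is functorial, and preserves finite limits. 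This is the central technical step transported to the super setting: it amounts to exhibiting every finite limit in $\left(\Alg_{\SCi}(\Spc)^{\mathbf{fp}}\right)^{op}$ as an iterated transverse pullback up to the coherent identifications supplied by the graded Koszul complex machinery, where the unramified property again guarantees that the derived structure computed via $\SCi$-algebras matches the one computed geometrically via transverse intersections of supermanifolds.
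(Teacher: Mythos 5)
Your overall skeleton---verifying the universal property of Definition \ref{dfn:dsmfd} for $\left(\Alg_{\SCi}\left(\Spc\right)^{\mathbf{fp}}\right)^{op}$ by transporting the argument of \cite{univ}, with unramifiedness of $\mathbf{SCom}_\R \to \SCi$ and graded Koszul complexes as the super-specific input---is indeed the paper's route. But your account of the ``essential computation'' is not correct as stated. The unramified property is not a principle that reduces derived $\SCi$-tensor products to derived tensor products of the underlying supercommutative algebras; it is the much narrower statement of Theorem \ref{thm:unramified}, concerning pushouts of \emph{free} algebras along graph morphisms and projections, where the relative situation is cut out by a regular sequence and hence computed by a super Koszul complex. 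For a general transverse cospan $\cM \to \cN \leftarrow \cP$ your proposed reduction already fails when $\cN = \ast$: the (derived or underived) tensor product of $\Ci\left(\R^n\right)$ and $\Ci\left(\R^m\right)$ over $\R$ in supercommutative algebras is $\Ci\left(\R^n\right) \underset{\R}\otimes \Ci\left(\R^m\right) \subsetneq \Ci\left(\R^{n+m}\right)$, and no Tor-vanishing closes that gap; the passage from the algebraic tensor product to the $\Ci$-tensor product is exactly where the paper uses Lemma \ref{lem:whit} (every supermanifold is a retract of an open subset of $\R^{n|m}$), which the paper singles out as one of the two facts needed and which you never invoke. The actual transversality argument splits into the closed-embedding case and the product case (the latter via the retract lemma), and the upgrade from the $1$-categorical pushout to a homotopy pushout is done via the super Koszul complex for submersions (Lemma \ref{lem:suplocsubm}); unramifiedness then feeds the verbatim transport of the universal-property argument of \cite{univ}, not a general base-change to supercommutative algebras.

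A secondary gap: by Lemma \ref{lem:retcol}, finitely presented $\SCi$-algebras are \emph{retracts} of finite colimits of finitely generated free algebras, not literally iterated pushouts of free algebras along maps killing tuples of coordinates. Your extension-by-presentations step therefore also has to handle retracts (and the attendant idempotent-completeness issues), which is a genuine part of the argument in \cite{univ} that your sketch leaves unaddressed.
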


In Section \ref{sec:schemes} we discuss the spectrum functor $\Speci$ for $\SCi$-algebras, and define derived $\SCi$-schemes, and prove that for a supermanifold $\mathcal{M},$ $\mathcal{M} \simeq \Speci\left(\Ci\left(\cM\right)\right),$ as a locally ringed space. In Section \ref{sec:infdimmfd} we explore the link between infinite-dimensional manifolds and $\Ci$-schemes. In particular, we prove:

\begin{proposition} (Proposition \ref{prop:convchar})
For a locally convex vector space $E$ or convenient vector space, $\left(E,\Ci_E\right)$ is an affine $\Ci$-scheme if and only if $E$ smoothly regular, realcompact, and every germ of a $\Ci$-function has a global representative. Similarly, for $E$ a convenient vector space, $\left(E_{c^\i},\Ci_C\right)$ is an affine $\Ci$-scheme if and only if $E_{c^\i}$ smoothly regular and realcompact, where $E_{c^\i}$ is $E$ equipped with the $c^\i$-topology.
\end{proposition}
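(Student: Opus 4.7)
The plan is to test when the canonical adjunction unit
$$\eta_E \colon (E,\Ci_E) \to \Speci(\Ci(E))$$
in the category of locally $\Ci$-ringed spaces is an isomorphism, since being an affine $\Ci$-scheme amounts exactly to this. I would split the question into three essentially independent parts corresponding to the three conditions: bijectivity on points, upgrade to a homeomorphism, and isomorphism on stalks. This decomposition will make each listed hypothesis appear as the precise obstruction to one of the three parts, giving both the ``if'' and ``only if'' directions simultaneously.

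On points, the underlying set of $\Speci(\Ci(E))$ identifies with $\Ci$-ring homomorphisms $\Ci(E) \to \R$, and $\eta_E$ sends $x \in E$ to the evaluation character $\mathrm{ev}_x$. Bijectivity of this assignment is by definition realcompactness of $E$. On topology, the topology on $\Speci(\Ci(E))$ is generated by basic opens $\hat f^{-1}(U)$ for $f \in \Ci(E)$ and $U \subseteq \R$ open, pulling back along $\eta_E$ to $f^{-1}(U) \subseteq E$; thus $\eta_E$ is a homeomorphism exactly when the smooth functions generate the topology of $E$, i.e., when $E$ is smoothly regular. On structure sheaves, the stalk of $\O_{\Speci(\Ci(E))}$ at $\mathrm{ev}_x$ is the $\Ci$-ring localization of $\Ci(E)$ at the set of functions non-vanishing at $x$, and the stalk of $\Ci_E$ at $x$ is the ring of germs; the natural comparison map is an isomorphism precisely when every germ of a smooth function at every point admits a global smooth representative, which is the third hypothesis. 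Injectivity of the comparison comes from smooth regularity by multiplying with a smooth bump function.

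For the convenient variant, the argument runs in parallel on $E_{c^\i}$, with one additional input: the $c^\i$-topology is by definition the final topology with respect to smooth curves, and by Kriegl--Michor this coincides with the initial topology induced by smooth functions as soon as $E$ is smoothly regular. In this topology, any germ of a $c^\i$-smooth function can be globalised by multiplying a local representative with a smooth bump function supported in a $c^\i$-open neighborhood and extending by zero, since $c^\i$-smoothness can be detected along smooth curves and the resulting extension is smooth along all such curves. This makes the ``global representative'' hypothesis automatic in the convenient setting, so the list reduces to smooth regularity and realcompactness, exactly as stated.

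I expect the main obstacle to be the stalk comparison in the locally convex case: smoothly regular locally convex spaces do admit smooth bump functions, but the operation of multiplying a local representative by a bump and extending by zero need not yield a globally smooth function in the original locally convex topology, which is why the ``every germ has a global representative'' condition cannot be dropped and must appear as an independent hypothesis. Verifying that precisely this gap closes in the $c^\i$-setting --- equivalently, that the extension-by-zero is smooth along every smooth curve through the boundary of the support --- is where convenient calculus is essential and where the most delicate technical work of the proof lies.
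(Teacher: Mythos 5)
Your treatment of the locally convex case is essentially the paper's proof: the same three-way decomposition (points via realcompactness, topology via smooth regularity, stalks via the localization-to-germs comparison, with the germ-globalization hypothesis accounting for surjectivity and smooth regularity for injectivity) is exactly how the converse direction is argued there, the only cosmetic difference being that the paper handles injectivity by shrinking the neighborhood to a set of the form $f^{-1}\left(\R\setminus\{0\}\right)$ and using $f\cdot g=0$ rather than a bump function.

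Where you genuinely diverge is the convenient case, and this is also where your proposal has a gap. The paper does not argue via bump functions and extension by zero at all: it invokes Proposition \ref{prop:globbas} (Kriegl--Michor), that every convenient vector space has a basis for the $c^\i$-topology consisting of open sets conveniently diffeomorphic to $E_{c^\i}$ itself, so a germ defined on such a basic open is globalized simply by transporting along the diffeomorphism (Lemma \ref{lem:convglobat}); note this needs neither smooth regularity nor any support control, and it is exactly the ingredient that makes the ``global representative'' condition drop out of the convenient statement. Your alternative --- multiply a local representative by a bump function and extend by zero, then check smoothness along every smooth curve through the boundary of the support --- is left unproven at precisely its crucial step, as you yourself acknowledge. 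If you want to salvage that route, the curve-by-curve analysis is the wrong tool: the correct fix is to arrange that the \emph{carrier} (closure of the non-vanishing set) of the bump lies inside the given $c^\i$-open $U$, which smooth regularity permits (take generating smooth functions $f_i$ with $x\in\bigcap f_i^{-1}\left(V_i\right)\subseteq U$, compose with cutoffs $h_i$ that are $1$ near $f_i\left(x\right)$ and have closed support inside $V_i$; then the carrier of $\prod h_i\circ f_i$ is contained in $\bigcap f_i^{-1}\left(\mathrm{supp}\, h_i\right)\subseteq U$), after which the extension by zero is smooth by the sheaf property of $\Ci_C$ on the cover $\{U, E\setminus \mathrm{carrier}\}$ with no curve analysis needed. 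As written, however, the convenient half of your argument is incomplete, and the paper's route via Proposition \ref{prop:globbas} is both simpler and strictly more general.
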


which has the following corollary:

\begin{corollary}(Corollary \ref{cor:allinfmfds})
The following classes of locally convex or convenient vector spaces are affine $\Ci$-schemes when equipped with their appropriate sheaf of smooth functions:
\begin{itemize}
\item[1.] Any smoothly regular locally convex vector space which admits smooth partitions of unity. This includes:
\begin{itemize}
\item[a)] Any metrizable locally convex vector space for which smooth functions separate disjoint closed sets.
\item[b)] All Hilbert spaces.
\item[c)] All Nuclear Frech\'et spaces.
\end{itemize}
\item[2.] All realcompact smoothly regular convenient vector spaces (with the $c^\i$-topology).
\item[3.] Any smoothly regular locally convex vector space which is Lindel\"{o}f. Any smoothly regular convenient vector space which is Lindel\"{o}f for its $c^\i$-topology.
\end{itemize}
\end{corollary}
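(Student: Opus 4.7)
The plan is to verify, for each of the enumerated classes, the three hypotheses of Proposition~\ref{prop:convchar}: smooth regularity (which is hypothesized in each item), realcompactness, and the property that every germ of a $\Ci$-function has a global representative. The common thread linking all cases is the existence of smooth partitions of unity subordinate to arbitrary open covers: such partitions automatically imply the germ-extension property via multiplication by a plateau function, and, together with smooth regularity, imply smooth realcompactness by standard results in the monograph of Frölicher-Kriegl-Michor.

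For item 1, smooth regularity is given, so the task is to deduce germ-extension and realcompactness from the existence of smooth partitions of unity. Given $f \in \Ci(U)$ on an open neighborhood $U$ of a point $x$, a partition of unity subordinate to a shrinking of $\{U, E \setminus \{x\}\}$ yields a smooth $\rho$ identically $1$ near $x$ with support in $U$, so $\rho f$ extends by zero to a global smooth function with germ $f$ at $x$. Smooth realcompactness then follows from the standard implication that smoothly regular spaces admitting smooth partitions of unity subordinate to every open cover are smoothly paracompact, hence smoothly realcompact. Subcases (a)--(c) reduce to invoking the standard existence theorems for smooth partitions of unity on these specific classes (metrizable spaces where smooth functions separate closed sets, separable Hilbert spaces via the smoothness of $\|\cdot\|^2$ off the origin together with squeezing, and nuclear Fréchet spaces).

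Items 2 and 3 are handled analogously. Item 2 is essentially tautological once the germ-extension property is checked, which again follows from smooth regularity by the plateau-function construction, observing that in a smoothly regular convenient vector space one can produce a $\Ci$-smooth bump supported inside any prescribed $c^\i$-open neighborhood. For item 3, a Lindelöf smoothly regular space admits smooth partitions of unity subordinate to every open cover---extract a countable subcover, then construct the partition inductively using smooth normality---so the argument of item 1 applies; moreover, Lindelöf smoothly regular spaces are classically known to be smoothly realcompact (a theorem of Frölicher-Kriegl-Michor).

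The principal obstacle is bookkeeping between the locally convex and $c^\i$-topologies: the germ-extension property and realcompactness must be formulated with respect to the correct topology in each case, and the plateau functions one extracts must be genuinely $\Ci$-smooth in the convenient sense rather than merely continuous for the locally convex topology. One must also check case by case that smooth regularity is strong enough to produce partitions subordinate to arbitrary open covers and not just to pairs of disjoint closed sets. No single verification is difficult, but the proof amounts to carefully assembling a handful of results from Frölicher-Kriegl-Michor into a single coherent application of Proposition~\ref{prop:convchar}.
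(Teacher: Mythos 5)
Your overall strategy---checking the three hypotheses of Proposition \ref{prop:convchar} case by case---is the paper's, and for items 1 and 3 your arguments essentially coincide with its proof: germ extension via a plateau function carved out of a smooth partition of unity for item 1, and for item 3 the reduction to item 1 through \cite[Theorem 16.10]{mk} (Lindel\"of plus smoothly regular implies smooth partitions of unity); the paper also offers \cite[Theorem 4.41]{joycesch} as an alternative for item 3. For item 2 you take a genuinely different route. You manufacture a conveniently smooth plateau function from smooth regularity and extend the germ by zero, which forces you to verify curve-wise that the zero-extension of $\rho\cdot g$ is conveniently smooth. The paper instead invokes Lemma \ref{lem:convglobat}, which rests on \cite[Section 16.21]{mk}: every point of a convenient vector space has a $c^{\infty}$-neighborhood basis of open sets $c^{\infty}$-diffeomorphic to the whole space, so a germ is globalized simply by precomposing with such a diffeomorphism---no bump functions and no smooth regularity are needed for the extension step. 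Both arguments work; the paper's is shorter and avoids the support bookkeeping, while yours has the mild virtue of using only the hypothesis already present in the statement.

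The one step you assert that neither the paper nor the standard literature cleanly supports is the implication ``smoothly regular with smooth partitions of unity $\Rightarrow$ smoothly paracompact $\Rightarrow$ smoothly realcompact.'' Already for topological spaces, paracompact does not imply realcompact in general (closed discrete subspaces of measurable cardinality obstruct it), and the smooth realcompactness results in Fr\"olicher--Kriegl--Michor all carry Lindel\"of-type or non-measurability hypotheses. To be fair, the paper's own proof of item 1 addresses only the germ condition and is silent on realcompactness, so this is a gap in the source as much as in your write-up; but you should not present the implication as ``standard.'' Either restrict to the subclasses where smooth realcompactness is actually known (Lindel\"of spaces, nuclear Fr\'echet spaces, separable or non-measurable-density Hilbert spaces), or supply a precise reference for the general claim.
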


We show that nonetheless, the induced functor from locally convex vector spaces to $\Ci$-schemes is only faithful and not full unless restricted to convenient vector spaces. We also examine the fully faithful inclusion of convenient vector spaces into the topos of sheaves on affine $\Ci$-schemes induced by first taking their functor of points on finite dimensional manifolds, and then left Kan extending. We show that for a manifold $M$, these two embeddings agree if and only if $M$ is finite dimensional. Finally, we prove that the infinite jet bundle of a submersion over a finite dimensional manifold $M,$ when regarded as a $\Ci$-scheme, is the limit in $\Ci$-schemes of the associated finite jet-bundles.

In Section \ref{sec:derived_topos}, we discuss various adjunctions that arise on the $\i$-topos of sheaves over a suitably chosen site of affine $\SCi$-schemes, in the spirit of Schreiber's differential cohesion \cite{Urs}. We emphasize that the $\i$-topoi arising as sheaves over small sites of affine $\SCi$-schemes are \emph{NOT} cohesive however.

In Section \ref{sec:orbifold}, we specialize our theory of higher orbifolds and Deligne-Mumford stacks \cite{higherme} to the derived $\SCi$-setting. We establish some technical results we use later in our study of quasi-coherent sheaves.

In Section \ref{sec:modandqc}, we define modules and quasi-coherent sheaves in the setting of derived $\SCi$-geometry. Our first result is the following:

\begin{theorem}(Theorem \ref{thm:stabmod})
For any dg-$\SCi$-superalgebra $\cA,$ there is a canonical equivalence of $\i$-categories
$$\Stab\left(\dgc/\cA\right) \simeq \Mod_{\A}$$ between the stabilization of the slice category over $\cA,$ and the $\i$-category of unbounded dg-modules over $\cA.$
\end{theorem}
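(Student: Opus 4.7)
I would follow the by-now classical pattern of Schwede, Basterra--Mandell, and Lurie (HA §7.3) for computing the stabilization of a slice of algebras over an $\mathbb{E}_\infty$-algebra, and adapt it to dg-$\SCi$-superalgebras. The first step is to observe that stabilization depends only on the underlying pointed $\i$-category, so
$$\Stab\left(\dgc/\cA\right) \simeq \Stab\left(\dgc_{\cA//\cA}\right),$$
reducing the problem to stabilizing the $\i$-category of augmented dg-$\SCi$-algebras over $\cA$.

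Next, I would construct the adjunction
$$\cA \oplus (-) : \Mod_{\cA} \rightleftarrows \dgc_{\cA//\cA} : I$$
whose left adjoint is the trivial square-zero extension (with $M$ treated as a square-zero ideal and the $\SCi$-operations extended tautologically from $\cA$), and whose right adjoint $I$ sends an augmented algebra $B \to \cA$ to the fiber of its augmentation, viewed as a dg-$\cA$-module via the augmentation section. Verifying that this is an adjunction boils down to checking that $\SCi$-derivations of $\cA$ valued in $M$ agree with ordinary $\cA$-linear derivations valued in $M$; this is a direct consequence of the unramifiedness of the morphism of algebraic theories $\mathbf{SCom}_\R \to \SCi$ established in Section \ref{sec:homotopical}, which guarantees that the $\SCi$-structure contributes no new tangential information beyond the underlying supercommutative $\R$-algebra structure.

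The main technical step is to show that the above adjunction induces an equivalence after stabilizing on the right. Since $\Mod_\cA$ is already stable and $\cA \oplus (-)$ preserves finite limits, it extends canonically to a functor $\Mod_\cA \to \Stab(\dgc_{\cA//\cA})$; it then suffices to verify the loop-space identity
$$\Omega\left(\cA \oplus M\right) \simeq \cA \oplus M[-1]$$
in $\dgc_{\cA//\cA}$, i.e., to compute the pullback $\cA \times_{\cA \oplus M} \cA$ inside dg-$\SCi$-algebras and identify it with $\cA \oplus M[-1]$. Using the graded Koszul complex machinery developed earlier, together with unramifiedness, this calculation reduces to the analogous purely supercommutative pullback, where the identity is classical. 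An adjoint-functor argument on pointed presentable $\i$-categories then promotes the adjunction into the desired equivalence between unbounded dg-modules and spectrum objects. I expect the main obstacle to lie in the bookkeeping needed to pass between the connective (non-positively graded) $\SCi$-setting and unbounded dg-modules, and in rigorously invoking unramifiedness to identify $\SCi$-derivations with ordinary $\R$-linear derivations at the derived level; once this is in place, the remainder of the argument is formal.
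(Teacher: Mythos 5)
Your overall architecture (reduce to augmented algebras, relate them to modules via square-zero extensions, verify the loop-space identity $\Omega\left(\cA\oplus M\right)\simeq \cA\oplus M\left[-1\right]$) is reasonable, but there are two genuine problems. First, your adjunction has the wrong handedness: the trivial square-zero extension $\cA\oplus\left(-\right)$ is the \emph{right} adjoint of the cotangent/indecomposables functor --- this is exactly the adjunction $\Omega^1_{\left(\blank\right)}\left(\A\right)\dashv \A\left[\blank\right]$ of (\ref{eqn:1form}) --- whereas the augmentation-ideal functor $I$ is the right adjoint of the \emph{free algebra} functor $\Sym$. A map $\cA\oplus M\to B$ of augmented algebras is not the same datum as a module map $M\to I\left(B\right)$, since nothing forces products of elements of $I\left(B\right)$ to vanish, so $\cA\oplus\left(-\right)\dashv I$ is not an adjunction. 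Relatedly, the claim that $\SCi$-derivations coincide with $\R$-algebra derivations is false in general (see the remark following the definition of $\Der\left(\A,M\right)$ in Section \ref{sec:derivations}), and unramifiedness of $\mathbf{SCom}_\R\to\SCi$ concerns pushouts along maps of supermanifolds, not derivations. The fact that actually renders the $\SCi$-structure harmless after stabilization is that $\Sym_{\Ci}\left(V^\bullet\right)\simeq\Sym\left(V^\bullet\right)$ whenever $V^0=0$: the $\Ci$-structure lives only in degree zero and disappears after a single suspension.

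Second, and more seriously, your closing step --- ``an adjoint-functor argument then promotes the adjunction into the desired equivalence'' --- is where essentially all of the content lies, and it is not formal. The loop-space identity (which, incidentally, follows formally from $\A\left[\blank\right]$ being a right adjoint and needs neither Koszul complexes nor unramifiedness) only produces a left-exact functor $\Mod_{\A}\to\Stab\left(\dgc/\cA\right)$; an adjunction between stable presentable $\i$-categories need not be an equivalence, and proving that every spectrum object of augmented algebras arises from a module requires the connectivity and convergence estimates of Basterra--Mandell/Schwede. The paper's proof is organized precisely to avoid redoing those estimates: it compares $\Stab\left(\left(\dgc_{\cB}\right)_*\right)$ with $\Stab\left(\left(\mathsf{SCAlg}_{\cB}\right)_*\right)$ via the completion/forgetful adjunction, checks that the unit and counit become equivalences (first on suspensions of free objects using the $V^0=0$ observation, then on all spectrum objects by passing to sequential colimits and using conservativity), and imports the supercommutative case from \cite[Theorem 3.2.3]{schwede}. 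To complete your route you would either need to reproduce those convergence estimates directly in the $\SCi$ setting or, as the paper does, reduce to the known supercommutative statement.
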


We go on to discuss square-zero extensions, derivations, and tangent $\i$-categories. In Sections \ref{sec:shvmod} and \ref{sec:funmod}, we define module sheaves for $\SCi$-ringed $\i$-topoi and use their functoriality properties to give a very general version of a module-spectrum functor.

In Section \ref{sec:qc}, we develop the theory of quasi-coherent sheaves. In $\Ci$-geometry, the theory of quasi-coherent sheaves is a bit different than that of algebraic geometry, as was already noticed by Joyce \cite{joycesch}. For $X=\left(\Speci\left(\A\right),\O_\A\right)$ an affine $\Ci$-scheme, \emph{every} $\O_\A$-module is quasi-coherent, but not every $\A$-module is global sections of an $\O_\A$-module. Quasi-coherent sheaves on $\Speci\left(\A\right)$ are equivalent to a reflective subcategory of $\A$-modules called \emph{complete} $\A$-modules $\widehat{\Mod}_{\A}$. It follows that it is a symmetric monoidal with an inherited tensor product $\widehat{\otimes}$ from that of $\Mod_{\Speci\left(\A\right)}.$ Whereas our definition of quasi-coherent sheaf agrees with that of Nuiten \cite{Nuiten} and Steffens \cite{steffens1}, we present a novel approach which is particularly convenient for working with derived stacks which are not affine $\Ci$-schemes. Our approach allows one to view quasi-coherent sheaves over $\cX$ concretely as certain $\O_{\cX}$-module sheaves over the \emph{big} site of $\cX,$ and is based in part on key ideas from \cite{qcglob}. Denoting $\bH$ the $\i$-topos of sheaves on a small site of affine $\SCi$-schemes, the pair $\left(\bH,\R\right)$ is a $\SCi$-ringed $\i$-topos, and we denote $\R$ when regarded as a $\SCi$-ring object as $\O.$ If $\cX \in \bH$ is any stack, $\left(\bH/\cX,\O_{\cX}\right)$ is an $\SCi$-ringed $\i$-topos. We prove:

\begin{proposition}(Proposition \ref{prop:lamlam})
For any stack $\cX \in \bH,$ $\QC\left(\cX\right)$ can be realized as a coreflective subcategory of $\Mod_{\O_\cX}$.
\end{proposition}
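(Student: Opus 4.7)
The plan is to realize the inclusion $\QC(\cX) \hookrightarrow \Mod_{\O_\cX}$ as a colimit-preserving inclusion of presentable $\infty$-categories, and then invoke the adjoint functor theorem to produce a right adjoint. The starting point is the tautological presentation $\cX \simeq \colim_{u \in \mathsf{Aff}_{/\cX}} u$ of $\cX$ as a colimit of its affines in $\bH$. Descent for module sheaves on the big site identifies $\Mod_{\O_\cX} \simeq \lim_{u \in \mathsf{Aff}_{/\cX}^{\mathrm{op}}} \Mod_{\O_u}$, where each $\Mod_{\O_u}$ denotes the $\i$-category of big-site $\O_u$-modules and the transition functors are pullbacks. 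The quasi-coherence condition then carves out the sub-limit $\QC(\cX) \simeq \lim_{u} \widehat{\Mod}_{A_u}$, in which, for $u = \Speci A$, the fiber is the category of complete $A$-modules and the transition functor along $g : \Speci B \to \Speci A$ is the completed tensor product $B \,\widehat{\otimes}_A\, (-)$. Under this identification, the inclusion $\QC(\cX) \hookrightarrow \Mod_{\O_\cX}$ is obtained by assembling, over each $u = \Speci A$, the inclusion $\widehat{\Mod}_A \hookrightarrow \Mod_{\O_u}$ sending $M$ to the big-site sheaf $\Speci B \mapsto B\,\widehat{\otimes}_A\, M$.

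Next, I would construct the coreflector termwise. On a single affine $u = \Speci A$, the inclusion $\widehat{\Mod}_A \hookrightarrow \Mod_{\O_u}$ admits an explicit right adjoint sending a big-site module sheaf $\cF$ to the complete $A$-module of sections $\cF(u)$; this is an immediate Yoneda-type calculation, since any natural transformation out of the sheaf $\Speci B \mapsto B\,\widehat{\otimes}_A\, M$ is determined by its component at the identity $u \to u$. Both functors are accessible, and the inclusion preserves colimits because completion is a Bousfield localization and the completed tensor product $B\,\widehat{\otimes}_A\, (-)$ is itself a left adjoint. Assembling these data along $\mathsf{Aff}_{/\cX}^{\mathrm{op}}$ in the $\i$-category of presentable $\i$-categories and colimit-preserving functors simultaneously yields presentability of $\QC(\cX)$ and colimit-preservation of the global inclusion into $\Mod_{\O_\cX}$; the adjoint functor theorem then produces the desired coreflector.

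The principal technical obstacle is to verify that the termwise adjunctions glue coherently under descent, or equivalently that the transition functors $B\,\widehat{\otimes}_A\, (-)$ in the limit presentation of $\QC(\cX)$ really are morphisms in the $\i$-category of presentable $\i$-categories with colimit-preserving functors, and that the inclusion into $\Mod_{\O_\cX}$ preserves all small colimits computed on the big site. This is where the theory of complete modules from the earlier sections enters decisively: one uses that $\widehat{\Mod}_A$ is a reflective localization of $\Mod_A$ to recognize completed tensor product as a left adjoint, and combines this with the fact that colimits of big-site module sheaves are computed as sheafifications of pointwise colimits to conclude that colimits of quasi-coherent sheaves remain quasi-coherent. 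Once these compatibilities are in place, coreflectivity follows from the presentable adjoint functor theorem.
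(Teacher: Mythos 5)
Your overall architecture is the same as the paper's: both arguments present $\QC\left(\cX\right)$ and $\Mod_{\O_\cX}$ as limits over $\sD/\cX$ (Corollary \ref{cor:limrep}, Lemma \ref{lem:sheafslice}), both feed in the affine comparison $\widehat{\Mod}_{\A}\simeq \QC\left(\Speci\left(\A\right)\right)\hookrightarrow \Mod_{\O_{\Speci\left(\A\right)}}$ of Theorem \ref{thm:QCff}, and both obtain the coreflector from the fact that the resulting inclusion is a colimit-preserving functor of presentable $\i$-categories. The genuine gap is in the step you describe as ``assembling'' the affine inclusions over $\sD/\cX$. In the $\i$-categorical setting one cannot produce the functor $\QC\left(\cX\right)\to\Mod_{\O_\cX}$ by specifying components at each $u=\Speci\left(\A\right)$ and compatibility squares for each morphism; one needs a natural transformation of $\PrL$-valued diagrams (equivalently, a map of sheaves $\QC\Rightarrow\Mod_{\O}$ on $\bH$) with all higher coherences, and your proposal leaves exactly this datum unconstructed. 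The tools you invoke at that point---reflectivity of $\widehat{\Mod}_{\A}$ in $\Mod_{\A}$, the completed tensor product being a left adjoint, colimits of big-site module sheaves being sheafified pointwise colimits---only establish the easier facts that the transition functors and the termwise inclusions lie in $\PrL$; they do not supply the coherence. This is precisely where the paper does its real work: Lemma \ref{lem:coldmsame} and Proposition \ref{lem:wdd} produce a colimit-preserving functor $\bH\to\Fun\left(\Delta\left[1\right],\Topi^{\SCi}\right)$ sending each affine $X$ to the morphism of $\SCi$-ringed $\i$-topoi $\Lambda_X:\left(\bH/X,\O_X\right)\to\left(\Shv\left(\underline{\Speci\left(\A\right)}\right),\O_\A\right)$, and composing with $\Mod$ yields the coherent transformation $\Lambda^\star$ whose affine components are the inclusions you want, with the compatibility of Corollary \ref{cor:mspec} built in.

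Two smaller points. First, ``coreflective subcategory'' requires the assembled functor to be fully faithful; you call it an inclusion but never verify this. Once the coherent transformation exists it does follow from the affine case by computing mapping spaces in the limit, as in the paper's proof, but it is a step to be carried out, not a given. Second, your identification of the affine-level right adjoint as $\cF\mapsto\cF\left(u\right)$ is correct (the values are complete by Proposition \ref{prop:blabla}), but the justification is the module-spec adjunction of Theorem \ref{thm:5.1.33}, i.e.\ $\Lambda^\star$ is $t_X^\star$ for the map to the point, rather than a literal Yoneda argument: the sheaf $\Speci\left(\cB\right)\mapsto \cB\underset{\A}{\widehat{\otimes}}M$ is a sheafification of $\Mspec$-type and is not representable, so maps out of it are not determined by evaluation at $id_u$ by Yoneda alone.
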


Moreover, for any quasi-coherent sheaf $\cF$ on derived stack $\cX$, realized as an $\O_\cX$-module, and any morphism $f:\Speci\left(\A\right),$ $\cF\left(f\right)$ is a \emph{complete} $\A$-module and we have the following theorem:

\begin{theorem}(Theorem \ref{thm:qccond})
Let $\cX \in \bH$ be an arbitrary derived stack. Then the following conditions are equivalent for an $\O_\cX$-module $\cF$:
\begin{itemize}
	\item[i)] $\cF$ is quasi-coherent (i.e. in the essential image of the fully-faithful functor $\Lambda^\star_\cX$).
	\item[ii)] For all morphisms
	$$\xymatrix{\Speci\left(\cB\right) \ar[rr]^-{\varphi} \ar[rd]_-{g} & & \Speci\left(\A\right) \ar[ld]^-{f}\\
		& \cX & }$$ the canonical map
	$$\cB \underset{\A} {\widehat{\otimes}} \cF\left(f\right) \to \cF\left(g\right)$$ is an equivalence.
	\end{itemize}
	\end{theorem}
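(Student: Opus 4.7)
The plan is to prove each direction separately, making essential use of the fact from Proposition \ref{prop:lamlam} that $\Lambda^\star_\cX$ realizes $\QC(\cX)$ as a coreflective full subcategory of $\Mod_{\O_\cX}$, together with the affine case where $\QC(\Speci(\A)) \simeq \widehat{\Mod}_{\A}$ and where pullback along $\varphi:\Speci(\cB)\to\Speci(\A)$ corresponds on the level of complete modules to the completed tensor product $\cB \widehat{\otimes}_{\A}(-)$.

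For (i) $\Rightarrow$ (ii), I would assume $\cF \simeq \Lambda^\star_\cX \cG$ for some $\cG \in \QC(\cX)$. Unwinding the construction of $\Lambda^\star_\cX$, the complete $\A$-module $\cF(f)$ is canonically identified with the pullback $f^\ast \cG \in \widehat{\Mod}_{\A}$, and similarly $\cF(g) \simeq g^\ast \cG$. The functoriality equivalence $g^\ast\cG \simeq \varphi^\ast(f^\ast \cG)$, combined with the identification of $\varphi^\ast$ on quasi-coherent sheaves with $\cB \widehat{\otimes}_{\A}(-)$, yields the desired equivalence $\cB \widehat{\otimes}_{\A} \cF(f) \simeq \cF(g)$.

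For (ii) $\Rightarrow$ (i), I would build a quasi-coherent sheaf directly out of the values of $\cF$. For each $f:\Speci(\A)\to\cX$, the complete $\A$-module $\cF(f)$ determines an object $\cG_f \in \QC(\Speci(\A))$. The hypothesis in (ii) furnishes canonical equivalences $\varphi^\ast \cG_f \simeq \cG_g$, and since $\cF$ is itself a sheaf on the big site, these come equipped with all higher coherences required to define a compatible family over the category of affines mapping to $\cX$. Realizing $\QC(\cX)$ as the limit of $(\Speci(\A)\to\cX)\mapsto \QC(\Speci(\A))$, this descent datum assembles to $\cG \in \QC(\cX)$. It then remains to verify that $\Lambda^\star_\cX \cG \simeq \cF$: by construction both sides take equivalent values on each affine $f$, and by applying the coreflector of Proposition \ref{prop:lamlam} together with the full-faithfulness of $\Lambda^\star_\cX$ one concludes that this affine-local agreement suffices.

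The main obstacle is the coherent assembly step in (ii) $\Rightarrow$ (i): extracting from the pointwise base-change equivalences a genuine object of the $\i$-categorical limit defining $\QC(\cX)$ requires controlling not merely strict pullback squares but all higher simplices of base-change data. This is where the sheaf structure of $\cF$ on the big site, the full-faithfulness of $\Lambda^\star_\cX$, and the completeness of the $\A$-modules $\cF(f)$ must jointly be deployed, and the argument has to exclude the pathology whereby an $\O_\cX$-module could satisfy base change on the nose at each pair $(f,g)$ without cohering globally.
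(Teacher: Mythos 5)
Your direction (i) $\Rightarrow$ (ii) is essentially the paper's argument: identify $\cF\left(f\right)$ with (global sections of) $f^\star\cG$, use $g^\star\cG \simeq \varphi^\star f^\star\cG$, and invoke the identification of $\varphi^\star$ on quasi-coherent sheaves with the completed induced module functor (Remark \ref{rmk:comind}); this part is fine.

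The gap is in (ii) $\Rightarrow$ (i), and it is exactly the step you flag yourself: you propose to \emph{reconstruct} an object $\cG$ of the limit $\QC\left(\cX\right) \simeq \lim_{f} \QC\left(\Speci\left(\A\right)\right)$ from the values $\cF\left(f\right)$ and the base-change equivalences of (ii), and you assert that the sheaf structure of $\cF$ supplies "all higher coherences" — but you never produce the descent datum, and the pointwise equivalences $\varphi^\star\cG_f \simeq \cG_g$ of (ii) are genuinely insufficient on their own to define an object of an $\i$-categorical limit. The paper avoids this assembly problem entirely: since $\Mod_{\O}$ and $\QC$ are both sheaves of $\i$-categories and $\Lambda^\star$ is a morphism of sheaves with componentwise fully faithful components (Proposition \ref{prop:lamlam}), the module $\cF$ \emph{already is} a fully coherent object of $\lim_f \Mod_{\O_{\Speci\left(\A\right)}}$, namely the family $\left(f^\star\cF\right)_f$, and quasi-coherence is merely the \emph{property} that each component lies in the full subcategory $\QC\left(\Speci\left(\A\right)\right)$ — this is Lemma \ref{lem:pbqce}. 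The remaining work is then purely affine: Proposition \ref{prop:qcafcl} shows that $f^\star\cF$ is quasi-coherent over $\Speci\left(\A\right)$ if and only if the canonical maps $\cB \,\widehat{\otimes}_{\A}\, \cF\left(f\right) \to \cF\left(g\right)$ are equivalences for all $\varphi:\Speci\left(\cB\right)\to\Speci\left(\A\right)$, which is proved by identifying $\Lambda^\star_X\Lambda^X_\star\cF$ with $\MSpec_{\bH/X}\left(\cF\left(X\right)\right)$ and observing that condition (ii) says precisely that the counit $\Lambda^\star_X\Lambda^X_\star\cF \to f^\star\cF$ is an equivalence objectwise. So the "pathology" you worry about (pointwise base change without global coherence) cannot occur, but not for the reason you sketch: it is excluded because no new coherence data needs to be constructed at all — the coherences are carried by $\cF$ itself, and quasi-coherence is checked componentwise. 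To repair your write-up you should replace the reconstruction of $\cG$ by this two-step reduction (affine criterion plus componentwise detection inside the limit), or else actually carry out the descent argument you allude to, which is substantially more work than the theorem requires.
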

	
	In Section \ref{sec:deRham}, we introduce the de Rham stack in the derived $\SCi$-setting. We use it to define formal neighborhoods, and the notions of formally \'etale and formal smoothness.
	
	In Section \ref{sec:group}, we review the concept of $\i$-group object introduced by Nikolaus-Schreiber-Stevenson in \cite{pb}. There are no new results here.
	
	In Section \ref{sec:jet} we introduce jet bundles and differential operators synthetically in terms of the de Rham stack. The main definitions are an adaption of ideas and concepts from \cite{jett} of Khavkine-Schrieber. Given any derived stack $\cX,$ it comes with a canonical map 
$$\eta:\cX \to \cX_{dR},$$	
to its de Rham stack, which equivalently is a geometric morphism between their slice topoi
$$\eta:\bH/cX \to \bH/\cX_{dR}.$$ Given an object $\pi:E \to \cX$ in $\bH/\cX,$ which one can regard as a generalized bundle, the \emph{infinite jet bundle} of $E$ is $\J^\i E:=\eta^*\eta_*E.$ (or more precisely $\eta^*\eta_*\pi$). To justify this definition, we prove:

\begin{theorem}(Corollary \ref{cor:jetfrech})
Let $\pi:E \to M$ be a fiber bundle over a finite dimensional manifold. Then $\J^\i E$ is the usual infinite-jet bundle with its Frech\'et manifold structure, regarded as a $\Ci$-scheme. 
\end{theorem}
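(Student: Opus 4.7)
The plan is to compute the $T$-points of $\J^\i E = \eta^*\eta_* E$ as $T$ ranges over affine $\SCi$-schemes, identify them with the classical infinite jet bundle via the formal neighborhood of the diagonal, and then invoke the limit result of Section~\ref{sec:infdimmfd}. The first key step is unpacking the de Rham stack $M_{dR}$ in the derived $\SCi$-setting. By the construction of Section~\ref{sec:deRham}, $M_{dR}$ quotients $M$ by formal infinitesimal equivalence along the diagonal, so that maps $T \to M_{dR}$ classify maps $T \to M$ modulo agreement to all orders. Under this identification, for any $T \in \bH$ and any $f:T \to M$, the $T$-point of $\eta^*\eta_* E$ lying over $f$ is a section of $\pi$ over the formal neighborhood $\widehat{T}_f \hookrightarrow T \times M$ of the graph $\Gamma_f$. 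In particular, the $M$-point corresponding to $\mathrm{id}_M$ yields a section of $\pi$ over the formal neighborhood $\widehat{M}_\Delta$ of the diagonal.

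Next, I would compare with finite-order jet bundles. The $k$-th infinitesimal neighborhood $\Delta^{(k)}$ of $M \hookrightarrow M \times M$ has $\SCi$-ring $\Ci(M\times M)/\mathcal{I}_\Delta^{k+1}$, and sections of $\pi$ over $\Delta^{(k)}$ are, by the standard algebraic description of finite jets, exactly the points of $J^k E$. The formal neighborhood $\widehat{M}_\Delta$ has $\SCi$-ring $\varprojlim_k \Ci(\Delta^{(k)})$, so that a section of $\pi$ over $\widehat{M}_\Delta$ is a compatible system of $k$-jets for all $k$, i.e., an element of $\varprojlim_k J^k E$. Globalizing this over arbitrary $T$ yields $\J^\i E \simeq \varprojlim_k J^k E$ in $\Ci$-schemes. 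Applying the proposition from Section~\ref{sec:infdimmfd} identifying the classical Fr\'echet manifold $J^\i E$ of a submersion over a finite-dimensional base with precisely this limit, we conclude $\J^\i E \simeq J^\i E$ as $\Ci$-schemes, with the Fr\'echet manifold structure intact.

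The main obstacle will be the first step: rigorously identifying $\eta_*$ with the functor sending a bundle to its sheaf of formal-neighborhood sections. This requires showing that $M_{dR}$ is the $\i$-topos quotient of $M$ by the formal pair groupoid on the diagonal, and that pushforward along the geometric morphism $\eta$ computes the right Kan extension to this quotient, which then becomes naturally equivalent to sections over formal neighborhoods. An additional subtlety arises on the analytic side: since the fibers of $\J^\i E$ are Fr\'echet, one must carefully match the pro-$\Ci$-scheme description of $\widehat{M}_\Delta$ with the specific limit realizing $J^\i E$ in the earlier proposition, and verify that the implicit commutation of limits with fiberwise sections is valid in the $\SCi$-setting. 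Once these identifications are in place, the remaining comparison with the Fr\'echet structure is purely a consequence of the limit formula.
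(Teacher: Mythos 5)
Your overall skeleton coincides with the paper's: unwind $\eta^*\eta_*\pi$ as sections of $\pi$ over the formal neighborhood of the diagonal, decompose that neighborhood into the finite-order infinitesimal neighborhoods $M_{\left(k\right)}$, identify sections over $M_{\left(k\right)}$ with the classical bundle $\J^k\left(E,M\right)$, and then use the results of Section \ref{sec:infdimmfd} to recognize the limit $\underset{k}\lim \J^k\left(E,M\right)$ in $\Ci$-schemes as the Fr\'echet manifold $\J^\i\left(E,M\right)$. The paper packages the first two steps as Proposition \ref{prop:bTloc} ($\bT^\i M \simeq \underset{k}\colim M_{\left(k\right)}$) together with the general base-change statement Lemma \ref{lem:wdw}, which converts a colimit decomposition of the correspondence $M \leftarrow \bT^\i M \to M$ into the formula $\eta^*\eta_* \simeq \underset{k}\lim \beta^k_*\alpha_k^*$; the analytic input is Corollary \ref{cor:colimfre}.

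The genuine gap is in your treatment of the formal neighborhood. You describe $\widehat{M}_\Delta$ as the affine object with $\SCi$-ring $\underset{k}\lim\, \Ci\left(M\times M\right)/\I^{k+1}$ and deduce that a section of $\pi$ over it is a compatible system of $k$-jets. That inference has the variance backwards: $\Speci$ does not convert a limit of rings into a colimit of schemes, and a map out of $\Speci$ of an inverse limit is not the same as a compatible system of maps out of the finite stages. What is actually needed---and what carries the real content of the proof---is that $M\times_{M_{dR}} M$ is the \emph{ind}-scheme $\underset{k}\colim M_{\left(k\right)}$ in $\bH$; only then do sections over it become the limit of sections over the $M_{\left(k\right)}$. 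In the $\Ci$-setting this is not automatic, because $M_{dR}$ is defined via the reduction $\A \mapsto \A/\Nil_{\Ci}\left(\A\right)$ by \emph{locally} nilpotent elements: one must show that the functor $\Nil_k$ of order-$k$ nilpotents is representable by $\Ci\left(t\right)/t^k$ (a derived Koszul computation), that $\Nil_{\Ci}\simeq \underset{k}\colim \Nil_k$ as sheaves (Corollary \ref{cor:repnil}), and then globalize via the local triviality $\left(\R^{n|m}\right)_{\left(k\right)}\simeq \R^{n|m}\times \bbD^{n|m}_{\left(k\right)}$ and a cosheaf argument. By contrast, the obstacle you flag---exhibiting $M_{dR}$ as a quotient by a formal pair groupoid and computing $\eta_*$ as a right Kan extension---is not needed; the passage from the colimit decomposition to the limit of pushforwards is the formal Lemma \ref{lem:wdw}. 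You do correctly identify the other substantive input, namely matching $\underset{k}\lim \J^k$ in $\Ci$-schemes with the Fr\'echet manifold, which the paper settles by showing that smooth functions on $\R^\i$ are locally pulled back from finite stages.
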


(A similar result in the non-derived context was proven in \cite{jett}.) We then discuss the notion of non-linear differential operators in this context and how they induce morphisms between respective stacks of sections. The $\i$-jet functor $\J^\i$ has the canonical structure of a comonad, and this closely related to Vinogradov's category of formally integrable PDEs \cite{vino1,vino2}. The linearized version is related to $\mathcal{D}$-modules.

In Section \ref{sec:monad} we start by discussing the general theory of monads and comonads and their interaction with higher topos theory. Along the way we prove the $\i$-categorical analogue of a well-known result in classical category theory:

\begin{lemma}(Lemma \ref{lem:AlgKL})
	Given a monad $T$ on an $\i$-category $\sC,$ there is a pullback diagram of $\i$-categories	$$\xymatrix{\Alg_T\left(\sC\right) \ar@{^(_->}[r] \ar[d] & \Psh\left(\Kl\left(T\right)\right) \ar[d]^-{\left(F_\Kl\right)^\ast} \\
		\sC \ar@{^(_->}[r]^-{y} & \Psh\left(\sC\right),}$$
		where $\Kl\left(T\right)$ is the Kleisli $\i$-category of free $T$-algebras.
	\end{lemma}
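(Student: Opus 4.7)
My plan is to realize $\Alg_T\left(\sC\right)$ inside $\Psh\left(\Kl\left(T\right)\right)$ as a restricted Yoneda embedding, and then show the resulting map into the pullback is both fully faithful (via density of free algebras) and essentially surjective (via an explicit inverse).

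I would first identify $\Kl\left(T\right)$ with the full subcategory of $\Alg_T\left(\sC\right)$ spanned by the free algebras, yielding a fully faithful $j\colon \Kl\left(T\right)\hookrightarrow \Alg_T\left(\sC\right)$ satisfying $j\circ F_\Kl\simeq F$ and $U\circ j\simeq U_\Kl$, where $F\dashv U$ is the Eilenberg-Moore adjunction. Restricting the Yoneda embedding along $j$ produces
$$\iota\colon \Alg_T\left(\sC\right)\longrightarrow \Psh\left(\Kl\left(T\right)\right),\qquad A\longmapsto \mathrm{Map}_{\Alg_T\left(\sC\right)}\left(j\left(-\right),A\right).$$
The square commutes because $F\dashv U$ gives $\left(F_\Kl\right)^\ast\iota\left(A\right)\left(c\right)\simeq \mathrm{Map}_{\Alg_T\left(\sC\right)}\left(F c,A\right)\simeq \mathrm{Map}_\sC\left(c,U A\right)=y\left(U A\right)\left(c\right)$, naturally in $c$ and $A$, so $\iota$ factors through a comparison functor $\Phi\colon \Alg_T\left(\sC\right)\to \sC\times_{\Psh\left(\sC\right)}\Psh\left(\Kl\left(T\right)\right)$.

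Fully faithfulness of $\Phi$ reduces to density of $\Kl\left(T\right)$ in $\Alg_T\left(\sC\right)$. In the $\i$-categorical setting this follows because every algebra $A$ is canonically the colimit of its monadic bar resolution, a simplicial diagram whose vertices $FU\cdots FUA$ lie in the essential image of $j$; the relevant simplicial resolution and colimit comparison are standard in Lurie's monadicity framework (\emph{Higher Algebra}, \S 4.7). Consequently $\iota$, and hence $\Phi$, is fully faithful.

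For essential surjectivity I would construct an inverse on objects of the pullback. A triple $\left(X,F,\alpha\colon y\left(X\right)\simeq \left(F_\Kl\right)^\ast F\right)$ determines a structure map $\theta\colon TX\to X$ by applying $F$ to the Kleisli counit $F_\Kl\left(TX\right)\to X$ (represented in $\sC$ by $\mathrm{id}_{TX}$) to obtain $F\left(X\right)\to F\left(TX\right)$, transporting via $\alpha$ to $\mathrm{Map}_\sC\left(X,X\right)\to \mathrm{Map}_\sC\left(TX,X\right)$, and taking the image of $\mathrm{id}_X$. Associativity, unitality, and all higher coherences of a $T$-algebra are encoded by the $\Kl\left(T\right)$-functoriality of $F$ combined with $\alpha$, since Kleisli composition realizes precisely the bar complex of $T$. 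The principal obstacle is packaging this into a coherent $\i$-functor $\Psi$ and verifying $\Phi,\Psi$ are mutually quasi-inverse. I would address this by unstraightening $F$ to a right fibration $\mathcal{E}\to \Kl\left(T\right)$: its restriction to $\sC\subseteq \Kl\left(T\right)$ is identified by $\alpha$ with the representable slice $\sC_{/X}\to \sC$, and the additional structure along $F_\Kl$-images of Kleisli counits extends $y\left(X\right)$ to a $T$-coaction, which via Yoneda corresponds to a $T$-algebra structure on $X$. Alternatively, one may invoke the universal property of $\Kl\left(T\right)$ as the initial $\i$-category under $\sC$ equipped with an adjunction realizing $T$, which directly identifies the space of presheaves on $\Kl\left(T\right)$ restricting to $y\left(X\right)$ with the space of $T$-algebra structures on $X$.
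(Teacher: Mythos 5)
Your overall architecture is reasonable: the comparison functor into the pullback, the (correct) observation that, because $y$ is fully faithful, the pullback $\sC\times_{\Psh\left(\sC\right)}\Psh\left(\Kl\left(T\right)\right)$ is the full subcategory of $\Psh\left(\Kl\left(T\right)\right)$ on presheaves whose restriction along $F_\Kl$ is representable, and the reduction to showing your restricted Yoneda functor $\iota$ is fully faithful with exactly that essential image. The full-faithfulness step can be made to work, but note that ``every algebra is a colimit of its bar resolution'' does not by itself give density; you also need that $\iota$ preserves that colimit, which holds because the resolution is $U$-split and split geometric realizations are absolute, so $\Map_{\sC}\left(c,U\left(-\right)\right)$ carries them to colimits in $\Spc$. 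This should be said explicitly.

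The genuine gap is essential surjectivity, which is where the content of the lemma lies and which you acknowledge but do not close. Given $\cF$ with $\left(F_\Kl\right)^\ast\cF\simeq y\left(X\right)$, extracting one structure map $TX\to X$ is easy; the problem is producing the full homotopy-coherent $T$-algebra structure, functorially in $\cF$. Your first repair (unstraighten $\cF$ to a right fibration and read off ``a $T$-coaction on $y\left(X\right)$'') merely restates the problem: the fibration over $\Kl\left(T\right)$ packages precisely the coherence data you must convert into an algebra structure, and no construction is given (also note $\sC$ is not a subcategory of $\Kl\left(T\right)$, since $F_\Kl$ is not fully faithful, so ``restriction to $\sC\subseteq\Kl\left(T\right)$'' should be pullback along $F_\Kl$). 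Your second repair, invoking initiality of $\Kl\left(T\right)$ among adjunctions realizing $T$, does not by itself identify presheaves with representable restriction with $T$-algebra structures on $X$; that identification is essentially the lemma, so as written this is circular. The paper closes exactly this point by a different mechanism: apply the Barr--Beck theorem to the adjunction $\left(F_\Kl\right)_!\dashv\left(F_\Kl\right)^\ast$ between presheaf categories---$\left(F_\Kl\right)^\ast$ is conservative because $F_\Kl$ is essentially surjective, and preserves colimits because it admits the further right adjoint $\left(F_\Kl\right)_\ast$---yielding $\Psh\left(\Kl\left(T\right)\right)\simeq\Alg_{T_!}\left(\Psh\left(\sC\right)\right)$, where $T_!$ is the colimit-preserving extension of $T$ with $T_!\circ y\simeq y\circ T$. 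A presheaf with representable restriction is then a $T_!$-algebra structure on $y\left(X\right)$, and the coherence transfer happens along the fully faithful Yoneda embedding, which is a clean, standard statement. To make your proposal a proof, replace your essential-surjectivity sketch with an argument of this kind (or some equally concrete construction of the inverse); as it stands the proposal does not establish the lemma.
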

	
	As an application we prove the following:

\begin{corollary}(Corollary \ref{cor:pbqc})
Let $f:E \to F$ be an epimorphism in $\bH$. Then the following diagram of $\i$-categories is a pullback:
$$\xymatrix@C=1.5cm{\QC\left(F\right) \ar[r]^-{\Lambda^\star\left(F\right)} \ar[d]_-{f^\star} & \Mod_{\O_F} \ar[d]^-{f^\star}\\
\QC\left(E\right) \ar[r]_-{\Lambda^\star\left(E\right)} & \Mod_{\O_E}.}$$
I.e. one can identify quasi-coherent sheaves on $\QC\left(F\right)$ as those $\O_F$-modules $\cF$ whose pullback $f^\star \cF$ to $F$ are quasi-coherent.
\end{corollary}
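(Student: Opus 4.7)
The plan is to show that the natural functor
$$\Phi: \QC(F) \longrightarrow \Mod_{\O_F}\times_{\Mod_{\O_E}}\QC(E)$$
induced by $\Lambda^\star(F)$ and $f^\star$ is an equivalence of $\i$-categories. Full faithfulness is formal from Proposition \ref{prop:lamlam}: because $\Lambda^\star(E)$ is fully faithful, the comparison $\operatorname{Map}_{\QC(E)}(f^\star\cF,f^\star\cG)\to \operatorname{Map}_{\Mod_{\O_E}}(f^\star\cF,f^\star\cG)$ is an equivalence, so the fiber product defining $\operatorname{Map}$ in the pullback collapses to $\operatorname{Map}_{\Mod_{\O_F}}(\cF,\cG)$, which in turn equals $\operatorname{Map}_{\QC(F)}(\cF,\cG)$ by fully faithfulness of $\Lambda^\star(F)$. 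The content of the statement is therefore essential surjectivity: an $\O_F$-module $\cF$ is quasi-coherent whenever $f^\star\cF$ is.

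The forward direction---stability of quasi-coherence under pullback---is immediate from the criterion of Theorem \ref{thm:qccond}, since for any $g:\Speci(\cB)\to E$ the composite $f\circ g:\Speci(\cB)\to F$ converts the base-change equivalence for $\cF$ into one for $f^\star\cF$. For the converse, Proposition \ref{prop:lamlam} realizes $\QC(\cX)\hookrightarrow\Mod_{\O_\cX}$ as the category of coalgebras for the idempotent comonad $T_\cX:=\Lambda^\star(\cX)\circ\Lambda_\ast(\cX)$. Dualizing Lemma \ref{lem:AlgKL} exhibits $\QC(\cX)$ as a canonical pullback built from $\Mod_{\O_\cX}$ and a presheaf $\i$-category on the Kleisli $\i$-category of cofree $T_\cX$-coalgebras. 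Applying this to both $E$ and $F$, and invoking the Beck-Chevalley intertwining $f^\star T_F\simeq T_E f^\star$ to promote $f^\star$ to a morphism of such pullback squares, the corollary then follows from the pullback pasting lemma in $\i$-categories.

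The principal obstacle is establishing the Beck-Chevalley compatibility $f^\star\circ\Lambda_\ast(F)\simeq\Lambda_\ast(E)\circ f^\star$---that is, that pullback along the epimorphism $f$ commutes with the quasi-coherator. An alternative, more elementary route that sidesteps Lemma \ref{lem:AlgKL} is to verify essential surjectivity directly via Theorem \ref{thm:qccond}: given a test diagram $\Speci(\cB)\xrightarrow{\varphi}\Speci(\A)\xrightarrow{h}F$, one uses that $f$ is an epimorphism in $\bH$ to cover $\Speci(\A)$ by affines along which $h$ lifts through $E$, invokes the base-change equivalences supplied by the quasi-coherence of $f^\star\cF$ on these pieces, and descends them back to $\Speci(\A)$ using descent for sheaves of modules.
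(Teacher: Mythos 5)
Your reduction of the corollary to essential surjectivity is correct and matches the paper: by Proposition \ref{prop:lamlam} both horizontal functors are fully faithful and $f^\star\Lambda^\star_F\simeq\Lambda^\star_E f^\star$, so the pullback $\Mod_{\O_F}\times_{\Mod_{\O_E}}\QC\left(E\right)$ is simply the full subcategory of $\Mod_{\O_F}$ on those $\cF$ with $f^\star\cF$ quasi-coherent, stability of quasi-coherence under $f^\star$ is automatic, and the entire content is the converse; that converse is exactly the paper's Lemma \ref{lem:fstarqc}, from which the corollary is stated without further argument.

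Neither of your routes establishes that converse as written. In Route A the equivalence $f^\star\circ\Lambda^F_\star\simeq\Lambda^E_\star\circ f^\star$ is not a formal Beck--Chevalley compatibility you can quote: on the objects at issue it is essentially the statement being proved, since granting it, idempotence of $\Lambda^\star\Lambda_\star$ together with conservativity of $f^\star$ on module sheaves finishes the argument in one line; flagging it as ``the principal obstacle'' and leaving it unproved makes the route circular. Route B is in outline the paper's own argument, but the step ``descend back using descent for sheaves of modules'' conceals the crux. Knowing the base-change equivalences over members $U_\alpha=\Speci\left(\A_\alpha\right)$ of a cover of $\Speci\left(\A\right)$ along which $h\colon\Speci\left(\A\right)\to F$ lifts through $E$ tells you nothing about the map $\cB\widehat{\otimes}_{\A}\cF\left(h\right)\to\cF\left(h\circ\varphi\right)$ until you can identify $\cF\left(h|_{U_\alpha}\right)$ with $\A_\alpha\widehat{\otimes}_{\A}\cF\left(h\right)$, and sheaf descent alone does not provide this: that identification is itself a quasi-coherence assertion (for open immersions). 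What makes it true is the special feature of $\Ci$-geometry recorded in Theorem \ref{thm:5.1.33} (with Proposition \ref{prop:blabla}): over an affine, every small-site $\O_{\A}$-module is quasi-coherent, so the small-site restriction of $h^\star\cF$ is $\MSpec_{\A}$ of its global sections and its values on basic opens are completed localizations. This is precisely how the paper proves Lemma \ref{lem:fstarqc}: reduce to $F=\Speci\left(\A\right)$ affine via Lemma \ref{lem:pbqce}, choose local sections $\lambda_\alpha$ of $f$ over an affine open cover so that $\cF|_{U_\alpha}\simeq\lambda_\alpha^\star f^\star\cF$ is quasi-coherent, invoke Theorem \ref{thm:5.1.33} to get $\cF\left(U_\alpha\right)\simeq\A_\alpha\widehat{\otimes}_{\A}\Gamma_F\left(\cF\right)$, and deduce that the counit $\MSpec_{\bH/F}\left(\Gamma_F\cF\right)\to\cF$ is an equivalence using that pullback along the covering is conservative (Corollary \ref{cor:modcom}). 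If you supply the appeal to Theorem \ref{thm:5.1.33} (or Proposition \ref{prop:qcafcl}) and either Corollary \ref{cor:modcom} or small-site descent at that point, your Route B becomes the paper's proof.
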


The rest of Section \ref{sec:monad} contains a series of technical $\i$-categorical results necessary to prove the main result about $\mathcal{D}$-modules.

Section \ref{sec:DMod} is dedicated to the theory of $\mathcal{D}$-modules. The $\i$-jet comonad $\J^\i$ canonically lifts to a comonad on $\Mod_{\O_{\cX}}$ allowing one to study linear differential operators systematically, and to define the ring of differential operators abstractly in such a way as to recover the usual ring in the case of a manifold or supermanifold:

\begin{proposition}(Propositions \ref{prop:locfinjet} and \ref{prop:Dsuper})
Let $\cM$ be a supermanifold. Then $\cD_{\cM}$ is the sheaf which assigns each open subset $U$ of the core $|\cM|_0$ the ring of linear differential operators of $\Ci_{\cM}\left(U\right)$ which are \underline{locally} of finite order. In particular, for $M$ an even manifold, such differential operators are the same as maps of Frech\'et vector bundles $\J^\i M \to \underline{\R}.$
\end{proposition}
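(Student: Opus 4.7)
The plan is to unwind the abstract definition of $\cD_{\cM}$ by combining three ingredients: the comonadic description of $\J^\i$ lifted to $\Mod_{\O_{\cM}}$, the explicit identification of $\J^\i E$ with the classical Fréchet jet bundle from Corollary \ref{cor:jetfrech} (together with its straightforward supergeometric adaptation), and a Peetre-type local finiteness argument.

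First I would unpack how $\cD_{\cM}$ is produced. By construction, $\cD_{\cM}$ arises from the (co)Kleisli morphisms of the lifted $\J^\i$ on $\Mod_{\O_{\cM}}$; concretely, the comonadic dual of Lemma \ref{lem:AlgKL} identifies $\cD_{\cM}(U)$ with the sheaf of $\O_{\cM}|_U$-module maps $\J^\i \O_{\cM}|_U \to \O_{\cM}|_U$. This is the natural home of the ``abstract'' linear differential operators in our framework, and gives a concrete target to compare with the classical one.

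Next I would reduce to a local calculation. On a coordinate chart of $\cM$, a Batchelor-type splitting identifies $\Ci_{\cM}$ with $\Ci(M) \otimes \Lambda^\bullet V^\vee$ for some finite-dimensional odd $V$. Compatibility of $\J^\i$ with the $\SCi$-structure then expresses $\J^\i \O_{\cM}$, locally, as the topological tensor product of the classical even jet module with a finite-dimensional odd factor. Invoking Corollary \ref{cor:jetfrech} for the even part presents $\J^\i \O_{\cM}$ locally as a cofiltered limit of the finite-order jet modules $\J^k \O_{\cM}$. By construction, morphisms $\J^k \O_{\cM} \to \O_{\cM}$ are exactly the classical linear differential operators of order $\le k$, so any map out of $\J^\i \O_{\cM}$ which factors locally through some $\J^k \O_{\cM}$ yields a locally finite-order differential operator, and conversely every such operator arises this way.

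For the Fréchet statement in the even case, I would note that Corollary \ref{cor:jetfrech} identifies $\J^\i M$ with the classical Fréchet infinite-jet manifold realised as a $\Ci$-schematic limit of the bundles $\J^k M$. A Fréchet vector bundle morphism $\J^\i M \to \underline{\R}$ is fibrewise a continuous linear functional on a Fréchet inverse limit, hence factors through some $\J^k M$ locally; this recovers precisely the locally finite-order linear differential operators, and the reverse direction is immediate.

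The main obstacle, I expect, is the bridge between the synthetic and the analytic side: namely, showing that an arbitrary $\O_{\cM}$-module map $\J^\i \O_{\cM} \to \O_{\cM}$ must locally factor through some $\J^k \O_{\cM}$. This is a Peetre-type statement transported into our setup, and requires carefully combining the inverse-limit presentation of $\J^\i$ from Corollary \ref{cor:jetfrech} with continuity/compatibility properties inherited from the $\SCi$-module structure. Once this local factorisation is established, the remainder---the supergeometric reduction and the Fréchet comparison---amounts to bookkeeping against results already proven earlier in the paper.
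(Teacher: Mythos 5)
Your overall strategy matches the paper's: identify $\cD_{\cM}$ with $\O_{\cM}$-linear maps $\J^\i\O_{\cM}\to\O_{\cM}$ (no Kleisli machinery is actually needed here, since $\Diff(\cF,\cG)$ is \emph{defined} as $\Mod_{\O_\cX}\left(\J^\i\cF,\cG\right)$), reduce to a local chart where the odd directions split off as a finite-dimensional factor, and run the even case through the Fr\'echet identification of Corollary \ref{cor:jetfrech}. This is exactly the route taken in Propositions \ref{prop:locfinjet} and \ref{prop:Dsuper}, where the super case is handled on $\R^{p|q}$ using $(\R^{p|q})_{(k)}\simeq(\R^p)_{(k)}\times\R^{0|q}$ and the even-case corollary $\cD_M\simeq\underset{k}\colim\left(\J^k\O_M\right)^\vee$.

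The genuine gap is the step you yourself flag as ``the main obstacle'': the local factorization of a map out of $\J^\i\O_{\cM}$ through some finite jet stage. The argument you do sketch for the even case --- that a Fr\'echet bundle map $\J^\i M\to\underline{\R}$ is ``fibrewise a continuous linear functional on a Fr\'echet inverse limit, hence factors through some $\J^k M$ locally'' --- does not work as stated: fibrewise factorization only gives a pointwise order, which may be unbounded as the base point varies, so it does not by itself yield the \emph{locally} finite order conclusion; you need uniformity coming from smoothness in the base direction. No separate Peetre-type theorem is required in this framework, because the needed fact is already encoded in the paper's identification of the infinite jet bundle as a $\Ci$-scheme: by Lemma \ref{lem:frecol} and Corollaries \ref{cor:colimfre}, \ref{cor:jetfrech}, \ref{cor:frechmfd}, \ref{cor:frechff}, smooth functions on $\J^\i M$ (locally modeled on $\R^\i$) are precisely those that \emph{locally} factor through a finite jet projection, so a fiberwise-linear smooth bundle map $\J^\i M\to\underline{\R}$ is automatically, on a neighborhood of each point, an operator of some finite order $k$, i.e.\ a locally finite sum $\sum_k f_k D_k$. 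Replacing your fibrewise argument by this appeal to the $\Ci$-scheme structure of $\J^\i$ closes the gap and makes the rest of your plan (the odd splitting and the bookkeeping with duals of the finite-rank modules $\J^k\O_{\cM}$) go through as in the paper.
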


Finally, we prove the main result:

\begin{theorem}(Theorem \ref{thm:DMod})
Let $\cM$ be a supermanifold. Then there is a canonical equivalence of $\i$-categories
$$\QC\left(\cM_{dR}\right) \simeq \DMod\left(\cM\right).$$
\end{theorem}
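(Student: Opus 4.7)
The plan is to realize $\QC(\cM_{dR})$ as comodules for the linearized jet comonad on $\QC(\cM)$, and then identify such comodules with $\cD_\cM$-modules.

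First, I would show that the canonical map $\eta:\cM \to \cM_{dR}$ is an effective epimorphism in $\bH$. This should follow from the construction of the de Rham stack as (the stackification of) the quotient of $\cM$ by the equivalence relation identifying infinitesimally near points, so that formal neighborhoods of $\cM$ cover $\cM_{dR}$. Given this, Corollary \ref{cor:pbqc} applied to $\eta$ identifies $\QC(\cM_{dR})$ with the full subcategory of $\Mod_{\O_{\cM_{dR}}}$ consisting of those $\cF$ whose pullback $\eta^\star\cF$ is quasi-coherent. Combining with the adjunction $\eta^\star\dashv \eta_\ast$ and the fact that $\eta^\star\eta_\ast$, restricted to $\QC(\cM)$, coincides with the linearized infinite-jet comonad $\J^\infty$ of Section~\ref{sec:jet}, the pullback description rewrites (via the $\infty$-categorical Barr-Beck theorem) as a comonadic adjunction, yielding
$$\QC(\cM_{dR}) \simeq \coAlg_{\J^\infty}\bigl(\QC(\cM)\bigr).$$
Conservativity of $\eta^\star$ is Step~1; preservation of $\eta^\star$-split totalisations is formal from the fact that $\QC$ is a reflective (in fact coreflective, by Proposition \ref{prop:lamlam}) subcategory of $\Mod_\O$.

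Next, I would identify $\J^\infty$-comodules in $\QC(\cM)$ with $\DMod(\cM)$. A coalgebra structure is a coherent map $E \to \J^\infty E$ compatible with counit and comultiplication. By Propositions \ref{prop:locfinjet} and \ref{prop:Dsuper}, $\cD_\cM$ is precisely the sheaf of $\Ci_\cM$-linear endomorphisms factoring through $\J^\infty$ that are locally of finite order; dually a coalgebra structure on $E$ corresponds to a coherent action of all such operators on $E$, giving exactly a $\cD_\cM$-module structure. In the derived setting this identification is bootstrapped by applying Lemma \ref{lem:AlgKL} (in its comonadic form) to realize both sides as presheaves on the corresponding (co)Kleisli $\infty$-category and then matching the free objects.

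The main obstacle will be two-fold. First, comonadicity of $\eta^\star$ on quasi-coherent sheaves (rather than on all module sheaves) is delicate because, as stressed in the abstract, $\QC(\cM)$ is typically not compactly generated and the usual algebraic strategies via coherent generators are unavailable; I expect to circumvent this using the explicit description of $\QC$ as complete modules from Section \ref{sec:qc} together with the exactness properties of $\eta^\star$ provided by the epimorphism Step~1. Second, a priori $\J^\infty$-coalgebras encode \emph{all-order} infinitesimal data, whereas $\cD_\cM$-modules demand locally finite order actions; the reconciliation should come from the completion built into complete modules, which ensures that the coalgebra structure map is determined by and reconstructible from its finite-order truncations, so that coalgebras automatically factor through the locally finite-order subalgebra $\cD_\cM$.
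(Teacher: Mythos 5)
Your overall strategy---comonadic descent along $\eta:\cM\to\cM_{dR}$ followed by an identification of the resulting comonad with the one induced by $\cD_\cM$---is the same as the paper's, and your Step 1 (formal smoothness of $\cM$, so that $\eta$ is an epimorphism and Corollary \ref{cor:pbqc} applies) is exactly right. But there is a genuine gap at the point where you assert that comonadicity over $\QC(\cM)$ is ``formal from the fact that $\QC$ is a coreflective subcategory of $\Mod_\O$.'' The difficulty is that $\eta_\star$ does \emph{not} preserve quasi-coherence, so the comonad $\J^\i=\eta^\star\eta_\star$ on $\Mod_{\O_\cM}$ does not restrict to $\QC(\cM)$; the candidate comonad on $\QC(\cM)$ is the composite $\Lambda_\star\J^\i\Lambda^\star$ with the coreflector inserted. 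A priori this composite is only a lax-monoidal image of $\J^\i$ (Proposition \ref{prop:lxmon}), so it need not carry a coherent comonad structure, and even if it did, its coalgebras need not compute the fiber product $\QC(\cM)\times_{\Mod_{\O_\cM}}\CoAlg_{\J^\i}(\Mod_{\O_\cM})$. The paper has to prove a dedicated transfer theorem (Theorem \ref{thm:odpsec}) whose hypothesis is that the comparison maps $(\Lambda_\star\J^\i\Lambda^\star)^n\to\Lambda_\star(\J^\i)^n\Lambda^\star$ are equivalences, and verifying this (Lemma \ref{lem:finn}) is a genuine computation: one identifies $\Lambda_\star(\J^\i)^n\Lambda^\star\cF$ with $\lim_{k_1,\dots,k_n}\bigl[\J^{k_1}\O_\cM\otimes\cdots\otimes\J^{k_n}\O_\cM\otimes\cF\bigr]$ and uses that each $\J^k\O_\cM$ is locally free of finite rank to dualize term by term. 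Your proposal skips this entirely, and it is precisely the step that fails to be formal.

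Relatedly, your reconciliation of ``all-order infinitesimal data'' with ``locally finite order operators'' via completeness of modules points at the right phenomenon but misattributes the mechanism. What actually makes it work is the same finite-rank local freeness: it lets one rewrite $\Lambda_\star\J^\i\Lambda^\star\cF\simeq\lim_k\Mod_{\QC(\cM)}\bigl((\J^k\O_\cM)^\vee,\cF\bigr)$ and pull the colimit $\cD_\cM\simeq\colim_k(\J^k\O_\cM)^\vee$ (Proposition \ref{prop:Dsuper}) out of the mapping object, yielding the adjunction $\De_\cM\dashv\Lambda_\star\J^\i\Lambda^\star$ (Proposition \ref{prop:lamadj}) and hence $\Lambda_\star\J^\i\Lambda^\star\simeq b^*b_*$. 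The paper then concludes by the comonadicity of $b^*\dashv b_*$, which is cleaner and more robust than your proposed matching of free objects across Kleisli categories via Lemma \ref{lem:AlgKL}; the latter would in any case still require the same duality computation to identify the cofree objects on the two sides.
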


\subsection*{Acknowledgments}
Firstly, we would like to thank Owen Gwilliam, with whom we have been working for many years now. This theory grew out of a desire to formalize many ideas that came up in our discussions about field theory. A large part of this manuscript was produced, in some form, while supported by a joint grant with Gwilliam from the National Science Foundation, Award No. 1811864, and we wish to thank the NSF for their support. We would also like to thank Tobias Barthel, Joseph Bernstein, Christian Blohmann, Mikhail Kapranov, Jacob Lurie, Joost Nuiten, Emily Riehl, Dmitry Roytenberg, Nick Rozenblyum, Pelle Steffens, and Peter Teichner for useful conversations.

\newpage

\section{Algebraic theories}\label{sec:algthy}

\begin{definition}
An \textbf{algebraic theory} is an $\infty$-category $\bT$ with finite products. A morphism of algebraic theories $$\bT \to \bT'$$ is a finite product preserving functor. Denote the associated $\i$-category by $\mathbf{AlgThy}.$
\end{definition}

\begin{example}
Let $\bbS$ be a set. Regarding this set as a discrete category, let $\mathbf{T}_\mathbb{S}$ be the
free completion of $\bbS$ with respect to finite products. Concretely, the objects are finite families
$$\left(s_i \in \mathbb{S}\right)_{i\in I}$$
and morphisms
$$\left(s_i \in \mathbb{S}\right)_{i\in I} \to \left(t_j \in \mathbb{S}\right)_{j\in J}$$
are functions of finite sets $f:J\to I$ such that
$$s_{f\left(j\right)}= t_j$$
for all $j\in J$.
Then $\mathbf{T}_\mathbb{S}$ is the \textbf{algebraic theory of $\bbS$-sorts}.

If $\bbS$ is a singleton set, then $\mathbf{T}_\mathbb{S}\cong \mathbf{FinSet}^{op}$ is the opposite category of finite sets. We denote this algebraic theory by $\bT_{obj}.$
\end{example}

\begin{definition}\label{definition:algebra}
Let $\sC$ be an $\i$-category with finite products. The $\i$-category $\Alg_{\bT}\left(\sC\right)$ of \textbf{$\bT$-algebras in $\sC$} is the full subcategory of $\Fun\left(\bT,\sC\right)$ on those functors which preserve finite products.

Of particular importance is the cases when $\sC=\Set$ and when $\sC=\Spc$. We will call an object of $\Alg_{\bT}\left(\Spc\right)$ a $\bT$-algebra, and an object of $\Alg_{\bT}\left(\Set\right)$ a $0$-truncated $\bT$-algebra.
\end{definition}

\begin{proposition}
Let $\sC$ be a presentable $\i$-category. Then $\Alg_{\bT}\left(\sC\right)$ is also presentable.
\end{proposition}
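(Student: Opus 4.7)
The plan is to realize $\Alg_{\bT}(\sC)$ as an accessible reflective subcategory of the functor $\i$-category $\Fun(\bT,\sC)$. Throughout I assume $\bT$ is essentially small, which holds in all examples of interest in the paper (and in particular for $\bT_\bbS$).

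First, since $\sC$ is presentable and $\bT$ is small, $\Fun(\bT,\sC)$ is itself presentable by HTT 5.5.3.6. The task is then to exhibit $\Alg_{\bT}(\sC) \subseteq \Fun(\bT,\sC)$ as the full subcategory of $S$-local objects for some small set of morphisms $S$.

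For each finite family $(t_i)_{i\in I}$ of objects of $\bT$ with product $p = \prod_i t_i$, the evaluation functors $\mathrm{ev}_t \colon \Fun(\bT,\sC) \to \sC$ are (jointly) corepresented by the left Kan extensions $i_!^{t}(c)$ of $c\in \sC$ along the inclusion $\{t\} \hookrightarrow \bT$; explicitly, $i_!^{t}(c)(s) \simeq \mathrm{Map}_\bT(t,s) \otimes c$, where $\otimes$ denotes the canonical copowering of $\sC$ over $\Spc$ (available since $\sC$ is presentable). The projections $\pi_i : p \to t_i$ then induce comparison maps $\bigsqcup_i i_!^{t_i}(c) \to i_!^{p}(c)$ in $\Fun(\bT,\sC)$, and a functor $F \in \Fun(\bT,\sC)$ preserves the product $p \simeq \prod_i t_i$ if and only if it is local with respect to each such comparison map. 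Letting $c$ range over a small generating set of $\sC$ and the family $(t_i)_{i \in I}$ range over representatives of the essentially small set of finite product cones in $\bT$, we obtain a small set $S$ of morphisms in $\Fun(\bT,\sC)$ such that $\Alg_{\bT}(\sC)$ is precisely the full subcategory of $S$-local objects.

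By HTT 5.5.4.15, the subcategory of $S$-local objects in a presentable $\i$-category is itself a presentable (reflective) subcategory, giving the result. The main obstacle is the size bookkeeping: one must verify that the set of finite product cones in $\bT$ is small up to equivalence (which follows from essential smallness of $\bT$) and that a small generating set of $\sC$ genuinely suffices to detect products via corepresentation. An alternative, equally clean route is to write $\Alg_{\bT}(\sC)$ as a small limit in $\mathrm{Pr}^{R}$ of the subcategories of $\Fun(\bT,\sC)$ preserving each individual cone, and invoke closure of presentable $\i$-categories under small limits; this avoids constructing $S$ explicitly, but relies on the same smallness input.
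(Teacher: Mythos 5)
Your proof is correct and follows essentially the same route as the paper, which simply cites \cite[Lemmas 5.5.4.17, 5.5.4.18, 5.5.4.19]{htt}: both exhibit $\Alg_{\bT}\left(\sC\right)$ as the full subcategory of $S$-local objects of the presentable $\i$-category $\Fun\left(\bT,\sC\right)$ for a small set $S$ of morphisms detecting product-preservation, and then invoke presentability of accessible localizations. Your use of copowers to corepresent the evaluation functors just makes explicit, for a general presentable target $\sC$, what the cited lemmas package in the space-valued case.
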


\begin{proof}
This follows immediately from  \cite[Lemmas 5.5.4.17, 5.5.4.18, 5.5.4.19]{htt}.
\end{proof}

\begin{example}
For any $\i$-category $\sC$ with finite products, there is a canonical equivalence $\Alg_{\bT_{obj}.}\left(\sC\right)\simeq \sC$ given by evaluation at $*$. More generally, for any set $\bbS,$ there is a canonical equivalence $\Alg_{T_{\bbS}}\left(\sC\right)\simeq \sC^{\bbS}.$ For example, if $\bbS=\left\{1,2,\ldots,n\right\},$ a $\bT_{\bbS}$-algebra in $\sC$ is the same data as the choice of $n$ objects of $\sC.$
\end{example}

\begin{definition}
Given an algebraic theory $\bT,$ an object $r \in \bT_0$ is called a \textbf{generator} if every object in $\bT$ is equivalent to $r^n,$ for some $n \ge 0.$ More generally, a subset $\bbS \subseteq \bT_0$ is said to be a set of generators for $\bT$ if every object in $\bT$ is equivalent to a finite product of objects in $\bbS.$
\end{definition}

\begin{definition}
An \textbf{$\bbS$-sorted Lawvere theory} is a skeletal algebraic theory $\bT$ together with a chosen a set of generators $\bbS \subseteq \bT_0.$ Equivalently, this is the same as a faithful map of algebraic theories $$\sigma_\bT:\bT_{\bbS}\to \bT.$$ A morphism $\varphi:\bT \to \bT'$ of $\bbS$-sorted Lawvere theories is a morphism in $\bbS/\mathbf{AlgThy},$ i.e. a finite product preserving functor making the following diagram commute
$$\xymatrix{& \bT_{\bbS} \ar[ld]_-{\sigma_\bT} \ar[rd]^-{\sigma_{\bT'}} & \\ \bT \ar[rr]^-{\varphi} && \bT'.}$$
\end{definition}

\begin{remark}
Since $\bT$ is further assumed skeletal, we may assume, without loss of generality, that the set of objects of $\bT$ is $\mathbb{N}^\bbS.$ Often, the set $\bbS$ is assumed to be finite. When $\bbS$ has $n$ elements, the Lawvere theory is said to be $n$-sorted. In this paper, we will only need to discuss $1$-sorted and $2$-sorted Lawvere theories.
\end{remark}

Most types of algebraic objects have an associated algebraic theory. For example:

\begin{example}
Let $k$ be a commutative ring and let $\Comk$ be the the opposite category of finitely generated free $k$-algebras. Making this a skeletal category, we may represent it as the category of affine $k$-schemes of the form $\bbA_k^n,$ for $n \ge 0.$ Then, $\Comk$ is a $1$-sorted Lawvere theory with $\bbA_k^1$ as a generator.

A $\Comk$-algebra in $\Set$ is an ordinary commutative $k$-algebra. More precisely, given a $\Comk$-algebra $$\mathcal{A}:\Comk \to \Set,$$ the set $\mathcal{A}\left(\bbA_k^1\right)$ has an induced structure of a commutative $k$-algebra. This is because $\bbA_k^1$ is a commutative $k$-algebra object in schemes (and hence in $\Comk$) and any finite product preserving functor preserves commutative $k$-algebra objects. For example, the maps
\begin{eqnarray*}
\bbA_k^1 \times \bbA_k^1 = \bbA_k^2 &\to & \bbA_k^1\\
\left(x,y\right) &\mapsto & x+y
\end{eqnarray*}
and 
\begin{eqnarray*}
\bbA_k^1 \times \bbA_k^1 = \bbA_k^2 &\to & \bbA_k^1\\
\left(x,y\right) &\mapsto & x\cdot y
\end{eqnarray*}
define addition and multiplication for the ring object $\bbA_k^1.$ Conversely, given a commutative $k$-algebra $B,$ the functor $$\Hom\left(\blank,B\right)=\Hom\left(\Spec\left(B\right),\blank\right)$$ preserves finite products, hence is a $\Comk$-algebra in $\Set$. These constructions are categorically inverse to one another.
\end{example}

\begin{example}
Similarly, let $k$ be a commutative ring and let $\SComk$ be the the opposite category of supercommutative $k$-algebras free on finitely many even and odd generators. This can be identified with the category of superschemes over $\Spec\left(k\right)$ of the form $\bbA_k^{n|m},$ for $n,m \ge 0.$ The category $\SComk$ is a $2$-sorted Lawvere theory with two generators; $\bbA_k^1=\bbA_k^{1|0}$ is the \emph{even} generator, and 
$$\bbA_k^1=\bbA_k^{0|1}=\Spec\left(k\left[\theta\right]\right),$$ (with $\theta$ an element of degree $1$) is the \emph{odd} generator.

Similarly to above, the category of $\SComk$-algebras in $\Set$ is canonically isomorphic to the category of supercommutative $k$-algebras. In slightly more detail,
a $\SComk$-algebra $$\mathcal{A}:\SComk \to \Set,$$ determines a supercommutative algebra whose set of even elements is $\mathcal{A}\left(\bbA_k^{1|0}\right)$ and whose set of odd elements is $\mathcal{A}\left(\bbA_k^{0|1}\right).$ From this point of a view, a supercommutative algebra does not have an underlying set, but instead has an underlying $\Z_2$-graded set.
\end{example}

This works well for other types of algebraic gadgets, such as groups, monoids, abelian groups, modules etc.

\begin{example}
Denote by $\bT_\Ab$ the opposite of the full subcategory of abelian groups on those of the form $\Z^n$ for some $n.$ Then $\bT_{\Ab}$ is an algebraic theory with $\Z$ as a generator. A classical $\bT_\Ab$ algebra is an abelian group.

Moreover, if $\sC$ is any category, or $\i$-category, with finite products, then $\Alg_{\bT_{\Ab}}\left(\sC\right)$ is the category of abelian group objects in $\sC,$ and hence we write
$$\Ab\left(\sC\right):=\Alg_{\bT_{\Ab}}\left(\sC\right)$$

More generally, the above holds with the category of abelian groups ($\Z$-modules) replaced with modules for any commutative ring $k$, resulting in an algebraic theory $\bT_{\mathbf{Mod}_k}.$
\end{example}

The following two examples are of fundamental importance for derived $\Ci$-geometry:

\begin{example}\label{ex:Ci}
Let $\Cart$ denote the full subcategory of the category of smooth manifolds $\Mfd$ on those manifolds of the form $\R^n$ for some $n.$ Then $\Cart$ is an algebraic theory with $\R$ as a generator. $\Cart$-algebras in $\Set$ are also known as \emph{$\Ci$-rings.}

Note that if $k=\R,$ $\ComR$ may also be seen as the category of manifolds of the form $\R^n$ whose morphisms are given by polynomials. Hence, there is an evident functor $\ComR \to \Cart$ which is a morphism of $1$-sorted Lawvere theories.
\end{example}

\begin{example}\label{ex:SCi}
Similarly, let $\SCart$ denote the full subcategory of the category of supermanifolds $\SMfd$ on those of the form $\R^{p|q}$ for $p,q \ge 0.$ Then $\SCart$ is a $2$-sorted Lawvere theory with the real line $\R^1=\R^{1|0}$ as the even generator, and the odd line $\R^{0|1}$ as the odd generator. $\SCart$-algebras in $\Set$ are introduced in \cite{dg1} and are also known as \emph{$\Ci$-superalgebras}.

Similarly to above, there is an evident morphism of $2$-sorted Lawvere theories 
$$\SComR \to \SCart.$$
\end{example}

We will review some of the important aspects of (super) $\Ci$-algebras in Section \ref{sec:ci}.

\subsection{Categorical properties of algebraic theories}

Let $\bT$ be an algebraic theory. Then as any co-representable functor preserves finite products, the Yoneda embedding restricts to a fully faithful functor $j:\bT \hookrightarrow \Alg_{\bT}\left(\Spc\right)^{op}.$ 
\begin{theorem} \cite[Proposition 5.5.8.10]{htt} \label{theorem:5.5.8.10}
\begin{itemize}
\item [a)] $\Alg_{\bT}\left(\Spc\right)$ is a localization of $\Fun\left(\bT,\Spc\right).$
 \item[b)] The $\i$-category $\Alg_{\bT}\left(\Spc\right)$ is compactly generated. In particular, $$\Ind\left(\Alg_{\bT}\left(\Spc\right)^{\mathbf{fp}}\right)\simeq \Alg_{\bT}\left(\Spc\right),$$ where $\Alg_{\bT}\left(\Spc\right)^{\mathbf{fp}}$ denotes the compact objects.
 \item[c)] The inclusion $j:\bT \hookrightarrow \Alg_{\bT}\left(\Spc\right)^{op}$ preserves finite products.
\end{itemize}
\end{theorem}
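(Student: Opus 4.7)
The plan is to realize $\Alg_{\bT}(\Spc)$ concretely as the full subcategory of $\Fun(\bT,\Spc)$ consisting of finite-product-preserving functors, and to exploit this description uniformly for all three parts. For (a), the key observation is that for each finite family $(t_i)_{i \in I}$ of objects of $\bT$ with product $\prod_i t_i$, the projections induce via the contravariant Yoneda embedding a canonical morphism
$$\coprod_i \Hom_\bT(t_i, -) \longrightarrow \Hom_\bT\!\left(\prod_i t_i, -\right)$$
in $\Fun(\bT,\Spc)$, where the coproduct is taken in presheaves. Applying $\mathrm{Map}(-,F)$ and using the Yoneda lemma, a functor $F$ is local with respect to this morphism precisely when the canonical comparison $F(\prod_i t_i) \to \prod_i F(t_i)$ is an equivalence, i.e.\ precisely when $F \in \Alg_\bT(\Spc)$. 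Since $\bT$ is small, this is a small set of morphisms, so by the accessible-localization machinery of \cite{htt} the subcategory of local objects forms an accessible reflective localization of $\Fun(\bT,\Spc)$, giving (a).

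For (b), presentability follows immediately from (a). To establish compact generation, the argument proceeds as follows. Filtered colimits in $\Spc$ commute with finite products, so filtered colimits in $\Alg_\bT(\Spc)$ are computed pointwise and the inclusion into $\Fun(\bT,\Spc)$ preserves them. Each representable $j(t) = \Hom_\bT(t,-)$ lies in $\Alg_\bT(\Spc)$ since representables preserve all limits, and $\mathrm{Map}_{\Alg_\bT(\Spc)}(j(t), -)$ is naturally equivalent to evaluation at $t$, which preserves filtered colimits; hence each $j(t)$ is compact. The family $\{\mathrm{ev}_t\}_{t \in \bT}$ is manifestly jointly conservative on $\Alg_\bT(\Spc)$, so $\{j(t)\}_{t \in \bT}$ is a set of compact generators. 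Therefore $\Alg_\bT(\Spc)$ is compactly generated, and the identification $\Alg_\bT(\Spc) \simeq \Ind(\Alg_\bT(\Spc)^{\mathbf{fp}})$ is the standard consequence that any compactly generated presentable $\i$-category is the $\Ind$-completion of its full subcategory of compact objects.

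For (c), this is a direct calculation from Yoneda: for any $F \in \Alg_\bT(\Spc)$ and objects $t_1, t_2 \in \bT$,
$$\mathrm{Map}_{\Alg_\bT(\Spc)}(j(t_1 \times t_2), F) \simeq F(t_1 \times t_2) \simeq F(t_1) \times F(t_2) \simeq \mathrm{Map}(j(t_1), F) \times \mathrm{Map}(j(t_2), F),$$
where the middle equivalence uses precisely that $F$ preserves finite products. Hence $j(t_1 \times t_2)$ is the coproduct of $j(t_1)$ and $j(t_2)$ in $\Alg_\bT(\Spc)$, i.e.\ their product in $\Alg_\bT(\Spc)^{op}$; the empty-product case (that $j$ sends the terminal object of $\bT$ to the terminal object of $\Alg_\bT(\Spc)^{op}$, equivalently the initial object of $\Alg_\bT(\Spc)$) is handled identically. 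The step I expect to require the most care is the verification in (a) that one has a genuinely small set of local-equivalence generators (rather than only a proper class), together with the companion bookkeeping in (b) that the reflection fixes representables up to equivalence and that filtered colimits in $\Alg_\bT(\Spc)$ agree with their pointwise formula. Once these set-theoretic and colimit-compatibility points are secured, the remaining assertions reduce to standard Yoneda manipulations.
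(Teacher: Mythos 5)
The paper does not actually prove this statement: it is quoted verbatim from Lurie, with the proof deferred to \cite[Proposition 5.5.8.10]{htt}. Your proposal essentially reconstructs Lurie's argument (in the copresheaf rather than presheaf variance): part (a) by localizing $\Fun\left(\bT,\Spc\right)$ at the small set of comparison maps $\coprod_i \Hom_{\bT}\left(t_i,-\right) \to \Hom_{\bT}\left(\prod_i t_i,-\right)$, whose local objects are exactly the product-preserving functors; part (c) by the Yoneda computation showing $j\left(t_1\times t_2\right)$ corepresents $F\mapsto F\left(t_1\right)\times F\left(t_2\right)$ on algebras; and part (b) via the compatibility of filtered colimits with finite products, so that the corepresentables are compact. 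All of this is correct and is the same route as the cited source.

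The only step you state without justification is the final inference in (b): that compactness of the $j\left(t\right)$ together with joint conservativity of the evaluations $\mathbf{ev}_t$ yields compact generation. For a general presentable $\i$-category this implication is true but requires an argument (one shows the counit of $\Ind\left(\sC_0\right)\rightleftarrows \Alg_{\bT}\left(\Spc\right)$ is an equivalence by testing against the $j\left(t\right)$, using that the relevant finite colimits of compacts remain compact), and it is not a named result you can simply point to. The cleaner and standard fix, which is also how Lurie concludes, is to observe that the localization functor $L:\Fun\left(\bT,\Spc\right)\to\Alg_{\bT}\left(\Spc\right)$ preserves colimits and compact objects and fixes corepresentables, so applying $L$ to the canonical presentation of a copresheaf as a colimit of corepresentables exhibits every $\bT$-algebra as a colimit (indeed a sifted colimit, cf.\ Corollary \ref{cor:1.16}) of the compact objects $j\left(t\right)$; compact generation then follows immediately. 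With that substitution (or with the counit argument spelled out), your proof is complete.
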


The functor $j$ also has the following universal property:

\begin{theorem} \cite[Proposition 5.5.8.15]{htt} \label{theorem:5.5.8.15}
Let $\sC$ be an $\i$-category which admits sifted limits. Then, composition with $j$ induces an equivalence of $\i$-categories
$$\Fun^{sift}\left(\Alg_{\bT}\left(\Spc\right)^{op},\sC\right) \stackrel{\sim}{\longlongrightarrow} \Fun\left(\bT,\sC\right),$$
between the $\i$-category of functors $\Alg_{\bT}\left(\Spc\right)^{op} \to \sC$ which preserves sifted limits, to the $\i$-category $\Fun\left(\bT,\sC\right),$ and if $\sC$ is complete, it also induces an equivalence of $\i$-categories
$$\Fun^{R}\left(\Alg_{\bT}\left(\Spc\right)^{op},\sC\right) \stackrel{\sim}{\longlongrightarrow} \Fun^{\Pi\!\!}\left(\bT,\sC\right)=\Alg_{\bT}\left(\sC\right)$$
between the $\i$-category of functors $\Alg_{\bT}\left(\Spc\right)^{op} \to \sC$ which preserves small limits, and the $\i$-category of $\bT$-algebras in $\sC.$ The inverse of both equivalences is given by right Kan extension.
\end{theorem}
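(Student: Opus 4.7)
The goal is to realize $\Alg_\bT(\Spc)$ as the free sifted-colimit completion $\mathcal{P}_\Sigma(\bT^{op})$ of $\bT^{op}$; both equivalences will then follow formally from the universal property of $\mathcal{P}_\Sigma$. First I would check the two easy closure properties: (i) since sifted colimits in $\Spc$ commute with finite products, the inclusion $\Alg_\bT(\Spc) \hookrightarrow \Fun(\bT,\Spc)$ is closed under sifted colimits, which are therefore computed pointwise; and (ii) the image of $j$ consists of the \emph{free} $\bT$-algebras $j(t) = \Map_\bT(t,-)$, which are corepresentable in $\Alg_\bT(\Spc)$ via Yoneda, and are consequently compact projective in the sense that the functors they corepresent preserve sifted colimits.

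The substantive step is showing that every $\bT$-algebra is canonically a sifted colimit of free algebras. My plan is to invoke $\i$-categorical monadicity (Barr--Beck--Lurie, HA 4.7.3): the ``forgetful'' functor
\[
U : \Alg_\bT(\Spc) \longrightarrow \Fun(\bT_0^{\simeq},\Spc) \simeq \prod_{t \in \bT_0}\Spc,
\]
given by evaluation at each object of $\bT$, is conservative and preserves sifted colimits by (i), and it admits a left adjoint by the presentability proposition preceding the theorem. Hence $U$ is monadic, and the associated bar construction presents each algebra $A$ as a geometric realization---hence a sifted colimit---of free algebras. Combined with (i) and (ii), this characterizes $\Alg_\bT(\Spc)$ as $\mathcal{P}_\Sigma(\bT^{op})$.

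The first equivalence then follows immediately from the universal property of $\mathcal{P}_\Sigma$: right Kan extension along $j$ sends $F : \bT \to \sC$ to a sifted-limit-preserving functor $\tilde F : \Alg_\bT(\Spc)^{op} \to \sC$, and restriction along $j$ is inverse to this operation up to canonical equivalence. For the second equivalence, assuming $\sC$ is complete, I would use that $\Alg_\bT(\Spc)$ is generated by $\bT^{op}$ under sifted colimits \emph{together with} finite coproducts (the latter inherited from products in $\bT$, which $j$ preserves by Theorem~\ref{theorem:5.5.8.10}(c)). Therefore a sifted-limit-preserving $\tilde F$ preserves all small limits if and only if $\tilde F \circ j$ preserves finite products, which is precisely the condition cutting out $\Alg_\bT(\sC) \subset \Fun(\bT,\sC)$.

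The principal obstacle is the monadicity argument in the second paragraph---verifying the Barr--Beck--Lurie hypotheses in the $\i$-categorical setting and identifying the resulting bar construction with a genuine sifted-colimit diagram of free algebras. Once this is in hand, the remainder of the argument is a formal chase through the universal property of free cocompletions, and so I expect no further essential difficulty beyond bookkeeping with the various sifted-vs.-finite-product decompositions.
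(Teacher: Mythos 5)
The paper does not actually prove this statement: it is quoted (in its dual form) directly from Lurie, and in that framework the identification of $\Alg_{\bT}\left(\Spc\right)$ with the free sifted cocompletion $\mathcal{P}_{\Sigma}\left(\bT^{op}\right)$ is essentially definitional, since $\mathcal{P}_{\Sigma}$ of an $\i$-category with finite coproducts is \emph{defined} as the finite-product-preserving presheaves. Consequently all of the mathematical content of the theorem is the universal property itself. Your plan inverts this: you spend the effort on the identification (via monadicity) and then declare that both equivalences ``follow formally from the universal property of $\mathcal{P}_{\Sigma}$.'' That universal property \emph{is} the theorem you are being asked to prove, so as written the argument is circular unless you intend simply to cite it, in which case nothing else is needed (this is exactly what the paper does). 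The content a genuine proof must supply is the one Lurie's argument supplies: for every $\bT$-algebra $A$, the comma $\i$-category of finitely generated free algebras (the image of $j$) mapping to $A$ is \emph{sifted}, so that $A$ is canonically a sifted colimit of objects $j\left(t\right)$, the pointwise right Kan extension along $j$ exists and preserves sifted limits even though $\sC$ is only assumed to admit sifted limits, and restriction and Kan extension can then be checked to be mutually inverse. Nothing in your outline addresses this siftedness, which is where the theorem actually lives.

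Your monadicity step also has a gap on its own terms. The bar resolution attached to the adjunction between $\Alg_{\bT}\left(\Spc\right)$ and the product of copies of $\Spc$ exhibits $A$ as a geometric realization of free algebras on arbitrary families of \emph{spaces}; these are not in the image of $j$, which consists only of the finitely generated free algebras. So compact projectivity of the $j\left(t\right)$ together with the bar resolution does not yet show that the image of $j$ generates $\Alg_{\bT}\left(\Spc\right)$ under sifted colimits, which is what any recognition principle for $\mathcal{P}_{\Sigma}$ requires. To repair this you would need the further decompositions: every space is a sifted colimit of finite sets, every family supported on infinitely many sorts is a filtered colimit of finitely supported ones, and the free-algebra functor preserves these colimits. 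That is fixable, but it is an additional argument you have not indicated, and even once supplied it still does not yield the siftedness of the canonical comma diagram (hence the Kan-extension inverse) demanded in the first point; the ``principal obstacle'' is therefore not the Barr--Beck verification you flag, but the universal property you treat as formal.
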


We have the following corollary:

\begin{corollary}\label{cor:1.16}
A functor $F:\bT \to \Spc$ is a $\bT$-algebra if and only if $F$ is a sifted colimit of corepresentable functors
$$F \simeq \underset{\alpha} \colim j\left(t_\alpha\right).$$
\end{corollary}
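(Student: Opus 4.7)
The plan is to prove both directions of the biconditional by combining Theorem \ref{theorem:5.5.8.15} with the basic fact that an $\i$-category whose opposite has finite products is sifted. For the easy direction $(\Leftarrow)$, I will observe that each corepresentable functor $j(t)=\Hom_{\bT}(t,-):\bT\to\Spc$ preserves finite products by the universal property of $\times$ in $\bT$, and that sifted colimits commute with finite products in $\Spc$; so any sifted colimit of corepresentables is again finite-product-preserving, and hence a $\bT$-algebra.

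For the nontrivial direction $(\Rightarrow)$, I would apply Theorem \ref{theorem:5.5.8.15} with target $\sC = \Alg_{\bT}(\Spc)^{op}$, which admits sifted limits because $\Alg_{\bT}(\Spc)$ is presentable. The identity functor on $\Alg_{\bT}(\Spc)^{op}$ trivially preserves sifted limits, and its restriction along $j$ is tautologically $j$ itself. Since the inverse of the equivalence in Theorem \ref{theorem:5.5.8.15} is right Kan extension along $j$, this yields $\mathrm{id} \simeq \mathrm{Ran}_{j}\, j$, or dually $\mathrm{id}_{\Alg_{\bT}(\Spc)} \simeq \mathrm{Lan}_{j^{op}}\, j^{op}$ for $j^{op}:\bT^{op}\to \Alg_{\bT}(\Spc)$. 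The pointwise formula for left Kan extension, evaluated at an arbitrary $F\in\Alg_{\bT}(\Spc)$, then exhibits $F$ as a colimit of corepresentables indexed by the slice $(\bT^{op})_{/F}$, which by the Yoneda lemma is equivalent to the opposite of the category of elements of $F$.

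It remains to verify that this indexing category is sifted. The crucial step is that because $F$ preserves finite products, its category of elements admits finite products, given on objects by $(t_1,x_1)\times(t_2,x_2)=(t_1\times t_2,(x_1,x_2))$ under the identification $F(t_1)\times F(t_2)\simeq F(t_1\times t_2)$, with projections inherited from $\bT$. Passing to the opposite category turns these into finite coproducts, and any nonempty $\i$-category with binary coproducts is sifted (the under-diagonal of any pair of objects has an initial object, hence is weakly contractible). The main obstacle will be tracking contravariance carefully, so that the finite products on the category of elements show up as finite coproducts on its opposite, which is the side actually indexing the colimit; once this bookkeeping is done, everything else is a formal consequence of Theorem \ref{theorem:5.5.8.15}.
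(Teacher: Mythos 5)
Your argument is correct and is essentially the proof the paper leaves implicit: the corollary is the standard fact behind \cite[Lemmas 5.5.8.13--5.5.8.14]{htt} that a finite-product-preserving functor $\bT\to\Spc$ is exactly a sifted colimit of corepresentables, established just as you do by identifying the indexing category of the canonical colimit with $\left(\bT^{op}\right)_{/F}$ and showing it has finite coproducts (using $F\left(t_1\times t_2\right)\simeq F\left(t_1\right)\times F\left(t_2\right)$), hence is sifted, with the converse following from the commutation of sifted colimits with finite products in $\Spc$. The only point to add is that your Kan-extension step produces the colimit in $\Alg_{\bT}\left(\Spc\right)$, whereas the statement (and your converse direction) reads it pointwise in $\Fun\left(\bT,\Spc\right)$; this is bridged by Lemma \ref{lem:presift}, or sidestepped entirely by invoking the canonical presentation of any presheaf as the colimit of representables over $\left(\bT^{op}\right)_{/F}$ rather than deriving it from Theorem \ref{theorem:5.5.8.15}.
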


\begin{lemma} \label{lem:refalg}
If $\sC$ is presentable, then $\Alg_{\bT}\left(\sC\right)$ is reflective in $\Fun\left(\bT,\sC\right).$
\end{lemma}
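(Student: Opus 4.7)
The plan is to realize $\Alg_{\bT}(\sC)$ as the full subcategory of $S$-local objects in $\Fun(\bT,\sC)$ for a small set of morphisms $S$, and then invoke HTT Proposition 5.5.4.15, which guarantees that any such subcategory of a presentable $\i$-category is a reflective accessible localization.

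First I would note that $\Fun(\bT,\sC)$ is presentable by HTT 5.5.3.6, since $\bT$ is a small $\i$-category and $\sC$ is presentable; the preceding proposition supplies presentability of $\Alg_{\bT}(\sC)$. I would then construct $S$ as follows. For each finite family $(t_i)_{i\in I}$ of objects of $\bT$ with product $t = \prod_i t_i$, the projections $t \to t_i$ induce a comparison of corepresentables $\coprod_i \bT(t_i,-) \to \bT(t,-)$ in $\Fun(\bT,\Spc)$. Copowering with a small set of generators $\{c_\alpha\}$ of $\sC$ then yields the set of morphisms
$$\coprod_{i\in I} \bigl(\bT(t_i,-) \otimes c_\alpha\bigr) \longrightarrow \bT(t,-) \otimes c_\alpha$$
in $\Fun(\bT,\sC)$, indexed by finite product diagrams in $\bT$ (of which there is a small set, since $\bT$ is small) and the generators $c_\alpha$; this is the desired $S$.

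By the enriched Yoneda lemma, $\operatorname{Map}_{\Fun(\bT,\sC)}\bigl(\bT(s,-) \otimes c,\, F\bigr) \simeq \operatorname{Map}_{\sC}(c, F(s))$, so an object $F \in \Fun(\bT,\sC)$ is $S$-local precisely when the comparison map $F(t) \to \prod_i F(t_i)$ is sent to an equivalence by $\operatorname{Map}_{\sC}(c_\alpha,-)$ for every $c_\alpha$. Since the $c_\alpha$ generate $\sC$ and hence jointly detect equivalences, $F$ is $S$-local iff $F(t) \xrightarrow{\;\sim\;} \prod_i F(t_i)$ for every finite product in $\bT$, i.e., iff $F \in \Alg_{\bT}(\sC)$. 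HTT 5.5.4.15 then supplies the reflective left adjoint.

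The step that requires the most care is the last identification: translating $S$-locality back into finite-product-preservation via the enriched Yoneda computation and the fact that a set of generators detects equivalences in $\sC$. This is transparent when $\sC = \Spc$ but demands the enriched perspective in general. Everything else is a direct appeal to standard structural results on presentable $\i$-categories, and the inclusion $\Alg_{\bT}(\sC) \hookrightarrow \Fun(\bT,\sC)$ preserving small limits (computed pointwise) is a sanity check that the resulting adjunction is indeed a reflection.
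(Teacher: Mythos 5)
Your argument is correct, but it is genuinely different from the one in the paper. The paper's proof is a two-line appeal to the adjoint functor theorem: since $\Alg_{\bT}(\sC)$ is presentable (by the proposition immediately preceding the lemma) and $\Fun(\bT,\sC)$ is presentable, it suffices to observe that the inclusion preserves limits, which holds because limits in the functor category are computed pointwise and limits commute with finite products. You instead exhibit $\Alg_{\bT}(\sC)$ explicitly as the full subcategory of $S$-local objects for the small set $S$ of maps $\coprod_{i}\bigl(\bT(t_i,-)\otimes c_\alpha\bigr)\to \bT(t,-)\otimes c_\alpha$, and invoke HTT 5.5.4.15. Your key identification is sound: the copower $\bT(s,-)\otimes c$ corepresents $F\mapsto \Map_{\sC}(c,F(s))$, so $S$-locality says exactly that each $\Map_{\sC}(c_\alpha,-)$ inverts $F(t)\to\prod_i F(t_i)$, and since the generators of a presentable $\i$-category jointly detect equivalences (every object being a colimit of them), this is equivalent to $F$ preserving finite products; including the empty family correctly handles preservation of the terminal object. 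What the two approaches buy is different: the paper's route is shorter but leans on the previously established presentability of $\Alg_{\bT}(\sC)$ (itself obtained from HTT 5.5.4.17--5.5.4.19) and leaves the accessibility hypothesis of the adjoint functor theorem implicit in that citation; your route is self-contained and more constructive, identifying the reflection as a Bousfield localization at an explicit set of maps, and in doing so it simultaneously re-proves the presentability and accessibility statements rather than assuming them.
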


\begin{proof}
Since both $\i$-categories are presentable, by the adjoint functor theorem, it suffices to prove that the inclusion preserves limits. But this follows immediately from the fact that limits commute with finite products.
\end{proof}

By an analogous proof, we also have:

\begin{lemma} \label{lem:presift}
For any $\i$-category $\sC$ in which sifted colimits commute with finite products (e.g. an $\i$-topos, or more generally any $\i$-category monadic over an $\i$-topos), then inclusion $$\Alg_{\bT}\left(\sC\right) \hookrightarrow \Fun\left(\bT,\sC\right)$$ preserves sifted colimits.
\end{lemma}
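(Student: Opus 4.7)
The plan is to mimic the argument for Lemma \ref{lem:refalg}, replacing the observation that finite products commute with arbitrary limits by the hypothesis that, in $\sC$, finite products commute with \emph{sifted} colimits. The inclusion $\Alg_{\bT}(\sC) \hookrightarrow \Fun(\bT,\sC)$ is fully faithful, so it is enough to show that if $\{F_\alpha\}_{\alpha \in \cI}$ is a sifted diagram of finite-product-preserving functors $\bT \to \sC$, then the pointwise colimit $F = \colim_\alpha F_\alpha$ (which is the colimit in $\Fun(\bT,\sC)$) again preserves finite products; once this is known, the same object will represent the colimit inside $\Alg_{\bT}(\sC)$.

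The verification is then a direct computation. Given objects $t,t' \in \bT$, I would write
$$F(t \times t') \;\simeq\; \colim_\alpha F_\alpha(t \times t') \;\simeq\; \colim_\alpha \bigl(F_\alpha(t) \times F_\alpha(t')\bigr),$$
where the first equivalence uses that colimits in $\Fun(\bT,\sC)$ are pointwise and the second uses that each $F_\alpha$ preserves finite products by assumption. Then I invoke the hypothesis that sifted colimits commute with finite products in $\sC$ to continue
$$\colim_\alpha \bigl(F_\alpha(t) \times F_\alpha(t')\bigr) \;\simeq\; \Bigl(\colim_\alpha F_\alpha(t)\Bigr) \times \Bigl(\colim_\alpha F_\alpha(t')\Bigr) \;\simeq\; F(t) \times F(t').$$
Preservation of the terminal object (the empty product) is automatic since an empty sifted colimit collapses to the same reasoning, or more simply since $F(\ast) \simeq \colim_\alpha F_\alpha(\ast) \simeq \colim_\alpha \ast \simeq \ast$ because sifted $\i$-categories are in particular weakly contractible.

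There is essentially no obstacle; the only subtlety is that one must work with the correct model of ``sifted colimits commute with finite products in $\sC$'', namely the statement that for each fixed finite set $S$, the $S$-fold product functor $\sC^S \to \sC$ preserves sifted colimits. The parenthetical examples in the statement are justified because in any $\i$-topos sifted colimits commute with finite products (as they do in $\Spc$ and this property is inherited by sheaves), and this property is preserved under monadic right adjoints to $\i$-topoi since such right adjoints create (and hence reflect) sifted colimits and preserve finite products. No new ingredients beyond pointwise computation of colimits in functor $\i$-categories and the commutation hypothesis are required.
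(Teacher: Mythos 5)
Your argument is correct and is essentially the paper's own: the paper proves this "by an analogous proof" to Lemma \ref{lem:refalg}, i.e.\ by observing that colimits in $\Fun\left(\bT,\sC\right)$ are computed pointwise and that the hypothesis that sifted colimits commute with finite products in $\sC$ guarantees the pointwise sifted colimit of finite-product-preserving functors again preserves finite products, hence lies in (and is a colimit in) the full subcategory $\Alg_{\bT}\left(\sC\right)$. No further comment is needed.
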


\begin{proposition} \label{prop:monadic}
Let $\sC$ be a presentable $\i$-category in which sifted colimits commute with finite products (e.g. an $\i$-topos, or more generally any $\i$-category monadic over an $\i$-topos), and suppose that $\bT$ is an $\bbS$-sorted Lawvere theory with $\bbS$ finite. Then $\Alg_{\bT}\left(\sC\right)$ is monadic over $\sC^{\bbS}.$
\end{proposition}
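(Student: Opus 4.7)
My plan is to apply Lurie's $\i$-categorical Barr--Beck monadicity theorem to the forgetful functor
$$U := \sigma_\bT^\ast \colon \Alg_\bT(\sC) \to \Alg_{\bT_\bbS}(\sC) \simeq \sC^\bbS,$$
obtained by restriction along the structural map $\sigma_\bT \colon \bT_\bbS \to \bT$. This is well-defined because pre-composition with a finite-product preserving functor sends product-preserving functors to product-preserving functors, and the identification $\Alg_{\bT_\bbS}(\sC)\simeq \sC^\bbS$ is the content of the earlier example on $\bT_\bbS$-algebras. The three conditions to verify are: that $U$ admits a left adjoint, that $U$ is conservative, and that $U$ preserves $U$-split simplicial colimits.

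For the left adjoint: since $\sC$ is presentable and in particular cocomplete, restriction at the level of functor categories $\sigma_\bT^\ast \colon \Fun(\bT,\sC) \to \Fun(\bT_\bbS,\sC)$ admits a left adjoint given pointwise by left Kan extension. Post-composing with the reflection $\Fun(\bT,\sC) \to \Alg_\bT(\sC)$ of Lemma~\ref{lem:refalg} yields a left adjoint to $U$. For conservativity: if $U(\varphi)$ is an equivalence then $\varphi(s)$ is an equivalence for every generator $s \in \bbS$, and since every object of $\bT$ is (up to equivalence) a finite product of generators and both source and target of $\varphi$ preserve such products, $\varphi$ is pointwise an equivalence on all of $\bT$, hence an equivalence in the full subcategory $\Alg_\bT(\sC)$.

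The main obstacle is step (iii), preservation of the requisite colimits, and this is precisely where the hypothesis on $\sC$ does the work. Lemma~\ref{lem:presift} tells us that the inclusion $\Alg_\bT(\sC) \hookrightarrow \Fun(\bT,\sC)$ preserves sifted colimits, exactly because finite products commute with sifted colimits in $\sC$. Sifted colimits in $\Fun(\bT,\sC)$ are computed pointwise, so evaluation at each generator preserves them as well; chaining the two, $U$ preserves \emph{all} sifted colimits, which is significantly stronger than the $U$-split geometric realizations needed for Barr--Beck. Existence of these colimits in $\Alg_\bT(\sC)$ is automatic from its being reflective in the cocomplete category $\Fun(\bT,\sC)$. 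With all three hypotheses verified, Barr--Beck then supplies the desired monadicity. Finiteness of $\bbS$ enters implicitly, both to keep $\sC^\bbS$ presentable and to make the identification $\Alg_{\bT_\bbS}(\sC)\simeq \sC^\bbS$ proceed without additional qualifications.
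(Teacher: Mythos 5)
Your proof is correct and follows essentially the same route as the paper: identify the forgetful functor as evaluation at the generators, check conservativity directly, and deduce preservation of sifted colimits from Lemma \ref{lem:presift} together with the pointwise computation of colimits in $\Fun\left(\bT,\sC\right)$, then invoke Barr--Beck. The only cosmetic difference is that you build the left adjoint explicitly via left Kan extension followed by the reflection of Lemma \ref{lem:refalg}, whereas the paper gets it from presentability and limit-preservation of the forgetful functor; both are fine.
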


\begin{proof}
Since both $\sC$ and $\Alg_{\bT}\left(\sC\right)$ are presentable, it suffices to prove that the functor 
$$R:\Alg_{\bT}\left(\sC\right) \to \sC^{\bbS}$$ given by $R=\underset{s \in \bbS} \prod \mathbf{ev}_s,$ preserves limits and sifted colimits, and is conservative. It is clearly conservative. Notice that $R$ factors as:

$$\Alg_{\bT}\left(\sC\right) \hookrightarrow \Fun\left(\bT,\sC\right) \stackrel{\underset{s \in \bbS}\prod \mathbf{ev}_s}{\longlongrightarrow} \sC^{\bbS}.$$ The first functor preserves limits by Lemma \ref{lem:refalg}, and the second since limits in functor categories are computed object-wise. The first functor preserves sifted colimits by Proposition \ref{lem:presift}, and the second does since colimits in the functor category are computed object-wise.
\end{proof}

\begin{definition}
Let $\bT$ be an $\bbS$-sorted Lawvere theory. Denote the left adjoint to
$$\underset{s \in \bbS} \prod \mathbf{ev}_s:\Alg_{\bT}\left(\Spc\right) \to \Spc^{\bbS}$$
by $$\bT\left\{\blank\right\}:\Spc^{\bbS} \to \Alg_{\bT}\left(\Spc\right).$$ $\bT$-algebras in the essential image of $\bT\left\{\blank\right\}$ are called \textbf{free $\bT$-algebras}.
\end{definition}

\begin{remark}
The $\bT$-algebras in the essential image of $j:\bT^{op} \hookrightarrow \Alg_{\bT}\left(\Spc\right)$ are also in the essential image of $\bT\left\{\blank\right\}$; they are precisely the \emph{finitely generated} free $\bT$-algebras. For example, if $\bT$ is $1$-sorted, with generator $r,$ then $j\left(r^n\right) \simeq \bT\left\{ \left\{1,2,\ldots,n\right\} \right\}$ is the free algebras on $n$ generators. We usually denote this as $\bT\left\{x_1,x_2,\ldots ,x_n\right\}.$
\end{remark}

\begin{example}
Suppose that $\bT$ is a $2$-sorted Lawvere theory, with generators $r,s\in \bT.$ We cannot speak of the free $\bT$-algebra on $n$-generators, since $\Alg_{\bT}\left(\Set\right)$ is not monadic over $\Set,$ but rather over $\Set^2.$ But we can speak of the free $\bT$-algebra on $n$ generators of \emph{type $r$} and $m$-generators of \emph{type $s$}, we denote it by $\bT\left\{x_1,x_2,\ldots,x_n ; \eta_1,\eta_2,\ldots \eta_m\right\}.$ The most important example for this paper will be $\bT=\SCi,$ the $2$-sorted Lawvere theory of super $\Ci$-algebras, of Example \ref{ex:SCi}. We call the sort corresponding to the generator $\R$ \emph{even} and the sort corresponding to the generator $\R^{0|1}$ \emph{odd}. We denote the free $\SCi$-algebra on $n$ even and $m$ odd generators by $\Ci\left\{x_1,x_2,\ldots x_n ; \eta_1,\eta_2 \ldots \eta_m \right\}.$ It the $\SCi$-algebra of smooth functions on the supermanifold $\R^{n|m}=\R^n \times \left(\R^{0|1}\right)^m.$
\end{example}

Notice that if $f$ is a morphism of algebraic theories, and $A:\bT' \to \sC$ is a $\bT'$-algebra in $\sC,$ then the composition $$\bT \stackrel{f}{\longrightarrow} \bT' \stackrel{A}{\longrightarrow} \sC$$ is a $\bT$-algebra in $\sC,$ which we denote by $f^*A.$ This induces a functor:
$$f^*:\Alg_{\bT'}\left(\sC\right) \to \Alg_{\bT}\left(\sC\right).$$

\begin{lemma} \label{prop:presift}
Let $\sC$ be an $\i$-category in which sifted colimits commute with finite products and let $f:\bT \to \bT'$ be a morphism of algebraic theories. Then $$f^*:\Alg_{\bT'}\left(\sC\right) \to \Alg_{\bT}\left(\sC\right)$$ preserves limits and sifted colimits.
\end{lemma}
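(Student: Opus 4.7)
The plan is to observe that $f^\ast$ is nothing but precomposition with $f$, fitting into a commutative square whose horizontal arrows are the canonical inclusions $\Alg_{\bT'}(\sC) \hookrightarrow \Fun(\bT',\sC)$ and $\Alg_{\bT}(\sC) \hookrightarrow \Fun(\bT,\sC)$, and whose right-hand vertical arrow is the ordinary precomposition functor $(-) \circ f : \Fun(\bT',\sC) \to \Fun(\bT,\sC)$. The composite through the top-right corner lands in $\Alg_{\bT}(\sC)$ because a composition of finite-product-preserving functors is finite-product-preserving. The strategy is then to reduce preservation of limits and sifted colimits under $f^\ast$ to the corresponding pointwise statements in the functor categories.

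For limits, I would first record that each of the inclusions $\Alg_{\bT}(\sC) \hookrightarrow \Fun(\bT,\sC)$ and $\Alg_{\bT'}(\sC) \hookrightarrow \Fun(\bT',\sC)$ preserves and reflects limits, since a pointwise limit of finite-product-preserving functors is again finite-product-preserving; this is immediate from the fact that limits commute with finite products in any $\i$-category, and is essentially the content of Lemma~\ref{lem:refalg} (one does not actually need presentability for this half of that lemma). The precomposition functor $(-)\circ f$ preserves \emph{all} limits, because limits in functor $\i$-categories are computed pointwise in the target. Hence, given a limit diagram in $\Alg_{\bT'}(\sC)$, I view it as a pointwise limit diagram in $\Fun(\bT',\sC)$, push it through $(-)\circ f$ to obtain a pointwise limit diagram in $\Fun(\bT,\sC)$, and conclude that it defines a limit in $\Alg_{\bT}(\sC)$ by the same pointwise criterion.

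For sifted colimits the argument is formally identical, with Lemma~\ref{lem:presift} playing the role of Lemma~\ref{lem:refalg}: it is precisely the hypothesis that sifted colimits commute with finite products in $\sC$ that guarantees the inclusions $\Alg_{\bT}(\sC) \hookrightarrow \Fun(\bT,\sC)$ and $\Alg_{\bT'}(\sC) \hookrightarrow \Fun(\bT',\sC)$ preserve sifted colimits. Precomposition with $f$ preserves all colimits since colimits in functor $\i$-categories are likewise computed pointwise, and the same diagram-chase completes the argument.

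There is no substantive obstacle here; the lemma is a purely formal consequence of pointwise computation of (co)limits in functor $\i$-categories together with the preservation results for the algebra inclusions established earlier. The only subtlety worth flagging is the asymmetry between the two cases: limit preservation by the inclusions is automatic for any $\sC$, whereas sifted-colimit preservation relies essentially on the stated hypothesis on $\sC$, and this is exactly where the hypothesis of the lemma is consumed.
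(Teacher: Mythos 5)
Your proof is correct, but it takes a genuinely different route from the paper's. The paper draws the commutative triangle of forgetful functors down to $\sC$ (via evaluation at the generators), invokes Proposition \ref{prop:monadic} to say both algebra categories are monadic over the base, and then uses that the conservative forgetful functor reflects the limits and sifted colimits it preserves. You instead go \emph{up} into the functor categories: you use the commutative square with the fully faithful inclusions $\Alg_{\bT'}(\sC)\hookrightarrow\Fun(\bT',\sC)$ and $\Alg_{\bT}(\sC)\hookrightarrow\Fun(\bT,\sC)$, the fact that precomposition with $f$ preserves all pointwise-computed (co)limits, and the fact that a (co)limit cone in an ambient $\i$-category whose vertex lies in a full subcategory is a (co)limit cone there. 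Both arguments are sound, and you correctly isolate where the hypothesis on $\sC$ is consumed (only in the sifted-colimit half, via Lemma \ref{lem:presift}). A small point in favor of your version: the paper's appeal to monadicity imports the hypotheses of Proposition \ref{prop:monadic} (presentability of $\sC$ and a finite sort set), which are not part of the lemma's stated assumptions, whereas your argument needs only the hypotheses as written; the paper's version, on the other hand, is shorter once the monadicity machinery is in place and reuses it elsewhere.
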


\begin{proof}
Notice that if $f:\bT \to \bT'$ is a morphism of algebraic theories, then we have a commutative diagram
$$\xymatrix{\Alg_{\bT'}\left(\sC\right) \ar[rd] \ar[r]^-{f^\ast} & \Alg_{\bT}\left(\sC\right) \ar[d]\\
& \sC.}$$
Since both $\i$-categories are monadic over $\sC$ by Proposition \ref{prop:monadic}, and conservative functors reflect limits and colimits that they preserve, the result follows.
\end{proof}

\begin{corollary}\label{cor:completioncor}
If $\sC$ is a presentable $\i$-category in which sifted colimits commute with finite limits then $f^*$ has a left adjoint $$f_!:\Alg_{\bT}\left(\sC\right) \to \Alg_{\bT'}\left(\sC\right).$$
\end{corollary}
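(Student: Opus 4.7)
The plan is to apply the adjoint functor theorem for presentable $\i$-categories (see \cite[Corollary 5.5.2.9]{htt}), which asserts that a functor between presentable $\i$-categories admits a left adjoint if and only if it preserves small limits and is accessible. I therefore need to verify three things about $f^*:\Alg_{\bT'}\left(\sC\right) \to \Alg_{\bT}\left(\sC\right)$: that both source and target are presentable, that $f^*$ preserves small limits, and that $f^*$ is accessible.

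First, presentability of both $\i$-categories follows from the proposition stated near the beginning of the section: for any presentable $\sC,$ the $\i$-category $\Alg_{\bT}\left(\sC\right)$ is presentable, and likewise for $\bT'.$ Second, Lemma \ref{prop:presift}, which we have just proven, tells us that $f^*$ preserves limits (observing that the hypothesis ``sifted colimits commute with finite limits'' in particular implies they commute with finite products, which is what is needed for Lemma \ref{prop:presift}). Third, filtered $\i$-categories are sifted (see \cite[Proposition 5.5.8.4]{htt}), so Lemma \ref{prop:presift} also gives that $f^*$ preserves filtered colimits; in particular it is $\omega$-accessible, hence accessible.

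Having verified the hypotheses, the adjoint functor theorem produces the desired left adjoint $f_!:\Alg_{\bT}\left(\sC\right) \to \Alg_{\bT'}\left(\sC\right).$ There is essentially no obstacle: the entire content of the corollary is packaging Lemma \ref{prop:presift} together with the presentability of algebra categories and the adjoint functor theorem. If anything, the mildly subtle point is to invoke the correct version of the adjoint functor theorem and to note that sifted covers both filtered diagrams (needed for accessibility) and reflexive coequalizers (needed implicitly if one wanted a more explicit monadic description of $f_!$, though that is not required here).
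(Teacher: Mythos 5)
Your proof is correct and follows exactly the route the paper intends (the corollary is left without an explicit proof precisely because it is the adjoint functor theorem applied to Lemma \ref{prop:presift} together with the presentability of $\Alg_{\bT}\left(\sC\right)$ and $\Alg_{\bT'}\left(\sC\right)$): limit preservation and preservation of sifted, hence filtered, colimits give accessibility, and \cite[Corollary 5.5.2.9]{htt} produces $f_!$. Your remark that the hypothesis on finite limits subsumes the finite-products hypothesis of Lemma \ref{prop:presift} is exactly the right observation.
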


When $\sC$ is the $\i$-category of spaces $\Spc,$ this can be described more explicitly:
We can take the left Kan-extension along the Yoneda embedding:
$$\xymatrix@C=3cm{\PShv\left(\bT^{op}\right) \ar@{-->}[r]^-{\Lan_{y_{\bT}}\left(y_{\bT'} \circ f^{op}\right)} & \PShv\left(\bT'^{op}\right)\\
\bT^{op} \ar@{^{(}->}[u]_-{y_{\bT}} \ar[r]^-{f^{op}} & \bT'^{op} \ar@{^{(}->}[u]_-{y_{\bT'}}.}$$
This is the unique colimit preserving functor which agrees with $f$ on representables. Corollary \ref{cor:1.16} then implies that $\Lan_{y_{\bT}}\left(y_{\bT'} \circ f^{op}\right)$ restricts to a functor between the respective categories of algebras. For example, if $f$ is a morphism of Lawvere theories, $f_!$ can then be characterized as the unique colimit preserving functor sending each finitely generated free algebra $\bT\left\{x_1,x_2,\ldots x_n\right\}$ to $\bT'\left\{x_1,x_2,\ldots x_n\right\}.$ Since every algebra is a sifted colimit of finitely generated free ones, this uniquely determines the functor $f_!.$

\begin{corollary}
If $f:\bT \to \bT'$ is a morphism of algebraic theories which has a right adjoint $g,$ then functor 
$$f_!:\Alg_{\bT}\left(\Spc\right) \to \Alg_{\bT}\left(\Spc\right)$$ has both a left and right adjoint, and in particular preserves small limits and colimits.
\end{corollary}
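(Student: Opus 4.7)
The plan is to identify $f_!$ with a pushforward functor coming from the right adjoint $g$, which gives both adjoints almost for free.

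First I would observe that since $g : \bT' \to \bT$ is a right adjoint in the $\i$-category of $\i$-categories, it preserves all limits, and in particular finite products. Hence $g$ is itself a morphism of algebraic theories, so it induces a precomposition functor $g^\ast : \Alg_{\bT}(\Spc) \to \Alg_{\bT'}(\Spc)$. Moreover, by Corollary \ref{cor:completioncor} (applied to $g$ in place of $f$), the functor $g^\ast$ itself admits a left adjoint $g_!$.

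The key step is to show $f_! \simeq g^\ast$ on $\Alg_{\bT}(\Spc)$. For this, I would use the fact that an adjunction $f \dashv g$ between small $\i$-categories induces, by precomposition, an adjunction
$$g^\ast = (-)\circ g \;\dashv\; f^\ast = (-)\circ f$$
between the functor $\i$-categories $\Fun(\bT,\Spc)$ and $\Fun(\bT',\Spc)$; the unit and counit are obtained by whiskering the unit and counit of $f \dashv g$. Since both $f$ and $g$ preserve finite products, this adjunction restricts to the full subcategories of product-preserving functors, giving $g^\ast \dashv f^\ast$ at the level of algebras. By uniqueness of left adjoints (Corollary \ref{cor:completioncor} already produced a left adjoint $f_!$ to $f^\ast$), we conclude $f_! \simeq g^\ast$.

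Given this identification, the conclusions are immediate: a right adjoint to $f_!$ is $f^\ast$ itself (by the corollary), and a left adjoint to $f_!$ is $g_!$ (the left adjoint to $g^\ast$, which exists by the same corollary applied to $g$). A functor which admits both a left and a right adjoint preserves all small limits and colimits, completing the proof. I expect the only subtle point to be checking carefully that the precomposition adjunction $g^\ast \dashv f^\ast$ exists at the $\i$-categorical level and restricts to algebras — but this is standard and follows from the 2-functoriality of $\Fun(-,\Spc)$ applied to the adjunction $f \dashv g$, together with the observation that both $f$ and $g$ preserve finite products.
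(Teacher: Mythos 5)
Your proof is correct, and its overall shape matches the paper's: both arguments reduce everything to the identification $f_! \simeq g^\ast$ and then read off the two adjoints ($g_!$ on the left, $f^\ast$ on the right). The difference is in how that identification is established. The paper verifies it by hand on corepresentables: it computes $g^\ast\left(j\left(t\right)\right)\left(s\right) \simeq \Map_{\bT}\left(t,g\left(s\right)\right) \simeq \Map_{\bT'}\left(f\left(t\right),s\right) \simeq f_!\left(j\left(t\right)\right)\left(s\right)$ using the adjunction $f \dashv g$ and the Yoneda lemma, and then extends from finitely generated free algebras to all algebras via the embedding into presheaves --- implicitly relying on Corollary \ref{cor:1.16} (every algebra is a sifted colimit of corepresentables) and on the fact that both functors preserve sifted colimits. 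Your route instead transports the adjunction $f \dashv g$ through precomposition: whiskering the unit and counit gives $g^\ast \dashv f^\ast$ between $\Fun\left(\bT,\Spc\right)$ and $\Fun\left(\bT',\Spc\right)$, this adjunction restricts to the full subcategories of product-preserving functors because both $f$ and $g$ preserve finite products, and uniqueness of left adjoints (against the $f_!$ produced by Corollary \ref{cor:completioncor}) forces $f_! \simeq g^\ast$. This is a clean alternative that avoids any appeal to generation by free algebras or to preservation of sifted colimits; the only point needing care is the one you flag, namely that precomposition carries the adjunction $f \dashv g$ to an adjunction $g^\ast \dashv f^\ast$ at the $\infty$-categorical level, which is standard (the whiskered triangle identities do the work). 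Both proofs yield the same conclusion; yours is slightly more formal and economical, while the paper's makes explicit what $f_!$ does on free algebras.
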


\begin{proof}
Since $f$ is a morphism of algebraic theories, $f_!$ has $f^*$ as a right adjoint. Since $g$ is a right adjoint, it preserves limits, and hence is an morphism of algebraic theories. Consequently, we have two induced adjunctions

$$\xymatrix{\Alg_{\bT}\left(\Spc\right) \ar@<-0.5ex>[r]_-{f_!} & \Alg_{\bT'}\left(\Spc\right) \ar@<-0.5ex>[l]_-{f^\ast}}$$
and 
$$\xymatrix{\Alg_{\bT'}\left(\Spc\right)\ar@<-0.5ex>[r]_-{g_!}
& \Alg_{\bT}\left(\Spc\right).\ar@<-0.5ex>[l]_-{g^{\ast}}}$$
Using the inclusion $\Alg\left(\Spc\right) \hookrightarrow \PShv\left(\bT^{op}\right),$ and similarly for $\bT'$, we can conclude that $f_! \simeq g^*.$ Indeed, let $t \in \bT$ and $s \in  \bT'.$ Then we have the following string of natural equivalences:
\begin{eqnarray*}
g^*\left(j\left(t\right)\right)\left(s\right) & \simeq & \Map_{\bT}\left(t,g\left(s\right)\right)\\
&\simeq& \Map_{\bT'}\left(f\left(t\right),s\right)\\
&\simeq& j\left(f\left(t\right)\right)\left(s\right)\\
&\simeq& f_!\left(j\left(t\right)\right)\left(s\right).
\end{eqnarray*}
Finally, we are done since $g^*$ has $g_!$ as a left adjoint.
\end{proof}

\subsection{Finitely presented algebras}
\begin{definition}
\begin{enumerate}
     \item A $\bT$-algebra $X \in \Alg_{\bT}\left(\Spc\right)$ is \textbf{finitely generated} if the functor  $$\Alg_{\bT}\left(\Spc\right)\rightarrow \Spc$$ corepresented by $X$ preserves small filtered colimits consisting only of monomorphisms. Denote the full subcategory on the finitely generated algebras by $\Alg_{\bT}\left(\Spc\right)^{\mathbf{fg}}$.
    \item A $\bT$-algebra $X \in \Alg_{\bT}\left(\Spc\right)$ is \textbf{finitely presented} if the functor  $$\Alg_{\bT}\left(\Spc\right)^{\fp}\rightarrow \Spc$$ corepresented by $X$ preserves small filtered colimits; that is, if $X$ is a compact object. Denote the full subcategory on the finitely presented algebras by $\Alg_{\bT}\left(\Spc\right)^{\fp}$.
\end{enumerate}
\end{definition}

\begin{lemma}\label{lem:retcol} \cite[Lemma 3.21]{univ}
The subcategory $\Alg_{\bT}\left(\Spc\right)^{\fp}$ is the smallest subcategory of $\Alg_{\bT}\left(\Spc\right)$ containing the finitely generated free algebras and closed under finite colimits and retracts. Moreover, any finitely presented algebra is a retract of a finite colimit of finitely generated free algebras.
\end{lemma}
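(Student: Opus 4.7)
The plan is to bootstrap from the compact generation statement in Theorem \ref{theorem:5.5.8.10}(b) together with Corollary \ref{cor:1.16}. The strategy has three main pieces: verify that finitely generated free algebras are themselves compact, show that compact objects in general are closed under finite colimits and retracts, and then use the sifted colimit presentation of arbitrary algebras to see that every compact object arises as a retract of a finite colimit of finitely generated free algebras.

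First I would check compactness of the finitely generated free algebras. By Lemma \ref{lem:presift}, since sifted colimits commute with finite products in $\Spc$, the inclusion $\Alg_{\bT}(\Spc)\hookrightarrow \Fun(\bT,\Spc)$ preserves sifted (in particular filtered) colimits; therefore filtered colimits of algebras are computed objectwise, and each evaluation functor $\mathbf{ev}_t\colon \Alg_{\bT}(\Spc)\to \Spc$ preserves filtered colimits. For the free algebra on a finite multi-sorted tuple $(x_1,\ldots,x_n;\eta_1,\ldots,\eta_m)$, the corepresented mapping space is a finite product of such evaluation functors, and finite products of filtered-colimit-preserving functors to $\Spc$ preserve filtered colimits. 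Hence each finitely generated free $\bT$-algebra is compact, i.e.\ lies in $\Alg_{\bT}(\Spc)^{\fp}$.

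Next, the full subcategory $\Alg_{\bT}(\Spc)^{\fp}$ of compact objects in any $\infty$-category is automatically closed under finite colimits and retracts. Let $\cC_0$ denote the smallest full subcategory containing the finitely generated free algebras and closed under finite colimits; by the previous step $\cC_0\subseteq \Alg_{\bT}(\Spc)^{\fp}$. The key remaining point is that $\Alg_{\bT}(\Spc)\simeq \Ind(\cC_0)$. By Corollary \ref{cor:1.16}, every $\bT$-algebra is a sifted colimit of finitely generated free algebras, and any sifted colimit of a diagram valued in an $\infty$-category with finite colimits can be rewritten as a filtered colimit of finite colimits taken from that diagram (this is the standard decomposition underlying \cite[Proposition 5.5.8.15]{htt}, already invoked to formulate Theorem \ref{theorem:5.5.8.15}). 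Consequently every $\bT$-algebra is a filtered colimit of objects in $\cC_0$. Combined with the fact that every object of $\cC_0$ is compact, the criterion \cite[Lemma 5.3.5.12]{htt} (or \cite[Proposition 5.3.5.11]{htt}) identifies $\Alg_{\bT}(\Spc)$ with $\Ind(\cC_0)$ and identifies the compact objects of an Ind-completion as precisely the retracts of objects of $\cC_0$.

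Putting these together, any finitely presented algebra is a retract of a finite colimit of finitely generated free algebras, proving the \textbf{moreover} clause. For the first clause, let $\cC_1$ be the smallest full subcategory containing the finitely generated free algebras and closed under finite colimits \emph{and} retracts. Then $\cC_1\subseteq \Alg_{\bT}(\Spc)^{\fp}$ by the closure properties of compact objects, while conversely the moreover clause gives $\Alg_{\bT}(\Spc)^{\fp}\subseteq \cC_1$, so the two coincide. The main technical pressure point is justifying the decomposition of sifted colimits as filtered colimits of finite ones in sufficient generality; once that is in hand, everything else is an application of standard facts about compact generation.
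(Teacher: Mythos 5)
Your argument is correct: compactness of the finitely generated free algebras (via objectwise filtered colimits and corepresentability by finite products of evaluations), closure of compact objects under finite colimits and retracts, and the decomposition of the sifted colimit presentation of Corollary \ref{cor:1.16} into a filtered colimit of finite colimits of the free algebras in the diagram (this is HTT 4.2.3.8--4.2.3.11, not 5.5.8.15) identify $\Alg_{\bT}\left(\Spc\right)$ with $\Ind$ of the finite-colimit closure and hence its compact objects with retracts of finite colimits of finitely generated free algebras. The paper itself gives no proof, importing the statement as \cite[Lemma 3.21]{univ}, and your proof is essentially the standard argument behind that cited result, so there is nothing substantive to compare.
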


\begin{theorem}\label{thm:finenv}\cite[Theorem 3.22]{univ}
Let $\sC$ be any idempotent complete $\i$-category with finite limits. Then composition with $j^{\fp}$ induces an equivalence of $\i$-categories
$$\Fun^{\lex}\left(\left(\Alg_{\bT}\left(\Spc\right)^{\fp}\right)^{op},\sC\right) \stackrel{\sim}{\longlongrightarrow} \Fun^{\Pi\!\!}\left(\bT,\sC\right)=\Alg_{\bT}\left(\sC\right)$$
whose inverse is given by right Kan extension.
\end{theorem}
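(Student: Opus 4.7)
The plan is to embed $\sC$ fully faithfully into a complete $\i$-category, apply Theorem~\ref{theorem:5.5.8.15}, and then descend the resulting equivalence to $\sC$ using Lemma~\ref{lem:retcol} together with the idempotent completeness of $\sC$.

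Concretely, embed $\sC \hookrightarrow \hat\sC := \PShv\left(\sC\right)$ via the Yoneda embedding. This functor is fully faithful, preserves all limits that exist in $\sC$, and $\hat\sC$ is complete. The key intermediate assertion is that for any complete $\hat\sC$, restriction along $j^{\fp}$ induces an equivalence
$$\Fun^{\lex}\left(\left(\Alg_{\bT}\left(\Spc\right)^{\fp}\right)^{op}, \hat\sC\right) \simeq \Fun^{\Pi\!\!}\left(\bT, \hat\sC\right),$$
with inverse given by right Kan extension. To see this, combine Theorem~\ref{theorem:5.5.8.15}, which yields an equivalence $\Fun^{R}\left(\Alg_{\bT}\left(\Spc\right)^{op}, \hat\sC\right) \simeq \Fun^{\Pi\!\!}\left(\bT, \hat\sC\right)$, with the observation from Theorem~\ref{theorem:5.5.8.10} that $\Alg_{\bT}\left(\Spc\right) \simeq \Ind\left(\Alg_{\bT}\left(\Spc\right)^{\fp}\right)$, so that $\Alg_{\bT}\left(\Spc\right)^{op}$ is the $\mathbf{Pro}$-completion of $\left(\Alg_{\bT}\left(\Spc\right)^{\fp}\right)^{op}$. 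By the universal property of $\mathbf{Pro}$, any left exact functor from $\left(\Alg_{\bT}\left(\Spc\right)^{\fp}\right)^{op}$ to $\hat\sC$ extends uniquely to a cofiltered-limit-preserving functor on $\Alg_{\bT}\left(\Spc\right)^{op}$; such an extension automatically preserves finite limits as well, since every small limit decomposes as a cofiltered limit of finite limits, and finite and cofiltered limits commute (limits commute with limits).

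The remaining step is to show that this equivalence restricts to one between $\sC$-valued functors. One direction is immediate. For the converse, given a product-preserving $F: \bT \to \sC \hookrightarrow \hat\sC$, let $G: \left(\Alg_{\bT}\left(\Spc\right)^{\fp}\right)^{op} \to \hat\sC$ denote its right Kan extension along $j^{\fp}$, and let $\sD \subseteq \left(\Alg_{\bT}\left(\Spc\right)^{\fp}\right)^{op}$ be the full subcategory on those objects whose image under $G$ lies in the essential image of $\sC \hookrightarrow \hat\sC$. Then $\sD$ contains $\bT$ by construction; it is closed under finite limits because $G$ is left exact and the Yoneda embedding preserves finite limits; and it is closed under retracts because $\sC$ is idempotent complete. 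By Lemma~\ref{lem:retcol}, $\sD$ exhausts $\left(\Alg_{\bT}\left(\Spc\right)^{\fp}\right)^{op}$, so $G$ factors through $\sC$.

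The main technical obstacle is the intermediate restriction equivalence for $\hat\sC$-valued functors, which relies on the universal property of the $\mathbf{Pro}$-completion in the $\i$-categorical setting together with the interaction of finite and cofiltered limits. Once this is in place, the descent to $\sC$ via Lemma~\ref{lem:retcol} and idempotent completeness is formal, and the inverse of the resulting equivalence is manifestly given by right Kan extension, as asserted.
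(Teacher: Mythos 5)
The paper itself gives no proof of Theorem \ref{thm:finenv}, deferring to \cite[Theorem 3.22]{univ}, and your argument is correct and follows essentially the same route as that reference: embed $\sC$ into $\PShv\left(\sC\right)$ via Yoneda, combine Theorem \ref{theorem:5.5.8.15} with the identification $\Alg_{\bT}\left(\Spc\right)\simeq \Ind\left(\Alg_{\bT}\left(\Spc\right)^{\fp}\right)$ of Theorem \ref{theorem:5.5.8.10} to obtain the equivalence for presheaf-valued functors, and then descend using Lemma \ref{lem:retcol}, closure of the essential image of the Yoneda embedding under finite limits, and idempotent completeness of $\sC$. One cosmetic correction: the justification of the intermediate extension step should be the standard fact that a cofiltered-limit-preserving functor out of a pro-category whose restriction is left exact preserves all small limits (the dual of the statement that finite together with filtered colimits generate all small colimits), rather than the assertion that ``finite and cofiltered limits commute.''
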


\subsection{Homotopical algebras}

When $\bT$ is a $1$-category, then the projective model structure on $\Fun\left(\bT,\Set^{\Delta^{op}}\right)$ restricts to the subcategory of simplicial $\bT$-algebras
$$\Alg_{\bT}\left(\Set\right)^{\Delta^{op}}\cong \Alg_{\bT}\left(\Set^{\Delta^{op}}\right).$$

We have the following result, originally due to Bergner:

\begin{theorem}\cite[Theorem 5.1]{Be},\cite[Corollary 5.5.9.2]{htt} \label{theorem:bergner}
There is an equivalence of $\i$-categories $$N_{hc}\left(\Alg_{\bT}\left(\Set\right)^{\Delta^{op}}_{proj.}\right) \simeq \Alg_{\bT}\left(\Spc\right)$$
between the homotopy coherent nerve of the category of simplicial $\bT$-algebras, endowed with the projective model structure, and the $\i$-category of $\bT$-algebras in $\Spc.$
\end{theorem}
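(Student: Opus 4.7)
The plan is to present both $\i$-categories as the sifted cocompletion of the Yoneda image of $\bT$ and invoke Theorem \ref{theorem:5.5.8.15}. That theorem, together with Corollary \ref{cor:1.16}, identifies $\Alg_{\bT}(\Spc)$ with the sifted cocompletion of $\bT$ inside $\Fun(N(\bT), \Spc)$. My goal is to exhibit the same universal property for $N_{hc}\bigl(\Alg_{\bT}(\Set)^{\Delta^{op}}_{proj.}\bigr)$, with the finitely generated free simplicial algebras $\bT\{x_1, \ldots, x_n\}$ (constant in the simplicial direction) playing the role of the representables $j(t)$.

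First I would establish the projective model structure on $\Alg_{\bT}(\Set^{\Delta^{op}}) \cong \Alg_{\bT}(\Set)^{\Delta^{op}}$ by transfer from the product model structure on $(\Set^{\Delta^{op}})^{\bbS}$. Proposition \ref{prop:monadic}, applied with $\sC = \Set^{\Delta^{op}}$, shows the source is monadic over $(\Set^{\Delta^{op}})^{\bbS}$; the standard transfer theorem then supplies a combinatorial simplicial model structure with pointwise fibrations and weak equivalences, whose generating (acyclic) cofibrations are obtained by applying the free $\bT$-algebra functor to those of $(\Set^{\Delta^{op}})^{\bbS}$. The path-object criterion is easily verified, since products of fibrant objects are fibrant. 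One then has a canonical functor
\[
\Phi \colon N_{hc}\bigl(\Alg_{\bT}(\Set^{\Delta^{op}})^{\circ}_{proj.}\bigr) \longrightarrow \Fun(N(\bT), \Spc),
\]
sending a fibrant-cofibrant algebra $A$ to the presheaf $t \mapsto \Map^{h}_{\Alg_{\bT}}\bigl(\bT\{t\}, A\bigr)$. On finitely generated free algebras this reduces, via the free/forgetful adjunction, to finite products of the sort-components of $A$, which recovers the representable presheaf $j(t)$; moreover $\Phi$ preserves sifted colimits, since these are computed pointwise on both sides by Lemma \ref{lem:presift}.

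The main obstacle is showing $\Phi$ is fully faithful. Essential surjectivity onto $\Alg_{\bT}(\Spc)$ follows from Corollary \ref{cor:1.16} together with the fact that every fibrant-cofibrant simplicial algebra is a sifted homotopy colimit of finitely generated free ones, obtained from its monadic bar resolution. For full faithfulness, the strategy is to reduce to the representable case: for fibrant-cofibrant $A$ and fibrant $B$, realize $A$ as the geometric realization of its bar resolution by free algebras, so that $\Map^{h}(A, B)$ becomes a cosimplicial limit of mapping spaces out of free algebras. Each such mapping space agrees with the corresponding computation in $\Fun(N(\bT), \Spc)$ by the adjunction identification above, and both sides respect the same cosimplicial limit. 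This checks that $\Phi$ exhibits the target as the sifted cocompletion of (the image of) $\bT$, whence Theorem \ref{theorem:5.5.8.15} forces the desired equivalence $N_{hc}\bigl(\Alg_{\bT}(\Set)^{\Delta^{op}}_{proj.}\bigr) \simeq \Alg_{\bT}(\Spc)$.
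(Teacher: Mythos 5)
The paper does not actually prove this statement: it is quoted verbatim as a known result, with the proof deferred to Bergner's rigidification theorem and to \cite[Corollary 5.5.9.2]{htt}. So your proposal cannot match "the paper's proof"; what it does is sketch the standard strictification argument that underlies those citations, and in outline it is correct. Your route --- transfer the projective model structure along the monadic adjunction with $(\Set^{\Delta^{op}})^{\bbS}$, define $\Phi$ by derived mapping spaces out of the finitely generated free algebras, check that $\Phi$ is product-preserving-valued and preserves sifted colimits, and then conclude by generation under sifted colimits of both sides by the corepresentables (Corollary \ref{cor:1.16} and the universal property of Theorem \ref{theorem:5.5.8.15}) --- is essentially Lurie's strategy, whereas Bergner's proof is phrased as a Quillen equivalence between the transferred model structure and a left Bousfield localization of projective simplicial presheaves. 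The payoff of your version is that it stays inside the $\i$-categorical universal-property framework the paper already sets up; the payoff of the cited proofs is that the delicate model-categorical bookkeeping is done once and for all.

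Two steps in your sketch are glossed and deserve care, though neither is a fatal gap. First, "the path-object criterion is easily verified, since products of fibrant objects are fibrant" is not the relevant verification: underlying simplicial sets of $\bT$-algebras need not be Kan for a general (multi-sorted) theory, and the standard justification of the transfer is instead that the free functor, being simplicial, sends the horn inclusions (simplicial deformation retracts) to simplicial homotopy equivalences, so the transferred acyclic cofibrations are weak equivalences. Second, in the full-faithfulness step you need the bar resolution to be a homotopy colimit diagram (e.g.\ via its splitting after applying the forgetful functor together with levelwise cofibrancy), and you need the identification $\Phi(F(X))\simeq$ the free algebra on $X$ for \emph{arbitrary} $X$, not just the finitely generated case; the latter follows from the finitely generated case by writing $X$ as a filtered colimit and using that $\Phi$ preserves sifted colimits, but this should be said, since the bar construction uses large free algebras.
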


\begin{example}
By the above, $\Alg_{\bT_{\Ab}}\left(\Spc\right)$ is equivalent to the $\i$-category of simplicial abelian groups, which by the Dold-Kan correspondence is equivalent to the $\i$-category of non-positively graded cochain complexes of abelian groups $\Ch^{\le 0}\left(\Ab\right).$
\end{example}

\begin{example}
Similarly, by the monoidal Dold-Kan correspondence, if $k$ is a field of characteristic zero, $\Alg_{\Comk}\left(\Spc\right)$ can be identified with the $\i$-category $dg\mbox{-}\left(\Comk\right)_{\le 0}$ of commutative non-positively graded differential $k$-algebras  (using cohomological grading conventions).
\end{example}

\section{$\Ci$-algebras}\label{sec:ci}

\begin{definition}
A \emph{classical} \textbf{$\Ci$-algebra} is a $\Ci$-algebra in $\Set,$ where $\Ci$ is the algebraic theory of Example \ref{ex:Ci}. Similarly, a \emph{classical} \textbf{$\SCi$-algebra} is a $\SCi$-algebra in $\Set,$ where $\SCi$ is the $2$-sorted algebraic theory of Example \ref{ex:SCi}.
\end{definition}

The prototypical example of a $\Ci$-ring is the ring of smooth functions $\Ci\left(M\right)$ on a smooth manifold $M.$ This ring has an $n$-ary operation  associated to each smooth function $f:\R^n \to \R$:
\begin{eqnarray*}
\Ci\left(M\right)^n\cong \Hom_{\Mfd}\left(M,\R^n\right) &\to& \Ci\left(M\right)\\
\left(\varphi_1,\ldots,\varphi_n\right)=\varphi:M \to \R^n &\mapsto& f\circ \varphi.
\end{eqnarray*}
As an algebra for the algebraic theory $\Ci,$ $\Ci\left(M\right)$ is the functor
$$\Hom_{\Mfd}\left(M,\blank\right):\Ci \to \Set,$$ and the underlying set of $\Ci\left(M\right)$ is recovered as the value of this functor on $\R.$ Recall that $\Ci$ is the full subcategory of $\Mfd$ on the manifolds of the form $\R^n$ for some $n \ge 0,$ and that this is an algebraic theory with a single generator $r:=\R.$ The free $\Ci$-algebra on $n$ generators is $j\left(r^n\right)=\Hom_{\Mfd}\left(\R^n,\blank\right),$ i.e. 
$$\Ci\{x_1,x_2,\ldots,x_n\} \cong \Ci\left(\R^n\right).$$

An analogous story holds for the $2$-sorted algebraic theory $\SCi$. The prototypical example of a $\SCi$-algebra, also called a \emph{super $\Ci$-ring}, is the superalgebra of smooth functions on a supermanifold $\cM.$ As a functor, this algebra is simply $\Hom_{\SMfd}\left(\cM,\blank\right).$ The main difference between this theory and the theory of $\Ci$-algebras is that there are two generators--- the even generator $\R$ and the odd generator $\R^{0|1}.$  If $\cM$ is a supermanifold, then the even elements $\Ci\left(\cM\right)_0=\Hom_{\SMfd}\left(\cM,\R\right)$ of its superalgebra of smooth functions are smooth functions to $\R,$ whereas its odd elements $\Ci\left(\cM\right)_1=\Hom_{\SMfd}\left(\cM,\R^{0|1}\right)$ correspond to smooth functions to the odd line $\R^{0|1}.$ In this formalism, there are no in-homogeneous elements, and so there is an underlying $\Z_2$-graded set of $\Ci\left(\cM\right).$ The free $\SCi$-algebra on $n$ even and $m$ odd generators is
$$\Ci\{x_1,\ldots,x_n;\eta_1,\ldots,\eta_m\}\cong \Ci\left(\R^{n|m}\right)\cong \Ci\left(\R^n\right) \underset{\R} \otimes \wedge^\bullet\left(\R^m\right),$$ where $\wedge^\bullet\left(\R^m\right)$ is the exterior algebra with its Koszul grading and the tensor product is as supercommutative $\R$-algebras.

One of the main reasons that $\Ci$-algebras are reasonable to work with is the following theorem:

\begin{theorem}\cite[Proposition 1.2]{MSIA}, \cite[Corollary 2.116]{dg1}
Let $I$ be an ideal of the underlying $\R$-algebra $\A_\sharp$ of a classical $\Ci$-algebra $\A.$ Then $I$ is a $\Ci$-congruence and $\A_\sharp/I$ carries the canonical structure of a $\Ci$-algebra making $\A \to \A_\sharp/I=:\A/I$ a $\Ci$-homomorphism. Similarly, for $\A$ a classical $\SCi$-algebra with underlying supercommutative $\R$-algebra $\A_\sharp,$ and $I$ a $\Z_2$ \emph{homogeneous} ideal of $\A_\sharp.$
\end{theorem}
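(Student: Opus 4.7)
The heart of the classical statement is the fact that any $\R$-algebra ideal $I \subseteq \A_\sharp$ in a $\Ci$-ring is automatically closed under all smooth operations. The plan is to establish this via Hadamard's lemma. Concretely, given any $f \in \Ci(\R^n)$, the smooth function $F(x,y) := f(x)-f(y)$ vanishes on the diagonal, so the standard integral form of Hadamard's lemma produces smooth functions $g_1,\dots,g_n \in \Ci(\R^{2n})$ with
\[
f(x)-f(y) \;=\; \sum_{i=1}^n (x_i - y_i)\, g_i(x,y), \qquad g_i(x,y) = \int_0^1 \partial_i f\bigl(y+t(x-y)\bigr)\,dt.
\]
Interpreting this identity inside $\A$ via the $\Ci$-structure (which by assumption handles all such smooth functions) yields, for any tuples $a,a' \in \A^n$ with $a_i-a_i' \in I$,
\[
f_\A(a) - f_\A(a') \;=\; \sum_{i=1}^n (a_i - a_i')\cdot (g_i)_\A(a,a'),
\]
and the right-hand side lies in $I$ because $I$ is an ordinary $\R$-algebra ideal. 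This is precisely the condition that $I$ is a $\Ci$-congruence: for every $n$-ary smooth operation $f$, the induced map on $(\A/I)^n$ is well-defined. One then defines $f_{\A/I}$ as the induced map and checks functoriality in $f$ from the functoriality in $\A$, producing a $\Ci$-algebra structure on $\A_\sharp/I$ for which the quotient map $\A \to \A/I$ is a $\Ci$-homomorphism (indeed, it is the universal such map annihilating $I$).

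For the $\SCi$-case, the plan is to reduce to the above by exploiting the explicit description
\[
\Ci(\R^{n|m}) \;\cong\; \Ci(\R^n) \otimes_\R \wedge^\bullet(\R^m).
\]
Every smooth superfunction $f \in \Ci(\R^{n|m})$ thus expands uniquely as $f = \sum_{S \subseteq \{1,\dots,m\}} f_S(x)\, \eta_S$ with $f_S \in \Ci(\R^n)$. Given even elements $a_i,a_i' \in \A_0$ with $a_i-a_i' \in I_0$ and odd elements $b_j,b_j' \in \A_1$ with $b_j-b_j' \in I_1$, I would apply the classical Hadamard argument above in each even coefficient $f_S$, and a straightforward signed polynomial expansion in the odd variables (using the fact that the exterior algebra is a polynomial algebra in the $\eta_j$'s, so differences factor termwise). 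The $\Z_2$-homogeneity of $I$ is exactly what is needed to keep the even and odd parts of the resulting elements in $I_0$ and $I_1$ respectively. Putting these together shows $f_\A(a,b) - f_\A(a',b') \in I$, giving the super-$\Ci$-congruence property, and the quotient $\SCi$-structure is then defined in the same way as before.

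The main obstacle is really bookkeeping rather than content. In the even case Hadamard is clean and immediate. In the super case one has to be careful about signs and to verify that the homogeneity of $I$ interacts correctly with the odd polynomial expansion so that the decomposition $\A_\sharp/I = (\A_0/I_0) \oplus (\A_1/I_1)$ is preserved by the induced operations. Once one checks this for the generating operations (products in the even and odd variables, and each $f_S \in \Ci(\R^n)$), functoriality of the $\SCi$-action on $\A$ ensures the analogous functoriality on the quotient, and the universal property follows as in the purely even setting.
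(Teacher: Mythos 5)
Your proof is correct and is essentially the standard argument: the paper itself does not prove this statement but defers to \cite[Proposition 1.2]{MSIA} and \cite[Corollary 2.116]{dg1}, and the Hadamard-lemma factorization $f(x)-f(y)=\sum_i (x_i-y_i)g_i(x,y)$ is exactly the mechanism used there, with the super case handled by expanding in the odd generators and telescoping, just as you describe. The role you assign to $\Z_2$-homogeneity (ensuring $I=(I\cap\A_0)\oplus(I\cap\A_1)$ so the quotient retains its grading and hence its $\SCi$-structure) is also the correct one.
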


The category of classical $\Ci$-algebras is presentable, so, in particular, it has coproducts. Coproducts in the category of commutative $k$-algebras are computed by tensor product: If $\A$ and $\B$ are commutative rings, their coproduct is $\A \underset{k} \otimes \B$--- the tensor product of $k$-modules equipped with an induced $k$-algebra structure. This is no longer the case for (classical) $\Ci$-algebras, and this is a good thing, since we have for $n,m > 0,$
$$\Ci\left(\R^n\right) \underset{\R} \otimes \Ci\left(\R^m\right) \subsetneq \Ci\left(\R^{n+m}\right).$$ But $\Ci\left(\R^n\right)$ and $\Ci\left(\R^m\right)$ are the free $\Ci$-algebras on $n$ and $m$ generators respectively, so their coproduct must be the free $\Ci$-algebra on $\left(n+m\right)$ generators, namely $\Ci\left(\R^{n+m}\right).$ We denote the coproduct in the category of classical $\Ci$-rings by $\oinfty.$ Then we have
$$\Ci\left(\R^n\right) \oinfty \Ci\left(\R^m\right) \cong \Ci\left(\R^{n+m}\right).$$ Similarly, we denote pushouts by
$$\xymatrix{\A \ar[d] \ar[r] & \cC \ar[d] \\ \B \ar[r] & \B \underset{\A} \oinfty \cC.}$$

The story is completely analogous for classical $\SCi$-algebras, and we denote the coproduct again by $\oinfty,$ which is justified since the inclusion $$\Alg_{\Ci}\left(\Set\right) \hookrightarrow \Alg_{\SCi}\left(\Set\right)$$ preserve coproducts.

The tensor product $\oinfty$ is remarkably well-behaved.

\begin{proposition}\cite[Chapter 1, Theorem 2.8]{MSIA}
If $f:M \to N$ and $g:L \to N$ are transverse maps of smooth manifolds, then
$$\Ci\left(M \times_N L\right) \cong \Ci\left(M\right) \underset{\Ci\left(N\right)} \oinfty \Ci\left(L\right).$$
\end{proposition}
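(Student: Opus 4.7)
\medskip

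\noindent\textbf{Proof plan.} The plan is to verify that $\Ci(M \times_N L)$ satisfies the universal property of the $\Ci$-pushout $\Ci(M) \oinfty_{\Ci(N)} \Ci(L)$. There is a canonical $\Ci$-homomorphism
$$\Ci(M) \underset{\Ci(N)}\oinfty \Ci(L) \longrightarrow \Ci(M \times_N L)$$
induced by the two projections; I will show it is an isomorphism by reducing to a case where one of the two legs is a closed embedding, and then identifying $\Ci$-ideals.

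\medskip

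\noindent\textbf{Step 1 (Reduction to one closed embedding).} I would first observe that $M \times_N L$ can be computed as the iterated pullback
$$M \times_N L \;\simeq\; (M \times L) \underset{N \times N}{\times} N,$$
where the second map is the diagonal $\Delta\colon N \to N \times N$, which is a closed embedding. Transversality of $(f,g)$ is equivalent to transversality of $(f\times g,\Delta)$. Since pushouts of $\Ci$-rings likewise compose, and since the product case $M \times L$ corresponds to the transverse pullback over a point (where the assertion reduces to $\Ci(M\times L) \cong \Ci(M) \oinfty \Ci(L)$, which is proved by an easy Hadamard/Taylor remainder argument on Cartesian spaces and extended to general manifolds via closed embeddings into Euclidean space), I may assume without loss of generality that $f\colon M\hookrightarrow N$ is a closed embedding transverse to $g\colon L\to N$.

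\medskip

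\noindent\textbf{Step 2 (Reduction to quotients by $\Ci$-ideals).} Under the closed embedding assumption, $\Ci(M) \cong \Ci(N)/I_M$, where $I_M \subseteq \Ci(N)$ is the $\Ci$-ideal (equivalently, the ordinary ideal, by the theorem on $\Ci$-congruences stated earlier) of smooth functions vanishing on $M$. This uses the standard result that a closed submanifold's ring of smooth functions is recovered as a quotient by the ideal of functions vanishing on it (a consequence of Seeley-type extension, or equivalently, of the existence of smooth partitions of unity and tubular neighborhoods). Because pushouts of $\Ci$-rings are compatible with $\Ci$-quotients, the pushout becomes
$$\Ci(M) \underset{\Ci(N)}\oinfty \Ci(L) \;\cong\; \Ci(L)\big/\, g^\ast(I_M)\!\cdot_{\infty}\! \Ci(L),$$
where $g^\ast(I_M)\!\cdot_{\infty}\!\Ci(L)$ denotes the $\Ci$-ideal of $\Ci(L)$ generated by $g^\ast(I_M)$.

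\medskip

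\noindent\textbf{Step 3 (Transversality identifies the ideals).} The transversality hypothesis guarantees that $g^{-1}(M)\subseteq L$ is a closed submanifold of the expected codimension, so again by the closed-embedding theorem,
$$\Ci(M\times_N L) \;=\; \Ci(g^{-1}(M)) \;\cong\; \Ci(L)\big/ I_{g^{-1}(M)}.$$
Thus the proposition reduces to the equality of $\Ci$-ideals
$$g^\ast(I_M) \!\cdot_{\infty}\! \Ci(L) \;=\; I_{g^{-1}(M)}\quad \text{in } \Ci(L).$$
The inclusion $\subseteq$ is automatic. For $\supseteq$, I would argue locally: by transversality, around each point $p\in g^{-1}(M)$ I can choose coordinates on $N$ so that $M$ is cut out by $h_1,\dots,h_k$ with linearly independent differentials transverse to $dg$, and then $g^\ast h_1,\dots,g^\ast h_k$ form part of a coordinate system on $L$ near $p$ with zero set $g^{-1}(M)$. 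A $\Ci$-version of Hadamard's lemma (iterated) shows that any smooth function on $L$ vanishing on $g^{-1}(M)$ lies, locally, in the $\Ci$-ideal generated by $g^\ast h_1,\dots,g^\ast h_k$. Globalizing via a smooth partition of unity subordinate to such charts (and using that $\Ci$-ideals are closed under the $\Ci$-operations, in particular under multiplication by bump functions) yields the desired global identity.

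\medskip

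\noindent\textbf{Main obstacle.} The one substantive step is Step 3: passing from the local Hadamard-type identification of the ideal of functions vanishing on $g^{-1}(M)$ to a \emph{global} identity of $\Ci$-ideals. This is where the special features of $\Ci$-algebras — existence of bump functions and the fact that $\Ci$-ideals are genuinely ideals (the congruence theorem cited above) — are used decisively, and this is precisely the input that fails in the purely polynomial setting and causes $\oinfty$ to differ from $\otimes_\R$.
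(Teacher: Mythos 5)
Your proof is correct and follows essentially the same route as the source the paper relies on: the paper does not reprove this statement but cites \cite[Chapter 1, Theorem 2.8]{MSIA}, and in its proof of the supergeometric analogue it describes exactly your decomposition --- the closed-embedding case (your Steps 2--3, quotients by vanishing ideals plus the local Hadamard/partition-of-unity identification of $g^\ast\left(I_M\right)\cdot_{\infty}\Ci\left(L\right)$ with $I_{g^{-1}\left(M\right)}$) together with the case over a point, assembled via the diagonal $N \to N\times N$. The only cosmetic difference is in the product case, which you extend from Cartesian spaces to general manifolds via closed embeddings into Euclidean space, whereas the paper's route (cf.\ Lemma \ref{lem:whit}) deduces it formally from the fact that every manifold is a retract of an open subset of $\R^n$.
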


\begin{lemma}\label{lem:whit}
Every supermanifold is a retract of an open subset of $\R^{n|m}$ for some $n$ and $m.$
\end{lemma}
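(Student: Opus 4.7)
The plan is to combine Batchelor's theorem for smooth supermanifolds with the classical Whitney embedding theorem and tubular neighborhood theorem, together with the fact that any smooth vector bundle is complemented inside a trivial bundle. The overall strategy reduces the claim for a general supermanifold to a statement about its body and an associated vector bundle, both of which live in the well-understood ordinary smooth setting.

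First, by Batchelor's theorem, any smooth supermanifold $\cM$ of dimension $(p|q)$ is (non-canonically) isomorphic to the linear supermanifold $\Pi E$, where $M := |\cM|_0$ is the body and $E \to M$ is a smooth real vector bundle of rank $q$; concretely, the structure sheaf of $\Pi E$ is $\Gamma\left(\wedge^\bullet E^\ast\right)$. Second, since $M$ admits a finite good cover (or, more generally, partitions of unity), the bundle $E$ has a complement: there exists a bundle $E' \to M$ such that $E \oplus E' \cong M \times \R^{q+q'}$ is trivial. The inclusion $E \hookrightarrow E \oplus E'$ together with the projection $E \oplus E' \to E$ induces, after applying the functor $\Pi$, an inclusion of supermanifolds $\Pi E \hookrightarrow M \times \R^{0|q+q'}$ together with a retraction.

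Third, by the classical Whitney embedding theorem, $M$ embeds as a closed submanifold of $\R^n$ for some $n$, and by the tubular neighborhood theorem there is an open neighborhood $U \subseteq \R^n$ of $M$ and a smooth retraction $r:U \to M$. Taking the product with $\R^{0|q+q'}$, the map $r \times \id$ exhibits $M \times \R^{0|q+q'}$ as a retract of the open subset $U \times \R^{0|q+q'} \subseteq \R^{n \mid q+q'}$. Composing the two retractions yields
\[
\cM \;\cong\; \Pi E \;\hookrightarrow\; M \times \R^{0|q+q'} \;\hookrightarrow\; U \times \R^{0|q+q'}
\]
with a retraction back to $\cM$, so setting $m := q+q'$ proves the lemma.

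The only nontrivial inputs are Batchelor's theorem and bundle complementability, both standard in the literature on smooth supermanifolds; everything else is a direct transfer from the even Whitney embedding theorem via the product construction. The main conceptual subtlety worth spelling out in the written proof is that Batchelor's isomorphism and the resulting trivialization depend on choices (and so are not canonical), but since the statement only asks for the existence of the retraction datum this poses no obstacle.
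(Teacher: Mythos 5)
Your proof is correct and follows essentially the same route as the paper's: Batchelor's theorem to write $\cM \cong \Pi E$, complementing $E$ inside a trivial bundle so that $\Pi E$ is a retract of $M \times \R^{0|m}$, and Whitney plus the tubular neighborhood theorem to realize $M$ as a retract of an open subset $U \subseteq \R^n$, whence $\cM$ is a retract of $U \times \R^{0|m} \subseteq \R^{n|m}$. No substantive differences.
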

\begin{proof}
Let $M$ be the underlying even manifold of a supermanifold $\cM.$ By Batchelor's theorem \cite{Batchelor}, $\cM \cong \Pi E$ for $E$ a finite dimensional vector bundle over $M.$ $E$ in turn is a direct summand of a trivial bundle $V$ of rank $m$ for some $m.$ By the tubular neighborhood theorem combined with Whitney's embedding theorem, write $M$ as a retract of an open subset $U$ of $\R^n.$ Then $\Pi V\cong M \times \R^{0|m}$ is a retract of $U \times \R^{0|m}$ which is an open subset of $\R^{n|m}.$ Since $E$ is a retract of $V,$ $\Pi E$ is a retract of $\Pi V,$ and the result follows.
\end{proof}

\begin{theorem}
If $f:\cM \to \cN$ and $g:\cL \to \cN$ are transverse maps of supermanifolds, then
$$\Ci\left(\cM \times_\cN \cL\right) \cong \Ci\left(\cM\right) \underset{\Ci\left(\cN\right)} \oinfty \Ci\left(\cL\right).$$
\end{theorem}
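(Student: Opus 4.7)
The strategy is to reduce the claim to the classical manifold version (the proposition immediately preceding Lemma \ref{lem:whit}) by exploiting the structure theorem Lemma \ref{lem:whit}. Every supermanifold is a retract of an open subset of a standard superdomain $\R^{n|m}$, and both the transverse fibered product of supermanifolds and the $\SCi$-coproduct are compatible with retracts (transversality being a pointwise condition on differentials, it is inherited by composition with retracts). Consequently the canonical comparison map $\Ci(\cM) \oinfty_{\Ci(\cN)} \Ci(\cL) \to \Ci(\cM \times_\cN \cL)$ is a retract of its analogue for open subsets of $\R^{n|m}$, and so it suffices to treat the case where each of $\cM$, $\cN$, $\cL$ is an open subset of a standard superdomain.

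For $U \subseteq \R^{n|m}$ open with body $U_0 \subseteq \R^n$, the $\SCi$-algebra $\Ci(U)$ decomposes as $\Ci(U_0) \otimes_\R \wedge^\bullet(\R^m)$. Since the inclusion $\Alg_{\Ci}(\Set) \hookrightarrow \Alg_{\SCi}(\Set)$ preserves coproducts (noted in the text before Lemma \ref{lem:whit}), the $\SCi$-coproduct $\oinfty$ of two such algebras splits as the $\Ci$-coproduct on the body factors tensored with the ordinary supercommutative tensor product over $\R$ of the exterior algebra factors. Transversality of $f$ and $g$ implies transversality of the induced body maps $f_0 : M \to N$ and $g_0 : L \to N$, so the classical proposition furnishes $\Ci(M) \oinfty_{\Ci(N)} \Ci(L) \cong \Ci(M \times_N L)$. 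Meanwhile the body of $\cM \times_\cN \cL$ is $M \times_N L$ and its odd part is obtained by pulling back the odd directions of $\cM$ and $\cL$ over $\cN$; matching this with the odd tensor-product factor of $\Ci(\cM) \oinfty_{\Ci(\cN)} \Ci(\cL)$ produces the isomorphism.

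The main obstacle I anticipate is the last matching step when $\cN$ carries nontrivial odd generators, since the identification of odd coordinates pulled back along $f$ and $g$ must be handled carefully (a priori the $\SCi$-pushout is not obviously the ``naive'' tensor product in this setting). I plan to dispatch this by appealing to the universal property of the $\SCi$-pushout: a morphism from $\Ci(\cM) \oinfty_{\Ci(\cN)} \Ci(\cL)$ into any test $\SCi$-algebra $\cA$ is exactly a compatible pair of $\SCi$-morphisms from $\Ci(\cM)$ and $\Ci(\cL)$ into $\cA$ over $\Ci(\cN)$, which by the functor-of-points perspective on supermanifolds (tested against $\Hom(\blank, \R^{p|q})$) matches the universal property of the transverse fibered product $\cM \times_\cN \cL$ on the geometric side.
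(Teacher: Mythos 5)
Your reduction is not the paper's, and it has two genuine gaps. First, the retract reduction does not actually put you in the situation your second paragraph needs. Lemma \ref{lem:whit} lets you write $\cM$ and $\cL$ as retracts of open subsets $U,W$ of superdomains, but the base $\cN$ cannot be replaced this way: if you fatten $\cN$ to an ambient open $V\subseteq\R^{n|m}$, the extended maps $U\to\cM\to\cN\hookrightarrow V$ and $W\to\cL\to\cN\hookrightarrow V$ both factor through $\cN$, so their differentials land in $T\cN$ and never span $TV$ when $\cN$ has positive codimension --- transversality over $V$ fails, and the comparison map for $\cM\times_\cN\cL$ is not exhibited as a retract of a comparison map for a transverse pullback of superdomains. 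If instead you keep $\cN$ unchanged, you have not reduced the base to a superdomain, so the splitting of $\oinfty$ over $\Ci\left(\cN\right)$ into a body factor and an exterior-algebra factor (the heart of your second paragraph) is unavailable. Second, your fallback via the universal property is circular: the pushout $\Ci\left(\cM\right)\underset{\Ci\left(\cN\right)}\oinfty\Ci\left(\cL\right)$ is characterized by maps into \emph{arbitrary} $\SCi$-algebras $\cA$, whereas the geometric universal property of the transverse fibered product (through full faithfulness of $\Ci$) only controls maps into algebras of the form $\Ci\left(\cS\right)$ for $\cS$ a supermanifold. The assertion that compatible pairs over $\Ci\left(\cN\right)$ into an arbitrary $\cA$ correspond to maps out of $\Ci\left(\cM\times_\cN\cL\right)$ \emph{is} the theorem; it does not follow formally, and already in the even case \cite[Chapter 1, Theorem 2.8]{MSIA} requires real analytic input.

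For contrast, the paper's proof isolates where the retract trick is legitimately used: following the structure of the proof in \cite{MSIA}, the statement is split into (a) the case where $g$ is a closed embedding, whose classical proof goes through essentially unchanged in the super setting, and (b) the absolute case $\cN=\ast$, i.e.\ that $\Ci$ sends binary products to coproducts. Only in case (b) is the retract argument invoked (via Lemma \ref{lem:whit}), and there no transversality issue arises because products over a point are automatically transverse. If you want to salvage your approach, you should reorganize it along these lines rather than trying to force the whole fibered product through the retract reduction.
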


\begin{proof}
Analyzing the proof of \cite[Chapter 1, Theorem 2.8]{MSIA}, it consists of two separate facts. Firstly, that the above holds when $g$ is a closed embedding, and secondly that it holds when $\cN$ is a point (i.e. binary products are sent to coproducts). The proof of the first fact goes through with virtually no modification in the supergoemetric setting. The proof of the second fact follows formally from the fact that every smooth manifold $M$ is a retract of an open subset of $\R^n.$ So we are done by Lemma \ref{lem:whit}.
\end{proof}

The following fact is fundamental:

\begin{proposition}
The canonical functors
$$\Ci\left(\blank\right):\Mfd \to \Alg_{\Ci}\left(\Set\right)$$
$$\Ci\left(\blank\right):\SMfd \to \Alg{\SCi}\left(\Set\right)$$
are fully faithful.
\end{proposition}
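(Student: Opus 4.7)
The plan is to prove the manifold case by a classical Milnor-style pointwise argument and to reduce the supermanifold case to an analysis of open subsets of $\R^{n|m}$ via Lemma \ref{lem:whit}. In both cases functoriality is immediate, so only faithfulness and fullness require work.

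For $\Ci(-)\colon \Mfd \to \Alg_{\Ci}(\Set)$, faithfulness is immediate: smooth functions separate points on any Hausdorff manifold, so if $f^\ast = g^\ast$ then $\varphi \circ f = \varphi \circ g$ for every $\varphi \in \Ci(N)$, forcing $f = g$. For fullness, given a $\Ci$-algebra homomorphism $\Phi\colon \Ci(N) \to \Ci(M)$, I would define $f\colon M \to N$ pointwise: for each $x \in M$ the composite $\operatorname{ev}_x \circ \Phi\colon \Ci(N) \to \R$ is an $\R$-algebra homomorphism, so by Milnor's exercise (which identifies $\R$-algebra homomorphisms out of $\Ci(N)$ with points of $N$) equals $\operatorname{ev}_y$ for a unique $y \in N$, and we set $f(x) := y$. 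By construction $\varphi \circ f = \Phi(\varphi) \in \Ci(M)$ for every $\varphi \in \Ci(N)$; embedding $N$ into $\R^k$ via Whitney, smoothness of $f$ follows from smoothness of its coordinate components, and $\Phi = f^\ast$ is then tautological.

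For $\Ci(-)\colon \SMfd \to \Alg_{\SCi}(\Set)$, I would reduce to the case of open subsets of $\R^{n|m}$. By Lemma \ref{lem:whit}, any supermanifold $\cM$ sits as a retract of an open subset $U \subseteq \R^{n|m}$ with retraction $r\colon U \to \cM$ and section $\iota\colon \cM \to U$ satisfying $r \circ \iota = \id_\cM$; a formal retract diagram chase deduces fully faithfulness of $\Ci(-)$ at $\cM$ from fully faithfulness at $U$ (for every test target $\cN$). For such $U$, writing $\Ci(U) \cong \Ci(U_0) \otimes_\R \wedge^\bullet \R^m$ with $U_0 \subseteq \R^n$ the underlying even open set, a $\SCi$-algebra homomorphism $\Phi\colon \Ci(U) \to \Ci(\cN)$ restricts on the even subalgebra $\Ci(U_0)$ to a classical $\Ci$-algebra homomorphism into $\Ci(\cN)_0$ which, composed with the reduction $\Ci(\cN)_0 \twoheadrightarrow \Ci(|\cN|_0)$ modulo nilpotents, yields by the even case a unique smooth map $|f|\colon |\cN|_0 \to U_0$. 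Together with the images $\theta_j := \Phi(\eta_j) \in \Ci(\cN)_1$ of the odd generators, the data $(\Phi(x_1),\ldots,\Phi(x_n);\theta_1,\ldots,\theta_m)$ packages, via the universal property of the corepresentable $\SCi$-algebra $\Ci(\R^{n|m})$, into a morphism $\cN \to \R^{n|m}$ whose reduction is $|f|$ and therefore factors through $U$; this is the candidate $f\colon \cN \to U$.

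To conclude I would verify $f^\ast = \Phi$ as $\SCi$-algebra maps. By construction they agree on the coordinate generators $x_j,\eta_j$, hence on the image of the restriction $\Ci(\R^{n|m}) \to \Ci(U)$; since this restriction is a $\Ci$-localization, and hence an epimorphism of $\SCi$-algebras, we conclude $f^\ast = \Phi$. Faithfulness at $U$ is a simpler version of the same bookkeeping. The principal technical input is Milnor's exercise in the even case; the main subtlety in the super case is the separation of the even (including nilpotent) data from the purely reduced geometric data, which is cleanly handled by the tensor decomposition of $\Ci(U)$ and the fact that the $\Ci$-localization $\Ci(\R^{n|m}) \to \Ci(U)$ is an epimorphism.
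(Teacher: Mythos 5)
The paper itself offers no proof of this proposition: it is recorded as a fundamental known fact (the even case is classical, and the super statement is folded into Theorem \ref{thm:4.51}, cited from the literature), so your argument has to stand on its own. Your even case does stand: faithfulness because smooth functions separate points, and fullness via Milnor's exercise applied to $\mathrm{ev}_x\circ\Phi$ together with a Whitney embedding to see smoothness of the resulting map, is the standard complete argument. The reduction of the super case to open subsets $U\subseteq\R^{n|m}$ via Lemma \ref{lem:whit} is also sound (a retract of a bijection in the arrow category is a bijection), and your concluding epimorphism trick is legitimate once one knows that $\Ci\left(\R^{n|m}\right)\to\Ci\left(U\right)$ is the $\SCi$-localization at a characteristic function $f$ with $U_0=f^{-1}\left(\R\setminus\{0\}\right)$; this is true (Moerdijk--Reyes) but is not a formal triviality for a general open set, so it should be stated rather than asserted.

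The genuine gap is at the crux of the super case: you say the tuple $\left(\Phi(x_1),\ldots,\Phi(x_n);\theta_1,\ldots,\theta_m\right)$ ``packages, via the universal property of the corepresentable $\SCi$-algebra $\Ci\left(\R^{n|m}\right)$, into a morphism $\cN\to\R^{n|m}$.'' The universal property of the free $\SCi$-algebra only produces a homomorphism of $\SCi$-algebras $\Ci\left(\R^{n|m}\right)\to\Ci\left(\cN\right)$ --- which you already have, namely $\Phi$ composed with restriction. Upgrading it to a morphism of supermanifolds $\cN\to\R^{n|m}$ is precisely the surjectivity of $\Hom_{\SMfd}\left(\cN,\R^{n|m}\right)\to\Hom_{\Alg_{\SCi}\left(\Set\right)}\left(\Ci\left(\R^{n|m}\right),\Ci\left(\cN\right)\right)$, i.e.\ fullness at the target $\R^{n|m}$, which is what you are trying to prove; as written the step is circular. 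The repair is to invoke explicitly the Leites/Kostant chart theorem, $\Hom_{\SMfd}\left(\cN,\R^{n|m}\right)\cong\Ci\left(\cN\right)_0^n\times\Ci\left(\cN\right)_1^m$, which is the super analogue of Milnor's exercise and is exactly the extra input your even-case template needs. With that citation in place, the remaining steps --- the reduced map landing in $U_0$ so that the morphism factors through the open sub-supermanifold $U$, and $f^\ast=\Phi$ via agreement on generators plus the epimorphism --- do go through.
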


We combine these results into the following theorem for reference

\begin{theorem}\cite[Theorem 4.51]{univ} \label{thm:4.51}
The canonical functor $\SCi:\SMfd \to \dgc^{op}$ is fully faithful and preserves transverse pullbacks.
\end{theorem}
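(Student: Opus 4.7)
I would decompose the functor in question as the composite
$$\SMfd \xrightarrow{\,\Ci(-)\,} \Alg_{\SCi}(\Set)^{op} \xhookrightarrow{\,\iota\,} \dgc^{op},$$
where $\iota$ regards a classical $\SCi$-superalgebra as a dg-$\SCi$-superalgebra concentrated in degree $0$. The proof then naturally splits into two independent claims: (a) $\SCi$ is fully faithful, and (b) $\SCi$ sends transverse pullbacks in $\SMfd$ to pushouts in $\dgc$.

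For (a), the first arrow is fully faithful by the proposition immediately preceding the theorem. For $\iota$, I would observe that for $A,B$ classical $\SCi$-superalgebras, viewed as dg-algebras concentrated in degree zero, the mapping space $\Map_{\dgc}(A,B)$ is discrete: there are no elements of nonzero degree in either the source or target to produce homotopies between two parallel maps, so this mapping space agrees with the set of classical $\SCi$-homomorphisms. Hence $\iota$ is fully faithful, and composition gives full faithfulness of $\SCi$.

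For (b), given transverse maps $f:\cM\to\cN$ and $g:\cL\to\cN$, the previous theorem supplies an isomorphism of classical $\SCi$-superalgebras
$$\Ci(\cM \times_\cN \cL) \;\cong\; \Ci(\cM) \underset{\Ci(\cN)}{\oinfty} \Ci(\cL),$$
so the task reduces to showing that this classical pushout square is also a homotopy pushout in $\dgc$. This is the crux of the argument and the main obstacle. My approach is a Koszul-style cofibrant replacement: using Lemma \ref{lem:whit}, I would reduce locally to the case in which $\cN$ is open in some $\R^{n|m}$ and $f$ (or $g$) is, after a standard submersion/section maneuver, cut out by a finite collection of even and odd defining functions. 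Resolving $\Ci(\cM)$ over $\Ci(\cN)$ by a Koszul complex built from these defining functions yields a cofibrant replacement in $\dgc$. Transversality ensures that the defining functions remain a regular system after base change along $g$, so the resulting Koszul complex over $\Ci(\cL)$ is acyclic in positive cohomological degrees, and its $H^0$ is precisely $\Ci(\cM)\oinfty_{\Ci(\cN)}\Ci(\cL)$. This identifies the classical $\SCi$-pushout with the derived one, and the super variables contribute only free (hence trivially cofibrant) factors, so the same argument goes through verbatim in the $\Z_2$-graded setting.
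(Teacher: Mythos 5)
Your overall route is the same as the one the paper (following \cite[Theorem 4.51]{univ}) takes: factor through classical $\SCi$-algebras, use full faithfulness there plus discreteness of mapping spaces into $0$-truncated algebras, and then show the classical $\oinfty$-pushout attached to a transverse pullback is already a homotopy pushout by Koszul-resolving the defining functions, with Lemma \ref{lem:whit} handling the super reduction. Part (a) is fine (though the reason the mapping space is discrete is that the \emph{target} is $0$-truncated, so maps out of a cofibrant replacement of the source factor through $\pi_0$; your phrase ``no elements of nonzero degree in either the source or target'' ignores that the derived mapping space is computed after cofibrantly replacing the source, which is not concentrated in degree $0$).

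The genuine gap is in (b), at the step ``transversality ensures that the defining functions remain a regular system after base change, so the resulting Koszul complex is acyclic in positive degrees.'' Transversality is a condition \emph{near the zero locus} only; it does not make the defining functions $h_i=f_i-g_i$ a globally regular sequence in $\Ci(\cM\times\cL)$, and in the $\Ci$-setting regularity of the intermediate quotients $\Ci(\cM\times\cL)/(h_1,\dots,h_i)$ is delicate (zero-divisor behaviour is governed by germs, and the partial zero sets away from the final locus can be arbitrary closed sets). This is precisely why the paper's corresponding argument (Lemma \ref{lem:suplocsubm}, feeding into Theorem \ref{thm:unramified}) does not invoke global regularity: it checks the quasi-isomorphism \emph{locally}, using the local normal form of submersions so that the components become part of a coordinate system (hence a regular sequence, via Proposition \ref{prop:kosz}) near the zero locus, and acyclicity of the Koszul complex away from it, and then globalizes by showing the Koszul-type sheaf $\cO_\varphi$ is a homotopy sheaf (softness of $\cO_\cM$, the spectral sequence of the \v{C}ech double complex). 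That local-to-global argument is the main content and is entirely absent from your sketch; ``reduce locally'' cannot be justified by Lemma \ref{lem:whit}, which gives a global retract, not an open cover, and homotopy pushouts of function algebras do not glue for free. Relatedly, your assertion that the Koszul complex ``yields a cofibrant replacement in $\dgc$'' needs the base to be cofibrant (or a left-properness argument); the paper is careful to work with $\Ci(\R^{p|q})$ and quasi-free extensions, and to transfer left properness from the underlying cdga model structure (Proposition \ref{prop:pushlocuscalg}), whereas for a general supermanifold $\cN$ the cofibrancy of $\Ci(\cN)$ is exactly what the retract trick must supply. So the plan points in the right direction, but as written the crucial step would not go through without the local normal form plus homotopy-sheaf gluing.
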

\subsubsection{$\Ci$-completion}
Denote by $\tau:\mathsf{SComm}_{\R} \to \SCi$ the evident morphism of algebraic theories from the theory of supercommutative $\R$-algebras to the theory of $\SCi$-algebras. By Corollary \ref{cor:completioncor} there is an induced adjunction between their associated categories of algebras
$$\xymatrix@C=2.5cm{\mathsf{SCAlg}_{\R} \ar@<+1ex>[r]^-{\widehat{\left(\blank\right)}}  & \SCi\mathsf{Alg} \ar@<+1ex>[l]^-{\left(\blank\right)_\sharp}}$$
where $\left(\blank\right)_\sharp=\tau^*$ and $\widehat{\left(\blank\right)}=\tau_!.$
\begin{definition}
Given a supercommutative $\R$-algebra $\A,$ its \textbf{$\Ci$-completion} is $\widehat{\A}.$ Given a (homogeneous) ideal $I$ of $\A,$ denote by $\widehat{I}$ the extension of $I$ along
$$\A \to \A \widehat{A}_\sharp,$$ the unit of the adjunction $\tau_! \dashv \tau^*.$
\end{definition}

\begin{example}
Let $\A$ be a finitely generated (classical) supercommutative $\R$-algebra. Then $\A$ has a presentation as $\R\left[x_1,\ldots,x_n;\theta_1,\ldots,\theta_m\right]/I,$ for a homogeneous ideal. The $\Ci$-completion of $\A$ is given as
$$\widehat{A}=\Ci\left(\R^{m|n}\right)/\widehat{I}.$$
\end{example}

The $\Ci$-completion can sometimes be destructive, as the following example illustrates:

\begin{example}
Consider $\mathbb{C}$ as a commutative $\R$-algebra. Notice $$\mathbb{C}\cong \R\left[x\right]/\left(x^2+1\right).$$ Since $\R\left[x\right]$ is free and $\widehat{\left(\blank\right)}$ is a left adjoint, $\widehat{\R\left[x\right]}=\Ci\{x\}=\Ci\left(\R\right).$ The extension of the principal ideal $\left(x^2+1\right)$ is still a principal ideal and generated by the same element $x^2+1.$ But $x^2+1$ is invertible on $\R,$ as it is never zero, so is a unit in $\Ci\left(\R\right),$ hence $\widehat{\left(x^2+1\right)}=\Ci\{x\},$ and 
$\widehat{\mathbb{C}}=0.$
\end{example} 
\subsubsection{Localizations}
\begin{definition}
Let $\A$ be a classical $\SCi$-algebra, and let $a \in \A_0.$ A morphism $\varphi:\A \to \B$ of $\SCi$-rings is a \textbf{localization of $\A$ at $a$} if $\B$ fits into a pushout diagram
$$\xymatrix{\Ci\left(\R\right) \ar[r]^-{\tilde a} \ar[d] & \A \ar[d]^-{\varphi}\\ \Ci\left(\R \setminus \left\{0\right\}\right) \ar[r] & \B.}$$
In this case we write $\cB=\A\left[a^{-1}\right].$ If $S$ is a multiplicatively closed subset of $\A_0,$ then the \textbf{localization of $\A$ at $S$} is the filtered colimit
$$\A_S:=\underset{a \in S} \colim \A\left[a^{-1}\right].$$ The localization of $\A$ at a prime ideal $P$ is by definition the localization of $\A$ at $\A\setminus P.$
\end{definition}

\begin{lemma}[{Lemma \cite[5.1.13]{Nuiten}}]\label{lem:5.1.13}
Any localization $\A \to \A\left[a^{-1}\right]$ is flat.
\end{lemma}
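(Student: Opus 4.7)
The plan is to reduce the claim to flatness of the universal map $\Ci\left(\R\right)\to\Ci\left(\R\setminus\{0\}\right)$ via base change, and then to establish that universal case by exhibiting $\Ci\left(\R\setminus\{0\}\right)$ as a filtered colimit of flat $\Ci\left(\R\right)$-algebras. Since $\widehat\otimes$ is compatible with pushouts of $\SCi$-algebras, the defining pushout square yields, for any $\A$-module $M$ viewed as a $\Ci\left(\R\right)$-module via $\tilde a$, a canonical equivalence
\[
\A\left[a^{-1}\right]\;\widehat\otimes_\A\;M\;\simeq\;\Ci\left(\R\setminus\{0\}\right)\;\widehat\otimes_{\Ci\left(\R\right)}\;M.
\]
Consequently, flatness of $\A\to\A\left[a^{-1}\right]$ would follow immediately once the universal map is shown to be flat, by testing against short exact sequences of $\A$-modules.

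For the universal step, I would choose a cofinal family of smooth nonnegative functions $g_n \in \Ci\left(\R\right)$ vanishing only at $0$ (e.g.\ with $g_n \geq 1$ on $\R\setminus\left(-1/n,1/n\right)$), and consider the filtered system of $\Ci$-localizations $\Ci\left(\R\right)\left\{g_n^{-1}\right\}$. A partition-of-unity argument on $\R\setminus\{0\}$ identifies $\colim_n \Ci\left(\R\right)\left\{g_n^{-1}\right\}$ with $\Ci\left(\R\setminus\{0\}\right)$. Each approximating algebra $\Ci\left(\R\right)\left\{g_n^{-1}\right\}$ is flat over $\Ci\left(\R\right)$: because $g_n$ is nowhere zero on the open set on which it is inverted, the underlying module structure of the $\Ci$-localization agrees with the algebraic localization $\Ci\left(\R\right)\left[g_n^{-1}\right]$; inversion of a regular element in a commutative ring is classically flat. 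Since filtered colimits of flat modules remain flat, the universal map is flat, completing the argument.

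The main obstacle will be the interplay between the algebraic tensor product (underlying classical flatness) and the complete tensor product $\widehat\otimes$ used for quasi-coherent sheaves: one must check that ordinary flatness of the approximating $\Ci\left(\R\right)\left[g_n^{-1}\right]$ yields exactness of $\widehat\otimes$. This should reduce to verifying that the reflector $\Mod_{\Ci\left(\R\right)}\to\widehat{\Mod}_{\Ci\left(\R\right)}$ acts trivially on these particular localizations, which is reasonable since each $\Ci\left(\R\right)\left\{g_n^{-1}\right\}$ already arises as global sections of a sheaf and hence is complete. A secondary point is the promotion from $\Ci$- to $\SCi$-algebras, but this is automatic: $a$ is even, the pushout only affects the even sector, and the Koszul $\Z_2$-grading is preserved throughout.
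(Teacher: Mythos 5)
The paper does not prove this lemma; it is imported from Nuiten's thesis, where the argument (following Moerdijk--Reyes) identifies the $\Ci$-localization with an \emph{algebraic} localization at a suitable multiplicative set. Measured against that, your proposal has two genuine gaps. First, the base-change reduction does not work as stated: the defining square exhibits $\A\left[a^{-1}\right]$ as $\A \underset{\Ci\left(\R\right)}\oinfty \Ci\left(\R\setminus\left\{0\right\}\right)$, and $\oinfty$ is emphatically not the ordinary relative tensor product of the underlying rings (that discrepancy is the whole point of $\Ci$-rings). So you cannot deduce $\A\left[a^{-1}\right]\otimes_{\A} M \simeq \Ci\left(\R\setminus\left\{0\right\}\right)\otimes_{\Ci\left(\R\right)} M$ for the ordinary tensor product; and flatness, which is what the lemma asserts and what is used downstream (e.g.\ in the strong-map characterization of Proposition \ref{prop:locz}), is exactness of the ordinary $\otimes_{\A}$, not of $\widehat\otimes$. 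Substituting $\widehat\otimes$ changes the statement being proved rather than proving it.

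Second, the key claim in your universal step is false: the $\Ci$-localization at a single element does \emph{not} agree with the algebraic localization at that element. Since each $g_n$ vanishes only at $0$, every $\Ci\left(\R\right)\left\{g_n^{-1}\right\}$ is just $\Ci\left(\R\setminus\left\{0\right\}\right)$ again (your filtered system is constant), whereas $\Ci\left(\R\right)\left[g_n^{-1}\right]$ is a proper subring: for instance $e^{1/x^2}$ lies in $\Ci\left(\R\setminus\left\{0\right\}\right)$ but is not of the form $f/x^n$ with $f$ smooth on $\R$, since $x^n e^{1/x^2}$ blows up at the origin. This failure is exactly why the lemma has content. The correct route is to show that $\Ci\left(\R\setminus\left\{0\right\}\right)$ is the algebraic localization of $\Ci\left(\R\right)$ at the multiplicative set $S$ of \emph{all} smooth functions with no zero on $\R\setminus\left\{0\right\}$ (for each $h$ one must produce $g\in S$ vanishing to infinite order at $0$ fast enough that $gh$ extends by zero), and more generally that $\A\left[a^{-1}\right]\cong\A_{S_a}$ for finitely generated $\A$, passing to a filtered colimit of finitely generated sub-$\Ci$-rings in general; flatness then follows because algebraic localization at a multiplicative set is flat and filtered colimits of flat maps are flat. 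Your outline supplies neither the correct multiplicative set nor a valid reduction to the universal case.
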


\begin{corollary}
Any localization of a $\Ci$-algebra is flat.
\end{corollary}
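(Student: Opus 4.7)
The plan is to reduce the statement of the corollary to the single-element case already handled by Lemma~\ref{lem:5.1.13}, by exhibiting an arbitrary localization as a filtered colimit of such localizations and then invoking the fact that flatness is stable under filtered colimits.

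First I would unpack the definition just introduced: for a multiplicatively closed subset $S \subseteq \A_0$, one has by construction
$$\A_S \;=\; \underset{a \in S}\colim \; \A\left[a^{-1}\right],$$
which is a filtered colimit, since $S$ is directed under the partial order $a \le b \iff a \mid b$ with transition maps $\A[a^{-1}] \to \A[(ab)^{-1}]$ themselves localizations at a single element (this last point uses that localizations compose, which is formal from the pushout definition). The prime-ideal case $\A_P$ is then subsumed by taking $S = \A_0 \setminus P$. So the only remaining content is that a filtered colimit (over $S$) of flat maps out of $\A$ is again flat.

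Second, I would invoke Lemma~\ref{lem:5.1.13} to conclude that each $\A \to \A[a^{-1}]$ in this diagram is flat, and then appeal to the general principle that a filtered colimit of flat $\A$-modules is flat. In our setting this can be seen concretely: the tensor product $\blank \underset{\A}\otimes \blank$ is a left adjoint in each variable and hence commutes with filtered colimits, while filtered colimits of short exact sequences of $\A_\sharp$-modules are exact, so tensoring with the colimit $\A_S$ preserves monomorphisms.

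The main obstacle, such as it is, is bookkeeping rather than substance: one must be careful that the notion of flatness used in the conclusion coincides with the one used in Lemma~\ref{lem:5.1.13}. Since both reduce to flatness of the underlying map of supercommutative $\R$-algebras on modules (the $\Ci$-module structure is determined by the $\A_\sharp$-module structure for classical $\Ci$-algebras), this matching is automatic, and the argument sketched above goes through.
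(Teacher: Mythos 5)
Your argument is correct and is essentially the argument the paper intends: the corollary is stated without proof precisely because, as you observe, a general localization $\A_S$ is by definition a filtered colimit of single-element localizations $\A\left[a^{-1}\right]$, each flat by Lemma \ref{lem:5.1.13}, and flatness is preserved under filtered colimits since $\blank \underset{\A}\otimes \blank$ commutes with them and filtered colimits of modules are exact. Your closing remark that flatness here just means flatness of the underlying map of (super)commutative $\R$-algebras is also the right reading, so nothing is missing.
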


\subsection{Reduced $\Ci$-algebras}

\begin{definition}
Let $\A$ be a classical $\Ci$-algebra, and let $x:\A \to \R$ be an $\R$-point of $\A.$ Let $\mathfrak{m}_x$ be the maximal ideal $\mathfrak{m}_x:=\ker\left(x\right).$ Denote the natural morphism to the localization by $$\germ_x:\A \to \A_x$$ where $\A_x$ is the $\Ci$-localization of $\A$ at $\mathfrak{m}_x.$ Given an element $a \in \A,$ its \textbf{germ at $x$} is $\germ_x\left(a\right).$ Similarly, given an ideal $I$ of $\A,$ its \textbf{germ at $x$}, $I_x$, is the extension of $I$ along the homomorphism $\germ_x.$
\end{definition}

\begin{lemma}\label{lem:localq}
Let $I$ be an ideal of a classical $\Ci$-algebra $\A.$ Let $x:\A/I \to \R$ be an $\R$-point. Then
$$\left(\A/I\right)_x \cong \A_x/I_x.$$
\end{lemma}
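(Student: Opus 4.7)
The plan is to deduce everything from the defining pushout description of localization at an element together with two structural facts recorded earlier: that ideal-quotients of classical $\Ci$-algebras coincide with the underlying $\R$-algebra quotient equipped with its induced $\Ci$-structure, and that filtered colimits in $\Alg_{\Ci}\left(\Set\right)$ are computed on underlying sets.

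First I would lift the $\R$-point. The composite $\tilde{x}:\A\to\A/I\xrightarrow{x}\R$ satisfies $\tilde{x}|_I=0$, so $I\subseteq\mathfrak{m}_{\tilde{x}}:=\ker\tilde{x}$ and the quotient map identifies $\mathfrak{m}_x=\mathfrak{m}_{\tilde{x}}/I$. Consequently the multiplicative set $(\A/I)\setminus\mathfrak{m}_x$ is precisely the image of $\A\setminus\mathfrak{m}_{\tilde{x}}$, which will let me index the two filtered colimits defining $\A_x$ and $(\A/I)_x$ over the same diagram.

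Next I would handle a single element $a\in\A\setminus\mathfrak{m}_{\tilde{x}}$, with image $\bar{a}$ in $\A/I$. The diagram
$$\xymatrix{\Ci\left(\R\right) \ar[r]^-{a} \ar[d] & \A \ar[r] \ar[d] & \A/I \ar[d]\\ \Ci\left(\R\setminus\{0\}\right) \ar[r] & \A\left[a^{-1}\right] \ar[r] & \left(\A/I\right)\left[\bar{a}^{-1}\right]}$$
has the left square a pushout by definition, and the outer rectangle a pushout since $\Ci(\R)\to\A/I$ factors through $a$; hence the right square is a pushout. Because $\A\to\A/I$ is a surjection and pushouts preserve epimorphisms, the induced map $\A[a^{-1}]\to(\A/I)[\bar{a}^{-1}]$ is surjective, and its kernel is the $\Ci$-ideal generated by the image of $I$, i.e.\ $I[a^{-1}]$ in the notation of the lemma. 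This gives a canonical isomorphism $(\A/I)[\bar{a}^{-1}]\cong\A[a^{-1}]/I[a^{-1}]$.

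Finally I would pass to the filtered colimit over $a\in\A\setminus\mathfrak{m}_{\tilde{x}}$. Since filtered colimits in $\Alg_{\Ci}\left(\Set\right)$ commute with the forgetful functor to sets and in particular with quotients by (homogeneous) ideals,
$$\left(\A/I\right)_x \;=\; \underset{\bar{a}}\colim\,\left(\A/I\right)\left[\bar{a}^{-1}\right] \;\cong\; \underset{a}\colim\,\A\left[a^{-1}\right]/I\left[a^{-1}\right] \;\cong\; \A_x/I_x,$$
which is the desired isomorphism. The only real content beyond pushout bookkeeping is the identification of the kernel in the middle step — i.e.\ that the kernel of the pushout $\A[a^{-1}]\to(\A/I)[\bar{a}^{-1}]$ is genuinely $I[a^{-1}]$ and not an \emph{a priori} larger $\Ci$-ideal; this is precisely where the Moerdijk--Reyes style fact invoked before the lemma — that $\Ci$-quotients agree with underlying $\R$-algebra quotients — is indispensable.
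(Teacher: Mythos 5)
Your proposal is correct and follows essentially the same route as the paper: reduce to a single-element localization, realize it as a pushout along $\Ci\left(\R\right)\to\Ci\left(\R\setminus\{0\}\right)$, identify the localization of the quotient with the quotient of the localization by the extended ideal (using that $\Ci$-quotients agree with underlying $\R$-algebra quotients), and then pass to the filtered colimit over the multiplicative set. Your version just makes explicit the pushout-pasting, the lift of the $\R$-point, and the kernel identification that the paper leaves implicit.
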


\begin{proof}
Firstly, let $\pi:\A \to \A/I$ and let $a \in \A.$ Then
\begin{eqnarray*}
\left(\A/I\right)\left\{\pi\left(a\right)^{-1}\right\} & \cong & \left(\A/I\right) \underset{\Ci\left(\R\right)} \oinfty \Ci\left(\R\setminus \left\{0\right\}\right)\\
&\cong & \left(\A\underset{\Ci\left(\R\right)} \oinfty \Ci\left(\R\setminus \left\{0\right\}\right)\right)/\left(I^e\right)\\
&\cong & \A\left\{a^{-1}\right\}/I^e
\end{eqnarray*}
where $I^e$ is the extension of the ideal. The result now follows since any localization is a filtered colimit of localizations by a single element.
\end{proof}

\begin{definition}
Let $\A$ be a classical $\Ci$-algebra and let $I$ be an ideal of $\A.$ The ideal
$$I^{\wedge}=\left\{a \in \A\mspace{3mu}|\mspace{3mu} a_x \in I_x, \mspace{5mu} \forall \mspace{3mu} x:\A \to \R\right\}$$ is called the \textbf{germ-completion} of $I$. An ideal $I$ is called \textbf{germ-determined} if $I=I^{\wedge}.$ A classical $\Ci$-algebra is called germ-determined if its zero ideal is.
\end{definition}

\begin{definition}
An element $a$ in a classical $\Ci$-algebra $\A$ is \textbf{locally nilpotent} if for all $x:\A \to \R,$ $\germ_x\left(a\right)$ is nilpotent in $\A_x.$ Denote by $\Nil_{\Ci}\left(\A\right)$ the ideal of locally nilpotent elements.
\end{definition}

\begin{definition}
A classical $\Ci$-algebra is said to be \textbf{$\Ci$-reduced} if it has no non-zero locally nilpotent elements. 
\end{definition}

\begin{remark}
This is a different definition than \cite[Definition 2.1]{ringlocmr}. What we are defining should perhaps be called \emph{Archimidean reduced}
\end{remark}

\begin{remark}
Note that $\Nil_{\Ci}\left(\A\right)$ is germ-determined.
\end{remark}

\begin{lemma} \label{lem:nilradsame}
Let $\A$ be a finitely generated classical $\Ci$-algebra. Then $$\Nil_{\Ci}\left(\A\right)=\Nil\left(\A\right)^{\wedge},$$ i.e. an element $a \in \A$ is locally nilpotent if and only if it is in the germ-completion of the nilradical.
\end{lemma}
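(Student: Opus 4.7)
The plan is to reduce the claim to a purely local assertion at each $\R$-point $x$ of $\A$, namely that $\Nil(\A_x)=\Nil(\A)_x$, after which the lemma follows by unpacking the two definitions and quantifying over all $x:\A\to\R$.

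One direction is formal and holds for any finitely generated classical $\Ci$-algebra (indeed for any commutative ring): since $\Ci$-homomorphisms send nilpotents to nilpotents and the nilradical of any commutative ring is an ideal, one gets $\germ_x(\Nil(\A))\subseteq \Nil(\A_x)$, and hence the extension $\Nil(\A)_x$ is contained in $\Nil(\A_x)$. This immediately yields $\Nil(\A)^\wedge \subseteq \Nil_\Ci(\A)$, which is the easy inclusion of the lemma.

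For the reverse inclusion I would use finite generation in an essential way. Writing $\A\cong \Ci(\R^n)/I$ as a quotient of a free finitely generated $\Ci$-algebra, Lemma \ref{lem:localq} identifies $\A_x \cong \Ci(\R^n)_x/I_x$, and $\Ci(\R^n)_x$ is the ring of germs of smooth functions at $x\in\R^n$. Given $\alpha\in\Nil(\A_x)$, lift it to a germ $\germ_x(f)$ of some $f\in\Ci(\R^n)$; nilpotence means $\germ_x(f^k)\in I_x$ for some $k$, so there is a representative $g\in I$ with $f^k=g$ on some open neighbourhood $U$ of $x$. The trick is to globalize this local coincidence using a smooth bump function $\rho\in\Ci(\R^n)$ with $\rho\equiv 1$ on a smaller neighbourhood $V\subset U$ and $\supp\rho\subset U$. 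Then $(\rho f)^k=\rho^k f^k$ equals $\rho^k g$ on $U$ and is $0$ outside $U$, so the global identity $(\rho f)^k=\rho^k g$ holds in $\Ci(\R^n)$; since $\rho^k g\in I$, the class $\pi(\rho f)\in \A$ is genuinely nilpotent, i.e.\ lies in $\Nil(\A)$. Because $\rho\equiv 1$ near $x$, we have $\germ_x(\rho f)=\germ_x(f)$, so $\alpha=\germ_x(\rho f)$ belongs to $\Nil(\A)_x$.

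The only real obstacle is the last step, where one must ensure that the local equality $f^k=g$ near $x$ can be promoted to a global identity in $\Ci(\R^n)$ with right-hand side still in $I$; the bump function computation handles this cleanly precisely because $I$ is a two-sided ideal (so multiplying by $\rho^k$ preserves membership) and because $\Ci(\R^n)$ admits smooth partitions of unity, which is where the $\Ci$-structure, as opposed to just the underlying $\R$-algebra structure, becomes crucial. Finally, having established $\Nil(\A_x)=\Nil(\A)_x$ for every $\R$-point $x$, the identity $\Nil_\Ci(\A)=\Nil(\A)^\wedge$ is immediate from the definitions.
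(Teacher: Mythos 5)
Your proof is correct, and it takes a genuinely different route from the paper's. You reduce the lemma to the pointwise identity $\Nil\left(\A_x\right)=\Nil\left(\A\right)_x$ at every $\R$-point $x$, and you prove the nontrivial inclusion by hand: writing $\A\cong \Ci\left(\R^n\right)/I$, invoking Lemma \ref{lem:localq} together with the identification of $\Ci\left(\R^n\right)_x$ with the ring of germs at $x$, and then globalizing the local relation $f^k=g\in I$ by a bump function $\rho$, so that $\pi\left(\rho f\right)$ is genuinely nilpotent in $\A$ while $\germ_x\left(\rho f\right)=\germ_x\left(f\right)$; the verification that $\left(\rho f\right)^k=\rho^k g$ holds globally (agreeing on $U$ and vanishing off the support of $\rho$) is sound. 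This argument is elementary and self-contained, and it makes explicit where finite generation enters: $\R$-points are honest points of $\R^n$, germs admit global representatives, and bump functions exist. The paper argues quite differently: it identifies $\Nil_{\Ci}\left(\A\right)$ with global sections of $\mathbf{MSpec}\left(\Nil\left(\A\right)\right)$, uses quasi-coherence and flatness of localizations, and obtains the base-change statement $\Nil_{\Ci}\left(\A_x\right)\cong \A_x\underset{\A}\otimes \Nil_{\Ci}\left(\A\right)$ by a filtered colimit over the basic opens $U_z$ with $z\notin\mathfrak{m}_x$, from which both inclusions are read off. Your approach buys transparency and the sharper intermediate statement $\Nil\left(\A_x\right)=\Nil\left(\A\right)_x$; the paper's buys compatibility with the module-sheaf machinery used later. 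One caveat if your argument were substituted: the subsequent lemma (that $\A/\Nil_{\Ci}\left(\A\right)$ is $\Ci$-reduced) cites not just the statement but the proof of Lemma \ref{lem:nilradsame}, namely the identification $\Nil_{\Ci}\left(\A\right)_x=\Nil_{\Ci}\left(\A_x\right)$; this does follow from your local identity, since $\A_x$ is local with a unique $\R$-point, so $\Nil_{\Ci}\left(\A_x\right)=\Nil\left(\A_x\right)=\Nil\left(\A\right)_x\subseteq \Nil_{\Ci}\left(\A\right)_x$, and the reverse inclusion is the definition of local nilpotence, but you should record this explicitly.
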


\begin{proof}
Note that, since $\Nil_{\Ci}\left(\A\right)$ is germ-determined, we can reduce to the case when $\A$ is germ-determined. Consider the nilradical $\Nil\left(\A\right).$ If it is not germ-determined, then it is not a complete-module in the sense of \cite[Definition 5.25]{joycesch}. However, we can identify $\Nil_{\Ci}\left(\A\right)$ as its completion --- that is global sections of $\mathbf{MSpec}\left(\Nil\left(\A\right)\right).$ Since $\mathbf{MSpec}\left(\Nil\left(\A\right)\right)$ is quasi-coherent, for all $b \in \A,$ 
$$\A\left\{b^{-1}\right\} \underset{\A} \otimes \Nil_{\Ci}\left(\A\right) \cong \Nil_{\Ci}\left(\A\left\{b^{-1}\right\}\right).$$ Let $U_z$ be the open subset of $\Speci\left(\A\right)$ corresponding to $\Spec\left(\A\left\{z^{-1}\right\}\right).$ It follows that for all $x:\A \to \R,$
\begin{eqnarray*}
\Nil_{\Ci}\left(\A_x\right) &\cong & \mathbf{MSpec}\left(\Nil_{\Ci}\left(\A\right)\right)_x\\
&\cong & \underset{z \notin \mathfrak{m}_x} \colim \mathbf{MSpec}\left(\Nil_{\Ci}\left(\A\right)\right)\left(U_z\right)\\
& \cong & \underset{z \notin \mathfrak{m}_x} \colim \A\left\{b^{-1}\right\} \underset{\A} \otimes \Nil_{\Ci}\left(\A\right)\\
& \cong & \left(\underset{z \notin \mathfrak{m}_x} \colim \A\left\{b^{-1}\right\}\right) \underset{\A} \otimes \Nil_{\Ci}\left(\A\right)\\
& \cong & \A_x \underset{\A} \otimes \Nil_{\Ci}\left(\A\right).
\end{eqnarray*}
Now, suppose that $a \in \Nil_{\Ci}\left(\A\right).$ Then it follows that $\mathbf{germ}_x\left(a\right) \in \Nil_{\Ci}\left(\A\right)_x$ for all $x:\A \to \R,$ and hence in $\Nil\left(\A\right)^{\wedge}.$ Conversely, suppose that $a \in \Nil\left(\A\right)^{\wedge}.$ Then, it also follows that for all $x,$ $\germ_x\left(a\right) \in \Nil_{\Ci}\left(\A_x\right),$ and hence $a$ is locally nilpotent.
\end{proof}

\begin{lemma} \label{lem:redfact}
Let $\A$ and $\B$ be $\Ci$-algebras, with $\B$ $\Ci$-reduced. Then for any map $$\varphi:\A \to \B,$$ $\Nil_{\Ci}\left(\A\right) \subseteq \ker\left(\varphi\right).$
\end{lemma}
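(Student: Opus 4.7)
The plan is to show that the image $\varphi(a)$ of any locally nilpotent $a \in \A$ is locally nilpotent in $\B$, and then invoke the hypothesis that $\B$ is $\Ci$-reduced to conclude $\varphi(a)=0$.

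First I would establish that $\varphi$ is compatible with $\Ci$-localizations at $\R$-points. Given any $\R$-point $y:\B \to \R$, let $x := y\circ\varphi : \A \to \R$. If $b \in \A$ satisfies $b \notin \mathfrak{m}_x$, then $x(b) = y(\varphi(b)) \neq 0$, so $\varphi(b) \notin \mathfrak{m}_y$ and hence $\varphi(b)$ becomes invertible in $\B_y$. By the universal property of the $\Ci$-localization, since $\A_x$ is obtained by successively forming pushouts along the maps $\Ci(\R) \to \Ci(\R \setminus \{0\})$ at each $b \notin \mathfrak{m}_x$, the composite $\A \xrightarrow{\varphi} \B \xrightarrow{\germ_y} \B_y$ factors through a canonical $\Ci$-homomorphism
\[
\varphi_{x,y} : \A_x \longrightarrow \B_y
\]
which satisfies $\varphi_{x,y}(\germ_x(a)) = \germ_y(\varphi(a))$ for every $a \in \A$.

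Next, suppose $a \in \Nil_{\Ci}(\A)$. Then for each $\R$-point $y$ of $\B$, taking $x = y\circ\varphi$ as above, we know $\germ_x(a)$ is nilpotent in $\A_x$. Applying the ring homomorphism $\varphi_{x,y}$, the element $\germ_y(\varphi(a)) = \varphi_{x,y}(\germ_x(a))$ is nilpotent in $\B_y$. Since this holds for every $\R$-point $y$ of $\B$, we conclude $\varphi(a) \in \Nil_{\Ci}(\B)$. Because $\B$ is $\Ci$-reduced, $\Nil_{\Ci}(\B)=0$, so $\varphi(a)=0$, i.e.\ $a \in \ker(\varphi)$.

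The only nontrivial point is the construction of $\varphi_{x,y}$; this is essentially a universal-property check, using the fact that $\Ci$-localization is defined as a $\Ci$-pushout along $\Ci(\R) \to \Ci(\R\setminus\{0\})$ and that images of elements outside $\mathfrak m_x$ land outside $\mathfrak m_y$. Everything else is formal from the definitions of locally nilpotent and $\Ci$-reduced.
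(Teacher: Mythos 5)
Your proof is correct and follows essentially the same route as the paper: pass to the induced map on $\Ci$-localizations $\A_{y\circ\varphi}\to\B_y$ at each $\R$-point $y$ of $\B$, conclude that $\germ_y(\varphi(a))$ is nilpotent, hence $\varphi(a)$ is locally nilpotent and therefore zero by reducedness. The only difference is that you spell out the universal-property construction of the induced map on localizations, which the paper takes for granted.
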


\begin{proof}
Let $a \in \Nil_{\Ci}\left(\A\right).$ Let $x:\B \to \R.$ Consider the induced map
$$\varphi_x:\A_{x \circ \varphi} \to \B_x.$$ Then $\varphi_x\left(\germ_x\left(a\right)\right)$ is nilpotent in $\B_x.$ But $$\varphi_x\left(\germ_x\left(a\right)\right)=\varphi\left(a\right)_x.$$ So $\varphi\left(a\right)$ is locally nilpotent in $\B,$ and hence zero.
\end{proof}

\begin{lemma}
Let $\A$ be a finitely generated $\Ci$-algebra. Then $\A/\Nil_{\Ci}\left(\A\right)$ is $\Ci$-reduced.
\end{lemma}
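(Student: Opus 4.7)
The plan is to show that any locally nilpotent $[a]\in \A/\Nil_{\Ci}(\A)$ satisfies $a\in \Nil_{\Ci}(\A)$, i.e.\ that $\germ_x(a)$ is nilpotent in $\A_x$ for every $\R$-point $x:\A\to\R$. A preliminary remark is that the $\R$-points of $\A/\Nil_{\Ci}(\A)$ are in canonical bijection with those of $\A$: any locally nilpotent $b\in\A$ has $\germ_x(b)$ nilpotent in $\A_x$, so its image under the unique map $\A_x\to\R$ vanishes, whence $x(b)=0$ and $x$ factors through the quotient.

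Writing $y$ for the $\R$-point of $\A/\Nil_{\Ci}(\A)$ induced by $x$, Lemma \ref{lem:localq} gives
$$
\bigl(\A/\Nil_{\Ci}(\A)\bigr)_y \;\cong\; \A_x\bigl/(\Nil_{\Ci}(\A))_x,
$$
where $(\Nil_{\Ci}(\A))_x$ denotes the extension of $\Nil_{\Ci}(\A)$ along the $\Ci$-localization $\A\to\A_x$. The assumption that $[a]$ is locally nilpotent says exactly that $\germ_x(a)^n\in (\Nil_{\Ci}(\A))_x$ for some $n=n(x)$.

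To finish, I would verify that $(\Nil_{\Ci}(\A))_x \subseteq \Nil(\A_x)$. Since $\Ci$-localization is flat (Lemma \ref{lem:5.1.13}), this ideal is the $\A_x$-ideal generated by the image of $\Nil_{\Ci}(\A)$; by the very definition of $\Nil_{\Ci}$, each such generator is nilpotent in $\A_x$, and in a commutative ring sums and ring multiples of nilpotents are nilpotent. Consequently $\germ_x(a)^n$, and hence $\germ_x(a)$, is nilpotent in $\A_x$ for every $x$, so $a\in \Nil_{\Ci}(\A)$ and $[a]=0$.

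The main obstacle is precisely the identification $(\Nil_{\Ci}(\A))_x\subseteq \Nil(\A_x)$, which requires care about what the $\Ci$-theoretic extension of an ideal along $\A\to\A_x$ is: without flatness one has no reason to expect the extension to be generated by images. This is where the finite-generation hypothesis enters the proof indirectly, via Lemma \ref{lem:nilradsame}, which controls $\Nil_{\Ci}$ through the germ-completion of the ordinary nilradical and guarantees the compatibility of $\Nil_{\Ci}$ with the flat base change $\A\to\A_x$ used above.
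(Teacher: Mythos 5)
Your proof is correct, and it follows the paper's skeleton for most of its length: you factor every $\R$-point $x$ of $\A$ through the quotient (the paper does this by citing Lemma \ref{lem:redfact} with $\B=\R$, you argue it directly, which amounts to the same thing), and you use Lemma \ref{lem:localq} to identify $\left(\A/\Nil_{\Ci}\left(\A\right)\right)_y$ with $\A_x/\left(\Nil_{\Ci}\left(\A\right)\right)_x$, so that local nilpotence of $\left[a\right]$ gives $\germ_x\left(a\right)^n \in \left(\Nil_{\Ci}\left(\A\right)\right)_x$. Where you genuinely diverge is the last step: the paper invokes the proof of Lemma \ref{lem:nilradsame} to identify $\Nil_{\Ci}\left(\A\right)_x$ with $\Nil_{\Ci}\left(\A_x\right)$ --- this is where the finite generation hypothesis and the quasi-coherence machinery enter --- whereas you only need the containment $\left(\Nil_{\Ci}\left(\A\right)\right)_x \subseteq \Nil\left(\A_x\right)$, which you get from the elementary observation that this extension ideal is generated by the germs $\germ_x\left(b\right)$ of locally nilpotent elements, each of which is nilpotent in $\A_x$, and the nilradical is an ideal. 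This is a cleaner finish, and it in fact proves the statement without the finite generation hypothesis. Two small corrections to your commentary, though: the appeal to flatness (Lemma \ref{lem:5.1.13}) is unnecessary, since the extension of an ideal along any ring homomorphism is by definition the ideal generated by the image (and in the $\Ci$-setting every ideal of the underlying $\R$-algebra is a $\Ci$-congruence, so there is no enhanced notion to worry about); and your closing claim that finite generation enters ``indirectly via Lemma \ref{lem:nilradsame}'' does not match your own argument, which never uses that lemma --- it is the paper's proof, not yours, that relies on it.
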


\begin{proof}
Let $\pi:\A \to \A/\Nil_{\Ci}\left(\A\right),$ and let $\left[a\right]$ be a locally nilpotent element of $\A/\Nil_{\Ci}\left(\A\right),$ with $a \in \A.$ By Lemma \ref{lem:redfact}, it follows that for any $x:\A \to \R,$ there is a factorization $x=y \circ \pi,$ for $y:\A/\Nil_{\Ci}\left(\A\right) \to \R.$ Let $y$ be such a $\R$-point. Then $\germ_y\left(\left[a\right]\right)$ is nilpotent, so there exists an integer $n_y$ such that $\germ_y\left(\left[a\right]\right)^{n_y}=0$ in $\left(\A/\Nil_{\Ci}\left(\A\right)\right)_y.$ So $\germ_x\left(a^{n_y}\right)$ is in the kernel of $$\pi_x:\A_x \to \left(\A/\Nil_{\Ci}\left(\A\right)\right)_y$$ which is $\Nil_{\Ci}\left(\A\right)_x$ by Lemma \ref{lem:localq}. But this is the same as $\Nil_{\Ci}\left(\A_x\right)$ by the proof of Lemma \ref{lem:nilradsame}. Hence $\germ_x\left(a^{n_y}\right)$ is nilpotent in $\A_x.$ Since $x$ was arbitrary, we conclude that $a\in\Nil_{\Ci}\left(\A\right),$ and hence $\left[a\right]=0.$
\end{proof}

\begin{definition}
Let $\A$ be a classical $\Ci$-algebra. Its \textbf{reduction} is $\A_{red}:=\A/\Nil_{\Ci}\left(\A\right).$ If $\A$ is a classical $\SCi$-algebra, then its reduction is defined by $\A_{red}:=\left(\A_0\right)_{red}.$
\end{definition}

\begin{corollary}
The category of reduced finitely generated $\Ci$-algebras is reflective in finitely generated $\Ci$-algebras, with the left adjoint being $$\A \mapsto \A_{red}=\A/\Nil_{\Ci}\left(\A\right).$$
\end{corollary}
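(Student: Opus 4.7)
The plan is to verify that $\A \mapsto \A_{red}$ satisfies the universal property of a left adjoint to the inclusion $$\iota:\Alg_{\Ci}^{\mathbf{fg},\mathrm{red}} \hookrightarrow \Alg_{\Ci}^{\mathbf{fg}}.$$ All the substantive content has already been established in the two preceding lemmas, so the corollary should follow by a direct assembly.

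First I would check that the assignment $\A \mapsto \A_{red}$ lands in the target subcategory: if $\A$ is finitely generated, then $\A_{red} = \A/\Nil_{\Ci}(\A)$ is a quotient of a finitely generated classical $\Ci$-algebra, hence finitely generated, and by the preceding lemma it is $\Ci$-reduced. Thus $\A_{red}$ is an object of $\Alg_{\Ci}^{\mathbf{fg},\mathrm{red}}$.

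Next I would establish the universal property. Let $\B$ be a finitely generated $\Ci$-reduced algebra and let $\varphi:\A \to \B$ be any $\Ci$-homomorphism. By Lemma \ref{lem:redfact}, $\Nil_{\Ci}(\A) \subseteq \ker(\varphi)$, so $\varphi$ factors uniquely as
$$\xymatrix{\A \ar[r]^-{\pi} \ar[rd]_-{\varphi} & \A_{red} \ar@{-->}[d]^-{\bar\varphi} \\ & \B}$$
through the quotient map $\pi:\A \to \A_{red}$. This provides the natural bijection
$$\Hom_{\Alg_{\Ci}^{\mathbf{fg},\mathrm{red}}}(\A_{red},\B) \stackrel{\sim}{\longlongrightarrow} \Hom_{\Alg_{\Ci}^{\mathbf{fg}}}(\A,\iota\B)$$
given by precomposition with $\pi$. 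Naturality in $\A$ and $\B$ is automatic from the universal property of the quotient. Functoriality of $\A \mapsto \A_{red}$ and the unit/counit formulas then follow formally.

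There is essentially no main obstacle: the two preceding lemmas---that $\A/\Nil_{\Ci}(\A)$ is reduced for finitely generated $\A$, and that any homomorphism to a reduced algebra kills $\Nil_{\Ci}$---are precisely the two halves of the universal property, and finite generation is preserved under quotients. The only point worth double-checking is that the adjunction lives in the right category, i.e.\ that the construction stays within \emph{finitely generated} algebras; this is immediate since $\pi:\A \to \A_{red}$ is surjective.
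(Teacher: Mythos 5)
Your proof is correct and follows exactly the route the paper intends: the corollary is stated as an immediate consequence of the two preceding lemmas (that $\A/\Nil_{\Ci}(\A)$ is $\Ci$-reduced for finitely generated $\A$, and Lemma \ref{lem:redfact} giving the factorization through the quotient), and your assembly of these into the universal property of the reflection, together with the observation that quotients preserve finite generation, is precisely what is needed.
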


\section{Derived manifolds}\label{sec:derived_mfd}
\subsection{A universal property}
Morally, derived manifolds are objects that arise by iteratively taking derived    fibered products and retracts starting with ordinary manifolds and smooth maps. There have been many proposed models for what a derived manifold ought to be \cite{spivak,joyce,derivjustden,dg2}. The first was given by Spivak \cite{spivak}, and Joyce has a very well-developed theory of \emph{d-manifolds}, with important applications to symplectic topology. A common thread of all of these approaches is the prominent role played by $\Ci$-algebras. This is not a coincidence, as it turns out that derived manifolds and $\Ci$-algebras are intimately linked at the categorical level.

Whatever the $\i$-category $\mathsf{DMfd}$ is, it should receive a fully faithful functor $$i:\Mfd \hookrightarrow \mathsf{DMfd}$$ and $i$ should preserve transverse pullbacks and the terminal object. Moreover, $\mathsf{DMfd}$ should be closed under taking fibered products. One way to phrase this is to ask for $\mathsf{DMfd}$ to have finite limits. Finally, one would make sense of the idea that derived manifolds are completely determined by how they are built out of manifolds using fibered products. This leads to the following definition:

\begin{definition}
The $\i$-category  $\mathsf{DMfd}$ of derived manifolds is the (essentially unique) $\i$-category with finite limits, together with a functor $i:\mathsf{Mfd} \to \mathsf{DMfd}$ which preserves transverse pullbacks and the terminal object, such that for all $\i$-categories $\sC$ with finite limits, composition along $i$ induces an equivalence of $\i$-categories
$$\Fun^{\mathbf{lex}}\left(\mathsf{DMfd},\sC\right) \stackrel{\sim}{\longlongrightarrow} \Fun^\pitchfork\left(\Mfd,\sC\right)$$
between the $\i$-category of finite limit preserving functors from $\mathsf{DMfd}$ to $\sC$ and the $\i$-category of functors from $\Mfd$ to $\sC$ which preserves transverse pullbacks and the terminal object.
\end{definition}

\begin{remark}
It turns out one does not need to demand that $i$ be fully faithful, and that it is automatically satisfied.
\end{remark}

\subsection{An algebro-geometric description}

Of course, such a definition is not very tractable, however there is the following remarkable connection between derived manifolds and $\Ci$-algebras:

\begin{theorem} \cite[Theorem 5.3]{univ}
For all $\i$-categories $\sC$ with finite limits, composition with the canonical functor $\Ci \to \Mfd \to \mathsf{DMfd}$ induces an equivalence of $\i$-categories
$$\Fun^{\mathbf{lex}}\left(\mathsf{DMfd},\sC\right) \stackrel{\sim}{\longlongrightarrow} \Alg_{\Ci}\left(\sC\right).$$
\end{theorem}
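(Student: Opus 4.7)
The plan is to factor the claimed equivalence through the intermediate $\i$-category $\left(\Alg_{\Ci}\left(\Spc\right)^{\fp}\right)^{op}$ of opposites of finitely presented $\Ci$-algebras, exploiting Theorem \ref{thm:finenv} which already supplies the desired universal property for that $\i$-category. First, the defining universal property of $\mathsf{DMfd}$ immediately gives an equivalence
$$\Fun^{\mathbf{lex}}\left(\mathsf{DMfd},\sC\right) \simeq \Fun^{\pitchfork}\left(\Mfd,\sC\right),$$
so it suffices to establish an equivalence $\Fun^{\pitchfork}\left(\Mfd,\sC\right) \simeq \Alg_{\Ci}\left(\sC\right)$ implemented by restriction along the canonical inclusion $\Ci \hookrightarrow \Mfd$. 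That restriction is well-defined because the products $\R^n \times \R^m \simeq \R^{n+m}$ in $\Ci$ are pullbacks over the terminal object $\R^0$ and are therefore transverse; hence any functor preserving transverse pullbacks and the terminal object automatically preserves finite products on $\Ci$.

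To produce an inverse, I would use the functor $\Ci\left(-\right): \Mfd \to \left(\Alg_{\Ci}\left(\Spc\right)^{\fp}\right)^{op}$, the non-super analogue of Theorem \ref{thm:4.51}, which is fully faithful and preserves transverse pullbacks and the terminal object. Its image lies in finitely presented $\Ci$-algebras because every manifold $M$ is a retract of an open subset $U \subseteq \R^n$ by Whitney embedding together with the tubular neighborhood theorem (cf.\ the argument in Lemma \ref{lem:whit}), and each such $U$ is a localization of $\Ci\left(\R^n\right)$ at a single element whose zero set is the closed complement $\R^n \setminus U$; both operations stay inside finitely presented $\Ci$-algebras by Lemma \ref{lem:retcol}. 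Given $A \in \Alg_{\Ci}\left(\sC\right)$, Theorem \ref{thm:finenv} produces a unique finite-limit-preserving extension $\widetilde{A}: \left(\Alg_{\Ci}\left(\Spc\right)^{\fp}\right)^{op} \to \sC$, and I would define the proposed inverse by $A \mapsto \widetilde{A} \circ \Ci\left(-\right)$. This composite preserves the terminal object and transverse pullbacks since both factors do.

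The main obstacle is verifying that the two constructions are mutually inverse. In one direction, starting from $A \in \Alg_{\Ci}\left(\sC\right)$, restricting $\widetilde{A} \circ \Ci\left(-\right)$ back to $\Ci$ recovers $A$ because the composite $\Ci \hookrightarrow \Mfd \to \left(\Alg_{\Ci}\left(\Spc\right)^{\fp}\right)^{op}$ coincides with the canonical $j^{\fp}$ of Theorem \ref{thm:finenv}. The other direction is the real work: given $F_0 \in \Fun^{\pitchfork}\left(\Mfd,\sC\right)$, one must show $\widetilde{F_0|_\Ci} \circ \Ci\left(-\right) \simeq F_0$. The key point is that every manifold is built from Cartesian spaces using only transverse pullbacks and retracts; concretely, any open $U \subseteq \R^n$ may be presented as a transverse pullback $U \simeq \R^{n+1} \times_\R \R^0$ along the map $\left(x,y\right) \mapsto f\left(x\right)\cdot y$ for a smooth $f$ vanishing exactly on $\R^n \setminus U$, and a general $M$ is a retract of such a $U$. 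Since both $F_0$ and $\widetilde{F_0|_\Ci} \circ \Ci\left(-\right)$ preserve transverse pullbacks and retracts and agree on $\Ci$, they must agree on all of $\Mfd$, naturally in $M$. Combined with the universal-property reduction of the first paragraph, this yields the claimed equivalence.
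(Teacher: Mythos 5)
Your setup---reducing via the defining universal property of $\mathsf{DMfd}$, restricting along $\Ci \hookrightarrow \Mfd$, and building a candidate inverse as $\widetilde{A} \circ \Ci\left(-\right)$ using Theorem \ref{thm:finenv} together with the even analogue of Theorem \ref{thm:4.51}---is sound, and it is essentially the skeleton of the argument in \cite{univ}. The genuine gap is the sentence ``since both $F_0$ and $\widetilde{F_0|_{\Ci}} \circ \Ci\left(-\right)$ preserve transverse pullbacks and retracts and agree on $\Ci$, they must agree on all of $\Mfd$, naturally in $M$.'' That claim is the actual content of the theorem, not a formal consequence of what precedes it. In the $\i$-categorical setting, object-wise agreement on a subcategory does not yield an equivalence of functors: you must exhibit a coherent equivalence in $\Fun^{\pitchfork}\left(\Mfd,\sC\right)$, and neither $\Cart$ nor the class of transverse squares generates $\Mfd$ in a sense that makes this automatic. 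Even object by object your argument needs more than is stated: to transport a comparison map from $U$ (presented as a finite limit of Cartesian spaces) to the retract $M$, you must know the comparison commutes with the idempotent $e:U \to U$, and $e$ is not a morphism of $\Cart$; handling it requires, for instance, extending smooth maps out of the closed-submanifold model of $U$ to the ambient $\R^{n+1}$, and even then one still faces all higher coherences needed for naturality over $\Mfd$. This is exactly where \cite{univ} does the real work, and where the unramifiedness of $\mathbf{Com}_{\R} \to \Ci$, proved by Koszul-complex computations (whose super analogue is Theorem \ref{thm:unramified}, flagged in Section \ref{sec:derived_mfd} as one of the two technical inputs needed), enters to control the values of finite-limit extensions on transverse squares. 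Your proposal never invokes anything of this kind, which signals that the essential difficulty has been bypassed rather than resolved.

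Separately, your presentation of an open subset $U \subseteq \R^n$ as a transverse pullback contains a slip: the fiber of $\left(x,y\right) \mapsto f\left(x\right)\cdot y$ over $0 \in \R$ is $\left\{f\left(x\right)y = 0\right\}$, which is neither diffeomorphic to $U$ nor cut out transversally (the derivative vanishes wherever $f\left(x\right)=0$ and $y=0$). You want the fiber over $1$, equivalently the zero locus of $f\left(x\right)y - 1$, which is transverse and diffeomorphic to $U$; this is the standard localization trick and is easily repaired, but as written the square you feed to $F_0$ is not a transverse pullback, so the step determining $F_0\left(U\right)$ from values on $\Cart$ does not go through verbatim.
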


In light of \ref{thm:finenv}, there is the following corollary, which provides a concrete model for derived manifolds:

\begin{corollary}
There is a canonical equivalence of $\i$-categories
$$\mathsf{DMfd} \simeq \left(\Alg_{\Ci}\left(\Spc\right)^{\mathbf{fp}}\right)^{op}$$ between the $\i$-category of derived manifolds and the opposite of the $\i$-category of finitely presented $\Ci$-algebras in $\Spc.$
\end{corollary}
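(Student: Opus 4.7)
The plan is a Yoneda-style comparison: both $\mathsf{DMfd}$ and $\left(\Alg_{\Ci}\left(\Spc\right)^{\mathbf{fp}}\right)^{op}$ corepresent the same functor $\sC \mapsto \Alg_{\Ci}\left(\sC\right)$ on a suitable $\i$-category of test categories, so they must be canonically equivalent. First I would verify that both sides are idempotent complete $\i$-categories with finite limits. For $\mathsf{DMfd}$ this holds by definition, and for the other side it follows from Lemma \ref{lem:retcol}, which asserts that $\Alg_{\Ci}\left(\Spc\right)^{\mathbf{fp}}$ is closed under finite colimits and retracts in $\Alg_{\Ci}\left(\Spc\right)$; idempotent completeness is automatic in any $\i$-category with finite limits, since idempotents split as equalizers with the identity.

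Next, for every idempotent complete $\i$-category $\sC$ with finite limits, I would combine the two universal properties in play: Theorem 5.3 of \cite{univ}, which gives
$$\Fun^{\mathbf{lex}}\left(\mathsf{DMfd},\sC\right) \simeq \Alg_{\Ci}\left(\sC\right)$$
via restriction along the canonical composite $\Ci \to \Mfd \to \mathsf{DMfd}$, and Theorem \ref{thm:finenv} applied to $\bT = \Ci$, which gives
$$\Fun^{\mathbf{lex}}\!\left(\left(\Alg_{\Ci}\left(\Spc\right)^{\mathbf{fp}}\right)^{op},\sC\right) \simeq \Alg_{\Ci}\left(\sC\right)$$
via restriction along $j^{\mathbf{fp}}\colon \Ci \hookrightarrow \left(\Alg_{\Ci}\left(\Spc\right)^{\mathbf{fp}}\right)^{op}$. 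Both universal properties therefore classify the same data, namely a $\Ci$-algebra object of $\sC$.

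I would then extract the comparison functors by evaluating the universal properties against each other. The tautological $\Ci$-algebra corresponding to $j^{\mathbf{fp}}$ produces, via the universal property of $\mathsf{DMfd}$, a finite-limit-preserving functor
$$\Phi\colon \mathsf{DMfd} \longrightarrow \left(\Alg_{\Ci}\left(\Spc\right)^{\mathbf{fp}}\right)^{op}$$
restricting to $j^{\mathbf{fp}}$ on $\Ci$. Symmetrically, the $\Ci$-algebra $\Ci \to \mathsf{DMfd}$ produces, via Theorem \ref{thm:finenv}, a candidate inverse
$$\Psi\colon \left(\Alg_{\Ci}\left(\Spc\right)^{\mathbf{fp}}\right)^{op} \longrightarrow \mathsf{DMfd}.$$
The compositions $\Psi\circ\Phi$ and $\Phi\circ\Psi$ are finite-limit-preserving endofunctors whose restrictions to $\Ci$ agree with the canonical inclusion on each side; by the uniqueness clauses of the two universal properties applied with $\sC = \mathsf{DMfd}$ and $\sC = \left(\Alg_{\Ci}\left(\Spc\right)^{\mathbf{fp}}\right)^{op}$ respectively, both compositions are equivalent to the identity.

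The expected main obstacle is purely bookkeeping: one must check that the equivalences furnished by Theorem 5.3 of \cite{univ} and Theorem \ref{thm:finenv} are genuinely set up so that under each of them the identity endofunctor corresponds to the tautological $\Ci$-algebra given by the canonical inclusion of $\Ci$ into the ambient $\i$-category. Once this compatibility is in hand, the equivalence $\mathsf{DMfd} \simeq \left(\Alg_{\Ci}\left(\Spc\right)^{\mathbf{fp}}\right)^{op}$ is a formal consequence, with no further analytic input needed.
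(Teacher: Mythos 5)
Your overall route is the same as the paper's: the corollary is obtained by playing the universal property of $\mathsf{DMfd}$ (Theorem 5.3 of \cite{univ}) against Theorem \ref{thm:finenv}, so that both $\i$-categories corepresent $\sC \mapsto \Alg_{\Ci}\left(\sC\right)$ and the comparison functors together with their invertibility fall out of the uniqueness clauses. However, there is one genuine gap: your claim that ``idempotent completeness is automatic in any $\i$-category with finite limits, since idempotents split as equalizers with the identity'' is a $1$-categorical fact that fails in the $\i$-categorical setting. An idempotent in an $\i$-category is not merely a map $e$ with $e \circ e \simeq e$ but a diagram carrying infinitely much coherence data, and splitting it is a limit over an infinite diagram; it is guaranteed by, e.g., countable sequential (co)limits (see the discussion of idempotents in \cite[\S 4.4.5]{htt}), but not by finite limits. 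Concretely, the opposite of the $\i$-category of finite spaces has all finite limits yet is not idempotent complete (Wall's finiteness obstruction), so the equalizer argument cannot be correct.

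This matters at exactly one point of your argument: the functor $\Psi\colon \left(\Alg_{\Ci}\left(\Spc\right)^{\mathbf{fp}}\right)^{op} \to \mathsf{DMfd}$ is produced by applying Theorem \ref{thm:finenv} with $\sC = \mathsf{DMfd}$, and that application requires $\mathsf{DMfd}$ to be idempotent complete. For $\left(\Alg_{\Ci}\left(\Spc\right)^{\mathbf{fp}}\right)^{op}$ there is no problem (compact objects are closed under retracts, cf.\ Lemma \ref{lem:retcol}), but for $\mathsf{DMfd}$ idempotent completeness is essentially what the corollary asserts, since the definition only postulates finite limits; justifying it by the equalizer trick is therefore not a harmless bookkeeping step but a circularity risk. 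The honest repair is to note that \cite{univ} establishes the universal property of Theorem 5.3 precisely by exhibiting $\left(\Alg_{\Ci}\left(\Spc\right)^{\mathbf{fp}}\right)^{op}$ (a manifestly idempotent complete $\i$-category) as a model satisfying the defining property of $\mathsf{DMfd}$, from which both the corollary and the idempotent completeness of $\mathsf{DMfd}$ follow; with that input your comparison argument goes through verbatim, and indeed it is how the paper intends the corollary to be read.
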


We wish to develop a theory of derived geometry that also incorporates supermanifolds, as such, we want a supergeometric analogue of the above. Upon careful analysis of the proofs in \cite{univ}, such a result will follow essentially verbatim from the even case once two technical facts are established. The first has already been proven, namely Lemma \ref{lem:whit}. The second fact we need is that the the canonical morphism of algebraic theories $$\mathbf{SCom}_{\R}\to \SCi$$ is unramified. We postpone this to Theorem \ref{thm:unramified}. However, we record the result for derived supermanifolds here:

\begin{definition}\label{dfn:dsmfd}
The $\i$-category  $\mathsf{DSMfd}$ of derived supermanifolds is the (essentially unique) $\i$-category with finite limits, together with a functor $i:\mathsf{SMfd} \to \mathsf{DSMfd}$ which preserves transverse pullbacks and the terminal object, such that for all $\i$-categories $\sC$ with finite limits, composition along $i$ induces an equivalence of $\i$-categories
$$\Fun^{\mathbf{lex}}\left(\mathsf{DSMfd},\sC\right) \stackrel{\sim}{\longlongrightarrow} \Fun^\pitchfork\left(\SMfd,\sC\right)$$
between the $\i$-category of finite limit preserving functors from $\mathsf{DSMfd}$ to $\sC$ and the $\i$-category of functors from $\SMfd$ to $\sC$ which preserves transverse pullbacks and the terminal object.
\end{definition}

\begin{theorem}\label{thm:superuniv}
For all $\i$-categories $\sC$ with finite limits, composition with the canonical functor $\SCi \to \SMfd \to \mathsf{DSMfd}$ induces an equivalence of $\i$-categories
$$\Fun^{\mathbf{lex}}\left(\mathsf{DSMfd},\sC\right) \stackrel{\sim}{\longlongrightarrow} \Alg_{\SCi}\left(\sC\right).$$
\end{theorem}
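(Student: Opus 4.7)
The first step is to reduce the theorem to a purely algebraic statement about restriction along $\SCart \hookrightarrow \SMfd.$ By the universal property of $\mathsf{DSMfd}$ (Definition \ref{dfn:dsmfd}) we have
$$\Fun^{\mathbf{lex}}\left(\mathsf{DSMfd},\sC\right) \simeq \Fun^{\pitchfork}\left(\SMfd,\sC\right),$$
and by Theorem \ref{thm:finenv} applied to the theory $\SCi,$
$$\Fun^{\mathbf{lex}}\left(\left(\Alg_{\SCi}\left(\Spc\right)^{\mathbf{fp}}\right)^{op},\sC\right) \simeq \Alg_{\SCi}\left(\sC\right) \simeq \Fun^{\Pi\!\!}\left(\SCart,\sC\right).$$
Thus it suffices to prove that restriction along the finite-product-preserving inclusion $\SCart \hookrightarrow \SMfd$ induces an equivalence
$$\Fun^{\pitchfork}\left(\SMfd,\sC\right) \stackrel{\sim}{\longlongrightarrow} \Fun^{\Pi\!\!}\left(\SCart,\sC\right).$$
This is precisely the super-analogue of \cite[Theorem 5.3]{univ}, and my plan is to adapt that proof; Lemma \ref{lem:whit} and the unramified property of $\mathbf{SCom}_\R \to \SCi$ (the forthcoming Theorem \ref{thm:unramified}) provide the only supergeometric inputs not already in place.

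For essential surjectivity of restriction I would proceed as follows. Given an $\SCi$-algebra $A : \SCart \to \sC,$ Theorem \ref{theorem:5.5.8.15} yields a unique small-limit-preserving extension $\bar{A} : \Alg_{\SCi}\left(\Spc\right)^{op} \to \sC$ with $\bar{A} \circ j \simeq A,$ where $j$ is the Yoneda embedding of $\SCart$ into $\Alg_{\SCi}\left(\Spc\right)^{op}.$ Precomposing with the transverse-pullback-preserving functor $\Ci : \SMfd \to \Alg_{\SCi}\left(\Spc\right)^{op}$ of Theorem \ref{thm:4.51} produces $\widetilde{A} := \bar{A} \circ \Ci,$ which preserves transverse pullbacks (since $\Ci$ sends them to pushouts of $\SCi$-algebras by the proposition in Section \ref{sec:ci}, and $\bar{A}$ preserves all limits) and whose restriction along $\SCart \hookrightarrow \SMfd$ agrees canonically with $A.$

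The main obstacle is the fully faithful part: one must show that any transverse-pullback-preserving $F : \SMfd \to \sC$ is canonically recovered from its restriction to $\SCart$ by the extension above. The reduction of every supermanifold to $\SCart$-data proceeds via Lemma \ref{lem:whit}, which realizes every $\cM \in \SMfd$ as a retract of an open subset $U \subseteq \R^{n|m}.$ Retracts are absorbed by idempotent completeness of $\sC,$ so it suffices to control $F(U);$ each such $U$ is a filtered union of basic opens $U_f = \{f \neq 0\},$ and each $U_f$ is a transverse pullback $\R^{n|m} \times_{\R} (\R \setminus \{0\}),$ with $\R \setminus \{0\}$ itself handled purely from $\Cart$ exactly as in the even case. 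The unramified property of $\mathbf{SCom}_\R \to \SCi$ then enters to guarantee that the $\SCi$-pushouts appearing in this reduction---in particular those arising from closed embeddings cut out by supergeometric equations, where the odd generators intervene---agree with the expected pushouts, so that transverse pullbacks of supermanifolds are faithfully reflected by $\Ci.$ Verifying this compatibility in the presence of odd variables is the delicate technical step; once in hand, the remainder of the even-case argument of \cite[Theorem 5.3]{univ} transfers verbatim, using Lemma \ref{lem:whit} in place of its even analogue.
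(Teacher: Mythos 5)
Your proposal takes essentially the same route as the paper: the paper proves Theorem \ref{thm:superuniv} simply by observing that the even-case argument of \cite[Theorem 5.3]{univ} carries over verbatim once two supergeometric inputs are supplied, namely Lemma \ref{lem:whit} and the unramifiedness of $\mathbf{SCom}_{\R} \to \SCi$ (Theorem \ref{thm:unramified}), which are exactly the two ingredients you identify. Your added sketch of essential surjectivity via the limit-preserving extension and of fully faithfulness via retracts of open subsets of $\R^{n|m}$ is consistent with that transferred argument.
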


\begin{corollary}\label{cor:supdman}
There is a canonical equivalence of $\i$-categories
$$\mathsf{DSMfd} \simeq \left(\Alg_{\SCi}\left(\Spc\right)^{\mathbf{fp}}\right)^{op}$$ between the $\i$-category of derived supermanifolds and the opposite of the $\i$-category of finitely presented $\SCi$-algebras in $\Spc.$
\end{corollary}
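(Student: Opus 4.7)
The plan is to combine the two universal properties in play: Theorem \ref{thm:superuniv} exhibits $\mathsf{DSMfd}$ as corepresenting the functor $\sC \mapsto \Alg_{\SCi}\left(\sC\right)$ on the $(\infty,2)$-category of $\i$-categories with finite limits, while Theorem \ref{thm:finenv}, applied to the algebraic theory $\bT = \SCi$, exhibits $\left(\Alg_{\SCi}\left(\Spc\right)^{\fp}\right)^{op}$ as corepresenting the same functor restricted to the full sub-$(\infty,2)$-category of \emph{idempotent complete} $\i$-categories with finite limits. Provided both corepresenting objects live in this common arena, a Yoneda-style uniqueness argument then forces them to be equivalent; this is the same strategy used to deduce the even case of this corollary in \cite{univ} from the even version of Theorem \ref{thm:superuniv}.

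Concretely, I would first observe that $\left(\Alg_{\SCi}\left(\Spc\right)^{\fp}\right)^{op}$ has finite limits and is idempotent complete, since $\Alg_{\SCi}\left(\Spc\right)^{\fp}$ is closed under finite colimits and retracts by Lemma \ref{lem:retcol}. Applying Theorem \ref{thm:superuniv} with $\sC = \left(\Alg_{\SCi}\left(\Spc\right)^{\fp}\right)^{op}$ and the Yoneda $\SCi$-algebra $j^{\fp} \colon \SCi \hookrightarrow \left(\Alg_{\SCi}\left(\Spc\right)^{\fp}\right)^{op}$ as input, I would extract an essentially unique finite-limit-preserving functor
$$F \colon \mathsf{DSMfd} \longrightarrow \left(\Alg_{\SCi}\left(\Spc\right)^{\fp}\right)^{op}$$
whose restriction along $\SCi \to \mathsf{DSMfd}$ agrees with $j^{\fp}$. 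In the opposite direction, Theorem \ref{thm:finenv} applied to $\sC = \mathsf{DSMfd}$ (assuming idempotent completeness, see below) produces, from the canonical $\SCi$-algebra $\SCi \to \SMfd \to \mathsf{DSMfd}$, a finite-limit-preserving functor $G$ going the other way. The compositions $F\circ G$ and $G\circ F$ classify, under the respective universal properties, the canonical $\SCi$-algebras on each side, as do the identity functors; hence by the uniqueness clauses both compositions are canonically equivalent to the identity.

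The main obstacle is verifying that $\mathsf{DSMfd}$ is idempotent complete, which is needed to apply Theorem \ref{thm:finenv} with $\sC = \mathsf{DSMfd}$. My approach would be to argue this formally from the universal property: because Theorem \ref{thm:superuniv} applies to \emph{all} $\i$-categories $\sC$ with finite limits, not merely idempotent complete ones, the functor $F$ above has the property that its image already generates $\left(\Alg_{\SCi}\left(\Spc\right)^{\fp}\right)^{op}$ under finite limits and retracts; combined with the fact that retracts of objects corresponding to transverse pullbacks of supermanifolds can be realized inside $\mathsf{DSMfd}$ via the image of $\SMfd$ (which is idempotent complete), one can either bootstrap the full idempotent completeness of $\mathsf{DSMfd}$, or alternatively circumvent the issue entirely by working with the idempotent completion $\mathsf{DSMfd}^{\mathsf{ic}}$, showing that the universal property of $\mathsf{DSMfd}$ already implies $\mathsf{DSMfd} \simeq \mathsf{DSMfd}^{\mathsf{ic}}$. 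Once this is settled, the inputs specific to the super case---namely Lemma \ref{lem:whit} and the unramified property of $\mathbf{SCom}_{\R} \to \SCi$ already used in Theorem \ref{thm:superuniv}---allow the rest of the argument from \cite{univ} to transport essentially verbatim.
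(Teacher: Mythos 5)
Your proposal is correct and follows essentially the same route as the paper: Corollary \ref{cor:supdman} is obtained by playing the universal property of Theorem \ref{thm:superuniv} against the finite-limit envelope of Theorem \ref{thm:finenv}, with Lemma \ref{lem:whit} and the unramifiedness of $\mathbf{SCom}_{\R} \to \SCi$ (Theorem \ref{thm:unramified}) as the only super-specific inputs. Your worry about idempotent completeness of $\mathsf{DSMfd}$ is resolved exactly as you suggest---the retracts required by Lemma \ref{lem:retcol} are already supplied by the image of $\SMfd$ via Lemma \ref{lem:whit}---which is precisely the role that lemma plays in the paper's (and \cite{univ}'s) argument.
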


The upshot is the universal property of derived (super)manifolds leads us naturally to the study of derived algebraic geometry over $\Ci$-(super)algebras. Put another way, derived differential (super)geometry is precisely the derived algebraic geometry of $\Ci$-(super)algebras!

\subsection{A geometric description}
There is an alternative more down to earth description of the $\i$-category of derived manifolds. It turns out this $\i$-category arises naturally as a localization of the $1$-category of non-positively graded manifolds. More specifically, let $f:\cM \to \cN$ be a map of dg-manifolds (or dg-supermanifolds). We say
\begin{definition}\label{dfn:catfibdgman}
\begin{itemize}
\item[i)] $f$ is a \textbf{fibration} if it is a submersion of underlying graded (super)manifolds, or equivalently, if the induced $\End\left(\R^{0|1}\right)$-equivariant map $\mathfrak{M} \to \mathfrak{N}$ is a submersion of supermanifolds.
\item[ii)] $f$ is a \textbf{weak equivalence} if 
$$f^*:\Ci\left(\cN\right) \to \Ci\left(\cM\right)$$ is a weak equivalence of dg-$\SCi$-algebras, i.e. of underlying cochain complexes.
\end{itemize}
\end{definition}

Consider the canonical functor of $1$-categories
\begin{eqnarray*}
\Ci:\dgMan &\to& \left(\dgcni\right)^{op}\\
\M  &\mapsto& \Ci\left(\M\right)
\end{eqnarray*}

It sends weak equivalences to quasi-isomorphisms, hence there is an induced functor between $\i$-categories
$$\Ci:\dgMani \to \dgci^{op},$$
where $\dgMani$ is the $\i$-category obtained by formally inverting the weak equivalences of dg-manifolds, up to homotopy.
By \cite[Lemma 6.1]{dgmander}, the essential image lies in the homotopically finite generated $\Ci$-algebras, and hence by Corollary \ref{cor:supdman}, there an induced functor
$$\Psi:\dgMani \to \DMfd$$
from the $\i$-category of non-positively graded dg-manifolds to the $\i$-category of derived manifolds.

\begin{theorem}\label{thm:dgmander}\cite[Theorem 6.11]{dgmander}
The functor $\Psi:\dgMani \to \DMfd$ is an equivalence of $\i$-categories.
\end{theorem}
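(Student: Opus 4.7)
The plan is to verify that $\Psi$ is fully faithful and essentially surjective, using the identification $\DMfd\simeq(\Alg_{\Ci}(\Spc)^{\mathbf{fp}})^{op}$ (the non-super analogue of Corollary \ref{cor:supdman}) to transport the question to the algebraic side. By construction, $\Psi$ factors as the composite of $\Ci:\dgMani\to\dgci^{op}$ with this equivalence, and the cited Lemma 6.1 of \cite{dgmander} ensures the essential image lands in $\Alg_{\Ci}(\Spc)^{\mathbf{fp}}$. Thus the two remaining ingredients are essential surjectivity onto all of $\Alg_{\Ci}(\Spc)^{\mathbf{fp}}$ and an equivalence on mapping spaces.

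For essential surjectivity, I would appeal to Lemma \ref{lem:retcol}: every finitely presented $\Ci$-algebra is a retract of a finite colimit of finitely generated free algebras. A finitely generated free algebra $\Ci(\R^n)$ is realized by $\R^n$ regarded as a dg-manifold concentrated in degree $0$. A derived pushout of the form $\R\otimes^{L}_{\Ci(\R^{k})}\Ci(\R^{n})$ along a smooth map $f:\R^n\to\R^k$ admits an explicit geometric model, namely the Koszul dg-manifold whose underlying graded manifold is $\R^n\times\R^k[-1]$ with homological vector field determined by $f$. Iterating this construction realizes every finite colimit of finitely generated free algebras as $\Ci(\M)$ for a suitable Koszul dg-manifold. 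For retracts, one uses that $\Alg_{\Ci}(\Spc)^{\mathbf{fp}}$ is idempotent-complete by Theorem \ref{theorem:5.5.8.10} and shows that $\dgMani$ is as well, so that the splittings of idempotents on the algebra side lift to splittings in $\dgMani$.

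For full faithfulness, I would exploit the model-theoretic setup of Section \ref{sec:homotopical} on $\dgci$ together with the category-of-fibrant-objects structure on $\dgMan$ of Definition \ref{dfn:catfibdgman}. The key observation is that semi-free (cofibrant) resolutions of finitely presented dg-$\Ci$-algebras are again Koszul-type algebras of the form $\Ci(\M)$ for a dg-manifold $\M$: semi-free generators in negative degrees correspond geometrically to adjoining shifted even directions, and the differential on them encodes a homological vector field. Consequently a cofibrant replacement of $\Ci(\M)$ in $\dgci$ can always be realized as $\Ci(\widetilde{\M})$ for some dg-manifold $\widetilde{\M}$ weakly equivalent to $\M$. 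This allows the derived mapping space $\Map_{\dgci^{op}}(\Ci(\N),\Ci(\M))$ to be presented by a simplicial set of dg-manifold morphisms $\widetilde{\M}\otimes\Delta^\bullet_{\mathrm{poly}}\to\N$, which by a standard Dwyer--Kan argument agrees with the hom-space computed in the localization $\dgMani$.

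The step I expect to be the main obstacle is reconciling the genuine model category structure on $\dgci$ with the weaker category-of-fibrant-objects structure on $\dgMan$: on the geometric side one does not have arbitrary colimits, so cofibrant replacements cannot be constructed internally to $\dgMan$. The way out is the observation, used above, that every semi-free cofibrant replacement is \emph{already} the algebra of a dg-manifold, so replacements on the algebra side can be imported to the geometric side one by one. Making this rigorous --- in particular, promoting the quasi-isomorphism $\Ci(\widetilde{\M})\xrightarrow{\sim}\Ci(\M)$ to a geometric fibrant approximation compatible with path objects and functorial enough to compute mapping spaces --- is the technical heart of the argument, and is precisely where the explicit Koszul-manifold models, combined with retract arguments of the type in Lemma \ref{lem:whit}, must be leveraged.
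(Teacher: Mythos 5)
You should first note that the paper does not actually prove this statement: Theorem \ref{thm:dgmander} is imported verbatim from \cite{dgmander} (Theorem 6.11 there), and the only commentary offered here is that the extension to the super case is routine. So there is no internal proof to compare against; what can be said is whether your outline is consistent with the strategy of the cited work and with the supporting machinery this paper does develop. On that score your sketch is essentially the right reconstruction: essential surjectivity via Lemma \ref{lem:retcol} together with geometric realization of derived pushouts by Koszul dg-manifolds is exactly what Proposition \ref{prop:pushlocus} and the graph trick of Theorem \ref{thm:unramified} are built to deliver, and full faithfulness via the observation that finite-type quasi-free resolutions are themselves of the form $\Ci\left(\widetilde{\M}\right)$ is the mechanism by which the model structure of Theorem \ref{thm:dgmodel} is transported to the category-of-fibrant-objects structure of Definition \ref{dfn:catfibdgman}.

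Two points in your outline are asserted rather than argued and would need real work. First, the retract step: you invoke idempotent-completeness of $\dgMani$ as if it were as accessible as that of $\Alg_{\Ci}\left(\Spc\right)^{\mathbf{fp}}$, but $\dgMani$ is a localization of a $1$-category with no colimits, and splitting idempotents there is not automatic; the cited proof has to produce an explicit finite-type quasi-free (hence geometric) model for a retract of a finite colimit of free algebras, rather than split the idempotent internally to dg-manifolds. Second, the mapping-space computation: tensoring a non-positively graded dg-$\Ci$-algebra with polynomial forms on simplices does not preserve the grading constraint, which is why the model structure is only ``almost simplicial''; your Dwyer--Kan argument must route through truncations or path objects in the fibration category of dg-manifolds, and this is precisely the delicate point you correctly flag but do not resolve. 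Neither issue is a wrong turn --- both are addressed in \cite{dgmander} --- but as written your proposal leaves the hardest steps as declared intentions.
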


The extension of the proof of this fact to dg-supermanifolds is trivial and the details are left to the reader. We record it here:

There is a well-defined functor $$\Psi:\dgSMani \to \DSMfd$$
from the $\i$-category of non-positively graded dg-supermanifolds to the $\i$-category of derived manifolds.

\begin{theorem}\label{thm:dgsmander}
The functor $\Psi:\dgSMani \to \DSMfd$ is an equivalence of $\i$-categories.
\end{theorem}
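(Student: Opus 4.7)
The plan is to mirror the proof of Theorem~\ref{thm:dgmander} step by step, substituting the super analogue of each input already developed in this paper. The even-case argument has three moving parts: well-definedness of $\Psi$ on the localized $\i$-category, essential surjectivity onto $\left(\Alg_{\SCi}\left(\Spc\right)^{\fp}\right)^{op}$ via Corollary~\ref{cor:supdman}, and an identification of derived mapping spaces; each translates with only cosmetic modifications.

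First I would verify well-definedness by checking that the functor $\Ci:\dgSMan \to \left(\dgcni\right)^{op}$ sends fibrations and weak equivalences in the sense of Definition~\ref{dfn:catfibdgman} to fibrations and weak equivalences in the model structure on $\dgc^{\le 0}$ from Section~\ref{sec:homotopical}. The super analogue of \cite[Lemma~6.1]{dgmander} then confines the essential image to homotopically finitely presented $\SCi$-superalgebras, so that Corollary~\ref{cor:supdman} provides the promised factorization $\Psi:\dgSMani \to \DSMfd$. Essential surjectivity would then proceed via Lemma~\ref{lem:retcol}: every finitely presented $\SCi$-superalgebra is a retract of a finite homotopy colimit of free algebras $\Ci\left(\R^{n|m}\right)$, and such a finite homotopy colimit is realized geometrically as the ring of functions on a dg-supermanifold built iteratively by forming zero loci of smooth maps into $\R^{p|q}$. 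Retracts are absorbed by passing to the idempotent completion of $\dgSMani$, using Lemma~\ref{lem:whit} to split idempotents at the geometric level exactly as in the even case. Fully faithfulness then reduces to the statement that mapping spaces between cofibrant-fibrant dg-supermanifolds coincide with derived mapping spaces of their associated dg-$\SCi$-superalgebras, which follows from Theorem~\ref{thm:superuniv} combined with the model-categorical material of Section~\ref{sec:homotopical}.

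The main obstacle is the verification that the Koszul-type cofibrant replacements from Section~\ref{sec:homotopical} arise genuinely from dg-supermanifolds, i.e.\ that one can arrange the interaction of the cohomological $\Z$-grading with the internal $\Z_2$-super-grading so that the free generators added at each stage can be interpreted as pullbacks of coordinate functions on appropriate shifts of the odd and even lines. The unramified property of $\mathbf{SCom}_\R \to \SCi$ supplied by Theorem~\ref{thm:unramified} is precisely the input that makes the induced $\SCi$-structure on these resolutions automatic from the underlying dg-$\mathbf{SCom}_\R$-structure, so beyond careful Koszul sign bookkeeping no new conceptual ingredient enters, which justifies the author's assertion that the extension is routine.
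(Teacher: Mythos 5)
Your proposal is correct and follows essentially the same route the paper intends: the paper itself declares the extension of Theorem \ref{thm:dgmander} to the super setting to be a verbatim adaptation left to the reader, resting precisely on the supergeometric inputs you identify (Lemma \ref{lem:whit}, the super Koszul machinery of Section \ref{sec:homotopical}, the unramifiedness of $\mathbf{SCom}_\R \to \SCi$ from Theorem \ref{thm:unramified}, and Corollary \ref{cor:supdman}). Your step-by-step mirroring of the even-case argument is exactly the content the author compresses into the phrase ``the extension is trivial.''
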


Given this, much of this manuscript can be rephrased using the language of dg-manifolds. We hope to do so in the future.

\section{Homotopical $\Ci$-algebras}\label{sec:homotopical}

\begin{definition}
We denote the $\i$-category $\Alg_{\SCi}\left(\Spc\right)$ by $\dgc$. 
\end{definition}

By Theorem \ref{theorem:bergner}, any $\SCi$-algebra in the $\i$-category $\Spc$ of spaces can be modeled by a simplicial $\SCi$-algebra in sets. Since any $\SCi$-algebra has an underlying $
\Z_2$-graded abelian group, namely its even and odd additive groups, such a simplicial algebra has an underlying simplicial abelian group of even and odd elements respectively. It follows each of the respective underlying simplicial sets are Kan complexes. There is a model structure on the category of $\SCi$-algebras in $\Set^{\Delta^{op}}$ such that:
\begin{itemize}
\item[i)] $f:X \to Y$ is a \textbf{weak equivalence} if both
$$f\left(\R\right):X\left(\R\right) \to Y\left(\R\right)$$ and
$$f\left(\R^{0|1}\right):X\left(\R^{0|1}\right) \to Y\left(\R^{0|1}\right)$$ are weak homotopy equivalences of simplicial sets and
\item[ii)] $f:X \to Y$ is a \textbf{fibration} if both maps above are Kan fibrations of simplicial sets.
\end{itemize}
The $\i$-category associated to this model category is equivalent to $\Alg_{\SCi}\left(\Spc\right).$ Having this model structure at hand will sometimes be useful for concrete arguments and calculations.

\subsection{Differential Graded $\Ci$-algebras}
Let a $\Ci$-algebra $\A$ (in $\Spc$) be represented by a simplicial $\Ci$-algebra $X.$ As discussed, $X$ has the underlying structure of a simplicial abelian group. The classical Dold-Kan correspondence associates to this simplicial abelian group a cochain complex. In fact, this abelian group is an $\R$-module, so this becomes a cochain complex over $\R.$ One may wonder if one can transfer the $\Ci$-algebra structure to this cochain complex.
\begin{definition}
A \textbf{differential graded $\Ci$-algebra} (dg-$\Ci$-algebra), is a commutative differential graded $\R$-algebra $\left(\cA^{\bullet},d\right),$ together with the additional structure of a lift of the induced commutative $\R$-algebra structure on $\cA_0$ to the structure of a $\Ci$-algebra. A morphism $f:\left(\cA,d\right)\to \left(\cA',d'\right)$ between two such algebras is a morphism of differential graded $\R$-algebras such that the morphism $f_0:\cA_0 \to \cA'_0$ is a morphism of $\Ci$-algebras.

Similarly, a \textbf{differential graded $\Ci$-superalgebra} (dg-$\Ci$-superalgebra), is a commutative differential graded $\R$-superalgebra $\left(\cA^{\bullet},d\right),$ (here each $\cA_i$ is additionally $\Z_2$-graded) together with the additional structure of a lift of the induced supercommutative $\R$-algebra structure on $\cA_0$ to the structure of a $\SCi$-algebra. A morphism $f:\left(\cA,d\right)\to \left(\cA',d'\right)$ between two such algebras is a morphism of differential graded $\R$-algebras such that the morphism $f_0:\cA_0 \to \cA'_0$ is a morphism of $\SCi$-algebras.
\end{definition}

\begin{theorem}\cite[Theorem 6.5]{dg2} \label{thm:dgmodel}
There exists a cofibrantly generated, almost simplicial, model category structure on the category of dg-$\Ci$-algebras, unique with the property that $$f:\left(\cA,d\right)\to \left(\cA',d'\right)$$ is a weak equivalence (respectively fibration) if and only if the induced map of underlying cochain complexes is, with respect to the projective model structure on $\Ch_{\R},$ and similarly for the category of dg-$\Ci$-superalgebras, but with the category $\Ch^\bullet_{\R}$ replaced with that of $\Ch^\bullet\left(\mathbf{Vect_{\R}^{\Z_2}}\right)$--- the category of cochain complexes of $\Z_2$-graded real vector spaces. Moreover, there are induced model structures on non-positively (and non-negatively) graded $\Ci$-(super)algebras.
\end{theorem}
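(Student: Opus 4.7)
The plan is to construct the model structure by right transfer (Kan's lifting theorem) along the free--forgetful adjunction
$$F : \mathbf{Ch}^{\bullet}_{\R} \rightleftarrows \mathsf{dg\mbox{-}\Ci\mbox{-}Alg} : U,$$
with $\mathbf{Ch}^{\bullet}_{\R}$ carrying the projective model structure. The forgetful $U$ preserves limits, sifted colimits, and reflects isomorphisms because dg-$\Ci$-algebras are algebras for an accessible monad on $\mathbf{Ch}^{\bullet}_{\R}$ (the dg-enhancement of the $\Ci$-algebra monad obtained by combining the Lawvere theory $\Ci$ with the monoidal Dold--Kan correspondence); the left adjoint $F$ then exists by presentability. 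The first step is therefore to verify that $\mathsf{dg\mbox{-}\Ci\mbox{-}Alg}$ is locally presentable, which follows from monadicity together with Lemma~\ref{lem:refalg} applied in the dg-setting.

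Second, I would set up the candidate generating (trivial) cofibrations as $FI$ and $FJ$, where $I, J$ are the standard generating sets in $\mathbf{Ch}^{\bullet}_{\R}$. Their smallness is automatic from local presentability, so the only nontrivial input to Kan's theorem is the \emph{acyclicity} condition: every relative $FJ$-cell complex is a quasi-isomorphism on underlying cochain complexes. My approach here is a path object argument. Tensoring a dg-$\Ci$-algebra $\cA$ with the polynomial de Rham algebra of the interval $\R\langle t, dt\rangle$ (promoted to a dg-$\Ci$-algebra via $\R[t] \hookrightarrow \Ci(\R)$) produces a factorization
$$\cA \; \stackrel{\sim}{\longrightarrow} \; \cA \otimes_{\R} \R\langle t, dt \rangle \; \twoheadrightarrow \; \cA \times \cA$$
of the diagonal through a weak equivalence followed by a levelwise surjection; the first map is a quasi-isomorphism by flatness of the interval algebra (a consequence of Lemma~\ref{lem:5.1.13}), and the second is a fibration in our putative structure.

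The main obstacle will be turning this path object into the acyclicity condition for pushouts along $FJ$. Concretely, I would show that a pushout $\cA \to \cA \otimes_{F(0)} F(C)$ along the free extension generated by a contractible cochain complex $C$ is a quasi-isomorphism; this reduces by a standard filtration argument and the path object to showing that the underlying cochain complex of the free dg-$\Ci$-algebra on a contractible complex is itself contractible, which follows from the fact that the $\Ci$-symmetric algebra functor preserves cochain-level homotopies constructed by tensoring with $\R\langle t, dt\rangle$. Transfer then produces the model structure; uniqueness is forced because weak equivalences and fibrations together determine the cofibrations. The almost-simplicial structure is the simplicial enrichment $\Map(\cA,\cB)_n := \Hom(\cA, \cB \otimes \Omega^{\bullet}(\Delta^n))$, which satisfies the relevant variant of $\mathrm{SM7}$ by the same flatness argument.

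The super case is obtained by repeating the argument verbatim with the symmetric monoidal category $\mathbf{Ch}^{\bullet}(\mathbf{Vect}_{\R}^{\Z_2})$; the Koszul sign rule affects only the monoidal structure and none of the homotopical inputs. Finally, for the bounded variants, one observes that $F$ applied to a non-positively (resp.\ non-negatively) graded complex lands in non-positively (resp.\ non-negatively) graded dg-$\Ci$-(super)algebras and that the inclusions into all-graded dg-$\Ci$-(super)algebras admit truncation adjoints; so one can either restrict the transferred model structure along the inclusion or directly re-apply Kan's transfer theorem with the bounded projective model structure as source, obtaining the induced model structures claimed in the final sentence.
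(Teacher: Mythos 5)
Your overall skeleton---monadicity of dg-$\Ci$-algebras over cochain complexes and right transfer of the projective model structure along the free--forgetful adjunction, with uniqueness coming for free from the specification of weak equivalences and fibrations---is the same strategy as the cited source for this theorem (the present paper offers no proof of its own, only the citation to \cite[Theorem 6.5]{dg2}), so everything hinges on your verification of the acyclicity condition, and that is where the argument breaks. The object $\cA \otimes_{\R} \R\langle t,dt\rangle$ is not a dg-$\Ci$-algebra: its degree-zero part contains $\cA^0\otimes_{\R}\R[t]$, a polynomial extension carrying no natural $\Ci$-ring structure (the paper itself emphasizes $\Ci\left(\R^n\right)\otimes_{\R}\Ci\left(\R^m\right)\subsetneq\Ci\left(\R^{n+m}\right)$), so as written it is not a path object in the category you are transferring to. ``Promoting via $\R[t]\hookrightarrow\Ci\left(\R\right)$'' forces you to replace $\otimes_{\R}$ in degree zero by the $\Ci$-coproduct $\oinfty$, and then the contractibility of $\cA \to \cA\oinfty\Omega^{\bullet}\left(\R\right)$ is no longer ``flatness of the interval algebra'': Lemma \ref{lem:5.1.13} concerns localizations $\A\to\A\left[a^{-1}\right]$ and says nothing about $\cA^0\to\cA^0\oinfty\Ci\left(\R\right)$. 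The same type-checking problem affects your simplicial enrichment $\Hom\left(\cA,\cB\otimes\Omega^\bullet\left(\Delta^n\right)\right)$, and in the non-positively graded case $dt$ sits in cohomological degree $+1$, so the interval path object does not even live in the bounded category; this is exactly why the bounded variants cannot simply be ``restricted'' and why the generating trivial cofibration $0\to D^0$ needs its own argument there.

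The substantive content is thus the step your sketch waves off: showing that the pushout of $\cA$ along the free map on a disk is a quasi-isomorphism. For disks avoiding degree zero this is an honest K\"{u}nneth argument, but for the disks touching degree zero the free algebras are $\Sym_{\Ci}\left(D^0\right)\cong K\left(\Ci\left(\R\right),t\right)$ and $\Sym_{\Ci}\left(D^1\right)\cong$ smooth forms on $\R$ (polynomial in $dt$), and the relevant pushout is the coproduct in dg-$\Ci$-algebras, not $\cA\otimes_{\R}\Sym_{\Ci}\left(D\right)$; so ``$\Sym_{\Ci}$ preserves homotopies built from $\R\langle t,dt\rangle$'' is not an argument. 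What is actually needed is either flatness of $\cA^0\to\cA^0\oinfty\Ci\left(\R\right)$ together with regularity of the adjoined variable $t$ (the Koszul-type mechanism the present paper isolates in Lemma \ref{lem:Kosz1} and Proposition \ref{prop:kosz}, and the route taken in \cite{dg2}), or an explicit fiberwise integration homotopy on $\cA^0\oinfty\Omega^{\bullet}\left(\R\right)$ in place of the naive Poincar\'e lemma. Until one of these is supplied, the acyclicity hypothesis of the transfer theorem---the only nontrivial hypothesis---remains unproven, and with it the theorem, its super analogue, and the bounded variants.
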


\begin{definition}
Denote by $\mathbf{dg}\Ci\Alg,$ $\mathbf{dg}\Ci\Alg_{\le 0},$ $\mathbf{dg}\Ci\Alg_{\ge 0},$ the $\i$-categories associated to the above model categories of appropriately bounded $dg$-$\Ci$-algebras. Similarly, denote by $\mathbf{dg}\SCi\Alg,$ $\mathbf{dg}\SCi\Alg_{\le 0},$ $\mathbf{dg}\SCi\Alg_{\ge 0},$ the analogously defined $\i$-categories of differential graded $\SCi$-algebras.
\end{definition}


\begin{remark}
A dg-$\Ci$-superalgbera $\left(\A,d\right)$ has both a $\Z$-grading and a $\Z_2$-grading, and they need not be compatible!
\end{remark}


Notice that for any dg-$\Ci$-superalgbera $\left(\A^\bullet,d\right),$ there is an underlying cochain complex of super vector spaces $\A^\bullet.$ This produces a forgetful functor
$$\mathbf{dg}\Ci\Alg \to \mathbf{dg}\mathsf{SVect},$$ with a left adjoint $\Sym_{\Ci}.$

Recall that if $V^\bullet$ is a $\Z$-graded vector space, and for each $i,$ we have a given basis $\left(e_i^\alpha\right),$ $\Sym\left(V^\bullet\right)$ can be described as the free graded commutative algebra with a generator of degree $i$ for each basis element $e_i^\alpha.$ If each $V^i$ is additionally $\Z_2$-graded, each of these generators is appropriately even or odd. If $\left(V^\bullet,d\right)$ is then a cochain complex, we can describe each linear map
$$d_i:V_i \to V_{i+1}$$
in terms of the basis elements as a matrix $M^i,$ where, using Einstein's summation convention,
$$d_i\left(e_i^\alpha\right)=M^i_{\alpha\beta}e^\beta_{i+1}.$$ $\Sym\left(\left(V^\bullet,d\right)\right)$ can then be described as $\Sym\left(V^\bullet\right)$ equipped with the differential
\begin{equation}\label{eq:differential}
D:=M^i_{\alpha\beta}e^{\beta}_{i+1}\frac{\partial}{\partial e^{\alpha}_i}.
\end{equation}

Similarly, $\Sym_{\Ci}\left(V^\bullet\right)$ is the free dg-$\SCi$-algebra on the variables $e^\alpha_i.$ If $V^\bullet=V$ is concentrated in degree $0,$ and has superdimension $n|m,$ then 
$$\Sym_{\Ci}\left(V\right)\cong \Ci\left(\R^{n|m}\right).$$ On the other extreme, if $V^0=0,$ then $$\Sym_{\Ci}\left(V^\bullet\right)\simeq \Sym\left(V^\bullet\right).$$ Combining these two facts implies that in general
$$\Sym_{\Ci}\left(V^\bullet\right) \simeq \Sym\left(\underset{i \ne 0} \bigoplus V^i\left[-i\right]\right) \underset{S\left(V^0\right)} \otimes \Ci\left(V_0\right).$$

For example, if $W^0=\R^{n|m},$ $W^{-1}=\R^{k|l},$ $W^{-2}=\R^{a|b},$ and all other components zero, then
$$\Sym_{\Ci}\left(W^\bullet\right)\cong \Ci\left(\R^{n|m}\right) \underset{\R} \otimes \R\left[x_1,x_2,\ldots,x_k,y_1,y_2,\ldots y_l,z_1,\ldots,z_a,w_1,\ldots w_b\right],$$ with each $x_i$ in cohomological degree $-1$ and even in its $\Z_2$-grading, each $y_i$ in cohomological degree $-1$ and odd in its $\Z_2$-grading, each $z_i$ in cohomological degree $-2$ and even in its $\Z_2$-grading, and each $w_i$ in cohomological degree $-2$ and odd in its $\Z_2$-grading. $\Sym_{\Ci}\left(W^\bullet\right)$ is the free $\Z$-graded $\SCi$-algebra on $n$ even generators of degree $0,$ $m$ odd generators of degree $0,$ $k$ even generators of degree $-1$ etc.

For a cochain complex $\left(V^\bullet,d\right),$ the underlying graded commutative algebra of $\Sym_{\Ci}\left(\left(V^\bullet,d\right)\right)$ is $\Sym_{\Ci}\left(V^\bullet\right),$ and the differential is given by the same formula (\ref{eq:differential}).

Finally, we note that in the projective model structure, every cochain complex is cofibrant, so the left derived functor of $\Sym_{\Ci}$ is itself.

\begin{definition}
A morphism $f:\left(\A,d\right) \to \left(\B,d\right)$ between  dg-$\SCi$-algebras is \textbf{quasi-free} if there exists a $\Z$-graded super vector space $W^\bullet$ such that $f$ is isomorphic to the inclusion of $\A$ into $\A \underset{\R}\otimes \Sym_{\Ci}\left(W^\bullet\right),$ after forgetting the differentials.
\end{definition}

\begin{proposition}\cite{dg2}
A morphism $f:\left(\A,d\right) \to \left(\B,d\right)$ between non-positively graded differential $\SCi$-algebras is a cofibration if and only if it is a retract of a quasi-free extension.
\end{proposition}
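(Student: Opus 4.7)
The plan is to exploit the cofibrant generation of the model structure from Theorem \ref{thm:dgmodel}, which is transferred from the projective model structure on $\Ch^{\le 0}(\mathbf{SVect})$ along the adjunction
\[
\Sym_{\Ci} : \Ch^{\le 0}(\mathbf{SVect}) \rightleftarrows \mathbf{dg}\SCi\Alg_{\le 0} : U .
\]
For the direction that every quasi-free extension is a cofibration, I would proceed directly. A quasi-free extension $\A \hookrightarrow \A \otimes_\R \Sym_{\Ci}(W^\bullet)$ can be filtered by the cohomological degree of the generators, writing $W^\bullet$ as a colimit of its ``truncations'' $W^{\ge -n}$. Each stage of the resulting filtration is obtained as a pushout along $\Sym_{\Ci}$ applied to a generating cofibration of cochain complexes of the form $S^{n-1} \hookrightarrow D^n$ (suitably interpreted in the non-positively graded super setting), so each stage is a cofibration, and hence the whole extension is too. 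Retracts of cofibrations are cofibrations, establishing one direction.

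For the converse, I would use the standard small object argument. Since the model structure is cofibrantly generated, every cofibration is a retract of a transfinite composition of pushouts of generating cofibrations. The generating cofibrations of $\Ch^{\le 0}(\mathbf{SVect})$ are the maps $0 \to D^n$ and $S^{n-1} \hookrightarrow D^n$ for $n \le 0$, in both even and odd parities, where $D^n$ is the contractible two-term complex and $S^{n-1}$ is $\R$ (resp.\ $\R^{0|1}$) in degree $n-1$. Applying $\Sym_{\Ci}$ to such a map $V^\bullet \hookrightarrow V^\bullet \oplus \R[-n]$ produces, after forgetting differentials, exactly a free extension by one generator, i.e.\ a quasi-free extension. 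The key step is then to verify that the class of quasi-free extensions is closed under pushout along arbitrary morphisms of dg-$\SCi$-algebras and under transfinite composition. Pushout stability follows because the underlying graded $\SCi$-algebra of a pushout $\A \otimes_\R \Sym_{\Ci}(W^\bullet) \otimes_\A^\mathbf{L} \A'$ is $\A' \otimes_\R \Sym_{\Ci}(W^\bullet)$, using that $\Sym_{\Ci}$ is left adjoint (hence preserves pushouts of underlying graded algebras), and that in non-positive degree such a pushout is already homotopy invariant on the underlying graded level by cofibrancy of $W^\bullet$ as a chain complex. Closure under transfinite composition is immediate since $\Sym_{\Ci}$ commutes with filtered colimits (coproducts of free algebras are free).

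Combining the two closure properties shows that every transfinite composition of pushouts of generating cofibrations is itself a quasi-free extension, so every cofibration is a retract of a quasi-free extension.

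The main obstacle, and the step requiring the most care, is the pushout-stability argument in the supergeometric and $\Ci$-setting: one must check that the underlying $\Z$-graded $\SCi$-algebra of a pushout of a quasi-free extension is again polynomial over the base. This is not automatic in $\Ci$-geometry—unlike in ordinary commutative algebra—because the $\SCi$-coproduct $\oinfty$ differs from the $\R$-algebra tensor product on non-free factors. However, one leg of the pushout being free (namely $\Sym_{\Ci}(V^\bullet) \to \Sym_{\Ci}(V^\bullet \oplus \R[-n])$) is precisely what saves us, since for free algebras $\Sym_{\Ci}$ agrees with the left adjoint on the level of generators, and the coproduct $\A' \oinfty_{\Sym_{\Ci}(V^\bullet)} \Sym_{\Ci}(V^\bullet \oplus \R[-n])$ reduces to $\A' \otimes_\R \Sym(\R[-n])$ on underlying graded algebras because the new generator sits in negative degree, where $\SCi$-structure coincides with $\Z$-graded supercommutative $\R$-algebra structure.
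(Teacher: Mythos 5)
Your overall strategy is the standard one (and essentially the one behind the cited result): the model structure is transferred from $\Ch^{\le 0}$ of super vector spaces along $\Sym_{\Ci}\dashv U$, quasi-free extensions are cofibrations via the filtration by cohomological degree of generators (which in the non-positively graded case automatically satisfies the triangularity condition on differentials), and conversely every cofibration is a retract of a relative cell complex, which one shows is a quasi-free extension by closure of free extensions under cell attachment and transfinite composition. However, there is a concrete gap in the step you yourself single out as the crux. Your list of generating cofibrations is wrong: the maps $0 \to D^n$ are the generating \emph{trivial} cofibrations, your indexing of the sphere-to-disk inclusions makes them fail to be chain maps (with cohomological conventions the generating cell is $S^n \hookrightarrow D^n$, attaching a generator in degree $n-1$ killing a degree-$n$ cocycle), and, most importantly, you omit the maps $0 \to S^0$ (in both parities), which are genuinely needed: every map in your generating set is an isomorphism in degree $0$, so the saturated class it generates cannot contain, e.g., the cofibration $\R \to \Ci(\R)$, and your small object argument therefore does not reach all cofibrations.

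This omission is not cosmetic, because the attachments along $0 \to S^0$ are exactly the ones not covered by your resolution of the ``main obstacle.'' Pushing out $\Sym_{\Ci}(0 \to S^0_{\mathrm{even}}) = \bigl(\R \to \Ci(\R)\bigr)$ along an arbitrary map yields $\A' \to \A' \oinfty \Ci(\R)$, whose underlying graded algebra is the $\Ci$-coproduct and \emph{not} $\A' \otimes_\R \R[x]$; your argument that ``the new generator sits in negative degree, where $\SCi$-structure coincides with graded supercommutative $\R$-algebra structure'' simply does not apply here. The statement survives because quasi-freeness is defined via $\Sym_{\Ci}$, i.e.\ via free $\SCi$-extensions of the underlying graded algebra, and $\A' \oinfty \Ci(\R)$ is free in that sense; but you must say this explicitly and redo the pushout-stability computation for even degree-$0$ cells (the odd degree-$0$ and all negative-degree cells are fine, since there the free $\SCi$-extension agrees with the plain graded-commutative one). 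Two smaller points: in the forward direction your degreewise filtration should begin with the degree-$0$ stage $\A \to \A \oinfty \Sym_{\Ci}(W^0)$, which is a pushout of $\Sym_{\Ci}(0 \to W^0)$ rather than of a sphere-to-disk inclusion; and the pushouts appearing in the cell-attachment argument are $1$-categorical pushouts in the model category, so the derived tensor $\otimes^{\mathbf{L}}$ you write should not appear there.
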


Cofibrations between non-connective dg-$\SCi$-algebras are slightly more subtle:

\begin{proposition}\cite{dg2}
A quasi-free extension $f:\left(\A,d\right) \to \left(\B,d\right)$ dg-$\SCi$-algebras is a cofibration if and only if it admits an exhaustive filtration
$$\A=\cB_{-1} \subset \cB_{0} \subset \cB_1 \subset \cdots \cB_k \subset \cdots$$
by sub-algebras such that for all $k \ge 0,$ 
\begin{itemize}
\item[1.] $\cB_k$ is obtained from $\cB_{k-1}$ by adjoining a set of generators $\{x_{k,\alpha}\}_{\alpha \in I_k},$ i.e. the filtration is by quasi-free extensions, and
\item[2.] $d\left(x_{k,\alpha}\right) \in \cB_{k-1}$ for all $\alpha \in I_k.$
\end{itemize}
\end{proposition}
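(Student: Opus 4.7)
The plan is to leverage the fact that the model structure of Theorem~\ref{thm:dgmodel} is cofibrantly generated via transfer from the projective model structure on $\Ch^\bullet(\mathbf{Vect_{\R}^{\Z_2}})$.

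First I would identify the generating cofibrations of $\mathbf{dg}\SCi\Alg$. The projective model structure on $\Ch^\bullet(\mathbf{Vect_{\R}^{\Z_2}})$ has generating cofibrations $\iota_n : S^n \hookrightarrow D^{n+1}$ for each $n \in \Z$ and each parity, where $S^n$ is a single generator in degree $n$ with zero differential and $D^{n+1}$ additionally adjoins a degree-$(n+1)$ generator mapping to it under $d$. Applying the left adjoint $\Sym_{\Ci}$ yields the generating cofibrations of $\mathbf{dg}\SCi\Alg$. Crucially, a pushout along $\Sym_{\Ci}(\iota_n)$ from a dg-$\SCi$-algebra is precisely the operation of adjoining a single new generator (even or odd, in degree $n+1$) whose differential is the prescribed cycle determined by the attaching map, and every such adjunction arises uniquely this way.

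For the forward direction (filtration implies cofibration), each step $\cB_{k-1} \hookrightarrow \cB_k$ in the filtration adjoins generators $\{x_{k,\alpha}\}$ whose differentials $d(x_{k,\alpha}) \in \cB_{k-1}$ are automatically cycles by $d^2=0$; hence each step is a pushout of a coproduct of generating cofibrations. The transfinite composition of these pushouts exhibits $f : \A \hookrightarrow \B$ as a relative cell complex, which is a cofibration by the standard closure of cofibrations under transfinite composition of pushouts of generating cofibrations.

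For the converse, let $f : \A \hookrightarrow \B = \A \otimes \Sym_{\Ci}(W)$ be a quasi-free cofibration. I would take as candidate filtration $\cB_\kappa = \A \otimes \Sym_{\Ci}(W_\kappa)$, where $W_\kappa \subseteq W$ is defined transfinitely by $W_0 = \{w \in W : dw \in \A\}$, $W_{\kappa+1} = \{w \in W : dw \in \A \otimes \Sym_{\Ci}(W_\kappa)\}$, and $W_\lambda = \bigcup_{\kappa < \lambda} W_\kappa$ at limit ordinals. Each inclusion $\cB_\kappa \hookrightarrow \cB_{\kappa+1}$ is then manifestly a quasi-free extension of the required form, so the main obstacle is to verify exhaustivity: every $w \in W$ must eventually lie in some $W_\kappa$. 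To establish this I would apply the small-object argument to factor $f$ as $\A \hookrightarrow \A' \to \B$, with the first map a relative cell complex built from the generating cofibrations above (so that $\A'$ carries a canonical well-founded cell filtration $\A'_\kappa$) and the second map a trivial fibration. The cofibration hypothesis on $f$ together with the retract lemma then produces a section $s : \B \to \A'$ over $\A$ exhibiting $\B$ as a retract of $\A'$ via a retraction $p$. A transfinite induction on the ordinal rank required to express $s(w)$ in the cell filtration of $\A'$, using that $s$ and $p$ are morphisms of dg-algebras so that $dw = p(d\, s(w))$ is the image under $p$ of an element built from strictly lower-rank cells of $\A'$, forces $dw \in \A \otimes \Sym_{\Ci}(W_{<\kappa})$ for $\kappa$ one larger than the maximum rank appearing in $s(w)$, placing $w$ in $W_\kappa$ and completing the proof.
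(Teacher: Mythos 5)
Your ``if'' direction is fine: adjoining generators whose differentials are prescribed cycles in the previous stage is exactly a pushout of a coproduct of the generating cofibrations $\Sym_{\Ci}(S^n \hookrightarrow D^{n+1})$ of the transferred model structure, and the (countable) composition of these pushouts is a cofibration. The paper itself offers no argument (it cites \cite{dg2}), so the only question is whether your converse works, and there it has a genuine gap. The transfinite induction does not close, for two reasons. First, the differential does \emph{not} strictly lower rank in the cell filtration of $\A'$: if $s(w)\in \A'_{\kappa}$ and $\A'_{\kappa}$ is obtained from $\A'_{<\kappa}$ by attaching cells $e_\beta$, then by the Leibniz rule $d\,s(w)$ still involves the cells $e_\beta$ (e.g.\ $d(a e_\beta)=(da)e_\beta \pm a\, de_\beta$), so $d\,s(w)\in \A'_{\kappa}$ but generally not in $\A'_{<\kappa}$; the phrase ``built from strictly lower-rank cells'' is simply false, and without a strict decrease the induction has no base to stand on. Second, and more fundamentally, even granting control of $d\,s(w)$, you need the retraction $p$ to carry the cell filtration of $\A'$ into your candidate filtration of $\B$, i.e.\ $p(\A'_{<\kappa})\subseteq \A\otimes\Sym_{\Ci}(W_{\mu})$ for a suitable $\mu$; this is never established and does not follow from the differential condition. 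Membership in $\cB_{\mu+1}=\A\otimes\Sym_{\Ci}(W_{\mu+1})$ requires expressibility in terms of those generators of $W$ whose differentials lie in $\cB_{\mu}$, whereas an arbitrary element $b\in\B$ with $db\in\cB_{\mu}$ (such as $b=p(e)$ for a cell $e$, or a generator plus a decomposable correction with partially cancelling differentials) need not lie in $\cB_{\mu+1}$. So knowing $dw=p(d\,s(w))$ lands in the image of a lower cell stage does not place $dw$ in a stage of your filtration, which is exactly what the inductive step needs.

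Two smaller points: your candidate (``maximal'') filtration is the right one to test, since any good filtration is dominated by it, so exhaustivity of \emph{some} filtration would indeed give exhaustivity of yours; but the argument that cofibrancy forces exhaustivity is the entire content of the converse, and transporting a filtration through a retract $\B \rightleftarrows \A'$ is the delicate step that must be done differently (e.g.\ via an analysis of the linear part of the differential on generators, or by lifting against trivial fibrations tailored to detect the filtration), not by the rank induction as written. Also note the statement asks for an $\mathbb{N}$-indexed filtration, while your construction is ordinal-indexed; that discrepancy would also need to be addressed.
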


This implies that the cofibrant objects in the model category $\dgchmod$ are retracts of quasi-free dg-$\Ci$-superalgebras by \cite[Corollary 6.20]{dg2}. In particular, each algebra $\Ci\left(\R^{p|q}\right),$ regarded as a dg-$\SCi$-algebra is cofibrant, and the collection of these algebras is closed under coproducts, and hence is also closed under derived $\Ci$-tensor products, as they coincide in this case. It follows that the canonical functor
\begin{eqnarray*}
\SCi & \to & \left(\mathbf{dg}\Ci\Alg_{\le 0}\right)^{op}\\
\R^{p|q} &\mapsto & \Ci\left(\R^{p|q}\right)
\end{eqnarray*}
preserves finite products. Hence, by Theorem \ref{theorem:5.5.8.15}, we deduce that there is a unique colimit preserving functor $$N_{\Ci}:\dgc \to \mathbf{dg}\Ci\Alg_{\le 0}$$ which sends each algebra of the form $\Ci\left(\R^{p|q}\right)$ to itself. By the adjoint functor theorem, it follows there exists a right adjoint $\Gamma^{\Ci}$. In fact, it follows from the Yoneda lemma that  $\Gamma^{\Ci}$ sends a dg-$\SCi$-algebra $\A$ to
\begin{eqnarray*}
\Gamma^{\Ci}\left(\A\right):\SCi &\to & \Spc\\
\R^{p|q} &\mapsto & \Map_{\mathbf{dg}\Ci\Alg_{\le 0}}\left(\Ci\left(\R^{p|q}\right),\A\right).
\end{eqnarray*}

\begin{theorem}[The $\Ci$ Dold-Kan Correspondence] \cite[Corollary 2.2.10]{Nuiten}
The above adjunction
is an adjoint equivalence of $\i$-categories.
\end{theorem}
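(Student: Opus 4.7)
The plan is to exploit the universal property of sifted cocompletion on both sides. By Theorem \ref{theorem:5.5.8.15}, $\dgc = \Alg_{\SCi}(\Spc)$ is the free sifted-colimit completion of $\SCi^{op}$, with embedding given by the representable algebras $\R^{p|q} \mapsto \Ci(\R^{p|q})$. To show $N_{\Ci}$ is an equivalence, it thus suffices to verify that the fully faithful functor $\SCi^{op} \hookrightarrow \mathbf{dg}\Ci\Alg_{\le 0}$, sending $\R^{p|q}$ to $\Ci(\R^{p|q})$ regarded in cohomological degree zero, exhibits $\mathbf{dg}\Ci\Alg_{\le 0}$ as the free sifted cocompletion of $\SCi^{op}$. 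By the standard characterization of sifted cocompletions, this amounts to two claims: (i) each $\Ci(\R^{p|q})$ is compact projective in $\mathbf{dg}\Ci\Alg_{\le 0}$, and (ii) the full subcategory on the $\Ci(\R^{p|q})$ generates $\mathbf{dg}\Ci\Alg_{\le 0}$ under sifted colimits. Granted these, the right adjoint $\Gamma^{\Ci}$ automatically preserves sifted colimits and is conservative, forcing the adjunction to be an equivalence.

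For (i), I would use the monadic adjunction $\Sym_{\Ci} \dashv U$ between non-positively graded super cochain complexes and $\mathbf{dg}\Ci\Alg_{\le 0}$. The forgetful functor $U$ preserves sifted colimits, since filtered colimits and geometric realizations in $\mathbf{dg}\Ci\Alg_{\le 0}$ are computed on underlying super cochain complexes (in characteristic zero symmetric powers commute with sifted colimits of super vector spaces). Therefore $\Sym_{\Ci}$ sends compact projectives to compact projectives, and since $\Ci(\R^{p|q}) \simeq \Sym_{\Ci}(\R^{p|q}[0])$, I only need compact projectivity of $\R^{p|q}[0]$ in $\mathbf{dg}\mathsf{SVect}_{\le 0}$. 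That is immediate: mapping out of $\R^{p|q}[0]$ is evaluation of the even and odd parts in cohomological degree zero, which commutes with sifted colimits since these are computed degreewise.

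For (ii), the natural candidate is the monadic bar resolution $\mathrm{Bar}_{\bullet}(\Sym_{\Ci} \circ U)(\A)$, whose geometric realization recovers $\A$ by monadicity. Its $n$-th level has the form $\Sym_{\Ci}(V_n^\bullet)$ for a super cochain complex $V_n^\bullet$ generally supported in several non-positive degrees, so it does not yet present $\A$ as a sifted colimit of objects of the form $\Ci(\R^{p|q})$. To remedy this, I apply the inverse Dold--Kan functor $\Gamma$ to each $V_n^\bullet$, obtaining a simplicial super vector space $\Gamma(V_n^\bullet)_\star$ whose levels are concentrated in degree zero, and form the bisimplicial object $(n,k) \mapsto \Sym_{\Ci}(\Gamma(V_n^\bullet)_k)$. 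Taking its diagonal yields a sifted-colimit presentation of $\A$ by finitely generated free $\SCi$-algebras, as required.

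The main obstacle is the monoidal compatibility needed to legitimize this diagonal argument, namely the identification $\Sym_{\Ci}(V^\bullet) \simeq |\Sym_{\Ci}(\Gamma(V^\bullet)_\star)|$ in $\mathbf{dg}\Ci\Alg_{\le 0}$. This is an instance of the monoidal Dold--Kan correspondence applied to the free $\SCi$-algebra monad, viewed as an analytic/polynomial functor over $\R$; for the purely even $\Ci$-part it is essentially Nuiten's Corollary 2.2.10, resting on the fact that over a field of characteristic zero the $\Ci$-monad is built out of symmetric powers which commute with Dold--Kan. The extension to the super setting is routine: odd generators contribute a free graded-commutative factor tensored with the even $\Ci$-part, and the graded-commutative monoidal Dold--Kan correspondence is classical.
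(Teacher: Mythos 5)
The paper does not actually prove this statement: it constructs the adjunction $N_{\Ci}\dashv\Gamma^{\Ci}$ via the universal property of the sifted cocompletion (Theorem \ref{theorem:5.5.8.15}) applied to the finite-product-preserving functor $\R^{p|q}\mapsto\Ci\left(\R^{p|q}\right)$, and then imports the equivalence wholesale from Nuiten. Your proposal supplies a genuine proof, and its architecture --- exhibit both $\dgc$ and $\mathbf{dg}\Ci\Alg_{\le 0}$ as the free sifted cocompletion of $\SCi^{op}$ by verifying compact projectivity of the finitely generated free algebras and generation under sifted colimits via the monadic bar resolution --- is the standard route for results of this kind and is, as far as the comparison can be made, consistent with the cited source rather than a departure from it. The bar-resolution-plus-levelwise-Dold--Kan-plus-diagonal argument for generation is correct: the diagonal computes a sifted colimit, each entry is free on a super vector space concentrated in degree zero, and such an algebra is in turn a filtered colimit of algebras $\Ci\left(\R^{p|q}\right)$, so the closure under sifted colimits of the finitely generated frees contains everything.

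One justification deserves tightening. You assert that the forgetful functor preserves sifted colimits because ``in characteristic zero symmetric powers commute with sifted colimits,'' but the free functor $\Sym_{\Ci}$ is not built purely from symmetric powers: in cohomological degree zero it produces $\Ci\left(V^0\right)$ rather than $\Sym\left(V^0\right)$, as the paper's formula
$$\Sym_{\Ci}\left(V^\bullet\right) \simeq \Sym\left(\underset{i \ne 0}{\textstyle\bigoplus} V^i\left[-i\right]\right) \underset{S\left(V^0\right)}{\otimes} \Ci\left(V^0\right)$$
makes explicit. What you actually need is that the monad $U\circ\Sym_{\Ci}$ on non-positively graded super cochain complexes preserves sifted colimits; the symmetric-power argument handles the nonzero degrees, but the degree-zero contribution requires separately that $V\mapsto\Ci\left(V\right)$ behaves well under the relevant colimits (for filtered colimits this is clear, and for geometric realizations one uses that the deviation from $\Sym$ is a base change along the flat map $S\left(V^0\right)\to\Ci\left(V^0\right)$, cf.\ Lemma \ref{lem:5.1.13}). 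This is exactly the point where the simplicial-algebra analogue is supplied by Proposition \ref{prop:monadic} and Lemma \ref{lem:presift}, and where the dg statement carries the real content of Nuiten's result; it is a fixable imprecision rather than a structural flaw, but as written the parenthetical does not cover the case that matters most.
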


The above theorem is a major convenience as it allows us to use techniques from homological algebra in our study of derived differential geometry.

\subsection{Localizations of homotopical $\SCi$-algebras}

\begin{definition}
Let $\A$ be a homotopical $\Ci$-algebra and let $a\in \pi_0\left( \A\right)$. We say that a map $f:\A\rightarrow \B$ such that $f\left( a\right)\in \pi_0\left( B\right)$ is invertible is a \textbf{localization of $A$ with respect to $a$} if for each $\cC \in \cialgsp$, the map $\Map_{\cialgsp}\left( \B,\cC\right)\rightarrow\Map_{\cialgsp}\left(\A,\cC\right)$ given by composition with $f$ induces an equivalence
\[\Map_{\cialgsp}\left(\B,\cC\right)\overset{\simeq}{\longrightarrow} \Map^0_{\cialgsp}\left( \A,\cC\right),\]
where $\Map^0_{\cialgsp}\left( \A,\cC\right)$ is the union of those connected components of $\Map_{\cialgsp}\left(\A,\cC\right)$ spanned by those maps $g$ such that $g\left( a\right)$ is invertible in $\pi_0\left( \cC\right)$.
\end{definition}

In the case of a classical $\Ci$-algebra (in sets), $\A,$ and some $a\in \A$, the above definition reduces to the usual $\Ci$ localization $\A\left[1/a\right]$ given up to equivalence by the pushout
\begin{equation*}
\begin{tikzcd}
\Ci\left( \R\right)\ar[r,"q_a"]\ar[d]& \A\ar[d]\\
\Ci\left( \R\setminus \{0\}\right)\ar[r] & \A\left[1/a\right]
\end{tikzcd}    
\end{equation*}
of $C^{\infty}$-rings. The localization of homotopical $C^{\infty}$-ring admits a similar characterization, for which we will need the following definition.
\begin{definition}\label{strongmap}
\begin{enumerate}
    \item A map $f:\A\rightarrow \B$ in $\Alg_{\ComR}\left(\Spc\right)$ is \textbf{strong} (in the sense of \cite[Definition 2.2.2.1]{ToeVez}) if the natural map
\[ \pi_n\left( \A\right)\underset{\pi_0\left( \A\right)}\otimes\pi_0\left( \B\right)\rightarrow \pi_n\left( \B\right)\]
is an isomorphism for all $n\geq 0$.
\item A map $f:\A\rightarrow \B$ of homotopical $C^{\infty}$-algebras is \textbf{strong} if $f^{\sharp}:\A^{\sharp}\rightarrow \B^{\sharp}$ is strong.
\end{enumerate}
\end{definition}
\begin{proposition}\label{localization}\label{prop:locz} \cite[Proposition 4.16]{univ}
Let $\A$ be a homotopical $\Ci$-algebra and let $a\in \pi_0\left( \A\right)$, and let $f:A\rightarrow B$ a map of simplicial $C^{\infty}$-rings. The following are equivalent:
\begin{enumerate}
    \item The map $f:A\rightarrow B$ exhibits $B$ as a localization with respect to $a$.
    \item For every $n\geq 0$, the induced map 
    \[\pi_n\left( \A^{\sharp}\right)\underset{\pi_0\left( A^{alg}\right)}\otimes\left( \pi_0\left( \A\right)\left[1/a\right]\right)^{alg}\rightarrow \pi_n\left( B^{alg}\right) \]
    is an equivalence; that is, $f$ is strong and the map of $C^{\infty}$-schemes corresponding to $\pi_0\left( A\right)\rightarrow \pi_0\left( B\right)$ is an open immersion. 
    \item $B$ fits into a pushout diagram
\begin{equation*}
\begin{tikzcd}
\Ci\left( \R\right)\ar[r,"q_a"]\ar[d]& A\ar[d,"f"]\\
\Ci\left( \R\setminus \{0\}\right)\ar[r] & B
\end{tikzcd}    
\end{equation*}
where $q_a$ is the unique up to homotopy map associated to $a\in \pi_0\left( A\right)$ (note that as a consequence, localizations always exist).
\end{enumerate}
\end{proposition}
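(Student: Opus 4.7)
The plan is to establish $(3) \Leftrightarrow (1)$ by a direct universal-property argument, and then $(3) \Leftrightarrow (2)$ by a homotopy-group computation that exploits the flatness of classical $\Ci$-localizations (Lemma~\ref{lem:5.1.13}). This splits the equivalence into a formal part and a genuinely computational part.

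For $(3) \Leftrightarrow (1)$, I would invoke the universal property of the pushout: for any $\cC \in \dgc$,
$$\Map_{\dgc}(\B,\cC) \simeq \Map_{\dgc}(\A,\cC) \underset{\Map_{\dgc}(\Ci(\R),\cC)}{\times} \Map_{\dgc}(\Ci(\R \setminus \{0\}),\cC).$$
Since $\Ci(\R \setminus \{0\})$ is the classical $\Ci$-localization of $\Ci(\R)$ at its coordinate, the rightmost space is the union of those components of $\Map_{\dgc}(\Ci(\R),\cC) \simeq \Gamma^{\Ci}(\cC)(\R)$ on which the represented element of $\pi_0(\cC)$ is invertible. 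The pullback therefore cuts out precisely the subspace of $\Map_{\dgc}(\A,\cC)$ on those maps sending $a$ to an invertible element, which is exactly the defining universal property of a localization in (1). Thus $\B$ and the pushout corepresent the same functor and hence coincide up to canonical equivalence.

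For $(3) \Rightarrow (2)$, I would resolve $\A$ by a sifted diagram of finitely generated free $\Ci$-algebras over $\Ci(\R)$ and transport the pushout with $\Ci(\R \setminus \{0\})$ along this resolution. At each stage of the resolution, the classical $\Ci$-localization at the image of $a$ agrees with the ordinary commutative-algebra localization (and is flat by Lemma~\ref{lem:5.1.13}), so the derived $\Ci$-tensor product can be computed on the underlying commutative $\R$-algebras without introducing higher Tor terms. Using that sifted colimits in $\dgc$ commute with homotopy groups and with the relevant localizations, one arrives at
$$\pi_n(\B^\sharp) \;\cong\; \pi_n(\A^\sharp)[1/a] \;\cong\; \pi_n(\A^\sharp) \underset{\pi_0(\A^\sharp)}{\otimes} \pi_0(\A^\sharp)[1/a],$$
which is strongness, while the identification $\pi_0(\B) \simeq \pi_0(\A)[1/a]$ of classical $\Ci$-algebras exhibits $\Speci\pi_0(\B) \to \Speci\pi_0(\A)$ as the evident open immersion. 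Finally, $(2) \Rightarrow (3)$ is formal: given any $f:\A \to \B$ satisfying (2), the already-established $(3) \Rightarrow (1)$ produces a canonical comparison $\varphi: \B' \to \B$ from the pushout $\B'$ of (3), and both endpoints satisfy the strong formula, so $\varphi$ induces isomorphisms on every $\pi_n$ of the underlying commutative $\R$-algebra and hence, via the $\Ci$ Dold--Kan correspondence, is an equivalence in $\dgc$. The main obstacle is the second paragraph's computation: without the flatness of Lemma~\ref{lem:5.1.13} the derived $\Ci$-tensor would carry Tor contributions and strongness would fail, and one must carefully separate the $\Ci$-structure from the underlying commutative-algebra structure throughout the sifted-colimit argument.
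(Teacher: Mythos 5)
The paper itself does not prove this proposition (it is quoted from \cite[Proposition 4.16]{univ}), so there is no internal proof to compare against; your three-step architecture --- universal property of the pushout for $(3)\Leftrightarrow(1)$, a flatness computation for $(3)\Rightarrow(2)$, and a comparison map for $(2)\Rightarrow(3)$ --- is the standard one and is in the spirit of the cited source. However, two of your steps have genuine gaps. The first is in $(3)\Leftrightarrow(1)$: the assertion that $\Map_{\dgc}\left(\Ci\left(\R\setminus\{0\}\right),\cC\right)\to\Map_{\dgc}\left(\Ci\left(\R\right),\cC\right)$ is the inclusion of exactly those components on which the corresponding element of $\pi_0\left(\cC\right)$ is invertible is precisely the proposition in the special case $\A=\Ci\left(\R\right)$, $a=x$. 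It does \emph{not} follow from the $1$-categorical universal property of the classical $\Ci$-localization, because $\cC$ is a homotopical algebra and the mapping space is derived; as written your argument is circular. To close it, one can present $\R\setminus\{0\}$ as the transverse zero locus of $xy-1$ in $\R^2$, so that $\Ci\left(\R\setminus\{0\}\right)\simeq K\left(\Ci\left(\R^2\right),xy-1\right)$ (Lemma \ref{lem:Kosz1}, Theorem \ref{thm:4.51}), identify the mapping space with the space of pairs $(b,c)$ in the underlying space of $\cC$ together with a homotopy $bc\sim 1$, and check that for fixed $b$ this space is contractible when $[b]$ is a unit in $\pi_0\left(\cC\right)$ (multiplication by $b$ is then an equivalence on underlying objects, since it acts by a unit on each $\pi_n$) and empty otherwise.

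The second gap is in your $(3)\Rightarrow(2)$ paragraph. The claim that at each free stage ``the classical $\Ci$-localization at the image of $a$ agrees with the ordinary commutative-algebra localization'' is false: $\Ci\left(\R\right)\left\{x^{-1}\right\}=\Ci\left(\R\setminus\{0\}\right)$ is strictly larger than $\Ci\left(\R\right)\left[x^{-1}\right]$. The content of Lemma \ref{lem:5.1.13} is not that the two localizations coincide but that the $\Ci$-localization is flat on underlying rings, and that flatness is what kills the higher Tor terms; moreover ``sifted colimits in $\dgc$ commute with homotopy groups'' is not true as stated (filtered colimits do, but geometric realizations contribute through a spectral sequence), so the transport along a sifted resolution needs to be replaced, e.g.\ by the Koszul presentation $B\simeq K\left(\A\{y\},ay-1\right)$ together with flatness of $\pi_0\left(\A\right)\to\pi_0\left(\A\right)\left[1/a\right]$, or by the argument of Nuiten. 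Your final step $(2)\Rightarrow(3)$ --- producing the comparison map from the pushout and checking it is an equivalence on homotopy groups --- is fine once $(3)\Rightarrow(2)$ has actually been established.
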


We denote the $\Ci$-localization of a homotopical $\Ci$-ring $\A$ by an element $a \in \pi_0 \A$ by $\A\left[1/a\right].$

\subsection{$\SCi$-Koszul complexes}

There is one subtle notion needed to work with homological algebra in the $\Z_2$-graded setting. For example, in the free superalgebra on odd generators $\eta_1,\eta_2,\ldots \eta_n$, the sequence of generators is not a regular sequence in the usual sense, since it consists of zero divisors. However, it is an \emph{odd regular sequence} in the sense of \cite{Schmitt}:

\begin{definition}
An even element $q$ of a supercommutative ring $\A$ is \textbf{regular} if it is not a zero-divisor.

An odd element $p$ of a supercommutative ring $\A$ is \textbf{regular} if the sequence $$\A \stackrel{\cdot p}{\longrightarrow} \A \stackrel{\cdot p}{\longrightarrow} \A$$ is exact.

A sequence $\left(a_1,a_2,...,a_n\right)$ of (even or odd) elements in a supercommutative ring $\A$ will be called a \textbf{regular sequence}, if  the image of $a_{i+1}$ is a regular element of $\A/\left(a_1,\ldots,a_i\right)$ for all $i.$

If $\A$ is a $\Z$-graded commutative superalgebra, a sequence $\left(a_1,a_2,...,a_n\right)$ of elements (possibly of different degrees), is \textbf{regular} if it is a regular sequence in the underlying $\Z_2$-graded commutative algebra.
\end{definition}

\begin{remark}
Recall that if $a \in \A$ is of bidegree $||a||=\left(n,\varepsilon\right),$ it is of $\Z_2$-degree $n+\varepsilon$ in the underlying $\Z_2$-graded algebra.
\end{remark}

\begin{definition}
Let $\A$ be a dg-$\SCi$-algebra. Let $a_1,\ldots, a_k$ be a set of elements of $\A$ of bidegrees $\left(n_1,\varepsilon_1\right),\ldots,\left(n_k,\varepsilon_k\right)$ such that each $a_i$ is closed, i.e. $da_i=0.$ The \textbf{Koszul algebra} of $\A$ associated to this set is the ($1$-categorical) pushout
$$\xymatrix{\left(\Ci\{x_1,\ldots,x_k\},||x_i||=||a_i||,0\right) \ar[d]_-{\left(a_1,\ldots,a_k\right)} \ar[r] & \left(\Ci\{x_1,\ldots,x_k,\xi_1,\ldots,\xi_k\},d=x_i \frac{\partial}{\partial \xi_i},||\xi_i||=\left(n_i -1,\varepsilon_i\right)\right) \ar[d]\\
\A \ar[r] & K\left(\A,a_1,\ldots, a_k\right),}$$
in $\dgchmod.$
If $\A=\left(\A,d\right),$ then the underlying $\Z$-graded commutative superalgebra of $K\left(\A,a_1,\ldots, a_k\right)$ is
$K\left(\A,a_1,\ldots, a_k\right) \cong \A\{\xi_1,\ldots,\xi_k\},$ the free $\A$-algebra (in $\SCi$-algebras) on the generators $\xi_1,\ldots,\xi_k.$ Since all of these generators are strictly of negative degrees, we have that
$$\A\{\xi_1,\ldots,\xi_k\} \cong \A \underset{\R} \otimes \mathbb{R}\left[\xi_{1},\ldots,\xi_{k}\right],$$ where $\mathbb{R}\left[\xi_{1},\ldots,\xi_{k}\right]$ is the free commutative $\R$-algebra on the (graded) generators $\xi_{1},\ldots,\xi_k.$
Using Einstein summation conventions, the differential takes the form
$$D=D_{\A} + a^i \frac{\partial}{\partial \xi_i}.$$
\end{definition}




\begin{remark}
The Koszul algebra of $a_1,\ldots, a_k$ can be constructed inductively, so that $$K\left(\A,a_1,\ldots,a_n\right)=K\left(K\left(\A,a_1\right),a_2,\ldots,a_n\right)=\ldots= K\left(K\ldots\left(K\left(\A,a_1\right),a_2\right),\ldots,a_n\right).$$
\end{remark}

\begin{lemma}\label{lem:Kosz1}
Suppose that $\A$ is a dg-$\Ci$-algebra with zero differential, and let $a$ be a regular element of $\A$ of any degree. Then there is a quasi-isomorphism of dg-$\Ci$-algebras
$$K\left(\A,a\right) \stackrel{\sim}{\longrightarrow} \A/\left(a\right),$$
where $\left(a\right)$ is the homogeneous ideal generated by $a.$
\end{lemma}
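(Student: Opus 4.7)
The plan is to unpack $K(\A,a)$ explicitly and reduce the statement to a standard Koszul-type cohomology computation, separating the cases according to the total $\Z_2$-parity of $a$. By the defining pushout, $K(\A,a) \cong \A\{\xi\} \cong \A \otimes_\R \R[\xi]$ as graded commutative $\Ci$-superalgebras, where $\xi$ sits in bidegree $(n-1,\varepsilon)$ and the differential extends $d_{\A}=0$ via the graded Leibniz rule starting from $D(\xi)=a$. Since $\xi$ has total $\Z_2$-parity $(n-1)+\varepsilon \pmod 2$, there are exactly two structural possibilities: either $\xi$ has odd total parity, so $\xi^2=0$ and $K(\A,a)=\A \oplus \A\xi$; or $\xi$ has even total parity, so $\R[\xi]$ is a genuine polynomial algebra and $K(\A,a)=\bigoplus_{m\ge 0} \A\xi^m$.

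Next I would produce the candidate quasi-isomorphism as the composition
\[
K(\A,a) \twoheadrightarrow \A \twoheadrightarrow \A/(a),
\]
where the first map sends $\xi \mapsto 0$ and the second is the quotient by the homogeneous ideal generated by $a$. This is a map of dg-$\Ci$-algebras: the $\Ci$-structure descends to $\A/(a)$ because homogeneous ideals in classical $\Ci$-algebras are $\Ci$-congruences (the $\Ci$-analogue of the result cited earlier in the paper), and $\pi$ is a chain map because $D(\xi)=a$ is carried to $[a]=0$, while everything else is killed by the zero differentials on $\A$ and $\A/(a)$.

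The content of the lemma is then the cohomological calculation, which I would handle by parity. If $a$ has even total parity (so $\xi$ has odd total parity), the Koszul complex collapses to the two-term complex $\A\xi \xrightarrow{\cdot a} \A$; regularity of $a$ as a non-zero-divisor makes the differential injective, so $H^{*}$ is zero on $\A\xi$ and equals $\A/a\A=\A/(a)$ on $\A$, exactly as $\pi$ realizes. If $a$ has odd total parity (so $\xi$ has even total parity), the Leibniz rule gives $D(\alpha\xi^{m}) = \pm\, m\, \alpha a\, \xi^{m-1}$, and since we are over $\R$ the integer scalars $m \ne 0$ are invertible; each boundary map is therefore, up to a nonzero scalar, right multiplication by $a$. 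For $m \ge 1$ the cohomology at $\A\xi^{m}$ is then $\ker(\cdot a)/\mathrm{image}(\cdot a)$, which vanishes by the exactness condition that defines odd-regularity, while at $m=0$ we obtain $\A/a\A=\A/(a)$, again matching $\pi$.

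The only delicate point is bookkeeping of signs in the graded Leibniz rule when the $\Z$- and $\Z_2$-gradings are not compatible (as flagged in the remark preceding the lemma), but this affects only the scalar in front of $\cdot a$ and never its vanishing, so the obstacle is cosmetic rather than substantive; the essential input is regularity of $a$ in the sense used by the paper, which is precisely the condition needed for the two boundary calculations above.
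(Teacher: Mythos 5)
Your proof is correct and takes essentially the same route as the paper's: the same case split by the total $\Z_2$-parity of $a$, the same identification of $K\left(\A,a\right)$ as $\A\oplus\A\xi$ (exterior) or $\bigoplus_m \A\xi^m$ (polynomial), and the same direct cohomology computation showing the canonical map to $\A/\left(a\right)$ is a quasi-isomorphism. If anything, your odd-parity case is handled more carefully than the paper's, which asserts that $c\cdot a=0$ forces $c=0$ (not literally true for odd $a$), whereas you correctly dispose of the higher $\xi$-powers by the exactness condition $\ker\left(\cdot a\right)=\operatorname{im}\left(\cdot a\right)$ defining odd regularity, using that the integer scalars from the Leibniz rule are invertible over $\R$.
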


\begin{proof}
\underline{Case 1}: $|a|$ is even. Then we have that $a$ is not a zero-divisor. Let $||a||=\left(i,\varepsilon\right).$ Let $\xi_{i-1}$ denote the added generator of bidegree $\left(i-1,\epsilon\right).$ Then $\xi_{i-1}$ is fermionic so squares to zero. Therefore, as a $\Z$-graded super vector space, we have
$$K\left(\A,a\right) \cong \A^\bullet \oplus \xi_{i-1} \cdot \A^{\bullet}\left[1-i\right].$$ We have
$$d_{k}:\A^{k} \oplus \xi_{i-1} \cdot  \A^{k-i+1} \to \A^{k+1} \oplus   \xi_{i-1} \cdot \A^{k-i}$$ is zero for $k=0,\ldots,i,$ and for $k <i,$
$$d_k\left(b_{k}+\xi_{i-1}c_{k-i+1}\right)=a \cdot c_{k-i+1}.$$ Since $a$ is not a zero-divisor, we have $$d_k\left(b_{k}+\xi_{i-1}c_{k-i+1}\right) = 0 \iff c_{k-1+i}= 0.$$ This means that $\ker\left(d_k\right)=\A^k.$ It's also clear that $\mathbf{Im}\left(d_{k-1}\right)=\left(a\right)_{k},$ that is, the $k^{th}$ component of the principal ideal $\left(a\right).$ It follows that the canonical map $K\left(\A,a\right) \to \A/\left(a\right)$ is a quasi-isomorphism.

\underline{Case 2}: $||a||$ is odd. Then we have that $a \cdot x = 0$ iff $x=a\cdot b$ for some $b.$ So we have the underlying graded algebra of $K\left(\A,a\right)$ is the polynomial algebra $\A\left[\xi_{i-1}\right]$ on a generator of \emph{bosonic} degree $\left(i-1,\varepsilon\right).$ An element of degree $k$ is polynomial of the form 
$x=c_k+\underset{j \ge 1} \sum c_{k-j\cdot\left(i-1\right)}\xi^j_{i-1},$ with each $c_{r} \in \A^{r}.$
So $$dx=\underset{j \ge 1} \sum c_{k-j\cdot\left(i-1\right)}\cdot a\cdot j\xi^{j-1}_{i-1}.$$ This means that $dx=0$ if and only if for all $j \ge 1,$ 
 $$c_{k-j\cdot\left(i-1\right)}\cdot a=0,$$
 but since $a$ is regular, this means that each $c_{k-j\cdot\left(i-1\right)}=0$ for $j \ge 1,$ i.e. $x=c_k,$ so $\ker\left(d_k\right)=\A_k.$ Again, we clearly have that $\mathbf{Im}\left(d_{k-1}\right)=\left(a\right)_{k},$ so it follows once more that the canonical map $K\left(\A,a\right) \to \A/\left(a\right)$ is a quasi-isomorphism.
 \end{proof}
 
\begin{proposition}\label{prop:kosz}
Suppose that $\A$ is a dg-$\Ci$-algebra with zero differential, and let $\left(a_1,a_2,\ldots a_n\right)$ be a regular sequence of elements of $\A.$ Then
$$K\left(\A,a_i\right) \stackrel{\sim}{\longrightarrow} \A/\left(a_1,a_2,\ldots,a_n\right).$$
\end{proposition}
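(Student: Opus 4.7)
The plan is to proceed by induction on $n$, bootstrapping from Lemma \ref{lem:Kosz1} (which handles $n=1$), and using the iterative description of Koszul algebras noted in the remark preceding the lemma, namely
$$K\left(\A,a_1,\ldots,a_n\right) \cong K\left(K\left(\A,a_1,\ldots,a_{n-1}\right),a_n\right).$$
For the inductive step, I would like to combine two facts: the induction hypothesis $K\left(\A,a_1,\ldots,a_{n-1}\right) \simeq \A/\left(a_1,\ldots,a_{n-1}\right)$, and Lemma \ref{lem:Kosz1} applied to the quotient $\A/\left(a_1,\ldots,a_{n-1}\right)$ together with the element $a_n$, which is regular in this quotient by the very definition of a regular sequence. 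The bridge between these two is the claim that $K\left(-,a_n\right)$ preserves quasi-isomorphisms, so that
$$K\left(K\left(\A,a_1,\ldots,a_{n-1}\right),a_n\right) \stackrel{\sim}{\longrightarrow} K\left(\A/\left(a_1,\ldots,a_{n-1}\right),a_n\right) \stackrel{\sim}{\longrightarrow} \A/\left(a_1,\ldots,a_n\right),$$
where the second arrow is by Lemma \ref{lem:Kosz1}.

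The key technical point is therefore the homotopy invariance of Koszul. To obtain this, I would recognize the defining pushout of $K\left(\A,a\right)$ as a \emph{homotopy} pushout in the model category $\mathbf{dg}\Ci\Alg$ of Theorem \ref{thm:dgmodel}. Concretely, the map $$\left(\Ci\{x\},\,||x||=||a||,\,0\right) \to \left(\Ci\{x,\xi\},\,d\xi=x,\,||\xi||=||a||-\left(1,0\right)\right)$$ is a quasi-free extension in the sense of the paper, hence a cofibration, so pushing out along an arbitrary map $a : \Ci\{x\} \to \A$ computes a derived tensor product. Consequently, replacing $\A$ with a weakly equivalent $\A'$ via a map that matches the choices of element (as it does here, since the element $a_n$ has a canonical image under the induction-hypothesis quasi-isomorphism) yields a quasi-isomorphism on Koszul complexes.

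The main obstacle is precisely this last invariance step: one has to be careful that, at each stage of the induction, the ``element we quotient by'' transports correctly along the quasi-isomorphism $K\left(\A,a_1,\ldots,a_{n-1}\right) \simeq \A/\left(a_1,\ldots,a_{n-1}\right)$. The element $a_n \in \A$ maps to a $0$-cocycle in the Koszul resolution and to its class in the quotient, and by the universal property of the Koszul construction as a homotopy pushout over $\Ci\{x\} \to \Ci\{x,\xi\}$, this compatibility of basepoints is automatic. Once this is in hand, the chain of equivalences above closes the induction and yields the proposition.
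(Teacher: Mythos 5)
Your overall strategy---induction on $n$, the iterative identification $K\left(\A,a_1,\ldots,a_n\right)\cong K\left(K\left(\A,a_1,\ldots,a_{n-1}\right),a_n\right)$, Lemma \ref{lem:Kosz1} applied to $\A/\left(a_1,\ldots,a_{n-1}\right)$ and the image of $a_n$, and a homotopy-invariance statement for $K\left(\blank,a_n\right)$ as the bridge---is exactly the structure of the paper's proof. The gap is in how you justify the bridge. You assert that because $\Ci\{x\}\to\Ci\{x,\xi\}$ is a quasi-free extension, hence a cofibration in $\mathbf{dg}\Ci\Alg_{\le 0}$, the defining pushout of the Koszul algebra is automatically a homotopy pushout, so that applying $K\left(\blank,a_n\right)$ to the induction-hypothesis quasi-isomorphism $K\left(\A,a_1,\ldots,a_{n-1}\right)\stackrel{\sim}{\longrightarrow}\A/\left(a_1,\ldots,a_{n-1}\right)$ again gives a quasi-isomorphism. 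But the inference ``ordinary pushout along a cofibration computes the homotopy pushout'' is valid only when the model category is left proper, or when all three vertices of the span are cofibrant. Here the third vertex is $K\left(\A,a_1,\ldots,a_{n-1}\right)$ for an \emph{arbitrary} dg-$\Ci$-algebra $\A$ with zero differential, which need not be cofibrant, and left properness of the transferred model structure of Theorem \ref{thm:dgmodel} on dg-$\Ci$-(super)algebras is not established anywhere (the paper deliberately never invokes it). Equivalently: what you need is that pushing out the weak equivalence $K\left(\A,a_1,\ldots,a_{n-1}\right)\to\A/\left(a_1,\ldots,a_{n-1}\right)$ along the cofibration $K\left(\A,a_1,\ldots,a_{n-1}\right)\to K\left(\A,a_1,\ldots,a_{n}\right)$ is again a weak equivalence, and that is literally the left properness property you have not supplied.

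The paper closes this gap by a maneuver your proposal is missing: since the adjoined Koszul generator always lies in strictly negative degree, the Koszul construction never adds degree-zero generators, so the underlying cdga of $K\left(\cB,b\right)$ is the pushout of underlying cdgas along $\R\left[x\right]\to\R\left[x,\xi\right]$ in non-positively graded commutative differential graded $\R$-(super)algebras. The quasi-isomorphism can therefore be checked at the level of underlying cdgas, where the projective model structure \emph{is} left proper \cite{white}, and left properness there gives exactly the required statement; Lemma \ref{lem:Kosz1} then finishes the inductive step as you describe. (Alternatively one can argue directly that $K\left(\cB,a_n\right)$ is semifree, hence K-flat, as a $\cB$-module, so that base change along $\cB\to\cB'$ computes the derived tensor product on underlying complexes; but some argument of this kind is needed, and cofibrancy of $\Ci\{x\}\to\Ci\{x,\xi\}$ alone does not deliver it.) With that repair, your induction closes and agrees with the paper's proof.
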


\begin{proof}
We will prove this by induction on the number of elements in the sequence. For simplicity, we will assume we are in the $\Z$-graded rather than $\Z \times \Z_2$-graded setting. We have already established this for $n=1.$ Suppose now that it is true for all sequences of $n-1$ regular elements. We wish to show it also holds for those of length $n.$

Let $\left(a_1,a_2,\ldots a_n\right)$ be a regular sequence of elements of $\A.$ Firstly, notice that since we are not allowing elements of degree $1,$ the Koszul construction can never add generators of degree $0.$ Therefore, for any dg-$\Ci$-algebra $\cB$ one can identify the underlying cdga of the Koszul algebra of an element $b_i \in \cB$ of degree $i$ as the pushout
$$\xymatrix{\R\left[x_i\right] \ar[r] \ar[d]_-{b_i} & \R\left[x_i,\xi_{i-1}\right] \ar[d]\\
\cB \ar[r] & K\left(\cB,b_i\right).}$$
Applying this to the case that $\cB=K\left(\A,a_1,\ldots,a_{n-1}\right),$ we can consider the stack of pushout diagrams
$$\xymatrix{\R\left[x_i\right] \ar[r] \ar[d]_-{a_n} & \R\left[x_i,\xi_{i-1}\right] \ar[d]\\
K\left(\A,a_1,\ldots,a_{n-1}\right) \ar[r] \ar[d]_-{ \rotatebox[origin=c]{90}{$\sim$}} & K\left(\A/\left(\A,a_1,\ldots,a_{n-1}\right),b_i\right) \ar[d]\\
\A/\left(\A,a_1,\ldots,a_{n-1}\right) \ar[r] & K\left(\A/\left(a_1,\ldots,a_{n-1}\right),a_{n}\right),}$$
where $$K\left(A,a_1,\ldots,a_{n-1}\right) \to \A/\left(\A,a_1,\ldots,a_{n-1}\right)$$ is a quasi-isomorphism. Since the projective model structure on non-positively graded differential algebras over $\R$ is left proper, and the top horizontal map is a cofibration, it follows that the map $$K\left(a_1,\ldots,a_n\right) \to K\left(\A/\left(a_1,\ldots,a_{n-1}\right),a_{n}\right)$$ is also a quasi-isomorphism. Since the outer square is also a pushout diagram, and $a_n$ is regular in $\A/\left(a_1,\ldots,a_{n-1}\right),$ Lemma \ref{lem:Kosz1} implies that $K\left(\A/\left(a_1,\ldots,a_{n-1}\right),a_{n}\right) \to \A/\left(a_1,\ldots,a_{n}\right)$ is a quasi-isomorphism. This completes the proof.
\end{proof}

\begin{lemma}\label{lem:suplocsubm}
Suppose that $\cM$ is a supermanifold such that $\Ci\left(\cM\right)$ is cofibrant in $\dgchmod,$ e.g. $\R^{n|m}$ for some $n$ and $m.$ Let $\varphi=\left(f,\alpha\right):\cM \to \R^{p|q}$ be a submersion locally around the origin. Then the following diagram is a homotopy pushout in $\dgchmod$:
$$\xymatrix{\Ci\left(\R^{p|q}\right) \ar[r]^-{\varphi^\ast} \ar[d]_-{ev_0} & 
\Ci\left(\cM\right) \ar[d]^-{i^\ast}\\
\R \ar[r] & \Ci\left(\varphi^{-1}\left(0\right)\right),}$$
where $i:\varphi^{-1}\left(0\right) \hookrightarrow \cM$ is in the inclusion of the closed sub-supermanifold of zeros of $\varphi.$
\end{lemma}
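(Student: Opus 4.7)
The plan is to identify the square as computing a derived $\Ci$-tensor product and evaluate it via a Koszul resolution.

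First, since $\Ci\left(\cM\right)$ is cofibrant in $\dgchmod$, the homotopy pushout in question is represented by the derived tensor product $\Ci\left(\cM\right) \otimes^L_{\Ci\left(\R^{p|q}\right)} \R$. My strategy is to replace the augmentation $ev_0: \Ci\left(\R^{p|q}\right) \to \R$ by a cofibration coming from a Koszul resolution, compute the resulting strict pushout, and then show it agrees with $\Ci\left(\varphi^{-1}\left(0\right)\right)$.

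Second, I would construct the cofibrant replacement explicitly. Let $x_1,\ldots,x_p$ denote the standard even coordinates and $\theta_1,\ldots,\theta_q$ the standard odd coordinates on $\R^{p|q}$. The sequence $\left(x_1,\ldots,x_p,\theta_1,\ldots,\theta_q\right)$ is regular in $\Ci\left(\R^{p|q}\right)$: for the even part, each $x_i$ is a non-zero divisor in $\Ci\left(\R^{p-i+1|q}\right)$ by the Hadamard lemma, and for the odd part, each $\theta_j$ is regular in the supercommutative sense since $\theta_j\cdot a = 0$ forces $a \in (\theta_j)$ in an exterior algebra. By Proposition \ref{prop:kosz},
$$K\left(\Ci\left(\R^{p|q}\right), x_1,\ldots,x_p,\theta_1,\ldots,\theta_q\right) \stackrel{\sim}{\longrightarrow} \R,$$
and this morphism is a quasi-free extension of non-positively graded dg-$\SCi$-algebras, hence a cofibration.

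Third, base-changing along $\varphi^*$ computes the homotopy pushout as
$$\Ci\left(\cM\right) \underset{\Ci\left(\R^{p|q}\right)} \otimes K\left(\Ci\left(\R^{p|q}\right), x_i, \theta_j\right) \;\cong\; K\left(\Ci\left(\cM\right), \varphi^*x_1,\ldots,\varphi^*x_p, \varphi^*\theta_1,\ldots,\varphi^*\theta_q\right),$$
since the Koszul construction is defined by pushouts of dg-$\SCi$-algebras and therefore commutes with base change. It remains to show this is quasi-isomorphic to $\Ci\left(\varphi^{-1}\left(0\right)\right)$. The submersion hypothesis is precisely what is needed: near every point of $\varphi^{-1}\left(0\right)$ the functions $\varphi^*x_i, \varphi^*\theta_j$ can be extended to a local coordinate system on $\cM$, which implies that the pullback sequence is regular in $\Ci\left(\cM\right)$ and that the ordinary quotient $\Ci\left(\cM\right)/\left(\varphi^*x_1,\ldots,\varphi^*\theta_q\right)$ agrees with $\Ci\left(\varphi^{-1}\left(0\right)\right)$. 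Applying Proposition \ref{prop:kosz} a second time then yields the desired equivalence.

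The main obstacle will be the rigorous verification of regularity of the pullback sequence globally in $\Ci\left(\cM\right)$ and the identification of the quotient with $\Ci\left(\varphi^{-1}\left(0\right)\right)$. Locally this is immediate from the submersion normal form, but upgrading from local to global requires a $\Ci$-Hadamard argument and smooth patching via partitions of unity, carried out with care for both even and odd generators. Lemma \ref{lem:whit}, which presents any supermanifold as a retract of an open subset of $\R^{n|m}$, should allow one to reduce the whole computation to the case where $\cM$ is (an open subset of) $\R^{n|m}$ and $\varphi$ is a coordinate projection, where the regularity and the identification of the quotient are both transparent.
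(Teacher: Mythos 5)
Your first three steps coincide with the paper's own argument: resolve $\R$ by the super Koszul algebra $K\left(\Ci\left(\R^{p|q}\right),y_1,\ldots,y_p,\beta_1,\ldots,\beta_q\right)$, note the inclusion is a quasi-free extension hence a cofibration between cofibrant objects, and base-change to identify the homotopy pushout with $K\left(\Ci\left(\cM\right),f_1,\ldots,f_p,g_1,\ldots,g_q\right)$. The gap is in your final step. You claim that because $\varphi$ is a submersion near $\varphi^{-1}\left(0\right)$, the pulled-back sequence is regular in $\Ci\left(\cM\right)$, so that Proposition \ref{prop:kosz} applies. This is a non sequitur: regularity is a global condition on $\Ci\left(\cM\right)$, and the submersion hypothesis gives you no control whatsoever away from the zero locus, which is exactly where regularity fails. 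Concretely, take $\cM=\R^{2}$ with coordinates $\left(u,v\right)$ and $\varphi=\left(u\chi\left(v\right),v\right)$, where $\chi$ is a cutoff equal to $1$ near $0$ and vanishing for $v\ge 2$: then $\varphi^{-1}\left(0\right)=\left\{0\right\}$ and $\varphi$ is a submersion near it, but $u\chi\left(v\right)$ vanishes on the open set $\left\{v\ge 2\right\}$ and is a zero divisor, so the sequence in the given order is not regular. One can even arrange (e.g. on $\R^3$ with $f_1=u\chi\left(v\right)$, $f_2=v\chi\left(w\right)$, $f_3=w\left(\chi\left(-u\right)+\tau\left(w\right)\right)$ for suitable cutoffs) that \emph{every} $f_i$ is a zero divisor, so no reordering is regular either, even though the lemma is still true. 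Hence Proposition \ref{prop:kosz} is simply unavailable, and your fallback of reducing via Lemma \ref{lem:whit} to a coordinate projection does not help: the normal form of a submersion is only local, and globalizing it is precisely the difficulty.

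What the paper does instead is a local-to-global argument at the level of sheaves, which sidesteps global regularity entirely. The Koszul algebra $K\left(\Ci\left(\cM\right),f_\bullet,g_\bullet\right)$ is sheafified over the underlying space of $\cM$ by tensoring with $\cO_{\cM}\left(U\right)$; by Batchelor's theorem these are soft sheaves, and a \v{C}ech/double-complex spectral sequence argument shows the resulting presheaf of dg-$\SCi$-algebras is a homotopy sheaf. Near $\cN=\varphi^{-1}\left(0\right)$ one uses the local normal form of the submersion, where the coordinates \emph{do} form a regular sequence, so Proposition \ref{prop:kosz} applies chart by chart and gives $\Ci\left(\cN\cap\cU_\alpha\right)$; away from $\cN$ some even component $f_i$ is invertible, so the complex is acyclic there; the homotopy sheaf property then glues these computations and identifies the global Koszul algebra with $\Ci\left(\cN\right)$. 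If you want to salvage your write-up, you should replace the claimed global regularity and quotient identification with an argument of this kind (your partition-of-unity idea does suffice for the $H^0$ statement, but not for the vanishing of the higher Koszul cohomology, which is the real content).
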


\begin{proof}
The function $f$ corresponds, via its components, to $p$ smooth functions $$f_1,f_2,\ldots f_p:\cM \to \R$$ and $q$ smooth functions $$g_1,\ldots,q_q:\cM \to \R^{0|1}.$$ Hence we have $f_1,\ldots f_p \in \Ci\left(\cM\right)_0$ and $g_1,\ldots,g_q \in \Ci\left(\cM\right)_1.$ Consider the super Koszul complex $K\left(\Ci\left(\cM\right),f_1,\ldots,f_p,g_1,\ldots,g_q\right),$ which is a differential graded commutative superalgebra over $\R.$ As a $\Z$-graded superalgebra it is
$$K\left(\Ci\left(\cM\right),f_1,\ldots,f_p,g_1,\ldots,g_q\right)=\Ci\left(\cM\right)\left[\theta_1,\theta_2,\ldots,\eta_1,\eta_2,\ldots,\eta_q\right],$$ where each $\theta_i$ is of degree $\left(0,-1\right)$ and each $\eta_j$ is of degree $\left(1,-1\right)$ with respect to the $\Z_2 \times \Z$ grading. The differential is given by
$$d=\sum_{i=1}^p f_i \frac{\partial}{\partial \theta_i} + \sum_{j=1}^{q} g_j \frac{\partial}{\partial \eta_j}.$$ In particular, we see that $K\left(\Ci\left(\cM\right),f_1,\ldots,f_p,g_1,\ldots,g_q\right)_0=\Ci\left(\cM\right)$ which is a $\SCi$-algebra, so the super Koszul algebra is a dg-$\SCi$-algebra. 

Consider for a moment the special case $\cM=\R^{p|q}$ and $\varphi=id.$ Choose coordinates $\left(y_1,y_2,\ldots,y_p|\beta_1,\beta_2,\ldots,\beta_q\right).$ Then these coordinates, regarded as elements of $\Ci\left(\cM\right)=\Ci\left(\R^{p|q}\right)$ form a regular sequence. It follows that the super Koszul complex in this case is acyclic. Thus the we have an equivalence of homotopy pushouts
$$\R\!\!\!\!\!\! \underset{\Ci\left(\R^{p|q}\right)} \oinfty^{\!\!\!\!\!\!\!\!\mathbb{L}} \mspace{2mu}\Ci\left(\cM\right) \simeq K\left(\Ci\left(\R^{p|q}\right),y_1,\ldots,y_p,\beta_1,\ldots,\beta_q\right)\!\!\!\!\!\! \underset{\Ci\left(\R^{p|q}\right)} \oinfty^{\!\!\!\!\!\!\!\!\mathbb{L}}\mspace{2mu} \Ci\left(\cM\right).$$ Moreover, the natural homomorphism $$\Ci\left(\R^{p|q}\right) \to K\left(\Ci\left(\cM\right),y_1,\ldots,y_p,\beta_1,\ldots,\beta_q\right)$$ is a quasi-free extension, hence a cofibration.

Let us now return to the case of a general $\varphi.$ Since all dg-$\Ci$-algebras in the diagram
$$\xymatrix{\Ci\left(\R^{p|q}\right) \ar[r]^-{\varphi^\ast} \ar[d] & 
\Ci\left(\cM\right) \\
K\left(\Ci\left(\R^{p|q}\right),y_1,\ldots,y_p,\beta_1,\ldots,\beta_q\right) & }$$ 
are cofibrant, and the vertical arrow above is a cofibration, the ordinary pushout computes the homotopy pushout. But the ordinary pushout is precisely $K\left(\Ci\left(\cM\right),f_1,\ldots,f_p,g_1,\ldots,g_q\right),$ whereas for the homotopy pushout we can replace $K\left(\Ci\left(\R^{p|q}\right),y_1,\ldots,y_p,\beta_1,\ldots,\beta_q\right)$ with $\R$ since it is acyclic. So the desired homotopy pushout is given by the super Koszul complex $K\left(\Ci\left(\cM\right),f_1,\ldots,f_p,g_1,\ldots,g_q\right).$ Therefore, it will suffice to prove that $K\left(\Ci\left(\cM\right),f_1,\ldots,f_p,g_1,\ldots,g_q\right)$ is quasi-isomorphic to $\Ci\left(\varphi^{-1}\left(0\right)\right).$

Suppose that as a supermanifold we have $\cM=\left(M,\cO_{\cM}\right).$ Define a $1$-categorical sheaf $\O_{\varphi}$ on $M$ with values in $\dgchmod$ by 
$$\cO_{\varphi}\left(U\right):=\cO_{\cM}\left(U\right) \underset{\Ci\left(\cM\right)} \otimes K\left(\Ci\left(\cM\right),f_1,\ldots,f_p,g_1,\ldots,g_q\right),$$ where the tensor product is the underived tensor product of commutative differential graded superalgebras over $\R.$ In concrete terms this is adding graded coordinates $\theta_1,\ldots,\theta_p,\eta_1,\ldots,\eta_q$ to $\cM,$ and equipping the result with a vector field $\Z$-degree $+1.$ (This gives the structure of a differential graded supermanifold). By Batchelor's theorem, $\cO_\cM$ is given by sections of a vector bundle over $M,$ and hence $\cO_{\cM}$ and $\cO_\varphi$ are soft.

We claim that $\cO_\varphi$ is a homotopy sheaf of dg-$\SCi$-algebras. The model structure on dg-$\SCi$-algebras is produced via transfer from the projective model structure on non-positively graded cochain complexes on super vector spaces, and hence homotopy limits can be computed as bare cochain complexes. Therefore, it suffices to prove that $\cO_\varphi$ is a homotopy sheaf of cochain complexes (of super vector spaces). Let $\cV:=\left(V_\alpha\right)_\alpha$ be an open cover of an open subset $U$ of $M.$ Let $\mbox{\v{C}}_{\cV}:\Delta^{op} \to \mathbf{Op}\left(M\right)$ be the \v{C}ech nerve of the cover, regarded as a simplicial diagram in the poset of open subsets of $M.$ The homotopy sheaf condition demands the homotopy limit of the natural cosimplicial diagram $\cO_{\varphi} \circ \mbox{\v{C}}_{\cV}$ of cochain complexes agrees up to quasi-isomorphism with $\cO_{\varphi}\left(U\right).$ This homotopy limit can be computed as the totalization of the  natural double complex arising by applying cosimplicial Dold-Kan to the cosimplicial direction of $\cO_{\varphi} \circ \mbox{\v{C}}_{\cV}.$ The spectral sequence for the double complex, which computes the cohomology of the totalization, degenerates on the $E_2$ page since the sheaves of complexes involved are all soft. The result readily follows at this point from the ($1$-categorical) sheaf condition of $\cO_{\varphi}.$

Since being a submersion is a local property, we can find an open sub-supermanifold $\cU$ of the closed sub-supermanifold $\cN:=\varphi^{-1}\left(0\right)$ such that $\varphi|_{\cU}$ is globally a submersion. By the local form of submersions, we can find local coordinate charts $\left(\mathcal{U}_{\alpha} \subseteq \mathcal{U}\right)_\alpha,$ $\mathcal{U}_\alpha \cong \R^{n+p|m+q}$ with coordinates $\left(x_1,\ldots,x_n,y_1,\ldots y_p|\xi_1,\ldots,\xi_m,\beta_1,\ldots,\beta_q\right),$ such that in these coordinates we have
$$\varphi\left(x_1,\ldots,x_n,y_1,\ldots y_p|\xi_1,\ldots,\xi_m,\beta_1,\ldots,\beta_q\right)=\left(y_1,\ldots,y_p|\beta_1,\ldots,\beta_q\right).$$ In particular, $\mathcal{U_\alpha}\cap\varphi^{-1}\left(0\right),$ is exactly the space $$\{\left(x_1,\ldots,x_n,y_1,\ldots y_p|\xi_1,\ldots,\xi_m,\beta_1,\ldots,\beta_q\right)\mspace{3mu}|\mspace{3mu}\left(y_1,\ldots,y_p|\beta_1,\ldots,\beta_p\right)=0\}.$$ Under this change of coordinates, we have that $\cO_{\varphi}\left(\mathcal{U}_{\alpha}\right)$ is equivalent to the Koszul complex $$K\left(\Ci\{y_1,\ldots,y_p|\beta_1,\ldots,\beta_q\},y_1,y_2,\ldots,y_p,\beta_1,\beta_2,\ldots, y_q\right).$$ However, $y_1\ldots,y_p,\beta_1,\ldots, y_q$ is a regular sequence, hence
\begin{eqnarray*}
K\left(\Ci\{y_1,\ldots,y_p|\beta_1,\ldots,\beta_q\},y_1,y_2,\ldots,y_p,\beta_1,\beta_2,\ldots, y_q\right) &\simeq& \Ci\{y_1,\ldots,y_p|\beta_1,\ldots,\beta_q\}/\left(y,\beta\right)\\
&\simeq& \Ci\left(\mathcal{U}_\alpha\right)/\left(f\mspace{3mu}, f|_{\cN \cap \cU_\alpha}=0\right)\\
&\simeq & \Ci\left(\cN \cap \cU_\alpha\right).
\end{eqnarray*}
By the homotopy sheaf property, we conclude that $\cO_{\varphi}\left(\mathcal{U}\right)\simeq \Ci\left(\cN\right).$

Finally, notice that on one hand since $f_1,\ldots,f_p$ are invertible away from $\cN,$ $\cO_{\varphi}$ is acyclic away from $\cN,$ and hence, again by the sheaf property, the restriction map $$K\left(\Ci\left(\cM\right),f_1,\ldots,f_p,g_1,\ldots,g_q\right)=\cO_\varphi\left(\cM\right)\to \cO_\varphi\left(\mathcal{U}\right)$$ is a quasi-isomorphism. On the other hand, we have already proved that $K\left(\Ci\left(\cM\right),f_1,\ldots,f_p,g_1,\ldots,g_q\right)$ computes the desired homotopy pushout.
\end{proof}

In the process of this proof, we have proven the following:

\begin{proposition}\label{prop:pushlocus}
Suppose that $\cM$ is a supermanifold such that $\Ci\left(\cM\right)$ is cofibrant in $\mathbf{dg}\Ci\Alg_{\le 0},$ e.g. $\R^{n|m}$ for some $n$ and $m.$ Let $\varphi=\left(f,\alpha\right):\cM \to \R^{p|q}$ be any smooth map. Then the following diagram is a homotopy pushout in $\dgchmod$:
$$\xymatrix{\Ci\left(\R^{p|q}\right) \ar[r]^-{\varphi^\ast} \ar[d]_-{ev_0} & 
\Ci\left(\cM\right) \ar[d]\\
\R \ar[r] & K\left(\Ci\left(\cM\right),f_1,\ldots,f_p,g_1,\ldots,g_q\right),}$$
where $\varphi=\left(f_1,\ldots,f_p|g_1,\ldots,g_q\right).$
\end{proposition}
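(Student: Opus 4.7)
The plan is to observe that the proof is essentially contained in the first half of the proof of Lemma \ref{lem:suplocsubm}, with the submersion hypothesis removed. That hypothesis was used only at the end, to identify the resulting super Koszul complex with $\Ci\left(\varphi^{-1}\left(0\right)\right)$ via a sheaf-theoretic argument; the computation of the homotopy pushout itself made no use of it.

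Concretely, I would proceed as follows. First, I would construct an explicit cofibrant replacement of $ev_0:\Ci\left(\R^{p|q}\right) \to \R$. Take the super Koszul complex $K\left(\Ci\left(\R^{p|q}\right),y_1,\ldots,y_p,\beta_1,\ldots,\beta_q\right)$, where $\left(y_1,\ldots,y_p\mid\beta_1,\ldots,\beta_q\right)$ are the standard coordinates. The inclusion
$$\Ci\left(\R^{p|q}\right) \hookrightarrow K\left(\Ci\left(\R^{p|q}\right),y_1,\ldots,y_p,\beta_1,\ldots,\beta_q\right)$$
is a quasi-free extension (the added generators $\theta_i,\eta_j$ all have strictly negative cohomological degree), hence a cofibration in $\dgchmod$. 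Moreover, since $y_1,\ldots,y_p$ are nonzerodivisors in $\Ci\left(\R^{p|q}\right)$ and $\beta_1,\ldots,\beta_q$ form an odd-regular sequence modulo the $y_i$'s in the Grassmann algebra $\wedge^\bullet\left(\R^q\right)$, the tuple $\left(y_1,\ldots,y_p,\beta_1,\ldots,\beta_q\right)$ is a regular sequence in the super sense, so Proposition \ref{prop:kosz} identifies this Koszul complex with $\Ci\left(\R^{p|q}\right)/\left(y,\beta\right) \cong \R$, i.e.\ the complex is acyclic in positive codegree with $H^0\cong\R$.

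Next, I would compute the homotopy pushout by replacing $\R \leftarrow \Ci\left(\R^{p|q}\right)$ with this cofibration. Since $\Ci\left(\R^{p|q}\right)$ and $\Ci\left(\cM\right)$ are cofibrant by hypothesis, and one leg has been replaced by a cofibration of cofibrant objects, the ordinary $1$-categorical pushout in $\dgchmod$ computes the homotopy pushout. But the pushout
$$K\left(\Ci\left(\R^{p|q}\right),y_1,\ldots,y_p,\beta_1,\ldots,\beta_q\right)\underset{\Ci\left(\R^{p|q}\right)}\oinfty \Ci\left(\cM\right)$$
is literally the super Koszul construction on $\Ci\left(\cM\right)$ along the images of the generators under $\varphi^\ast$: concretely, one adjoins graded generators $\theta_1,\ldots,\theta_p,\eta_1,\ldots,\eta_q$ to $\Ci\left(\cM\right)$ with differential $d=\sum_i f_i \,\partial/\partial\theta_i+\sum_j g_j\,\partial/\partial\eta_j$, which is exactly $K\left(\Ci\left(\cM\right),f_1,\ldots,f_p,g_1,\ldots,g_q\right)$. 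This directly identifies the homotopy pushout with the claimed algebra.

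The only nontrivial step is the acyclicity of the Koszul complex on $\left(y_1,\ldots,y_p,\beta_1,\ldots,\beta_q\right)$ in $\Ci\left(\R^{p|q}\right)$; however, this was already established as a special case of the argument for Lemma \ref{lem:suplocsubm} and follows cleanly from Proposition \ref{prop:kosz} together with the observation that the defining sequence of generators of $\Ci\left(\R^{p|q}\right)$ is a super-regular sequence. I expect no further obstacles: the entire argument is formal once one notices that the submersion hypothesis in the previous lemma was only used after the homotopy pushout had been identified with the Koszul complex.
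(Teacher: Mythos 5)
Your proposal is correct and follows essentially the same route as the paper: the paper obtains this proposition precisely as a byproduct of the proof of Lemma \ref{lem:suplocsubm} (acyclicity of the coordinate Koszul complex via regularity of the coordinate sequence, quasi-freeness of the extension $\Ci\left(\R^{p|q}\right) \to K\left(\Ci\left(\R^{p|q}\right),y,\beta\right)$, and cofibrancy of all objects so that the $1$-categorical pushout, which is $K\left(\Ci\left(\cM\right),f_1,\ldots,f_p,g_1,\ldots,g_q\right)$, computes the homotopy pushout), exactly as you outline, with the submersion hypothesis entering only afterwards to identify the Koszul complex with $\Ci\left(\varphi^{-1}\left(0\right)\right)$.
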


It is worth remarking that the morphism
$$\Ci\left(\R^{p|q}\right) \to K\left(\Ci\left(\R^{p|q}\right),y_1,\ldots,y_p,\beta_1,\ldots,\beta_q\right)$$ used in the proof is also a cofibration in the projective model structure on non-positively graded commutative differential graded superalgebras over $\R.$ This model structure is moreover left proper \cite[Theorem 4.17]{white}, hence we also get have the following:

\begin{proposition}\label{prop:pushlocuscalg}
Suppose that $\cM$ is a supermanifold. Let $\varphi=\left(f,\alpha\right):\cM \to \R^{p|q}$ be any smooth map. Then the following diagram is a homotopy pushout in the projective model structure on non-positively graded commutative differential graded superalgebras over $\R.$
$$\xymatrix{\Ci\left(\R^{p|q}\right) \ar[r]^-{\varphi^\ast} \ar[d]_-{ev_0} & 
\Ci\left(\cM\right) \ar[d]\\
\R \ar[r] & K\left(\Ci\left(\cM\right),f_1,\ldots,f_p,g_1,\ldots,g_q\right),}$$
where $\varphi=\left(f_1,\ldots,f_p|g_1,\ldots,g_q\right).$
\end{proposition}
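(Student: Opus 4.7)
The plan is to exploit a factorization of $ev_0$ through the appropriate Koszul algebra together with left properness of the cdga model structure over $\R$. First I would factor the augmentation as
\[ \Ci\left(\R^{p|q}\right) \xrightarrow{\iota} K\left(\Ci\left(\R^{p|q}\right), y_1, \ldots, y_p, \beta_1, \ldots, \beta_q\right) \xrightarrow{\varepsilon} \R, \]
where $\iota$ is a quasi-free extension, hence (as noted in the paragraph immediately preceding the statement) a cofibration in the projective model structure on non-positively graded commutative differential graded superalgebras over $\R$. For $\varepsilon$, the sequence $\left(y_1, \ldots, y_p, \beta_1, \ldots, \beta_q\right)$ is regular in $\Ci\left(\R^{p|q}\right)$: the $y_i$ are genuine non-zero divisors, while each odd $\beta_j$ is odd-regular since $\beta_j^2 = 0$ with annihilator $\beta_j \cdot \Ci\left(\R^{p|q}\right)$ in the relevant quotient. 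Proposition \ref{prop:kosz} then gives a quasi-isomorphism $\varepsilon\colon K\left(\Ci\left(\R^{p|q}\right), y, \beta\right) \xrightarrow{\sim} \Ci\left(\R^{p|q}\right)/\left(y, \beta\right) \cong \R$.

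Next, by the homotopy invariance of homotopy pushouts in any slot, the homotopy pushout of the original cospan $\R \leftarrow \Ci\left(\R^{p|q}\right) \to \Ci\left(\cM\right)$ agrees with that of the modified cospan $K\left(\Ci\left(\R^{p|q}\right), y, \beta\right) \leftarrow \Ci\left(\R^{p|q}\right) \to \Ci\left(\cM\right)$. In this new cospan the leftmost arrow is a cofibration; left properness of the cdga model structure, cited from \cite[Theorem 4.17]{white} in the preamble to the proposition, then guarantees that its homotopy pushout is modelled by the ordinary $1$-categorical pushout, which in cdgas is the tensor product $K\left(\Ci\left(\R^{p|q}\right), y, \beta\right) \otimes_{\Ci\left(\R^{p|q}\right)} \Ci\left(\cM\right)$.

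Finally, I would identify this tensor product explicitly. As a graded commutative superalgebra the Koszul algebra is $\Ci\left(\R^{p|q}\right) \otimes_{\R} \R\left[\theta_1, \ldots, \theta_p, \eta_1, \ldots, \eta_q\right]$ with the $\theta_i$ and $\eta_j$ freely adjoined in the appropriate bigradings. Tensoring over $\Ci\left(\R^{p|q}\right)$ with $\Ci\left(\cM\right)$ therefore produces $\Ci\left(\cM\right)\left[\theta_1, \ldots, \theta_p, \eta_1, \ldots, \eta_q\right]$ as a graded superalgebra; under the substitution induced by $\varphi^{\ast}$, the differential $d\theta_i = y_i$, $d\eta_j = \beta_j$ transports to $d\theta_i = f_i$, $d\eta_j = g_j$, which is precisely the differential defining $K\left(\Ci\left(\cM\right), f_1, \ldots, f_p, g_1, \ldots, g_q\right)$.

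The main conceptual subtlety, compared to Proposition \ref{prop:pushlocus}, is that $\Ci\left(\cM\right)$ is generally not cofibrant as a cdga over $\R$ (since $\Ci\left(\R\right)$ is not the free graded commutative $\R$-algebra on one generator), so one cannot simply invoke cofibrancy of the other leg. Left properness is exactly what is needed to bypass this defect: it suffices that the Koszul leg be a cofibration for the ordinary pushout to compute the derived one.
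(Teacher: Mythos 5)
Your proposal is correct and follows essentially the same route as the paper: the paper's one-line proof appeals precisely to the preceding remarks that $\Ci\left(\R^{p|q}\right) \to K\left(\Ci\left(\R^{p|q}\right),y_1,\ldots,y_p,\beta_1,\ldots,\beta_q\right)$ is a cofibration of non-positively graded cdgas over $\R$, that this Koszul algebra is acyclic because the coordinates form a regular sequence, that the model structure is left proper by \cite[Theorem 4.17]{white}, and that the square with the Koszul algebras is a $1$-categorical pushout identified with $K\left(\Ci\left(\cM\right),f_1,\ldots,f_p,g_1,\ldots,g_q\right)$. Your write-up just makes these steps, including the reason cofibrancy of $\Ci\left(\cM\right)$ cannot be assumed, explicit.
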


\begin{proof}
This follows from the above discussion together with the fact that the above diagram is a $1$-categorical pushout diagram of differential graded commutative superalgebras.
\end{proof}

\begin{theorem}\label{thm:unramified}
Let $\varphi:\R^{n|m} \to \R^{p|q}$ be any smooth map of supermanifolds. Then for any $l,k,$ the following diagram is pushout in $\Alg_{\mathbf{SCom}_{\R}}\left(\Spc\right)$:
$$\xymatrix{\Ci\left(\R^{n+p|m+q}\right) \ar[r]^{pr^\ast} \ar[d]_-{\Gamma_f^\ast} & \Ci\left(\R^{n+p+l|m+q+k}\right) \ar[d]\\
\Ci\left(\R^{n|m}\right) \ar[r] & \Ci\left(\R^{n+l|m+k}\right)}$$
where $\Gamma_\varphi$ is the graph of $\varphi.$ I.e. the canonical morphism $$\mathbf{SCom}_\R \to \SCi$$ of algebraic theories is \emph{unramified} in the sense of \cite[Definition 3.31]{univ}.
\end{theorem}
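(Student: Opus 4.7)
My plan is to realize the square in question as the right-hand square of a horizontal pasting of two homotopy pushouts in $\Alg_{\mathbf{SCom}_\R}(\Spc)$, each obtained as a direct application of Proposition \ref{prop:pushlocuscalg}, and then invoke the pasting lemma for homotopy pushouts.

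The key geometric observation is that the graph $\Gamma_\varphi: \R^{n|m} \hookrightarrow \R^{n+p|m+q}$ is the zero locus of the globally defined submersion
$$\psi: \R^{n+p|m+q} \to \R^{p|q}, \qquad (x,y) \mapsto y - \varphi(x),$$
which is a submersion because its derivative in the $y$-directions is $\mathrm{id}_{\R^{p|q}}$. Analogously, the closed embedding $\tilde{\Gamma}_\varphi: \R^{n+l|m+k} \hookrightarrow \R^{n+p+l|m+q+k}$, $(x,z) \mapsto (x,\varphi(x),z)$ --- whose $\Ci$-pullback is the unlabeled right vertical map of the theorem's square --- is the zero locus of the composition $\tilde{\psi} := \psi \circ pr$, again a global submersion. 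The unlabeled bottom map of the theorem's square is pullback along the projection $\R^{n+l|m+k} \to \R^{n|m}$, and the identity $pr \circ \tilde{\Gamma}_\varphi = \Gamma_\varphi \circ \mathrm{proj}$ forces the whole square to commute.

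Applying Proposition \ref{prop:pushlocuscalg} to $\psi$ and to $\tilde{\psi}$ exhibits both the left-hand square and the outer rectangle of
$$\xymatrix{
\Ci(\R^{p|q}) \ar[r]^-{\psi^*} \ar[d]_-{\mathrm{ev}_0} & \Ci(\R^{n+p|m+q}) \ar[r]^-{pr^*} \ar[d]_-{\Gamma_\varphi^*} & \Ci(\R^{n+p+l|m+q+k}) \ar[d] \\
\R \ar[r] & \Ci(\R^{n|m}) \ar[r] & \Ci(\R^{n+l|m+k})
}$$
as homotopy pushouts in $\Alg_{\mathbf{SCom}_\R}(\Spc)$, with the bottom-middle and bottom-right entries \emph{a priori} given by the super Koszul complexes for $\psi$ and for $\tilde{\psi}$. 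Since $\psi$ and $\tilde{\psi}$ are global submersions and the source algebras are cofibrant, Lemma \ref{lem:suplocsubm} identifies these Koszul complexes, via quasi-isomorphism, with $\Ci(\psi^{-1}(0)) = \Ci(\R^{n|m})$ and $\Ci(\tilde{\psi}^{-1}(0)) = \Ci(\R^{n+l|m+k})$ respectively. These quasi-isomorphisms of dg-$\SCi$-algebras are in particular quasi-isomorphisms of underlying cdga's, so they realize equivalences in $\Alg_{\mathbf{SCom}_\R}(\Spc)$.

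Since the left square and the outer rectangle are both homotopy pushouts, the right square --- which is precisely the diagram in the theorem --- is a homotopy pushout by the pasting lemma. The substantive content is thus absorbed into Proposition \ref{prop:pushlocuscalg} (which itself rests on left-properness of the projective model structure on non-positively graded cdga's over $\R$) and Lemma \ref{lem:suplocsubm} (the Koszul-resolution computation of the vanishing locus of a submersion); the main potential obstacle would have been to verify the global-submersion hypothesis, but this is immediate from the explicit form of $\psi$ and $\tilde{\psi}$. What remains is only routine submersion and commutativity bookkeeping.
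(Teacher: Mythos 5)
Your proposal is correct and is essentially the paper's own argument: the paper likewise introduces the submersion $h(x,y|\alpha,\beta)=(y-f(x,\alpha)\,|\,\beta-g(x,\alpha))$ (your $\psi$), resolves its zero locus (the graph) by the super Koszul complex, and identifies the theorem's square inside a horizontally glued diagram of homotopy pushouts using Lemma \ref{lem:suplocsubm} and the left-properness/cofibrancy input behind Proposition \ref{prop:pushlocuscalg}. The only difference is bookkeeping --- you invoke Proposition \ref{prop:pushlocuscalg} twice (for $\psi$ and $\psi\circ pr$) and cancel via the pasting law for pushouts, whereas the paper glues the corresponding $1$-categorical pushouts of Koszul algebras explicitly --- which is a cosmetic reorganization of the same proof.
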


\begin{proof}
Suppose once again that $\varphi=\left(f_1,\ldots,f_p|g_1,\ldots,g_q\right).$ Consider the super Koszul complex $K\left(\Ci\{t_1,\ldots,t_p|\xi_1,\ldots,\xi_q\},t_1,\ldots,t_p,\xi_1,\ldots,\xi_q\right) \simeq \R.$ Introduce coordinates $$\left(x_1,\ldots,x_n,y_1,\ldots,y_p|\alpha_1,\ldots,\alpha_m,\beta_1\ldots \beta_q\right)$$ on $\R^{n+m|p+q}.$ Define the function
$$h:\R^{n+p|m+q} \to \R^{p|q}$$
$$\resizebox{6in}{!}{$
h\left(x_1,\ldots,x_n,y_1,\ldots,y_p|\alpha_1,\ldots,\alpha_m,\beta_1\ldots \beta_q\right) =\left(y_1-f_1\left(x,\alpha\right),\ldots,y_p-f_p\left(x,\alpha\right)|\beta_1-g_1\left(x,\alpha\right),\ldots,\beta_q-g_q\left(x,\alpha\right)\right).$}$$
Write $h=\left(r,s\right).$
Then $\Gamma_\varphi^*$ factors as
$$\Ci\left(\R^{n+p|m+q}\right) \rightarrowtail K\left(\Ci\left(\R^{n+p|m+q}\right),r_1,\ldots,r_p,s_1,\ldots, s_q\right)  \stackrel{a}{\longrightarrow} \Ci\left(\R^{n|m}\right),$$ where the first arrow is a cofibration and the second is induced by the identification $$H^0\left(K\left(\Ci\left(\R^{n+p|m+q}\right),r_1,\ldots,r_p,s_1,\ldots, s_q\right)\right)\cong \Ci\left(\R^{n|m}\right),$$ since we have e.g. $\left[y_i\right]=\left[f_i\right]$ in Koszul cohomology. Notice that we also have a $1$-categorical pushout diagram
$$\resizebox{6in}{!}{$
\xymatrix{\Ci\{t_1,\ldots,t_p|\xi_1,\ldots,\xi_q\} \ar[d] \ar[r]^-{h^\ast} & \Ci\left(\R^{n+p|m+q}\right) \ar[d]\\
K\left(\Ci\{t_1,\ldots,t_p|\xi_1,\ldots,\xi_q\},t_1,\ldots,t_p,\xi_1,\ldots,\xi_q\right) \ar[r] & K\left(\Ci\left(\R^{n+p|m+q}\right),r_1,\ldots,r_p,s_1,\ldots, s_q\right).}$}$$
The vertical map is a cofibration and all the algebras are cofibrant, hence this is also a homotopy pushout. Moreover, since $h$ is a submersion, it follows from Lemma \ref{lem:suplocsubm} that $K\left(\Ci\left(\R^{n+p|m+q}\right),r_1,\ldots,r_p,s_1,\ldots, s_q\right)$ is quasi-isomorphic to smooth functions on the zero-locus of $h,$ which is readily identified with the $0^{th}$ cohomology of the super Koszul complex. Hence the morphism $a$ above is a quasi-isomorphism. The above pushout diagram is a homotopy pushout both of differential graded commutative superalgbras and of dg-${\Ci}$-superalgebras. By gluing pushout diagrams, we have the follow diagram
$$\resizebox{6.8in}{!}{$\xymatrix{\Ci\{t_1,\ldots,t_p|\xi_1,\ldots,\xi_q\} \ar@/^2.0pc/[rr]^-{\left(h\circ pr\right)^{\ast}} \ar[d] \ar[r]^-{h^\ast} & \Ci\left(\R^{n+p|m+q}\right) \ar[d] \ar[r]^-{pr^\ast} \ar[d] & \Ci\left(\R^{n+p+l|m+q+k}\right) \ar[d]\\
K\left(\Ci\{t_1,\ldots,t_p|\xi_1,\ldots,\xi_q\},t_1,\ldots,t_p,\xi_1,\ldots,\xi_q\right) \ar[r] & K\left(\Ci\left(\R^{n+p|m+q}\right),r_1,\ldots,r_p,s_1,\ldots, s_q\right) \ar[r] & K\left(\Ci\left(\R^{n+p+k|m+q+1}\right),r_1,\ldots,r_p,s_1,\ldots, s_q\right).}$}$$
all of whose squares are homotopy pushouts of differential graded commutative superalgebras and of dg-${\Ci}$-superalgebras. Since $pr \circ h$ is a submersion, Lemma \ref{lem:suplocsubm} implies that $K\left(\Ci\left(\R^{n+p+k|m+q+1}\right),r_1,\ldots,r_p,s_1,\ldots, s_q\right)$ is quasi-isomorphic to smooth functions on the zero locus of $pr \circ h,$ which once again is $0^{th}$ super Koszul cohomology, and hence $\Ci\left(\R^{n+l|m+k}\right).$ Finally, since $K\left(\Ci\left(\R^{n+p|m+q}\right),r_1,\ldots,r_p,s_1,\ldots, s_q\right) \stackrel{\sim}{\longrightarrow} \Ci\left(\R^{n|m}\right),$ the right most pushout square is also the homotopy pushout of 
$$\xymatrix{\Ci\left(\R^{n+p|m+q}\right) \ar[r]^{pr^\ast} \ar[d]_-{\Gamma_f^\ast} & \Ci\left(\R^{n+p+l|m+q+k}\right) \\
\Ci\left(\R^{n|m}\right)  &}$$
and so the result follows.
\end{proof}

This finishes the proof of Theorem \ref{thm:superuniv} and Corollary \ref{cor:supdman}.
\section{$C^\i$-schemes}\label{sec:schemes}

\begin{definition}
A \textbf{locally $\SCi$-ringed space} is a pair $\left(X,\cO_X\right)$ with $X$ a topological space and $\cO_X$ a sheaf with values in the $\i$-category $\dgc,$ such that for each point $x \in X,$ the stalk of $\left(\pi_0\left(\cO_X\right)\right)_0$ is a local ring with residue field $\R.$  Morphisms are pairs $$\left(  f,\alpha\right):\left(  X,\cO_X\right) \to \left(  Y,\cO_Y\right),$$ with $f$ a continuous map and $$\alpha:\cO_Y \to f_{*}\cO_X.$$ More formally, the $\i$-category $\Loc$ of locally $\SCi$-ringed space is the full subcategory on such spaces of the total space of the coCartesian fibration associated to to the functor
$$\Shv\left(  \blank,\dgc\right):\Top \to \widehat{\mathbf{Cat}}_\i$$ induced by push-forward of sheaves, where $\widehat{\mathbf{Cat}}_\i$ is the $\i$-category of large $\i$-categories.
\end{definition}

Consider the canonical functor $$\Gamma:\Loc \to \dgc$$ induced by taking global sections of each structure sheaf. This functor has a right adjoint $$\Speci:\dgc^{op} \to \Loc$$  \cite[Proposition 5.1.8]{Nuiten} which can be described as follows:

Let us first describe the underlined space $\underline{\Speci\left(\A\right)}$ of $\Speci\left(\A\right).$ Consider the canonical functor $\SCi \to \Top$ sending $\R^{p|q}$ to $\R^p.$ This functor preserves finite products, hence, by Theorem \ref{theorem:5.5.8.15} induces a unique limit-preserving functor
$$\underline{\Speci}:\dgc^{op} \to \Top.$$ Concretely, the underlying set of $\underline{\Speci}\left(\A\right)$ is the set of homomorphisms $$\Hom\left(\pi_0\left(\A\right)_0,\R\right).$$ Given an element $a \in \pi_0\left(\A\right)_0,$ consider the function
\begin{eqnarray*}
f_a:\Hom\left(\pi_0\left(\A\right)_0,\R\right) &\to &\R\\
\varphi & \mapsto & \varphi\left(a\right).
\end{eqnarray*}
Then the subsets of the form $U_a:=f_a^{-1}\left(\R\setminus \left\{0\right\}\right)$ form a basis for a topology on $\Hom\left(\pi_0\left(\A\right)_0,\R\right)$ which is moreover closed under finite intersections--- see \cite[Section 4.3.1]{univ}. Since this basis is closed under finite intersections, to describe a sheaf on $\underline{\Speci}\left(\A\right),$ it suffices to define a sheaf on the subcategory of all open subsets of $\underline{\Speci}\left(\A\right)$ on those of the form $U_a.$ Hence, there is a unique sheaf $\cO_\A$ arising as the sheafification of the presheaf $$U_a \mapsto \A\left[a^{-1}\right],$$ and we have
$$\Speci\left(\A\right)=\left(\underline{\Speci}\left(\A\right),\cO_\A\right).$$

\begin{remark}\label{rmk:4.19}
By \cite[Proposition 4.19]{univ}, if $\A$ is finitely presented, then all open subsets of $\Speci\left(\A\right)$ are of the form $U_a$ for some $a \in \pi_0\left(\A\right).$
\end{remark}

\begin{definition}
An \textbf{affine derived $\SCi$-schemes} is locally $\SCi$-ringed space in the image of $\Speci.$ Denote the full subcategory of $\Loc$ on these objects by $\Aff_{\SCi}.$
\end{definition}

\begin{definition}
An affine $\SCi$-scheme is \textbf{of finite type} if it is equivalent to $\Speci\left(\A\right),$ for $\A$ finitely presented. A $\SCi$-scheme is \textbf{locally of finite type} if it has a cover by finite type affine $\SCi$-schemes.
\end{definition}

\begin{remark}\label{rmk:pi0spec}
Let $\A$ be a dg-$\SCi$-algebra. By Theorem \ref{thm:dgsmander} and Corollary \ref{cor:supdman}, it follows that $\A$ is a finitely presented $\SCi$-ring if and only if it is quasi-isomorphic to $\Ci\left(\cM\right),$ for some dg-supermanifold $\cM.$ By \cite[Theorem 5.7]{dgmander}, we have that
$$\Speci\left(\A\right) \simeq \Speci\left(\Ci\left(\cM\right)\right) \simeq \left(\pi_0\left(\cM\right),\O_{\cM}|_{\pi_0\left(\cM\right)}\right).$$
\end{remark}

The $\Ci$-spectrum functor enjoys some nice properties:

\begin{proposition}
Let $\cM$ be a supermanifold, then $\Speci\left(\Ci\left(\cM\right)\right) \simeq \cM.$ Moreover, composite 
$$\SMfd \stackrel{\Ci}{\longhookrightarrow} \dgc^{op} \stackrel{\Speci}{\longlonglongrightarrow} \Loc$$ is fully faithful and preserves transverse pullbacks.
\end{proposition}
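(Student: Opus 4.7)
The plan is to first establish $\Speci\left(\Ci\left(\cM\right)\right) \simeq \cM$ as locally $\SCi$-ringed spaces, after which the remaining two assertions follow formally. Since $\Ci\left(\cM\right)$ is concentrated in degree zero, $\pi_0\left(\Ci\left(\cM\right)\right)_0 = \Ci\left(\cM\right)_0$; writing $\cM = \left(M, \cO_\cM\right)$ and applying Batchelor's theorem (compare Lemma \ref{lem:whit}), we have $\Ci\left(\cM\right)_0 \cong \Gamma\left(M, \wedge^{\mathrm{even}} E^*\right)$ for a finite-rank vector bundle $E \to M$. Every $\R$-algebra homomorphism to $\R$ must kill the nilpotent ideal $\Gamma\left(M, \wedge^{\geq 2} E^*\right)$ and hence factor through $\Ci\left(M\right) \to \R$; by Milnor's classical theorem these correspond bijectively to points of $M$. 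The basic opens $U_a$ depend only on the nonvanishing locus of the scalar part $a_0 \in \Ci\left(M\right)$, so the topology on $\underline{\Speci}\left(\Ci\left(\cM\right)\right)$ coincides with the usual topology on $M$.

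For the structure sheaf, it suffices to identify $\Ci\left(\cM\right)\left[a^{-1}\right]$ with $\Gamma\left(U_{a_0}, \cO_\cM|_{U_{a_0}}\right)$ on basic opens. I would do this by covering $\cM$ with charts $\cU \cong \R^{p|q}$ where the splitting $\Ci\left(\R^{p|q}\right) \cong \Ci\left(\R^p\right) \underset{\R}\otimes \wedge^\bullet \left(\R^q\right)^*$ reduces the statement to the classical fact (Moerdijk-Reyes) that the $\Ci$-localization of $\Ci\left(\R^p\right)$ at $f$ recovers $\Ci\left(\{f \neq 0\}\right)$, using the explicit $\oinfty$-pushout presentation of $\SCi$-localization together with its flatness (Lemma \ref{lem:5.1.13}). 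The main technical subtlety I expect will be verifying that this local identification patches globally; alternatively one can argue directly from Batchelor's decomposition by showing that the $\oinfty$-pushout defining $\SCi$-localization commutes past the polynomial extension by the odd fiber coordinates, so only the scalar factor is modified.

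With $\Speci \circ \Ci$ naturally identified with the tautological inclusion $\iota: \SMfd \hookrightarrow \Loc$, full faithfulness is immediate, since a morphism of supermanifolds is by definition a morphism of the underlying locally $\SCi$-ringed spaces. For transverse pullbacks, $\Ci: \SMfd \to \dgc^{op}$ sends them to pullbacks by Theorem \ref{thm:4.51} (such pullbacks correspond to $\oinfty$-pushouts of $\SCi$-algebras), and $\Speci: \dgc^{op} \to \Loc$, being right adjoint to $\Gamma$, preserves all small limits. The composite therefore preserves transverse pullbacks, as claimed.
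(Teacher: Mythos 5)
Your proposal is correct in outline and reaches all three assertions, but it takes a genuinely different route through the key step, the identification of structure sheaves, and that is also where your plan is still soft. The paper does this in one global stroke: for a basic open $U_f$, the square exhibiting $\cM|_{U_f}$ as $\cM \times_{\R} \left(\R\setminus\left\{0\right\}\right)$ is a transverse pullback in $\SMfd$, so Theorem \ref{thm:4.51} gives
$\Ci\left(\cM\right)\left[f^{-1}\right] \simeq \Ci\left(\cM\right) \underset{\Ci\left(\R\right)}{\oinfty} \Ci\left(\R\setminus\left\{0\right\}\right) \simeq \Ci\left(\cM|_{U_f}\right)$
outright; in particular the presheaf defining $\cO_{\Speci\left(\Ci\left(\cM\right)\right)}$ is already a sheaf, and no charts, Batchelor splitting, or patching ever enter. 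Your chart-by-chart reduction runs into exactly the difficulty you flag: the localization of the \emph{global} algebra $\Ci\left(\cM\right)$ is not computed chartwise, so to make that plan rigorous you would have to recast it as a comparison of sheaves checked on stalks, or run your alternative argument pushing the $\oinfty$-pushout past the odd polynomial extension (a retract argument via Batchelor, which does work). Since you already invoke Theorem \ref{thm:4.51} at the end anyway, the cleaner move is to apply it directly to the basic open itself, as the paper does.

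For the second half, your observation that $\Speci$, being right adjoint to $\Gamma$, preserves the pullbacks in $\dgc^{op}$ produced by Theorem \ref{thm:4.51} is valid, and is in fact more self-contained than the paper's proof, which simply cites Dubuc's theorem for $0$-truncated algebras. Your claim that full faithfulness is \emph{immediate} is too quick, however: a morphism of supermanifolds is by definition a morphism of super-ringed spaces, not of locally $\SCi$-ringed spaces, so the identification you assert hides the (true, but nontrivial) fact that such sheaf-algebra maps are automatically $\SCi$-homomorphisms. Either supply that fact, or deduce full faithfulness formally from the adjunction $\Gamma \dashv \Speci$ together with $\Speci\left(\Ci\left(\cM\right)\right)\simeq\cM$, $\Gamma\left(\cM,\cO_{\cM}\right)=\Ci\left(\cM\right)$, and the full faithfulness of $\Ci$ in Theorem \ref{thm:4.51}, or cite Dubuc as the paper does.
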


\begin{proof}
It is well known that for an ordinary smooth manifold $M,$ the set of $\R$-algebra homomorphisms $\Hom\left(\Ci\left(M\right),\R\right)$ is natural bijection with the points of $M,$ and all such homomorphisms are homomorphisms of $\Ci$-rings. Since any closed subset of a smooth manifold is the zero-set of a smooth function, an open subset is of the form $U_f,$ for $f:M \to \R,$ a smooth function. Hence, the topology also agrees with that of $M.$ It therefore suffices to check that for any open subset $U \subseteq \cM_0$ of the core of $\cM,$ we have $$\cO_{\SCi\left(\cM\right)}\left(U\right) \simeq \cO_{\cM}\left(U\right).$$ Notice that the following is a transverse pullback diagram in $\SMfd$:
$$\xymatrix{\cM|_{U} \ar[r] \ar[d] & \R\setminus\left\{0\right\} \ar[d] \\
\cM \ar[r]^-{f} & \R,}$$
where $U=U_f.$ By Theorem \ref{thm:4.51}, this implies that 
\begin{eqnarray*}
\Ci\left(\cM|_U\right) &\simeq & \Ci\left(\cM\right) \underset{\Ci\left(\R\right)} \oinfty \Ci\left(\R\setminus\left\{0\right\}\right)\\
&\simeq & \Ci\left(\cM\right)\left[f^{-1}\right]
\end{eqnarray*}
Since $\Ci\left(\cM|_U\right)=\cO_{\cM}\left(U\right),$ we can deduce that the presheaf defining $\cO_{\Speci\left(\Ci\left(\cM\right)\right)}$ is already a sheaf and is equivalent to $\cO_{\cM}.$
The second statement follows immediately from \cite[Theorem 16]{cinfsch}, since $\Speci$ agrees with Dubuc's spectrum for $\Ci$-rings for $0$-truncated algebras.
\end{proof}

Despite this, unlike in classical algebraic geometry, the spectrum functor $\Speci$ is not fully faithful in general. This was already realized for $\Ci$-rings in $\Set$ by Dubuc \cite{cinfsch}. It is fully faithful however when restricted to germ-determined finitely presented $\Ci$-rings by loc. cit.

\begin{definition}
A $\SCi$-algebra $\A$ is \textbf{complete} if the co-unit $$\varepsilon:\A \to \Gamma\left(\Speci\left(\A\right)\right)$$ is an equivalence. Denote the full subcategory on the complete algebras by $\dgc^{\wedge}.$
\end{definition}

\begin{proposition}\cite[Example 5.1.30]{Nuiten}, \cite[Corollary 4.45]{univ} \label{prop:compcomplete}
If $\A \in \dgc$ is finitely presented, then it is complete.
\end{proposition}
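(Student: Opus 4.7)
The plan is to reduce the proposition to a statement about dg-supermanifolds, where the identification of global sections is essentially built into the geometry.

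First I would invoke Corollary \ref{cor:supdman} together with Theorem \ref{thm:dgsmander} to observe that any finitely presented $\SCi$-algebra $\A$ is equivalent to $\Ci(\cM)$ for some non-positively graded dg-supermanifold $\cM$. (More precisely, any finitely presented $\A$ is a retract of a finite colimit of finitely generated free algebras $\Ci(\R^{p|q})$ by Lemma \ref{lem:retcol}, and such colimits are realized geometrically as dg-supermanifolds obtained from iterated derived intersections; any such algebra is thus quasi-isomorphic to the global functions of a dg-supermanifold.) This step is important because it gives us a concrete geometric handle on $\A$.

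Next I would apply Remark \ref{rmk:pi0spec} to identify
$$\Speci(\A) \simeq \bigl(\pi_0(\cM),\, \cO_{\cM}|_{\pi_0(\cM)}\bigr),$$
so that taking global sections yields
$$\Gamma\bigl(\Speci(\A)\bigr) \simeq \cO_{\cM}\bigl(\pi_0(\cM)\bigr) \simeq \Ci(\cM) \simeq \A,$$
where the middle equivalence is the definition of global smooth functions on a dg-supermanifold. To finish, I would check that under these identifications the co-unit map $\varepsilon:\A \to \Gamma(\Speci(\A))$ coincides with the canonical identity. This is a formal consequence of the $\Gamma \dashv \Speci$ adjunction together with the fact that $\Speci$ is constructed from the presheaf $U_a \mapsto \A[a^{-1}]$: evaluating at the full open subset $\pi_0(\cM)$ (corresponding formally to inverting $1 \in \pi_0(\A)_0$) returns $\A$ itself, and the co-unit is by construction the map sending a global generator to its corresponding global section.

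The main obstacle is the last verification, namely that $\cO_{\cM}|_{\pi_0(\cM)}$ is genuinely an $\infty$-sheaf whose global sections compute the homotopy type of $\A$, rather than merely a presheaf for which some higher $\varprojlim^i$ obstruction could appear. In the dg-supermanifold setting, $\cO_{\cM}$ is constructed from a sheaf of ordinary dg-algebras on a paracompact core manifold admitting smooth partitions of unity, so softness yields vanishing of higher Čech cohomology; combined with \cite[Theorem 5.7]{dgmander} cited in Remark \ref{rmk:pi0spec}, this ensures the sheafification in the construction of $\Speci$ does not alter global sections. Once this is in place, the identification $\varepsilon \simeq \mathrm{id}_{\A}$ is automatic and the proposition follows.
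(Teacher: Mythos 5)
There is a genuine gap at the central step of your argument. After invoking Remark \ref{rmk:pi0spec} you write $\Gamma\bigl(\Speci(\A)\bigr) \simeq \O_{\cM}\bigl(\pi_0(\cM)\bigr) \simeq \Ci(\cM)$ and call the middle equivalence ``the definition of global smooth functions on a dg-supermanifold.'' It is not. By definition $\Ci(\cM)$ is the global sections of $\O_{\cM}$ over the entire body $M$ of the dg-supermanifold, whereas $\pi_0(\cM)$ is in general a \emph{proper closed} subset of $M$ (the derived zero locus of the image of the differential), and $\Gamma\bigl(\pi_0(\cM),\O_{\cM}|_{\pi_0(\cM)}\bigr)$ is computed from sections of $\O_{\cM}$ on neighborhoods of that closed subset (germs along $\pi_0(\cM)$). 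The assertion that these two agree is precisely the completeness statement you are trying to prove, transported to the dg-manifold model; writing it as a definition makes the argument circular at exactly the nontrivial point. To close the gap one needs (a) that $\O_{\cM}$ is acyclic on $M \setminus \pi_0(\cM)$ (off the zero locus some $d\xi$ is invertible in degree $0$, so the unit is exact), (b) a softness/hypercohomology argument, as in the proof of Lemma \ref{lem:suplocsubm}, showing that derived global sections over $M$ agree with the colimit of derived sections over neighborhoods of $\pi_0(\cM)$, and (c) an identification of the structure sheaf $\O_{\A}$ of $\Speci(\A)$ (the sheafification of $U_a \mapsto \A[a^{-1}]$) with $\O_{\cM}|_{\pi_0(\cM)}$. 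None of these is supplied; your closing paragraph about softness only addresses the presheaf-versus-sheaf (higher limit) issue in the construction of $\Speci$, not the passage from sections over $M$ to sections along the closed subset $\pi_0(\cM)$, which is where the content lies.

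Two further cautions. First, the remark that inverting $1 \in \pi_0(\A)_0$ ``returns $\A$ itself'' only identifies the value of the \emph{presheaf} on the top open; after sheafification the value on the top open is by definition $\Gamma(\O_\A)$, i.e.\ the completion, so this observation cannot be used to identify the co-unit with the identity. Second, be aware that the paper does not prove this proposition at all: it is imported from Nuiten (Example 5.1.30) and from the universal-property paper (Corollary 4.45), and the dg-manifold comparison results you lean on (Theorem \ref{thm:dgsmander}, Remark \ref{rmk:pi0spec}) are themselves established in work that develops the spectrum and completeness theory, so an argument routed through them must check it is not silently re-using the statement being proven.
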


Note that the adjunction $\Gamma \dashv \Speci$ restricts to an adjunction
$$\xymatrix@C=2cm{\dgc^{op} \ar@<-0.5ex>[r]_-{\Speci} & \Aff_{\SCi \ar@<-0.5ex>[l]_-{\Gamma}}}$$

\begin{proposition}\cite[Proposition 5.1.26]{Nuiten}
The unit $$\eta:id_{\Aff_{\SCi}} \Rightarrow \Speci \circ \Gamma$$ is an equivalence.
\end{proposition}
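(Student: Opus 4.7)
The plan is to reduce to showing that $\Speci(\varepsilon_\A): \Speci(\Gamma(\Speci(\A))) \to \Speci(\A)$ is an equivalence for every $\A \in \dgc$. Since every $X \in \Aff_{\SCi}$ is of the form $\Speci(\A)$, the triangle identity for the adjunction $\Gamma \dashv \Speci$ yields $\Speci(\varepsilon_\A) \circ \eta_{\Speci(\A)} \simeq \mathrm{id}_{\Speci(\A)}$; if $\Speci(\varepsilon_\A)$ is an equivalence, then its one-sided inverse $\eta_{\Speci(\A)}$ is forced to be its homotopy inverse, and the proposition follows. Setting $\B := \Gamma(\Speci(\A))$, the task then splits into checking that $\Speci(\varepsilon_\A): \Speci(\B) \to \Speci(\A)$ is a homeomorphism on underlying spaces and an equivalence of structure sheaves.

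For the underlying spaces: the adjunction combined with $\Speci(\R) \simeq *$ identifies the underlying sets of $\Speci(\A)$ and $\Speci(\B)$ with $\Hom_{\dgc}(\A,\R)$ and $\Hom_{\dgc}(\B,\R)$, respectively, and $\underline{\Speci}(\varepsilon_\A)$ becomes precomposition with $\varepsilon_\A$. A set-theoretic inverse sends a point $p:\A \to \R$ to $\Gamma(\widetilde{p}):\B \to \R$, where $\widetilde{p}:\Speci(\R) \to \Speci(\A)$ is the adjunct of $p$ (using $\Gamma(\Speci(\R))\simeq \R$). Continuity is immediate from the identification $\underline{\Speci}(\varepsilon_\A)^{-1}(U_a) = U_{\varepsilon_\A(a)}$ of basic opens, and the two compositions with the forward map being identities reduces to the triangle identities of $\Gamma \dashv \Speci$.

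For the structure sheaves: both $\cO_\A$ and $\cO_\B$ are defined as sheafifications of the localization presheaves $U_a \mapsto \A[a^{-1}]$ and $U_b \mapsto \B[b^{-1}]$, respectively. Since sheafification preserves stalks, and the stalk of such a localization presheaf at a point $y$ is the localization $\A_x$ (resp.~$\B_y$) at the kernel of $y$, it suffices to show that the induced map $\A_x \to \B_y$ is an equivalence, where $x = y \circ \varepsilon_\A$. This holds because $\B = \Gamma(\cO_\A)$ is assembled from compatible local sections of $\A$, so no new data can appear at any single stalk. The main obstacle will be making this final stalk identification rigorous in the derived setting: one must verify that sheafification of $\dgc$-valued presheaves preserves stalks (with stalks computed as filtered colimits of $\SCi$-algebras), and that the flat localizations of Lemma~\ref{lem:5.1.13} interact properly with the passage to global sections $\B = \Gamma(\cO_\A)$ so that $\A_x \simeq \B_y$ genuinely as homotopical $\SCi$-algebras rather than merely as underlying $\R$-algebras.
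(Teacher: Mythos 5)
Your reduction via the triangle identity is fine, but the two steps that carry all the mathematical content are asserted rather than proved, and neither is formal. For the bijection on points, only one composite follows from general nonsense: naturality of the counit together with $\Gamma\left(\Speci\left(\R\right)\right)\simeq\R$ gives $\Gamma\Speci\left(p\right)\circ\varepsilon_\A\simeq p$ for $p:\A\to\R$, so your candidate inverse is a \emph{section} of precomposition with $\varepsilon_\A$. The other composite is the claim that a character $q:\B=\Gamma\left(\Speci\left(\A\right)\right)\to\R$ is recovered from $q\circ\varepsilon_\A$; unwinding it, you need $\Gamma\Speci\left(\varepsilon_\A\right)\simeq\varepsilon_{\Gamma\Speci\left(\A\right)}$ (idempotency of the completion monad), equivalently injectivity of $\Hom\left(\pi_0\left(\B\right)_0,\R\right)\to\Hom\left(\pi_0\left(\A\right)_0,\R\right)$, and this does \emph{not} follow from the triangle identities. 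Since $\Speci$ is not fully faithful and $\varepsilon_\A$ is in general far from an equivalence, nothing formal forces it; it is precisely (half of) the content of the proposition. Moreover, bijectivity plus continuity is not yet a homeomorphism: you must also show the topology of $\underline{\Speci}\left(\B\right)$ is generated by the opens $U_{\varepsilon_\A\left(a\right)}$ with $a\in\pi_0\left(\A\right)_0$, i.e.\ that a global section of $\cO_\A$ which is nonvanishing at a point is invertible on some basic open of $\A$ containing that point.

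The stalk comparison is where the argument becomes circular. Saying that $\A_x\to\B_y$ is an equivalence ``because $\B$ is assembled from compatible local sections of $\A$, so no new data can appear at any single stalk'' is the proposition in disguise: that completion $\A\mapsto\Gamma\left(\cO_\A\right)$ changes neither the $\R$-points nor the localizations at them is exactly what must be proved, and the natural attempt to deduce it from the stalk map $\B\to\A_x$ runs into the same injectivity problem as above (one must know that the character $\B\to\A_x\to\R$ equals $y$). A workable route is to first identify $\left(\Speci\left(\A\right)\right)|_{U_a}\simeq\Speci\left(\A\left[a^{-1}\right]\right)$, so that $\cO_\A\left(U_a\right)\simeq\widehat{\A\left[a^{-1}\right]}$, and then compare the two structure sheaves on the common basis, where the reduction to stalks is legitimate because $\cO_\A$ is a hypersheaf (the paper's hypercompleteness argument cites exactly this); but the key inputs --- insensitivity of points and stalks to completion, and compatibility of completion with localization --- have to be established, not assumed. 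Note finally that the paper itself does not prove this statement: it is imported from Nuiten (Proposition 5.1.26), and the work done there is precisely the part your sketch leaves open, so there is no shortcut available from elsewhere in the paper.
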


The following corollary follows immediately:

\begin{corollary}
The composite
$$\left(\dgc^{\wedge}\right)^{op} \hookrightarrow \dgc^{op} \stackrel{\Speci}{\longlonglongrightarrow} \Loc$$ is fully faithful, with essential image $\Aff_{\SCi}.$ In particular $$\left(\dgc^{\wedge}\right)^{op} \simeq \Aff_{\SCi}.$$ Moreover, $\dgc^{\wedge}$ is a reflective subcategory of $\dgc$ with the left adjoint to the inclusion $i^{\wedge}$ given by $L^{\wedge}:=\Gamma \circ \Speci.$
\end{corollary}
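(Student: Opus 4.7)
The plan is to derive all three claims as formal consequences of the preceding proposition that the unit $\eta: \mathrm{id}_{\Aff_{\SCi}} \to \Speci \circ \Gamma$ of the restricted adjunction is an equivalence. For any adjunction $L \dashv R$, the unit is an equivalence if and only if the left adjoint $L$ is fully faithful; applied to $\Gamma \dashv \Speci$ between $\Aff_{\SCi}$ and $\dgc^{op}$, this shows that $\Gamma: \Aff_{\SCi} \to \dgc^{op}$ is fully faithful.

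Next I would identify the essential image of $\Gamma$ with $(\dgc^{\wedge})^{op}$. One direction is immediate from the definition of complete algebra: if $\A$ is complete then $\A \simeq \Gamma(\Speci(\A))$ lies in the image. Conversely, suppose $\A \simeq \Gamma(X)$ for some $X \in \Aff_{\SCi}$. The triangle identity for the adjunction gives $\varepsilon_{\Gamma(X)} \circ \Gamma(\eta_X) \simeq \mathrm{id}_{\Gamma(X)}$, and since $\eta_X$ is an equivalence by the preceding proposition, two-out-of-three forces $\varepsilon_{\Gamma(X)}$ to be an equivalence; transporting along $\A \simeq \Gamma(X)$ shows $\varepsilon_\A$ is an equivalence, so $\A$ is complete. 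Hence $\Gamma$ corestricts to an equivalence $\Aff_{\SCi} \xrightarrow{\sim} (\dgc^{\wedge})^{op}$ whose inverse is the restriction of $\Speci$. Composing this inverse with the fully faithful inclusion $\Aff_{\SCi} \hookrightarrow \Loc$ yields the stated fully faithful functor $(\dgc^{\wedge})^{op} \hookrightarrow \Loc$ with essential image exactly $\Aff_{\SCi}$, and in particular the equivalence $(\dgc^{\wedge})^{op} \simeq \Aff_{\SCi}$.

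For the final claim I would verify directly that $L^\wedge := \Gamma \circ \Speci$ provides a left adjoint to the inclusion $i^\wedge: \dgc^\wedge \hookrightarrow \dgc$. The preceding step shows that the image of $L^\wedge$ automatically lies in $\dgc^\wedge$, since $\Gamma(X)$ is complete for every $X \in \Aff_{\SCi}$. To produce the required natural equivalence $\Map_\dgc(L^\wedge \A, \B) \simeq \Map_\dgc(\A, i^\wedge \B)$ for $\A \in \dgc$ and $\B \in \dgc^\wedge$, one uses the completeness equivalence $\B \simeq \Gamma(\Speci(\B))$ together with the full faithfulness of $\Gamma$ and the defining adjunction $\Gamma \dashv \Speci$ to rewrite both sides as the space of morphisms $\Speci(\B) \to \Speci(\A)$ in $\Aff_{\SCi}$.

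No serious obstacle is expected, since the argument is standard idempotent-monad bookkeeping once the unit equivalence is in hand; the only care needed is to track the $\dgc$ versus $\dgc^{op}$ flip and to confirm via the triangle identity that $L^\wedge$ genuinely lands in $\dgc^\wedge$, so that the reflection is well defined.
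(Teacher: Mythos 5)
Your proposal is correct and matches the paper's (implicit) approach: the paper offers no written proof, asserting the corollary ``follows immediately'' from the preceding proposition that the unit of $\Gamma \dashv \Speci$ is an equivalence, and your argument is exactly the standard idempotent-adjunction bookkeeping (unit equivalence $\Rightarrow$ $\Gamma$ fully faithful, triangle identity $\Rightarrow$ essential image is the complete algebras, and the induced reflection $L^{\wedge} = \Gamma \circ \Speci$) that justifies this. The only point worth being explicit about, which you flag yourself, is the $\dgc$ versus $\dgc^{op}$ variance when checking that the reflection map $\A \to L^{\wedge}\A$ is the counit of $\Gamma \dashv \Speci$ read in $\dgc$; you handle this correctly.
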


\begin{corollary}
The above adjunction restricts to an equivalence
$$\Aff^{\mathbf{ft}}_{\SCi} \simeq \left(\Alg_{\Ci}\left(\Spc\right)^{\mathbf{fp}}\right)^{op}$$ between affine $\SCi$-schemes of finite type and the opposite $\i$-category of finitely presented $\SCi$-algebras. In particular, we have
$$\Aff^{\mathbf{ft}}_{\SCi} \simeq \left(\Alg_{\Ci}\left(\Spc\right)^{\mathbf{fp}}\right)^{op} \simeq \DSMfd \simeq \dgsmani.$$
\end{corollary}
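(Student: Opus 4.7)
The plan is to assemble this as a direct consequence of the equivalence $(\dgc^{\wedge})^{op} \simeq \Aff_{\SCi}$ established in the preceding corollary, together with Proposition \ref{prop:compcomplete}, Corollary \ref{cor:supdman}, and Theorem \ref{thm:dgsmander}. There is no genuinely new content to prove; the task is just to restrict the adjunction $\Gamma \dashv \Speci$ to the appropriate finiteness subcategories on both sides and check that the restriction still gives an equivalence.

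First I would observe that Proposition \ref{prop:compcomplete} tells us that every finitely presented $\SCi$-algebra is complete, so there is a fully faithful inclusion
$$\Alg_{\SCi}\left(\Spc\right)^{\mathbf{fp}} \hookrightarrow \dgc^{\wedge}.$$
Composing with the equivalence $(\dgc^{\wedge})^{op} \stackrel{\sim}{\to} \Aff_{\SCi}$ furnished by $\Speci,$ I obtain a fully faithful functor
$$\Speci \colon \left(\Alg_{\SCi}\left(\Spc\right)^{\mathbf{fp}}\right)^{op} \hookrightarrow \Aff_{\SCi}.$$
By the very definition of $\Aff^{\mathbf{ft}}_{\SCi}$ as those affine $\SCi$-schemes of the form $\Speci(\A)$ with $\A$ finitely presented, this functor lands in $\Aff^{\mathbf{ft}}_{\SCi}$ and is essentially surjective onto it. Fully faithfulness is preserved under restriction, so this already yields the equivalence
$$\left(\Alg_{\SCi}\left(\Spc\right)^{\mathbf{fp}}\right)^{op} \stackrel{\sim}{\longrightarrow} \Aff^{\mathbf{ft}}_{\SCi}.$$

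To finish the chain of equivalences, I would invoke Corollary \ref{cor:supdman}, which gives $\DSMfd \simeq \left(\Alg_{\SCi}\left(\Spc\right)^{\mathbf{fp}}\right)^{op},$ and Theorem \ref{thm:dgsmander}, which gives $\dgsmani \simeq \DSMfd.$ Splicing these together with the equivalence established in the previous paragraph produces the desired string
$$\Aff^{\mathbf{ft}}_{\SCi} \simeq \left(\Alg_{\SCi}\left(\Spc\right)^{\mathbf{fp}}\right)^{op} \simeq \DSMfd \simeq \dgsmani.$$

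Since all the hard work has been done in Proposition \ref{prop:compcomplete} (completeness of finitely presented algebras), Theorem \ref{thm:superuniv}/Corollary \ref{cor:supdman} (universal property of derived supermanifolds), and Theorem \ref{thm:dgsmander} (the dg-model), there is no real obstacle here; the only point requiring a moment's thought is making sure the adjoint equivalence really does restrict, which is automatic from the characterization of $\Aff^{\mathbf{ft}}_{\SCi}$ via finitely presented algebras together with the fact that such algebras are already complete.
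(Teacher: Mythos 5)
Your proposal is correct and is essentially the argument the paper intends: the corollary is stated without proof precisely because it follows immediately from Proposition \ref{prop:compcomplete} (finitely presented implies complete), the restriction of the equivalence $\left(\dgc^{\wedge}\right)^{op} \simeq \Aff_{\SCi}$ to the finite-type/finitely presented subcategories, and the chain supplied by Corollary \ref{cor:supdman} and Theorem \ref{thm:dgsmander}. No gaps.
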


\begin{definition}\label{dfn:open_covering}
A morphism $$\left(f,\alpha\right):\left(X,\cO_X\right) \to \left(Y,\cO_Y\right)$$ is an \textbf{open embedding} if $f:X \to Y$ is an open embedding of topological spaces, and the map $$f^*\cO_Y \to \cO_X$$ adjoint to $\alpha$ is an equivalence. An \textbf{open covering} of a locally $\SCi$-ringed space is a collection $$\left(\varphi_i:\left(X_i,\cO_{X_i}\right) \to \left(X,\cO_X\right)\right)_{i \in I}$$ of open embeddings such that the underlying maps of topological spaces are jointly surjective. The associated Grothendieck topology is called the \textbf{open covering} topology.
\end{definition}

\begin{definition}
A \textbf{derived $\SCi$-scheme} is a locally $\SCi$-ringed space $\left(X,\cO_X\right)$ such that there exists an open covering $$\left(f_i:\left(\Spec\left(\A_i\right) \to \left(X,\cO_X\right)\right)\right)_{i \in I}$$ with each $\A_i \in \dgc.$
\end{definition}

\begin{definition}
A collection of maps $\left(\varphi_\alpha:\A \to \A_\alpha\right)_{\alpha}$ in $\dgc$ is a \textbf{open covering} if the collection $\left(\Speci\left(\varphi_\alpha\right):\Speci\left(\A_{\alpha}\right) \to \Spec\left(\A\right)\right)_{\alpha}$ is in $\Loc.$ It follows immediately from the fact that $\Speci$ preserves pullbacks, that the collection of open coverings forms a Grothendieck topology on $\dgc^{op}.$
\end{definition}

\begin{remark}\label{rmk:open_covering}
Notice also that, since $L^{\wedge}$ preserves coproducts, and $\Speci \circ L^{\wedge} \circ i^{\wedge},$ the open covering topology restricts to $\left(\dgc^{\wedge}\right)^{op}.$ In fact, since $\Speci$ induces an equivalence $$\left(\dgc^{\wedge}\right)^{op} \simeq \Aff_{\SCi},$$ it follows that a collection of maps $\left(\varphi_\alpha:\A \to \A_\alpha\right)_{\alpha}$ in $\dgc^{op}$ is an open covering if and only if the collection of maps $\left(L^{\wedge}\left(\varphi_\alpha\right):L^{\wedge}\A \to L^{\wedge}\A_\alpha\right)_{\alpha}$ is.
\end{remark}

Let $\widehat{\Spc}$ be the $\i$-category of large spaces, and for any $\i$-category let $$\widehat{\Psh}\left(\sC\right):=\Fun\left(\sC^{op},\widehat{\Spc}\right),$$ (and similarly for sheaves). Then we have a composition of adjunctions

$$\xymatrix@C=1.5cm{\widehat{\Shv}\left(    \left(  \dgc^{\wedge}  \right)^{op}   \right) \ar@<-0.65ex>[r] & 
\widehat{\PShv}\left(    \left(  \dgc^{\wedge}   \right)^{op}  \right) \ar@<-0.65ex>[l]_-{a} \ar@<-0.65ex>[r]_-{\left(i^{\wedge}\right)_*} & 
\widehat{\PShv}\left(  \left(\dgc^{\wedge}\right)    ^{op}\right) \ar@<-0.65ex>[l]_-{\left(i^{\wedge}\right)^*}   }$$
with $a$ denoting sheafification with respect to the induced open covering topology, exhibiting $\widehat{\Shv}\left(    \left(  \dgc^{\wedge}  \right)^{op}   \right)$ as a left exact localization of $\widehat{\PShv}\left(  \left(\dgc^{\wedge}\right)    ^{op}\right).$ Unwinding the definitions, since $i^{\wedge} \simeq \left(L^{\wedge}\right)_!,$ it is in fact the localization with respect to sieves arising from covering families $\left(\varphi_\alpha:\A \to \A_\alpha\right)_{\alpha}$ in $\dgc^{op}$ such that $\left(L^{\wedge}\left(\varphi_\alpha\right):L^{\wedge}\A \to L^{\wedge}\A_\alpha\right)_{\alpha}$ is an open covering. But by Remark \ref{rmk:open_covering}, this is if and only if the original family was an open covering. This implies the following:

\begin{corollary}
Restriction along $i^{\wedge}$ induce an equivalence of $\i$-categories
$$\Shv\left(\dgc^{op}\right) \simeq \Shv\left(\left(\dgc^{\wedge}\right)^{op}\right),$$ where sheaves are taken with respect to the open covering topology.
\end{corollary}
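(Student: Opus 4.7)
The plan is to leverage the composition of adjunctions displayed immediately before the statement, which already exhibits $\widehat{\Shv}\left(\left(\dgc^{\wedge}\right)^{op}\right)$ as a left exact localization of $\widehat{\PShv}\left(\dgc^{op}\right)$, and to match its essential image with the sheaf condition for the open cover topology on $\dgc^{op}$ itself. Since $i^{\wedge}$ is fully faithful, $\left(i^{\wedge}\right)_{*}$ is fully faithful on presheaves, so $\left(i^{\wedge}\right)^{*} \dashv \left(i^{\wedge}\right)_{*}$ already exhibits $\widehat{\PShv}\left(\left(\dgc^{\wedge}\right)^{op}\right)$ as a reflective subcategory of $\widehat{\PShv}\left(\dgc^{op}\right)$, and postcomposing with sheafification gives the required left exact reflective localization $\widehat{\PShv}\left(\dgc^{op}\right) \to \widehat{\Shv}\left(\left(\dgc^{\wedge}\right)^{op}\right)$. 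The strategy is then to read off its essential image.

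Next I would characterize that essential image: a presheaf $F \in \widehat{\PShv}\left(\dgc^{op}\right)$ lies in it if and only if (a) the unit $F \to \left(i^{\wedge}\right)_{*}\left(i^{\wedge}\right)^{*}F$ is an equivalence, equivalently $F\left(L^{\wedge}\A\right) \to F\left(\A\right)$ is an equivalence for every $\A \in \dgc$; and (b) the restriction $\left(i^{\wedge}\right)^{*}F$ is a sheaf on $\left(\dgc^{\wedge}\right)^{op}$. The pointwise identification $\left(i^{\wedge}\right)_{*}F\left(\A\right) \simeq F\left(L^{\wedge}\A\right)$ is immediate from the adjunction $L^{\wedge} \dashv i^{\wedge}$: the Kan-extension indexing category $\A/i^{\wedge}$ has $\mathrm{id}_{L^{\wedge}\A}$ as initial object, so the limit computing the Kan extension collapses to the value at $L^{\wedge}\A$.

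The last step is to identify (a) and (b) with the sheaf condition on $\dgc^{op}$ for the open cover topology. The key input is already on the table. By the proposition asserting that the unit $\mathrm{id} \Rightarrow \Speci \circ \Gamma$ is an equivalence on $\Aff_{\SCi}$, we have $\Speci\left(L^{\wedge}\A\right) \simeq \Speci\left(\A\right)$, so the singleton $\{\A \to L^{\wedge}\A\}$ is an open cover in $\dgc^{op}$ in the trivial way; hence (a) is exactly the sheaf condition for these distinguished covers. For the remaining covers, Remark~\ref{rmk:open_covering} states that $\left(\varphi_{\alpha}: \A \to \A_{\alpha}\right)$ is an open cover in $\dgc^{op}$ if and only if $\left(L^{\wedge}\varphi_{\alpha}\right)$ is one in $\left(\dgc^{\wedge}\right)^{op}$; combined with (a), the descent conditions $F\left(\A\right) \simeq \lim F\left(\A_{\alpha}\right)$ and $F\left(L^{\wedge}\A\right) \simeq \lim F\left(L^{\wedge}\A_{\alpha}\right)$ become equivalent. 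Thus the essential image of the composite right adjoint coincides with $\widehat{\Shv}\left(\dgc^{op}\right)$, giving the equivalence.

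The one point that needs care, rather than being a deep obstacle, is the interaction of $\infty$-categorical right Kan extension with the topology: one has to check that a presheaf satisfying (a) really transports descent for a cover $\left(\varphi_{\alpha}\right)$ in $\dgc^{op}$ to and from descent for $\left(L^{\wedge}\varphi_{\alpha}\right)$ in $\left(\dgc^{\wedge}\right)^{op}$. This is exactly where the preceding remark and the equivalence $\Speci\left(\A\right) \simeq \Speci\left(L^{\wedge}\A\right)$ do the work, so nothing beyond bookkeeping is required.
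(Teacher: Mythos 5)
Your starting point coincides with the paper's (the displayed composite adjunction, the pointwise formula $\left(i^{\wedge}\right)_{*}G\left(\A\right)\simeq G\left(L^{\wedge}\A\right)$ coming from the initial object of the comma category, and the use of Remark \ref{rmk:open_covering}), and the containment ``essential image of $\left(i^{\wedge}\right)_{*}$ consists of sheaves'' goes through as you describe, since $L^{\wedge}$ is a reflector and therefore carries the pushouts appearing in \v{C}ech nerves in $\dgc$ to the corresponding pushouts of complete algebras. The gap is in the opposite containment, at the step where you assert that condition (a) ``is exactly the sheaf condition for the distinguished covers $\left\{\A \to L^{\wedge}\A\right\}$.'' Descent for the one-element covering family $\left\{\eta_{\A}:\A\to L^{\wedge}\A\right\}$ is not the condition $F\left(\A\right)\simeq F\left(L^{\wedge}\A\right)$: it says that $F\left(\A\right)$ is the limit over $\Delta$ of $F$ applied to the \v{C}ech conerve, whose $n$-th term is the $\left(n+1\right)$-fold pushout $L^{\wedge}\A\underset{\A}\oinfty\cdots\underset{\A}\oinfty L^{\wedge}\A$; these pushouts need not be complete, and the cosimplicial diagram collapses to $F\left(L^{\wedge}\A\right)$ only if one already knows that $F$ is insensitive to completion, i.e.\ condition (a) itself. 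Since $\eta_{\A}$ admits no retraction, there is no extra-degeneracy argument available either. Hence the implication you actually need --- ``$F$ a sheaf for the open covering topology on $\dgc^{op}$ implies (a)'' --- is not established; it is precisely the nontrivial content here, and in the paper it is a \emph{consequence} of this corollary (it is the proposition proved immediately afterwards), not an input to it. (A cosmetic point: the unit evaluated at $\A$ is the map $F\left(\A\right)\to F\left(L^{\wedge}\A\right)$ induced by $\eta_{\A}$, not the reverse.)

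The paper's proof avoids this issue by working with the localizations rather than with essential images of an abstract sheaf: using $\left(i^{\wedge}\right)^{*}\simeq\left(L^{\wedge}\right)_{!}$, the composite left exact localization of $\widehat{\PShv}\left(\dgc^{op}\right)$ is identified as the localization at the sieves generated by families $\left(\varphi_{\alpha}:\A\to\A_{\alpha}\right)$ whose completions $\left(L^{\wedge}\varphi_{\alpha}\right)$ form an open covering, and Remark \ref{rmk:open_covering} says these are exactly the open covering families; so the localization coincides with the topological localization defining $\Shv\left(\dgc^{op}\right)$, and the equivalence follows without ever verifying condition (a) for an arbitrary sheaf beforehand. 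If you want to keep your essential-image strategy, you must replace the singleton-cover remark by an argument of this kind --- for instance, showing that sheafification on $\dgc^{op}$ sends $y\left(\A\right)$ to the sheaf determined by $L^{\wedge}\A$ --- which is where the real work lies.
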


\begin{remark}
The open covering topology on $\left(\dgc^{\wedge}\right)^{op}$ is sub-canonical. However, the open covering topology on $\dgc^{op}$ is \emph{NOT} sub-canonical. Indeed, since the former is sub-canonical, and $\left(i^{\wedge}\right)^* \simeq \left(L^{\wedge}\right)_!,$ it follows that the sheafification of the representable presheaf corresponding to $\A$ is the representable sheaf corresponding to $L^\wedge \A.$
\end{remark}

\begin{remark}\label{rmk:subcan}
By Proposition \ref{prop:compcomplete}, every finitely presented $\SCi$-algebra is complete. Moreover, by \cite[Remark 4.17]{univ}, these are closed under localizations. So, it follows from Remark \ref{rmk:4.19} that if $\A$ is finitely presented, all open subsets of $\Speci\left(\A\right)$ are of the form $\Speci\left(\A\left[a^{-1}\right]\right),$ for some $a \in \pi_0\left(\cM\right),$ and are thus finitely presented $\SCi$-schemes. It follows that the open cover pre-topology restricts to $\left(\Alg_{\SCi}\left(\Spc\right)^{\mathbf{fp}}\right)^{op}$ and is sub-canonical.
\end{remark}

\begin{proposition}
For $\cF:\dgc^{op} \to \Spc$ a sheaf with respect to the open covering topology, for any $\A \in \dgc,$ $$\cF\left(\A\right) \simeq \cF\left(L^{\wedge}\A\right).$$
\end{proposition}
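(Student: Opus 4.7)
The plan is to reduce to the preceding corollary, which identifies $\Shv(\dgc^{op})$ with $\Shv((\dgc^{\wedge})^{op})$ via restriction along $j := (i^{\wedge})^{op}: (\dgc^{\wedge})^{op} \hookrightarrow \dgc^{op}$. Passing to opposite categories flips the adjunction, so $j$ is now the \emph{left} adjoint, fully faithful, with right adjoint $(L^{\wedge})^{op}$. Consequently, the slice category $j/\A$ has a terminal object, namely $L^{\wedge}\A$ coming from the unit of the adjunction, so for any presheaf $\cG \in \PShv((\dgc^{\wedge})^{op})$ the right Kan extension of $\cG$ along $j$ satisfies
\[ (\mathrm{Ran}_{j}\,\cG)(\A) \simeq \cG(L^{\wedge}\A). \]

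I would then identify $\mathrm{Ran}_{j}$, restricted to sheaves, as the inverse of the equivalence $j^{*}$. At the presheaf level one has $j^{*}\dashv \mathrm{Ran}_{j}$ with $j^{*}\circ \mathrm{Ran}_{j}\simeq \id$ by fully faithfulness of $j$. Since $j^{*}: \Shv(\dgc^{op})\to \Shv((\dgc^{\wedge})^{op})$ is already an equivalence, essential uniqueness of inverses forces $\mathrm{Ran}_{j}$ to send sheaves to sheaves and to realize the inverse equivalence. Granting this, for any sheaf $\cF$ on $\dgc^{op}$ one computes
\[ \cF(\A) \simeq \mathrm{Ran}_{j}(j^{*}\cF)(\A) \simeq (j^{*}\cF)(L^{\wedge}\A) = \cF(L^{\wedge}\A), \]
using fully faithfulness of $j$ for the last equality.

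The main obstacle will be verifying that $\mathrm{Ran}_{j}$ preserves sheaves. A cleaner alternative that sidesteps this is to argue directly that the morphism of representables $y(L^{\wedge}\A) \to y(\A)$ in $\PShv(\dgc^{op})$, induced by the unit $\A\to L^{\wedge}\A$, becomes an equivalence after sheafification. Its restriction along $j$ is pointwise the map $\Map_{\dgc}(L^{\wedge}\A,\B)\to \Map_{\dgc}(\A,\B)$ for $\B$ complete, which is an equivalence by the reflection $L^{\wedge}\dashv i^{\wedge}$. By Remark \ref{rmk:open_covering}, the open cover topology on $\dgc^{op}$ pulls back along $L^{\wedge}$ from that on $(\dgc^{\wedge})^{op}$, so $j^{*}$ commutes with sheafification; combined with conservativity of $j^{*}$ on sheaves (from the preceding corollary), this will force $a\,y(L^{\wedge}\A)\to a\,y(\A)$ to be an equivalence in $\Shv(\dgc^{op})$, and the Yoneda lemma will then yield $\cF(\A)\simeq \cF(L^{\wedge}\A)$ for every sheaf $\cF$.
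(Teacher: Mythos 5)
Your second, ``cleaner'' route does work, and it is very close to how the paper argues: the paper's own proof writes every open-cover sheaf $\cF$ on $\dgc^{op}$ as $\left(i^{\wedge}\right)_*\cG=\mathrm{Ran}_j\,\cG$ for a sheaf $\cG$ on $\left(\dgc^{\wedge}\right)^{op}$ (this is what the composite adjunction displayed before the corollary provides), and then computes $\cF\left(\A\right)\simeq \Map\left(j^*y\left(\A\right),\cG\right)\simeq \Map\left(y\left(L^{\wedge}\A\right),\cG\right)\simeq \cG\left(L^{\wedge}\A\right)$, finishing by replacing $\A$ with $i^{\wedge}L^{\wedge}\A$; your alternative packages the same computation through the statement that sheafification sends $y\left(\A\right)$ to the representable sheaf of $L^{\wedge}\A$ (which the paper records in the unlabeled remark on subcanonicity just before the proposition), and then invokes Yoneda. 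The one step you should spell out is ``$j^*$ commutes with sheafification'': Remark \ref{rmk:open_covering} alone is not quite enough. You need (i) that restricting a covering sieve $S\hookrightarrow y\left(\A\right)$ along $j$ yields a covering sieve of $L^{\wedge}\A$ — true because any map out of a complete algebra into some $\A_\alpha$ of the covering family factors through $L^{\wedge}\A_\alpha$, so $j^*S$ contains the sieve generated by $\left(L^{\wedge}\A\to L^{\wedge}\A_\alpha\right)_\alpha$, which is covering by the remark — and (ii) that $j^*$ sends sheaves to sheaves. Together these give $a'\circ j^*\simeq j^*\circ a$, and then conservativity of $j^*$ on sheaves (the preceding corollary) closes your argument exactly as you describe.

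By contrast, the patch you suggest for your first route does not work as stated. From $j^*\circ\mathrm{Ran}_j\simeq \mathrm{id}$ and the fact that $j^*$ is an equivalence between the sheaf categories, you cannot conclude by ``essential uniqueness of inverses'' that $\mathrm{Ran}_j$ carries sheaves to sheaves: $j^*$ is not conservative at the presheaf level, and since the open covering topology on $\dgc^{op}$ is not subcanonical you also cannot detect the comparison between $\mathrm{Ran}_j\,\cG$ and the sheaf-theoretic inverse by testing against representables. What is actually needed — and what the paper uses — is the identification, coming from the composite adjunction before the corollary, of $\Shv\left(\left(\dgc^{\wedge}\right)^{op}\right)$ with the left exact localization of $\PShv\left(\dgc^{op}\right)$ at the open-covering sieves; this says precisely that the local objects, i.e.\ the sheaves on $\dgc^{op}$, are the essential image of $\left(i^{\wedge}\right)_*=\mathrm{Ran}_j$ applied to sheaves. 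Citing that discussion would fill the obstacle you flagged in route one; without it, route one is incomplete, which is why your second route is the one to keep.
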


\begin{proof}
By the above composite of adjunctions, it follows that any sheaf $\cF$ with respect to the open covering topology is in the essential image of $\left(i^{\wedge}\right)_*.$ Suppose that $$\cF=\left(i^{\wedge}\right)_*\cG.$$ Then for any $\A,$
\begin{eqnarray*}
\cF\left(\A\right) &\simeq & \Map\left(y\left(\A\right),\left(i^{\wedge}\right)_*\cG\right)\\
& \simeq & \Map\left(\left(i^{\wedge}\right)^*\left(y\left(\A\right)\right),\cG\right)\\
& \simeq & \Map\left(\left(L^{\wedge}\right)_!\left(y\left(\A\right)\right),\cG\right)\\
& \simeq & \Map\left(y\left(L^{\wedge}\A\right),\cG\right)\\
& \simeq & \cG\left(L^{\wedge}\A\right),
\end{eqnarray*}
The result now follows by replacing $\A$ with $i^{\wedge}L^{\wedge} \A.$
\end{proof}

In light of this, it is permissible to write $\cF\left(\Speci\left(\A\right)\right)$ instead of $\cF\left(\A\right),$ even when $\A$ is not complete, and we shall do so freely. One is free to interpret the occurrence of $\Speci\left(\A\right)$ in the above expression as a purely formal placeholder for $\A$ to remind one that they are working in the opposite category, or for $\cF$ to really mean $\Speci\cF$--- since $\Speci$ yields an equivalence between $\Aff_{\SCi}$ and $\dgc^{op}.$

Finally, we make the following observation:

\begin{remark}\label{rmk:restrict}
A presheaf $\cF:\dgc^{op} \to \Spc$ is a sheaf for the open covering topology if and only if for each $\A \in \dgc,$ $\cF|_{\underline{\Speci\left(\A\right)}}$ is, where the latter is the composite
\begin{alignat*}{8}
\mathbf{Bas}\left(\Speci\left(\A\right)\right)^{op} & \to & &\dgc^{op}& &&\stackrel{\cF}{\longrightarrow}&& &\Spc&\\
U_a &\mapsto & &\A\left[a^{-1}\right]&  &&\mapsto &&  &\cF\left( \A\left[a^{-1}\right]\right),&
\end{alignat*}
where $\mathbf{Bas}\left(\Speci\left(\A\right)\right)$ is the natural basis.
\end{remark}

\subsection{Infinite dimensional manifolds as $\Ci$-schemes}\label{sec:infdimmfd}
We will show that a very large class of infinite dimensional manifolds embed fully faithfully into $\Ci$-schemes. We start by discussing their model spaces, which are infinite dimensional vector spaces.

Recall that one of many equivalent ways of defining a locally convex vector space is as follows:

\begin{definition}
A \textbf{locally convex (real) vector space} (LCVS) is real vector space $V$ for which there exists a set $N_V=\left\{\rho_\alpha:V \to \mathbb{R}\right\}_{\alpha \in A}$ of semi-norms such that the addition $$+:V \times V \to V$$ and scalar multiplication $$m:\mathbb{R} \times V \to V$$ maps are continuous when $V$ is equipped with the initial topology with respect to the family $N_V$ of functions to $\R.$ If $N_V$ can be chosen to be countable, $V$ is called a \textbf{Frech\'et space}.
\end{definition}

\begin{remark}
Explicitly, given a family of semi-norms $N_V$ as above, the topology of $V$ is generated by the subbasis $$\left\{\rho_\alpha^{-1}\left(U\right)\mspace{3mu}:\mspace{3mu} U \subseteq \mathbb{R} \mbox{ open, }\alpha \in A\right\}.$$
\end{remark}


\begin{definition}
Suppose that $E$ and $F$ are LCV spaces. Let $U \subseteq E$ be an open subset and let $f:U \to F$ be a continuous function, and let $x \in U$ and $v \in E.$ The \textbf{G\^{a}teaux derivative} of $f$ at $x$ in the direction $v$ is
$$Df_x\left(v\right):=\underset{t \to 0} {\mathsf{lim}} \frac{f\left(x+t\cdot v\right)-f\left(x\right)}{t} \in F,$$
provided this limit exists.

We will say that such an $f$ is $\mathbf{C}_G^1$ if $Df_x$ exists for all $x \in U$ and the function
\begin{eqnarray*}
Df:U \times E &\to& F\\
\left(x,v\right) &\mapsto & Df_x\left(v\right)
\end{eqnarray*}
is continuous. This is essentially what is called \textbf{Michal-Bastiani differentiable} in the literature. For $k \ge 1,$ We will say that $f$ is $\mathbf{C}_G^{k+1}$ if $f$ is continuous and $Df$ is $\mathbf{C}^k.$ Finally, we will say that $f$ is $\Ci_G$ or G\^{a}teaux smooth if it is $\mathbf{C}_G^k$ for all $k \ge 0.$ Denote the set of G\^{a}teaux smooth functions from $U$ to $\mathbb{R}$ by $\Ci_G\left(U\right).$ This has a canonical structure of a $\Ci$-algebra in an obvious way.
\end{definition}

\begin{remark}
For Frech\'et or Banach spaces, this notion of smoothness agrees with Frech\'et smoothness (although for finite $n,$ the classes of $C^n$-functions can differ).
\end{remark}

\begin{remark}
The assignment $U \mapsto \Ci_G\left(U\right)$ is clearly a sheaf of $\Ci$-algebras on the underlying topological space of $E$. Moreover, for all open subsets $U,$ $\Ci_G\left(U\right)$ is closed under locally finite sums.
\end{remark}

Besides this notion of smoothness, there exists the notion of \emph{convenient smoothness} of Fr\"{o}licher and Kriegl:

\begin{definition}
A LCVS $E$ is a \textbf{convenient vector space} if a curve $\gamma:\R \to E$ is in $\Ci_G$ if and only if for all continuous linear functions $\ell:E \to \R,$ $$\ell \circ \gamma \in \Ci\left(\R\right).$$ For $E$ a convenient vector space, the \textbf{$c^\i$-topology} on $E$ is the final topology with respect to all smooth curves, i.e. $U \subseteq E$ is $c^\i$-open if and only if for all smooth curves $\gamma,$ $\gamma^{-1}\left(U\right) \subseteq \mathbb{R}$ is open. We denote $E$ equipped with the $c^\i$-topology by $E_{c^\i}$.

For a $c^\i$-open subset $U,$ a function $f:U \to \R$ is \textbf{conveniently smooth} if for all $\gamma:\R \to E$ smooth with $\gamma\left(\R\right) \subseteq U,$ $f \circ \gamma \in \Ci\left(\R\right).$ Denote the set of conveniently smooth functions on $U$ by $\Ci_C\left(U\right).$ More generally, a set-theoretic function $f:E \to F$ between convenient vector spaces is \textbf{conveniently smooth} if for all $\Ci_G$-smooth curves $\gamma:\R \to E,$ $f\circ \gamma$ is a $\Ci_G$-smooth curve in $F,$ equivalently, for all continuous linear functionals $\ell:E \to \R,$ $\ell \circ f \circ \gamma \in \Ci\left(\R\right).$
\end{definition}

\begin{remark}
The $c^\i$-topology on a convenient vector space $E$ is clearly finer than the locally convex topology, and may not agree with it, that is there can exists subsets $B \subseteq E$ such that for all smooth curves $\gamma,$ $\gamma^{-1}\left(B\right)$ is open, but $B$ is not open (for the topology generated by the semi-norms). In fact, $E_{c^\i}$ may fail to be a topological vector space with the $c^\i$-topology. Nonetheless, it is easily checked by using the sheaf condition on $\R,$ that the assignment to each $c^\i$-open subset $U$ the set of conveniently smooth functions $\Ci_C\left(U\right)$ is a sheaf for the $c^\i$-topology. It should be pointed out that a function $f:E \to F$ which is conveniently smooth may fail to be continuous for the given locally convex topologies, even when $F=\R,$ (but is of course continuous for their respective $c^\i$-topologies).
\end{remark}

There is a large class of convenient vector spaces for which the $c^\i$-topology \emph{does} coincide with the given locally convex topology \cite[Theorem 4.11]{mk}. These include any $E$ which is:

\begin{itemize}
\item a sequential space (equivalently the quotient of a metric space), e.g. any Frech\'et space.
\item the strong dual of a Frech\'et-Schwartz space.
\end{itemize}

\begin{proposition}\cite[Section 16.21]{mk}\label{prop:globbas}
For any convenient vector space $E,$ there is a basis for the $c^\i$-topology which consists of open subsets which are conveniently diffeomorphic to $E_{c^\i}$ itself.
\end{proposition}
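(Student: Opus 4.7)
The plan is to follow the construction of Kriegl--Michor in \cite[Section 16.21]{mk}. Given a $c^\i$-open neighborhood $U$ of a point $x \in E$, the goal is to produce a smaller $c^\i$-open neighborhood $V$ with $x \in V \subseteq U$ together with a conveniently smooth diffeomorphism $V \cong E_{c^\i}$. Since translation by $-x$ is a smooth self-diffeomorphism of $E_{c^\i}$, it suffices to treat the case $x = 0$. My first step would then be to replace $U$ by an absolutely convex $c^\i$-open neighborhood $W \subseteq U$ of $0$: in any convenient vector space such neighborhoods form a basis at $0$ for the $c^\i$-topology, obtained by taking absolutely convex hulls of smaller $c^\i$-open neighborhoods. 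Write $p = p_W:E \to [0, \infty)$ for the associated Minkowski functional, so that $W = \{y : p(y) < 1\}$ and $p(ty) = |t|\, p(y)$.

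Next I would construct an explicit radial smooth diffeomorphism $\Phi:E \to W$. Choose an odd smooth diffeomorphism $h:\R \to (-1,1)$ with $h'(0) = 1$ such that $r(t) := h(t)/t$ extends to a smooth, strictly positive, even function on all of $\R$; the canonical example is $h(t) = t/\sqrt{1+t^2}$, giving $r(t) = 1/\sqrt{1+t^2}$. The candidate diffeomorphism is then $\Phi(y) = r(p(y))\cdot y$, extended by $\Phi(0) = 0$, with inverse $\Psi:W \to E$ built from the analogous rescaling using $h^{-1}$ in place of $h$. Taking $V := W$ will then yield the desired open set together with the diffeomorphism $V \cong E_{c^\i}$.

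The main obstacle will be verifying convenient smoothness of $\Phi$ and $\Psi$, since $p$ itself typically fails to be smooth on $E$. The argument I intend to use invokes the curve-wise characterization of convenient smoothness: for an arbitrary smooth curve $\gamma:\R \to E$ one must show that $\Phi \circ \gamma$ is smooth. The key observation is that along a line through the origin $p$ reduces to $t \mapsto |t|\cdot p(v)$, whose non-smoothness at $t=0$ is cancelled precisely by the evenness of $r$, so that $r \circ p$ becomes smooth along lines. Smoothness of the resulting scalar factor $r(p(\gamma(t)))$ for general smooth curves $\gamma$ will then follow by combining the absolutely convex structure of $W$ with the scalar-testing criterion for smooth curves into convenient vector spaces, and the inverse direction is handled identically with $h^{-1}$ in place of $h$.
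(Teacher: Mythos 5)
Your first reduction already fails: in a general convenient vector space you cannot shrink a $c^\i$-open neighborhood $U$ of $0$ to an absolutely convex $c^\i$-open $W\subseteq U$. If absolutely convex $c^\i$-open sets formed a neighborhood basis at $0$, then---since translations and dilations are $c^\i$-homeomorphisms, $\tfrac12 W+\tfrac12 W\subseteq W$, and every $c^\i$-open absolutely convex set is absorbing---the $c^\i$-topology would be a locally convex vector space topology. But it is not in general: as remarked just before the proposition, $E_{c^\i}$ can fail to be a topological vector space, and the finest locally convex topology coarser than the $c^\i$-topology is only the bornologification of the given one, which may be strictly coarser. So the sets your construction would produce do not form a basis of the $c^\i$-topology; they only see the bornological topology.

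Even when such a $W$ exists, the radial rescaling $\Phi(y)=r(p_W(y))\,y$ is not conveniently smooth, because Minkowski functionals of absolutely convex open sets are typically non-smooth away from the origin, and the evenness of $r$ only cures the corner of $p_W$ at $0$ along rays. Concretely, take $E=\R^2$, $W=(-1,1)^2$, so $p_W(x,y)=\max(|x|,|y|)$, and test along the smooth curve $\gamma(t)=(1,t)$: the first component of $\Phi\circ\gamma$ is $\bigl(1+\max(1,t^2)\bigr)^{-1/2}$, which is constant for $|t|<1$ and has nonzero one-sided derivative at $t=1$, so $\Phi\circ\gamma$ is not smooth; the same happens for $\Psi$. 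No choice of $h$ repairs this (smoothness along the curves $t\mapsto(s,t)$ would force the rescaling factor to have vanishing derivative at every radius, i.e.\ to be constant), and one cannot instead choose $W$ to be the ball of a smooth seminorm, since an arbitrary convenient vector space need not admit any nontrivial smooth seminorms---smooth regularity is an extra hypothesis elsewhere in this paper and is not available here. So your argument, suitably patched, covers only spaces whose $c^\i$-topology is generated by smooth seminorms (e.g.\ Hilbert or nuclear Fr\'echet spaces), not arbitrary convenient $E$. The cited result of Kriegl--Michor is proved by a genuinely different mechanism: the natural shrinking of $U$ is the star-shaped core $\{y: x+t(y-x)\in U \mbox{ for all } t\in[0,1]\}$, which is $c^\i$-open by a tube-lemma argument, and the diffeomorphism onto $E$ is not obtained by rescaling with a gauge function, precisely because no smoothness of such a gauge is available.
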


\begin{definition}
Let $\left(X,\O_X\right)$ be a $\Ci$-ringed space with $\O_X$ a subsheaf of the sheaf of continuous functions. We say that it is \textbf{smoothly regular} if $X$ carries the final topology with respect to all functions of the form
\begin{eqnarray*}
X &\longrightarrow & \R\\
x &\mapsto & a\left(x\right),
\end{eqnarray*}
for some $a \in \Gamma\left(\O_X\right).$
\end{definition}

\begin{proposition}\cite[Proposition 14.4]{mk}
Let $E$ be a LCVS. Then $\left(E,\Ci_G\right)$ is smoothly regular if and only if the locally convex topology of $E$ is generated by semi-norms $\rho_\alpha$ for which
$$\rho_\alpha:\rho_\alpha^{-1}\left(\R\setminus\{0\}\right) \to \R$$ is in $\Ci_G$ for all $\alpha.$ (These could be different than the original semi-norms chosen, so this is a question of the existence of smooth ones.) Similarly, if $E$ is convenient, $\left(E_{c^\i},\Ci_C\right)$ is smoothly regular if the $c^\i$-topology on $E$ is generated by semi-norms $\rho_\alpha$ for which
$$\rho_\alpha:\rho_\alpha^{-1}\left(\R\setminus\{0\}\right) \to \R$$ is in $\Ci_C$ for all $\alpha.$
\end{proposition}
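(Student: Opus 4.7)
The plan is to prove the two implications separately, as both are relatively standard but require different techniques.

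For the easy direction $(\Leftarrow)$, assume the locally convex topology is generated by a family of seminorms $\{\rho_\alpha\}_{\alpha \in A}$, each of which is $\Ci_G$ on $\rho_\alpha^{-1}(\R \setminus \{0\})$. To check smooth regularity, I would show that for any $x \in E$ and any open neighborhood $U$ of $x$, there is a $\Ci_G$ function $f: E \to \R$ with $f(x) \neq 0$ and $f \equiv 0$ outside $U$. Choose a basic open $\{y : \rho_\alpha(y-x) < \varepsilon\} \subseteq U$, and pick a smooth cutoff $\phi: \R \to [0,1]$ which is identically $1$ on $[0, \varepsilon/3]$ and identically $0$ outside $[-2\varepsilon/3, 2\varepsilon/3]$. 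Then $f(y) := \phi(\rho_\alpha(y - x))$ is $\Ci_G$: smoothness on the set $\{y : \rho_\alpha(y-x) > 0\}$ comes from the hypothesis and the chain rule, while on the open set where $\rho_\alpha(y - x) < \varepsilon/3$ (which contains $x$) the function $f$ is constantly $1$ and hence trivially smooth. These bump functions generate the initial topology with respect to smooth functions and contain a basis for the original topology, giving smooth regularity.

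For the harder direction $(\Rightarrow)$, assume $(E, \Ci_G)$ is smoothly regular. I would construct, for each continuous seminorm $p$ in a defining family, a smooth seminorm $\tilde p$ whose unit ball is comparable to that of $p$. By smooth regularity, pick a $\Ci_G$ function $g: E \to [0,1]$ with $g(0) = 1$ and $g \equiv 0$ outside $\{p < 1\}$. Symmetrize by setting $g_{\mathrm{sym}}(y) := g(y)g(-y)$, and form $\tilde p$ as the Minkowski functional of the absolutely convex hull $K$ of a superlevel set $\{g_{\mathrm{sym}} > c\}$ for a regular value $c \in (0,1)$ of $g_{\mathrm{sym}}$. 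By smoothness of the boundary of $K$ away from $0$ together with a standard implicit function argument, $\tilde p$ is smooth on $\tilde p^{-1}(\R \setminus \{0\})$; the inclusion $\{\tilde p < 1\} \subseteq \{p < \lambda\}$ for some $\lambda$ then shows the family $\{\tilde p\}$ obtained by running this over all $p$ still generates the original topology.

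The convenient vector space case proceeds by the same template with $\Ci_C$ replacing $\Ci_G$ and the $c^\i$-topology replacing the locally convex one, since all constructions above (cutoffs, Minkowski functionals of smoothly-defined absolutely convex sets) respect the class of conveniently smooth functions. The main obstacle is the forward direction, specifically verifying smoothness of the Minkowski functional away from the origin: this requires choosing a regular superlevel set so that its absolutely convex hull has a smooth boundary, which in turn relies on a Sard-type argument to select $c$ away from critical values of $g_{\mathrm{sym}}$ and on the fact that the Minkowski functional of an absolutely convex body with smooth boundary is smooth away from $0$ by the implicit function theorem applied radially.
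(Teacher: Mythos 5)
Your backward implication is essentially correct and is the standard argument: translate the seminorm, compose with a cutoff $\phi$ that is constant near $0$, and check smoothness on the open cover by $\left\{\rho_\alpha\left(\cdot-x\right)>0\right\}$ and $\left\{\rho_\alpha\left(\cdot-x\right)<\varepsilon/3\right\}$; since $\Ci_G$ (resp.\ $\Ci_C$) functions are continuous (resp.\ $c^\i$-continuous), the original topology then coincides with the initial topology with respect to global smooth functions. Two small repairs: a basic neighbourhood for a topology generated by a family of seminorms is a \emph{finite intersection} of seminorm balls, so unless you assume the family directed you should take a finite product of your bump functions; and note that for the convenient case the proposition only asserts this easy implication, so nothing more is needed there. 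Be aware also that the paper does not prove this statement at all --- it is quoted from Kriegl--Michor --- so the only comparison to be made is with correctness of your argument, not with a proof in the text.

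The forward implication, however, has genuine gaps, and they sit exactly at the crux of the matter. First, your selection of a ``regular value'' $c$ of $g_{\mathrm{sym}}$ by a Sard-type argument is unavailable: Sard's theorem fails in infinite dimensions (already on separable Hilbert space there are smooth real-valued functions whose critical values fill an interval), and the proposition concerns arbitrary LCVS and convenient spaces. Second, even granted a regular value, promoting the level set to a smooth hypersurface and differentiating the gauge ``radially'' requires an implicit function theorem, which does not exist for Michal--Bastiani or conveniently smooth maps on general locally convex spaces. Third, and most seriously, passing to the absolutely convex hull destroys smoothness: already in $\R^2$ the boundary of the convex hull of a smooth superlevel set typically consists of curved arcs joined to straight segments and is only $C^{1,1}$, not $C^\i$, so the Minkowski functional of $K$ need not be in $\Ci_G$ away from its zero set; no argument is offered to recover smoothness after convexification, and producing convexity and smoothness \emph{simultaneously} is precisely the difficulty this result addresses (compare the delicacy of smooth renorming theory for Banach spaces, where a smooth bump does not yield a smooth norm by hull-taking). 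As it stands the hard direction is not established; it should either be proved by a construction tailored to give smooth seminorms directly, or simply cited to Kriegl--Michor as the paper does.
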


\begin{remark}
The above holds for Nuclear Frech\'et spaces, and in this case, the $c^\i$-topology coincides with the original locally convex one, and $C^\i_G=C^\i_C.$ On the other extreme, there are Banach spaces which do not even admit a $C^1$-norm.
\end{remark}

\begin{lemma}\label{lem:convglobat}
Let $E$ be a convenient vector space, and let $x \in E.$ Then any germ of a smooth function $\mathsf{germ}_x f$ defined on a $c^\i$-open neighborhood $U$ of $x$ has a representation as a conveniently smooth function $h:E \to \mathbb{R}$.
\end{lemma}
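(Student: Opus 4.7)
The plan is to use Proposition \ref{prop:globbas} together with the translation invariance of $E$ to transport the local data of $f$ to a globally defined conveniently smooth function on $E$. First, since the translation $\tau_x: v \mapsto v+x$ is a convenient diffeomorphism of $E$, replacing $f$ by $f \circ \tau_x$ reduces the problem to the case $x = 0$; I therefore seek $h \in \Ci_C(E)$ agreeing with $f$ on some $c^\i$-open neighborhood of $0$, and then translate back via $\tau_{-x}$.

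Next, apply Proposition \ref{prop:globbas} to obtain a $c^\i$-open neighborhood $V$ of $0$ contained in $U$ together with a convenient diffeomorphism $\phi: E \to V$ fixing $0$. The key refinement needed is that $\phi$ may be chosen so as to restrict to the identity on some $c^\i$-open neighborhood $W$ of $0$. This follows from the standard construction underlying Proposition \ref{prop:globbas}, in which $\phi$ is built as a radial rescaling of the form $v \mapsto g(\lambda(v)) \cdot v$ for a smoothly generated Minkowski-type functional $\lambda$ associated to $V$ and a smooth scalar function $g$ satisfying $g(t) = 1$ for $t$ near $0$; then $\phi$ coincides with the identity on $W := \{v \in E : \lambda(v) < \varepsilon\}$ for $\varepsilon > 0$ sufficiently small.

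Given such a $\phi$, define $h := f \circ \phi : E \to \R$. This is globally defined since $\phi$ maps $E$ into $V \subseteq U$, and is conveniently smooth as a composition of conveniently smooth maps. Moreover $h|_W = f|_W$ because $\phi|_W = \mathrm{id}_W$, so $\mathsf{germ}_0 h = \mathsf{germ}_0 f$. Composing back with $\tau_{-x}$ then yields a conveniently smooth function on $E$ representing $\mathsf{germ}_x f$, as required.

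The principal obstacle is the refinement of Proposition \ref{prop:globbas} asserting that the diffeomorphism $\phi$ may be chosen to equal the identity on a neighborhood of the base point. This is a slight strengthening of the bare statement of the proposition and requires one to inspect the construction in \cite[16.21]{mk}, verifying that the radial rescaling factor $g$ can indeed be taken to equal $1$ on an interval around the origin; once this is in place, the remainder of the argument is a direct pullback along the diffeomorphism and is essentially formal.
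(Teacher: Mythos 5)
Your argument is essentially the paper's: its proof is a one-line appeal to Proposition \ref{prop:globbas}, i.e., to the neighborhood basis of $c^{\infty}$-open sets conveniently diffeomorphic to $E$, used exactly as you use it. The refinement you flag---that the diffeomorphism may be chosen to be the identity near the base point, so that pulling $f$ back along it preserves the germ at $x$---is precisely what the paper's terse proof leaves implicit, and it does hold for the radial diffeomorphisms underlying \cite[16.21]{mk}, so your proposal is correct and takes the same route.
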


\begin{proof}
This follows since $x$ has a neighborhood basis of $c^\i$-open subsets which are $c^\i$-diffeomorphic to $E_{c^\i}$ by Proposition \ref{prop:globbas}.
\end{proof}

\begin{definition}
A locally convex space $E$ or convenient vector space is \textbf{realcompact} if the function of sets
\begin{eqnarray*}
E &\longrightarrow & \Hom_{\Ci\Alg}\left(\Ci\left(E\right),\mathbb{R}\right)\\
e &\mapsto & ev_e,
\end{eqnarray*}
is a bijection, where $ev_e\left(f\right)=f\left(e\right).$
\end{definition}

\begin{proposition}\label{prop:convchar}
For a locally convex vector space $E$ or convenient vector space, $\left(E,\Ci_G\right)$ or respectively $\left(E_{c^\i},\Ci_C\right)$ is an affine $\Ci$-scheme if and only if $E$ smoothly regular, realcompact, and every germ of a $\Ci$-function has a global representative.
\end{proposition}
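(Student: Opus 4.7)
The approach is to compare $\left(E,\Ci_G\right)$ (respectively $\left(E_{c^\i},\Ci_C\right)$) directly with $\Speci\left(\Ci_G\left(E\right)\right)$ via the unit $\eta_E$ of the adjunction $\Gamma \dashv \Speci$. Being an affine $\Ci$-scheme is equivalent to this unit being an equivalence in $\Loc$, so the plan is to translate each of the three conditions into one piece of this equivalence: realcompactness into $\eta_E$ being a bijection of sets, smooth regularity into $\eta_E$ being a homeomorphism of underlying topological spaces, and the germ-representation condition into $\eta_E$ inducing an isomorphism on stalks of structure sheaves. The convenient case then follows by the same argument with $\Ci_G$ replaced by $\Ci_C$, since Lemma \ref{lem:convglobat} makes the germ condition automatic.

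For the forward direction, if $\eta_E$ is an isomorphism, then (a) the underlying set of $\Speci\left(\Ci_G\left(E\right)\right)$ is $\Hom_{\Ci\Alg}\left(\Ci_G\left(E\right),\R\right)$, so bijectivity on sets is precisely realcompactness; (b) the topology of $\Speci\left(\Ci_G\left(E\right)\right)$ has subbasis $U_a = \{\varphi : \varphi\left(a\right) \neq 0\}$ for $a \in \Ci_G\left(E\right)$, so its pullback to $E$ being the original topology means the original topology is the initial one for $\Ci_G\left(E\right)$, i.e.\ smooth regularity; (c) the stalk of $\cO_{\Speci\left(\Ci_G\left(E\right)\right)}$ at a point $ev_e$ is the $\Ci$-localization of $\Ci_G\left(E\right)$ at the maximal ideal $\mathfrak{m}_e$, every element of which is constructed from global smooth functions by localization, so the stalk isomorphism forces every germ in $\Ci_G$ at $e$ to come from a global representative.

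For the converse, assume all three conditions. Realcompactness gives the set-level bijection and smooth regularity identifies the two topologies, so $\eta_E$ is a homeomorphism. For the structure sheaves, on each basic open $U_a$ the element $a$ is a unit in $\Ci_G\left(U_a\right)$, so the universal property of the $\Ci$-localization yields a canonical map $\Ci_G\left(E\right)\left[a^{-1}\right] \to \Ci_G\left(U_a\right)$, natural in $a$. By Remark \ref{rmk:restrict}, sheafifying along the basis of $U_a$'s produces a comparison morphism $\cO_{\Speci\left(\Ci_G\left(E\right)\right)} \to \left(\eta_E\right)_{\ast}\Ci_G$ whose adjoint I would check is an isomorphism on stalks: at each $e \in E$ the left stalk is $\Ci_G\left(E\right)_{\mathfrak{m}_e}$ and the right stalk is the germ algebra $\Ci_{G,e}$. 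Surjectivity of the comparison is precisely the hypothesis that every germ has a global representative; for injectivity, if $f \in \Ci_G\left(E\right)$ has vanishing germ at $e$, then $f$ is zero on some open neighborhood $V$ of $e$, and smooth regularity allows one to choose $b \in \Ci_G\left(E\right)$ with $b\left(e\right) \neq 0$ and $\{b \neq 0\} \subseteq V$, so $f$ is zero on the support of $b$ and hence becomes zero in $\Ci_G\left(E\right)\left[b^{-1}\right]$, and therefore in $\Ci_G\left(E\right)_{\mathfrak{m}_e}$.

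The main obstacle I expect is this stalk-level comparison in the reverse direction, specifically the interplay between the $\Ci$-localization and the sheaf of smooth functions: one needs smooth regularity to produce the separating function $b$ above, and one needs to know that $f$ vanishing on the nonvanishing locus of $b$ is enough to kill $f$ in the algebraic $\Ci$-localization $\Ci_G\left(E\right)\left[b^{-1}\right]$ (which follows from the pushout description of $\Ci$-localization and flatness as in Lemma \ref{lem:5.1.13}). The convenient version is then immediate: Lemma \ref{lem:convglobat} supplies the germ-representation hypothesis for free, so only realcompactness and smooth regularity need to be imposed, and the remaining sheaf-theoretic comparison runs through verbatim with the $c^\i$-topology in place of the locally convex one.
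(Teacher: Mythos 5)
Your proposal is correct and takes essentially the same route as the paper: in the substantive (converse) direction you identify the underlying sets via realcompactness and the topologies via smooth regularity, then compare structure sheaves stalkwise, with surjectivity being exactly the germ-representation hypothesis and injectivity coming from the smooth-regularity trick of producing $b$ with $b\left(e\right)\neq 0$ and $b\cdot f=0$ so that $f$ dies in $\Ci_G\left(E\right)\left[b^{-1}\right]$ (no flatness is needed here, since $b$ is inverted). The only loose point is in your forward direction, where the stalk of $\Speci\left(\Ci_G\left(E\right)\right)$ consists of fractions $g/s$ of global functions, so surjectivity onto germs does not literally hand you a single global representative without a further step (e.g.\ replacing $1/s$ near $e$ by $\phi\circ s$ for a suitable smooth $\phi:\R\to\R$); the paper sidesteps this by declaring that direction obvious.
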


\begin{proof}
One direction is obvious since any affine $\Ci$-scheme has these properties. Let us prove the converse. We will be agnostic as to whether $E$ is convenient or not, and $E$ will denote either $E$ with the locally convex topology, or the $c^\i$-topology.

Since $E$ is realcompact, we can identify $E=\Hom_{\Ci\Alg}\left(\Ci\left(E\right),\mathbb{R}\right)$ as a set. Since $E$ is smoothly regular, the topology of $E$ is generated by subsets of the form $f^{-1}\left(\R\setminus \{0\}\right),$ for $f \in \Ci\left(E\right),$ but this is the same topology as that on $\Hom_{\Ci\Alg}\left(\Ci\left(E\right),\mathbb{R}\right)$ coming from the $\Speci$-construction. It suffices to prove that the structure sheaves agree. For this, we will show that their stalks agree at each point $x \in E.$ Notice that for each $f \in \Ci\left(E\right),$ letting $U_f=f^{-1}\left(\R\setminus\{0\}\right),$ $f|_{U_f}$ is a unit, and hence there is an induced homomorphism
$$\Ci\left(E\right)\left[f^{-1}\right] \to \Ci\left(U\right).$$ Recall that the sheaf $\O_{\Ci\left(E\right)}$ is defined as the sheafification of a presheaf $\tilde \O_{\Ci\left(E\right)},$ and the left hand side is precisely what the presheaf $\tilde \O_{\Ci\left(E\right)}$ assigns $U_f.$ It follows that we get an induced map of sheaves $$\O_{\Ci\left(E\right)} \to \Ci_E.$$ It suffices to show that this morphism induces an isomorphism on stalks. The stalk at $x \in E$ of the right hand side is simply the $\Ci$-ring of germs of smooth functions at $x.$ The stalks on the left hand side can be computed directly from the presheaf. There is a neighborhood basis $\mathcal{U}_x$ of $x$ consisting of open subsets of the form $U_f=f^{-1}\left(\R\setminus\{0\}\right),$ ranging over all $f$ for which $f\left(x\right) \ne 0.$ Note that these are precise those $f$ such that $f \in \Ci\left(E\right) \setminus \mathfrak{m}_x.$ The stalk of $\tilde \O_{\Ci\left(E\right)}$ at $x$ is therefore the filtered colimit over all such $f$ of $\Ci\left(E\right)\left[f^{-1}\right],$ but this is just the $\Ci$-localization at the compliment of $\mathfrak{m}_x.$ We claim that the $\Ci$-localization of $\Ci\left(E\right)$ at this muliplicatively closed subset coincides with the algebraic localization. For this, it suffices to prove that the algebraic localization is a $\Ci$-algebra, but since $\mathfrak{m}_x$ is maximal, the algebraic localization is a local ring, so this is obvious. Elements of the algebraic localization are generalized fractions of the form  $\frac{g}{s},$ with $s$ a smooth function such that $s\left(x\right) \ne 0.$ We have that $\frac{g}{s}=0$ in the localization if and only if there exists $s'$ smooth such that $s'\left(x\right) \ne 0$ and such that $s' \cdot g =0.$ The induced map on stalks sends this generalized fraction to the quotient of the germ of $g$ at $x$ by the germ of $s$ at $x.$ So, $\frac{g}{s}$ is in the kernel if and only if $\mathsf{germ}_x\left(g\right)=0.$ This means that there exists an open neighborhood $U$ of $x$ such that $g|_U=0.$ Since $E$ is smoothly regular, we can shrink $U$ so that $U=f^{-1}\left(\R\setminus\{0\}\right),$ for some $f \in \Ci\left(E\right).$  But now $f \cdot g=0$, and $\germ_x\left(f\right) \ne 0.$ It follows that the homomorphism is injective at the level of stalks. It suffices to prove that it is surjective, but this is precisely the statement that every germ of a smooth function has a global representative, so we are done.
\end{proof}


\begin{corollary}\label{cor:allinfmfds}
The following classes of LCVS and convenient vector spaces are affine $\Ci$-schemes when equipped with their appropriate sheaf of smooth functions:
\begin{itemize}
\item[1.] Any smoothly regular LCVS which admits smooth partitions of unity. This includes:
\begin{itemize}
\item[a)] Any metrizable LCVS for which smooth functions separate disjoint closed sets.
\item[b)] All Hilbert spaces.
\item[c)] All Nuclear Frech\'et spaces.
\end{itemize}
\item[2.] All realcompact smoothly regular convenient vector spaces (with the $c^\i$-topology).
\item[3.] Any smoothly regular LCVS which is Lindel\"{o}f. Any smoothly regular convenient vector space which is Lindel\"{o}f for its $c^\i$-topology.
\end{itemize}
\end{corollary}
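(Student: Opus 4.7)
The plan is to apply Proposition \ref{prop:convchar}, which characterizes affine $\Ci$-schemes among smoothly regular locally convex vector spaces (or convenient vector spaces equipped with the $c^\i$-topology) by two further conditions: realcompactness, and the property that every germ of a smooth function admits a global representative. Since smooth regularity is built into the hypothesis of each listed class, only these two conditions require verification in each case.

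Item 2 is immediate: realcompactness is assumed, and Lemma \ref{lem:convglobat} supplies the global representability of germs for any convenient vector space.

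For item 1, the admission of smooth partitions of unity delivers both remaining conditions. Global representability of germs follows by a standard bump-function argument: given a representative $f \in \Ci(U)$ of $\germ_x f$, apply a smooth partition of unity subordinate to the open cover $\{U, E \setminus \{x\}\}$; the component supported in $U$ equals $1$ on a neighborhood of $x$, so multiplying $f$ by it and extending by zero produces a global representative. Realcompactness is the classical consequence, developed in the Kriegl--Michor framework, that a smoothly regular space admitting smooth partitions of unity is realcompact: a $\Ci$-algebra homomorphism $\Ci(E) \to \R$ not arising from a point evaluation would have to vanish on every member of an appropriately chosen partition, contradicting unitality. The sub-items 1a)--1c) are then precisely the prototypical classes of smoothly regular spaces known to admit smooth partitions of unity in that theory---metrizable LCVS with smooth separation of closed sets, all Hilbert spaces, and all nuclear Fr\'echet spaces---so each reduces to the general case of item 1.

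For item 3, a smoothly regular Lindel\"of space automatically admits smooth partitions of unity: smooth regularity refines any open cover to one of the form $(\{h_\alpha \ne 0\})_\alpha$, the Lindel\"of property extracts a countable subcover, and a standard normalization of the resulting sequence of non-negative smooth functions (via squaring and locally finite countable sums, rescaled so that the sum is everywhere positive) yields a subordinate smooth partition; thus item 3 also reduces to item 1. The main obstacle in writing out a full proof is not conceptual but bibliographic---careful citation of the classical Kriegl--Michor inputs (existence of smooth partitions of unity for each sub-class 1a)--1c), and the implication that smoothly regular spaces admitting smooth partitions of unity are realcompact)---since no new arguments beyond Proposition \ref{prop:convchar} are required.
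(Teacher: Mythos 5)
Your reduction to Proposition \ref{prop:convchar} and most of the casework match the paper's proof: item 2 is handled exactly as in the paper via Lemma \ref{lem:convglobat}; the bump-function construction of global representatives of germs from smooth partitions of unity is precisely the content of the paper's one-line justification of item 1; and your reduction of item 3 to item 1 via the fact that smoothly regular Lindel\"{o}f spaces are smoothly paracompact is the paper's alternative argument (its primary route for item 3 is to cite \cite[Theorem 4.41]{joycesch} directly).

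The genuine gap is your treatment of realcompactness in item 1. Proposition \ref{prop:convchar} requires three conditions, and smooth partitions of unity only deliver the germ condition; the argument you sketch for realcompactness does not work. If $\varphi:\Ci\left(E\right) \to \R$ is a character whose kernel has no common zero, choosing a smooth partition of unity $\left(\psi_\alpha\right)$ subordinate to the cover by cozero sets of elements of $\ker\left(\varphi\right)$ yields no contradiction with $\varphi\left(1\right)=1$: the family is in general infinite and only locally finite, and an algebra homomorphism need not commute with such sums, so one cannot conclude $\varphi\left(1\right)=\sum_\alpha \varphi\left(\psi_\alpha\right)=0$. This is exactly why the evaluation property (``smooth realcompactness'') is a delicate matter, treated class by class in Kriegl--Michor and the related literature, typically under Lindel\"{o}f or nonmeasurable-density hypotheses; no blanket implication of the form ``smoothly regular plus smooth partitions of unity implies realcompact'' is stated there, so the bibliographic step you defer would not close. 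Note that the paper does not derive realcompactness from partitions of unity either: its proof of item 1 addresses only the germ condition and leans on the known realcompactness of the listed classes, and for item 3 it can appeal to \cite[Theorem 4.41]{joycesch}, where the Lindel\"{o}f hypothesis is what secures realcompactness. To repair your write-up you should invoke the class-specific realcompactness results for 1a)--1c) (and the Lindel\"{o}f case) rather than attempt to deduce realcompactness from smooth paracompactness.
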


\begin{proof}
$1.$ follows since admitting smooth partitions of unity allows one to construct global representatives for germs. $2.$ follows since by Lemma \ref{lem:convglobat}, any such space admits global representatives for germs. $3.$ follows from \cite[Theorem 4.41]{joycesch}. Alternatively, by \cite[Theorem 16.10]{mk}, any such space has smooth partitions of unity, so the result follows from 1.

\end{proof}

\begin{remark}
Let $\left(E,\Ci_E\right)$ and $\left(F,\Ci_F\right)$ be locally convex vector spaces as above, regarded as affine $\Ci$-schemes. Since each structure sheaf is a subsheaf of the sheaf of continuous functions, a morphism $$\varphi:\left(E,\Ci_E\right) \to \left(F,\Ci_F\right)$$ of $\Ci$-ringed spaces is the same as a function of underlying sets $\varphi:E \to F$ such that $f \circ \varphi \in \Ci_G\left(E\right)$ for all $f \in \Ci_G\left(F\right).$ Denoting $\mathsf{LCVS_{s}}$ the full subcategory of locally convex real vector spaces and $\Ci_G$-smooth maps on those which are smoothly regular (i.e. there exists a family of semi-norms generating the topology which are $C^\i_G$ away from their zero-set), realcompact, and such that germs of smooth functions have global representatives, there is a functor
\begin{eqnarray*}
\mathsf{LCVS}_{s} &\to& \Aff_{\Ci}\\
E &\mapsto & \left(E,\Ci_E\right).
\end{eqnarray*}
This functor is faithful, but it is not full.

Consider now instead, the full subcategory of convenient vector spaces and conveniently smooth maps on those which are smoothly regular and realcompact, and denote it by $\mathsf{Conv}_s.$ Then the corresponding functor
\begin{eqnarray*}
\mathsf{Conv}_s &\to& \Aff_{\Ci}\\
E &\mapsto & \left(E_{c^\i},\Ci_E\right)
\end{eqnarray*}
\emph{is} fully faithful. To see this, we need to show that a function $\varphi:E \to F$ is conveniently smooth if and only if $f \circ \varphi \in \Ci_C\left(E\right)$ for all $f \in \Ci_C\left(F\right).$ If $\varphi$ is conveniently smooth, so is $f \circ \varphi,$ so one direction is clear. Conversely, suppose that $f \circ \varphi \in \Ci_C\left(E\right)$ for all $f \in \Ci_C\left(F\right).$ Let $\gamma:\R \to E$ be a smooth curve.  We need to show that $f \circ \gamma:\R \to F$ is a smooth curve. But since $F$ is convenient, this is if and only if for all $\ell:F \to \R$ linear and continuous, $\ell \circ \left(f \circ \gamma\right) \in \Ci\left(\R\right).$ Since $\ell \in \Ci_C\left(F\right),$ we have by hypothesis that $\ell \circ f \in \Ci_C\left(E\right),$ and thus $\gamma \circ \ell \circ f$ is also smooth.

Thus, if $E$ is not convenient, then even when $\left(E,\Ci_E\right)$ is an affine $\Ci$-scheme, it may not have the same functor of points as $E,$ regarding $E$ as an infinite-dimensional manifold.




\end{remark}

\begin{definition}
A \textbf{convenient manifold} is a topological space $M,$ together with charts with values in convenient vector spaces whose transition maps are conveniently smooth. We call a convenient manifold \textbf{admissible} if it has a chart with values in $\mathsf{Conv}_s$.
\end{definition}

\begin{corollary}\label{cor:frechff}
The category of admissible convenient manifolds embeds fully faithfully into $\Ci$-schemes. In particular, so does the full subcategory of Frech\'et manifolds locally modeled on nuclear Frech\'et spaces. We denote by
$$S_{\mathsf{Conv}}:\Mfd_{\mathsf{Conv}_s} \hookrightarrow \Ci\mathsf{Sch}$$
the fully faithful embedding.
\end{corollary}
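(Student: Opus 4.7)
The plan is to build $S_{\mathsf{Conv}}$ by gluing the affine case, then deduce full faithfulness from local full faithfulness at the level of charts. Given $M \in \Mfd_{\mathsf{Conv}_s}$ with an admissible atlas $\{\varphi_i:U_i \to V_i\subseteq E_i\}_i$ whose models $E_i$ all lie in $\mathsf{Conv}_s$, first equip $M$ with the sheaf $\Ci_M$ defined by declaring $f:W\to\R$ to lie in $\Ci_M(W)$ iff each composite $f\circ\varphi_i^{-1}$ is conveniently smooth on $\varphi_i(W\cap U_i)$. Chart independence follows because transition maps are conveniently smooth, and the result is a locally $\Ci$-ringed space whose stalks have residue field $\R$.

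The first real step is to verify that $(M,\Ci_M)$ is a $\Ci$-scheme. By Proposition \ref{prop:globbas}, the $c^\i$-topology on each $E_i$ has a basis of opens conveniently diffeomorphic to $E_i$ itself; pulling these back through the charts yields a cover of $M$ by opens $W\subseteq U_i$ whose associated $\Ci$-ringed space is isomorphic to $((E_i)_{c^\i},\Ci_C)$. By Proposition \ref{prop:convchar}, each such model is affine, so $(M,\Ci_M)$ is covered by affine $\Ci$-schemes and is therefore a $\Ci$-scheme. This defines $S_{\mathsf{Conv}}$ on objects, and functoriality on morphisms is immediate because the pull-back of conveniently smooth functions along a conveniently smooth map is conveniently smooth.

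For full faithfulness, let $M,N\in\Mfd_{\mathsf{Conv}_s}$ and let $\phi:S_{\mathsf{Conv}}(M)\to S_{\mathsf{Conv}}(N)$ be a morphism of $\Ci$-ringed spaces. Cover $M$ and $N$ by the affine opens constructed above, so that for each $x\in M$ one can choose affine neighborhoods $x\in W\subseteq U_i$ and $\phi(x)\in W'\subseteq U_j$ with $\phi(W)\subseteq W'$; the restriction $\phi|_W$ is then a morphism of affine $\Ci$-schemes both isomorphic to objects in $\mathsf{Conv}_s$. By the remark preceding the corollary, the functor $\mathsf{Conv}_s\hookrightarrow \Aff_{\Ci}$ is fully faithful, so each restriction corresponds to a unique conveniently smooth map between chart opens. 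These local maps agree on overlaps by the same full faithfulness, applied to affine refinements of the overlaps, and hence glue to a single conveniently smooth $f:M\to N$ with $S_{\mathsf{Conv}}(f)=\phi$. Faithfulness is automatic, as $f$ determines and is determined by its underlying set map together with the pull-back of local smooth functions.

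The Fréchet case follows because nuclear Fréchet spaces are smoothly regular and Lindelöf, hence lie in $\mathsf{Conv}_s$ and admit smooth partitions of unity by Corollary \ref{cor:allinfmfds}, items (1c) and (3); consequently Fréchet manifolds locally modelled on them are admissible convenient. The step I expect to require the most care is the overlap compatibility in the full faithfulness argument: one must verify that the locally extracted conveniently smooth maps from different choices of affine neighborhoods agree on intersections, which reduces to choosing common affine refinements (available again via Proposition \ref{prop:globbas}) and invoking the affine-case full faithfulness on them. This is essentially bookkeeping once the affine basis is in hand, but it is the one place where one must be attentive to the distinction between the $c^\i$-topology on each chart and the ambient topology on $M$.
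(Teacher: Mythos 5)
Your proposal is correct and follows exactly the route the paper intends: the corollary is stated as an immediate consequence of the preceding remark (full faithfulness of $\mathsf{Conv}_s \hookrightarrow \Aff_{\Ci}$ on affine models) together with Proposition \ref{prop:globbas} giving an affine basis of charts, and your write-up just makes the chart-wise gluing and locality of full faithfulness explicit, which the paper leaves implicit. The only cosmetic remark is that the overlap-compatibility step you flag is automatic, since all locally extracted conveniently smooth maps share the underlying set map of the given morphism of ringed spaces.
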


\begin{remark}
By \cite[Theorem 4.41]{joycesch}, if $M$ is an admissible convenient manifold which is moreover Lindel\"{o}f, $S_{\mathsf{Conv}}\left(M\right)$ is moreover affine.
\end{remark}

Since $\Ci$-schemes embed fully faithfully into $\Shv\left(\dgc^{op}\right),$ this gives a fully faithful embedding of admissible convenient manifolds into sheaves via their functor of points. There is another way to fully faithful embed convenient manifolds into sheaves that works even without the admissibility assumption, but the embedding is different. Namely, the category of smoothly regular convenient manifolds embeds fully faithfully into the category of Fr\"{o}licher spaces \cite[Lemma 27.5]{mk}, and these in turn embed fully faithfully into the category of sheaves on (finite dimensional) manifolds $\Shv\left(\Mfd\right)$ (and moreover, factors through the inclusion of diffeological spaces). Finally, if $$sm:\Mfd \hookrightarrow \dgc^{op}$$ denotes the fully faithful inclusion, there is an induced restriction functor $$sm^*:\Shv\left(\dgc^{op}\right) \to \Shv\left(\Mfd\right)$$ which admits a left adjoint $sm_!,$ since the covers for a finite dimensional manifold in either site are the same. Since $sm$ is fully faithful, so is $sm_!,$ and by composition we get a fully faithful embedding of the category of smoothly regular convenient manifolds into the topos $\Shv\left(\dgc^{op}\right).$

Now suppose that $M$ is an \emph{admissible} convenient manifold. Then $S_{\mathsf{Conv}}\left(M\right)$ is a $\Ci$-scheme, and moreover, its functor of points on manifolds is $sm^*S_{\mathsf{Conv}}\left(M\right).$ So we wish to compare
$S_{\mathsf{Conv}}\left(M\right)$ with $sm_!sm^*S_{\mathsf{Conv}}\left(M\right),$ i.e. we wish to determine if the co-unit
$$\epsilon_M:sm_!sm^*S_{\mathsf{Conv}}\left(M\right) \to S_{\mathsf{Conv}}\left(M\right)$$ is an equivalence. Let $F \in \Shv\left(\Mfd\right)$ be arbitrary. Since we could equivalently take sheaves on $\Ci$---i.e. the full subcategory of manifolds on those which are of the form $\R^n$ for some $n,$ the canonical map
$$\underset{f:\R^n \to M, n \in \mathbb{N}} \coprod \R^n \to \tilde y\left(M\right)$$ is an effective epimorphism. Since $sm_!$ is a left-adjoint, we get that
$$\pi:\underset{f:\R^n \to M, n \in \mathbb{N}} \coprod \R^n \to sm_!sm^*S_{\mathsf{Conv}}\left(M\right)$$ is an effective epimorphism in $\Shv\left(\dgc^{op}\right).$ If $\epsilon_M$ were an equivalence, since $S_{\mathsf{Conv}}\left(M\right)$ is a $\Ci$-scheme, $\pi$ would need to admit local sections. That means around every $x \in M,$ there exists an open subset $U$ such the inclusion $U \hookrightarrow M$ factors through a map $f:\R^n \to M$ via a smooth map $\lambda:U \to \R^n.$ This would imply in particular that 
$$Tf_{\lambda\left(x\right)}:T_{\lambda\left(x\right)} \R^n \to T_x M$$ is surjective, which would imply that $M$ is finite dimensional.

\begin{corollary}
For an admissible convenient manifold, $$\epsilon_M:sm_!sm^*S_{\mathsf{Conv}}\left(M\right) \to S_{\mathsf{Conv}}\left(M\right)$$ is an equivalence if and only if $M$ is finite dimensional.
\end{corollary}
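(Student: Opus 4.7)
My plan is to handle the two directions separately, since the ``if'' direction is essentially formal while the ``only if'' direction is a cleaned-up version of the tangent-space sketch given in the paragraph preceding the statement. For the ``if'' direction, if $M$ is finite-dimensional then $M$ is already an object of $\Mfd$, and $S_{\mathsf{Conv}}(M)$ coincides with the representable sheaf $sm(M)$. I would then invoke the standard fact that if $sm:\Mfd \hookrightarrow \dgc^{op}$ is fully faithful (equivalently, $sm^\ast \circ sm \simeq \mathrm{id}$ on representables, which holds because open covers in either site restrict to the same covers on a finite-dimensional manifold), then the counit $\epsilon_M: sm_! sm^\ast \tilde y(M) \to \tilde y(M)$ is an equivalence. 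Since $sm_!$ commutes with Yoneda, this is immediate.

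For the ``only if'' direction, I would first note that the full subcategory $\Ci \subseteq \Mfd$ of Euclidean spaces is a dense generating family for $\Shv(\Mfd)$, so the tautological map
\[
\coprod_{n \in \mathbb{N}} \, \coprod_{f:\R^n \to M} \R^n \longrightarrow \tilde y(M)
\]
is an effective epimorphism in $\Shv(\Mfd)$. Since $sm_!$ is a left adjoint, it preserves colimits and effective epimorphisms, so postcomposing with the equivalence $\epsilon_M$ yields an effective epimorphism
\[
\pi:\coprod_{n,f} \R^n \longrightarrow S_{\mathsf{Conv}}(M)
\]
in $\Shv(\dgc^{op})$.

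The key geometric input is then the translation of this sheaf-theoretic surjectivity into pointwise local sections. Because $S_{\mathsf{Conv}}(M)$ is a $\Ci$-scheme (Corollary \ref{cor:frechff}) with underlying topological space the admissible convenient manifold $M$, an effective epimorphism onto it must, after restricting to a small enough open neighborhood of each point of $M$, admit an actual section in $\Ci$-ringed spaces. Concretely, for each $x \in M$ I would extract an open $U \ni x$ in $M$, a finite $n$, a smooth $f:\R^n \to M$, and a conveniently smooth map $\lambda:U \to \R^n$ with $f \circ \lambda$ equal to the inclusion $U \hookrightarrow M$. Differentiating at $x$ gives a surjection $Tf_{\lambda(x)}:T_{\lambda(x)}\R^n \twoheadrightarrow T_x M$, so $\dim T_x M \le n < \infty$; running this at every point forces $M$ to be finite-dimensional.

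The main obstacle is justifying the local-section step rigorously: one needs to know that effective epimorphisms in $\Shv(\dgc^{op})$ onto a $\Ci$-scheme, out of a coproduct of affines, admit sections locally in the open-cover topology. I would extract this from Definition \ref{dfn:open_covering} and Remark \ref{rmk:subcan}, together with the fact that $\pi$ covers each stalk: a section of an effective epimorphism to an affine chart of $S_{\mathsf{Conv}}(M)$ exists over some open subscheme, and the image of this open subscheme in $M$ is genuinely a neighborhood by admissibility of the convenient manifold structure. Once this geometric reformulation is in hand, the tangent-space computation is routine and completes the argument.
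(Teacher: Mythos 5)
Your proposal is correct and follows essentially the same route as the paper: the ``only if'' direction is exactly the paper's argument (the effective epimorphism $\coprod_{n,f}\R^n \to \tilde y(M)$ in $\Shv(\Mfd)$, pushed forward by the left adjoint $sm_!$, must admit local sections because $S_{\mathsf{Conv}}(M)$ is a $\Ci$-scheme, and the resulting factorization $U \to \R^n \to M$ forces $Tf_{\lambda(x)}$ to surject onto $T_xM$), while your ``if'' direction is the formal observation, left implicit in the paper, that for finite-dimensional $M$ the sheaf $S_{\mathsf{Conv}}(M)$ is representable and hence lies in the essential image of the fully faithful functor $sm_!$. Your flagged concern about justifying the local-section step is legitimate but resolves exactly as you indicate, via the open-cover topology and full faithfulness of $S_{\mathsf{Conv}}$ on open subsets of $M$.
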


There are two main ways that infinite dimensional manifolds naturally arise. One is via the mapping space between two finite dimensional manifolds, or more generally, as the space of sections of some fiber bundle. Another, is by a pro-system of finite dimensional manifolds. Depending on which of the ways the manifold shows up, one may prefer to use one embedding over the other. On one hand, for the former case, \cite[Proposition 3.2.6]{ellst} implies that for $E \to M$ a proper submersion of finite dimensional manifolds, the relative mapping stack $\Gamma_M\left(E\right)$ is represented by the image under $sm_!$ of the nuclear Frech\'et manifold of sections. On the other hand, for the latter case, consider the following two examples:

\begin{example}
	The projective limit $\R^\i$ with its standard Frech\'et structure is the projective limit $\underset{k} \lim \R^k$ in the category of nuclear Frech\'et manifolds.
	\end{example}
	
	\begin{example}
	More generally, for any surjective submersion $$\pi:E \to M$$ of finite dimensional manifolds, its collection of jet bundles $\J^k\left(E,M\right)$ forms a projective system of finite dimensional manifolds, and the infinite jet bundle $\J^\i\left(E,M\right)$ with its standard Frech\'et structure is the projective limit in the category of nuclear Frech\'et manifolds. It is locally diffeomorphic to $\R^\i.$
	\end{example}
	
	

\begin{lemma}\label{lem:frecol}
Let $\R^\i$ be equipped with its standard Frech\'et structure. Then
$$\Ci\left(\R^\i\right) \simeq \Gamma\left(\Speci\left(\colim \Ci\left(\R^k\right)\right)\right).$$
\end{lemma}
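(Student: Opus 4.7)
The key observation is that $\Speci$, being right adjoint to $\Gamma$, preserves limits, so
$$\Speci\left(\underset{k}\colim\, \Ci\left(\R^k\right)\right) \simeq \underset{k}\lim\, \Speci\left(\Ci\left(\R^k\right)\right) = \underset{k}\lim\, \R^k$$
in $\Loc$. The plan is to identify this limit with $\R^\i$, equipped with its standard Fréchet structure, viewed as an affine $\Ci$-scheme, and then take global sections.

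First, I would invoke Corollary \ref{cor:allinfmfds}: since $\R^\i$ is a nuclear Fréchet space, it is smoothly regular, realcompact, and admits smooth partitions of unity, hence is already an affine $\Ci$-scheme. In particular $\Ci\left(\R^\i\right)$ is complete and $\Ci\left(\R^\i\right) \simeq \Gamma\left(\Speci\left(\Ci\left(\R^\i\right)\right)\right)$. The projections $\pi_k : \R^\i \to \R^k$ give a compatible family of pullbacks $\pi_k^* : \Ci\left(\R^k\right) \to \Ci\left(\R^\i\right)$, defining a canonical map $\phi : \underset{k}\colim\, \Ci\left(\R^k\right) \to \Ci\left(\R^\i\right)$. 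Applying $\Speci$ produces a comparison morphism $\Speci\left(\phi\right) : \R^\i \to \Speci\left(\underset{k}\colim\, \Ci\left(\R^k\right)\right)$ in $\Loc$, and the lemma reduces to showing this is an equivalence, since applying $\Gamma$ then yields the identification.

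I would verify this equivalence in two stages. At the level of underlying topological spaces, points of $\Speci\left(\underset{k}\colim\, \Ci\left(\R^k\right)\right)$ are $\R$-algebra maps from the colimit to $\R$, which by the universal property of the colimit are compatible towers of points, i.e. $\underset{k}\lim\, \R^k = \R^\i$ as a set. The Spec topology has subbase $\{U_a\}$ indexed by $a$ in the colimit; each such $a$ comes from some $f \in \Ci\left(\R^k\right)$, and the corresponding $U_a$ is $\pi_k^{-1}\left(f^{-1}\left(\R\setminus\{0\}\right)\right)$, which is exactly a subbase for the product topology, i.e. the standard Fréchet topology on $\R^\i$. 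Hence the underlying map is a homeomorphism.

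The main obstacle is verifying the corresponding isomorphism on structure sheaves. It suffices to check on the basis $\{U_a\}$, or equivalently on stalks at each $p \in \R^\i$. On $U_a = \pi_k^{-1}\left(U_f\right)$, one side gives the Fréchet-smooth functions $\Ci\left(\pi_k^{-1}\left(U_f\right)\right)$, while the other is the sheafification of the presheaf
$$U_a \;\longmapsto\; \left(\underset{j}\colim\, \Ci\left(\R^j\right)\right)\!\left[a^{-1}\right] \;\simeq\; \underset{j \ge k}\colim\, \Ci\left(\pi_{jk}^{-1}\left(U_f\right)\right),$$
using that $\Ci$-localization commutes with filtered colimits and agrees with restriction to the corresponding open for manifolds. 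At stalks, this reduces to the claim
$$\underset{j}\colim\, \Ci\left(\R^j\right)_{\pi_j\left(p\right)} \;\simeq\; \Ci\left(\R^\i\right)_{p},$$
i.e. every germ at $p$ of a Fréchet-smooth function on $\R^\i$ is the pullback of a germ from some finite-dimensional $\R^j$. This is the central technical input; it rests on the fact that the continuous dual of $\R^\i$ consists of finite sequences, so that all iterated derivatives of a smooth function at $p$ factor through finite-dimensional projections, and on the availability of smooth partitions of unity on $\R^\i$ (Corollary \ref{cor:allinfmfds}) to upgrade this infinitesimal factorization into a genuine local one. Granted this stalk identification, the structure sheaves agree, $\Speci\left(\phi\right)$ is an equivalence of affine $\Ci$-schemes, and taking $\Gamma$ yields $\Ci\left(\R^\i\right) \simeq \Gamma\left(\Speci\left(\underset{k}\colim\, \Ci\left(\R^k\right)\right)\right)$ as claimed.
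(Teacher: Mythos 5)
Your proposal follows essentially the same route as the paper: identify the underlying space of $\Speci\left(\colim \Ci\left(\R^k\right)\right)$ with $\R^\i$ by limit-preservation, and identify its structure sheaf with the sheaf of Fr\'echet-smooth functions using the fact that sections of the (localized) colimit are exactly the functions that locally factor through a finite-dimensional projection $p_k$ --- the same key analytic fact, which you state at the level of germs and which the paper's proof invokes without further justification. One caution: your sketched reason for that fact (each iterated derivative at $p$ factors through a finite-dimensional projection because the continuous dual of $\R^\i$ consists of finite sequences, upgraded by smooth partitions of unity) is not itself a proof, since the relevant projections vary with the point and grow with the order of the derivative, and bump functions do not convert such jet-level factorization into a genuine local factorization; the statement is true, but it requires the standard argument for smooth functions on $\R^{\mathbb{N}}$ (as used for infinite jet bundles), which the paper simply treats as known.
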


\begin{proof}
Since $\underline{\Speci}:\dgc^{op} \to \Top$ preserves limits, we have an identification of topological spaces between $\R^\i$ and $\underline{\Speci}\left(\colim \Ci\left(\R^k\right)\right).$ The structures sheaf of $\Speci\left(\colim \Ci\left(\R^k\right)\right)$ is the sheafification of the presheaf that assigns an open subset $U$ of $\R^\i$ the set of set-theoretic functions $$U \to \R$$ which factor as $$U \to p_k\left(U\right) \stackrel{f}{\longrightarrow} \R,$$ with $p_k:\R^\i \to \R^k$ the projection, and with $f$ smooth. The sheafification is then all functions that are \emph{locally} of this form. But these are precisely the smooth functions from the inherited Frech\'et manifold structure on $U \subset \R^\i.$
\end{proof}

\begin{corollary}\label{cor:colimfre}
For any surjective submersion $$\pi:E \to M$$ of finite dimensional manifolds, 
$$\Ci\left(\J^\i\left(E,M\right)\right) \simeq \Gamma\left(\Speci\left(\colim \Ci\left(\J^k\left(E,M\right)\right)\right)\right).$$
\end{corollary}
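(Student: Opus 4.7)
The plan is to reduce the corollary to Lemma \ref{lem:frecol} via a local-to-global argument on $\J^\i(E,M)$. First I would identify the underlying topological spaces: since $\underline{\Speci}:\dgc^{op}\to\Top$ preserves limits, there is a canonical homeomorphism $\underline{\Speci}\bigl(\colim_k \Ci(\J^k(E,M))\bigr)\cong \lim_k \J^k(E,M)$, and the right-hand side is by construction the topological space underlying the Frech\'et manifold $\J^\i(E,M)$.

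Next I would compare the structure sheaf $\O$ of $\Speci(\colim_k\Ci(\J^k(E,M)))$ with the Frech\'et sheaf $\Ci_{\J^\i(E,M)}$, both now regarded as sheaves on the same topological space $\J^\i(E,M)$. To do this, cover $M$ by coordinate charts $V_\alpha\cong\R^m$ over which $\pi$ trivialises as $\pi^{-1}(V_\alpha)\cong V_\alpha\times F_\alpha$ with $F_\alpha$ a Cartesian chart in the fibre. Over each $V_\alpha$ the finite jet bundle $\J^k(E,M)|_{V_\alpha}$ is diffeomorphic to a Cartesian space $\R^{N_k}$, and the tower $\{\J^k|_{V_\alpha}\to\J^{k-1}|_{V_\alpha}\}_k$ is (noncanonically) a tower of linear projections between Cartesian spaces. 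Its inverse limit in Frech\'et manifolds is a product of $\R^\i$ with a finite-dimensional Cartesian factor, and this is precisely $\J^\i(E,M)|_{V_\alpha}$. On each such chart the argument of Lemma \ref{lem:frecol} applies essentially verbatim: the additional finite-dimensional factor does not interact with the tower, so the sheafified presheaf $U_a\mapsto(\colim_k\Ci(\J^k|_{V_\alpha}))[a^{-1}]$ coincides with the sheaf of Frech\'et-smooth functions on $\J^\i|_{V_\alpha}$.

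Finally, by Remark \ref{rmk:restrict}, a sheaf on the topological space of $\Speci(\colim_k\Ci(\J^k))$ is determined by its restrictions to the sub-bases coming from $\underline{\Speci}$ of localisations, and in particular is determined by its restrictions to any open cover. The local identifications from the previous paragraph therefore glue canonically to an isomorphism of sheaves $\O\cong\Ci_{\J^\i(E,M)}$ on $\J^\i(E,M)$; passing to global sections then yields the corollary. The main technical point to verify is that the restriction of the presheaf $U_a\mapsto(\colim_k\Ci(\J^k(E,M)))[a^{-1}]$ to the open subset $\J^\i|_{V_\alpha}$ agrees with the analogous presheaf built from $\colim_k\Ci(\J^k|_{V_\alpha})$; this holds because $\Ci$-localisation commutes with filtered colimits, and because every basic open subset of $\J^\i|_{V_\alpha}$ is of the form $U_a$ for some $a$ lifted from a finite jet level.
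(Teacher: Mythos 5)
Your proposal is correct and follows essentially the route the paper intends: the corollary is stated as an immediate consequence of Lemma \ref{lem:frecol}, using that the jet tower is locally a tower of coordinate projections between Cartesian spaces (so that $\J^\i\left(E,M\right)$ is locally diffeomorphic to $\R^\i$), identifying underlying spaces via the limit-preservation of $\underline{\Speci}$, and comparing structure sheaves chart-by-chart before globalizing. Your added care about localization commuting with filtered colimits and basic opens coming from finite jet levels is exactly the bookkeeping the paper leaves implicit.
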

\section{The derived topos}\label{sec:derived_topos}

\subsection{The derived site}
Let $\sD$ be a small subcategory of the the opposite category of $\dgc$ such that\footnote{We will also tacitly assume each algebra $\A$ in $\sD$ is complete. Otherwise, after choosing a subcategory $\sD$ satisfying $1)$-$4)$, one can replace each algebra with its completion.}

\begin{itemize}
\item[1)] $\sD$ contains all the compact objects--- i.e. (homotopically) finitely presented algebras
\item[2)] For all $\A \in \sD,$ and $n \in \mathbb{N} \cup \left\{-1\right\},$ $\tau_{\le n}\left(\A\right) \in \sD,$ where $\tau_{\le -1} \A$ is the reduction of $\pi_0\left(\A\right).$
\item[3)] $\sD$ is closed under open coverings
\item[4)] $\sD$ is closed under finite limits and retracts.
\end{itemize}

\begin{definition}
The $\i$-category $\sD$ together with the open covering Grothendieck topology is called the \textbf{derived site}. Moreover, the associated $\i$-topos $$\Shv\left(\sD\right)=:\bH$$ will be called the \textbf{derived topos}. Objects of this topos will be called \textbf{derived stacks}.
\end{definition}

\begin{proposition}
	$\bH$ is hypercomplete.
	\end{proposition}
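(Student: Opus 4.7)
The plan is to show that $\bH$ has enough points in the $\infty$-categorical sense; hypercompleteness then follows by a routine argument using the fact that $\Spc$ is hypercomplete (Whitehead's theorem).

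First, I would identify a small conservative family of points. For each $\A\in\sD$ and each $\R$-valued point $x:\pi_0(\A)_0\to \R,$ form the stalk functor
\[
p_x^\ast:\bH\to\Spc,\qquad p_x^\ast(\cF)=\underset{a\notin\mathfrak{m}_x}\colim\,\cF(\Speci(\A[a^{-1}])).
\]
Because the indexing category is filtered and filtered colimits commute with finite limits in $\Spc,$ each $p_x^\ast$ is left exact; it is tautologically colimit-preserving, so it extends uniquely to a geometric morphism $p_x:\Spc\to\bH.$

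Second, I would verify that the collection $\{p_x\}$, ranging over all $\A\in\sD^{\mathbf{fp}}$ and all $x\in\underline{\Speci}(\A),$ is jointly conservative on $\bH.$ By Remark \ref{rmk:restrict} a morphism $\alpha:\cF\to\cG$ of sheaves is an equivalence iff its restriction to each small site $\mathbf{Bas}(\Speci(\A))$ is an equivalence. For $\A\in\sD^{\mathbf{fp}},$ Remark \ref{rmk:pi0spec} identifies $\underline{\Speci}(\A)$ with $\pi_0(\cM)$ for some dg-supermanifold $\cM,$ which embeds as a closed subspace of some $\R^n,$ hence is paracompact Hausdorff of finite covering dimension $\le n.$ Moreover, by Remark \ref{rmk:subcan}, the basic opens of $\Speci(\A)$ are themselves finitely presented affines $\Speci(\A[a^{-1}]),$ so the restricted sheaf $\cF|_{\mathbf{Bas}(\Speci(\A))}$ literally is an $\infty$-sheaf on the paracompact finite-dimensional space $\underline{\Speci}(\A).$ By \cite[Theorem 7.2.3.6]{htt}, such $\Shv(\underline{\Speci}(\A))$ is hypercomplete with enough points (its stalks at ordinary topological points are jointly conservative). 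For a general $\A\in\sD,$ Theorem \ref{theorem:5.5.8.10}(b) writes $\A$ as a filtered colimit of finitely presented $\SCi$-algebras, and one reduces to the f.p.\ case to obtain the joint conservativity of the full family $\{p_x\}$ on $\bH.$

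Third, I would conclude. Let $\alpha:\cF\to\cG$ be $\infty$-connective in $\bH.$ Since $p_x^\ast$ is left exact and preserves small colimits, it preserves effective epimorphisms, hence sends $\infty$-connective morphisms to $\infty$-connective morphisms in $\Spc.$ By Whitehead's theorem, $p_x^\ast(\alpha)$ is an equivalence in $\Spc$ for every $x,$ and by step two, $\alpha$ is itself an equivalence. Therefore $\bH$ is hypercomplete.

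The main obstacle will be step two: upgrading joint conservativity of $\R$-points at the level of each small topological site to joint conservativity at the level of the big topos $\bH.$ The argument is not purely formal, because the slice $\bH/y(\Speci(\A))$ is a much larger topos than $\Shv(\underline{\Speci}(\A))$; what rescues us is Remark \ref{rmk:restrict}, which states precisely that the sheaf condition on $\bH$ is controlled by the small sites, so that an equivalence of sheaves in $\bH$ can be tested after restriction to the small sites, where the classical theory of stalks applies.
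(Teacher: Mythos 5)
Your overall skeleton is sound: a jointly conservative family of points does force hypercompleteness, since inverse image functors of geometric morphisms preserve $n$-connective morphisms for every $n$ and $\i$-connective morphisms in $\Spc$ are equivalences. The gap is in step two, and it is not quite the one you flagged. First, the reduction from a general $\A \in \sD$ to the finitely presented case via Theorem \ref{theorem:5.5.8.10}(b) does not work: an object of $\bH$ is a sheaf on $\sD$ and has no reason to carry filtered colimits of algebras to colimits of spaces (these filtered limits of affines are not preserved by the Yoneda embedding into $\bH$), so the values and stalks of a sheaf on general objects of $\sD$ are not controlled by its values on $\sD^{\mathbf{fp}}$. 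This matters because conditions 1)--4) force $\sD$ to contain non-finitely-presented algebras (e.g.\ the truncations $\tau_{\le n}\A$ and reductions, whose $\pi_0$ is typically only finitely generated) and permit it to contain much more. Second, even if you enlarge your family to the points of $\Speci\left(\A\right)$ for every $\A \in \sD$, the strategy requires each small topos $\Shv\left(\underline{\Speci\left(\A\right)}\right)$ to have enough points. The paracompact--finite-covering-dimension criterion you invoke does cover the finitely presented case, where $\underline{\Speci\left(\A\right)} \cong \pi_0\left(\cM\right)$ is a closed subset of a manifold by Remark \ref{rmk:pi0spec}; but $\sD$ is only bounded from below by conditions 1)--4), and the framework is explicitly designed to accommodate affine $\Ci$-schemes with infinite-dimensional underlying spaces, for which enough points (equivalently, hypercompleteness of the small topos) is unknown or false --- this is exactly the Hilbert-cube phenomenon. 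So joint conservativity of your family is not established, and the strategy as a whole demands strictly more than the statement being proved.

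For contrast, the paper's proof deliberately avoids any claim that the small topoi have enough points: it reduces hypercompleteness of $\bH$ to the assertion that each representable $\Speci\left(\cB\right)$ is a hypersheaf, uses Remark \ref{rmk:restrict} to test this on the small site of an arbitrary $\Speci\left(\A\right)$, identifies the restriction with $\Map_{\dgc}\left(\cB,\cO_\A\left(\blank\right)\right)$, and then quotes Nuiten's result that the single sheaf $\cO_\A$ is a hypersheaf. That is a statement about one sheaf on each small site rather than about the whole small topos, which is why it survives in the stated generality. If you want to rescue a points-based argument, you would need to prove that $\Shv\left(\underline{\Speci\left(\A\right)}\right)$ has enough points for every $\A \in \sD$, which is a substantially stronger and site-dependent assertion.
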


\begin{proof}
	It suffices to prove that for each $D \in \sD,$ $\bH/D$ is hypercomplete. To this end, it suffices to show that each representable $D \in \bH$ is a hypersheaf. Let $\Speci\left(B\right)$ be a representable and let $\Speci\left(\A\right)$ be another. By Remark \ref{rmk:restrict}, it suffices to prove that the restriction of $\Speci\left(\B\right)$ to $\mathbf{Bas}\left(\Speci\left(\A\right)\right)$ is a hypersheaf. Without loss of generality, assume that $\cB$ is complete, so that this restriction can be identified with $\Map_{\dgc}\left(\cB,\cO_\A\left(\blank\right)\right).$ It follows from \cite[Lemma 5.1.25]{Nuiten} that $\cO_\A$ is a hypersheaf. The result now follows.
	\end{proof}
	
\subsection{Adjunctions and Monads}\label{sec:adj}

Denote by $$i:\sD^{0} \hookrightarrow \sD$$ the full subcategory of classical $\SCi$-algebras, and denote by $$l:\sD_{red} \hookrightarrow \sD_0$$ the full subcategory thereof on the reduced algebras. Finally, denote by $k:=i \circ l.$ Note that $i$ has a right adjoint $\pi_0$, which sends $\sp\left(A\right)$ to spec of its $0$-truncation $\sp\left(\pi_0\left(A\right)\right),$ and $l$ has a right adjoint $R$ which sends $\sp\left(A\right)$ to $\sp\left(A/\Nil_{\Ci}\left(A\right)\right)$. Denote the composite $R \circ \pi_0$ by $p$; it is right adjoint to $k.$ Note that $p$ sends $\sp\left(\A\right)$ to $\sp\left(\A_{red}\right).$

Note that all $6$ of these functors preserve covers, consequently there are are many induced adjunctions induced by Kan extensions:

  $$
    \xymatrix{
      \Shv\left(\sD^{0}\right)
      \ar@<+18pt>@{^{(}->}[rr]|-{i_!}
      \ar@<+9pt>@{<-}[rr]|-{i^\ast \simeq \left(\pi_0\right)_!}
      \ar@<+0pt>@{^{(}->}[rr]|-{i_\ast \simeq \left(\pi_0\right)^\ast}
      \ar@<-9pt>@{<-}[rr]|-{\left(\pi_0\right)_\ast}
      &&
      \;\Shv\left(\sD\right)
    }
    \,
  $$

$$
    \xymatrix{
      \Shv\left(\sD_{red}\right)
      \ar@<+18pt>@{^{(}->}[rr]|-{l_!}
      \ar@<+9pt>@{<-}[rr]|-{l^\ast \simeq R_!}
      \ar@<+0pt>@{^{(}->}[rr]|-{l_\ast \simeq R^\ast}
      \ar@<-9pt>@{<-}[rr]|-{R_\ast}
      &&
      \;\Shv\left(\sD_0\right)
    }
    \,
  $$

$$
    \xymatrix{
      \Shv\left(\sD_{red}\right)
      \ar@<+18pt>@{^{(}->}[rr]|-{k_!}
      \ar@<+9pt>@{<-}[rr]|-{k^\ast \simeq p_!}
      \ar@<+0pt>@{^{(}->}[rr]|-{k_\ast \simeq p^\ast}
      \ar@<-9pt>@{<-}[rr]|-{p_\ast}
      &&
      \;\Shv\left(\sD\right)
    }
    \,.
  $$

\begin{definition}
An derived stack $\cX$ is \textbf{classical} if it is in the essential image of $i_!$. Denote the full subcategory on the classical stacks by $\bHc.$
\end{definition}

Since $i$ is fully faithful so is $i_!,$ and hence $\bHc$ is a coreflective subcategory of $\bH.$ In particular, it is closed under colimits.

\begin{proposition}\label{prop:closedprod}
The subcategory $\bHc$ is closed under finite products.
\end{proposition}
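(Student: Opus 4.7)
The plan is to prove that the fully faithful left adjoint $i_{!}:\Shv(\sD^{0})\hookrightarrow\bH$ preserves finite products; this will immediately give closure of $\bHc$ (the essential image of $i_!$) under finite products. Preservation of the terminal object is automatic, as the terminal of either $\Shv(\sD^{0})$ or $\bH$ is the representable of $\Speci(\R)$ (the initial $\SCi$-algebra gives the terminal site object), which is classical and sent to itself by $i$.

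For binary products, the main move is to invoke universality of colimits in the $\infty$-topos $\bH$. Every object of $\Shv(\sD^{0})$ is a colimit of representables $y_{0}(\A)$ for $\A\in\sD^{0}$, and $i_{!}$ preserves colimits; since binary products distribute over colimits on each side, it suffices to check that for every pair of classical $\A,\B\in\sD^{0}$ the canonical comparison
\[
i_{!}\!\left(y_{0}(\A)\times y_{0}(\B)\right)\;\longrightarrow\;y(\A)\times y(\B)
\]
is an equivalence in $\bH$. Unwinding, the left-hand side is the representable of the classical $\SCi$-coproduct $\A\otimes_{\R}^{\infty}\B$, while the right-hand side is the representable of the derived coproduct $\A\otimes_{\R}^{\infty,L}\B$ in $\dgc$. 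So the whole statement reduces to the flatness claim: for classical $\A,\B\in\sD^{0}$ the comparison map $\A\otimes_{\R}^{\infty,L}\B\to \A\otimes_{\R}^{\infty}\B$ is an equivalence in $\dgc$, i.e.\ the derived coproduct of two classical $\SCi$-algebras \emph{over $\R$} is already $0$-truncated.

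This flatness is the heart of the proof and the only non-formal step. Morally it holds because we are working over the field $\R$, where every $\R$-vector space is flat, so no Tor contribution can arise; this is in marked contrast to the situation for derived pushouts over a non-initial classical algebra $\cC$, where for example $\R\otimes_{\Ci(\R)}^{\infty,L}\R$, computed via the Koszul resolution $\Ci(\R)[t]$ with $dt=x$, yields $\R[t]$ with genuinely non-trivial negative cohomology. To realize this intuition rigorously I would represent $\A$ by a cofibrant dg-$\SCi$-algebra $\widetilde{\A}\to\A$ built as an iterated super Koszul--Tate resolution of a presentation of $\A$, so that $\A\otimes_{\R}^{\infty,L}\B\simeq \widetilde{\A}\otimes_{\R}^{\infty}\B$, and then run a Koszul cohomology computation in the spirit of Lemma \ref{lem:suplocsubm} and Proposition \ref{prop:kosz}, combined with the exactness of $-\otimes_{\R}-$ on underlying $\R$-vector spaces, to conclude that the negative cohomology vanishes. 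The main obstacle lies precisely in making this Koszul--Tate argument uniform for a general classical $\SCi$-algebra rather than just those cut out by a regular sequence; once this flatness lemma is in place, universality of colimits closes the proof immediately.
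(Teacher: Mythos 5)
Your argument is essentially the paper's: the printed proof also notes the terminal object is classical, writes $i_!\cX$ and $i_!\cY$ as colimits of representables $\sp\left(A\right)$ with $A$ in $\sD^{0}$, distributes the product over both colimits using universality of colimits in $\bH$, and reduces to the affine case, where it simply asserts $\sp\left(A\right) \times \sp\left(B\right) \simeq \sp\left(A \oinfty B\right)$ and concludes since the latter lies in $\bHc$. The single step you flag as incomplete --- that the coproduct of two classical $\SCi$-algebras computed in $\dgc$ is again $0$-truncated, i.e.\ agrees with the classical coproduct $A \oinfty B$, so that the product of classical affines is a classical affine --- is precisely the step the paper states without justification, so your write-up is structurally identical and is in fact more explicit about where the non-formal content sits. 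You are also right that the field-of-definition/flatness intuition does not transfer verbatim, because $\oinfty$ is not the tensor product of underlying $\R$-modules; closing this requires knowing, e.g., that $\left(\blank\right) \oinfty B$ preserves weak equivalences between cofibrant simplicial $\SCi$-algebras and that a levelwise-free resolution $\widetilde{\A}_\bullet \to \A$ yields $\widetilde{\A}_\bullet \oinfty \B$ with discrete homotopy, which goes beyond the regular-sequence Koszul computations of Lemma \ref{lem:suplocsubm} and Proposition \ref{prop:kosz}. So: same route, and your acknowledged gap coincides exactly with the paper's implicit assertion rather than indicating a flaw in your approach.
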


\begin{proof}
$\bHc$ clearly contains the terminal object, so it suffices to check it is closed under binary products. Let $i_!\cX$ and $i_!\cY$ be classical stacks. Then 
$$i_!\cX \simeq \underset{\sp\left(A\right) \to \cX} \colim i_! \sp\left(A\right)$$ and $$i_!\cY \simeq \underset{\sp\left(B\right) \to \cY} \colim i_! \sp\left(B\right)$$ where in both colimits, $A$ and $B$ range over objects in $\cD_0.$ By abuse of notation, $$i_!\sp\left(A\right)=\sp\left(A\right)$$ (more accurately, $i_! y_{\cD_0}\left(\sp\left(A\right)\right) \simeq y_{\sD}\left(i\left(\sp\left(A\right)\right)\right)$). Hence
\begin{eqnarray*}
\cX \times \cY &\simeq& \left(\underset{\sp\left(A\right) \to \cX} \colim \sp\left(A\right)\right) \times \left(\underset{\sp\left(B\right) \to \cX} \colim \sp\left(B\right)\right)\\
&\simeq& \underset{\sp\left(A\right) \to \cX} \colim \mspace{5mu} \underset{\sp\left(B\right) \to \cY} \colim \left(\sp\left(A\right) \times \sp\left(B\right)\right)\\
&\simeq& \underset{\sp\left(A\right) \to \cX} \colim \mspace{5mu} \underset{\sp\left(B\right) \to \cY} \colim \left(\sp\left(A \oinfty B\right)\right)
\end{eqnarray*}
which is in $\bHc,$ since each $\sp\left(A \oinfty B\right)$ is and $\bHc$ is closed under colimits.
\end{proof}

\subsection{Spaces of sections}
Recall that for any $\i$-topos $\cE,$ and an object $E \in \cE,$ there is a canonical geometric morphism $$\pi_E:\cE/E \to \cE.$$ The inverse image functor is given by pullback:
\begin{eqnarray*}
\pi_E^*:\cE &\to & \cE/E\\ 
F &\mapsto & F \times E \to E.
\end{eqnarray*}
This functor has both a left and a right adjoint:
$$\left(\pi_E\right)_! \dashv \pi_E^* \dashv \left(\pi_E\right)_*.$$
The left adjoint is the functor:
\begin{eqnarray*}
\left(\pi_E\right)_!:\cE/E \to \cE\\
f:F \to E & \mapsto & F.
\end{eqnarray*}

Notice that if $M \in \sD,$ that by the Yoneda lemma it follows that for all $N \in \sD$ and $f:E \to M,$
\begin{eqnarray*}
\left(\pi_M\right)_*\left(f\right)\left(N\right) & \simeq & \Map_{\bH}\left(N,\left(\pi_M\right)_*\left(f\right)\right)\\
&\simeq & \Map_{\bH/M}\left(\pi^*_M\left(N\right),f\right)\\
& \simeq & \Map_{\bH/M} \left(N \times M \to M,f\right),
\end{eqnarray*}
where the latter is the space of sections
$$\xymatrix@C=2.5cm@C=2cm{& E \ar[d]^-{f}\\
N \times M \ar[r]_-{pr_N} \ar@{-->}[ur]^-{\sigma} & M.}$$
\begin{definition}
Given $f:E \to M$ in $\bH,$ the \textbf{space of sections of $f$} is the stack
$$\Sec_M\left(f\right):=\left(\pi_M\right)_*\left(f\right)$$
\end{definition}

\begin{remark}
Notice that for $E,F \in \bH,$ the internal Hom $$F^E=\underline{\Map}_{\bH}\left(E,F\right)=\Sec\left(\pi_F^*E\right),$$ and conversely, if $f:E \to F,$ there is a pullback diagram
$$\xymatrix{\Sec_F\left(E\right) \ar[r] \ar[d] &\ar[d] F^E \ar[d]^-{F^f}\\
\left\{id_F\right\} \ar[r] & F^F.}$$
\end{remark}

\section{Derived Orbifolds} \label{sec:orbifold}
This section will follow \cite{higherme}. (See also \cite[Chapter 20]{LurieSAG}, where these ideas are expanded upon from a different point of view.)

\begin{definition}\label{dfn:sctopos}
A \textbf{$\SCi$-ringed $\i$-topos} is a pair $\left(\cE,\O_{\cE}\right)$ with $\O_{\cE}$ a $\SCi$-algebra in $\cE,$ i.e. a finite product preserving functor $$\O_\cE:\Ci \to \cE.$$ Equivalently, by abuse of notation, this is an $\i$-topos $\cE$ equipped with a geometric morphism $$\O_{\cE}:\cE \to \cB\SCi,$$ where $\cB\SCi$ is the classifying $\i$-topos of the algebraic theory $\SCi.$
\end{definition}

The $\i$-category of $\SCi$-ringed $\i$-topoi, denoted $\Topi^{\SCi},$ may be defined informally as the $\i$-category whose objects are $\SCi$-ringed $\i$-topoi, and whose morphisms $$\left(\cE,\cO_{\cE}\right) \to \left(\cF,\O_{\cF}\right)$$ consists of a geometric morphism $$f:\cE \to \cF,$$ together with a morphism of sheaves of $\SCi$-algebras over $\cF$ $$\alpha:\O_{\cF} \to f_*\O_{\cE}.$$ This can be rigorously constructed as a Cartesian fibration over $\Topi,$ c.f. \cite[Section 2.4]{higherme}.

\begin{remark}
By Theorem \ref{theorem:5.5.8.15}, specifying a finite product preserving functor $$\SCi \to \cE$$ is the same as specifying a limit preserving functor $$\Alg_{\SCi}\left(\Spc\right)^{op} \to \cE,$$ which is the same as a colimit preserving functor $$\Alg_{\SCi}\left(\Spc\right) \to \cE^{op},$$ which is completely determined by its right adjoint (which must exist) which is a limit preserving functor $$\cE^{op} \to \Alg_{\SCi}\left(\Spc\right),$$ i.e. a sheaf on $\cE$ with values in $\dalg.$ If $\cE\simeq \Shv\left(\sC,J\right)$ for a Grothendieck site $\left(\sC,J\right),$ then this is the same data as a sheaf on $\left(\sC,J\right)$ with values in $\dalg.$ Applying this to the case that the Grothendieck site is the poset of open subsets of a topological space $X$ with the standard open covered topology, a finite product preserving functor $$\SCi \to \Shv\left(X\right)$$ is the same as a sheaf of $\SCi$-rings on $X.$ So if $\left(X,\O_X\right)$ is any $\SCi$-ringed space, then $\left(\Shv\left(X\right),\O_X\right)$ is an $\SCi$-ringed $\i$-topos.
\end{remark}

\begin{definition}
Given a $\SCi$-ringed $\i$-topos $\left(\cE,\O\right)$ and an object $E$ of $\cE,$ the slice topos $\cE/E$ inherits a canonical structure of a $\SCi$-ringed topos as $\left(\cE/E,\O_E:=\O|_E\right),$ where more formally $\O|_E:=e^*\O,$ where $e:\cE/E \to \cE$ is the canonical (\'etale) geometric morphism.
We denote $\O|_E$ by $\O_E$ and call it the \textbf{structure sheaf} of $E.$
	\end{definition}

\begin{definition}
An \textbf{$\SCi$-Deligne-Mumford stack} is a $\SCi$-ringed $\i$-topos $\left(\cX,\O_{\cX}\right)$ such that there exists a collection of objects $X_\alpha \in \cX$ for which the unique map $$\underset{\alpha} \coprod X_\alpha \to 1$$ is an effective epimorphism, together with equivalences $$\left(\cX/X_\alpha,\O_\cX|_{X_\alpha}\right) \simeq \left(\Shv\left(\Speci\left(\A_\alpha\right)\right),\O_{\A_\alpha}\right)$$ for $\A_\alpha \in \sD.$ If the $\A_\alpha$ can be chosen to be of finite presentation (i.e. $\Speci\left(\A_\alpha\right)$ is a derived manifold), then $\left(\cX,\O_{\cX}\right)$ is called a \textbf{derived orbifold}. Denote the full subcategory of $\Topi^{\SCi}$ spanned by the $\SCi$-Deligne-Mumford stacks by $\DMsCi.$
\end{definition}

\begin{remark}
The canonical functor
\begin{eqnarray*}
\sD &\longrightarrow & \Topi^{\SCi}\\
\A &\mapsto & \left(\Shv\left(\Speci\left(\A\right)\right)\right)
\end{eqnarray*}
is fully faithful. Moreover, the induced functor
$$\DMsCi \to \Shv\left(\sD\right)=\bH$$ is fully faithful by \cite[Theorem 2.2]{higherme}. We hence can identify $\DMsCi$ with its image in $\bH.$
\end{remark}

\begin{definition}
Let $\left(\cX,\O_{\cX}\right)$ be an $\SCi$-ringed $\i$-topos and let $X \in \cX.$ Then there is a canonical morphism $$\left(\cX/X,\O_{\cX}|_{X}\right) \to \left(\cX,\O_{\cX}\right).$$ Any morphism $\left(\cY,\O_{\cY}\right) \to \left(\cX,\O_{\cX}\right)$ equivalent to one of the above form is called \textbf{\'etale}.
\end{definition}

Denote by $\DMsCi^{\et}$ the subcategory of $\DMsCi$ of $\SCi$-Deligne-Mumford stacks and \'etale morphisms.

\begin{proposition}\cite[Remark 1.7]{higherme}
For any $\SCi$-Deligne-Mumford stack $\left(\cX,\O_{\cX}\right),$ there is a canonical equivalence of $\i$-categories
\begin{eqnarray*}
\cX & \longrightarrow & \DMsCi^{\et}/\left(\cX,\O_{\cX}\right)\\
X & \mapsto & \left(\cX/X,\O_{\cX}|_{X}\right) \to \left(\cX,\O_{\cX}\right)
\end{eqnarray*}
\end{proposition}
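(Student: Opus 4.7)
The plan is to reduce the claim to its purely topos-theoretic analogue and then upgrade the resulting equivalence to the $\SCi$-ringed setting. The topos-theoretic statement I have in mind is standard: for any $\i$-topos $\cE$, the functor $E \mapsto (\cE/E \to \cE)$ is fully faithful with essential image the étale geometric morphisms into $\cE$ (cf.\ \cite[Section 6.3.5]{htt}). Combined with the fact that, on both sides of the desired equivalence, the structure sheaf of a slice is by construction the pullback of $\O_{\cX}$, this promotes the equivalence to $\SCi$-ringed $\i$-topoi over $(\cX, \O_{\cX})$; one then only has to check that the functor lands in, and is essentially surjective onto, the subcategory $\DMsCi^{\et}/(\cX, \O_\cX)$.

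The first thing I would verify is well-definedness: for each $X \in \cX$, the pair $(\cX/X, \O_{\cX}|_X)$ is an $\SCi$-DM stack. I would pick an affine cover $\coprod_\alpha X_\alpha \to 1_\cX$ witnessing that $(\cX,\O_\cX)$ is DM, with $(\cX/X_\alpha, \O_{\cX}|_{X_\alpha}) \simeq (\Shv(\Speci(\A_\alpha)), \O_{\A_\alpha})$, and observe that $\coprod_\alpha (X \times X_\alpha) \to X$ is an effective epimorphism in $\cX/X$ whose slices identify canonically with $\cX/(X \times X_\alpha)$. This reduces the problem to the case $\cX \simeq \Shv(\Speci(\A))$ and $X$ an arbitrary sheaf. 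Any such $X$ is a colimit of representables, and by Remark \ref{rmk:subcan} each representable admits a cover by basic opens $U_{a_i}$; since $(\Shv(\Speci(\A))/U_{a_i}, \O_{\A}|_{U_{a_i}}) \simeq (\Shv(\Speci(\A[a_i^{-1}])), \O_{\A[a_i^{-1}]})$, this provides the required affine étale cover of $(\cX/X, \O_{\cX}|_X)$.

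Once well-definedness is in hand, fully faithfulness drops out of the unstructured statement: a morphism in $\DMsCi^{\et}/(\cX, \O_\cX)$ between two objects of the form $(\cX/X, \O_{\cX}|_X) \to (\cX, \O_{\cX})$ and $(\cX/Y, \O_{\cX}|_Y) \to (\cX, \O_{\cX})$ is uniquely determined by its underlying étale geometric morphism over $\cX$, because the compatibility with the structure sheaves is tautological --- both are restrictions of $\O_{\cX}$ along the given maps to $\cX$, so the space of choices of comparison map between them is contractible. Essential surjectivity is then literally the definition of étaleness adopted in Definition \ref{dfn:sctopos} \emph{et seq.}: every étale morphism into $(\cX, \O_\cX)$ is, by definition, equivalent to one of the form $(\cX/X, \O_{\cX}|_X) \to (\cX, \O_{\cX})$.

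The main point requiring care is the well-definedness step: one must know that on an affine $\SCi$-DM stack every sheaf admits an étale cover by basic opens. This rests on Remark \ref{rmk:subcan} and Remark \ref{rmk:restrict}, which together guarantee that the opens $U_a \subseteq \Speci(\A)$ form a base for the topology and are themselves representable by affines in $\sD$. The remaining verifications are routine unwinding of the étale slice construction and present no essential obstacle beyond the analogous statement for ordinary DM stacks already treated in \cite{higherme}, from which the present result is adapted.
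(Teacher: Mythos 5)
The paper does not actually prove this proposition: it is imported from \cite{higherme} (Remark 1.7), so there is no internal argument to compare yours against. Judged on its own, your proof is essentially correct, and you have correctly located the one step with real content, namely that $\left(\cX/X,\O_{\cX}|_{X}\right)$ is again an $\SCi$-Deligne-Mumford stack for \emph{every} $X \in \cX$; your reduction to the affine case and then to basic opens is the right argument. Essential surjectivity is indeed definitional, and fully faithfulness does follow from the unstructured \'etale-slice theorem of \cite{htt} once one knows the structure-sheaf data contributes nothing.

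Two places should be argued rather than asserted. First, ``the compatibility with the structure sheaves is tautological'' deserves the actual computation: writing $\pi_X,\pi_Y$ for the \'etale projections and fixing a geometric morphism $f:\cX/X \to \cX/Y$ over $\cX,$ the space of maps $\alpha:\pi_Y^{\ast}\O_{\cX} \to f_{\ast}\pi_X^{\ast}\O_{\cX}$ together with a homotopy making the triangle over $\left(\cX,\O_{\cX}\right)$ commute is the fiber, over the unit $\rho_X:\O_{\cX} \to \left(\pi_X\right)_{\ast}\pi_X^{\ast}\O_{\cX},$ of the map $\Map\left(\pi_Y^{\ast}\O_{\cX},f_{\ast}\pi_X^{\ast}\O_{\cX}\right) \to \Map\left(\O_{\cX},\left(\pi_X\right)_{\ast}\pi_X^{\ast}\O_{\cX}\right),$ $\alpha \mapsto \left(\pi_Y\right)_{\ast}\left(\alpha\right)\circ \rho_Y$; this map is precisely the adjunction equivalence for $\pi_Y^{\ast}\dashv\left(\pi_Y\right)_{\ast}$ (using $\pi_Y\circ f\simeq \pi_X$), so the fiber is contractible, and the ringed mapping spaces agree with the unstructured ones. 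Second, Remark \ref{rmk:subcan} is not quite the fact you need (it concerns finitely presented algebras and sub-canonicity); what your well-definedness step uses is that the basic opens $U_a$ form a basis of $\underline{\Speci\left(\A\right)}$ for \emph{arbitrary} $\A$, which holds by the construction of $\Speci,$ together with the fact that $\A\left[a^{-1}\right]$ (or rather its completion, per the standing convention on $\sD$) again lies in $\sD$: the latter follows from condition (4) on the derived site, since $\Speci\left(\A\left[a^{-1}\right]\right)$ is a finite limit in $\dgc^{op}$ built from $\Speci\left(\A\right),$ $\Speci\left(\Ci\left(\R\right)\right)$ and $\Speci\left(\Ci\left(\R\setminus\left\{0\right\}\right)\right),$ the last two being finitely presented and hence in $\sD$ by condition (1). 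With these two points spelled out, your argument is a complete substitute for the citation.
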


\begin{remark}
It follows from \cite[Corollary 6.3.5.9]{htt} that for $\cX$ any $\SCi$-Deligne-Mumford stack, $\DMsCi^{\et}/\left(\cX,\O_{\cX}\right)$ can be identified with a full subcategory of $\bH/\cX,$ namely by those morphisms $\cY \to \cX$ which are representable by an \'etale map of $\SCi$-Deligne-Mumford stacks. In particular, for any $\A \in \sD,$ the $\i$-topos $\Shv\left(\Speci\left(\A\right)\right)$ can be identified with the full subcategory of $\bH/\Speci\left(\A\right)$ spanned by \'etale maps from an $\SCi$-Deligne-Mumford stack.
\end{remark}

Consider the canonical inclusion $$\sD^{op} \hookrightarrow \dgc.$$ Since the open covering topology is sub-canonical on $\sD$ (as we are assuming all algebras in $\sD$ are complete), this inclusion is a sheaf $\cO$ on $\sD$ with values in $\dgc.$ Thus $\left(\bH,\cO\right)$ is a $\SCi$-ringed $\i$-topos.

\begin{remark}
The above sheaf $\cO$ is represented by $\mathbb{R}$ (and $\mathbb{R}^{0|1}$ as a sheaf of $\mathbb{Z}_2$-graded sets in the super setting).
\end{remark}

\begin{remark}
Let $\widetilde{\cX} \in \bH$ be an $\SCi$-Deligne-Mumford stack (which is the functor of points of say $\left(\cX,\O_{\cX}\right).$) Then, identifying $\cX$ with the full subcategory $\DMsCi^{\et}/\cX$ of $\bH/\cX,$ we have that $$\left(\cX,\O_{\cX}\right) \simeq \left(\DMsCi^{\et}/\cX,\O_{\bH}|_{\DMsCi^{\et}/\cX}\right).$$
\end{remark}

Denote by $\sD^{\et}$ the subcategory of $\sD$ spanned by the \'etale maps. The open covered topology on $\sD$ restricts to $\sD^{\et}.$ Denote by 
$$j:\sD^{\et} \to \sD$$ the canonical inclusion. Then the restriction functor $$j^*:\bH=\Shv\left(\sD\right) \to \Shv\left(\sD^{\et}\right)$$ admits a left adjoint $j_!$ \cite[Definition 3.4]{higherme} called the \textbf{\'etale prolongation functor}.

\begin{theorem}\cite[Definition 3.9]{higherme}
The essential image of $j_!$ is precisely (the functor of points of) the $\SCi$-Deligne-Mumford stacks.
\end{theorem}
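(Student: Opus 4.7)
The plan is to recognize both sides as generated under colimits by the same class of representables, namely $y_\sD\left(\Speci\left(\A\right)\right)$ for $\A \in \sD$, and then leverage that $j_!$ is a left adjoint. Concretely, I would verify two things: (a) $j_!$ sends the representable $y_{\sD^{\et}}\left(\Speci\left(\A\right)\right)$ to $y_{\sD}\left(\Speci\left(\A\right)\right)$, and (b) every $\SCi$-Deligne-Mumford stack arises, as a sheaf on $\sD$, as a colimit in $\bH$ of such representables along étale transition maps.

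For step (a), since both sites carry the open-covering topology and this topology is subcanonical (Remark \ref{rmk:subcan}), the representable $y_{\sD}\left(\Speci\left(\A\right)\right)$ on the big site restricts under $j^*$ to the representable $y_{\sD^{\et}}\left(\Speci\left(\A\right)\right)$ on the étale site. Combined with the universal property of the left adjoint $j_!$ applied to a representable—which is computed as the sheafification of the pointwise left Kan extension along $j$—one sees that $j_! y_{\sD^{\et}} \simeq y_\sD$ on objects of $\sD^{\et}$. This is essentially a statement that $j$ is a morphism of sites inducing an essential geometric morphism, and the verification is formal given the sub-canonicity.

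For step (b), given a $\SCi$-DM stack $\left(\cX,\O_{\cX}\right)$, choose an étale cover by objects $X_\alpha \in \cX$ with $\left(\cX/X_\alpha,\O_{\cX}|_{X_\alpha}\right) \simeq \left(\Shv\left(\Speci\left(\A_\alpha\right)\right),\O_{\A_\alpha}\right)$. Then its functor of points $\widetilde{\cX} \in \bH$ is the colimit (in $\bH$) of the Čech nerve of $\coprod_\alpha \Speci\left(\A_\alpha\right) \to \widetilde{\cX}$, where all face maps of the nerve are étale. Identifying the same Čech nerve as a simplicial diagram in $\Shv\left(\sD^{\et}\right)$ and writing $F$ for its colimit there, we have $j_! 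F \simeq \widetilde{\cX}$, because $j_!$ preserves colimits and by step (a) sends each $y_{\sD^{\et}}\left(\Speci\left(\A_\alpha\right)\right)$ to $y_\sD\left(\Speci\left(\A_\alpha\right)\right)$. Conversely, every $F \in \Shv\left(\sD^{\et}\right)$ is a colimit of representables with étale morphisms between them, so $j_! F$ is such a colimit in $\bH$; by the characterization of $\SCi$-DM stacks as those admitting an étale atlas by affines, this is a DM stack.

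The main obstacle will be step (a): it is tempting to think of $j_!$ as unproblematic, but since it is defined as the sheafification of a Kan extension, one has to be careful that étale covers suffice to witness the sheaf condition inherited from the big open-covering topology on the image. The cleanest route is to observe that $j$ is a cover-preserving, limit-preserving inclusion whose induced geometric morphism $\Shv\left(\sD^{\et}\right) \to \bH$ factors the étale-slice equivalence $\cX \simeq \DMsCi^{\et}/\left(\cX,\O_{\cX}\right)$ recalled before the theorem; this reduces (a) to the fact that the étale site of $\Speci\left(\A\right)$ already generates all opens under colimits, which is essentially the content of the identification of $\Shv\left(\Speci\left(\A\right)\right)$ with the étale slice in \cite{higherme}.
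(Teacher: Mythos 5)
The paper does not actually prove this statement: it is imported verbatim from \cite[Definition 3.9]{higherme} (together with Theorems 3.6--3.7 and Proposition 3.2 there), so there is no in-paper argument to compare against; I will assess your proposal on its own terms. Your step (a) contains a false identification: $j:\sD^{\et}\to\sD$ is wide but \emph{not full} (same objects, only the \'etale morphisms), so $j^{*}y_{\sD}\left(\Speci\left(\A\right)\right)$ assigns to $D'$ the space of \emph{all} morphisms $D'\to\Speci\left(\A\right)$, whereas $y_{\sD^{\et}}\left(\Speci\left(\A\right)\right)$ records only the \'etale ones; these differ. The conclusion $j_{!}\,y_{\sD^{\et}}\left(\Speci\left(\A\right)\right)\simeq y_{\sD}\left(\Speci\left(\A\right)\right)$ is nevertheless correct, but it follows from the co-Yoneda computation of $\Lan_{j}$ on representables together with sub-canonicity, not from the claimed compatibility with $j^{*}$. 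Note in particular that $j_{!}$ is \emph{not} fully faithful here: the unit $y_{\sD^{\et}}\left(D\right)\to j^{*}j_{!}\,y_{\sD^{\et}}\left(D\right)$ is the inclusion of \'etale maps into all maps, so your implicit picture of $j_{!}$ as an "inclusion of sites" embedding is misleading.

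Both halves of your step (b) have gaps. In the forward direction, the terms $\Speci\left(\A_{\alpha}\right)\times_{\widetilde{\cX}}\Speci\left(\A_{\beta}\right)$ of the \v{C}ech nerve are \'etale over affines but not affine, so "identifying the same \v{C}ech nerve as a simplicial diagram in $\Shv\left(\sD^{\et}\right)$" already presupposes that these terms lie in the essential image of $j_{!}$ --- this is circular as written. The clean repair is to invoke $j_{!}\,y^{\et}\left(\cX\right)\simeq\cX$ directly (recorded in the paper, citing \cite[Proposition 3.2]{higherme}), or to decompose the nerve terms further into basic affine opens using the \'etale-slice equivalence. In the converse direction, the assertion that a colimit in $\bH$ of affines along \'etale transition maps is an $\SCi$-Deligne-Mumford stack is not a formality "by the characterization": one must produce the glued $\SCi$-ringed $\i$-topos, show that the coprojections $\Speci\left(\A_{\alpha}\right)\to j_{!}F$ are \'etale in the structured sense (i.e., induced by slices), and verify that the glued topos has the stated functor of points. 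This \'etale descent/gluing statement is precisely the content of \cite[Theorems 3.6, 3.7]{higherme} and is the mathematical heart of the theorem; your proposal leaves it unaddressed.
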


\begin{definition}
Denote by $\mathbb{U}=j_!\left(1\right).$ This is the \textbf{universal $\SCi$-Deligne-Mumford stack}.
\end{definition}

\begin{theorem}\cite[Theorems 3.6,3.7]{higherme}
The $\i$-category $\DMsCi^{\et}$ itself is (equivalent to) an $\i$-topos and it is in fact the underlying $\i$-topos of the universal $\SCi$-Deligne-Mumford stack. Moreover, this topos is canonically equivalent to $\Shv\left(\sD^{\et}\right).$
\end{theorem}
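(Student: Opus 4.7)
The plan is to establish the equivalence $\DMsCi^{\et} \simeq \Shv(\sD^{\et})$ directly via the étale prolongation functor $j_!$, and then deduce the identification with the underlying $\i$-topos of $\mathbb{U}$ as an immediate corollary. The key input already recorded is that $j_!$ has essential image exactly the $\SCi$-Deligne--Mumford stacks.

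First I would check that $j_!$ promotes to a functor $\Phi: \Shv(\sD^{\et}) \to \DMsCi^{\et}$; that is, for every morphism $f: \cF \to \cG$ in $\Shv(\sD^{\et})$ the induced map $j_!(f)$ in $\bH$ is étale. Since étaleness of a morphism of DM stacks is local both on source and target, after choosing an étale cover of $\cG$ by representable affines $\coprod_\alpha \Speci(\A_\alpha) \to \cG$ and pulling back along the atlas of $\cF$, it suffices to treat morphisms between representables; but these are morphisms in $\sD^{\et}$, hence étale by construction. Next, to show $\Phi$ is an equivalence: essential surjectivity is the stated essential-image result. For fully faithfulness I would express any DM stack as an étale colimit of its affine étale atlas and argue, via descent for the étale topology in $\bH$, that an étale morphism $\cX \to \cY$ between DM stacks determines and is determined by a compatible family of étale morphisms of affines over any étale atlas of $\cY$, which is exactly a morphism in $\Shv(\sD^{\et})$. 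Equivalently one can exhibit a right adjoint $\Psi$ to $\Phi$ sending a DM stack $\cX$ to the sheaf represented by its small étale site $\sD^{\et}/\cX$, and verify that the unit and counit are equivalences by étale descent.

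The second assertion is then a formal consequence. The underlying $\i$-topos of a DM stack $\cX$ is by definition the étale slice $\DMsCi^{\et}/\cX$. For $\mathbb{U} = j_!(1)$, where $1$ denotes the terminal sheaf, the equivalence $\Phi$ identifies $\DMsCi^{\et}/\mathbb{U}$ with $\Shv(\sD^{\et})/1 \simeq \Shv(\sD^{\et}) \simeq \DMsCi^{\et}$, so $\mathbb{U}$ is terminal in $\DMsCi^{\et}$ and its underlying $\i$-topos is all of $\DMsCi^{\et}$. The main obstacle is the fully faithfulness step: $j_!$ viewed as a functor into $\bH$ is \emph{not} fully faithful, since general morphisms of DM stacks in $\bH$ are much more plentiful than étale morphisms, yet it must become fully faithful after restricting the target to $\DMsCi^{\et}$. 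The reconciliation is that morphisms in $\Shv(\sD^{\et})$ between representables are, by definition of $j$, precisely étale morphisms of the corresponding affines, so no other morphisms of DM stacks survive the restriction; carefully carrying out this dévissage via étale descent is where the real work lies.
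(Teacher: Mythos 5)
First, a point of comparison: the paper does not actually prove this statement --- it is imported wholesale from \cite[Theorems 3.6, 3.7]{higherme}, and the only in-paper indication of how the equivalence is constructed is the subsequent remark citing \cite[Proposition 3.2]{higherme}, which goes in the \emph{opposite} direction to yours, sending a Deligne--Mumford stack $\cX$ to the sheaf $y^{\et}\left(\cX\right)$ of \'etale morphisms into $\cX$ and recording that $j_!y^{\et}\left(\cX\right)\simeq \cX$. Your strategy of corestricting $j_!$ to a functor $\Shv\left(\sD^{\et}\right)\to\DMsCi^{\et}$ and checking it is an equivalence is a legitimate way to build the inverse, and your final deduction (that $\mathbb{U}=j_!\left(1\right)$ is terminal, so its underlying topos is $\DMsCi^{\et}/\mathbb{U}\simeq\DMsCi^{\et}$) is correct once the equivalence and the identification of the underlying topos of a stack with its \'etale overcategory are in place.

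The gaps sit exactly at the two places you flag, and they are not merely routine. (1) Your d\'evissage showing $j_!\left(f\right)$ is \'etale needs two unsupplied inputs: that $j_!$ preserves the pullbacks along the chosen atlas (left exactness of $j_!$ is a nontrivial theorem, cited elsewhere in this paper via \cite[Proposition 5.27]{higherme} in the affine case), and that \'etaleness of a morphism of $\SCi$-ringed $\i$-topoi can be checked after pullback to an \'etale atlas of the target and locally on the source. The latter is one of the main technical lemmas of \cite{higherme} and cannot be waved through. (2) The full-faithfulness step as written risks circularity: asserting that an \'etale morphism $\cX\to\cY$ ``is determined by a compatible family over an atlas'' presupposes that for an affine the \'etale overcategory $\DMsCi^{\et}/\Speci\left(\A\right)$ is identified with the $\i$-topos $\Shv\left(\underline{\Speci\left(\A\right)}\right)$ --- i.e.\ the affine case of the very ``underlying topos'' assertion being proved. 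That identification (the Proposition cited from \cite[Remark 1.7]{higherme} earlier in the section) is logically prior and must be invoked explicitly; with it, full faithfulness reduces to showing that for each affine $D$ the unit $\cG\to j^{*}j_!\cG$ identifies $\cG\left(D\right)$ with the union of \'etale components of $\Map_{\bH}\left(D,j_!\cG\right)$, which is precisely the content of $j_!y^{\et}\left(\cX\right)\simeq\cX$. So your outline is a viable skeleton of the inverse construction, but the three lemmas it silently consumes are where the actual proof lives.
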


\begin{remark}\cite[Proposition 3.2]{higherme}
The equivalence 
$$\DMsCi^{\et} \simeq \Shv\left(\sD^{\et}\right)$$ sends an $\SCi$-Deligne-Mumford stack $\cX$ to the sheaf $y^{\et}\left(\cX\right)$ on $\sD^{\et}$ which sends an object $D$ to the space of \'etale morphisms from $D$ to $\cX.$ Moreover, $j_!y^{\et}\left(\cX\right)\simeq \cX.$
\end{remark}

\begin{proposition}
For any $\SCi$-Deligne-Mumford stack $\left(\cX,\O_{\cX}\right),$ there is a canonical morphism of $\SCi$-ringed $\i$-topoi
$$\left(\bH/\cX,\O_{\bH}|_{\cX}\right) \to \left(\cX,\O_{\cX}\right)$$
\end{proposition}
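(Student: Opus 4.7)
My plan is to exhibit the canonical morphism as follows. Using the preceding proposition, I identify $\cX$ with the full subcategory $\DMsCi^{\et}/(\cX,\O_\cX)$ of $\bH/\cX$; call this fully faithful embedding $\iota\colon\cX\hookrightarrow\bH/\cX$. I will produce the morphism of $\SCi$-ringed $\i$-topoi by taking $\iota$ to be the inverse image $f^*$ of a geometric morphism $f\colon\bH/\cX\to\cX$, and identifying the structure sheaves under the resulting direct image $f_*$.

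First I would verify that $\iota$ is left exact. The terminal object of $\cX$ is the identity $\mathrm{id}_\cX$, which is also the terminal object of $\bH/\cX$, and pullbacks of \'etale morphisms of $\SCi$-Deligne-Mumford stacks inside $\bH$ are again \'etale by the stability of open immersions of affine $\SCi$-schemes. Next I would construct a right adjoint $f_*=\iota^R$ explicitly: for $(p\colon\cZ\to\cX)\in\bH/\cX$, define
\[
\iota^R(p)\colon (\cU\to\cX)\longmapsto \Map_{\bH/\cX}(\cU,\cZ),
\]
regarded as a presheaf on $\DMsCi^{\et}/\cX$. I would then verify that this is a sheaf for the open-cover topology: for any cover in $\DMsCi^{\et}/\cX$ the associated \v{C}ech nerve is an effective epimorphism in $\bH/\cX$ (since the open-cover topology on $\sD$ is subcanonical and $\bH/\cX$ inherits descent from $\bH$), so $\Map_{\bH/\cX}(-,\cZ)$ converts it into a limit. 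The adjunction identity $\Map_\cX(\cU,\iota^R p)\simeq\Map_{\bH/\cX}(\iota\cU,p)$ holds by Yoneda on representables and extends to all of $\cX$ by compatibility with colimits in the first variable.

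With $f$ in hand, I would identify $\O_\cX$ with $f_*\O_\bH|_\cX$. Applying $f_*$ to the $\SCi$-algebra $\O_\bH|_\cX$, and using that right adjoints preserve finite products, one obtains a $\SCi$-algebra in $\cX$ whose value on the generator $\R\in\SCi$ at an \'etale $\cU\to\cX$ is
\[
\Map_{\bH/\cX}\bigl(\cU,\R\times\cX\to\cX\bigr)\simeq \Map_\bH(\cU,\R)\simeq \O_\bH(\cU).
\]
By the remark preceding the proposition, $\O_\cX(\R)(\cU)$ equals $\O_\bH(\cU)$ for the same $\cU$, and the analogous computation holds on $\R^{0|1}$. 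Finite-product preservation then promotes this pointwise identification to a canonical equivalence $\alpha\colon\O_\cX\xrightarrow{\sim}f_*\O_\bH|_\cX$ of $\SCi$-algebras in $\cX$; the pair $(f,\alpha)$ is the required morphism.

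The main obstacle is the sheaf condition for $\iota^R(p)$, which requires that \'etale covers in $\DMsCi^{\et}/\cX$ remain effective epimorphism covers in the ambient $\bH/\cX$. This should follow from the hypercompleteness of $\bH$ established earlier together with subcanonicity of the open-cover topology, but it warrants careful handling when $\cX$ is not $0$-truncated. A cleaner alternative, which I would pursue if the direct verification becomes unwieldy, is to use the equivalence $\cX\simeq\Shv(\sD^{\et}/y^{\et}(\cX))$ from the earlier theorem and produce $f$ from the continuous morphism of sites induced by the inclusion $\sD^{\et}/\cX\hookrightarrow\sD/\cX$; the geometric morphism then arises formally, and the structure-sheaf comparison reduces to a check on representables via $\O_\bH(\sp(\cA))\simeq\cA$.
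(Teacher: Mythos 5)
Your construction is the right morphism — inverse image the inclusion $\iota:\cX\simeq\DMsCi^{\et}/\cX\hookrightarrow\bH/\cX$, direct image the restriction functor, structure sheaves matched on representables — and your structure-sheaf computation is exactly how the paper handles that part (it is relegated to the remark following the proposition, where $\Lambda^{\cX}_*\O_{\bH}|_{\cX}=\O_{\cX}$ is read off from the fact that $\Lambda_*$ is restriction). The paper itself does not build the adjunction by hand: it pulls back the already-established geometric morphism $\Lambda:\bH/\mathbb{U}\to\Shv\left(\sD^{\et}\right)$ of \cite[Proposition 2.4]{higherme} along $y^{\et}\left(\cX\right)$ using \cite[Proposition 6.3.5.8]{htt}, so all the nontrivial topos-theoretic content is inherited from the earlier results on the \'etale prolongation functor.

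That difference exposes the gap in your version. When you assert that the adjunction identity $\Map_{\cX}\left(\cU,\iota^R p\right)\simeq\Map_{\bH/\cX}\left(\iota\cU,p\right)$ ``extends to all of $\cX$ by compatibility with colimits in the first variable,'' you are using that $\iota$ carries colimits in $\cX$ to colimits in $\bH/\cX$: writing $\cU\simeq\colim_\alpha\cU_\alpha$ with $\cU_\alpha$ affine \'etale, you need $\iota\cU\simeq\colim_\alpha\iota\cU_\alpha$ in $\bH/\cX$ to pass the limit through on the right-hand side. This is not formal for a full subcategory — it is essentially equivalent to the statement you are trying to prove (existence of the right adjoint), and for a general full subcategory of a topos it fails. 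In this setting it is true, but only because $\iota$ is (the slice over $\cX$ of) the \'etale prolongation $j_!$, which is a colimit-preserving, left exact, fully faithful left adjoint by the results of \cite{higherme} quoted earlier in the paper (e.g. the analogue of Proposition 5.27 and Proposition 3.1/3.2 used in Lemma \ref{lem:coldmsame} and Lemma \ref{lem:geomorph}); equivalently, colimits in $\bH/\cX$ of objects \'etale over $\cX$ are again \'etale over $\cX$. The same issue reappears in your fallback: a cover-preserving inclusion of sites $\sD^{\et}/\cX\hookrightarrow\sD/\cX$ does not ``formally'' produce a geometric morphism $\bH/\cX\to\cX$ — the inverse image is the prolongation $u_!$, and its left exactness is precisely the nontrivial input, not a formality. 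So your argument is completable, but only by importing the prolongation results the paper leans on; as written, the extension of the adjunction beyond representables (and the claimed formality of the fallback) is unjustified.
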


\begin{proof}
By \cite[Proposition 2.4]{higherme}, there is a canonical geometric morphism
$$\Lambda:\bH/\mathbb{U} \to \Shv\left(\sD^{\et}\right)$$ whose inverse image functor is
\begin{eqnarray*}
\Lambda^*:\Shv\left(\sD^{\et}\right) &\longrightarrow & \bH/\mathbb{U}\\
\cF &\mapsto & j_!\left(\cF \to 1\right).
\end{eqnarray*}
So, by \cite[Proposition 6.3.5.8]{htt}, for any $\cF \in \Shv\left(\sD^{\et}\right),$ there is a pullback diagram of $\i$-topoi
$$\xymatrix{\left(\bH/\mathbb{U}\right)/\Lambda^*\left(\cF\right) \ar[d] \ar[r] & \Shv\left(\sD^{\et}\right)/\cF \ar[d]\\
\bH/\mathbb{U} \ar[r]_-{\Lambda} & \Shv\left(\sD^{\et}\right).}$$
Now, $\Lambda^*\left(\cF\right)=j_!\cF \to j_!1=\mathbb{U},$ so $\left(\bH/\mathbb{U}\right)/\Lambda^*\left(\cF\right) \simeq \bH/j_!\left(\cF\right).$ So, if $\cF=y^{\et}\left(\cX\right),$ then $$\left(\bH/\bU\right)/\Lambda^*\left(\cF\right)\simeq \bH/\cX.$$
Notice moreover that 
$$\Shv\left(\sD^{\et}\right)/y^{\et}\left(\cX\right) \simeq \DMsCi^{\et}/\left(\cX,\O_{\cX}\right) \simeq \cX.$$ The result now follows.
\end{proof}

\begin{remark}\label{rmk:lambda}
Denote by $\Lambda_{\cX}:\bH/\cX \to \cX$ the geometric morphism from the previous proposition. Then, $\Lambda^{\cX}_*$ is the restriction functor from the large topos $\bH/\cX$ to the small topos $\cX.$ In particular, $\Lambda^{\cX}_*\O_{\bH}|_{\cX}=\O_{\cX}.$ Hence the above geometric morphism has a canonical upgrade to a morphism of $\SCi$-ringed $\i$-topoi.
\end{remark}

\begin{lemma}\label{lem:coldmsame}
There exists a unique colimit preserving functor
$$\bH \to \Topi^{\SCi}$$ which sends each $\SCi$-Deligne-Mumford stack to itself.
\end{lemma}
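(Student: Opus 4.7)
The plan is to obtain the functor by left Kan extension: start with the tautological inclusion $\Phi_0 : \sD \hookrightarrow \Topi^{\SCi}$ sending $\Speci(\A)$ to the $\SCi$-ringed $\i$-topos $(\Shv(\Speci(\A)), \O_{\A})$, left Kan extend along the Yoneda embedding $y : \sD \hookrightarrow \PShv(\sD)$, and then show this extension factors through the sheafification $\PShv(\sD) \to \bH$. Since $\Topi$ admits all small colimits (by \cite[Theorem 6.3.4.6]{htt}, which computes them via pushouts of geometric morphisms) and $\Topi^{\SCi} \to \Topi$ is a Cartesian fibration whose fibers are presentable $\i$-categories of $\SCi$-algebras, $\Topi^{\SCi}$ also admits small colimits. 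Consequently the left Kan extension $\widetilde{\Phi}: \PShv(\sD) \to \Topi^{\SCi}$ exists and preserves all small colimits.

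To see $\widetilde{\Phi}$ descends to $\bH$, I would verify that it inverts the local equivalences of the open covering topology. Since these are generated by the colimits of \v{C}ech nerves of covers, it suffices to check that for every open covering $(\Speci(\A_i) \to \Speci(\A))$ in $\sD$, the induced map in $\Topi^{\SCi}$ from the colimit of the \v{C}ech nerve of the $(\Shv(\Speci(\A_i)), \O_{\A_i})$ to $(\Shv(\Speci(\A)), \O_{\A})$ is an equivalence. On the underlying-topos side this is effective \'etale descent for $\i$-topoi along open coverings, encoded in the equivalence $\DMsCi^{\et} \simeq \Shv(\sD^{\et})$ recorded just before the lemma; on the structure-sheaf side it follows from the sheaf property of $\O_{\A}$ established in Section~\ref{sec:schemes} together with Remark~\ref{rmk:lambda}. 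Consequently $\widetilde{\Phi}$ factors through a colimit preserving functor $\Phi : \bH \to \Topi^{\SCi}$.

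To verify that $\Phi$ sends each $\SCi$-DM stack $(\cX, \O_{\cX})$ to itself, recall that by definition $\cX$ admits an \'etale atlas by affines $\Speci(\A_\alpha)$, realizing $\cX$ as the colimit of its \v{C}ech nerve in $\bH$; the previous paragraph shows that the analogous colimit in $\Topi^{\SCi}$ reproduces $(\cX, \O_{\cX})$. Uniqueness is immediate because $\bH$ is generated under colimits by representables, so any colimit preserving $\Psi : \bH \to \Topi^{\SCi}$ whose restriction to $\sD$ agrees with $\Phi_0$ is forced to agree with $\Phi$ everywhere.

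The main obstacle I anticipate is the descent statement in the second paragraph: one must genuinely check that the colimit in $\Topi^{\SCi}$ of the \v{C}ech nerve of an open cover reconstructs both the small \'etale topos and the structure sheaf of the covered affine. Fortunately this is largely packaged into the equivalence $\DMsCi^{\et} \simeq \Shv(\sD^{\et})$ and the sheafiness of $\O$ on $\sD$ noted at the start of Section~\ref{sec:orbifold}, so once these are cited carefully the remainder of the argument is purely formal bookkeeping about left Kan extensions along dense subcategories.
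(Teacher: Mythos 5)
Your overall strategy---left Kan extending the tautological functor $\Phi_0:\sD\to\Topi^{\SCi}$ along the Yoneda embedding and then descending through sheafification---is a genuinely different route from the paper's, which instead invokes the equivalence $\bH\simeq\Shv\left(\DMsCi\right)$ of \cite[Lemma 3.1]{higherme} to identify colimit preserving functors $\bH\to\Topi^{\SCi}$ with functors $\DMsCi\to\Topi^{\SCi}$ whose restriction to each small topos $\cX\simeq\DMsCi^{\et}/\left(\cX,\O_{\cX}\right)$ preserves colimits, and then quotes \cite[Proposition 3.1]{higherme} to see that the tautological functor has this property. Your route can be made to work, but as written it has a concrete gap in the second and third paragraphs. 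The descent step you need is a statement about colimits in $\Topi^{\SCi}$: the colimit of the \v{C}ech diagram of the $\left(\Shv\left(\underline{\Speci\left(\A_i\right)}\right),\O_{\A_i}\right)$ must map by an equivalence to $\left(\Shv\left(\underline{\Speci\left(\A\right)}\right),\O_{\A}\right)$. The equivalence $\DMsCi^{\et}\simeq\Shv\left(\sD^{\et}\right)$ you cite is a statement about the $\i$-category of Deligne-Mumford stacks and \'etale maps and does not by itself compute any colimit in $\Topi^{\SCi}$; the actual input is that for each $\SCi$-Deligne-Mumford stack the canonical functor $\cX\simeq\left(\Topi^{\SCi}\right)^{\et}/\left(\cX,\O_{\cX}\right)\to\Topi^{\SCi}$ preserves colimits, i.e.\ \cite[Proposition 3.1]{higherme}, applied to the effective epimorphism determined by the cover.

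More seriously, your third paragraph does not follow from your second. For a general $\SCi$-Deligne-Mumford stack $\cX$ with \'etale atlas $\coprod_\alpha\Speci\left(\A_\alpha\right)\to\cX$, the terms of the \v{C}ech nerve beyond the zeroth are only \'etale over affines---they are in general non-affine $\SCi$-schemes or more general \'etale objects---so your affine-cover descent statement says nothing about what $\Phi$ does to them, and you cannot yet conclude $\Phi\left(\cX\right)\simeq\left(\cX,\O_{\cX}\right)$. You would need either a bootstrapping argument (affine covers handle schemes, then schemes handle stacks, with attention to the fact that nerve terms of atlases of higher stacks are again non-affine), or, much more efficiently, the same colimit-preservation statement \cite[Proposition 3.1]{higherme} applied to $\cX$ itself, which settles the affine descent step and this step simultaneously. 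Once that proposition is in hand, your Kan-extension construction and your uniqueness argument (a colimit preserving functor out of $\bH$ is determined by its restriction to $\sD$, by the universal properties of $\PShv\left(\sD\right)$ and of the localization $\bH$) both go through; at that point the paper's proof is simply a shorter packaging of exactly the same input.
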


\begin{proof}
Firstly, by \cite[Lemma 3.1]{higherme}, there is a canonical equivalence $$\bH \simeq \Shv\left(\DMsCi\right),$$ where a functor $\cF:\DMSCi^{op} \to \Spc$ is a sheaf if and only if for each $\SCi$-Deligne-Mumford stack $\left(\cX,\O_{\cX}\right),$ the composite
$$\cX^{op} \simeq \left(\DMSCi^{\et}/\left(\cX,\O_{\cX}\right)\right)^{op} \longrightarrow \left(\DMSCi\right)^{op} \stackrel{\cF}{\longrightarrow} \Spc$$ preserves small limits. A colimit preserving functor $$\bH \to \Topi^{\SCi}$$ is the same as a limit preserving functor $$\bH^{op} \to \left(\Topi^{\SCi}\right)^{op},$$ i.e. a sheaf with values in $\left(\Topi^{\SCi}\right)^{op}$, which can then be identified with a functor
$$\DMSCi \to \Topi^{\SCi}$$ such that each composite
$$\cX \simeq \DMSCi^{\et}/\left(\cX,\O_{\cX}\right) \longrightarrow \Topi^{\SCi}$$ preserves colimits. Now, observe that for each $\SCi$-Deligne-Mumford stack $\left(\cX,\O_{\cX}\right),$
the composite
$$\cX \simeq \left(\Topi^{\SCi}\right)^{\et}/\left(\cX,\O_{\cX}\right) \to \Topi^{\SCi}$$ preserves colimits, by \cite[Proposition 3.1]{higherme}.
\end{proof}

\begin{proposition}\label{lem:wdd}
There is a unique colimit-preserving functor
$$\bH \to \Fun\left(\Delta\left[1\right],\Topi^{\SCi}\right)$$
which sends each $\SCi$-Deligne-Mumford stack to the map of $\SCi$-ringed $\i$-topoi $$\Lambda_{\cX}:\left(\bH/\cX,\O_{\bH}|_{\cX}\right) \to \left(\cX,\O_{\cX}\right)$$ of Remark \ref{rmk:lambda}.
\end{proposition}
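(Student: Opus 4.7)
The plan is to apply the extension-by-colimits argument from the proof of Lemma \ref{lem:coldmsame}, but now with target $\sC := \Fun\left(\Delta[1], \Topi^{\SCi}\right)$ in place of $\Topi^{\SCi}$. The key observation is that this target is cocomplete (since $\Topi^{\SCi}$ is, and colimits in a functor category are computed pointwise), so the universal property $\bH \simeq \Shv\left(\DMsCi\right)$ used in Lemma \ref{lem:coldmsame} still applies: giving a colimit-preserving functor $\bH \to \sC$ is the same as giving a functor $G : \DMsCi \to \sC$ such that, for every $\SCi$-Deligne-Mumford stack $\left(\cX, \O_\cX\right)$, the composite $\cX \simeq \DMsCi^{\et}/\left(\cX, \O_\cX\right) \to \DMsCi \stackrel{G}{\to} \sC$ preserves colimits. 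Uniqueness of $G$'s extension would be automatic.

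First, I would define the functor $G : \DMsCi \to \Fun\left(\Delta[1], \Topi^{\SCi}\right)$ by $\left(\cX, \O_\cX\right) \mapsto \Lambda_\cX$. Functoriality comes from naturality of the slice construction: given a morphism $f : \left(\cY, \O_\cY\right) \to \left(\cX, \O_\cX\right)$ in $\DMsCi$, the induced map of slice topoi $\bH/\cY \to \bH/\cX$ (composition with $\cY \to \cX$), together with the evident morphism of structure sheaves, is compatible with $\Lambda$, yielding a commutative square in $\Topi^{\SCi}$. This can be deduced from the naturality of the universal geometric morphism $\Lambda : \bH/\mathbb{U} \to \Shv\left(\sD^{\et}\right)$ via the pullback description of $\bH/\cX$ used in the proof of the preceding proposition, together with Remark \ref{rmk:lambda} showing $\Lambda^{\cX}_* \O_\bH|_\cX \simeq \O_\cX$.

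Next, I would verify the colimit-preservation condition by evaluating at source and target separately, since colimits in $\Fun\left(\Delta[1], \Topi^{\SCi}\right)$ are pointwise. The target projection $X \mapsto \left(\cX/X, \O_\cX|_X\right)$ preserves colimits by Lemma \ref{lem:coldmsame} (indeed this is its restriction to $\cX$). For the source projection $X \mapsto \left(\bH/\left(\cX/X\right), \O_\bH|_{\cX/X}\right)$, I would use the identification $\bH/\left(\cX/X\right) \simeq \left(\bH/\cX\right)/X$, where $X$ is viewed as an object of $\bH/\cX$ via the fully faithful inclusion $\cX \simeq \DMsCi^{\et}/\cX \hookrightarrow \bH/\cX$ noted after the equivalence with $\DMsCi^{\et}/\left(\cX,\O_\cX\right)$. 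Under this identification, the source projection factors as $\cX \hookrightarrow \bH/\cX \to \Topi^{\SCi}$, where the second map is the slice-topos functor for $\bH/\cX$ and preserves colimits by \cite[Proposition 3.1]{higherme}. The first factor preserves colimits because étale maps into $\cX$ are stable under colimits taken in $\bH/\cX$; equivalently, the left adjoint $j_!$ responsible for the inclusion is colimit-preserving. Combining these, $G$ meets the hypothesis and extends uniquely to the desired colimit-preserving functor.

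The main obstacle will be careful bookkeeping of structure sheaves throughout---tracking the $\SCi$-ring structure as one passes between $\left(\bH/\cX, \O_\bH|_\cX\right)$ and $\left(\cX, \O_\cX\right)$---and a clean verification that the inclusion $\cX \hookrightarrow \bH/\cX$ commutes with the small colimits arising from $\DMsCi^{\et}/\cX$. Both ultimately reduce to the pullback description of $\bH/\cX$ from the proof of the preceding proposition and the functoriality of $j_!$, so these are routine but demand precision. Once $G$ is constructed and the pointwise colimit-preservation is checked, the proof concludes by invoking the universal property of $\bH \simeq \Shv\left(\DMsCi\right)$ to produce and uniquely characterize the desired functor, and by noting that its restriction to representables recovers $G$ by construction.
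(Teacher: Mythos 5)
Your proposal follows essentially the same route as the paper: you construct the functor on $\DMsCi$ by exploiting the universal Deligne--Mumford stack $\mathbb{U}$ and the pullback description of $\bH/\cX$ over $\left(\mathbb{U},\O_{\mathbb{U}}\right)$ (the paper makes this precise as base change along $\Lambda:\left(\bH/\mathbb{U},\O_{\bH}|_{\mathbb{U}}\right) \to \left(\mathbb{U},\O_{\mathbb{U}}\right)$ applied to the unique \'etale map $\left(\cX,\O_{\cX}\right)\to\left(\mathbb{U},\O_{\mathbb{U}}\right)$), and you then verify the cosheaf condition by checking $\ev_0$ and $\ev_1$ separately, with $\ev_1$ handled by Lemma \ref{lem:coldmsame} and $\ev_0$ by factoring through the colimit-preserving $\Lambda_{\cX}^\star$ (i.e.\ $j_!$) followed by the slice-topos functor of $\bH/\cX$, exactly as in the paper. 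The argument is correct as outlined, with the only caveat being that the coherent functoriality of $\left(\cX,\O_\cX\right)\mapsto\Lambda_\cX$ should be pinned down via the base-change functor $\Topi^{\SCi}/\left(\mathbb{U},\O_{\mathbb{U}}\right)\to\Fun\left(\Delta\left[1\right],\Topi^{\SCi}\right)$ rather than square-by-square naturality, which is precisely what the paper does.
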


\begin{proof}
Firstly, let us show that there is a functor
\begin{eqnarray*}
\DMSCi & \longrightarrow& \Fun\left(\Delta\left[1\right],\Topi^{\SCi}\right)\\
\left(\cX,O_{\cX}\right) &\mapsto & \Lambda_{\cX}:\left(\bH/\cX,\O_{\bH}|_{\cX}\right) \to \left(\cX,\O_{\cX}\right).
\end{eqnarray*}
Consider the functor 
$$\DMSCi \to \Topi^{\SCi}$$ from Lemma \ref{lem:coldmsame}. 

By \cite[Theorem 3.6]{htt}, for each such $\SCi$-Deligne-Mumford stack, there is a unique \'etale map $\left(\cX,\O_{\cX}\right) \to \left(\mathbb{U},\O_{\mathbb{U}}\right).$ It follows that there is a canonical extension of the above functor to a functor
$$\DMSCi \to \Topi^{\SCi}/\left(\mathbb{U},\O_{\mathbb{U}}\right).$$ Furthermore, base change along the map
$$\Lambda:\left(\bH/\mathbb{U},\O_{\bH}|_{\mathbb{U}}\right) \to \left(\mathbb{U},\O_{\mathbb{U}}\right)$$ induces a functor 
\begin{eqnarray*}
\Topi^{\SCi}/\left(\mathbb{U},\O_{\mathbb{U}}\right) &\longrightarrow & \Fun\left(\Delta\left[1\right],\Topi^{\SCi}\right)\\
\left(\cB,\O_{\cB}\right) \to \left(\mathbb{U},\O_{\mathbb{U}}\right) & \mapsto & \left(\cB,\O_{\cB}\right) \times_{\left(\mathbb{U},\O_{\mathbb{U}}\right)} \left(\bH/\mathbb{U},\O_{\bH}|_{\mathbb{U}}\right) \to \left(\cB,\O_{\cB}\right).
\end{eqnarray*}
By composition, we get a functor $$F:\DMSCi \to \Fun\left(\Delta\left[1\right],\Topi^{\SCi}\right).$$ Unwinding the definitions, it can be identified with the desired one. 

To finish the proof, we need to verify that this functor is a cosheaf, or, in other words, the restriction to each $\cX$ preserves colimits. This will be the case if and only if both $\ev_0 \circ F$ and $\ev_1 \circ F$ preserve colimits. $\ev_1 \circ F$ can be identified with the canonical functor $$\cX \simeq \left(\Topi^{\SCi}\right)^{\et}/\left(\cX,\O_{\cX}\right) \to \Topi^{\SCi}$$ which we already saw preserves colimits. The second can be identified with the composite
$$\cX \stackrel{\Lambda_{\cX}^*}{\longlongrightarrow} \bH/\cX\simeq \left(\Topi^{\SCi}\right)^{\et}/\left(\bH/\cX\right) \to \Topi^{\SCi}$$ which also preserves colimits by \cite[Proposition 3.1]{higherme}.
\end{proof}

\section{Modules and Quasi-coherent sheaves}\label{sec:modandqc}

\subsection{Modules and square-zero extensions}

Recall that for a commutative ring $R,$ the canonical functor
\begin{eqnarray*}
\Ab\left(\mathbf{Com}\Alg/R\right) &\to & \Mod_R\\
\left(\pi_B:B \to R\right) &\mapsto & \ker\left(\pi\right)\\
\end{eqnarray*}
from the category of abelian group objects in the slice category to the category of $R$-modules is an equivalence of categories.
Consider the forgetful/free monadic adjunction
$$\xymatrix@C=2cm{\Ab\left(\mathbf{Com}\Alg/R\right) \ar@<-0.5ex>[r]_-{U} & \mathbf{Com}/R \ar@<-0.5ex>[l]_-{F}}$$

Under the equivalence $\Mod_R\simeq\Ab\left(\mathbf{Com}\Alg/R\right) \simeq \Mod_{R},$
for a module $M,$ $U\left(M\right)$ is the square-zero extension $$R\oplus \epsilon M \to R.$$ Also, $F\left(id_R\right)$ is isomorphic to the module of K\"{a}hler differentials $\Omega^1\left(R\right),$ since for any module $M,$
\begin{eqnarray*}
\Hom\left(F\left(id_R\right),M\right) &\cong & \Hom\left(id_R,R\oplus \epsilon M \to R\right)\\
& \cong & \mathfrak{Der}\left(A,M\right),
\end{eqnarray*}
which is precisely the universal property of the module $\Omega^1\left(R\right).$

We now generalize this story for algebras in $\dgc.$

\begin{definition}
Let $\cA \in \dgc.$ An \textbf{$\A$-module} is by definition a module for the underlying supercommutative dg-algebra $\A_\sharp$ in unbounded cochain complexes of $\Z_2$-graded real vector spaces. Denote this $\i$-category by $\Mod_\A.$ Denote the subcategory on algebras whose underlying cochain complex is concentrated in non-positive degrees by $\Mod^{\le 0}_\A.$
\end{definition}

Recall the following theorem:
\begin{lemma}\cite[Theorem 1.1]{schwedeship} \label{thm:boundedok}
For $\cA$ any $dg$-algebra, corresponding to a simplicial ring $\tilde \cA,$ there is an equivalence
$$\Mod_{\cA}^{\le 0} \simeq \Mod_{\tilde \cA}.$$
\end{lemma}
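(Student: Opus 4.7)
The plan is to reduce to the (bi)graded Dold-Kan correspondence and then invoke the Schwede--Shipley monoidal transfer. Concretely, I would begin from the underlying Dold-Kan equivalence
\[
N:\mathbf{sVect}^{\Z_2}_{\R} \;\rightleftarrows\; \mathsf{Ch}^{\le 0}\!\left(\mathsf{Vect}_{\R}^{\Z_2}\right):\Gamma,
\]
where $N$ is normalized chains and $\Gamma$ is its right (equivalently, for abelian categories, also left) inverse; the $\Z_2$-grading rides along trivially since everything is $\R$-linear componentwise. This is already an equivalence of $1$-categories, and, equipping both sides with their standard model structures (projective on complexes, Kan-type on simplicial vector spaces), it is a Quillen equivalence --- here I would cite the classical Dold-Kan theorem as the baseline, no calculation needed.

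Next, I would upgrade this to a weak monoidal Quillen equivalence. The functor $N$ is lax monoidal via the Eilenberg-Zilber shuffle map and $\Gamma$ is lax monoidal via Alexander-Whitney; neither is strong, but together they form a weak monoidal Quillen pair in the sense of Schwede--Shipley, since the shuffle map is a natural quasi-isomorphism on cofibrant inputs. This is exactly the setup of \cite[Theorem 1.1]{schwedeship} (or \cite[Section 4]{schwedeship} in the graded case), whose hypotheses are all stable under the purely formal $\Z_2$-grading extension.

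The third step is to pass to modules. Given a dg-$\SCi$-algebra $\cA,$ the underlying dg-algebra $\cA_{\sharp}$ is a monoid in $\mathsf{Ch}^{\le 0}\!\left(\mathsf{Vect}_{\R}^{\Z_2}\right)$ (after truncation, if necessary; but note $\Mod_{\cA}^{\le 0}$ is defined using the whole $\cA$ acting on non-positively graded complexes, and the monoidal statement below is insensitive to this since $\cA$ has connective cover equivalent to the relevant piece under the hypothesis that $\cA$ corresponds to a simplicial ring). The Schwede--Shipley theorem then produces a Quillen equivalence
\[
N_{\tilde{\cA}}:\Mod_{\tilde{\cA}} \;\rightleftarrows\; \Mod^{\le 0}_{\cA}:\Gamma_{\cA},
\]
where $\tilde{\cA}=\Gamma(\cA_{\sharp})$ is the corresponding simplicial ring, using that lax monoidality of $\Gamma$ transports ring structures in one direction and the shuffle map transports them in the other (together with an Eilenberg-Zilber-type comparison), and that both directions preserve the relevant (weak equivalence, fibration) structure. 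Taking underlying $\i$-categories yields the claimed equivalence.

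The main obstacle is bookkeeping around the monoidal structure: the Eilenberg-Zilber shuffle and Alexander-Whitney maps are not mutually inverse as lax monoidal structures, so one has to verify the Schwede-Shipley hypotheses (in particular, that the unit is cofibrant and that the shuffle map is a weak equivalence on cofibrant objects) rather than appeal to a strong monoidal equivalence. The $\Z_2$-grading itself adds no content, since sign conventions for the shuffle map already extend to any symmetric monoidal abelian category enriched over $\R$-vector spaces, so no additional Koszul-sign verification is required beyond what Schwede-Shipley already carry out.
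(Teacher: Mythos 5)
The paper offers no independent proof of this lemma: it is quoted directly from Schwede--Shipley, and your sketch (Dold--Kan, upgraded to a weak monoidal Quillen equivalence via the shuffle and Alexander--Whitney structure maps, then transported to module categories over a monoid) is precisely the argument behind the cited theorem, so the approaches coincide. The only caveat worth recording is that the paper's $\Mod^{\le 0}_{\cA}$ is defined as a full subcategory of the unbounded derived $\i$-category of $\cA$-modules, so one should add the (standard, but not free) remark that for connective $\cA$ this agrees with the $\i$-category underlying the model category of connective dg-$\cA$-modules that Schwede--Shipley actually compare with simplicial $\tilde{\cA}$-modules.
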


\begin{theorem}\label{thm:stabmod}
For any $\cA \in \dgc,$ there is a canonical equivalence of $\i$-categories
$$\Stab\left(\dgc/\cA\right) \simeq \Mod_{\A}.$$
\end{theorem}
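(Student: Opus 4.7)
The plan is to use the $\Ci$-Dold-Kan correspondence to reduce to working concretely with dg-$\SCi$-algebras, and then adapt the blueprint of the analogous classical result (Lurie, \emph{Higher Algebra} §7.3) which exhibits modules over an $E_\infty$-algebra as the stabilization of the slice. The key input is a square-zero extension functor.

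First, I would construct the candidate equivalence in one direction. Given a dg-$\A$-module $M$ (over the underlying dg-supercommutative algebra $\A_\sharp$) concentrated in non-positive degrees, form the square-zero extension $\A \oplus M$. The supercommutative dg-structure is standard, and the $\SCi$-lift is forced by first-order Taylor expansion: for $f \in \Ci\left(\R^{n|m}\right)$ and an element $a + \epsilon \in \left(\A \oplus M\right)_0^{n|m}$ with $\epsilon \in M^{\oplus\left(n|m\right)}$, one defines $f\left(a + \epsilon\right) := f\left(a\right) + \sum_i \partial_i f\left(a\right)\cdot \epsilon_i$. Because $M$ is a square-zero ideal, higher Taylor coefficients vanish, and the associativity/coherence of $\SCi$-operations reduces to the first derivative calculus, which is where one uses that $\mathbf{SCom}_\R \to \SCi$ is unramified (Theorem \ref{thm:unramified}) to ensure this assignment actually preserves the $\SCi$-operations. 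This produces a functor $\A \oplus \left(\blank\right) : \Mod^{\le 0}_\A \to \dgc/\A$, whose image lies in abelian group objects of the slice.

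Second, I would promote this to the unbounded setting by passing to spectrum objects. Since $\Mod_\A$ is itself stable and presentable, one obtains a candidate functor
\begin{equation*}
\Phi : \Mod_\A \longrightarrow \Stab\left(\dgc/\A\right),
\end{equation*}
sending $M$ to the spectrum object $\{n \mapsto \A \oplus M[-n]\}_n$, the bonding maps being furnished by the natural equivalence $\A \oplus M \simeq \Omega\left(\A \oplus M[1]\right)$ in $\dgc/\A$, provable by computing the loop pullback $\A \times_{\A \oplus M[1]} \A$ via Koszul resolutions (Proposition \ref{prop:kosz}) together with Proposition \ref{prop:pushlocus} to give a cofibrant model for $\A$ over $\A \oplus M[1]$. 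To verify $\Phi$ is an equivalence, both sides are stable presentable, so it suffices to check that $\Phi$ preserves colimits, sends a compact generator (namely $\A \in \Mod_\A$) to a compact generator of $\Stab\left(\dgc/\A\right)$, and is fully faithful on that generator. The mapping space computation reduces to the assertion that derivations $\A \to \A \oplus M$ are the same as maps in $\dgc/\A$, and the universal such target is classified by the cotangent complex $\mathbb{L}_\A \in \Mod_\A$; fully faithfulness at $\A$ comes down to identifying $\Map_{\Stab(\dgc/\A)}\left(\Sigma^\infty_+ \A, \Phi\left(M\right)\right)$ with $\Map_{\Mod_\A}\left(\A, M\right)$, which again is transferred from the classical $E_\infty$ / dg-commutative result via the unramified morphism of theories.

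The main obstacle will be the first step: organizing the square-zero extension into a fully $\infty$-categorically coherent functor $\Mod^{\le 0}_\A \to \dgc/\A$, particularly the compatibility with all higher $\SCi$-operations and the structure maps making the images into spectrum objects. The cleanest route is to work first in the model category of dg-$\SCi$-algebras of Theorem \ref{thm:dgmodel}, where the square-zero extension is strict and manifestly functorial; establish a Quillen adjunction between the projective model structure on dg-$\A$-modules and the augmented dg-$\SCi$-algebra model structure; and only then localize to obtain the $\infty$-categorical functor and its right adjoint (the augmentation-ideal functor), applying Schwede-Shipley (Lemma \ref{thm:boundedok}) to handle the passage from simplicial to unbounded modules.
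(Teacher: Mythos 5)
Your argument is correct in outline but takes a genuinely different route from the paper. You build the comparison functor directly: an explicit square-zero extension $\A \oplus M$ with its Taylor-expansion $\SCi$-structure, delooped to a functor $\Mod_{\A} \to \Stab\left(\dgc/\A\right)$ and then verified to be an equivalence by a colimit-preservation-plus-generators argument between stable presentable $\i$-categories. The paper never constructs the square-zero functor by hand at this stage; it factors the problem through the purely algebraic case. Writing $\left(\blank\right)_\sharp$ for the forgetful functor from augmented $\SCi$-algebras to augmented supercommutative algebras and $\widehat{\left(\blank\right)}$ for its left adjoint, it invokes Schwede's theorem to identify $\Stab\left(\left(\mathsf{SCAlg}_{\cB}\right)_{*}\right)$ with $\Mod_{\cB}$, and then shows that the induced adjunction on stabilizations $\widehat{\left(\blank\right)}^{\i} \dashv \left(\blank\right)_\sharp^{\i}$ is an equivalence. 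The crux there is the identity $\Sym_{\Ci}\left(N\right) \simeq \Sym_{\Ci}\left(N_0\right) \underset{\Sym\left(N_0\right)} \otimes \Sym\left(N\right)$: after a single suspension a connective module has nothing in degree $0$, so $\left(\Sym_{\Ci} M\left[1\right]\right)_\sharp = \Sym M\left[1\right]$ and the two free functors agree on the stable range. Your approach buys a concrete description of $\Omega^{\i}$ (the square-zero extension) from the outset, which the paper only extracts afterwards; the paper's approach avoids having to verify the homotopy coherence of the square-zero construction and its deloopings, which you correctly identify as the main obstacle and propose to handle model-categorically.

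One caveat: at the two places where you ``transfer from the classical dg-commutative result via the unramified morphism of theories,'' unramifiedness (Theorem \ref{thm:unramified}) is not the right tool---it concerns pushouts along graphs of smooth maps between free algebras, not the stabilization of the free-algebra functor, and it also plays no role in checking that the Taylor-expansion formula respects the $\SCi$-operations (that is just the chain rule). What your generator argument actually needs is the degree-zero observation above, equivalently that $\Sigma^\infty_+$ of the free $\SCi$-algebra over $\A$ on one even (resp.\ odd) generator is the free module $\A$ (resp.\ $\Pi\A$); without some such statement you cannot identify $\Phi\left(\A\right)$ with a generator of $\Stab\left(\dgc/\A\right)$, nor compute the mapping spectrum out of it. This lemma is available and is exactly what the paper's proof isolates, so the issue is one of citation rather than substance.
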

\
\begin{proof}
Let $\cB$ be an $\SCi$-algebra with underlying supercommutative $\R$-algebra $\cB_{\sharp}.$ Since $\SCi$-completion preserves the terminal algebra $\R,$ the forgetful functor
$$\left(\blank\right)_\sharp:\left({\dgc}_{\cB}\right)_\ast \to \left({\mathsf{SCAlg}}_{\cB}\right)_\ast$$ from augmented $\cB$-algebras in $\SCi$-algebras, to augmented $\cB$-algebras in $\mathsf{SCAlg}_{\R}$ has a left adjoint, sending
$$\xymatrix@C=2cm{\cB_{\sharp} \ar@<5pt>[r]^{a} & \cA \ar@<5pt>[l]^{\pi}}$$ to
$$\xymatrix@C=2cm{\cB \ar@<5pt>[r]^{\eta_{\A} \circ a} & \widehat{\cA}, \ar@<5pt>[l]^{\widehat{\pi}}}$$
where $\eta$ is the unit of the adjunction between the $\i$-categories of un-augmented objects.  As both of these $\i$-categories are presentable, there is an induced adjunction between their stabilizations
$$\xymatrix@C=1.5cm{ \left({\mathsf{SCAlg}}_{\cB}\right)_{\ast} \ar@<+1ex>[r]^-{\Sigma^\infty_{+,{alg}}} &\Stab\left(\left({\mathsf{SCAlg}}_{\cB}\right)_\ast\right) \ar@<+1ex>[l]^-{\Omega^\infty_{alg}} \ar@<+1ex>[r]^-{\widehat{\left(\blank\right)}^\infty} & \Stab\left(\left({\dgc}_{\cB}\right)_\ast\right) \ar@<+1ex>[l]^-{{\left(\blank\right)_\sharp}^\infty} \ar@<-1ex>[r]_-{{\Omega}^\infty_{\Ci}} & \left({\dgc}_{\cB}\right)_\ast. \ar@<-1ex>[l]_-{\Sigma^\infty_{\Ci}} }$$

Consider the adjunction
$$\xymatrix{
\Mod_{\cB}^{\le 0} \ar@<+1ex>[r]^-{\Sym} &  \left({\mathsf{SCAlg}}_{\cB}\right)_{\ast} \ar@<+1ex>[l]^-{I}}$$
where $$\xymatrix{I:\A \ar@<+1ex>[r]^-{\pi} & \cB \ar@<+1ex>[l]^-{a}  & \ar@{|->}[r] & 0\times_{\cB} \A}$$ is the right derived functor of the augmentation ideal functor.
By \cite[Theorem 3.2.3]{schwede}, the induced adjunction $\Sym^\infty \dashv I^\i$ between their stabilizations is an equivalence. Since $\Stab\left(\Mod^{\le 0}_{\cB}\right) \simeq \Mod_{\cB},$ this implies $$\Stab\left( \left({\mathsf{SCAlg}}_{\cB}\right)_{\ast}\right)\simeq \Mod_{\cB}.$$ It therefore suffices to prove that $\widehat{\left(\blank\right)}^\infty \dashv \left(\blank\right)_\sharp$ is an adjoint equivalence. 

Notice that \cite[Corollary 3.1.4, Theorem 3.2.3]{schwede} imply that the essential image of 
$$\Sym^\i\Sigma^\infty_+:\Mod^{\le 0}_{\cB} \to \Stab\left(\left({\mathsf{SCAlg}}_{\cB}\right)_{\ast}\right)$$ 
is equivalent to the connective $\left({\mathsf{SCAlg}}_{\cB}\right)_{\ast}$-spectra objects. Let $M$ be a connective $\cB$-module in $\Mod^{\le 0}_{\cB}.$ To ease notation, let $F:=\widehat{\left(\blank\right)}^\infty$ and $G:=\left(\blank\right)_\sharp^\infty.$
We claim that the unit
$$\Sigma\Sym^\i \Sigma^\infty_+ M \to GF\Sigma\Sym^\i \Sigma^\infty_+ M,$$ is an equivalence.  Since $F$ and $\Sym^\i$ preserve colimits, we have
\begin{eqnarray*}
\Sigma\Sym^\i \Sigma^\infty_+ M &\simeq& \Sym^\i \Sigma^\infty_+ M\left[1\right].\\
&\simeq& \Sigma^\infty_{+,alg} \Sym M\left[1\right].
\end{eqnarray*}
It follows that 
$$F\Sigma\Sym^\i \Sigma^\infty_+ M \simeq \Sigma^\infty_{+,\Ci} \Sym_{\Ci} M\left[1\right].$$
For a general $\cB$-module $N,$
$$\Sym_{\Ci}\left(N\right)=\Sym_{\Ci}\left(N_0\right)  \underset{\Sym\left(N_0\right)} \otimes \Sym\left(N\right).$$
Since $M\left[1\right]_0=\R,$ we therefore have that $\left(\Sym_{\Ci} M\left[1\right]\right)_\sharp = \Sym M\left[1\right].$ It follows that the unit is an equivalence, as claimed. So the component of the unit is an equivalence for all $\left({\mathsf{SCAlg}}_{\cB}\right)_{\ast}$-spectra objects of the form $\Sigma N,$ for $N$ connective. By \cite[Corollary 6.2.2.16]{LurieHA}, since $\left(\blank\right)_\sharp^\i$ preserves sequential colimits, we can identify $G$ with the pointwise application of $\left(\blank\right)_\sharp^\i$ to spectra objects, and can conclude that $G$ preserves sequential colimits as well. Since every spectrum object is a sequential colimit of such objects of the form $\Sigma N$ with $N$ connective, and $F$ and $G$ preserve sequential colimits, it follows that the unit is an equivalence everywhere.

We now wish to show that that counit $\beta$ is also an equivalence. Notice that $G$ is conservative since $\left(\blank\right)_\sharp$ is. Therefore, is suffices to show that for any $\left(\dgc_{\cB}\right)_{\ast}$-spectrum object $X,$ $G\left(\beta_X\right)$ is an equivalence. By the triangle identity for adjunctions, we have that the following diagram commutes up to homotopy
$$\xymatrix@C=2cm{GX & \ar[l]_-{G\beta_X} GFG X\\ & GX \ar[lu]^-{id} \ar[u]}$$
where the vertical arrow is the unit of $GX,$ hence an equivalence. Therefore, $G\left(\beta_X\right)$ is an equivalence.
\end{proof}

Since $\dgc/\A$ has finite limits, there is an adjunction
$$\xymatrix{\left(\dgc/\A\right)_* \ar@<-0.5ex>[r]_-{\Sigma^\i} & \Stab\left(\dgc/\cA\right) \simeq \Mod_{\A} \ar@<-0.5ex>[l]_-{\Omega^\i}},$$
with $\Sigma^\i \dashv \Omega^\i.$

Concretely, given an $\A$-module $M,$ under the identification above, $\Omega^\i\left(M\right)$ is the square zero extension of $\A$ by $\tau_{\le 0} M.$ Consequently, we denote the composite
$$\Mod^{\le 0}_{\A} \hookrightarrow \Mod_{\A} \stackrel{\simeq}{\longrightarrow} \Stab\left(\dgc/\cA\right) \stackrel{\Omega^\i}{\longrightarrow} \left(\dgc/\A\right)_* \to \dgc/\cA$$ by $\A\left[\blank\right].$ 

For a given $\A$-module $M,$ $\A\left[M\right]\simeq \A \oplus M,$ as an $\A$-modules, and the projection map is the map making $\A\left[M\right]$ an object in $\dgc/A.$ Moreover, the canonical section $$a \mapsto \left(a,0\right)$$ is the map making it a pointed object. We use the following notation
$$\xymatrix{\A\left[M\right] \ar[r]^-{\pi_M} & \A \ar@/^1.65pc/^{0} [dlu]}.$$

Notice that the canonical functor $$\left(\dgc/\A\right)_* \to \dgc/\cA$$ has a left adjoint
\begin{eqnarray*}
\left(\blank\right)_{+}:\dgc/\cA &\to & \left(\dgc/\A\right)_*\\
\left(\B \to \A \right) & \to & \left(\A \to \B\oinfty \A \to \A\right).
\end{eqnarray*}

Denote the composite
$$\dgc/\cA \stackrel{\left(\blank\right)_+}{\longlongrightarrow} \left(\dgc/\A\right)_* \stackrel{\Sigma^\i}{\longrightarrow} \Stab\left(\left(\dgc/\A\right)_*\right) \stackrel{\sim}{\longrightarrow} \Mod_{\A}$$
by $\Omega^1_{\left(\blank\right)}\left(\A\right).$ We will return to this functor in Section \ref{sec:derivations}. For now, we just observe that by the composition of adjoints, we have
\begin{equation}\label{eqn:1form}
\Omega^1_{\left(\blank\right)}\left(\A\right) \dashv \A\left[\blank\right].
\end{equation}

\begin{remark}
Since the functor $\A\left[\blank\right]$ is a right adjoint, it preserves limits. In particular, we have that for $M_1,M_2,\ldots,M_n$ $\A$-modules
$$\A\left[\bigoplus \limits^{n}_{i=1} M_i\right] \simeq \A\left[M_1\right] \times_{\A} \A\left[M_2\right] \ldots \times_{\A} \A\left[M_n\right].$$
\end{remark}

\subsection{Derivations and $1$-forms}\label{sec:derivations}

\begin{definition}
Given an $\A$-module $M,$ the \textbf{space of $\Ci$-derivations of $\A$ with values in $M$} is the mapping space $$\Der\left(\A,M\right):=\Map_{\dgc/A}\left(id_{\A},\A\left[M\right]\right).$$ More generally, given a map $f:\B \to \A,$ the \textbf{space of $\Ci$-derivations of $\A$ along $f$ with values in $M$} is the space
$$\Der\left(\A,M\right)_f:=\Map_{\dgc/A}\left(f,\A\left[M\right]\right).$$
\end{definition}

\begin{remark}
Given a derivation $\varphi$ of $\A$ with values in $M,$
$$\xymatrix{\A\left[M\right] \ar[r]^-{\pi_M} & \A \ar@/^1.65pc/^{\varphi} [dlu]},$$ as a map of $\A$-modules,
\begin{eqnarray*}
\varphi:\A &\to& \A\left[M\right]\simeq \A \oplus M\\
a & \mapsto & \left(a,\cD_\varphi\left(a\right)\right),
\end{eqnarray*}
for a unique map of $\A$-modules $\cD_\varphi:\A \to M.$ The map $\cD_\varphi$ is a derivation in the classical sense, but the converse need not be true; a derivation of $\A$ as a $\Ci$-algebra needs to satisfy more conditions than simply being a derivation of the underlying $\R$-algebra $\A_{\sharp}.$ See \cite[Proposition 2.11]{dg2}. However for nice enough $\SCi$-rings (such as the algebra of functions on a supermanifold) any derivation on the sense of $\R$-algebras is a $\Ci$-derivation \cite[Corollary 2.18]{dg2}.
\end{remark}

\begin{definition}
For $n \in \Z,$ let $\R\left[n\right]$ denote the graded vector space with $\R$ in degree $-n,$ and otherwise zero, regarded as a cochain complex with zero differential. Let $\mathbb{E} \in \Mod_{\A},$ and let $$\mathbb{E}\left[n\right]:=\mathbb{E} \underset{\R} \otimes \R\left[n\right].$$
\end{definition}

\begin{remark}
In concrete terms, we have $\left(\mathbb{E}\left[n\right]\right)^k=\mathbb{E}^{n+k}$ with the same differential as before, i.e. you shift the entire complex to the left by $n.$
\end{remark}

\begin{remark}
Since $\Mod_{\A}$ is a stable $\i$-category, it comes equipped with a suspension functor 
$$\Sigma:\Mod_{\A} \stackrel{\sim}{\longrightarrow} \Mod_{\A}.$$ Strictly speaking, there is a difference between $\Sigma\mathbb{E}$ and $\mathbb{E}\left[1\right].$ The functor $\Sigma$ is given by $\R\left[1\right] \underset{\R} \otimes \left(\blank\right).$ For a given $\A$-module $\mathbb{E}^\bullet$, and any integer $k,$ we have
$$\left(\Sigma\mathbb{E}^\bullet\right)^k=\mathbb{E}^{k+1}=\left(\mathbb{E}^\bullet\left[1\right]\right)^k,$$ but the differentials are different. The differential on $\mathbb{E}^\bullet\left[1\right]^k=\mathbb{E}^{k+1}$ is simply $d_{k+1},$ whereas the differential for $\Sigma \mathbb{E}^{\bullet}$ is $-d_{k+1}.$ Clearly, these two $\A$-modules are canonically equivalent. Also, of course, if $\mathbb{E}$ has zero differential (for example is $\R\left[m\right]$, for some $m$), then we do have $\Sigma^n\mathbb{E}=\mathbb{E}\left[n\right].$

The functor $\Omega:\Mod_{\A} \stackrel{\sim}{\longrightarrow} \Mod_{\A},$ is defined to be $$\Sigma^{-1}=\R\left[-1\right] \underset{\R} \otimes \left(\blank\right)$$ and is inverse to $\Sigma.$ For any cochain complex, the following is a (homotopy) pullback diagram
$$\xymatrix{\Omega\left(\mathbb{E}\right) \ar[r] \ar[d] & 0 \ar[d] \\ 0 \ar[r] & \mathbb{E},}$$  hence the notation.
\end{remark}

\begin{definition}
For all $n \in \Z,$ let $$\A\left[\xi_n\right]:=\A\left[\A\left[-n\right]\right],$$ i.e. the square zero extension associated to the $\A$-module $\A\left[-n\right].$ Concretely, 
$$\A\left[\xi_n\right]=\A\left[\mathbf{x}\right]/\mathbf{x}^2,\mspace{10mu} |\mathbf{x}|=n.$$
In the supercommutative case, $\A$-modules are $\Z \times \Z_2,$ graded, so we mean that $\mathbf{x}$ is in degree $\left(n,0\right).$ We also define $\Pi\A\left[\xi_n\right]:=\A\left[\Pi\A\left[-n\right]\right],$ where $\Pi$ is the parity-reversal operator. Concretely, 
$$\Pi\A\left[\xi_n\right]=\A\left[\mathbf{x}\right],\mspace{10mu} |\mathbf{x}|=\left(n,1\right).$$ (We do not need to impose that $\mathbf{x}^2=0$; this is automatic since $\mathbf{x}$ has odd $\Z_2$-parity)
\end{definition}

\begin{remark}
Since $\A\left[\blank\right]$ is a right adjoint, it follows that for any $n \in Z,$ we have a pullback diagram
$$\xymatrix{\A\left[\xi_{n+1}\right] \ar[r] \ar[d] & \A \ar[d]^-0\\ \A \ar[r]^-0 & \A\left[\xi_n\right],}$$ and similarly for the odd versions.
\end{remark}

\begin{definition}
Let $n \in \mathbb{N}.$ Define the \textbf{$n$-shifted first order $1$-disk} be the $\Ci$-scheme $$\mathbb{D}_{\left(1\right)}\left[n\right]:=\Speci\left(\R\left[\xi_{n}\right]\right).$$ In the setting of supergeometry, we also denote the above by $$\mathbb{D}_{\left(1\right)}\left[n\right]^{\left(1,0\right)}:=\Speci\left(\R\left[\xi_{n}\right]\right)$$ for clarity, and call it the \textbf{$n$-shifted first order \emph{even} $1$-disk}, and we denote 
$$\mathbb{D}_{\left(1\right)}\left[n\right]^{\left(0,1\right)}:=\Speci\left(\Pi\R\left[\xi_{n}\right]\right)$$ and call it the \textbf{$n$-shifted first order \emph{odd} $1$-disk}
\end{definition}

\begin{remark}
The first order odd $1$ disk is the same as the odd line
$$\mathbb{D}_{\left(1\right)}^{\left(1,0\right)} = \R^{\left(0|1\right)}.$$
\end{remark}

Notice that the canonical functor $$\left(\dgc/\A\right)_* \to \dgc/\cA$$ has a left adjoint
\begin{eqnarray*}
\left(\blank\right)_{+}:\dgc/\cA &\to & \left(\dgc/\A\right)_*\\
\left(\B \to \A \right) & \to & \left(\A \to \B\oinfty \A \to \A\right).
\end{eqnarray*}

\begin{definition}
Recall the adjunction (\ref{eqn:1form}). For $f:\B \to \A$ any map in $\dgc,$ we say that the module $\Omega^1_{f}\left(\A\right)$ is the \textbf{module of differential $1$-forms relative to $f$}. When $f=id_\A,$ we denote $\Omega^1_{f}\left(\A\right)$ simply by $\Omega^1\left(\A\right),$ and call it the \textbf{module of differential $1$-forms of $\A.$}
\end{definition}

\begin{remark}
$\Omega^1\left(\A\right)$ is the \emph{cotangent complex} $\mathbb{L}_{X}$ of $X=\Speci\left(\A\right).$ More generally, $\Omega^1_{f}\left(\A\right)$ is the \emph{relative cotangent complex}.
\end{remark}

\begin{proposition}\cite[Example 2.2.18]{Nuiten},
Let $\cM$ be a supermanifold, then $$\Omega^1\left(\Ci\left(\cM\right)\right) \simeq \Omega^1_{dR}\left(\cM\right).$$
\end{proposition}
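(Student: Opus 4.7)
The plan is to use the defining adjunction (\ref{eqn:1form}) for $\Omega^1$: namely, $\Omega^1(\A)$ corepresents $\Der(\A,\blank)=\Map_{\dgc/\A}(id_\A,\A[\blank])$, so it suffices to exhibit a canonical map of $\Ci(\cM)$-modules between $\Omega^1_{dR}(\cM)$ and $\Omega^1(\Ci(\cM))$ and then verify it is an equivalence. The classical exterior derivative $d\colon \Ci(\cM)\to\Omega^1_{dR}(\cM)$ is an $\R$-algebra derivation, and by \cite[Corollary 2.18]{dg2} (cited earlier in the excerpt) it is automatically a $\Ci$-derivation, so the universal property supplies a canonical morphism $\Omega^1(\Ci(\cM))\to\Omega^1_{dR}(\cM)$, whose equivalence I would check in three steps.

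First I would settle $\cM=\R^{n|m}$. Here $\Ci(\R^{n|m})=\Ci\{x_1,\ldots,x_n;\eta_1,\ldots,\eta_m\}$ is a free $\SCi$-algebra, hence cofibrant (Theorem \ref{thm:dgmodel}). For any module $M$, a $\Ci$-derivation into $M$ is freely determined by its values on the generators, giving $\Der(\Ci(\R^{n|m}),M)\simeq M_0^{\oplus n}\oplus M_1^{\oplus m}$; this functor is corepresented by the free $\Ci(\R^{n|m})$-module in cohomological degree $0$ on symbols $dx_i, d\eta_j$ with matching parities, which is exactly $\Omega^1_{dR}(\R^{n|m})$.

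Second, I would propagate to open subsets $U\subseteq\R^{n|m}$ via $\Ci$-localization. Such opens correspond to $\Ci$-localizations $\Ci(\R^{n|m})\to\Ci(U)$ by Proposition \ref{prop:locz}, which are flat by Lemma \ref{lem:5.1.13}; the base-change formula $\Ci(U)\otimes_{\Ci(\R^{n|m})}\Omega^1(\Ci(\R^{n|m}))\simeq \Omega^1(\Ci(U))$ is a formal consequence of the universal property, combined with the fact that derivations out of the localization are exactly those derivations of the original algebra that factor through it. The analogous compatibility for classical de Rham $1$-forms of open subsets is standard, so the identification of step one propagates to $U$. Finally, by Lemma \ref{lem:whit}, a general supermanifold $\cM$ is a retract (in $\SMfd$) of such a $U$; functoriality of both $\Omega^1$ and of classical $\Omega^1_{dR}$ under this retract then yields $\Omega^1(\Ci(\cM))\simeq\Omega^1_{dR}(\cM)$.

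The main obstacle is smoothness/concentration control: ensuring that $\Omega^1(\Ci(\cM))$ is concentrated in cohomological degree $0$ with no higher Andr\'e--Quillen contributions. Freeness handles this for $\R^{n|m}$, flatness propagates it across localizations, and a retract of a $0$-truncated finitely generated projective module stays $0$-truncated and projective; but the compatibility of the classical and derived retract data as $\Ci(\cM)$-modules, rather than merely as abstract projectives of the same rank, must be tracked carefully. A cleaner, gluing-based alternative would bypass retracts entirely by covering $\cM$ with coordinate patches, combining the local identification with the localization compatibility above, and invoking a homotopy-sheaf property of $\Omega^1$ over the open-cover topology of $\Ci(\cM)$.
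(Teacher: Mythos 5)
The paper does not actually prove this proposition---it is imported verbatim as a citation to Nuiten's thesis (\cite[Example 2.2.18]{Nuiten})---so there is no in-text argument to compare against. That said, your outline is a correct and essentially standard reconstruction of how such a statement is proved in this framework, and it is consistent with the machinery the excerpt does set up: the corepresentability of $\Der(\A,\blank)$ via the adjunction $\Omega^1_{(\blank)}(\A)\dashv \A[\blank]$, the identification of $\Ci$-derivations out of a free algebra by their values on generators (with the correct parities, giving $M_0^{\oplus n}\oplus M_1^{\oplus m}$), cofibrancy of $\Ci(\R^{n|m})$ so that no higher Andr\'e--Quillen terms appear, and the reduction of a general $\cM$ to an open $U\subseteq\R^{n|m}$ by Lemma \ref{lem:whit} plus naturality of the comparison map (a retract of an equivalence is an equivalence).

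Two small points where you should be more precise. First, in the localization step, the clean statement is not merely that derivations out of $\Ci(U)$ are derivations out of $\Ci(\R^{n|m})$ that ``factor through'': since $U=U_a$ is the pushout of $\Ci(\R^{n|m})$ along $\Ci(\R)\to\Ci(\R\setminus\{0\})$ (Proposition \ref{prop:locz}), the right argument is that the relative cotangent complex of $\Ci(\R)\to\Ci(\R\setminus\{0\})$ vanishes (an open inclusion of manifolds is \'etale), so base change along the derived pushout gives $\Omega^1(\Ci(U))\simeq \Ci(U)\otimes^{\mathbb{L}}_{\Ci(\R^{n|m})}\Omega^1(\Ci(\R^{n|m}))$, and flatness of localizations (Lemma \ref{lem:5.1.13}) identifies this with the underived base change, i.e.\ the free module on $dx_i,d\eta_j$. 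Second, in the retract step you should say explicitly that the comparison map $\Omega^1(\Ci(\cM))\to\Omega^1_{dR}(\cM)$ is natural in $\cM$ and that both sides carry compatible retract data induced by $\cM\to U\to\cM$ (using that $\Omega^1(\Ci(U))$ is free, so its base change to $\Ci(\cM)$ is again underived); once that diagram is written down, the conclusion is immediate. Neither point is a gap in the strategy, only in the level of detail.
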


Due to the adjunction (\ref{eqn:1form}), we have that
$$\Der\left(A,M\right)_f\simeq \Map_{\Mod_\A}\left(\Omega^1_f\left(\A\right),M\right).$$
Note that the stable $\i$-category $\Mod_{\A}$ is canonically enriched in itself. For $N$ and $M$ two $\A$-modules, denote their mapping module by $\Mod_{\A}\left(N,M\right).$ In light of this, and the above, we make the following definition:

\begin{definition}
Let $M$ be an $\A$-module and let $f:\B \to \A.$ The \textbf{$\A$-module of derivations of $\A$ along $f$ with values in $M$} is 
$$\underline{\Der}\left(\A,M\right)_f:=\Mod_{\A}\left(\Omega^1_f\left(\A\right),M\right),$$ and the \textbf{$\A$-module of derivations of $\A$} is
$$\underline{\Der}\left(\A\right):=\Mod_{\A}\left(\Omega^1\left(\A\right),\A\right).$$
We also denote $\underline{\Der}\left(\A\right)$ by $\mathfrak{X}\left(\A\right)$ and refer to it as the \textbf{$\A$-module of vector fields on $\A.$}
\end{definition}

\begin{remark}\label{rmk:DKmap}
The space of derivations is related to the module of derivations in the following way
$$\Der\left(\A,M\right)_f \simeq \mathcal{DK}\left(\tau_{\le 0}\underline{\Der}\left(\A,M\right)_f\right),$$ where $\mathcal{DK}$ of a connective $\A$-module is the corresponding simplicial $\A$-module via the Dold-Kan correspondence.
\end{remark}

\begin{remark}
If $\cM$ is a $\mathbb{N}$-graded supermanifold, $\mathfrak{X}\left(\Ci\left(\cM\right)\right) \simeq \mathfrak{X}\left(\cM\right),$ that is the $\Ci\left(\cM\right)$-module of derivations of $\Ci\left(\cM\right)$ is equivalent to the $\Ci\left(\cM\right)$-module of graded vector fields on $\cM,$ in the usual sense.
\end{remark}

\subsection{Tangent categories}

\begin{definition}
Let $\sC$ be an $\i$-category with finite limits. The \textbf{tangent category} to $\sC$ is the $\i$-category of excisive functors $$\T\sC:=\mathbf{Exc}\left(\Spc^{\fin}_*,\sC\right)$$ from finite pointed spaces to $\sC.$
\end{definition}

 Let $F:\Spc^{\fin} \to \sC$ be any functor. Then it is the same data as the induced functor $$\Spc^{\fin} \to \sC/F\left(*\right),$$ which sends a space $X$ to $F\left(X\right) \to F\left(*\right).$ Now, suppose instead that $$F:\Spc^{\fin}_* \to \sC.$$ Consider the induced functor $$\tilde F:\Spc^{\fin}_* \to \sC/F\left(*\right).$$ If $* \to X$ is a pointed space, then as we have $* \to X \to *$ is the identify of $*,$ we have that $\tilde F\left(X\right)$ is canonically pointed by $$\tilde F\left(* \to X\right):id_{F\left(*\right)} \to \tilde F\left(X\right).$$ So we have an induced functor $$\Spc^{\fin}_* \to \left(\sC/F\left(*\right)\right)_*$$ which is reduced. This functor moreover is excisive if and only if $F$ is, in which case we get a reduced excisive functor $$\Spc^{\fin}_* \to \left(\sC/F\left(*\right)\right)_*,$$ i.e. an object of $\Stab\left(\sC/F\left(*\right)\right).$ Now suppose that $f:C \to D,$ then since $\sC$ has finite limits, there exists a functor $$f^*:\sC/D \to \sC/C$$ given by pulling back along $f,$ right adjoint to the functor induced by composition with $f.$ Hence $f^*$ preserves limits and thus induces a functor $$\Stab\left(f\right):\Stab\left(\sC/D\right) \to \Stab\left(\sC/C\right).$$
Now suppose that $\alpha:F \Rightarrow G$ is an arrow in $\T \sC,$ and $Z$ is an object in $\Spc^{\fin}_*.$ Then consider the naturality square for $\alpha$ at $Z$:
$$\xymatrix{F\left(Z\right) \ar[rrr]^-{\alpha\left(Z\right)} \ar[ddd] \ar@{-->}[rd] & & & G\left(Z\right)  \ar[ddd]\\
& \alpha\left(\ast\right)^*\left(G\left(Z\right)\right) \ar[ldd] \ar[rru] & & \\
& & & \\
F\left(\ast\right) \ar[rrr]^-{\alpha\left(\ast\right)} & & & G\left(\ast\right)}$$
This part of an explicit unwinding of a canonical equivalence $$\T\sC \simeq \int_{\sC} \Stab\left(\sC/\blank\right).$$ In particular, evaluation at the terminal object $\ast,$
\begin{equation}\label{eq:q}
q:=\mathbf{ev}_\ast:\T\sC  \to \sC
\end{equation}
is a Cartesian fibration. 

\begin{corollary}
$$\T\left(\dgc\right) \simeq \int_{\dgc} \Mod^{\ast}_{\left(\blank\right)},$$
where $\Mod^{\ast}_{\left(\blank\right)}:\left(\dgc\right)^{op} \to \widehat{\mathbf{Cat}}_\i$
is the functor induced by restriction of scalars.
\end{corollary}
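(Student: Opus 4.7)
The plan is to combine two inputs that are already in hand: the general description of the tangent $\i$-category as a Grothendieck construction over the base, and Theorem \ref{thm:stabmod}, which identifies the stabilization of each slice of $\dgc$ with the $\i$-category of modules.

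First, I would specialize the general equivalence $\T\sC \simeq \int_\sC \Stab\left(\sC/\blank\right)$, established by the unwinding immediately preceding the corollary, to the case $\sC = \dgc$. This exhibits the Cartesian fibration $q = \mathbf{ev}_\ast$ of (\ref{eq:q}) as classifying the functor $\A \mapsto \Stab\left(\dgc/\A\right)$, with transition maps given by the pullback functors $f^\ast:\Stab\left(\dgc/\B\right) \to \Stab\left(\dgc/\A\right)$ induced by a morphism $f:\A \to \B$.

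Next, I would apply Theorem \ref{thm:stabmod} pointwise, obtaining an equivalence $\Stab\left(\dgc/\A\right) \simeq \Mod_{\A}$ for every $\A \in \dgc$. To upgrade these fiberwise equivalences to an equivalence of functors $\left(\dgc\right)^{op} \to \widehat{\mathbf{Cat}}_\i$, I would check that the pullback $f^\ast$ corresponds to restriction of scalars along $f$ under the chosen identifications. The key point is that under Theorem \ref{thm:stabmod}, the right adjoint $\Omega^\infty$ on each side is computed by the square-zero extension $\B\left[M\right]$; applying $f^\ast$ to $\B\left[M\right]\to \B$ yields the pullback $\A \times_\B \B\left[M\right]$, which is computed in $\dgc$ as the square-zero extension $\A\left[f^\ast M\right]$, where $f^\ast M$ denotes $M$ endowed with its $\A$-module structure via $f$. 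Thus $f^\ast \circ \Omega^\infty_{\Ci} \simeq \Omega^\infty_{\Ci} \circ f^\ast$, and passing to left adjoints gives the desired compatibility of $\Sigma^\infty$-type functors.

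The main obstacle, as always in this sort of result, is arranging the naturality as a coherent diagram of $\i$-categories rather than just a pointwise family of equivalences. I would handle this by appealing to the universal property of stabilization: both $\int_{\dgc} \Stab\left(\dgc/\blank\right) \to \dgc$ and $\int_{\dgc} \Mod_{\left(\blank\right)} \to \dgc$ are Cartesian fibrations whose fibers have been matched by Theorem \ref{thm:stabmod} in a manner compatible with the right adjoints $\Omega^\infty$ (the square-zero extension functor), and since each fiber is presentable and both pullback functors preserve limits, the $\infty$-categorical adjoint functor theorem produces a canonical Cartesian equivalence over $\dgc$. Finally, unstraightening this equivalence yields the desired equivalence of functors into $\widehat{\mathbf{Cat}}_\i$.
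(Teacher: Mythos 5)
Your proposal is correct and follows essentially the same route as the paper, which treats the corollary as an immediate consequence of the general equivalence $\T\sC \simeq \int_{\sC} \Stab\left(\sC/\blank\right)$ unwound just before it, together with the fiberwise identification $\Stab\left(\dgc/\A\right) \simeq \Mod_{\A}$ of Theorem \ref{thm:stabmod}, with the Cartesian transition functors matched to restriction of scalars (compare also Remark \ref{rmk:cocart}, which records the adjoint coCartesian description via induced modules). Your verification that base change of the square-zero extension $\B\left[M\right]$ is $\A\left[f^\ast M\right]$ is exactly the compatibility the paper is implicitly using.
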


\begin{remark}\label{rmk:cocart}
Note that, since taking the induced module is left adjoint to restriction of scalars, it follows that there is a canonical equivalence of $\i$-categories
$$\T\left(\dgc\right) \simeq \int_{\dgc} \Mod^{!}_{\left(\blank\right)},$$
where for each $\varphi:\A \to \B$
\begin{eqnarray*}
\Mod^{!}_{\left(\varphi\right)}:\Mod_{\A} & \to & \Mod_{\B} \\ M &\mapsto & \B \underset{\A} \otimes M.
\end{eqnarray*}
In particular, $q$ is also a coCartesian fibration, and hence a bifibration.
\end{remark}

Let us consider also the canonical functor induced by evaluation at $S^0$ 
$$\pi_\sC:=\mathbf{ev_{S^0}}:\T\sC \to \sC.$$ Note that for each $C,$ $\mathbf{ev_\ast}^{-1}\left(C\right) \simeq \Stab\left(\sC/C\right),$ and thus $\pi_\sC$ restricts to a functor
$$\Stab\left(\sC/C\right) \to \sC/C,$$
which can be identified with $\Omega_{\sC}^\i.$

\begin{remark}
If follows that when $\sC=\dgc,$ for each $\A,$ the restriction of $\pi_{\sC}$ can be identified with
$$\A\left[\blank\right]:\Mod_{\A} \to \dgc/A.$$
\end{remark}
 
Note that the $1^{st}$ Taylor approximation $P_1$ (in the sense of Goodwillie calculus) shows that $\T\sC=\mathbf{Exc}\left(\Spc^{\fin}_*,\sC\right)$ is a reflective subcategory of $\Fun\left(\Spc^{\fin}_*,\sC\right),$ and hence presentable and closed under limits in $\Fun\left(\Spc^{\fin}_*,\sC\right)$. Moreover, the functor $\pi_{\sC}$ is given by evaluation at $S^0,$ and hence preserves limits. By the adjoint functor theorem, this implies that there exist a left adjoint $\mathbb{L} \dashv \pi_{\sC}.$ 

\begin{remark}\label{rmk:tan_topos}
	It is well known that if $\cE$ is an $\i$-topos, then $\T\cE$ is also. By the above, this follows from the observation that the Taylor approximation functor $P_1$ is left exact.
	\end{remark}

\begin{definition}
For $\sC=\dgc,$ $\mathbb{L}$ is the absolute cotangent complex functor.
\end{definition}

\begin{proposition}
For $\A \in \dgc,$ $$\mathbb{L}\left(\A\right)=\left(\A,\Omega^1\left(\A\right)\right).$$
\end{proposition}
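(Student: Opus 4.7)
The plan is to verify that $(\A, \Omega^1(\A))$ corepresents the functor
\[(\B, M) \longmapsto \Map_{\dgc}(\A, \pi_{\sC}(\B, M))\]
on $\T(\dgc)$; by the Yoneda lemma and the adjunction $\mathbb{L} \dashv \pi_{\sC}$, this pins down $\mathbb{L}(\A)$ uniquely. Under the identification of the restriction of $\pi_{\sC}$ to the fibre over $\B$ with $\B[\blank]: \Mod_{\B} \to \dgc/\B$, post-composed with the forgetful functor to $\dgc$, we have $\pi_{\sC}(\B, M) \simeq \B[M]$, so the task reduces to producing a natural equivalence
\[\Map_{\T(\dgc)}((\A, \Omega^1(\A)), (\B, M)) \simeq \Map_{\dgc}(\A, \B[M]).\]

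Using the Cartesian fibration $q: \T(\dgc) \to \dgc$ of (\ref{eq:q}), whose fibre over $\B$ is $\Mod_{\B}$ and whose Cartesian lifts over a morphism $g: \A \to \B$ are given by restriction of scalars $g^\ast: \Mod_{\B} \to \Mod_{\A}$, I would decompose the mapping space on the left fibrewise over $\Map_{\dgc}(\A, \B)$, so that the fibre over $g$ is $\Map_{\Mod_{\A}}(\Omega^1(\A), g^\ast M)$. The defining adjunction $\Omega^1_{(\blank)}(\A) \dashv \A[\blank]$ of (\ref{eqn:1form}) then identifies each such fibre with $\Map_{\dgc/\A}(\id_{\A}, \A[g^\ast M])$.

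The key technical step is the natural identification $\A[g^\ast M] \simeq \A \times_{\B} \B[M]$ in $\dgc/\A$. For this, one observes that the forgetful functor from $\dgc$ to supercommutative $\R$-algebras in $\Spc$ is a right adjoint (with $\Ci$-completion $\widehat{(\blank)}$ as its left), hence preserves limits, so the pullback can be computed at the underlying level, where it is visibly the square-zero extension $\A \oplus g^\ast M$ with its evident multiplication; the induced $\SCi$-structure on this pullback must agree with that of $\A[g^\ast M]$ by uniqueness. Granted this, the universal property of the pullback yields $\Map_{\dgc/\A}(\id_{\A}, \A \times_{\B} \B[M]) \simeq \Map_{\dgc/\B}(g, \B[M])$, and reassembling over $\Map_{\dgc}(\A, \B)$ gives the desired equivalence.

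The main subtlety I anticipate is precisely this pullback identification in the homotopy-coherent setting: one needs to verify that the two $\SCi$-structures on $\A \oplus g^\ast M$ actually coincide, which is cleanest to argue by characterizing both objects by the same universal property (namely, square-zero extensions of $\A$ by an $\A$-module, equipped with a compatible map to $\B[M]$ lying over $g$). Everything else is formal: it is a matter of stringing together the adjunctions defining $\Omega^1(\A)$ and the tangent $\i$-category, together with the fact that $q$ is a Cartesian fibration with the described fibres and Cartesian lifts.
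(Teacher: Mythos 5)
Your proof is correct, and it takes a more global route than the paper's. The paper first asserts that $\mathbb{L}\left(\A\right)$ lies in the fiber of $q$ over $\A$ (so that it may be regarded as an object of $\Mod_{\A}$), and then verifies the universal property only against modules in that single fiber: for $M \in \Mod_{\A}$ it identifies $\Map_{\Mod_{\A}}\left(\mathbb{L}\left(\A\right),M\right)$ with the fiber over $id_{\A}$ of $\Map_{\T\dgc}\left(\mathbb{L}\left(\A\right),M\right) \simeq \Map_{\dgc}\left(\A,\A\left[M\right]\right)$, i.e.\ with $\Map_{\dgc/\A}\left(id_{\A},\A\left[M\right]\right) \simeq \Map_{\Mod_{\A}}\left(\Omega^1\left(\A\right),M\right)$, and concludes by Yoneda in $\Mod_{\A}$. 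You instead verify corepresentability against an arbitrary object $\left(\B,M\right)$ of $\T\left(\dgc\right)$; this costs you the base-change identification $\A\left[g^{\ast}M\right] \simeq \A \times_{\B} \B\left[M\right]$, but it buys you two things: you never need to know in advance that $q\left(\mathbb{L}\left(\A\right)\right) \simeq \A$ (which the paper asserts without argument), and you obtain the full universal property rather than its restriction to one fiber. Your key step is genuinely correct, but there is a cleaner justification than comparing $\SCi$-structures on the underlying supercommutative algebras: in the model $\T\left(\dgc\right) \simeq \int_{\dgc} \Stab\left(\dgc/\blank\right)$ used in the paper, restriction of scalars $g^{\ast}:\Mod_{\B} \to \Mod_{\A}$ is post-composition of excisive functors with the base-change functor $g^{\ast}:\dgc/\B \to \dgc/\A$, while $\pi_{\sC}$ is evaluation at $S^0$; these visibly commute, so $\A\left[g^{\ast}M\right] \simeq g^{\ast}\left(\B\left[M\right]\right) = \A \times_{\B} \B\left[M\right]$ with no further checking required. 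With that substitution your argument is complete, modulo the usual (and standard) coherence point that the fiberwise equivalences over $\Map_{\dgc}\left(\A,\B\right)$ assemble naturally.
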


\begin{proof}
Let $\A \in \dgc,$ and consider $\mathbb{L}\left(\A\right) \in \T \dgc.$ Notice that $\mathbb{L}\left(\A\right)$ is in the fiber of
$$\pi:\tau \left(\dgc\right) \to \dgc$$
over $\A,$ which can be identified with $\Mod_{\A}.$ Now, let $M \in \Mod_{\A}.$ We have a pullback diagram
$$\xymatrix{\Map_{\Mod_{\A}}\left(\mathbb{L}\left(\A\right),M\right) \ar[r] \ar[d] & \Map_{\T \dgc}\left(\mathbb{L}\left(\A\right),M\right) \ar[d] \\
\ast \ar[r]^-{id_\A} & \Map_{\dgc}\left(\A,\A\right).}$$
But since $$\Map_{\T \dgc}\left(\mathbb{L}\left(\A\right),M\right) \simeq \Map_{\dgc}\left(\A,\pi_{\sC}\left(M\right)\right),$$ and $\pi_{\sC}\left(M\right) \simeq \A\left[\blank\right],$ we can identify $\Map_{\Mod_{\A}}\left(\mathbb{L}\left(\A\right),M\right)$ with $$\Map_{\dgc/A}\left(id_\A,\A\left[\blank\right]\right)\simeq \Map_{\Mod_{\A}}\left(\Omega^1\left(\A\right),M\right).$$
\end{proof}

\subsection{Sheaves of modules}\label{sec:shvmod}
\subsubsection{$\SCi$-ringed topoi}
Recall (Definition \ref{dfn:sctopos}) that an $\SCi$-ringed $\i$-topos is a pair $\left(\cE,\O_{\cE}\right)$ of an $\i$-topos $\cE$ with a sheaf $\O_{\cE}$ on $\cE$ with values in $\dgc.$

\begin{remark}
If $\cE=\Spc,$ $\cO_{\cE}$ may be identified with a $\SCi$-ring object in spaces, i.e. an object of $\dgc.$
\end{remark}

\begin{definition}
(Recall the coCartesian fibration $q$ (\ref{eq:q}).) Let $\left(\cE,\cO_{\cE}\right)$ be an $\SCi$-ringed $\i$-topos. An \textbf{$\cO_{\cE}$-module} is a sheaf $\mathscr{M}$ on $\cE,$ with values in $\T\dgc,$ which is a lift of the structure sheaf:
$$\xymatrix@C=2cm@R=2cm{ & \T\dgc \ar[d]^-{q}\\ \cE^{op} \ar[r]^-{\cO_{\cE}} \ar@{-->}[ur]^-{\mathscr{M}}  & \dgc.}$$ I.e., the $\i$-category fits into a pullback diagram
$$\xymatrix{ \Mod_{\cO_{\cE}} \ar[r] \ar[d] & \Shv\left(\cE,\T\dgc\right) \ar[d]^-{q_!}\\
\ast \ar[r]^-{\cO_{\cE}} & \Shv\left(\cE,\dgc\right).}$$
\end{definition} 

Less formally, an $\cO_{\cE}$-module is a sheaf $\mathscr{M}$ of $\R$-modules on $\cE$ together with the structure of an $\cE\left(E\right)$-module on $\mathscr{M}\left(E\right)$ for each $E \in \cE$, such that for each $f:E \to F$ in $\cE,$ the canonical map $$\mathscr{M}\left(f\right):\mathscr{M}\left(F\right) \to \mathscr{M}\left(E\right)$$ is a map of $\cE\left(F\right)$-modules.

\begin{remark}
Notice that we have a pullback diagram
$$\xymatrix{\T\dgc \ar[d] \ar[r] & \T\mathsf{ComAlg} \ar[d]\\
\dgc \ar[r] & \mathsf{ComAlg},}$$
where $\mathsf{ComAlg}$ is the $\i$-category of $\mathbb{E}_\i$-ring spectra. As such, an $\cO_{\cE}$-module sheaf is precisely the same as a module sheaf for the underlying sheaf of $\mathbb{E}_\i$-rings. Such a sheaf is the same as an $\mathbb{E}_\i$-ring object in $\cE.$ A module for it is a module for this $\mathbb{E}_\i$-ring object. This point of view is useful in establishing functoriality results.
\end{remark}

\begin{remark}
For any $\SCi$-ringed $\i$-topos $\left(\cE,\cO_{\cE}\right),$ there is a canonical limit-preserving functor
$$U_{\cE}:\Mod_{\cO_{\cE}} \to \Stab\left(\cE\right)\simeq \Shv\left(\cE,\Spt\right),$$ which object-wise assigns to $E \in \cE,$ the underlying spectrum of $\mathscr{M}\left(E\right),$ i.e. the underlying $\mathbb{S}$-module of the underlying $H\mathbb{R}$-module (equivalently chain complex in $\mathbb{R}$-modules) of $\mathscr{M}\left(E\right),$ where $\mathbb{S}$ is the sphere spectrum. To be more precise, if the structure sheaf is $\Z_2$-graded, we have
$$U_{\cE}:\Mod_{\cO_{\cE}} \to \Stab\left(\cE\right)\simeq \Shv\left(\cE,\Spt\right)^2,$$ associating a spectrum object to both the even and odd components of the module sheaf, but we will not dwell on this.
\end{remark}

\begin{definition}
Let  $\cX$ be an object in an $\SCi$-ringed $\i$-topos $\left(\cE,\O\right).$ Then the functor
$$\Tot_{\cX}:=\Omega_{\cE/\cX}^{\i} \circ U_{\cE/\cX}:\Mod_{\cO_{\cX}} \to \cE/\cX,$$
where $\Omega_{\cE/\cX}^{\i}:\Spt\left(\cE\right) \to \cE$ is the right adjoint to $\Sigma^\i_{\cE/\cX,+},$
is called the \textbf{total space} functor.
\end{definition}

\begin{definition}
Given a derived stack $\cX \in \bH,$ an \textbf{$\cO_\cX$-module} is a module sheaf for the $\SCi$-ringed $\i$-topos $\left(\bH/\cX,\cO|_\cX\right).$
\end{definition}

\subsection{Functoriality of module sheaves}\label{sec:funmod}
The $\i$-category of $\SCi$-ringed $\i$-topoi may be defined informally as the $\i$-category whose objects are $\SCi$-ringed $\i$-topoi, and whose morphisms $\left(\cE,\cO_{\cE}\right) \to \left(\cF,\O_{\cF}\right)$ consists of a geometric morphism $$f:\cE \to \cF,$$ together with a morphism of sheaves of $\SCi$-algebras over $\cF$ $$\alpha:\O_{\cF} \to f_*\O_{\cE}.$$ This can be rigorously constructed as a Cartesian fibration over $\Topi,$ c.f. \cite[Section 2.4]{higherme}.

Given such a morphism, if $\mathscr{M}$ is an $\O_{\cE}$-module, then $$f_{\star}\left(\mathscr{M}\right):=\alpha^{\star}\left(f_*\mathscr{M}\right)$$ is an $\cO_{\cF}$-module, where $\alpha^{\star}$ is the functor given by restriction of scalars. This assembles into a functor
$$f_{\star}:\Mod_{\O_\cE} \to \Mod_{\O_\cF}.$$
Notice also that if $\mathscr{N}$ is an $\O_\cF$-module, then $\mathscr{N}$ has the canonical structure of a $f^*\O_\cF$-module, and hence 
$$f^\star\left(\mathscr{N}\right):=\O_\cE \underset{f^*\O_\cF} \otimes f^*\mathscr{N}$$
has the canonical structure of a $\cE$-module. This assembles into a functor
$$f^\star:\Mod_{\O_\cF} \to \Mod_{\O_\cE},$$ which is readily checked to be left adjoint to $f_\star$ \cite[Section 2.5]{LurieSAG}.

\subsubsection{Working internally}

Note that this is not special for for $\SCi$-rings. There is a completely analogous story where $\cE$ is allowed to take values in not-necessarily commutative algebras. We will make the convention that in the non-commutative case, $\Mod_{\O_\cE}$ is the $\i$-category of left $\O_\cE$-modules. The $\i$-category of right $\O_\cE$-modules is thus $\Mod_{\left(\O_\cE\right)^{op}}.$ This will be important later when we discuss $\cD$-modules.

Here is a special case. Suppose that $\left(\cE,\O_{\cE}\right)$ is an $\SCi$-ringed $\i$-topos, and that $\cR$ and $\cS$ are sheaves of not necessarily commutative or bounded $\O_{\cE}$-algebras. An $\cO_{\cE}$-algebra homomorphism $\varphi:\cS \to \cR$ is the same data as a morphism of ringed $\i$-topoi $$f:\left(\cE,\cR\right) \stackrel{\left(id,\alpha\right)}{\longlonglongrightarrow} \left(\cE,\cS\right).$$ A pertinent example to have in mind is $\cE=\Sh\left(M\right)$ for a manifold $M,$ $\cE=\Delta\left(\R\right)$---the constant sheaf with value $\R,$ $\cS=\Ci_M$---smooth functions on $M,$ and $\cR=\cD_M$---the sheaf of linear differential operators on $M,$ locally of finite degree. In this case, there are the standard change of ring functors $$\alpha_! \dashv \alpha^\star \dashv \alpha_\star,$$ with $\alpha_!$ the induced module construction $\cS \underset{\cR} \otimes \left(\blank\right),$ $\alpha^\star$ restriction of scalars, and $\alpha_\star$ the co-induced module functor. Given a $\cS$-module $N,$ the underlying $\cO_{\cE}$-module of $\alpha_\star N$ is the same as $\Mod_\cS\left(\cR,N\right),$ but where the left $\cR$-module structure is induced from that on $\cS$ via $\alpha.$ Notice that we have $$f^\star=\alpha_!\mbox{,   }f_\star=\alpha^\star.$$ As such, we will make the definition
$$f^!:=\alpha_\star.$$


\subsubsection{Module Spec}
Consider the special case when $\left(\cF,\O_{\cF}\right)=\left(\Spc,\Gamma\left(\cO_{\cX}\right)\right),$ where we are using the canonical identification $$\Shv\left(\Spc,\dgc\right)\simeq \dgc,$$ and where $f:\cE \to \Spc$ is the (homotopically) unique map, and $\alpha$ is the identity (since $f_*\O_{\cE}=\Gamma\O_{\cE}$). In this case
$$f_\star=\Gamma:\Mod_{\O_\cE} \to \Mod_{\Gamma\O_\cE}$$ is the global sections functor.

\begin{definition}
With $f$ as in the preceding paragraph, we call $f^\star$ the \textbf{module spec} functor, and denote it $$\Mspec_\cE:\Mod_{\Gamma\O_\cE} \to \Mod_{\O_\cE}.$$
\end{definition}

\begin{lemma}\label{lem:stars}
Suppose that $$\xymatrix{\cE \ar[rd]^-{g} \ar[d]_-{f} & \\ \cF \ar[r]^-{h} & \mathcal{G}}$$ is a commutative diagram of $\SCi$-ringed $\i$-topoi. Then
$$g_\star \simeq h_\star \circ f_\star \mbox{ and } g^{\star} \simeq f^\star \circ h^\star.$$
\end{lemma}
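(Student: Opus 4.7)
The plan is to unwind the definitions of $f_\star$ and $f^\star$, verify compatibility of the two constructions with composition of morphisms of $\SCi$-ringed $\i$-topoi, and then deduce the statement for $g^\star$ formally via uniqueness of left adjoints.

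First I would recall that a morphism of $\SCi$-ringed $\i$-topoi $f:(\cE,\O_\cE)\to (\cF,\O_\cF)$ is the datum of a geometric morphism $f:\cE\to \cF$ together with a map $\alpha_f:\O_\cF\to f_\ast\O_\cE$ of sheaves of $\SCi$-algebras on $\cF$, and that composition in $\Topi^{\SCi}$ is given by composing the geometric morphisms and forming the composite structure map
$$\alpha_g : \O_\mathcal{G} \xrightarrow{\alpha_h} h_\ast\O_\cF \xrightarrow{h_\ast\alpha_f} h_\ast f_\ast \O_\cE \simeq g_\ast \O_\cE.$$
By construction, $f_\star = \alpha_f^{\star}\circ f_\ast$, where $\alpha_f^\star$ denotes restriction of scalars along $\alpha_f$, with the entirely analogous formulas for $h_\star$ and $g_\star$. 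The key observation is that $h_\ast$ preserves algebra and module structures: if $\mathscr{M}$ is an $\O_\cE$-module, then $h_\ast f_\ast \mathscr{M}$ acquires a canonical $h_\ast f_\ast \O_\cE$-module structure, and restriction of scalars along $h_\ast\alpha_f$ gives an $h_\ast \O_\cF$-module equivalent to $h_\ast(f_\star \mathscr{M})$ since $h_\ast$ commutes with restriction of scalars (this is a formal consequence of $h_\ast$ being lax symmetric monoidal with respect to the tensor product of sheaves of modules, or directly from the internal description in Section \ref{sec:funmod}).

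Second, I would organize these identifications into a commutative diagram. Namely, given an $\O_\cE$-module $\mathscr{M}$,
\begin{align*}
(h_\star \circ f_\star)(\mathscr{M}) &= \alpha_h^\star\bigl(h_\ast (\alpha_f^\star f_\ast \mathscr{M})\bigr) \\
&\simeq \alpha_h^\star\bigl((h_\ast\alpha_f)^\star h_\ast f_\ast \mathscr{M}\bigr) \\
&\simeq (h_\ast\alpha_f \circ \alpha_h)^\star(h_\ast f_\ast \mathscr{M}) \\
&\simeq \alpha_g^\star (g_\ast \mathscr{M}) = g_\star(\mathscr{M}),
\end{align*}
where the second equivalence uses that restriction of scalars commutes with the direct image $h_\ast$, the third uses that restriction of scalars composes strictly as a functor of ring maps, and the fourth uses the construction of $\alpha_g$ as a composite. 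These equivalences are natural in $\mathscr{M}$, yielding an equivalence $g_\star \simeq h_\star\circ f_\star$ of functors $\Mod_{\O_\cE}\to \Mod_{\O_\mathcal{G}}$.

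For the second statement, I would argue abstractly using adjunctions. By the discussion at the end of Section \ref{sec:funmod}, the functor $f_\star$ admits the left adjoint $f^\star$, and similarly for $h$ and $g$. Since adjoints compose, the composite $f^\star\circ h^\star$ is left adjoint to $h_\star\circ f_\star$. The first half of the lemma identifies $h_\star\circ f_\star$ with $g_\star$, whose left adjoint is $g^\star$. By uniqueness of adjoints up to canonical equivalence, $g^\star \simeq f^\star\circ h^\star$. The main obstacle I anticipate is verifying the coherence of restriction of scalars commuting with $h_\ast$ at the level of $\i$-categories, i.e.\ the equivalence $h_\ast(\alpha_f^\star \mathscr{N}) \simeq (h_\ast \alpha_f)^\star h_\ast \mathscr{N}$, which is best handled by working internally with the underlying $\mathbb{E}_\infty$-algebras and their module $\i$-categories as sheaves on the respective topoi, as suggested by the remark following the definition of $\Mod_{\O_\cE}$.
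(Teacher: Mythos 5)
Your proof is correct and follows essentially the same route as the paper: the first identity is deduced from $g_* \simeq h_* \circ f_*$ for the underlying geometric morphisms together with how the structure maps compose (you simply spell out the restriction-of-scalars compatibility with $h_*$ that the paper leaves implicit), and the second identity is obtained, exactly as in the paper, by uniqueness of adjoints.
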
 

\begin{proof}
The former follows immediately from the fact that for the direct image functors for the corresponding geometric morphisms satisfies $g_*\simeq h_* \circ f_*.$ The latter then follows from uniqueness of adjoints.
\end{proof}

\begin{corollary} \label{cor:mspec}
For any morphism $f:\cE \to \cF$ of $\SCi$-ringed $\i$-topoi, the following diagram commutes up to homotopy:
$$\xymatrix@C=2.5cm@R=2cm{\Mod_{\Gamma\left(\O_\cF\right)} \ar[r]^-{\Mspec_\cF} \ar[d]_-{\Gamma\left(f\right)_{!}} & \Mod_{\O_\cF} \ar[d]^-{f^\star} \\ \Mod_{\Gamma\left(\O_\cE\right)} \ar[r]^-{\Mspec_\cE} & \Mod_{\O_\cE},}$$ where $$\Gamma\left(f\right)_!\left(M\right)=\Gamma\left(\O_\cE\right) \underset{\Gamma\left(\O_\cF\right)} \otimes M.$$
\end{corollary}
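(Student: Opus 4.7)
The plan is to realize the square in the statement as the image, under the module-pullback operation $(\blank)^\star$, of a commutative square of $\SCi$-ringed $\i$-topoi, and then to deduce the homotopy commutativity from Lemma \ref{lem:stars}.

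Write $f=(f,\alpha)$ with $\alpha:\O_\cF\to f_*\O_\cE$, and let $p_\cE:(\cE,\O_\cE)\to(\Spc,\Gamma(\O_\cE))$ and $p_\cF:(\cF,\O_\cF)\to(\Spc,\Gamma(\O_\cF))$ denote the canonical projections, so that by definition $\Mspec_\cE=p_\cE^\star$ and $\Mspec_\cF=p_\cF^\star$. Applying $\Gamma$ to $\alpha$ produces a homomorphism of $\SCi$-rings $\Gamma(\alpha):\Gamma(\O_\cF)\to\Gamma(\O_\cE)$, which in turn packages into a morphism of $\SCi$-ringed $\i$-topoi $\Gamma(f):(\Spc,\Gamma(\O_\cE))\to(\Spc,\Gamma(\O_\cF))$ with underlying geometric morphism $\mathrm{id}_\Spc$. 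Unwinding the definition of $(\blank)^\star$ for a morphism of ringed topoi whose geometric morphism is the identity, one sees that $\Gamma(f)^\star$ coincides with the base-change functor $\Gamma(f)_!=\Gamma(\O_\cE)\otimes_{\Gamma(\O_\cF)}(\blank)$ appearing in the statement.

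Next, I would verify that the square
$$\xymatrix{(\cE,\O_\cE) \ar[r]^-{f} \ar[d]_-{p_\cE} & (\cF,\O_\cF) \ar[d]^-{p_\cF} \\ (\Spc,\Gamma(\O_\cE)) \ar[r]^-{\Gamma(f)} & (\Spc,\Gamma(\O_\cF))}$$
commutes in $\Topi^{\SCi}$. On underlying geometric morphisms, both composites equal the essentially unique map $\cE\to\Spc$. On structure maps, both composites unravel—using that $p_\cE$ and $p_\cF$ have identity structure maps—to $\Gamma(\alpha):\Gamma(\O_\cF)\to\Gamma(\O_\cE)$. Once this commutative square is in hand, Lemma \ref{lem:stars} applied to the common diagonal $p_\cF\circ f\simeq \Gamma(f)\circ p_\cE$ yields
$$f^\star\circ\Mspec_\cF \;\simeq\; (p_\cF\circ f)^\star \;\simeq\; (\Gamma(f)\circ p_\cE)^\star \;\simeq\; \Mspec_\cE\circ \Gamma(f)_!,$$
which is exactly the asserted homotopy commutativity.

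The only real subtlety is the $\i$-categorical bookkeeping required to promote the pointwise identifications above to a genuine $2$-cell in $\Topi^{\SCi}$ rather than a strict commutation of two parallel morphisms; this is a routine consequence of the Cartesian fibration description of $\Topi^{\SCi}\to\Topi$ from the discussion preceding Definition \ref{dfn:sctopos}, and I do not anticipate any serious obstacle.
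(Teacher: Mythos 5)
Your argument is correct and is exactly the route the paper intends: $\Mspec_\cE$ and $\Mspec_\cF$ are by definition $p_\cE^\star$ and $p_\cF^\star$ for the projections to the ringed point, the square $p_\cF\circ f\simeq \Gamma(f)\circ p_\cE$ commutes in $\Topi^{\SCi}$, and Lemma \ref{lem:stars} applied to the two factorizations of the common composite (noting that $\Gamma(f)^\star$ is base change along $\Gamma(\O_\cF)\to\Gamma(\O_\cE)$, i.e.\ $\Gamma(f)_!$) gives the claimed homotopy commutativity. No substantive difference from the paper's (implicit) proof.
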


\subsection{Quasi-coherent sheaves}\label{sec:qc}

\begin{definition}\label{def:qc}
	Let $\A \in \dgc$ be a $\Ci$-algebra. We can regard $\Speci\left(\A\right)$ as a $\Ci$-ringed $\i$-topos, rather than simply a topological space, i.e. we can consider the pair $$\left(\Shv\left(\underline{\Speci\left(\A\right)}\right),\cO_A\right).$$ Denote by 
	$$\Mspec_\A:\Mod_\A \to \Mod_{\cO_\A}$$
	the functor $\Mspec_\cE$ for $\left(\cE,\cO_\cE\right)$ the above $\Ci$-ringed $\i$-topos. An $\cO_\A$-sheaf over $\underline{\Speci\left(\A\right)}$ is \textbf{quasi-coherent} if is in the essential image of $\Mspec_\A.$ Denote the full subcategory of $\cO_\A$-modules on the quasi-coherent sheaves by $\QC\left(\Speci\left(\A\right)\right).$
	\end{definition}

\begin{proposition}\label{prop:concqc}
For a complete $\Ci$-ring $\A$ and a given $\A$-module $M,$ $\Mspec\left(M\right)$ is the sheafification of the presheaf $\tilde M$ which assigns to an open subset $U$ the module $$\cO_\A\left(U\right) \underset{\A} \otimes M.$$
\end{proposition}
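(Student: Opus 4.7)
The plan is to unfold the definition of $\Mspec_\A$ from Section~\ref{sec:funmod} and reduce the claim to the case $M = \A$ by colimit-preservation. Write $f : \left(\Shv\left(\underline{\Speci\left(\A\right)}\right), \cO_\A\right) \to \left(\Spc, \A\right)$ for the canonical morphism of $\SCi$-ringed $\i$-topoi; here we use completeness of $\A$ to identify $\Gamma\cO_\A \simeq \A$ so that this map is well defined, and $\Mspec_\A$ is by definition the inverse image functor $f^\star$. The general formula recalled in Section~\ref{sec:funmod} gives $\Mspec_\A\left(M\right) \simeq \cO_\A \otimes_{f^*\A} f^*M$, where tensor products of module sheaves are, by construction, sheafifications of the objectwise (presheaf-level) tensor product. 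Since $f^*$ is the constant sheaf functor, $f^*M$ is the sheafification of the constant presheaf $U \mapsto M$ and $f^*\A$ the sheafification of $U \mapsto \A$, so unfolding one obtains $\Mspec_\A\left(M\right) \simeq L \tilde M$, where $L$ denotes sheafification and $\tilde M\left(U\right) := \cO_\A\left(U\right) \otimes_\A M$.

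To verify this agreement rigorously, I would show that the two functors $\Mspec_\A$ and $L \circ \widetilde{\left(\blank\right)}$ from $\Mod_\A$ to $\Mod_{\cO_\A}$ both preserve colimits and agree on $\A$. The former preserves colimits as a left adjoint. The latter does as well, since tensor product $\cO_\A\left(U\right) \otimes_\A \left(\blank\right)$ is a left adjoint for each $U$, colimits of presheaves of modules are computed pointwise, and sheafification is itself a left adjoint. On $M = \A$ one has $\Mspec_\A\left(\A\right) \simeq \cO_\A$ (the structure module goes to the structure sheaf), while $\tilde\A\left(U\right) = \cO_\A\left(U\right) \otimes_\A \A \simeq \cO_\A\left(U\right)$ is already a sheaf, so $L\tilde\A \simeq \cO_\A$. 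Since $\Mod_\A$ is generated under colimits by shifts of $\A$, the two functors must agree on all of $\Mod_\A$.

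The main obstacle is justifying that the manipulations of tensor products and sheafifications above behave as expected in the $\i$-categorical, possibly unbounded derived setting. Concretely, one must verify that the sheaf-level relative tensor product $\cO_\A \otimes_{f^*\A} f^*M$ really is computed as the sheafification of the presheaf of objectwise tensor products, and that sheafification of a presheaf of $\cO_\A$-modules agrees with sheafification of the underlying presheaf of spectra. Both statements follow from the general machinery of presentable symmetric monoidal $\i$-categories applied to $\Mod_{\cO_\A}$, together with the fact that the forgetful functor $\Mod_{\cO_\A} \to \Shv\left(\underline{\Speci\left(\A\right)}, \Spt\right)$ preserves and reflects colimits, so I expect this to be essentially bookkeeping rather than a genuinely new difficulty.
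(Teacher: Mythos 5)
Your first paragraph is essentially a correct proof, and its spirit is the same as the paper's (unfold the definition of $\Mspec_\A$ as a pullback of module sheaves), but the mechanics differ. The paper factors the morphism $t:\left(\Shv\left(\underline{\Speci\left(\A\right)}\right),\cO_\A\right)\to\left(\Spc,\A\right)$ through the presheaf topos $\left(\Psh\left(\mathsf{Op}\left(\underline{\Speci\left(\A\right)}\right)\right),i_*\cO_\A\right)$ and invokes functoriality of $\left(\blank\right)^\star$ (Lemma \ref{lem:stars}) to write $\Mspec_\A=t^\star\simeq i^\star\circ s^\star$: there $s^\star M$ is literally the presheaf $\tilde M$, because $s^*$ is the constant presheaf functor, and $i^\star$ is sheafification, because $i^*=a$ and $\cO_\A$ is a sheaf. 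This packaging absorbs exactly the point you defer to ``bookkeeping,'' namely that the sheaf-level relative tensor product $\cO_\A\otimes_{f^*\A}f^*M$ is computed by sheafifying the objectwise tensor product. Your direct route is viable, but that step does need an argument (sheafification is a colimit-preserving symmetric monoidal left adjoint, hence commutes with the bar construction computing the relative tensor product, and $f^*M$, $f^*\A$ are sheafified constant presheaves); the paper's factorization through the presheaf topos is a clean way to get all of this for free.

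There is, however, a genuine gap in your second paragraph. Two colimit-preserving functors $\Mod_\A\to\Mod_{\cO_\A}$ that agree on the single object $\A$ (even on $\A$ and all its shifts) need not be equivalent: a colimit-preserving functor out of $\Mod_\A$ is determined by the image of $\A$ \emph{together with its $\A$-action}, equivalently one needs a natural transformation along which to propagate the identification through colimit diagrams, not merely an abstract equivalence $\Mspec_\A\left(\A\right)\simeq\cO_\A\simeq L\tilde\A$ of objects. The repair is cheap: first construct the comparison map, e.g.\ the unit $M\to\Gamma_\A\Mspec_\A\left(M\right)$ together with $\cO_\A\left(U\right)$-linearity of restriction yields a map of presheaves $\tilde M\to\Mspec_\A\left(M\right)$, hence a natural map $L\tilde M\to\Mspec_\A\left(M\right)$ after sheafifying; then your colimit argument (or a check on $\A$ and its shifts) shows this map is an equivalence. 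Alternatively, show directly that $M\mapsto L\tilde M$ is left adjoint to $\Gamma_\A$ and conclude by uniqueness of adjoints. With either repair your argument closes; as written, the inference ``the two functors must agree on all of $\Mod_\A$'' is unjustified, although it is also redundant once the identification in your first paragraph is carried out carefully.
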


\begin{proof}
	Since $\A$ is complete, $\A\simeq\Gamma\left(\O_\A\right),$ so the unique geometric morphism 
	$$\Shv\left(\underline{\Speci\left(\A\right)}\right) \to \Spc$$
	canonically gives rise to a morphism of $\SCi$-ringed $\i$-topoi 
	$$t:\left(\Shv\left(\underline{\Speci\left(\A\right)}\right),\cO_A\right) \to \left(\Spc,\A\right).$$
	Denoting by $i$ the inclusion of the sheaf $\i$-category
$\Shv\left(\underline{\Speci\left(\A\right)}\right)$ into the presheaf $\i$-category $\Psh\left(\mathsf{Op}\left(\underline{\Speci\left(\A\right)}\right)\right),$
and denoting by $a$ its left adjoint (sheafification), the pair $\left(i,a\right)$ constitutes a geometric morphism $$\Shv\left(\underline{\Speci\left(\A\right)}\right) \to \Psh\left(\mathsf{Op}\left(\underline{\Speci\left(\A\right)}\right)\right).$$ By abuse of notation, denote the induced map of $\SCi$-ringed $\i$-topoi
$$i:\left(\Shv\left(\underline{\Speci\left(\A\right)}\right),O_\A\right) \to \left(\Psh\left(\mathsf{Op}\left(\underline{\Speci\left(\A\right)}\right)\right),i_*\O_\A\right).$$
Since $\Spc$ is the terminal $\i$-topos, there is a unique factorization of $t$ as
$$\xymatrix{\left(\Shv\left(\underline{\Speci\left(\A\right)}\right),O_\A\right) \ar[d]_-{i} \ar[r]^-{t} & \left(\Spc,\A\right)\\
\left(\Psh\left(\mathsf{Op}\left(\underline{\Speci\left(\A\right)}\right)\right),i_*\O_\A\right). \ar[ru]_-{s} &}$$
Note that $s^*$ is the constant presheaf functor, so for $M$ any $\A$-module and $U$ any open subset,
$$s^\star\left(M\right)\left(U\right)=\O_\A\left(U\right) \underset{\A} \otimes M.$$
Now, for $\cF$ any $i_*\O_\A$-module, since $i^*i_*\O\A  \simeq \O_\A$ since $i^*=a$ and $\O_\A$ is a sheaf, it follows that $i^\star \cF \simeq a\left(\cF\right),$ i.e. $i^\star \cF$ is simply the sheafification of $\cF.$ Combining these two observations with the fact that
$$\Mspec\left(M\right)=t^\star M \simeq i^\star \left(s^\star M\right),$$
yields the desired result.
\end{proof}

\begin{theorem} \label{thm:5.1.33}\cite[Lemma 5.1.33]{Nuiten},
	Let $\A$ be a complete $\SCi$-ring. Then every $\cO_\A$-module is quasi-coherent, and hence $$\QC\left(\Speci\left(\A\right)\right) = \Mod_{\cO_\A}.$$ Moreover, the right adjoint $\Gamma_\A \vdash \Mspec_\A$ is fully faithful, and the adjunction $\Mspec_\A \dashv \Gamma_\A$ restricts to an equivalence of $\i$-categories between $\Mod_{\cO_\A}$ and the full subcategory of $\Mod_{\A}$ on those $\A$-modules $M$ such that the unit $$\eta_{M}:M \to \Gamma_\A\left(\Mspec\left(M\right)\right)$$ is an equivalence.
	\end{theorem}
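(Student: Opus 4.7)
The plan is to establish both assertions simultaneously by exploiting that the canonical basis $\mathcal{B} = \{U_a : a \in \pi_0(\A)_0\}$ for the topology on $\Speci(\A)$ is closed under finite intersection (since $U_a \cap U_b = U_{ab}$), so any $\cO_\A$-module sheaf is determined by its restriction to $\mathcal{B}$, together with the crucial fact that $\cO_\A(U_a) \simeq \A[a^{-1}]$ with $\A \to \A[a^{-1}]$ flat by Lemma \ref{lem:5.1.13}. All the real content sits in (i) showing $\Mspec_\A(M)$ takes the expected value $\A[a^{-1}]\otimes_\A M$ on each basic open, and (ii) showing that an arbitrary $\cO_\A$-module is recovered from its global sections by the same base-change formula.

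First I would verify (i): the presheaf $\tilde M$ of Proposition \ref{prop:concqc} is already a sheaf on $\mathcal{B}$, so no sheafification is required. The point is that $\tilde M \simeq \cO_\A \otimes_\A M$ as a presheaf on $\mathcal{B}$, and $\cO_\A$ itself satisfies Čech descent on $\mathcal{B}$ (this is essentially the hypercompleteness established at the beginning of Section \ref{sec:derived_topos}, combined with \cite[Lemma 5.1.25]{Nuiten}). Because every map appearing in the Čech nerve of a basic cover is a flat localization map, tensoring with the fixed $\A$-module $M$ preserves the totalization computing descent, so $\tilde M$ remains a sheaf. Consequently $\Mspec_\A(M)(U_a) \simeq \A[a^{-1}] \otimes_\A M$ as an $\A[a^{-1}]$-module.

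Next, for an arbitrary $\cO_\A$-module $\mathcal{F}$ with $M := \Gamma(\mathcal{F})$, I would exhibit the counit $\Mspec_\A(\Gamma_\A(\mathcal{F})) \to \mathcal{F}$ as a basis-wise equivalence. On a basic open this is the canonical comparison
\[
\A[a^{-1}] \otimes_\A \mathcal{F}(\Speci\A) \longrightarrow \mathcal{F}(U_a),
\]
arising from the restriction map $M \to \mathcal{F}(U_a)$ after extension of scalars along the flat localization $\A \to \A[a^{-1}]$. To see this is an equivalence I would apply Lemma \ref{lem:stars} and Corollary \ref{cor:mspec} to the open inclusion $j_a : U_a \hookrightarrow \Speci(\A)$, identifying $U_a \simeq \Speci(\A[a^{-1}])$, so that $j_a^\star \mathcal{F}$ is an $\cO_{U_a}$-module whose global sections carry a canonical $\A[a^{-1}]$-module structure refining that of $M$ along $\A \to \A[a^{-1}]$. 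The main obstacle — and the step requiring actual work — is showing that $\Gamma(j_a^\star \mathcal{F}) \simeq \A[a^{-1}] \otimes_\A M$, i.e.\ that forming global sections commutes with restriction to a basic open. This I would prove by Čech descent: expressing $\mathcal{F}(U_a)$ via a totalization over the Čech nerve of a basic cover (using that $\cO_\A$ is a hypersheaf and $\mathcal{F}$ is a module over it), and then using flatness of the relevant localizations to push the tensor $\A[a^{-1}]\otimes_\A(-)$ past the totalization.

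Granting these two facts, the theorem follows formally. Equivalence of the counit $\Mspec_\A \Gamma_\A \to \mathrm{id}$ is, by a standard triangle-identity argument, equivalent both to $\Gamma_\A$ being fully faithful and to $\Mspec_\A$ being essentially surjective — so every $\cO_\A$-module is quasi-coherent and $\QC(\Speci(\A)) = \Mod_{\cO_\A}$. Finally, for any adjunction $L \dashv R$ with $R$ fully faithful, $L$ restricts to an equivalence between the essential image of $R$ (equivalently, the full subcategory on those objects for which the unit $\eta$ is an equivalence) and the entire codomain; applying this to $\Mspec_\A \dashv \Gamma_\A$ yields the last clause of the theorem.
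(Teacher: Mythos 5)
There is a genuine gap, and it sits exactly at the step you flag as the main obstacle. First, your step (i) is false as stated: the presheaf $\tilde M : U_a \mapsto \A\left[a^{-1}\right]\otimes_{\A} M$ is \emph{not} in general a sheaf, and sheafification genuinely changes global sections. If it were a sheaf for every $M$, then taking $a=1$ would give $\Gamma_\A\left(\Mspec_\A\left(M\right)\right)\simeq M$ for all $M$, i.e.\ every $\A$-module would be complete --- contradicting the whole point of the theorem (and the discussion in Section \ref{sec:qc}) that completion is a nontrivial reflector. Concretely, for $\A=\Ci\left(\R\right)$ and $M$ the quotient of $\A$ by the ideal of compactly supported functions, every stalk of $\Mspec_\A\left(M\right)$ vanishes (any germ is the germ of a compactly supported function), so $\Mspec_\A\left(M\right)\simeq 0$ while $M\neq 0$. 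Fortunately you only need the base-change statement for modules of the form $\Gamma_\A\left(\cF\right)$, so this particular error is repairable.

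The irreparable part is the mechanism you propose for step (ii), the equivalence $\A\left[a^{-1}\right]\otimes_\A \cF\left(\Speci\left(\A\right)\right)\to\cF\left(U_a\right)$ for an \emph{arbitrary} $\cO_\A$-module $\cF$. Flatness of $\A\to\A\left[a^{-1}\right]$ does not let you push $\A\left[a^{-1}\right]\otimes_\A\left(\blank\right)$ past the totalization computing descent: tensoring is a colimit-preserving functor and does not commute with the limits over $\Delta$ (nor with the infinite products in the ordinary sheaf condition), and flatness gives no convergence control. A decisive sanity check that no such formal argument can work: over an ordinary affine scheme $\Spec\left(R\right)$ one also has flat localizations and Zariski descent for the structure sheaf, yet the conclusion fails there --- not every $\cO_{\Spec\left(R\right)}$-module is quasi-coherent (extension by zero from a basic open is a standard counterexample). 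So the proof must use input special to $\Ci$-geometry. This is exactly what the cited Lemma 5.1.33 of Nuiten supplies (the paper itself does not reprove the statement): completeness of $\A$ together with smooth partitions of unity and bump functions subordinate to covers of $\Speci\left(\A\right)$, which split the relevant \v{C}ech data and force every module sheaf to be determined by its global sections via localization. Your formal endgame (counit an equivalence $\Rightarrow$ $\Gamma_\A$ fully faithful, $\Mspec_\A$ essentially surjective, and the restriction to the complete modules by the triangle identities) is fine, but the analytic heart of the proof is missing.
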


\begin{definition}
The full subcategory of $\Mod_{\A}$ on those sheaves for which the unit is an equivalence is called the subcategory of \textbf{complete} $\A$-modules, which we denote by $\widehat{\Mod}_{\A}$. We also denote the reflector $\Gamma \circ \Mspec$ by $\widehat{\left(\blank\right)}.$
\end{definition}

\begin{remark}\label{rmk:presMod}
	Since $\widehat{\Mod}_{\A}$ is a reflective localization of the presentable $\i$-category $\Mod_{\A},$ $\widehat{\Mod}_{\A}$ is also presentable.
	\end{remark}

\begin{remark}\cite[Example 5.1.36]{Nuiten}
Since the subcategory $\widehat{\Mod}_{\A},$ is reflective, it is closed under limits and retracts Moreover, for complete $\SCi$-algebras $\A,$ 
\begin{eqnarray*}
\Gamma \cO_{\Speci\left(\A\right)} &\simeq& \A \\
&\simeq & \Gamma \Mspec \A,
\end{eqnarray*}
so $\A \in \widehat{\Mod}_{\A}.$ It follows that $\widehat{\Mod}_{\A}$ contains all 
finitely presented $\A$-modules.
\end{remark}

\begin{remark}
As $\widehat{\Mod}_{\A} \simeq \Mod_{\Speci\left(\A\right)},$ it follows that it is a symmetric monoidal with an inherited tensor product $\widehat{\otimes}$ from that of $\Mod_{\Speci\left(\A\right)}$. By checking on stalks, one can see that in fact
$$M \widehat{\otimes} N \simeq \widehat{M \otimes N}.$$
\end{remark}

Let $\varphi:\A \to \cB$ be a morphism of $\SCi$-rings. Then we have the induced morphism of $\SCi$-ringed $\i$-topoi $$\Speci\left(\varphi\right):\left(\Shv\left(\underline{\Speci\left(\cB\right)}\right),\O_\cB\right) \to \left(\Shv\left(\underline{\Speci\left(\cA\right)}\right),\O_\cA\right).$$ Consider the functor
$$\Speci\left(\varphi\right)^\star:\Mod_{\O_\A} \to \Mod_{\O_\cB}.$$ Under the identifications
$\Mod_{\O_\A} \simeq \widehat{\Mod}_\A,$ and $\Mod_{\O_\cB} \simeq \widehat{\Mod}_{\cB}$, this induces a functor $$\widehat{\Mod}_\A \to \widehat{\Mod}_\cB$$

\begin{definition}\label{dfn:comind}
	For a complete $\A$-module $M,$ we denote the image of $M$ under the above functor by
	$$\cB \underset{\A} {\widehat{\otimes}} M,$$ and call it the \textbf{completed induced module of $M$}.
	\end{definition}

\begin{proposition}
	There is a canonical identification
	$$\widehat{\cB \underset{\A} {\otimes} M} \simeq \cB \underset{\A} {\widehat{\otimes}} M.$$
	\end{proposition}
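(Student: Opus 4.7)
The plan is to deduce this as an essentially immediate consequence of Corollary \ref{cor:mspec} applied to the morphism of $\SCi$-ringed $\i$-topoi $f = \Speci(\varphi)$. First, I would unwind Definition \ref{dfn:comind}: the completed induced-module functor $\cB \widehat{\otimes}_\A (-): \widehat{\Mod}_\A \to \widehat{\Mod}_\cB$ is, by construction, the composite
$$\widehat{\Mod}_\A \xrightarrow{\Mspec_\A} \Mod_{\O_\A} \xrightarrow{\Speci(\varphi)^\star} \Mod_{\O_\cB} \xrightarrow{\Gamma_\cB} \widehat{\Mod}_\cB,$$
where the outer two arrows are the mutually inverse equivalences supplied by Theorem \ref{thm:5.1.33}. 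Hence for a complete $M$ one has $\cB \widehat{\otimes}_\A M \simeq \Gamma_\cB\bigl(\Speci(\varphi)^\star\, \Mspec_\A(M)\bigr)$.

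Next, I would apply Corollary \ref{cor:mspec} to $f = \Speci(\varphi)$. Interpreting $\Speci(\A)$ and $\Speci(\cB)$ as $\SCi$-ringed $\i$-topoi and identifying $\Gamma(\O_\A)$ and $\Gamma(\O_\cB)$ with $\A$ and $\cB$ (using that $\Speci$ factors through $\Ci$-completion, so one may pass to complete representatives without changing either side of the diagram), the Corollary produces a canonical equivalence
$$\Speci(\varphi)^\star \circ \Mspec_\A \;\simeq\; \Mspec_\cB \circ \bigl(\cB \otimes_\A (-)\bigr).$$
Evaluating at $M$ gives $\Speci(\varphi)^\star \Mspec_\A(M) \simeq \Mspec_\cB(\cB \otimes_\A M)$.

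Finally, applying $\Gamma_\cB$ and using the definition $\widehat{(-)} = \Gamma_\cB \circ \Mspec_\cB$ of the completion functor on $\Mod_\cB$ yields
$$\cB \widehat{\otimes}_\A M \;\simeq\; \Gamma_\cB\bigl(\Mspec_\cB(\cB \otimes_\A M)\bigr) \;=\; \widehat{\cB \otimes_\A M},$$
which is the claimed equivalence. There is no substantive obstacle here, since every step invokes only already-established results; the only minor bookkeeping is the identification of $\Gamma(\O_\A)$ with $\A$ when $\A$ is not complete, which is harmless because both $\Mspec$ and the completion functor are invariant under $\A \mapsto \widehat{\A}$, so the argument reduces formally to the complete case.
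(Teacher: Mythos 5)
Your argument is correct, but it is not the route the paper takes. The paper proves this by direct computation: using Proposition \ref{prop:concqc}, it identifies $\widehat{\cB \otimes_\A M}$ with global sections of the sheafification of the presheaf on $\mathbf{Bas}\left(\Speci\left(\cB\right)\right)$ sending $U_b$ to $\cB\left[b^{-1}\right] \underset{\cB}{\otimes} \left(\cB \underset{\A}{\otimes} M\right)$, identifies $\cB \widehat{\otimes}_\A M$ with global sections of the sheafification of the presheaf sending $U_b$ to $\cB\left[b^{-1}\right] \underset{\A}{\otimes} M$, and observes these presheaves agree. You instead argue purely formally: unwinding Definition \ref{dfn:comind} via the equivalences of Theorem \ref{thm:5.1.33} to write $\cB \widehat{\otimes}_\A M \simeq \Gamma_\cB\bigl(\Speci(\varphi)^\star \Mspec_\A(M)\bigr)$, then invoking the base-change compatibility of $\Mspec$ from Corollary \ref{cor:mspec} and the definition $\widehat{(-)} = \Gamma \circ \Mspec$. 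Each step is licensed by results already in the paper, so there is no gap; your treatment of the non-complete case is also fine (and in the intended setting $\A$ and $\cB$ are complete anyway, so $\Gamma(\O_\A) \simeq \A$ and $\Gamma(\O_\cB) \simeq \cB$ directly). The trade-off: your route avoids touching sheafification altogether and makes the statement a formal consequence of the naturality of $\Mspec$, while the paper's computation exhibits the two sides explicitly on the basic opens, a local description it effectively reuses in the proof of Corollary \ref{cor:mspecon}.
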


\begin{proof}
	On one hand, $\widehat{\cB \underset{\A} {\otimes} M}$ is global sections of the sheafification of the presheaf on $\mathbf{Bas}\left(\Speci\left(\cB\right)\right)$ which assigns to $U_b=\Speci\left(\cB\left[b^{-1}\right]\right)$ the module
	$$\cB\left[b^{-1}\right] \underset{\cB} {\otimes} \left[\cB \underset{A} {\otimes} M\right].$$ On the other hand, $\cB \underset{\A} {\widehat{\otimes}} M$ is global sections of the sheafification of the presheaf which assigns to $U_b$ the module $$\cB\left[b^{-1}\right] \underset{\A} {\otimes} M,$$ but these presheaves are canonically equivalent.
	\end{proof}

\begin{lemma}\label{lem:sheafslice}
	Let $\left(\cE,\O_\cE\right)$ be an $\SCi$-ringed $\i$-topos. Then the composite functor
\begin{eqnarray*}
	\cE^{op} &\to& \widehat{\mathbf{Cat}}_\i\\
	E &\mapsto& \Mod_{\O_{\cE}|_E}
	\end{eqnarray*}
is a sheaf (i.e. preserves small limits).
\end{lemma}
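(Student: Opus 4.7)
The plan is to deduce the sheaf property from the pullback definition of $\Mod_{\O_\cE|_E}$ together with the general descent principle that for any presentable $\i$-category $\sC$, the assignment $E \mapsto \Shv(\cE/E, \sC)$ is a sheaf of $\i$-categories on $\cE$.

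First I would set up the descent principle. The functor $\cE \to \Topi$ sending $E \mapsto \cE/E$ converts colimits in $\cE$ into limits of $\i$-topoi; this is the standard fact that slicing an $\i$-topos by a colimit is the limit of the slices, and it is equivalent to the effectivity of colimits in $\cE$. Composing with the functor $\cF \mapsto \Shv(\cF, \sC)$ (which itself converts limits of $\i$-topoi to limits of $\i$-categories, since $\sC$-valued sheaves are limit-preserving functors) yields a presheaf on $\cE$ with values in $\widehat{\Cat}_\i$ that satisfies descent. I would apply this to both $\sC = \T\dgc$ and $\sC = \dgc$, producing sheaves
\[ F_1(E) := \Shv(\cE/E, \T\dgc), \qquad F_2(E) := \Shv(\cE/E, \dgc). \]
The coCartesian fibration $q : \T\dgc \to \dgc$ (see Remark \ref{rmk:cocart}) induces a morphism of sheaves $q_* : F_1 \to F_2$.

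Next, I would show that the structure sheaf assembles into a global section of $F_2$. Concretely, for each $E$ the restriction $\O_\cE|_E = \pi_E^\ast \O_\cE$ (where $\pi_E: \cE/E \to \cE$ is the étale geometric morphism) is an object of $F_2(E)$, and for any $f : E \to F$ the identification $\pi_E \simeq \pi_F \circ f$ gives a canonical equivalence $f^\ast \O_\cE|_F \simeq \O_\cE|_E$. This exhibits $E \mapsto \O_\cE|_E$ as a morphism of sheaves $\mathbf{1} \to F_2$ from the terminal sheaf. Taking the pullback in $\widehat{\Cat}_\i$ valued sheaves,
\[
\cM \;:=\; \mathbf{1} \underset{F_2}{\times} F_1,
\]
we obtain a new sheaf, since pullbacks commute with limits and therefore preserve the descent condition. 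Unwinding at a fixed $E$, the value $\cM(E)$ is the fiber of $q_*: F_1(E) \to F_2(E)$ over $\O_\cE|_E$, which is exactly the definition of $\Mod_{\O_\cE|_E}$.

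The main technical subtlety — and the place that requires the most care — is verifying that the assignment $E \mapsto \O_\cE|_E$ is genuinely coherent as a natural transformation of $\widehat{\Cat}_\i$-valued sheaves (rather than being defined only up to incoherent equivalence at each stage). This amounts to checking that the family of pullback equivalences $f^\ast(\O_\cE|_F) \simeq \O_\cE|_E$ assembles into a simplicial identity data, which follows formally from the fact that $\O_\cE$ is itself a (geometric morphism from $\cE$ to) a fixed $\SCi$-ringed $\i$-topos, so all the restrictions are governed by a single universal diagram over $\cE$. Once this coherence is in place, the conclusion is automatic from general nonsense about limits of $\i$-categories. I would expect no other substantive obstacles; everything else is standard descent theory.
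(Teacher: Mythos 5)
Your argument is correct and is essentially the proof the paper relies on (the paper simply defers to the argument of \cite[Remark 2.1.11]{dagviii}): descent for $\Shv\left(\cE/E,\sC\right)$ with $\sC$ a fixed presentable $\i$-category, applied to $\sC=\T\dgc$ and $\sC=\dgc,$ combined with the observation that the pullback defining $\Mod_{\O_{\cE}|_E}$ is computed objectwise, so that a limit of limits is again a limit. The two points you single out as delicate are in fact easy: the coherence of $E \mapsto \O_{\cE}|_E$ is automatic because $\cE^{op}$ has an initial object, so a map of presheaves $\mathbf{1} \to \Shv\left(\cE/\left(\blank\right),\dgc\right)$ is precisely the single datum of $\O_{\cE} \in \Shv\left(\cE,\dgc\right)$; and the descent claim for $\Shv\left(\blank,\sC\right)$ should be justified not by the parenthetical you give but by writing $\cE/E \simeq \underset{\alpha}\colim \, \cE/E_\alpha$ in $\mathfrak{Pr}^L$ and using that $\left(\blank\right)\otimes\sC$ preserves colimits there, then passing to the right adjoints to recover the limit along restriction functors.
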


\begin{proof}
This follows almost verbatim by the proof of \cite[Remark 2.1.11]{dagviii}.
\end{proof}	

Denote by $\QC$ the composite functor
$$\sD^{op} \hookrightarrow \left(\dgc\right) \stackrel{\Speci}{\longlongrightarrow} \left(\mathfrak{Top}_\i^{\SCi}\right)^{op} \stackrel{\Mod}{\longlongrightarrow} \widehat{\mathbf{Cat}}_\i$$ sending an algebra $\A$ to the $\i$-category $\Mod_{\O_A} \simeq \widehat{\Mod}_\A.$

\begin{proposition}\label{prop:QCsheaf}
	The above presheaf of large $\i$-categories is a sheaf.
	\end{proposition}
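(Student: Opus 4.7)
The plan is to reduce the sheaf condition on $\sD^{op}$ to a sheaf condition on the basic opens of each affine, and then apply Lemma \ref{lem:sheafslice} to the $\SCi$-ringed $\i$-topos $(\Shv(\underline{\Speci(\A)}), \O_\A)$. Since the sheaf condition for a $\widehat{\mathbf{Cat}}_\i$-valued presheaf is still a limit condition, the $\i$-category valued analogue of Remark \ref{rmk:restrict} applies: to check that $\QC$ is a sheaf, it suffices to verify that for each $\A \in \sD$ the restriction
\begin{equation*}
\mathbf{Bas}(\Speci(\A))^{op} \to \sD^{op} \xrightarrow{\QC} \widehat{\mathbf{Cat}}_\i, \qquad U_a \mapsto \QC(\A[a^{-1}])
\end{equation*}
is a sheaf on the basis (with its induced topology).

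Next I would identify this restricted presheaf with $U \mapsto \Mod_{\O_\A|_U}$ restricted to the basis. For each basic open $U_a = \Speci(\A[a^{-1}]) \hookrightarrow \Speci(\A)$, the open inclusion of topoi $\Shv(U_a) \simeq \Shv(\underline{\Speci(\A)})/U_a \to \Shv(\underline{\Speci(\A)})$ upgrades to an equivalence of $\SCi$-ringed $\i$-topoi
\begin{equation*}
(\Shv(U_a), \O_\A|_{U_a}) \simeq (\Shv(\underline{\Speci(\A[a^{-1}])}), \O_{\A[a^{-1}]}),
\end{equation*}
since $\O_\A|_{U_a}$ is, by its very construction as a sheafification, the structure sheaf of $\Speci(\A[a^{-1}])$. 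Combining this with Theorem \ref{thm:5.1.33} gives $\Mod_{\O_\A|_{U_a}} \simeq \widehat{\Mod}_{\A[a^{-1}]} \simeq \QC(\A[a^{-1}])$, and naturality in $U_a$ follows because the localization maps $\A[a^{-1}] \to \A[(ab)^{-1}]$ induce the restriction-of-scalars functors on both sides.

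Finally, Lemma \ref{lem:sheafslice}, applied to $(\Shv(\underline{\Speci(\A)}), \O_\A)$, says that the functor $U \mapsto \Mod_{\O_\A|_U}$ is a sheaf on the whole topos $\Shv(\underline{\Speci(\A)})$. Restriction of this sheaf along the embedding of the basis $\mathbf{Bas}(\Speci(\A)) \hookrightarrow \Shv(\underline{\Speci(\A)})$ yields a sheaf on the basis, which under the identification above coincides with the restriction of $\QC$. Combining the two reductions completes the argument.

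The main technical point — and the only place that requires genuine care — is establishing the naturality of the equivalence $\Mod_{\O_\A|_{U_a}} \simeq \QC(\A[a^{-1}])$ as $a$ varies; the identification at a single basic open is essentially a restatement of Theorem \ref{thm:5.1.33} (together with the description of $\O_\A$ via sheafification), but keeping track of how the reflective localization $\widehat{(\blank)}$ interacts with restriction along open embeddings across all of $\mathbf{Bas}(\Speci(\A))$ requires verifying compatibility with the change-of-rings functor $\Speci(\varphi)^\star$ of Definition \ref{dfn:comind}, and this can be done by invoking Lemma \ref{lem:stars} to reorganize the compositions of $(-)^\star$ along the inclusions $U_{ab} \hookrightarrow U_a \hookrightarrow \Speci(\A)$.
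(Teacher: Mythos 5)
Your argument is correct and follows essentially the same route as the paper: both proofs rest on Lemma \ref{lem:sheafslice} applied to the small ringed $\i$-topos $\left(\Shv\left(\underline{\Speci\left(\A\right)}\right),\O_\A\right)$ of an affine, together with the identification of the restriction of $\QC$ to opens of $\Speci\left(\A\right)$ with the module-sheaf functor $U \mapsto \Mod_{\O_\A|_U}$. The only differences are cosmetic: the paper verifies descent for an arbitrary open cover of an affine directly, using that the colimit of the \v{C}ech nerve of the cover is the terminal object of the small topos, rather than reducing to basic opens via an analogue of Remark \ref{rmk:restrict}; and your phrase ``restriction-of-scalars functors'' for the transition maps should instead refer to the inverse image functors (completed base change, as in Definition \ref{dfn:comind} and Remark \ref{rmk:comind}), which is in fact what your final paragraph uses.
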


\begin{proof}
Let $\sC$ be an $\i$-category with small limits. Define a functor $G:\left(\mathfrak{Top}_\i^{\SCi}\right)^{op} \to \sC$ to be a sheaf if and only if for each $\left(\cE,\O_\cE\right),$ $\chi_{\cE}^*G:\cE^{op} \to \sC$ preserves small limits, where
\begin{eqnarray*}
	\chi_{\cE}:\cE &\to& \mathfrak{Top}_\i^{\SCi}\\
	E &\mapsto& \left(\cE/E,\O_{\cE}|_E\right).
	\end{eqnarray*}
	 For each $\left(\cE,\O_{\cE}\right),$ the sheaf from Lemma \ref{lem:sheafslice} is precisely $\chi_{\cE}^*\Mod,$ hence $\Mod$ is a sheaf on $\mathfrak{Top}_\i^{\SCi}.$ Now let $D$ be an element of $\sD,$ and let $\left(U_\alpha \hookrightarrow D\right)$ be an open cover by elements of $\sD.$ Then in the topos of sheaves $\Shv\left(\underline{\Speci}\left(\A\right)\right)$ on the underlying space of $D,$ the colimit of the associated \v{C}ech nerve of the cover is the terminal object (the whole space $D$). This means that $D$ is the \emph{limit} in the opposite category, and since the restriction of $\Mod$ preserves limits, we have that $\QC\left(D\right)$ is the limit of the cosimplicial diagram of $\QC\left(U_\alpha\right).$ Thus $\QC$ is a sheaf.
	\end{proof}
	
	The sheaf $\QC$ of $\i$-categories on $\sD$ is equivalent to a sheaf of $\i$-categories on $\bH.$ Concretely, one may take the left Kan extension of $\QC$ along the Yoneda embedding of $\sD$ into $\bH$ of $\QC.$ We will abuse notation and write this also as
	$$\QC:\bH^{op} \to \widehat{\Cat}_\i.$$ 
	
\begin{definition}
Let $E \in \bH$ be a derived stack. Then the $\i$-category of \textbf{quasi-coherent sheaves} on $E$ is the $\i$-category $\QC\left(E\right).$ For $f: Y \to E$ any morphism in $\bH$, we denote by $f^\star$ the functor
$$\QC\left(f\right):\QC\left(E\right) \to \QC\left(F\right)$$
and call it the \textbf{inverse image} functor.
\end{definition}

\begin{remark}\label{rmk:comind}
Let $f:\Speci\left(\cB\right) \to \Speci\left(\A\right)$ be a morphism in $\sD.$ Then, upon the identifications of quasi-coherent sheaves with complete modules:
$$\QC\left(\Speci\left(\cB\right)\right)\simeq \widehat{\Mod}_{\cB}$$ and
$$\QC\left(\Speci\left(\cA\right)\right)\simeq \widehat{\Mod}_{\cA},$$
$$f^\star:\widehat{\Mod}_{\cA} \to \widehat{\Mod}_{\cB}$$ can be identified with the completed induced module functor (Definition \ref{dfn:comind}).
\end{remark}

\begin{corollary}\label{cor:limrep}
	Let $\cX \in \bH,$ then the $\i$-category of quasi-coherent sheaves can be presented as
	$$\QC\left(\cX\right) \simeq \underset{\Speci\left(\A\right) \to X} \lim \widehat{\Mod}_\A,$$ with $\A$ ranging over all of $\sD.$
\end{corollary}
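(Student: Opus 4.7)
The plan is to derive the limit formula directly from the sheaf property of $\QC$ together with the density of representables in the $\i$-topos $\bH$. First, I would recall that since $\bH=\Shv(\sD)$ is the topos of sheaves on the derived site, every object $\cX\in\bH$ can be written canonically as a colimit of representables,
$$\cX \simeq \underset{\Speci\left(\A\right)\to\cX}\colim \Speci\left(\A\right),$$
with the colimit indexed by the slice category $\sD/\cX$ (or, equivalently, by the category of elements of $\cX$ viewed as a sheaf on $\sD$). This is the standard density statement for sheaves on a Grothendieck site, combining the Yoneda lemma with the fact that the sheafification of a presheaf colimit of representables writes any sheaf as such a colimit.

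Next, I would invoke Proposition \ref{prop:QCsheaf}, which establishes that the functor $\QC\colon \sD^{op}\to\widehat{\mathbf{Cat}}_\infty$ is a sheaf of $\i$-categories on $\sD$. By the general correspondence between sheaves on a site and limit-preserving functors out of the opposite of the associated $\i$-topos, this sheaf has an essentially unique extension to a functor $\QC\colon\bH^{op}\to\widehat{\mathbf{Cat}}_\infty$ that sends colimits in $\bH$ to limits in $\widehat{\mathbf{Cat}}_\infty$. Concretely, this extension is the right Kan extension of $\QC$ along $y^{op}\colon\sD^{op}\hookrightarrow\bH^{op}$, and its value on $\cX\in\bH$ is exactly the claimed limit; this right Kan extension agrees (on representables) with the original $\QC$ because the open covering topology on $\sD$ is subcanonical on the completed site (Remark \ref{rmk:subcan} together with the discussion of $\widehat{\left(\blank\right)}$).

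Combining these two facts, I compute
$$\QC\left(\cX\right)\simeq \QC\!\left(\underset{\Speci\left(\A\right)\to\cX}\colim \Speci\left(\A\right)\right)\simeq \underset{\Speci\left(\A\right)\to\cX}\lim \QC\left(\Speci\left(\A\right)\right).$$
The final step identifies each value $\QC\left(\Speci\left(\A\right)\right)$ with $\widehat{\Mod}_{\A}$; this is Theorem \ref{thm:5.1.33} together with the definition of $\QC$ on affine objects (Definition \ref{def:qc} and Remark \ref{rmk:comind}), which shows that the transition functors in the limit diagram are precisely the completed base-change functors $\cB\,\widehat{\otimes}_{\A}\,(\blank)$.

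The main subtlety, and the step I would take the most care with, is justifying that the extension of $\QC$ from $\sD$ to $\bH$ really does send the canonical colimit presentation of $\cX$ to the limit over the over-category $\sD/\cX$ of all maps $\Speci(\A)\to\cX$ (ranging over all $\A\in\sD$, not merely a generating subset). This requires observing that the indexing diagram $\sD/\cX$ is sifted enough — in particular, that it is cofinal in whichever smaller presentation one might use — and that the right-Kan-extension formula is compatible with the sheafification involved in expressing $\cX$ as a colimit of representables. Once the cofinality and the compatibility with sheafification are in place, the corollary follows formally.
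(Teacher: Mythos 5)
Your proposal is correct and follows essentially the same route as the paper: write $\cX$ as the canonical colimit of representables indexed by $\sD/\cX$, use that the extension of the sheaf $\QC$ to $\bH^{op}$ is limit-preserving (Proposition \ref{prop:QCsheaf}), and identify $\QC\left(\Speci\left(\A\right)\right)$ with $\widehat{\Mod}_{\A}$ via Theorem \ref{thm:5.1.33}. The cofinality/siftedness worry in your last paragraph is not actually needed, since the density theorem already exhibits $\cX$ as the colimit over all of $\sD/\cX$, which is exactly the indexing category appearing in the statement.
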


\begin{proof}
	The sheaf $\cO_{\bH}^*\QC$ on $\bH$ is a limit preserving functor
	$$\bH^{op} \to \widehat{\mathbf{Cat}}_\i.$$ Since $\sD$ is a site for $\bH,$ it follows that $$\cX \simeq \underset{\Speci\left(\A\right) \to \cX} \colim \Speci\left(\A\right),$$ and by the preservation of limits we then have an identification
	\begin{eqnarray*}
		\QC\left(\cX\right) &\simeq & \underset{\Speci\left(\A\right) \to \cX} \lim \QC\left(\Speci\left(\A\right)\right)\\
		&\simeq& \underset{\Speci\left(\A\right) \to \cX} \lim \widehat{\Mod}_\A
	\end{eqnarray*}
\end{proof}

We now wish to show that for any $E,$ $\QC\left(E\right)$ has the canonical presentation as a full subcategory of $\Mod_{\O_E}.$ We will start with the case that $E=\Speci\left(\A\right)$ is an affine object of $\sD.$ 

\begin{lemma}\label{lem:geomorph}
	For $\A$ any $\Ci$-ring, there is pair of geometric morphisms $$\Lambda:\bH/\Speci\left(\A\right) \to \Shv\left(\underline{\Speci\left(\A\right)}\right)$$ and $$\Psi:\Shv\left(\underline{\Speci\left(\A\right)}\right) \to \bH/\Speci\left(\A\right)$$ such that $\Lambda_*\simeq \Psi^*$ and $\Lambda^*$ is fully faithful.
\end{lemma}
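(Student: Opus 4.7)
The plan is to construct both geometric morphisms from a single comparison of sites. The small topos is $\Shv(\underline{\Speci(\A)})=\Shv(\mathbf{Op}(\underline{\Speci(\A)}))$ with the open cover topology, while $\bH/\Speci(\A)\simeq \Shv(\sD/\Speci(\A))$ with the induced open cover topology. There is a natural fully faithful inclusion of sites
$$j:\mathbf{Op}(\underline{\Speci(\A)})\hookrightarrow \sD/\Speci(\A),$$
sending an open $U\subseteq \underline{\Speci(\A)}$ to the corresponding open subspectrum $\Speci(\A)|_U\to \Speci(\A)$. On a basis of opens of the form $U_a$, this is just $\Speci(\A[a^{-1}])\to \Speci(\A)$, and the general case is handled as in Remark \ref{rmk:restrict} by working with this basis. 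The functor $j$ is both continuous (covers in the small site are covers in the big site) and cocontinuous (covers in $\sD/\Speci(\A)$ of objects in the image of $j$ are refined by open coverings).

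For $\Lambda$, I would specialize the construction preceding Remark \ref{rmk:lambda} to the affine Deligne-Mumford stack $\cX=\Speci(\A)$: in the affine case there are no non-trivial étale covers so $\cX\simeq \Shv(\underline{\Speci(\A)})$, and the geometric morphism $\Lambda_{\Speci(\A)}$ provided there has inverse image given by the sheafified left Kan extension $\Lambda^*\simeq a\circ j_!$ (the étale prolongation) and direct image given by restriction $\Lambda_*\simeq j^*$. That $\Lambda^*$ is fully faithful is essentially \cite[Proposition 2.4]{higherme}; concretely it amounts to the presheaf-level identity $j^*j_!\simeq \id$, which holds because $j$ is a fully faithful subcategory inclusion, together with the fact that $j^*$ commutes with sheafification---which is where continuity and cocontinuity of $j$ both get used.

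For $\Psi$, I claim that $\Psi^*:=\Lambda_*=j^*$ is itself left exact and colimit preserving, hence the inverse image of a geometric morphism. Left exactness is automatic because restriction along any functor preserves pointwise limits and sheafification is left exact. Colimit preservation follows from cocontinuity of $j$: colimits of sheaves on either site are computed by sheafifying the pointwise colimit, and both these operations commute with restriction along $j$. Setting $\Psi^*:=j^*=\Lambda_*$, the existence of a right adjoint $\Psi_*$ (essentially the sheafified right Kan extension $a\circ j_*$) then follows from the adjoint functor theorem, since both $\i$-topoi are presentable. The required identification $\Lambda_*\simeq \Psi^*$ holds by construction.

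The main technical obstacle is the careful handling of sheafification and Kan extensions in the $\i$-categorical setting, especially when $\A$ is not finitely presented so that not every open is automatically of the form $U_a$; this is the reason one has to pass through a basis and invoke Remark \ref{rmk:restrict}. A secondary subtlety is ensuring that $j^*$ genuinely does commute with sheafification in the derived setting, which requires both the continuity and cocontinuity of $j$ (without both, one only gets an adjunction between presheaf categories that may fail to descend). Once these are verified, the fully faithfulness of $\Lambda^*$ and the identification $\Lambda_*\simeq \Psi^*$ both reduce to the observation that $j$ is a fully faithful site morphism of the expected sort.
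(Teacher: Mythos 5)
Your proposal is correct and follows essentially the same route as the paper: both proofs compare the small site of $\underline{\Speci\left(\A\right)}$ (via its basis of opens $U_a \mapsto \Speci\left(\A\left[a^{-1}\right]\right)$) with $\sD/\Speci\left(\A\right)$ through a cover-preserving functor $j$, extract the triple adjunction $j_! \dashv j^* \dashv j_*$, and read off $\Lambda = \left(j_!, j^*\right)$ and $\Psi = \left(j^*, j_*\right)$, with fully faithfulness of $j_!$ and its left exactness supplied by the prolongation results of \cite{higherme}. The only cosmetic difference is that the paper works directly with the basis $\mathbf{Bas}\left(\Speci\left(\A\right)\right)$ and cites \cite[Proposition 5.27]{higherme}, whereas you route through the full poset of opens and the already-constructed $\Lambda_{\cX}$ for Deligne--Mumford stacks; the underlying mechanism is identical.
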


\begin{proof}
	Without loss of generality, assume that $\A$ is complete. Denote by $$j:\mathbf{Bas}\left(\Speci\left(\A\right)\right) \to \sD/\Speci\left(\A\right)$$ the functor sending each basic open subset $U_a$ to $$\Speci\left(\A\left[a^{-1}\right]\right) \to \Speci\left(\A\right).$$ Since this functor preserves covers, there is a restriction functor $$j^*:\bH/\Speci\left(\A\right) \to \Shv\left(\underline{\Speci\left(\A\right)}\right)$$
	with a left adjoint $j_!.$ Moreover, it follows from \cite[Proposition 5.27]{higherme} that $j_!$ preserves finite limits, hence the pair $\left(j^*,j_!\right)$ constitutes a geometric morphism $\Lambda$ with $\Lambda^*=j_!$ and $\Lambda_*=j^*.$ The functor $j^*$ also has a natural \emph{right} adjoint $j_*,$ and the pair $\left(j_*,j^*\right)$ is a geometric morphism $\Psi$ with $$\Psi^*=j^*$$ and $$\Psi_*=j_*.$$ The fully faithfulness of $\Lambda^*$ follows by an analogous proof to that of \cite[Proposition 5.27]{higherme}.
\end{proof} 

\begin{corollary}
	With the notation of the preceding theorem, $$\Lambda \circ \Psi \simeq id.$$
\end{corollary}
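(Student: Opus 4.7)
The plan is to unpack what $\Lambda \circ \Psi$ is as a geometric morphism and recognize that its inverse image functor is trivially the identity. By construction in the preceding lemma, the composite $\Lambda \circ \Psi$ is a geometric morphism from $\Shv(\underline{\Speci(\A)})$ to itself, and its inverse image functor is
\[
(\Lambda \circ \Psi)^\ast \;\simeq\; \Psi^\ast \circ \Lambda^\ast \;\simeq\; j^\ast \circ j_!,
\]
using $\Lambda^\ast = j_!$ and $\Psi^\ast = j^\ast$.

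The key input is the fully faithfulness of $\Lambda^\ast = j_!$ established at the end of Lemma \ref{lem:geomorph}. For any adjunction $L \dashv R$ with $L$ fully faithful, the counit $L R \to \mathrm{id}$ — equivalently here, the unit $\mathrm{id} \to R L = j^\ast j_!$ — is an equivalence. Hence $j^\ast \circ j_! \simeq \mathrm{id}_{\Shv(\underline{\Speci(\A)})}$, so
\[
(\Lambda \circ \Psi)^\ast \;\simeq\; \mathrm{id}.
\]

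Finally, since a geometric morphism of $\infty$-topoi is determined up to equivalence by its inverse image functor (by the uniqueness of adjoints), the equivalence $(\Lambda \circ \Psi)^\ast \simeq \mathrm{id}^\ast$ upgrades canonically to an equivalence $\Lambda \circ \Psi \simeq \mathrm{id}$ in the $\infty$-category of geometric morphisms, completing the proof. There is no genuine obstacle here; the corollary is a purely formal consequence of the fully faithfulness of $\Lambda^\ast$ combined with the composition law for inverse images.
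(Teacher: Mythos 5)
Your proof is correct and takes essentially the same route as the paper: both arguments use that full faithfulness of $\Lambda^{\ast}=j_{!}$ makes the unit $\mathrm{id}\to\Lambda_{\ast}\Lambda^{\ast}=\Psi^{\ast}\Lambda^{\ast}=(\Lambda\circ\Psi)^{\ast}$ an equivalence and then conclude by uniqueness of adjoints / the fact that a geometric morphism is determined by its inverse image. One small wording slip: for an adjunction $L\dashv R$ with $L$ fully faithful it is the \emph{unit} $\mathrm{id}\to RL$, not the counit $LR\to\mathrm{id}$, that is an equivalence, so your parenthetical conflates two non-equivalent conditions, although the statement you actually invoke ($\mathrm{id}\to j^{\ast}j_{!}$ is an equivalence) is the correct one.
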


\begin{proof}
	Since $\Lambda^*$ is fully faithful, it follows that the unit $$id \to \Lambda_*\Lambda^*$$ is an equivalence, but since $\Lambda_* = \Psi^*,$ we have
	\begin{eqnarray*}
		id & \simeq & \Psi^* \Lambda^*\\
		& \simeq & \left(\Lambda \circ \Psi\right)^*
	\end{eqnarray*}
	It follows that $\Lambda \circ \Psi \simeq id.$
\end{proof}

\begin{corollary}
	Let $X=\Speci\left(\A\right) \in \bH.$  Then there is a triple adjunction between $\Mod_{\cO_\A}$ and $\Mod_{\cO_X}$ given by $$\Lambda^\star \dashv \Lambda_\star=\Psi^\star \dashv \Psi_\star.$$ Moreover, $\Lambda^\star$ and $\Psi_\star$ are both fully faithful.
\end{corollary}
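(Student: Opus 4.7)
The plan is to upgrade both $\Lambda$ and $\Psi$ to morphisms of $\SCi$-ringed $\i$-topoi, apply the functoriality of module sheaves from Section \ref{sec:funmod} to obtain both adjunctions, identify the middle functors via the topos-level equivalence $\Lambda_* \simeq \Psi^*$, and deduce both fully faithfulness claims from the corresponding topos-level results.

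First I would set up the ringed enhancements. The structure sheaf $\O_X$ on $\bH/\Speci\left(\A\right)$, when restricted via $\Lambda_* = j^*$ to the basic opens of $\underline{\Speci\left(\A\right)}$, reproduces $\O_\A$ by the very definition of the latter as the sheafification of $U_a \mapsto \A\left[a^{-1}\right]$. Consequently $\alpha_\Lambda \colon \O_\A \xrightarrow{\sim} \Lambda_* \O_X$ is an equivalence (equivalently $\Lambda^\ast \O_\A \simeq \O_X$), making $\left(\Lambda,\alpha_\Lambda\right)$ a morphism of $\SCi$-ringed $\i$-topoi. For $\Psi$, I take $\alpha_\Psi \colon \O_X \to \Psi_\ast\O_\A$ to be the composite of the unit $\O_X \to \Psi_\ast\Psi^\ast\O_X$ with the equivalence $\Psi^\ast\O_X = j^\ast\O_X \simeq \O_\A$. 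Section \ref{sec:funmod} then yields the two adjunctions $\Lambda^\star \dashv \Lambda_\star$ and $\Psi^\star \dashv \Psi_\star$ between $\Mod_{\O_\A}$ and $\Mod_{\O_X}$.

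Next, for the identification $\Lambda_\star \simeq \Psi^\star$, I would unwind the definitions: on an $\O_X$-module $\cM$, $\Lambda_\star\cM$ is $\Lambda_\ast\cM = j^\ast\cM$ with its $\O_\A$-structure transported along the equivalence $\alpha_\Lambda$, while $\Psi^\star\cM = \O_\A \otimes_{\Psi^\ast\O_X} \Psi^\ast\cM$ reduces to $j^\ast\cM$ because $\Psi^\ast\O_X \simeq \O_\A$. Both produce the same $\O_\A$-module. Fully faithfulness of $\Lambda^\star$ then amounts to showing the unit $\cN \to \Lambda_\star\Lambda^\star\cN$ is an equivalence for every $\O_\A$-module $\cN$, and since $\alpha_\Lambda$ is itself an equivalence, this unit is, after forgetting module structure, the topos-level unit $\cN \to \Lambda_\ast\Lambda^\ast\cN$, which is an equivalence by fully faithfulness of $\Lambda^\ast$.

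For fully faithfulness of $\Psi_\star$, I would verify that the counit $\Psi^\star\Psi_\star\cN \to \cN$ is an equivalence. Unwinding, $\Psi^\star\Psi_\star\cN \simeq \O_\A \otimes_{\Psi^\ast\O_X} \Psi^\ast\Psi_\ast\cN$, and since $\Psi^\ast = \Lambda_\ast$, the equivalence $\Lambda\circ\Psi \simeq \mathrm{id}$ gives $\Psi^\ast\Psi_\ast \simeq \Lambda_\ast\Psi_\ast \simeq \left(\Lambda\circ\Psi\right)_\ast \simeq \mathrm{id}$; combined with $\Psi^\ast\O_X \simeq \O_\A$, the tensor product collapses to $\cN$. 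The main technical point will be to check that these module-level equivalences are coherently compatible with their topos-level origins, i.e. that $\left(\Lambda,\alpha_\Lambda\right) \circ \left(\Psi,\alpha_\Psi\right) \simeq \mathrm{id}$ holds in $\Topi^{\SCi}$ and not merely in $\Topi$; this is a routine verification using the triangle identities for $\Lambda^\ast \dashv \Lambda_\ast$ combined with the fact that $\alpha_\Lambda$ is already an equivalence.
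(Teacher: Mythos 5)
Your proposal is correct and follows essentially the same route as the paper: both equip $\Lambda$ and $\Psi$ with $\SCi$-ringed structures via the equivalence $\Lambda_*\O_X \simeq \O_\A$, identify $\Lambda_\star$ with $\Psi^\star$, and extract both fully-faithfulness claims from $\Lambda \circ \Psi \simeq \mathrm{id}$. The only divergence is that for fully faithfulness of $\Lambda^\star$ the paper identifies the unit $\mathrm{id} \to \Lambda_\star\Lambda^\star$ with the purely formal equivalence $\Psi^\star\Lambda^\star \simeq \left(\Lambda\circ\Psi\right)^\star \simeq \mathrm{id}$ coming from Lemma \ref{lem:stars}, whereas you reduce to fully faithfulness of the topos-level $\Lambda^*$, which works but tacitly requires transferring the unit equivalence from space-valued sheaves to module (spectrum-valued) sheaves.
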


\begin{proof}
	
	Firstly, notice that 
	\begin{eqnarray*}
	\Lambda_*\cO_X&=& j^*\cO_X\\
	&\simeq& \cO_A.
	\end{eqnarray*}
	This canonical equivalence $\cO_\A \to \Lambda_*\cO_X$ equips the geometric morphism $\Lambda$ with the structure of a map of $\SCi$-ringed $\i$-topoi. The inverse equivalence, after identifying $\Psi^*$ with $\Lambda_*$ is a morphism
	$$\Psi^*\cO_X \to \cO_\A$$ and is adjoint to a map $$\cO_\cX \to \Psi_*\cO_\A$$ which equips $\Psi$ with the structure of a map of $\SCi$-ringed $\i$-topoi.

	The first statement of the corollary now follows immediately from the fact that $\Psi^*=\Lambda_*.$ The second statement follows from the fact that the unit map $$id \to \Psi^\star \Psi_\star$$ can be identified with the canonical equivalence $\Lambda_\star \Psi_\star \simeq id$ and the counit map $$\Lambda^*\Lambda_* \to id$$ can be identified with the canonical equivalence $$\Lambda^\star \Psi^\star \simeq id.$$
\end{proof}

\begin{theorem}\label{thm:QCff}
	For $X=\Speci\left(\A\right) \in \bH,$ the fully faithful functor $$\Lambda^\star:\Mod_{\cO_\A} \hookrightarrow \Mod_{\cO_X}$$ identifies $$\QC\left(\Speci\left(\A\right)\right)=\Mod_{\cO_\A}$$ with the subcategory $\Mod_{\cO_X}$ spanned by those $\O_\cX$-modules of the form $\Mspec_{\bH/X}\left(M\right)$ for an $\A$-module $M.$
\end{theorem}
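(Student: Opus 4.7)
The plan is to express $\Mspec_{\bH/X}$ as $\Lambda^\star \circ \Mspec_\A$ and then invoke Theorem \ref{thm:5.1.33} to transfer essential surjectivity across. Without loss of generality I assume $\A$ is complete (otherwise replace $\A$ with $L^\wedge\A$, which does not change $\Speci(\A)$ or $\cO_X$). Then $\A \simeq \Gamma\cO_\A \simeq \Gamma\cO_X$, so both $\Mspec_\A$ and $\Mspec_{\bH/X}$ are realized, in line with Definition \ref{def:qc}, as the inverse-image-on-modules functors associated to the essentially unique maps of $\SCi$-ringed $\i$-topoi from $(\Shv(\underline{\Speci(\A)}),\cO_\A)$ and from $(\bH/X,\cO_X)$ into $(\Spc,\A)$.

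Next I would upgrade the geometric morphism $\Lambda$ of Lemma \ref{lem:geomorph} to a morphism of $\SCi$-ringed $\i$-topoi by means of the canonical equivalence $\cO_\A \simeq j^{\ast}\cO_X = \Lambda_{\ast}\cO_X$. By essential uniqueness of the terminal maps to $(\Spc,\A)$, one obtains a triangle
\[\xymatrix@C=1cm{(\bH/X,\cO_X) \ar[rr]^-{\Lambda} \ar[rd] && (\Shv(\underline{\Speci(\A)}),\cO_\A) \ar[ld] \\ & (\Spc,\A) & }\]
of $\SCi$-ringed $\i$-topoi that commutes up to canonical equivalence. Applying Lemma \ref{lem:stars} to this triangle yields the key identification
\[\Mspec_{\bH/X} \simeq \Lambda^\star \circ \Mspec_\A.\]

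Finally, Theorem \ref{thm:5.1.33} asserts that $\Mspec_\A$ is essentially surjective---in fact it restricts to the equivalence $\widehat{\Mod}_\A \simeq \Mod_{\cO_\A}$, and every $\cO_\A$-module is quasi-coherent---so the essential images of $\Lambda^\star$ and of $\Lambda^\star \circ \Mspec_\A = \Mspec_{\bH/X}$ coincide. Combined with the full faithfulness of $\Lambda^\star$ already established in the corollary preceding the theorem, this gives the desired identification of $\QC(\Speci(\A)) = \Mod_{\cO_\A}$ with the subcategory of $\Mod_{\cO_X}$ spanned by sheaves of the form $\Mspec_{\bH/X}(M)$. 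The argument is essentially formal and I do not anticipate serious obstacles, with the real content resting on the earlier input that over an affine $\SCi$-scheme every $\cO_\A$-module sheaf is quasi-coherent; everything else is bookkeeping with the compatibility of module-spec under composition of maps of $\SCi$-ringed $\i$-topoi.
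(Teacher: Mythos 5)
Your proposal is correct and follows essentially the same route as the paper: both factor $\Mspec_{\bH/X}$ as $\Lambda^\star\circ\Mspec_\A$ by upgrading $\Lambda$ to a map of $\SCi$-ringed $\i$-topoi compatible with the canonical maps to $\left(\Spc,\A\right)$ (Lemma \ref{lem:stars}), and then use the essential surjectivity of $\Mspec_\A$ from Theorem \ref{thm:5.1.33} to identify the essential image of $\Lambda^\star$ with that of $\Mspec_{\bH/X}$. The only cosmetic difference is that the paper verifies the two composites $t_X\simeq t_\A\circ\Lambda$ and $t_\A\simeq t_X\circ\Psi$ directly rather than appealing to uniqueness of the map to $\left(\Spc,\A\right)$, but the content is the same.
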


\begin{proof}
	Denote by $$t_\A:\left(\Shv\left(\underline{\Speci\left(\A\right)}\right),\cO_\A\right) \to \left(\Spc,\A\right)$$ and
	$$t_X:\left(\bH/X,\cO_X\right) \to \left(\Spc,\A\right)$$ the canonical morphisms. Observe that the composite
	$$\left(\Shv\left(\underline{\Speci\left(\A\right)}\right),\cO_\A\right) \stackrel{\Psi}{\longrightarrow} \left(\bH/X,\cO_X\right) \stackrel{t_X}{\longrightarrow} \left(\Spc,\A\right)$$ is equivalent to $t_\A$ and the composition
	$$\left(\bH/X,\cO_X\right) \stackrel{\Lambda}{\longrightarrow} \left(\Shv\left(\underline{\Speci\left(\A\right)}\right),\cO_\A\right) \stackrel{t_\A}{\longrightarrow} \left(\Spc,\A\right)$$ is equivalent to $t_X.$ 
	
	By Theorem \ref{thm:5.1.33}, $t_\A^\star=\Mspec_{\A}$ is essentially surjective, so the essential image of $\Lambda^\star$ is the same as the essential image of $\left(t_\A \circ \Lambda\right)^\star.$ But this is the same as the essential image of $t_X^\star=\Mspec_{\bH/X}.$
	\end{proof} 

\begin{proposition}\label{prop:blabla}
Let $\cX \in \bH$ be arbitrary and let $\cF$ be an $\O_\cX$-module. Then, for any $X=\Speci\left(\A\right) \in \sD$ (with $\A$ complete) and $f:\Speci\left(\A\right) \to \cX,$ $\cF\left(f\right)$ is a complete $\A$-module.
\end{proposition}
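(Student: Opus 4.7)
The plan is to realize $\cF\left(f\right)$ as the global sections of an $\cO_\A$-module and then invoke Theorem \ref{thm:5.1.33}, which guarantees that such global sections are automatically complete. The crucial observation is that all the morphisms of topoi involved are canonically compatible with the structure sheaves, since each structure sheaf is obtained by restricting $\O$ from $\bH$.

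Concretely, the map $f: \Speci\left(\A\right) \to \cX$ induces a morphism of $\SCi$-ringed $\i$-topoi
$$\tilde f : \left(\bH/\Speci\left(\A\right), \cO_{\Speci\left(\A\right)}\right) \to \left(\bH/\cX, \cO_\cX\right).$$
Because both $\cO_\cX$ and $\cO_{\Speci\left(\A\right)}$ are restrictions of $\O$, the map on structure sheaves is the canonical equivalence $\tilde f^{\ast} \cO_\cX \simeq \cO_{\Speci\left(\A\right)}$, and consequently the module pullback $\tilde f^{\star}$ coincides with the underlying sheaf pullback equipped with its natural module structure; for $g: Y \to \Speci\left(\A\right)$, one has $\left(\tilde f^{\star} \cF\right)\left(g\right) \simeq \cF\left(f \circ g\right)$. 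I would then compose with the module pushforward $\Lambda_\star : \Mod_{\cO_{\Speci\left(\A\right)}} \to \Mod_{\cO_\A}$ coming from the corollary to Lemma \ref{lem:geomorph}, obtaining an $\cO_\A$-module $\Lambda_\star \tilde f^{\star} \cF$ whose value on a basic open $U_a = \Speci\left(\A\left[a^{-1}\right]\right)$ is $\cF$ evaluated at the composite $\Speci\left(\A\left[a^{-1}\right]\right) \to \Speci\left(\A\right) \stackrel{f}{\to} \cX$.

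Taking global sections (i.e.\ evaluating at the terminal open, or equivalently $a = 1$) yields the identification
$$\Gamma_\A\left(\Lambda_\star \tilde f^{\star} \cF\right) \simeq \cF\left(f\right)$$
as $\A$-modules. Theorem \ref{thm:5.1.33} asserts precisely that $\Gamma_\A$ factors through the full subcategory $\widehat{\Mod}_\A$ of complete $\A$-modules, so $\cF\left(f\right)$ is complete. The main bookkeeping obstacle is verifying that the $\A$-module structure on $\cF\left(f\right)$ arising intrinsically from $\cF$ being an $\cO_\cX$-module agrees with the one produced via $\Gamma_\A \circ \Lambda_\star \circ \tilde f^{\star}$; this will follow from Lemma \ref{lem:stars} applied to the composite of ringed topos morphisms $\Shv\left(\underline{\Speci\left(\A\right)}\right) \to \bH/\Speci\left(\A\right) \to \bH/\cX \to \Spc$, together with the canonical identification $\O\left(\Speci\left(\A\right)\right) \simeq \A$ coming from the completeness of $\A$.
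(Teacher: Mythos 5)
Your proposal is correct and follows essentially the same route as the paper: pull $\cF$ back along $f$ to an $\cO_{\Speci\left(\A\right)}$-module on the big site (using that the structure map on sheaves of rings is an equivalence, so $f^\star\cF \simeq f^*\cF$), push down via $\Lambda_\star$ to an $\cO_\A$-module on the small site, identify its global sections with $\cF\left(f\right)$, and conclude completeness from Theorem \ref{thm:5.1.33}, whose essential image of the global-sections functor is exactly $\widehat{\Mod}_\A$. The extra bookkeeping you flag (compatibility of the two $\A$-module structures via Lemma \ref{lem:stars}) is sound and is left implicit in the paper's shorter argument.
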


\begin{proof}
Note that $\cF\left(f\right)$ is equivalent to global sections of $f^\star \cF,$ but this is the same as global sections of $\Lambda_\star f^* \cF$ over $\Shv\left(\underline{\Speci\left(\A\right)}\right),$ and thus is contained in the essential image of 
$$\left(t_{\A}\right)_\star:\Mod_{\O_\A} \to \Mod_\A,$$ which is precisely the complete $\A$-modules.
\end{proof}

\begin{corollary}\label{cor:mspecon}
Let $X=\Speci\left(\A\right).$ Then if $\cF \in \QC\left(X\right),$ if $$f:\Speci\left(\cB\right) \to \Speci\left(\A\right)$$ is in $\sD,$ then $\Lambda_X^\star \cF\left(f\right)$ is the complete $\cB$-module $$\cB \underset{\A} {\widehat{\otimes}} \cF\left(\Speci\left(\A\right)\right).$$
\end{corollary}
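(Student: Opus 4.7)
The plan is to unpack $\left(\Lambda_X^\star \cF\right)\left(f\right)$ via a naturality square for the functor $\cX \mapsto \Lambda_\cX$ together with Remark \ref{rmk:comind}. First, writing $\cF \simeq \Mspec_\A M$ for the unique complete $\A$-module $M$ via the identification $\QC\left(X\right) \simeq \widehat{\Mod}_\A$ of Theorem \ref{thm:QCff}, I would check that $M \simeq \cF\left(\Speci\left(\A\right)\right)$ by evaluating $\Lambda_X^\star \cF$ at the terminal object $\id_X$ of $\bH/X$; this uses that $\Lambda_X^\star$ is fully faithful (so $\Lambda_{X,\star} \Lambda_X^\star \simeq \id$) together with the fact that $\Gamma \circ \Mspec_\A \simeq \id$ on complete modules (Theorem \ref{thm:5.1.33}).

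Next I would set up the commutative square of $\SCi$-ringed $\i$-topoi
$$\xymatrix@C=1.5cm{\left(\bH/\Speci\left(\cB\right),\O_{\Speci\left(\cB\right)}\right) \ar[r]^-{\tilde f} \ar[d]_-{\Lambda_{\Speci\left(\cB\right)}} & \left(\bH/X,\O_X\right) \ar[d]^-{\Lambda_X} \\ \left(\Shv\left(\underline{\Speci\left(\cB\right)}\right),\O_\cB\right) \ar[r]_-{g} & \left(\Shv\left(\underline{\Speci\left(\A\right)}\right),\O_\A\right),}$$
where $\tilde f$ is the slice geometric morphism induced by $f$, $g = \Speci\left(f^\sharp\right)$ (for $f^\sharp : \A \to \cB$ the induced map of complete algebras), and the verticals are the geometric morphisms of Lemma \ref{lem:geomorph}. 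The commutativity of this square as a diagram in $\mathfrak{Top}_\i^{\SCi}$ is the naturality of $\cX \mapsto \Lambda_\cX$, packaged functorially by Proposition \ref{lem:wdd}; the structure-sheaf compatibility is automatic since both $\tilde f^\star \O_X$ and $g^\star \O_\A$ canonically recover $\O_{\Speci\left(\cB\right)}$ and $\O_\cB$ respectively.

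Applying Lemma \ref{lem:stars} to this square yields $\tilde f^\star \Lambda_X^\star \simeq \Lambda_{\Speci\left(\cB\right)}^\star g^\star$ as functors on module sheaves. By definition, $\left(\Lambda_X^\star \cF\right)\left(f\right)$ is the image of $\Lambda_X^\star \Mspec_\A M$ under $\left(t_{\Speci\left(\cB\right)}\right)_\star \tilde f^\star$, where $t_{\Speci\left(\cB\right)}$ is the unique morphism to $\left(\Spc,\cB\right)$. Factoring $t_{\Speci\left(\cB\right)} \simeq t_\cB \circ \Lambda_{\Speci\left(\cB\right)}$ (another application of Lemma \ref{lem:stars}) and using the fully faithfulness of $\Lambda_{\Speci\left(\cB\right)}^\star$ to collapse $\Lambda_{\Speci\left(\cB\right),\star}\Lambda_{\Speci\left(\cB\right)}^\star \simeq \id$, the computation reduces to $\left(t_\cB\right)_\star g^\star \Mspec_\A M$. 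Remark \ref{rmk:comind} identifies $g^\star \Mspec_\A M \simeq \Mspec_\cB\left(\cB \widehat{\otimes}_\A M\right)$, and since $\cB \widehat{\otimes}_\A M$ is complete, Theorem \ref{thm:5.1.33} gives $\left(t_\cB\right)_\star \Mspec_\cB\left(\cB \widehat{\otimes}_\A M\right) \simeq \cB \widehat{\otimes}_\A M$, concluding the argument.

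The only step requiring genuine verification is the commutativity of the square at the level of $\SCi$-ringed $\i$-topoi: one must exhibit the square not merely as a diagram of topoi but coherently match the structure-sheaf data coming from Proposition \ref{lem:wdd} with that of Lemma \ref{lem:geomorph}. Once this naturality is in hand, the rest of the proof is a formal chase through adjunctions and the completeness equivalence of Theorem \ref{thm:5.1.33}.
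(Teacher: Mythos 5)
Your argument is correct, but it takes a genuinely different route from the paper's own proof of this corollary. The paper argues computationally: by Theorem \ref{thm:QCff} one has $\Lambda_X^\star\cF \simeq \Mspec_{\bH/X}\left(\cF\left(\Speci\left(\A\right)\right)\right)$, and then, following the proof of Proposition \ref{prop:concqc}, $\Mspec_{\bH/X}\left(M\right)$ is identified with the sheafification over $\sD/X$ of the presheaf $f \mapsto \cB \underset{\A}\otimes M$, whose value at $f$ is read off as the completed induced module $\cB \underset{\A}{\widehat{\otimes}} M$ by restricting to the small site of $\Speci\left(\cB\right)$; no naturality of $\Lambda$ is invoked. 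You instead proceed formally: you reduce evaluation at $f$ to the small-site change-of-rings functor via a commutative square of $\SCi$-ringed $\i$-topoi, Lemma \ref{lem:stars}, and the factorization $t_X \simeq t_\A \circ \Lambda_X$, then conclude with Remark \ref{rmk:comind} and Theorem \ref{thm:5.1.33}. What your route buys is that it makes the base-change mechanism explicit and is essentially the affine case of the argument the paper deploys later for Theorem \ref{thm:qccond}; what it costs is exactly the step you flag, the coherent commutativity of your square, i.e.\ the naturality of $\Lambda^\star$, which the paper only packages afterwards as Proposition \ref{prop:lamlam} (its proof of the present corollary deliberately avoids that dependence, and the corollary then feeds into Proposition \ref{prop:qcafcl} and Theorem \ref{thm:qccond}). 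Your appeal to Proposition \ref{lem:wdd} for that naturality is legitimate and non-circular, since it is proved earlier, and Remark \ref{rmk:lambda} identifies its $\Lambda_{\cX}$ with the $\Lambda$ of Lemma \ref{lem:geomorph} in the affine case; note also that your identification of $\left(\Lambda_X^\star\cF\right)\left(f\right)$ with $\Gamma_{\Speci\left(\cB\right)}\left(\tilde f^\star \Lambda_X^\star\cF\right)$ is exactly the opening observation of Proposition \ref{prop:blabla}, so that step needs no further verification.
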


\begin{proof}
By Theorem \ref{thm:QCff}, $$\Lambda_X^\star \cF \simeq \Mspec_{\bH/X}\left(\cF\left(\Speci\left(\A\right)\right)\right).$$ Following the proof of Proposition \ref{prop:concqc}, for $M$ any $\A$-module, we can identify $\Mspec_{\bH/X}\left(M\right)$ with the sheafification over $\sD/X$ of the presheaf $\tilde M$ that assigns each $f:\Speci\left(\cB\right) \to X$ the $\cB$-module
$\cB \underset{\A} \otimes M.$ The value of the sheafification of $\tilde M$ on $f$ can be identified with the global sections of the sheafification of the restriction of $f^*\tilde M$ to the poset of open subsets of $\Speci\left(\cB\right).$  This is readily seen to be equivalent to the completed induced module $$\cB \underset{\A} {\widehat{\otimes}} M.$$ Applying this observation to $M=\cF\left(\Speci\left(\A\right)\right)$ yields the desired result.
\end{proof}

\begin{proposition}\label{prop:qcafcl}
Let $X=\Speci\left(\A\right),$ then $\cF \in \Mod_{\O_X}$ is quasi-coherent if and only if for any $f:\Speci\left(\B\right) \to X$ in $\sD,$ the canonical map
$$\cB \underset{\A} {\widehat{\otimes}} \cF\left(X\right) \to \cF\left(f\right)$$ is an equivalence.
\end{proposition}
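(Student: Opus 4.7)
The forward direction is essentially Corollary \ref{cor:mspecon}: if $\cF$ is quasi-coherent, then $\cF \simeq \Lambda_X^\star \cG$ for some $\cG \in \QC\left(X\right)$, and taking $\cG\left(\Speci\left(\A\right)\right) \simeq \cF\left(\mathrm{id}_X\right) \simeq \cF\left(X\right)$ we have by the corollary that for any $f:\Speci\left(\cB\right) \to X$ in $\sD$,
$$\cF\left(f\right) \simeq \Lambda_X^\star \cG\left(f\right) \simeq \cB \widehat{\otimes}_\A \cF\left(X\right),$$
and the map in the statement is precisely this equivalence.

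For the converse, suppose the condition holds. The plan is to exhibit $\cF$ as the image under $\Lambda_X^\star$ of a quasi-coherent sheaf constructed directly from $\cF$. Set $M := \cF\left(X\right) = \Gamma\left(\cF\right)$. By Proposition \ref{prop:blabla}, $M$ is a complete $\A$-module, hence (under Theorem \ref{thm:5.1.33}) corresponds to an object $\cG \in \QC\left(X\right) = \Mod_{\O_\A}$. Using the adjunction $\Lambda_X^\star \dashv \Lambda^X_\star$ from Lemma \ref{lem:geomorph} (promoted to module categories), one computes $\Lambda^X_\star \cF \simeq \cG$: indeed $\Lambda^X_\star$ is restriction from the big site to the small site, and its global sections functor recovers $\cF\left(X\right) = M$. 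The counit
$$\varepsilon_\cF: \Lambda_X^\star \Lambda^X_\star \cF \longrightarrow \cF$$
is then a canonical morphism of $\O_X$-modules whose source is quasi-coherent by construction. It remains to show $\varepsilon_\cF$ is an equivalence.

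To do this, evaluate $\varepsilon_\cF$ at an arbitrary $f:\Speci\left(\cB\right) \to X$ in $\sD$. On one hand, Corollary \ref{cor:mspecon} identifies $\left(\Lambda_X^\star \cG\right)\left(f\right)$ with $\cB \widehat{\otimes}_\A M$. On the other hand, by naturality of the counit (applied to the component at $f$ viewed as an object of the small site $\sD/X$ fitting into the adjunction), the induced map $\cB \widehat{\otimes}_\A M \to \cF\left(f\right)$ agrees with the canonical comparison map in the hypothesis, which is assumed to be an equivalence. Since the representables $\left\{\Speci\left(\cB\right) \to X\right\}$ generate $\bH/X$ under colimits, and equivalences in $\Mod_{\O_X}$ are detected on underlying sheaves of spectra --- which in turn are detected by evaluation on representables --- we conclude $\varepsilon_\cF$ is an equivalence. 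Hence $\cF \simeq \Lambda_X^\star \cG$ is quasi-coherent.

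The main obstacle is the bookkeeping in the last paragraph: one must verify carefully that the counit $\varepsilon_\cF$, evaluated at $f$, literally produces the canonical base-change map appearing in the statement (and not merely a map isomorphic to it through some correction). This comes down to tracing the definition of $\Lambda^\star$ as "$\cO_X \otimes_{\Lambda^\ast \cO_\A} \Lambda^\ast\left(-\right)$" and the compatibility of the completed tensor product $\widehat{\otimes}$ with sheafification over $\mathbf{Bas}\left(\Speci\left(\cB\right)\right)$, along the lines of the argument in Proposition \ref{prop:concqc}; everything else is a formal consequence of the adjunctions already set up.
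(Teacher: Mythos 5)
Your proof is correct and takes essentially the same route as the paper's: both directions rest on Corollary \ref{cor:mspecon}, and your converse is exactly the paper's argument identifying $\Lambda^\star_X\Lambda^X_\star\cF$ with $\MSpec_{\bH/X}\left(\cF\left(X\right)\right)$, so that the counit evaluated at each $f \in \sD/X$ is the canonical base-change map, which the hypothesis makes an equivalence, and equivalences of module sheaves are detected object-wise on the site. The bookkeeping concern you flag at the end (that the counit at $f$ is literally the canonical comparison map) is likewise left implicit in the paper, which simply invokes the proof of Corollary \ref{cor:mspecon}, so it is not a gap relative to the paper's own argument.
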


\begin{proof}
The module sheaf $\cF$ is quasi-coherent if and only if $\cF \simeq \Lambda^\star_{X} \Lambda_\star*^X \cF.$ So, applying Corollary \ref{cor:mspecon} to $\Lambda_\star \cF$ implies that if $\cF$ is quasi-coherent, the canonical map in the proposition is an equivalence. 

Conversely, by Theorem \ref{thm:5.1.33}, $\MSpec_\A =t_\A^\star$ is fully faithful, so the co-unit $$t_\A^\star t^\A_\star \to id$$ is an equivalence. Moreover, recall from the proof of Theorem \ref{thm:QCff} that $t_X \simeq t_\A \circ \Lambda.$ It follows that
\begin{eqnarray*}
t^\star_X t^X_\star &\simeq & \left(t_\A \circ \Lambda\right)^\star \circ \left(t_\A \circ \Lambda\right)_\star\\
&\simeq & \Lambda^\star \circ t_\A^\star \circ t^\A_\star \circ \Lambda_\star\\
& \simeq & \Lambda^\star \Lambda_\star.
\end{eqnarray*}
Thus, $$\Lambda^\star_{X} \Lambda_\star^X \simeq \MSpec\left(\cF\left(X\right)\right).$$ By the proof of Corollary \ref{cor:mspecon}, for any $f:\Speci\left(\B\right) \to X$ in $\sD,$ we have
$$\MSpec\left(\cF\left(X\right)\right)\left(f\right) \simeq \cB \underset{\A} {\widehat{\otimes}} \cF\left(X\right).$$ The result now follows.
\end{proof}



\begin{lemma}\label{lem:orbqc}
Let $\left(\cX,\O_{\cX}\right)$ be an $\SCi$-Deligne-Mumford stack and let $Y\left(\cX\right) \in \bH$ be its functor of points. Then
$$\QC\left(Y\left(\cX\right)\right) \simeq \Mod_{\O_{\cX}}.$$
\end{lemma}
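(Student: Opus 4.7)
The plan is to exhibit both sides as limits over a common \'etale \v{C}ech diagram and reduce to the affine case Theorem \ref{thm:QCff}.

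First, I would choose an \'etale cover exhibiting $\cX$ as a $\SCi$-Deligne-Mumford stack: a family $\left(X_\alpha \in \cX\right)_{\alpha}$ with $\coprod_\alpha X_\alpha \to 1$ an effective epimorphism in $\cX$ and $\left(\cX/X_\alpha, \O_\cX|_{X_\alpha}\right) \simeq \left(\Shv\left(\Speci\left(\A_\alpha\right)\right), \O_{\A_\alpha}\right)$ for suitable $\A_\alpha \in \sD$. Via the equivalence $\cX \simeq \DMsCi^{\et}/\left(\cX, \O_\cX\right)$, these $X_\alpha$ correspond to \'etale morphisms $\Speci\left(\A_\alpha\right) \to \left(\cX, \O_\cX\right)$, and hence, under the fully faithful inclusion $\DMsCi \hookrightarrow \bH$, to an effective epimorphism $U := \coprod_\alpha \Speci\left(\A_\alpha\right) \to Y\left(\cX\right)$.

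Second, I would apply descent on both sides to the \v{C}ech nerve of $U \to Y\left(\cX\right)$. Each term $U^{n/Y\left(\cX\right)}$ is itself a coproduct of affines: iterated fiber products of the $\Speci\left(\A_\alpha\right)$ over $Y\left(\cX\right)$ can be computed inside the subcategory $\cX \subseteq \bH/Y\left(\cX\right)$ spanned by \'etale maps, and such fiber products are themselves objects of $\cX$ that, being covered by intersections of the affine pieces, are affine. Proposition \ref{prop:QCsheaf} combined with $Y\left(\cX\right) \simeq \colim_{\Delta^{op}} U^{\bullet/Y\left(\cX\right)}$ then gives
$$\QC\left(Y\left(\cX\right)\right) \simeq \lim_{[n] \in \Delta} \QC\left(U^{n/Y\left(\cX\right)}\right),$$
while Lemma \ref{lem:sheafslice} applied to the corresponding \v{C}ech cover $X^\bullet$ of $1_\cX$ yields
$$\Mod_{\O_\cX} \simeq \lim_{[n] \in \Delta} \Mod_{\O_\cX|_{X^n}}.$$

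Third, I would identify the two cosimplicial diagrams termwise: the defining equivalence $\left(\cX/X_\alpha, \O_\cX|_{X_\alpha}\right) \simeq \left(\Shv\left(\Speci\left(\A_\alpha\right)\right), \O_{\A_\alpha}\right)$, applied also at the higher \v{C}ech levels, together with Theorem \ref{thm:QCff}'s identification $\QC\left(\Speci\left(\A\right)\right) \simeq \Mod_{\O_\A}$, supplies an equivalence at each simplicial level. Naturality in $[n] \in \Delta$ then follows from Lemma \ref{lem:stars}: the face and degeneracy maps of the \v{C}ech nerve act by pullback along morphisms of $\SCi$-ringed $\i$-topoi on both sides, and the Deligne-Mumford equivalences intertwine these pullbacks.

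The main obstacle will be pinning down this naturality rigorously at the level of $\i$-categories, so that the termwise equivalences assemble into an equivalence of cosimplicial objects in $\widehat{\mathbf{Cat}}_\i$. The cleanest route is to construct a single global comparison functor rather than handle each level separately: using the geometric morphism $\Lambda_\cX:\left(\bH/Y\left(\cX\right), \O_\bH|_{Y\left(\cX\right)}\right) \to \left(\cX, \O_\cX\right)$ supplied by Proposition \ref{lem:wdd}, I would form $\Lambda_\cX^\star:\Mod_{\O_\cX} \to \Mod_{\O_\bH|_{Y\left(\cX\right)}}$, verify via \'etale descent that it lands in $\QC\left(Y\left(\cX\right)\right)$ (reducing to the affine identification $\Lambda^\star \simeq \Mspec_\A$ of Theorem \ref{thm:QCff} via Lemma \ref{lem:stars}), and finally show it is an equivalence by restriction to the cover, where the affine case of Theorem \ref{thm:QCff} applies. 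Combining this with the descent presentations above yields the desired equivalence $\Mod_{\O_\cX} \simeq \QC\left(Y\left(\cX\right)\right)$.
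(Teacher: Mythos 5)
The central step that fails is your claim that the terms of the \v{C}ech nerve of $U=\coprod_\alpha \Speci\left(\A_\alpha\right) \to Y\left(\cX\right)$ are again coproducts of affines. That is only true when $\cX$ has affine diagonal, and the $\SCi$-Deligne-Mumford stacks of this paper include higher orbifolds and non-separated examples where it is false: already for a $2$-gerbe $\cB^2\Gamma$ with atlas $\ast \to \cB^2\Gamma$ one has $\ast \times_{\cB^2\Gamma} \ast \simeq \cB\Gamma$, which is an \'etale Deligne-Mumford stack but not affine, and even $0$-truncated non-separated $\Ci$-schemes give non-affine overlaps. Consequently the termwise identification $\QC\left(U^{n/Y\left(\cX\right)}\right) \simeq \Mod_{\O_\cX|_{X^n}}$ is not an instance of the affine case (Theorems \ref{thm:QCff} and \ref{thm:5.1.33}); it is exactly the statement of the lemma for the \'etale Deligne-Mumford stack $\cX/X^n$, so the argument is circular. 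Your fallback in the last paragraph does not repair this: knowing that the global comparison functor $\Lambda_\cX^\star$ restricts to an equivalence over the level-$0$ pieces $\Speci\left(\A_\alpha\right)$ is not enough to conclude it is an equivalence, since both sides are limits over the full cosimplicial diagram and you would again need the identification at all higher \v{C}ech levels (conservativity of restriction to a cover does not let you deduce essential surjectivity or fully faithfulness of the comparison).

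The paper sidesteps the \v{C}ech nerve entirely. It first shows (Lemma \ref{lem:coldmsame} and its proof, relying on \cite[Lemma 3.1]{higherme}) that a presheaf on $\DMsCi$ is a sheaf precisely when its restriction along $\cX \simeq \DMsCi^{\et}/\left(\cX,\O_\cX\right)$ is limit-preserving for every Deligne-Mumford stack, and that restriction induces an equivalence $\Shv\left(\DMsCi\right) \simeq \Shv\left(\sD\right)=\bH$. The assignment $\left(\cX,\O_\cX\right) \mapsto \Mod_{\O_\cX}$ is such a sheaf because $\Mod_{\O_\cX}$ is a sheaf on the small site $\cX$ (Lemma \ref{lem:sheafslice}), and on affines it agrees with $\QC$ by Theorem \ref{thm:5.1.33}. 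Uniqueness of the sheaf extension then gives $\QC\left(Y\left(\cX\right)\right) \simeq \Mod_{\O_\cX}$ for all Deligne-Mumford stacks at once, with no need to analyze overlaps of an atlas. If you want to salvage your strategy, the fix is essentially to replace \v{C}ech descent along one atlas by this sheaf-theoretic uniqueness argument on the site of all Deligne-Mumford stacks (or, equivalently, to compare the two limit-preserving functors on the small site $\cX$, using that objects with affine slice generate $\cX$ under colimits and that the natural transformation is supplied by the $\Lambda$-machinery of Proposition \ref{lem:wdd} and Proposition \ref{prop:lamlam}).
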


\begin{proof}
By Lemma \ref{lem:coldmsame}, there exists a unique colimit preserving functor $$\bH \to \Topi^{\SCi}$$ which sends $Y\left(\cX\right)$ to $\left(\cX,\O_{\cX}\right)$ for all $\SCi$-Deligne-Mumford stacks $\left(\cX,\O_{\cX}\right).$ For any such $\cX,$ 
the composite
$$\cX^{op} \simeq \left(\DMSCi^{\et}/\cX\right)^{op} \to \left(\DMSCi\right)^{op} \hookrightarrow \bH^{op} \stackrel{\Mod}{\longlongrightarrow} \widehat{\Cati}$$ preserves limits as it can be identified with $\Mod_{\O_{\cX}},$ which is a sheaf on $\cX.$ So the assignment $$\left(\cX,\O_{\cX}\right) \mapsto \Mod_{\O_{\cX}}$$ is a sheaf on $\DMSCi.$ By \cite[Lemma 3.1]{higherme}, the restriction functor
$$\Shv\left(\DMSCi\right) \to \Shv\left(\sD\right)=\bH$$ is an equivalence, and the restriction of this sheaf agrees with $\QC$ on affine $\SCi$-schemes. The result now follows.
\end{proof}

By Lemma \ref{lem:sheafslice}, the functor 
$$\Mod_\O:\bH^{op} \to \widehat{\Cati}$$ C sending $E \in \bH$ to $\Mod_{\O_{E}}$ is a sheaf. Moreover, $\QC$ is a sheaf by definition. Both of these functors take values in presentable $\i$-categories, and send morphisms to accessible colimit-preserving functors. It follows from \cite[Proposition 5.5.3.13]{htt} that they both lift to sheaves with values in $\PrL$--- the $\i$-category of presentable $\i$-categories and accessible colimit-preserving functors.

\begin{proposition}\label{prop:lamlam}
There is a canonical morphism $$\Lambda^\star:\QC \to \Mod_\O$$ in $\Shv\left(\bH,\PrL\right)$ which restricts to $\Lambda_X^\star$ for all $X \in \sD.$ Moreover, for any object $E \in \bH,$ $\Lambda_E^\star$ is fully faithful and thus $\QC\left(E\right)$ can be realized as a coreflective subcategory of $\Mod_{\O_E}$.
\end{proposition}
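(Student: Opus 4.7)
The plan is to first construct $\Lambda^\star$ over the small affine site $\sD$ from the data already provided by Theorem \ref{thm:QCff}, then extend it to all of $\bH$ using the sheaf property of both sides, and finally deduce fully faithfulness on a general derived stack by passing fully faithfulness through the limit over affines. Coreflectivity will then follow formally from the adjoint functor theorem in $\PrL$.

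First, I would set up the transformation on $\sD$. For each $\A \in \sD$, Theorem \ref{thm:QCff} produces a fully faithful functor $\Lambda_\A^\star: \QC\left(\Speci\A\right) \simeq \widehat{\Mod}_\A \hookrightarrow \Mod_{\O_{\Speci\A}}$; in the proof of that theorem it is identified with the restriction of $\Mspec_{\bH/\Speci\A}: \Mod_\A \to \Mod_{\O_{\Speci\A}}$ to complete modules. Given a morphism $f: \Speci\B \to \Speci\A$ in $\sD$, Corollary \ref{cor:mspec} applied to the induced morphism of $\SCi$-ringed $\i$-topoi yields $f^\star \circ \Mspec_{\bH/\Speci\A} \simeq \Mspec_{\bH/\Speci\B} \circ \left(\B \otimes_\A -\right)$. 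The canonical map $\B \otimes_\A N \to \B\widehat{\otimes}_\A N$ is the unit of the reflection $\Mspec_{\bH/\Speci\B} \dashv \Gamma_{\bH/\Speci\B}$, so applying the left adjoint inverts it; hence $\Mspec_{\bH/\Speci\B}\left(\B\otimes_\A N\right) \simeq \Mspec_{\bH/\Speci\B}\left(\B\widehat{\otimes}_\A N\right)$, and the square
\[
\xymatrix@C=1.6cm{
\widehat{\Mod}_\A \ar[r]^-{\Lambda_\A^\star} \ar[d]_-{\B\widehat{\otimes}_\A -} & \Mod_{\O_{\Speci\A}} \ar[d]^-{f^\star} \\
\widehat{\Mod}_\B \ar[r]^-{\Lambda_\B^\star} & \Mod_{\O_{\Speci\B}}
}
\]
commutes. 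These data assemble into a morphism $\Lambda^\star|_\sD: \QC|_\sD \to \Mod_\O|_\sD$ in $\Shv\left(\sD,\PrL\right)$.

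By Proposition \ref{prop:QCsheaf} and Lemma \ref{lem:sheafslice}, both $\QC$ and $\Mod_\O$ are $\PrL$-valued sheaves on $\bH$, and since $\sD$ is a site for $\bH$ such sheaves are determined by their restriction to $\sD$; thus $\Lambda^\star|_\sD$ extends uniquely to the claimed $\Lambda^\star: \QC \to \Mod_\O$ in $\Shv\left(\bH,\PrL\right)$. Explicitly, for $E \in \bH$ presented as $E \simeq \colim_\alpha \Speci\left(\A_\alpha\right)$, the sheaf condition on both sides gives $\Lambda_E^\star \simeq \lim_\alpha \Lambda_{\A_\alpha}^\star$. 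Each $\Lambda_{\A_\alpha}^\star$ is fully faithful, and fully faithfulness is stable under limits of $\i$-categories (mapping spaces in a limit are limits of mapping spaces, and limits of equivalences are equivalences), so $\Lambda_E^\star$ is fully faithful. Being a morphism in $\PrL$, $\Lambda_E^\star$ is a colimit-preserving functor between presentable $\i$-categories, hence admits a right adjoint by \cite[Corollary 5.5.2.9]{htt}; together with fully faithfulness this exhibits $\QC\left(E\right)$ as a coreflective subcategory of $\Mod_{\O_E}$.

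The main obstacle in this strategy is verifying that the functors $\Lambda_\A^\star$ assemble into a genuine morphism of $\PrL$-valued sheaves and not merely a pointwise-compatible family of squares. Working directly with the identification of $\Lambda_\A^\star$ as a restriction of $\Mspec_{\bH/\Speci\A}$ and invoking the functoriality of module-spec in Corollary \ref{cor:mspec} (which is itself phrased for arbitrary morphisms of $\SCi$-ringed $\i$-topoi, and so already packages the higher coherences) reduces the construction of the natural transformation to a matter of base change for $\Mspec$, sidestepping the need to choose cofibrant replacements or otherwise rigidify the construction.
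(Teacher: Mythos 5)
Your overall architecture matches the paper's: construct the transformation affine-locally, extend using that both $\QC$ and $\Mod_\O$ are $\PrL$-valued sheaves, prove fully faithfulness of $\Lambda_E^\star$ by computing mapping spaces as limits over $\sD/E$ (this part of your argument is essentially verbatim the paper's), and get coreflectivity from the existence of right adjoints in $\PrL$. The gap is exactly at the point you flagged and then dismissed. A morphism in $\Shv\left(\sD,\PrL\right)$ is a functor $\sD^{op} \to \Fun\left(\Delta\left[1\right],\PrL\right)$ restricting to $\QC|_{\sD}$ and $\Mod_\O|_{\sD}$ at the endpoints; producing it requires coherently compatible homotopies over all composable strings of morphisms in $\sD$, not just a homotopy-commutative square for each single morphism. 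Corollary \ref{cor:mspec} is a statement about one morphism of $\SCi$-ringed $\i$-topoi at a time: being ``phrased for arbitrary morphisms'' does not make the resulting squares coherent, so it does not package the higher coherences and does not sidestep the rigidification problem. As written, your construction only yields a pointwise-compatible family, which is not yet a morphism of sheaves of $\i$-categories.

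The paper resolves precisely this issue with Proposition \ref{lem:wdd} (resting on Lemma \ref{lem:coldmsame} and the \'etale-prolongation machinery of \cite{higherme}): there is a colimit-preserving functor $\bH \to \Fun\left(\Delta\left[1\right],\Topi^{\SCi}\right)$ sending each $\SCi$-Deligne-Mumford stack $\cX$ to $\Lambda_{\cX}:\left(\bH/\cX,\O_{\bH}|_{\cX}\right) \to \left(\cX,\O_{\cX}\right)$. Composing with the genuine functor $\Mod:\left(\Topi^{\SCi}\right)^{op} \to \PrL$ then produces the natural transformation $\mu:\mu_0 \Rightarrow \mu_1$ with all coherences automatic, and it only remains to identify $\mu_0 \simeq \QC$, $\mu_1 \simeq \Mod_\O$ and the components over $\sD$ with $\Lambda^\star_X$ (which your Theorem \ref{thm:QCff}/Corollary \ref{cor:mspec} analysis does accomplish objectwise). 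To repair your proof, either invoke Proposition \ref{lem:wdd}, or supply an equivalent construction of a functor $\sD^{op} \to \Fun\left(\Delta\left[1\right],\Topi^{\SCi}\right)$ (or directly into $\Fun\left(\Delta\left[1\right],\PrL\right)$) whose value at $\Speci\left(\A\right)$ is $\Lambda_{\Speci\left(\A\right)}$; the base-change computation you give then correctly identifies its components, and the rest of your argument goes through unchanged.
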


\begin{proof}
By Lemma \ref{lem:wdd}, there is a colimit preserving functor
$$\bH \to \Fun\left(\Delta\left[1\right],\Topi^{\SCi}\right)$$ which sends each affine element $X=\Speci\left(\A\right) \in \sD,$ to
$$\Lambda_X:\left(\bH/X,\O_X\right) \to \left(\Shv\left(\underline{\Speci\left(\A\right)}\right),\O_\A\right).$$ Composition with the functor
$$\Mod:\left(\Topi^{\SCi}\right)^{op} \to \PrL$$ induces a functor
$$\bH^{op} \to \Fun\left(\Delta\left[1\right],\PrL\right),$$
which we can canonically identify with a functor
$$\mu:\bH^{op} \times \Delta\left[1\right] \to \PrL,$$
in other words a natural transformation
$$\mu:\mu_0=\mu\left(\blank,0\right) \Rightarrow \mu\left(\blank,1\right)=\mu_1.$$
We will prove that $\mu_0=\QC,$ $\mu_1=\Mod_{\O},$ and that for all $X \in D,$
$\mu_X=\Lambda^\star_X.$ The latter is true by construction, and in particular, for all $X \in D,$ $\mu_0\left(X\right)=\QC\left(X\right)$ and $\mu_1\left(X\right)=\Mod_{\O_X}.$
Thus, it suffices to prove that $\mu_0$ and $\mu_1$ are sheaves. However, this is automatic since by construction, both functors are limit preserving functors from $\bH^{op}$ to $\PrL.$ Thus $\Lambda^\star:=\mu$ above is the desired morphism.

Now lets show that $\Lambda^\star$ is component-wise fully faithful. Since both $\QC$ and $\Mod$ are sheaves, regarded as functors $$\bH^{op} \to \widehat{\Cati},$$ they are limit preserving. Given any $E \in \bH,$ we can write $$E \simeq \underset{D \to E} \colim D,$$ with the colimit is indexed by $\sD/E.$ It follows that
$$\QC\left(E\right) \simeq \underset{E \to D} \lim \QC\left(D\right)$$ and
$$\Mod_{\O_E} \simeq \underset{E \to D} \lim \Mod_{\O_D}.$$
It is well-known that mapping spaces in limits of $\i$-categories can be computed as limits of mapping spaces. Using this, the naturality of $\Lambda,$ and the fact that $\Lambda_D^\star$ is fully faithful for all $D \in \sD,$ we have for any $\cF,\cG \in \QC\left(E\right),$ the following string of natural equivalences:
\begin{eqnarray*}
\Map_{\QC\left(E\right)}\left(\cF,\cG\right) &\simeq & \underset{D \stackrel{f}{\to} E}{\lim} \Map_{\QC\left(D\right)}\left(f^\star \cF,f^\star \cG\right)\\
&\simeq& \underset{D \stackrel{f}{\to} E}{\lim} \Map_{\Mod_{\O_D}}\left(\Lambda_D^\star f^\star \cF,\Lambda_D^\star f^\star \cG\right)\\
&\simeq & \underset{D \stackrel{f}{\to} E}{\lim} \Map_{\Mod_{\O_D}}\left(f^\star \Lambda_E^\star \cF,f^\star \Lambda_E^\star \cG\right)\\
&\simeq & \Map_{\Mod_{\O_E}}\left(\Lambda_E^\star \cF,\Lambda_E^\star \cG\right).
\end{eqnarray*}
Moreover, since $\Lambda_E^\star$ is a morphism in $\PrL,$ it has a right adjoint $\Lambda^E_\star,$ so $\QC\left(E\right)$ is coreflective in $\Mod_{\O_E}.$
\end{proof}

\begin{remark}
Given that for any stack $\cX,$ the functor
$$\Lambda_{\cX}^\star:\QC\left(\cX\right) \hookrightarrow \Mod_{\O_{\cX}}$$ is fully faithful, we will speak of an $\O_\cX$-module $\cF$ as \emph{being} quasicoherent if it's in the essential image of $\Lambda^\star_{\cX}.$
\end{remark}

The fact that $\Lambda$ is a natural transformations means that for any morphism $f:\cX \to \cY$ of derived stacks, there is a commutative diagram
	$$\xymatrix@C=2.5cm{\QC\left(\cX\right) \ar@{^{(}->}[r]^-{\Lambda_{\cX}^\star} & \Mod_{\O_\cX}\\
	\QC\left(\cY\right) \ar[u]^-{\QC\left(f\right)} \ar@{^{(}->}[r]^-{\Lambda_{\cY}^\star} & \Mod_{\O_\cY} \ar[u]_{f^\star}},$$ 
with the horizontal functors fully faithful. This justifies our abuse of notation in writing $\QC\left(f\right)$ as $f^\star.$
	
We warn that $f_\star$ \textbf{need not} send quasi-coherent sheaves, to quasi-coherent sheaves. Nonetheless, $\QC\left(f\right)$ does admit a right adjoint.

\begin{lemma}\label{lem:wadj}
Let $R:\mathscr{C} \hookrightarrow \mathscr{D}$ be a reflective subcategory of an $\i$-category $\mathscr{D},$ with reflector $L,$ and suppose that $\tilde R:\widetilde{\mathscr{C}} \hookrightarrow \widetilde {\mathscr{D}}$ is a reflective subcategory with reflector $\tilde L.$ Suppose further that there is a commutative diagram of functors
$$\xymatrix{\widetilde{\mathscr{C}} \ar[d]_-{F} \ar[r]^-{\tilde R} & \widetilde{\mathscr{D}} \ar[d]^-{G}\\
\mathscr{C} \ar[r]_-{R} & \mathscr{D},}$$
and that $G$ has a left adjoint $H$. Then the composite $\tilde L \circ H \circ R$ is left adjoint to $F.$
\end{lemma}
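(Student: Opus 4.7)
The plan is to establish the adjunction $\tilde L \circ H \circ R \dashv F$ via a direct Yoneda-style computation of mapping spaces, chaining the three given adjunctions together with the commutative square, and using the fully faithfulness of $R$ at the end. This is the cleanest route because, while $\tilde L$ has a right adjoint $\tilde R$ and $H$ has a right adjoint $G$, the functor $R$ is only assumed to have a \emph{left} adjoint $L$, so one cannot simply compose right adjoints naively; the commutative square is precisely what lets one "trade" a left adjoint on the outside for the action of $F$ on the inside.

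Concretely, for $X \in \mathscr{C}$ and $Y \in \widetilde{\mathscr{C}}$, I would produce a natural equivalence
$$\Map_{\widetilde{\mathscr{C}}}\left(\tilde L H R X,\, Y\right) \;\simeq\; \Map_{\mathscr{C}}\left(X,\, F Y\right)$$
by the chain
$$\Map_{\widetilde{\mathscr{C}}}(\tilde L H R X, Y) \;\simeq\; \Map_{\widetilde{\mathscr{D}}}(H R X, \tilde R Y) \;\simeq\; \Map_{\mathscr{D}}(R X, G \tilde R Y) \;\simeq\; \Map_{\mathscr{D}}(R X, R F Y) \;\simeq\; \Map_{\mathscr{C}}(X, F Y),$$
where the first equivalence uses $\tilde L \dashv \tilde R$, the second uses $H \dashv G$, the third substitutes the assumed natural equivalence $G \tilde R \simeq R F$, and the fourth is the fully faithfulness of $R$ (equivalently, the fact that the counit $L R \to \mathrm{id}_{\mathscr{C}}$ is an equivalence). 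Each step is natural in both variables, so the composite natural equivalence realizes the claimed adjunction via the standard $\infty$-categorical criterion (cf.\ \cite[Proposition 5.2.2.8]{htt}).

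The argument is essentially bookkeeping, so I do not anticipate any real obstacle. The one point worth pausing over is the naturality of the third equivalence, which relies on interpreting the commutative square in the statement as a genuine natural equivalence of functors $G \circ \tilde R \simeq R \circ F \colon \widetilde{\mathscr{C}} \to \mathscr{D}$ rather than as a diagram that merely commutes on objects; in the $\infty$-categorical setting this is automatic from the data of the square. From the composite equivalence one can then read off the unit of the new adjunction as the composite of the unit $\mathrm{id}_{\mathscr{C}} \Rightarrow L R \simeq F \circ (\text{candidate left adjoint target})$ built out of the units of $\tilde L \dashv \tilde R$ and $H \dashv G$ and the square, which will be useful in subsequent applications (e.g.\ to $\Lambda_\star^{\cX}$ and its interaction with $\QC$).
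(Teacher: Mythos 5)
Your proposal is correct and is essentially the paper's own proof: the same four-step chain of mapping-space equivalences (via $\tilde L \dashv \tilde R$, then $H \dashv G$, then the commutative square $G\tilde R \simeq RF$, then fully faithfulness of $R$), applied in the same order. The additional remarks on naturality and on extracting the unit are fine but not needed beyond what the paper records.
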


\begin{proof}
Let $C$ be an element of $\mathscr{C}$ and $Y$ an element of $\widetilde{\mathscr{C}}.$ Then we have
\begin{eqnarray*}
\Map_{\widetilde{\mathscr{C}}}\left(\tilde L H R\left(C\right),Y\right) &\simeq& \Map_{\widetilde {\mathscr{D}}}\left(H R\left(C\right),\tilde R\left(Y\right)\right)\\
&\simeq& \Map_{\mathscr{D}}\left(R\left(C\right),G \tilde R\left(Y\right)\right)\\
&\simeq& \Map_{\mathscr{D}}\left(R\left(C\right),R F\left(Y\right)\right)\\
&\simeq& \Map_{\mathscr{C}}\left(C,F\left(Y\right)\right),
\end{eqnarray*}
with the last equivalence following since $R$ is fully faithful.
\end{proof}

\begin{remark}
By the dual of Lemma \ref{lem:wadj}, we conclude that for any morphism $f:\cX \to \cY$ in $\bH,$ the functor
$$f^\star:\QC\left(\cY\right) \to \QC\left(\cX\right)$$ has a right adjoint $f^{\QC}_\star.$ Explicitly, it is given by $\Lambda^\cY_\star \circ f_\star \circ \Lambda_\cX^\star.$
\end{remark}

\begin{lemma}\label{lem:pbqce}
Let $\cX \in \bH$ and $\cF \in \Mod_{\O_\cX}.$ Then $\cF$ is quasi-coherent if and only if for all $\varphi:\Speci\left(\A\right) \to \cX$ with $\Speci\left(\A\right) \in \sD,$ $\varphi^\star\cF$ is quasi-coherent.
\end{lemma}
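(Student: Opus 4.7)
The plan is to reduce to a general categorical fact about limits of pointwise fully faithful natural transformations, using the sheaf descriptions of both $\QC$ and $\Mod_{\O}$ on $\bH$ established earlier.

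The ``only if'' direction is immediate from the naturality of $\Lambda^\star$ as a morphism of sheaves on $\bH$ (Proposition \ref{prop:lamlam}): if $\cF \simeq \Lambda^\star_\cX \cG$ for some $\cG \in \QC(\cX)$, then for any $\varphi: \Speci(\A) \to \cX$ one has $\varphi^\star \cF \simeq \Lambda^\star_{\Speci(\A)} \varphi^\star \cG$, which lies in the essential image of $\Lambda^\star_{\Speci(\A)}$ and is therefore quasi-coherent by definition.

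For the converse, since $\cX \simeq \colim_{\Speci(\A) \to \cX} \Speci(\A)$ and both $\QC$ and $\Mod_\O$ are $\PrL$-valued sheaves on $\bH$, we obtain equivalences
\[
\QC(\cX) \simeq \lim_{\varphi: \Speci(\A) \to \cX} \QC(\Speci(\A)), \qquad
\Mod_{\O_\cX} \simeq \lim_{\varphi: \Speci(\A) \to \cX} \Mod_{\O_{\Speci(\A)}}
\]
(cf.\ Corollary \ref{cor:limrep} and Lemma \ref{lem:sheafslice}), under which $\Lambda^\star_\cX$ becomes the limit of the pointwise fully faithful functors $\Lambda^\star_{\Speci(\A)}$, and an object of $\Mod_{\O_\cX}$ corresponds precisely to the coherent family of its pullbacks $(\varphi^\star \cF)$ to affine charts.

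It thus suffices to invoke the following general $\i$-categorical fact: for a natural transformation $\alpha: \sC \Rightarrow \sD$ of functors $I \to \widehat{\Cati}$ with each component $\alpha_i$ fully faithful, the induced functor $\lim \alpha$ is fully faithful, and its essential image consists of those coherent families $(x_i)$ whose components individually lie in the essential image of $\alpha_i$. Fully faithfulness follows from the pointwise formula for mapping spaces in limits. For the characterization of the essential image, the inclusion of the essential image of $\alpha_i$ into $\sD_i$ is a monomorphism in $\widehat{\Cati}$; since monomorphisms are stable under limits, $\lim \sC \hookrightarrow \lim \sD$ is a monomorphism, and a coherent family $(x_i)$ with each $x_i$ in the essential image of $\alpha_i$ lifts uniquely to $\lim \sC$ because fully faithfulness lets one lift each transition equivalence $\sD(f)(x_i) \simeq x_j$ uniquely to $\sC(f)(y_i) \simeq y_j$. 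Applying this with $I = \sD/\cX$ finishes the argument. The main subtlety---and the only place requiring a little care---is this pointwise detection of the essential image for limits of fully faithful functors in the $\i$-categorical setting; everything else is automatic from the machinery developed in Sections \ref{sec:shvmod}--\ref{sec:qc}.
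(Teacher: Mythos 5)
Your proposal is correct and follows essentially the same route as the paper: both arguments use the limit descriptions $\Mod_{\O_\cX} \simeq \lim_{D \to \cX} \Mod_{\O_D}$ and $\QC\left(\cX\right) \simeq \lim_{D \to \cX} \QC\left(D\right)$ together with the observation that quasi-coherence is a pointwise full-subcategory condition preserved by the transition functors $f^\star$. The paper settles the coherence subtlety you flag (an object of the limit carries more than objects and transition equivalences) by identifying both limits with Cartesian sections of the associated Cartesian fibrations over $\sD/\cX$, where pointwise detection of the full subcategory is immediate; this is the cleaner substitute for your monomorphism-stability and transition-equivalence-lifting sketch.
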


\begin{proof}
Since we have equivalences
$$\Mod_{\O_\cX} \simeq \underset{f:D \to \cX} \lim \Mod_{\O_D}$$
$$\QC\left(\cX\right) \simeq \underset{f:D \to \cX} \lim \QC\left(D\right),$$
we can think of an $\O_{\cX}$-module $\cG$ as a family of compatible $\O_{D}$-modules $\cG_f$ for each $f:D \to \cX,$ satisfying homotopy coherence, and similarly we can think of a quasi-coherent sheaf $\cH$ over $\cX$ as a family of compatible quasi-coherent sheaves $\cH_f$ over $D$ for each such $f.$ Since we can view quasi-coherent sheaves over $D$ as sitting inside $\O_D$-modules as a full subcategory, and the functor $f^\star$ for $\O_D$-modules restricts to one between quasi-coherent sheaves, we see that when viewing $\cF$ as a family $\left(\cF_f\right)_{f:D \to \cX},$ $\cF$ being quasi-coherent is just the condition that each $\cF_f,$ is. But under the equivalence 
$$\Mod_{\O_\cX} \simeq \underset{f:D \to \cX} \lim \Mod_{\O_D}$$ we have $\cF_f \simeq f^\star \cF.$ This can be made formal by identifying $\Mod_{\O_{\cX}}$ and $\QC\left(\cX\right)$ with the $\i$-categories of Cartesian sections of the Cartesian fibrations
$$\int_{\sD/\cX} \pi^*_\cX\Mod \to \sD/\cX$$
and
$$\int_{\sD/\cX} \pi^*_\cX\QC \to \sD/\cX$$
respectively.

\end{proof}

\begin{theorem}\label{thm:qccond}
Let $\cX \in \bH$ be an arbitrary derived stack. Then the following conditions are equivalent for an $\O_\cX$-module $\cF$:
\begin{itemize}
	\item[i)] $\cF$ is quasi-coherent (i.e. in the essential image of the fully-faithful functor $\Lambda^\star_\cX$).
	\item[ii)] For all morphisms
	$$\xymatrix{\Speci\left(\cB\right) \ar[rr]^-{\varphi} \ar[rd]_-{g} & & \Speci\left(\A\right) \ar[ld]^-{f}\\
		& \cX & }$$ in $\sD/\cX,$ the canonical map
	$$\cB \underset{\A} {\widehat{\otimes}} \cF\left(f\right) \to \cF\left(g\right)$$ is an equivalence.
	\end{itemize}
	(Note that by Proposition \ref{prop:blabla}, $\cF$ always takes values in complete modules, and above we are using the completed induced module functor $\cB \underset{\A} {\widehat{\otimes}} \left(\blank\right).$)
\end{theorem}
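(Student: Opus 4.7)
The plan is to reduce everything to the affine case, which has already been essentially established. Two earlier results do all the heavy lifting: Lemma \ref{lem:pbqce}, which reduces quasi-coherence over an arbitrary $\cX$ to quasi-coherence of the pullbacks to affine charts, and Proposition \ref{prop:qcafcl}, which characterizes quasi-coherent sheaves on an affine $\Speci(\A)$ in exactly the base-change form appearing in condition (ii).

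The key preliminary observation I would isolate is the following naturality statement. For any morphism $f:\Speci(\A)\to\cX$ in $\sD/\cX$, the induced map of slice topoi $\bH/\Speci(\A)\to\bH/\cX$ is \'etale, and by Remark \ref{rmk:lambda} (applied via the commutative diagram $\bH/\Speci(\A)\to\bH/\cX\to\cX$) the structure sheaf $\O_{\Speci(\A)}$ coincides with the restriction $f^{*}\O_{\cX}$. Hence the pullback functor $f^{\star}:\Mod_{\O_{\cX}}\to\Mod_{\O_{\Speci(\A)}}$ is simply sheaf restriction along $f$. In particular, for any $\varphi:\Speci(\cB)\to\Speci(\A)$ in $\sD/\Speci(\A)$, corresponding to the composite $g=f\circ\varphi$ in $\sD/\cX$, one has the canonical identifications
$$(f^{\star}\cF)(\id_{\Speci(\A)})\simeq\cF(f),\qquad (f^{\star}\cF)(\varphi)\simeq\cF(g),$$
and the canonical comparison map $\cB\,\widehat{\otimes}_{\A}\,(f^{\star}\cF)(\Speci(\A))\to(f^{\star}\cF)(\varphi)$ appearing in Proposition \ref{prop:qcafcl} agrees, under these identifications, with the map $\cB\,\widehat{\otimes}_{\A}\,\cF(f)\to\cF(g)$ of the statement. (This uses Remark \ref{rmk:comind}, which identifies $\varphi^{\star}$ on complete modules with the completed induced module functor.)

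Granting this identification, the equivalence of (i) and (ii) is immediate. For (i)$\Rightarrow$(ii), if $\cF$ is quasi-coherent then Lemma \ref{lem:pbqce} gives that $f^{\star}\cF\in\QC(\Speci(\A))$ for every $f\in\sD/\cX$; Proposition \ref{prop:qcafcl} then yields the required equivalence $\cB\,\widehat{\otimes}_{\A}\,(f^{\star}\cF)(\Speci(\A))\xrightarrow{\simeq}(f^{\star}\cF)(\varphi)$ for every $\varphi$, which translates to (ii). Conversely, for (ii)$\Rightarrow$(i), it suffices by Lemma \ref{lem:pbqce} to check $f^{\star}\cF\in\QC(\Speci(\A))$ for each $f:\Speci(\A)\to\cX$ in $\sD/\cX$; by Proposition \ref{prop:qcafcl} this is exactly the condition that for every $\varphi:\Speci(\cB)\to\Speci(\A)$ the comparison map is an equivalence, which is precisely (ii) applied to the pair $(f,g=f\circ\varphi)$.

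The only real obstacle is the preliminary naturality statement that the completed base-change map of Proposition \ref{prop:qcafcl} agrees with the canonical comparison of the theorem. The cleanest way to handle this is to observe that both maps arise as components of the unit for the adjunction $\Lambda^{\star}_{\Speci(\A)}\dashv\Lambda^{\Speci(\A)}_{\star}$ in Proposition \ref{prop:lamlam} (equivalently, the counit for completion in Theorem \ref{thm:5.1.33}) applied to the complete $\A$-module $\cF(f)$, and that this adjunction is natural in $\A$ via the completed induced module functor of Definition \ref{dfn:comind}. Once this compatibility is in hand the equivalence (i)$\Leftrightarrow$(ii) is formal.
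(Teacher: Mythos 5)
Your proposal is correct and follows essentially the same route as the paper: both directions are reduced to the affine characterization of Proposition \ref{prop:qcafcl} via Lemma \ref{lem:pbqce}, with the observation that $f^{\star}$ along an affine chart is restriction (so $\left(f^{\star}\cF\right)\left(\varphi\right)\simeq\cF\left(f\circ\varphi\right)$) and that $\varphi^{\star}$ corresponds to the completed induced module functor by Remark \ref{rmk:comind}. The paper's proof of (i)$\Rightarrow$(ii) just carries out this translation as an explicit string of equivalences using the naturality of $\Lambda^{\star}$, which is exactly the compatibility statement you isolate, and its proof of (ii)$\Rightarrow$(i) is verbatim your argument.
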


\begin{proof}
Suppose that $\cF=\Lambda_\cX^\star \cG$ is a quasi-coherent sheaf on $\cX$. Then 
\begin{eqnarray*}
\cF\left(g\right)&\simeq &\cF\left(f \circ \varphi\right)\\
&\simeq & \Gamma_{\Speci\left(\cB\right)}\left(\varphi^\star \left(f^\star\cF\right)\right)\\
&\simeq & \left(t_{\cB}\right)_\star \Lambda^{\Speci\left(\cB\right)}_\star \left(\varphi^\star \left(f^\star\cF\right)\right) \in \widehat{\Mod}_{\cB}.
\end{eqnarray*}
Under the identification $$\widehat{\Mod}_{\cB} \simeq \QC\left(\Speci\left(\cB\right)\right),$$ the complete $\cB$-module on the last line above corresponds to 
\begin{eqnarray*}
\Lambda^{\Speci\left(\cB\right)}_\star \left(\varphi^\star \left(f^\star\cF\right)\right) &\simeq &\Lambda^{\Speci\left(\cB\right)}_\star \left(\varphi^\star \left(f^\star \Lambda_{\cX}^\star \cG\right)\right)\\
&\simeq & \Lambda^{\Speci\left(\cB\right)}_\star \left(\varphi^\star \left(\Lambda_{\Speci\left(\A\right)}^\star f^\star  \cG\right)\right)\\
&\simeq & \Lambda^{\Speci\left(\cB\right)}_\star \Lambda_{\Speci\left(\cB\right)}^\star \varphi^\star f^\star \cG\\
& \simeq &  \varphi^\star f^\star \cG.
\end{eqnarray*}
Switching back to complete modules, the quasi-coherent sheaf $f^\star \cG$ on $\Speci\left(\A\right)$ is identified with its global sections, which is nothing but the complete $\A$-module $\cF\left(f\right).$ The result now follows since the operation $\varphi^\star$ on quasi-coherent sheaves corresponds to the complete induced module construction, by Remark \ref{rmk:comind}.

Conversely, notice that condition $ii)$ is equivalent to stating that for all $f:D \to \cX,$ $f^\star$ satisfies the necessary and sufficient conditions of Proposition \ref{prop:qcafcl} for quasi-coherence. Hence, we are done by Lemma \ref{lem:pbqce}.
\end{proof}

\section{The de Rham Stack and Formal Neighborhoods}\label{sec:deRham}

\begin{definition} \label{dfn:deRham}
Given $\cX \in \bH,$ its \textbf{de Rham stack} $\cX_{dR}$ is $k_*k^*\cX.$ Note that the unit of the adjunction $k^* \dashv k_*$ furnishes a canonical map $$\eta_{\cX}:\cX \to \cX_{dR}.$$
\end{definition}

Lets unwind this definition a bit:

\begin{eqnarray*}
\cX_{dR}\left(\sp A\right) & \simeq & \Map\left(y\left(\sp A\right),k_*k^*\cX\right)\\
& \simeq & \Map\left(k^*y\left(\sp A\right),k^*\cX\right)\\
&\simeq& \Map\left(p_!y\left(\sp A\right),k^*\cX\right),
\end{eqnarray*}

and since $p_! \simeq \Lan_y \left(y_{red} \circ p\right),$ we have

\begin{eqnarray*}
\cX_{dR}\left(\sp A\right) &\simeq& \Map\left(y_{red}\left(p\left(\sp A\right)\right),k^*\cX\right)\\
&\simeq& k^*\cX\left(p\left(\sp A\right)\right)\\
&\simeq& \cX\left(kp\left(\sp A\right)\right).
\end{eqnarray*}

In other words, the de Rham stack $\cX_{dR}$'s value on $\sp$ of an algebra $A$ is the same as the value of $\cX$ on $\sp$ of the reduction of $A.$
As a consequence, the functor $$\left(\blank\right)_{dR}:\bH \to \bH$$ is idempotent, i.e. there is a canonical equivalence $$\left(\blank\right)_{dR} \circ \left(\blank\right)_{dR} \simeq \left(\blank\right)_{dR}.$$

\begin{remark}
The functor $\left(\blank\right)_{dR}:=k_*k^*$ is a comonad. It moreover has both a left and right adjoint given by $k_!k^*$ and $p^*p_*$ respectively. Consequently, it preserves all small limits and colimits.
\end{remark}

\subsection{Formal completions and formal neighborhoods}

\begin{definition}
Let $f:\cY \to \cX$ be a map in $\bH.$ The \textbf{formal completion of $\cY$ along $f$,} denoted by $\widehat{\cY}_f,$ is the pullback
$$\xymatrix{\widehat{\cY}_f \ar[d]_-{\hat f} \ar[r] & \cY_{dR} \ar[d]^-{f_{dR}}\\
\cX \ar[r]_-{\eta_{\cX}} & \cX_{dR}.}$$
The naturality square for $\eta$ yields a canonical map $\iota_f:\cY \to \widehat{\cY}_f$ factorizing $f$ as
$$\cY \stackrel{\iota_f}{\longrightarrow} \widehat{\cY}_f \stackrel{\hat f}{\longrightarrow} \cX.$$ Viewing $\hat f$ as a generalized point, the map 
$$\hat f:\widehat{\cY}_f \to \cX$$ is defined to be the \textbf{formal neighborhood of $\cX$ at $f$}, for which we introduce another notation
$$\cX_{\dpr{f}}=\widehat{\cY}_f.$$
\end{definition}

\begin{proposition}\label{prop:drsame}
The canonical map $$\left(\iota_f\right)_{dR}:\cY_{dR} \to \left(\widehat{\cY}_f\right)_{dR}$$ is an equivalence.
\end{proposition}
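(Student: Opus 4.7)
The plan is to apply the functor $(\blank)_{dR}$ to the defining pullback square of $\widehat{\cY}_f$ and exploit two key features recorded in the excerpt: that $(\blank)_{dR} = k_*k^*$ preserves all small limits (it has both a left adjoint $k_!k^*$ and a right adjoint $p^*p_*$), and that it is idempotent, i.e.\ $(\blank)_{dR} \circ (\blank)_{dR} \simeq (\blank)_{dR}$.

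First I would apply $(\blank)_{dR}$ to the pullback
$$\xymatrix{\widehat{\cY}_f \ar[d] \ar[r] & \cY_{dR} \ar[d]^-{f_{dR}}\\ \cX \ar[r]_-{\eta_\cX} & \cX_{dR}}$$
to obtain, by limit preservation, another pullback square
$$\xymatrix{(\widehat{\cY}_f)_{dR} \ar[d] \ar[r]^-{q} & (\cY_{dR})_{dR} \ar[d]^-{(f_{dR})_{dR}}\\ \cX_{dR} \ar[r]_-{(\eta_\cX)_{dR}} & (\cX_{dR})_{dR}.}$$
The idempotency of $(\blank)_{dR}$, which follows from the fact that $k_*$ is fully faithful (as $k$ is), implies that for any $\cZ \in \bH$ the unit $\eta_{\cZ_{dR}}: \cZ_{dR} \to (\cZ_{dR})_{dR}$ is an equivalence, and moreover that $\eta_{\cZ_{dR}}$ and $(\eta_\cZ)_{dR}$ are canonically equivalent (both are the comparison map $T \Rightarrow T^2$ for an idempotent monad $T$, which are homotopic equivalences). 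Applied to $\cZ = \cX$, this identifies the lower horizontal arrow $(\eta_\cX)_{dR}$ with the equivalence $\eta_{\cX_{dR}}$.

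Since base change along an equivalence is an equivalence, the top horizontal map $q:(\widehat{\cY}_f)_{dR} \to (\cY_{dR})_{dR}$ is an equivalence as well. To conclude that $(\iota_f)_{dR}$ itself is the equivalence, I would invoke the characterization of $\iota_f$: by construction $\iota_f$ is the unique map whose post-composition with $\widehat{\cY}_f \to \cY_{dR}$ is $\eta_\cY$. Applying $(\blank)_{dR}$ gives that $q \circ (\iota_f)_{dR} \simeq (\eta_\cY)_{dR} \simeq \eta_{\cY_{dR}}$, which by idempotency is again an equivalence. Two-out-of-three then forces $(\iota_f)_{dR}$ to be an equivalence.

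The main subtlety — really the only place requiring care — is the coherent identification of $(\eta_\cZ)_{dR}$ with $\eta_{\cZ_{dR}}$ for an idempotent monad in the $\i$-categorical setting. I would handle this by observing that idempotency of the comonad/monad structure is equivalent to the counit $k^*k_* \Rightarrow \mathrm{id}$ being an equivalence, which holds because $k$ is fully faithful (hence so is $k_*$), and invoking the standard result that in this situation the natural transformations $\eta T, T\eta : T \Rightarrow T^2$ are canonically homotopic equivalences. Everything else reduces to limit-preservation and 2-out-of-3.
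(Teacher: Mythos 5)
Your proof is correct and uses essentially the same two ingredients as the paper's argument: the de Rham functor preserves limits (having a left adjoint) and is idempotent, applied to the defining pullback square of $\widehat{\cY}_f.$ The only difference is organizational — the paper computes $\left(\widehat{\cY}_f\right)_{dR} \simeq \cY_{dR}$ by a direct chain of equivalences, while you additionally pin down via two-out-of-three that this equivalence is realized by the canonical map $\left(\iota_f\right)_{dR},$ a point the paper leaves implicit.
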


\begin{proof}
This follows formally from the fact that the de Rham functor is idempotent and preserves limits:
\begin{eqnarray*}
\left(\widehat{\cY}_f\right)_{dR} &\simeq& \left(\cX\times_{\cX_{dR}} \cY\right)_{dR}\\
&\simeq& \cX_{dR} \times_{\left(\cX_{dR}\right)_{dR}} \cY_{dR}\\
&\simeq& \cX_{dR} \times_{\cX_{dR}} \cY_{dR}\\
&\simeq& \cY_{dR}
\end{eqnarray*}
\end{proof}

The following proposition follows by the universal property of pullback diagrams together with the fact that the de Rham functor preserves finite products:

\begin{proposition} \label{prop:prodcomplete}
Let $f=\left(f_0,f_1\right):\cY \to \cX_0 \times \cX_1.$ Then the canonical commutative diagram
$$\xymatrix{\widehat{\cY}_{\left(f_0,f_1\right)} \ar[r] \ar[d] & \widehat{\cY}_{f_0} \times \widehat{\cY}_{f_1} \ar[d]\\
\cY_{dR} \ar[r]^-{\Delta} & \cY_{dR} \times \cY_{dR},}$$
is a pullback diagram.
\end{proposition}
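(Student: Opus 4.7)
The plan is purely diagrammatic: unwind both sides of the claimed pullback square into iterated fiber products over products of de Rham stacks, and then apply the pasting lemma. The essential input, already highlighted in the remark after Definition \ref{dfn:deRham}, is that $(-)_{dR} = k_\ast k^\ast$ has both a left and right adjoint, so in particular it preserves finite products; equivalently, the canonical map
$$(\cX_0 \times \cX_1)_{dR} \longrightarrow (\cX_0)_{dR} \times (\cX_1)_{dR}$$
is an equivalence.

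First I would expand the definition of formal completion to get
$$\widehat{\cY}_{(f_0,f_1)} \simeq (\cX_0 \times \cX_1) \times_{(\cX_0 \times \cX_1)_{dR}} \cY_{dR} \simeq (\cX_0 \times \cX_1) \times_{(\cX_0)_{dR} \times (\cX_1)_{dR}} \cY_{dR},$$
using the product-preservation of the de Rham functor in the second step. On the other hand, $\widehat{\cY}_{f_i} \simeq \cX_i \times_{(\cX_i)_{dR}} \cY_{dR}$, and since products of pullbacks compute as pullbacks of products, we have
$$\widehat{\cY}_{f_0} \times \widehat{\cY}_{f_1} \simeq (\cX_0 \times \cX_1) \times_{(\cX_0)_{dR} \times (\cX_1)_{dR}} (\cY_{dR} \times \cY_{dR}).$$

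Now pulling back along the diagonal $\Delta\colon \cY_{dR} \to \cY_{dR} \times \cY_{dR}$ and invoking the pasting lemma for pullbacks in the $\infty$-category $\bH$, we obtain
$$(\widehat{\cY}_{f_0} \times \widehat{\cY}_{f_1}) \times_{\cY_{dR} \times \cY_{dR}} \cY_{dR} \simeq (\cX_0 \times \cX_1) \times_{(\cX_0)_{dR} \times (\cX_1)_{dR}} \cY_{dR},$$
which, by the first computation, is equivalent to $\widehat{\cY}_{(f_0,f_1)}$. A small bookkeeping step will be required to verify that the equivalence so produced is indeed induced by the canonical map in the statement, but this is routine from the universal properties of the various pullbacks and of $\Delta$. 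There is no real obstacle; the statement is essentially an application of $(-)_{dR}$ preserving limits combined with pullback pasting, which is why the original text indicates the proof can be omitted.
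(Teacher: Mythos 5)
Your argument is correct and is exactly the route the paper intends: the paper's (omitted) proof is precisely "the universal property of pullback diagrams together with the fact that the de Rham functor preserves finite products," which is what your unwinding of the fiber products and pullback-pasting spells out. No gaps; the bookkeeping step identifying the resulting equivalence with the canonical comparison map is indeed routine.
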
\

The following corollary is a special case of Proposition \ref{prop:prodcomplete}:

\begin{corollary} \label{cor:prodcomplete}
Let $x=\left(x_0,x_1\right):* \to \cX_0 \times \cX_1$ be a geometric point. Then
$$\left(\cX_0\times\cX_1\right)_{\dpr{x}} \simeq \left(\cX_0\right)_{\dpr{x_0}} \times \left(\cX_1\right)_{\dpr{x_1}}.$$
\end{corollary}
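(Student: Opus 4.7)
The plan is to deduce the corollary directly from Proposition \ref{prop:prodcomplete} by specializing to $\cY = \ast$ and $f = x = (x_0, x_1) \colon \ast \to \cX_0 \times \cX_1$; essentially nothing else is needed.

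First I would verify that the de Rham functor preserves the terminal object. This is immediate from the remark following Definition \ref{dfn:deRham}, which observes that $(\blank)_{dR} = k_* k^*$ admits both a left and a right adjoint (given by $k_! k^*$ and $p^* p_*$ respectively), so it preserves all small limits. Hence $\ast_{dR} \simeq \ast$, and in particular the diagonal $\Delta \colon \ast_{dR} \to \ast_{dR} \times \ast_{dR}$ is an equivalence.

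Next I would invoke Proposition \ref{prop:prodcomplete} with $\cY = \ast$, which yields the pullback square
$$\xymatrix{\widehat{\ast}_{(x_0,x_1)} \ar[r] \ar[d] & \widehat{\ast}_{x_0} \times \widehat{\ast}_{x_1} \ar[d]\\
\ast_{dR} \ar[r]^-{\Delta} & \ast_{dR} \times \ast_{dR}.}$$
Since pullback along an equivalence is an equivalence, the top horizontal arrow is an equivalence. Finally, unwinding the formal-neighborhood notation gives $\widehat{\ast}_{(x_0,x_1)} = (\cX_0 \times \cX_1)_{\dpr{x}}$ and $\widehat{\ast}_{x_i} = (\cX_i)_{\dpr{x_i}}$, yielding the claimed equivalence. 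I do not anticipate any substantive obstacle here: the corollary is a purely formal consequence of Proposition \ref{prop:prodcomplete} together with the fact that $(\blank)_{dR}$ preserves the terminal object.
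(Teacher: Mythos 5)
Your proof is correct and follows exactly the route the paper intends: the text introduces the corollary as "a special case of Proposition \ref{prop:prodcomplete}," and your specialization to $\cY=\ast$, using that $(\blank)_{dR}$ preserves the terminal object so that the diagonal of $\ast_{dR}$ is an equivalence, is precisely the verification that it is such a special case. Nothing further is needed.
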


\begin{definition}
Let $k$ be a non-negative integer. The \textbf{$k$-dimensional formal disk} is the colimit
$$\bbD^k:=\underset{n} \colim \sp\left(C^\i\left(t_1,t_2,\ldots,t_k\right)/\left(t_1,t_2,\ldots,t_k\right)^n\right),$$ where the colimit is taken in $\bH.$
\end{definition}

\begin{definition}
Let $\A$ be an $\SCi$-algebra. Let $k$ be a non-negative integer. Let $N_k \subset \Nil_{\Ci}\left(\pi_0\left(\A\right)\right)$ be the subset of nilpotent elements of degree $k$. 
Define $$\Nil_k\left(\A\right):=\A \times_{\pi_0 \A} N_k$$ be the \textbf{space of nilpotent elements of degree $k$ of $\A$}, and let $$\Nil_{\Ci}\left(\A\right):=\A \times_{\pi_0 \A}  \Nil_{\Ci}\left(\pi_0 \A \right)$$ be the \textbf{space of locally nilpotent elements of $\A$}. Denote by 
 $$\Nil_k:\cD^{op} \to \Spc$$ the functor $\sp\left(\A\right) \mapsto \Nil_k\left(\A\right),$ and by
$$\Nil_{\Ci}:\cD^{op} \to \Spc$$ the functor $\sp\left(\A\right) \mapsto \Nil_\Ci\left(\A\right).$
\end{definition}

\begin{lemma}
The functor $$\Nil_k:\cD^{op} \to \Spc$$ is representable by $\sp\left(C^\i\left(t\right)/t^k\right).$
\end{lemma}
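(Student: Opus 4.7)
The plan is to exhibit $\Ci(t)/t^k$ as a homotopy pushout in $\dgc$ and then read off representability from the universal property of the free $\SCi$-algebra on one even generator.

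First I would observe that $t^k \in \Ci(\R)$ is a regular element: if $f \cdot t^k = 0$ as a smooth function then $f$ vanishes off $\{0\}$ and hence identically by continuity. Proposition~\ref{prop:pushlocus} applied to the smooth map $t \mapsto t^k \colon \R \to \R$ then identifies the homotopy pushout in $\dgc$ of
\[
\xymatrix{
\Ci(\R) \ar[r]^-{t^k} \ar[d]_-{ev_0} & \Ci(\R) \\
\R & {}
}
\]
with the super Koszul algebra $K(\Ci(\R), t^k)$. By Lemma~\ref{lem:Kosz1} and the regularity of $t^k$, this Koszul algebra is quasi-isomorphic to the classical quotient $\Ci(\R)/(t^k) = \Ci(t)/t^k$ (the $\SCi$-ideal and ordinary ideal generated by $t^k$ coinciding, by the cited result of Moerdijk--Reyes). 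Hence $\Ci(t)/t^k$ is itself the homotopy pushout in $\dgc$.

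Next, for $\A \in \dgc$ I would apply $\Map_{\dgc}(-, \A)$ to turn the homotopy pushout into a homotopy pullback of spaces. Since $\R$ is the initial object of $\dgc$ and $\Ci(\R)$ is the free $\SCi$-algebra on one even generator, these mapping spaces are identified with a point and with the space of even elements of $\A$ respectively, producing
\[
\Map_{\dgc}\bigl(\Ci(t)/t^k,\,\A\bigr) \;\simeq\; \ast \times_{\A} \A,
\]
where the left leg picks out $0 \in \A$ (coming from $ev_0$) and the right leg is $a \mapsto a^k$ (coming from $t \mapsto t^k$). By construction this pullback is precisely $\A \times_{\pi_0 \A} N_k = \Nil_k(\A)$, naturally in $\A$; the Yoneda lemma then exhibits $\sp\bigl(\Ci(t)/t^k\bigr)$ as the representing object.

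The main obstacle is the Koszul--regularity step that promotes the ordinary quotient $\Ci(\R)/(t^k)$ to the \emph{derived} pushout in $\dgc$; once that is in hand, the rest is a formal unwinding of the universal properties of free $\SCi$-algebras and initial objects.
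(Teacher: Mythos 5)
Your proposal is correct and takes essentially the same route as the paper: the paper realizes the representing object as the derived pushout of $\R \leftarrow \Ci\left(t\right) \stackrel{t\mapsto x^k}{\longrightarrow} \Ci\left(x\right)$, computes it by resolving $\R$ with the acyclic Koszul algebra $K\left(\Ci\left(t\right),t\right)$, and uses that $x^k$ is not a zero divisor to identify the result with $\Ci\left(t\right)/t^k$ --- precisely the Koszul--regularity step you package by citing Proposition \ref{prop:pushlocus} and Lemma \ref{lem:Kosz1}. The only differences are cosmetic (order of the two steps, and citing the packaged propositions rather than redoing the resolution inline), and your final identification of maps out of the pushout with $\Nil_k\left(\A\right)$ is asserted at the same level of detail as the paper's own ``clearly.''
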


\begin{proof}
Define $P$ to be the pushout in $\dgc$
$$\xymatrix{C^\i\left(t\right) \ar[r] \ar[d]_-{x^k} & C^\i\left(t\right)/t=\R \ar[d]\\
C^\i\left(x\right) \ar[r] & P.}$$ 
Clearly, $\sp\left(P\right)$ represents $\Nil_k.$ It suffices to prove that  $P \simeq C^\i\left(t\right)/t^k.$ Since $\pi_0$ preserves colimits, we have that $\pi_0 P \simeq C^\i\left(t\right)/t^k.$ The model structure of dg-$C^\i$-algebras gives a presentation for $\dgc,$ consequently, we can represent $P$ as the derived tensor product with respect to this model structure. Consider the Koszul algebra $K\left(t\right)$ associated to the element $t$ in $C^\i\left(t\right)$. Since the singleton $t$ is a regular sequence, $K\left(t\right)$ is acyclic, and $C^\i\left(t\right) \to K\left(t\right)$ is a quasi-free extension, hence a cofibration. Therefore the derived tensor product is simply $P=C^\i\left(x\right) \oinfty K\left(t\right)$ which can be identified with the Koszul algebra of $C^\i\left(x\right)$ with respect to the element $x^k.$ Since $x^k$ is not a zero divisor, the canonical map $P \to C^\i\left(x\right)/x^k$ is a quasi-isomorphism.
\end{proof}

\begin{corollary}
$\Nil_k$ is a sheaf for all $k.$
\end{corollary}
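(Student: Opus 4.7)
The plan is straightforward: invoke the previous lemma together with the sub-canonicity of the open covering topology on $\sD$. By the preceding lemma, $\Nil_k$ is represented by $\sp(\Ci(t)/t^k)$, so it suffices to exhibit this representing algebra as an object of $\sD$ for which the corresponding Yoneda representable is a sheaf.

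First, I would observe that $\Ci(t)/t^k$ is finitely presented: it is the pushout of $\Ci(t) \to \R$ along $\Ci(t) \xrightarrow{t^k} \Ci(t)$, hence is a finite colimit of finitely generated free $\SCi$-algebras and therefore lies in $\Alg_{\SCi}(\Spc)^{\fp}$. By condition (1) in the definition of the derived site $\sD$ (which contains all compact objects), we have $\sp(\Ci(t)/t^k) \in \sD$.

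Next, recall from Remark \ref{rmk:subcan} that every finitely presented $\SCi$-algebra is complete, and that the open covering topology restricted to $(\Alg_{\SCi}(\Spc)^{\fp})^{op}$ is sub-canonical. More generally, since we are assuming that every algebra appearing in $\sD$ is complete, the open covering topology on $\sD$ is itself sub-canonical (the sheaf condition for representables reduces, via the identification $\dgc^{\wedge,op} \simeq \Aff_{\SCi}$, to the standard sheaf condition for structure sheaves of affine $\SCi$-schemes, which is built into the definition of $\cO_{\A}$). Consequently, the representable functor $y(\sp(\Ci(t)/t^k))$ is a sheaf on $\sD$.

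The only subtle point is that one must verify the representability established in the previous lemma really produces the Yoneda embedding into $\Shv(\sD) = \bH$ rather than merely into presheaves. But this is immediate: under $y : \sD \hookrightarrow \bH$, the functor co-represented by $\Ci(t)/t^k$ assigns to any $\sp(\A) \in \sD$ the mapping space $\Map_{\dgc}(\Ci(t)/t^k, \A)$, which is precisely $\Nil_k(\A)$ by the preceding lemma. Hence $\Nil_k \simeq y(\sp(\Ci(t)/t^k)) \in \bH$, which is a sheaf. No step here requires real work beyond the two inputs already in place, so I do not anticipate any obstacles.
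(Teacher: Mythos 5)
Your proposal is correct and follows the same route the paper intends: the corollary is immediate from the representability lemma once one knows the representing object lies in $\sD$ and the open covering topology there is sub-canonical, which is exactly what you verify (finite presentation of $\Ci\left(t\right)/t^k$ via the defining pushout, hence compactness and membership in $\sD$, plus completeness and sub-canonicity as in Remark \ref{rmk:subcan}). The paper leaves these details implicit, so your write-up is simply a fuller version of the same argument.
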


\begin{proposition}
$\Nil_\Ci$ is a sheaf and $$\Nil_\Ci \simeq \underset{k} \colim \Nil_k.$$
\end{proposition}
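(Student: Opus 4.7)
My plan is to handle the two claims separately: sheafness directly from the pointwise definition of local nilpotency, and the filtered colimit presentation via a local-to-global descent argument using $\Ci$-localizations.

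For the sheaf property, I view $\Nil_\Ci$ as a sub-presheaf of the representable sheaf $\sp(\A)\mapsto\A_0,$ so it suffices to check that the condition of being locally nilpotent descends along covers in $\cD.$ Given a covering family $(\A\to\A[a_\alpha^{-1}])_\alpha$ and an element $a\in\A_0$ whose image in each $\A[a_\alpha^{-1}]$ is locally nilpotent, I verify $a\in\Nil_\Ci(\A).$ For any $\R$-point $x:\A\to\R,$ some $a_\alpha$ is non-vanishing at $x,$ so $x$ lifts to an $\R$-point $\tilde x$ of $\A[a_\alpha^{-1}],$ and the natural map of $\Ci$-local rings $\A_x\to(\A[a_\alpha^{-1}])_{\tilde x}$ is an isomorphism (since $a_\alpha\notin\mathfrak m_x$ is already a unit in $\A_x,$ by Lemma \ref{lem:localq} and the universal property of $\Ci$-localization). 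Nilpotence of $\germ_{\tilde x}(a)$ therefore transports to nilpotence of $\germ_x(a).$

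For the colimit identification, the inclusion $\Nil_k\hookrightarrow\Nil_\Ci$ is immediate from the definition and is compatible with the transition maps $\Nil_k\to\Nil_{k+1},$ producing a map $\colim_k\Nil_k\to\Nil_\Ci$ in $\bH$ (colimit in sheaves), which is a monomorphism since each $\Nil_k\hookrightarrow\A$ is. It remains to show it is an effective epimorphism, equivalently to produce local lifts. Given $a\in\Nil_\Ci(\A)$ and any $\R$-point $x,$ nilpotence of $\germ_x(a)$ in $\A_x$ combined with the presentation $\A_x\simeq\underset{b\notin\mathfrak m_x}\colim \A[b^{-1}]$ as a filtered $\Ci$-colimit yields some $b\notin\mathfrak m_x$ and $n_x\in\mathbb N$ with $a^{n_x}=0$ in $\A[b^{-1}].$ As $x$ varies, the family $\{\Speci(\A[b^{-1}])\}$ covers $\Speci(\A),$ and on each piece $a$ literally lies in $\Nil_{n_x}(\A[b^{-1}]).$

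The main subtlety, if one can call it that, lies in correctly interpreting the sheaf colimit $\colim_k\Nil_k.$ Because the $\Nil_k$ are nested sub-sheaves of $\A,$ their sheaf colimit is simply the union of the $\Nil_k$ inside $\A$---that is, the sub-sheaf of sections that, locally on an open cover of $\Speci(\A),$ satisfy $a^k=0$ for some $k.$ The argument above shows that every locally nilpotent section has exactly this property, giving the desired equivalence.
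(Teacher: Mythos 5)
Your proof is correct, but it takes a genuinely different route from the paper's. The paper identifies $\Nil_{\Ci}\left(\A\right)$ with the global sections of the quasi-coherent sheaf $\mathbf{MSpec}\left(\Nil\left(\A\right)\right)$ over $\Speci\left(\A\right)$ (as in the proof of Lemma \ref{lem:nilradsame}); this simultaneously gives sheafness and exhibits $\Nil_{\Ci}$ as the sheafification of the presheaf $\Nil$ of genuinely nilpotent elements, after which the identity $\Nil_{\Ci}\simeq \colim_k \Nil_k$ is formal, since $\Nil\cong\colim_k\Nil_k$ objectwise and colimits of sheaves are computed by sheafifying the presheaf colimit. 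You instead argue by hand: descent of the locally-nilpotent predicate along basic covers via the isomorphism $\A_x\simeq\left(\A\left[a_\alpha^{-1}\right]\right)_{\tilde x}$ of $\Ci$-local rings, and then the mono-plus-effective-epi criterion, using the filtered colimit $\A_x\simeq\underset{b\notin\mathfrak{m}_x}\colim\,\A\left[b^{-1}\right]$ to produce, for each point, a basic open on which $a$ is honestly nilpotent of some finite order. Your approach is more elementary and self-contained (it avoids the quasi-coherence machinery), at the cost of having to verify the local-to-global statement and the union description of $\colim_k\Nil_k$ directly; the paper's approach buys both claims at once from the module-spec identification and also records the stronger statement that $\Nil_{\Ci}$ is the sheafification of $\Nil$. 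Two small points to tighten: the reference to Lemma \ref{lem:localq} is not quite apt (that lemma concerns quotients), though the local-ring isomorphism you need follows immediately from cofinality in the filtered colimit presentation of $\A_x$; and the opening reduction of the $\i$-sheaf condition to descent of the predicate deserves one sentence noting that $\Nil_{\Ci}$ is levelwise an inclusion of connected components of the representable sheaf $\sp\left(\A\right)\mapsto\A_0$ (the even line, which is a sheaf since the topology on $\sD$ is sub-canonical), so the full \v{C}ech condition collapses to the $\pi_0$-level gluing statement you check.
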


\begin{proof}
It suffices to prove that the restriction of $\Nil_{\Ci}$ to the small site of $\sp\left(\A\right)$ is a sheaf for all $\A.$ As in the proof of Lemma \ref{lem:nilradsame}, $\Nil_{\Ci}\left(A\right)$ can be identified with global sections over $\sp\left(\A\right)$ of $\mathbf{MSpec}\left(\Nil\left(\A\right)\right),$ which is a sheaf. Hence $\Nil_{\Ci}$ is a sheaf, and in fact the sheafification of the presheaf $\Nil$ assigning the space of genuinely nilpotent elements. Since 
$$\Nil\left(A\right) \simeq \underset{k} \colim \Nil_k\left(A\right),$$ and colimits in sheaves are computed by first computing them object-wise in presheaves and then sheafifying, the result follows.
\end{proof}

\begin{corollary}\label{cor:repnil}
The sheaf $\Nil_\Ci$ is represented by $\bbD^1.$
\end{corollary}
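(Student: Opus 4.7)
The plan is to observe that this corollary is essentially an immediate consequence of the two results that precede it, with only a small bookkeeping step required to connect them. By the proposition just established, we have an equivalence of sheaves
\[
\Nil_{\Ci} \;\simeq\; \underset{k}\colim\, \Nil_k
\]
where the colimit is taken in $\bH = \Shv(\sD)$. By the lemma preceding the proposition, each $\Nil_k$ is representable by $\sp(\Ci(t)/t^k)$, i.e.\ $\Nil_k \simeq y(\sp(\Ci(t)/t^k))$ in $\bH$. Substituting this identification into the colimit and comparing with the definition
\[
\bbD^1 \;:=\; \underset{n}\colim\, \sp\!\left(\Ci(t)/(t)^n\right)
\]
(where the colimit is likewise taken in $\bH$) yields
\[
\Nil_{\Ci} \;\simeq\; \underset{k}\colim\, \sp(\Ci(t)/t^k) \;=\; \bbD^1.
\]

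The only thing to verify is naturality: one should check that the inclusions $\Nil_k \hookrightarrow \Nil_{k+1}$ used in assembling $\Nil_\Ci$ correspond, under the representability isomorphisms, to the canonical quotient maps $\Ci(t)/t^{k+1} \twoheadrightarrow \Ci(t)/t^k$ used in assembling $\bbD^1$. This is a direct unwinding: both maps are determined by where the generator $t$ is sent, and on the representing side the map classifies the universal nilpotent element of degree $k$ viewed as a nilpotent element of degree $k+1$.

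I do not expect any real obstacle here — the work was done in the preceding two results. If anything, one might want to remark that because colimits in $\bH$ are computed by first taking the colimit of presheaves and then sheafifying, and because $\Nil_\Ci$ was identified as precisely the sheafification of $\colim_k \Nil_k$ (equivalently, of the presheaf $\Nil$ of genuinely nilpotent elements), the displayed equivalence indeed holds in $\bH$ and not merely at the level of presheaves. This could be stated explicitly in one sentence, after which the proof is complete.
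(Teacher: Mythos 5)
Your proposal is correct and follows exactly the route the paper intends: the corollary is immediate from the preceding proposition ($\Nil_\Ci \simeq \underset{k}\colim \Nil_k$ in $\bH$), the lemma identifying each $\Nil_k$ with $\sp\left(\Ci\left(t\right)/t^k\right)$, and the definition of $\bbD^1$ as that very colimit, with your naturality check of the transition maps being the only (routine) point worth recording. Nothing is missing.
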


\begin{lemma}
Let $x:* \to \R$ be a point in $\R.$ Then the formal neighborhood around $x$ is equivalent to $\bbD^1$
\end{lemma}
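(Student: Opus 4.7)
The plan is to compute the functor of points of $\R_{\dpr{x}}$ on test affines $\sp(\A) \in \sD$, identify it with the space of elements $a \in \A$ whose reduction equals $x$, translate by $-x$ to reduce to the locally nilpotent elements, and finally invoke Corollary \ref{cor:repnil}.

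First I would observe that the point $*$ is already classical and reduced, so $k^\ast *\simeq *$ and therefore $*_{dR}\simeq *$. The pullback defining the formal neighborhood then simplifies to $\R_{\dpr{x}} \simeq * \times_{\R_{dR}} \R$, where the left-hand map is the composite $* \xrightarrow{x} \R \xrightarrow{\eta_\R} \R_{dR}$. Evaluating on a test object $\sp(\A) \in \sD$ and using the description $\R_{dR}(\sp\A) \simeq \R(\sp\A_{red})$ that follows from Definition \ref{dfn:deRham}, I expect this pullback square of spaces to identify $\R_{\dpr{x}}(\sp\A)$ with the space of those $a \in \A$ (regarded as maps $\sp(\A) \to \R$) whose class in $\A_{red}$ coincides with $x$.

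Next I would note that the smooth translation map $T_{-x} \colon \R \to \R$, $t \mapsto t - x$, is a $\Ci$-diffeomorphism and hence an equivalence in $\bH$; it carries $x$ to $0$ and, by naturality in $\A$, produces an equivalence of functors of points between $\R_{\dpr{x}}$ and $\R_{\dpr{0}}$. The latter evaluates to $\{a \in \A : a_{red} = 0\} = \Nil_\Ci(\A)$, i.e.\ it is precisely the sheaf $\Nil_\Ci$. Applying Corollary \ref{cor:repnil}, which identifies $\Nil_\Ci$ with the representable stack $\bbD^1$, then completes the identification $\R_{\dpr{x}} \simeq \bbD^1$.

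The hard part is mostly bookkeeping: verifying that the identifications in the second paragraph are natural in $\sp(\A)$ and that the reduction $*_{dR} \simeq *$ interacts correctly with the defining homotopy pullback (as opposed to a strict one). Beyond these coherence checks, the argument is essentially an unwinding of the functor-of-points description of $\left(\blank\right)_{dR}$ combined with the representability of $\Nil_\Ci$ already established in Corollary \ref{cor:repnil}.
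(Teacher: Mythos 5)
Your proposal is correct and follows essentially the same route as the paper: both reduce to the origin via a translation diffeomorphism, identify the functor of points of $\R_{\dpr{0}}$ with the fiber of $\A \to \A_{red}$ over $0$ (i.e.\ with $\Nil_{\Ci}\left(\A\right)$, which the paper extracts by pasting pullback squares), and conclude with Corollary \ref{cor:repnil}. The only cosmetic difference is the order in which the translation is applied.
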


\begin{proof}
Firstly, since there is a diffeomorphism $$\R \to \R$$ carrying $x$ to the origin, we can assume without loss of generality that $x$ is the origin.

Notice that $$\Map\left(\sp\left(A\right),\R\right) \simeq A$$ and $$\Map\left(\sp\left(A\right),\R_{dR}\right) \simeq \pi_0\left(A\right)/\Nil_{\Ci}\left(\pi_0 A\right),$$ hence
 $\Map\left(\sp\left(A\right),\R_{\dpr{0}}\right)$ can be identified with the fiber over $0$ of $$A \to \pi_0\left(A\right)/\Nil_{\Ci}\left(\pi_0 A\right).$$ By pasting pullback diagrams

$$\xymatrix{\Nil_{\Ci}\left(A\right) \ar[r] \ar[d] & A \ar[d]\\
\Nil_{\Ci}\left(\pi_0 A\right) \ar[d] \ar[r] & \pi_0\left(A\right) \ar[d]\\
0 \ar[r] & \pi_0\left(A\right)/\Nil_{\Ci}\left(\pi_0 A\right),}$$
we conclude that $$\Map\left(\sp\left(A\right),\R_{\dpr{0}}\right)\simeq \Nil_{\Ci}\left(A\right).$$ The result now follows from Corollary \ref{cor:repnil}.
\end{proof}

\begin{proposition}\label{prop:fmrn}
Let $x:* \to \R^n$ be a point in $\R^n.$ Then the formal neighborhood around $x$ is equivalent to $\bbD^n$.
\end{proposition}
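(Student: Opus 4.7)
My plan is to reduce immediately to the one-dimensional case just established, using the product decomposition $\R^n \simeq \R \times \cdots \times \R$ and iterated application of Corollary~\ref{cor:prodcomplete}. Writing $x = \left(x_1, \ldots, x_n\right)$ and applying Corollary~\ref{cor:prodcomplete} one factor at a time yields
$$\left(\R^n\right)_{\dpr{x}} \simeq \R_{\dpr{x_1}} \times \cdots \times \R_{\dpr{x_n}}.$$
By the preceding lemma each factor is equivalent to $\bbD^1$, so the problem reduces to constructing a natural equivalence $\left(\bbD^1\right)^n \simeq \bbD^n$.

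For this identification, I would exploit that finite products commute with colimits in the $\i$-topos $\bH$, so that
$$\left(\bbD^1\right)^n \simeq \underset{\left(k_1,\ldots,k_n\right)} \colim \, \sp\left(\Ci\left(t_1\right)/t_1^{k_1}\right) \times \cdots \times \sp\left(\Ci\left(t_n\right)/t_n^{k_n}\right).$$
Each factor is finitely presented and therefore complete by Proposition~\ref{prop:compcomplete}, so each product of spectra is computed by the $\Ci$-coproduct. Using that $\Ci\left(\R^n\right) \cong \Ci\left(t_1\right) \oinfty \cdots \oinfty \Ci\left(t_n\right)$ and that $\Ci$-quotients by (homogeneous) ideals are preserved under $\oinfty$, one obtains
$$\left(\bbD^1\right)^n \simeq \underset{\left(k_1,\ldots,k_n\right)} \colim \, \sp\left(\Ci\left(t_1, \ldots, t_n\right)/\left(t_1^{k_1}, \ldots, t_n^{k_n}\right)\right).$$

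The final step is a cofinality argument comparing this $n$-fold filtered colimit to the single filtration defining $\bbD^n$. On the one hand, $\left(t_1^N, \ldots, t_n^N\right) \subseteq \left(t_1, \ldots, t_n\right)^N$ trivially. On the other hand, by a pigeonhole argument, any monomial $t_1^{a_1}\cdots t_n^{a_n}$ with $\sum a_i \ge k_1 + \cdots + k_n$ satisfies $a_j \ge k_j$ for some $j$, so $\left(t_1, \ldots, t_n\right)^{k_1 + \cdots + k_n} \subseteq \left(t_1^{k_1}, \ldots, t_n^{k_n}\right)$. Thus the two families of ideals are mutually cofinal, exhibiting the colimit above as $\bbD^n$, as required.

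The step I expect to be the main obstacle is the verification that the coproduct of $\Ci$-quotients computes what I claim at the $\infty$-categorical level—i.e., that one may pass freely between the (classical) $\Ci$-quotient by $\left(t_1^{k_1}, \ldots, t_n^{k_n}\right)$ and its derived analogue. Since the elements $t_i^{k_i}$ form a regular sequence in $\Ci\left(\R^n\right)$ (in the sense used for the Koszul computations of Section~\ref{sec:homotopical}), the relevant Koszul complexes are acyclic and these two notions of quotient agree, so the cofinality argument transports without obstruction to the derived setting.
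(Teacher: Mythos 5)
Your proof is correct, and after the common first step (Corollary \ref{cor:prodcomplete} reduces everything to showing $\left(\bbD^1\right)^n \simeq \bbD^n$) it diverges genuinely from the paper's argument. The paper observes that both $\left(\bbD^1\right)^n$ and $\bbD^n$ lie in the coreflective subcategory $\bHc$ of classical stacks (Proposition \ref{prop:closedprod} plus closure under colimits), and then compares functors of points on a classical test algebra $A$: the left side evaluates to $\Nil_{\Ci}\left(A\right)^n$, the right side is the sheafification of $A \mapsto \underset{k}\colim \Hom\left(\Ci\left(t_1,\ldots,t_n\right)/\left(t_1,\ldots,t_n\right)^k,A\right)$, and the two are matched by a pigeonhole argument on the nilpotence orders of the elements $a_i \in A$, finishing with left exactness of sheafification. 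You instead stay on the representing-object side: distribute the product over the filtered colimits, identify each finite product of infinitesimal disks with a single affine $\Speci\left(\Ci\left(t_1,\ldots,t_n\right)/\left(t_1^{k_1},\ldots,t_n^{k_n}\right)\right)$, and run the very same pigeonhole combinatorics as a mutual-cofinality (interleaving) statement between the two filtrations of ideals of $\Ci\left(\R^n\right)$. Your route avoids the detour through $\bHc$ and the representability/sheaf statements for $\Nil_k$ and $\Nil_{\Ci}$, but it obliges you to check that $\left(t_1^{k_1},\ldots,t_n^{k_n}\right)$ is a regular sequence in $\Ci\left(\R^n\right)$ so that the underived quotients compute the derived pushouts --- you correctly flag this as the delicate point; it does hold (the verification that $t_2^{k_2}$ is regular modulo $\left(t_1^{k_1}\right)$ is a small Hadamard-lemma/Taylor-remainder exercise, not literally one of the cases computed in Section \ref{sec:homotopical}, so it deserves a line of proof), whereas the paper's argument never forms these mixed quotients and so sidesteps the issue entirely. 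One minor imprecision: the representability of a finite product of affines by the coproduct of the algebras rests on the site being closed under finite limits and the topology being subcanonical on finitely presented algebras (Remark \ref{rmk:subcan}), rather than on completeness per se; this does not affect the validity of the argument.
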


\begin{proof}
By Corollary \ref{cor:prodcomplete}, the formal neighborhood of $x$ in $\R^n$ is equivalent to $\left(\bbD^1\right)^n.$ It therefore suffices to prove that $$\left(\bbD^1\right)^n \simeq \bbD^n.$$ By Proposition \ref{prop:closedprod} and the fact that $\bHc$ is coreflective in $\bH$ and hence closed under colimits, both $\left(\bbD^1\right)^n$ and $\bbD^n$ are in $\bHc.$ Therefore, it suffices to show that $$i^*\left(\bbD^1\right)^n \simeq i^*\bbD^n.$$ Let $\sp\left(A\right)$ be in $\sD_0.$ Then $A$ is an ordinary $C^\i$-ring. Hence, on one hand, $$i^*\left(\bbD^1\right)^n\left(\sp\left(A\right)\right) \cong \Nil_{\Ci}\left(A\right)^n.$$ On the other hand $i^*\bbD^n$ is the sheafification of the presheaf $F$
$$A \mapsto  \underset{k} \colim \Hom\left(C^\i\left(t_1,t_2,\ldots,t_n\right)/\left(t_1,t_2,\ldots,t_n\right)^k,A\right).$$ Since $t_i^k \in \left(t_1,t_2,\ldots,t_n\right)^k$ for all $i,$ it follows that $$F\left(\sp\left(A\right)\right) \subseteq \Nil\left(A\right)^n.$$ Let us now show the reverse inclusion. Let $a_1,a_2,\ldots a_n \in \Nil\left(A\right)^n.$ Suppose that $a_i$ is nilpotent of order $n_i.$ Let $N:=\sum_{i=1}^{n} n_i.$ The ideal $\left(t_1,t_2,\ldots t_n\right)^N$ is generated by the binomials of the form $t_{1}^{k_{1}}t_{2}^{k_{2}}\cdots t_{m}^{k_{m}},$ where $k_{1}+k_{2}+\cdots +k_{m}=N.$ Fix such a monomial. For a given $j,$ 
$$k_j=N-\sum_{i \ne j} k_i.$$ So, if each $i \ne j,$ $k_i< n_j,$ then $k_j=n_j +\sum_{i \ne j} \left(n_i-k_i\right) > n_j,$ hence $a_j^{k_j}=0,$ and consequently $a_{1}^{k_{1}}a_{2}^{k_{2}}\cdots a_{m}^{k_{m}}=0.$ Otherwise, there exists $i \ne j$ such that $k_i \ge n_i,$ and again we conclude that $a_{1}^{k_{1}}a_{2}^{k_{2}}\cdots a_{m}^{k_{m}}=0$. It follows that $$\Nil\left(A\right)^n \subseteq \underset{k} \colim \Hom\left(C^\i\left(t_1,t_2,\ldots,t_n\right)/\left(t_1,t_2,\ldots,t_n\right)^k,A\right)=F\left(\sp\left(A\right)\right).$$ Since sheafification is left exact, the result follows.
\end{proof}

\begin{definition}
Let $n$ and $k$ be non-negative integers. The \textbf{$k^{th}$ order $\left(0|n\right)$-dimensional formal disk} is
$$\bbD^{0|n}_{\left(k\right)}:=\sp\left(C^\i\left(0;\eta_1,\eta_2,\ldots,\eta_n\right)/\left(\eta_1,\eta_2,\ldots,\eta_n\right)^k\right).$$
The \textbf{$\left(0,n\right)$-dimensional formal disk} is the colimit
$$\bbD^{0|n}:=\underset{k} \colim \sp\left(C^\i\left(0;\eta_1,\eta_2,\ldots,\eta_n\right)/\left(\eta_1,\eta_2,\ldots,\eta_n\right)^k\right),$$ where the colimit is taken in $\bH.$
\end{definition}

\begin{remark}
More generally, define the \textbf{$k^{th}$ order $\left(m|n\right)$-dimensional formal disk} as
$$\bbD^{m|n}_{\left(k\right)}:=\sp\left(C^\i\left(x_1,\ldots,x_m;\eta_1,\eta_2,\ldots,\eta_n\right)/\left(x_1,\ldots,x_m\eta_1,\eta_2,\ldots,\eta_n\right)^k\right).$$
Then we have $$\bbD^{m|n}_{\left(k\right)} \simeq \bbD^{m}_{\left(k\right)} \times \bbD^{0|n}_{\left(k\right)}.$$
\end{remark}

\begin{proposition}
There is a canonical equivalence $$\bbD^{0|n} \simeq \R^{0|n}.$$
\end{proposition}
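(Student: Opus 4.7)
The plan is to show that the sequential diagram in the definition of $\bbD^{0|n}$ stabilizes after finitely many steps, with limiting value $\R^{0|n}$. The key algebraic input is that in any supercommutative algebra, odd elements square to zero and pairwise anticommute, which bounds the possible nonzero monomials in the $\eta_i$ much more severely than in the even case of Proposition \ref{prop:fmrn}.

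First I would set $I := (\eta_1,\ldots,\eta_n) \subset \Ci\{0;\eta_1,\ldots,\eta_n\} \cong \Ci(\R^{0|n})$ and establish the vanishing $I^{n+1} = 0$. A generator of $I^k$ has the form $c\cdot \eta_{i_1}\eta_{i_2}\cdots \eta_{i_k}$ with $c \in \Ci(\R^{0|n})$ and indices $i_1,\ldots,i_k \in \{1,\ldots,n\}$; since the $\eta_i$ anticommute and square to zero, whenever $k \geq n+1$ the pigeonhole principle forces a repeated index among the factors, and the monomial vanishes. Hence $I^k = 0$, and consequently $\Ci(\R^{0|n})/I^k = \Ci(\R^{0|n})$, for every $k \geq n+1$.

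Second, I would pass to the colimit in $\bH$. For $k \geq n+1$, the transition map
$$\Speci\bigl(\Ci(\R^{0|n})/I^k\bigr) \longrightarrow \Speci\bigl(\Ci(\R^{0|n})/I^{k+1}\bigr)$$
is the identity on $\Speci(\Ci(\R^{0|n}))$, which under the fully faithful embedding $\SMfd \hookrightarrow \bH$ from Section \ref{sec:schemes} is exactly $\R^{0|n}$. Since a sequential colimit in the $\i$-topos $\bH$ of an eventually constant diagram is its eventual value, we conclude $\bbD^{0|n} \simeq \R^{0|n}$.

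No real obstacle is anticipated: the entire content is the vanishing $I^{n+1}=0$, which is the super-geometric avatar of the fact that $\Lambda^\bullet \R^n$ is concentrated in degrees $\leq n$. In sharp contrast with the even case of Proposition \ref{prop:fmrn}, where $\bbD^n$ is a genuine formal thickening of the origin, in the odd setting the ``formal disk'' needs no thickening to begin with --- it is already a bona fide supermanifold, namely all of $\R^{0|n}$.
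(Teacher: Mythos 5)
Your proof is correct and takes essentially the same route as the paper: both rest on the vanishing $\left(\eta_1,\ldots,\eta_n\right)^k=0$ for $k>n$, so that the tower $\bbD^{0|n}_{\left(k\right)}$ stabilizes at $\R^{0|n}$ and the sequential colimit is its eventual value (the paper phrases this step by computing $\Map\left(\bbD^{0|n},F\right)$ as a limit whose diagram has $\Map\left(\R^{0|n},F\right)$ as initial object and then invoking Yoneda). Your threshold $k\ge n+1$ is in fact the sharp one, since $\eta_1\cdots\eta_n$ still survives in $\left(\eta_1,\ldots,\eta_n\right)^n$.
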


\begin{definition}
For $n$ and $m$ non-negative integers, define the \textbf{$\left(n|m\right)$-dimensional formal disk} to be $$\bbD^{m|n}:=\bbD^m \times \bbD^{0|n}.$$
\end{definition}

\begin{proof}
Let $F$ be an arbitrary object of $\bH.$ Then we have
$$\Map\left(\bbD^{0|n},F\right) \simeq \underset{k} \lim \Map\left(\bbD^{0|n}_{\left(k\right)},F\right).$$
Notice that for all $k \ge n,$
$\bbD^{0|n}_{\left(k\right)} = \R^{0|n}.$ Thus the diagram in the above limit has $\Map\left(\R^{0|n},F\right)$ as an initial object, and thus the limit is $\Map\left(\R^{0|n},F\right).$ We are done by the Yoneda lemma.
\end{proof}

\begin{proposition}
Let $\A$ be a (complete) $\SCi$-algebra such that $\A_{red}=\R.$ Then
\begin{itemize}
\item[i)] $\Map\left(*,\sp\left(\A\right)\right) \simeq *,$
\item[ii)] $\sp\left(\A\right)_{dR}\simeq *,$
\item[iii)] The formal completion $\sp\left(\A\right)_{\left(\left(x\right)\right)}$ at the unique point $x$ is equivalent to $\sp\left(\A\right)$ itself.
\end{itemize}
\end{proposition}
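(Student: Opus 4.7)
The plan is to obtain all three assertions from a single observation: because $\R$ is a $0$-truncated, reduced $\SCi$-algebra and is initial among $\Ci$-algebras, the hypothesis $\A_{red}=\R$ forces every derived map from $\A$ into a reduced test algebra to be essentially unique.

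\emph{Part (i).} I would identify $\ast \simeq \Speci(\R)$ in $\bH$ and use the Yoneda embedding to obtain
$$\Map_{\bH}(\ast,\Speci(\A)) \simeq \Map_{\dgc}(\A,\R).$$
Since $\R$ is $0$-truncated in $\dgc$, this mapping space is $0$-truncated, hence coincides with the discrete set $\Hom_{\Alg_{\SCi}(\Set)}(\pi_0(\A),\R)$. Because $\R$ has no odd part, such homomorphisms correspond to classical $\Ci$-algebra maps $(\pi_0\A)_0 \to \R$. Lemma~\ref{lem:redfact} then shows that every such map factors uniquely through the reduction $(\pi_0\A)_0 \to ((\pi_0\A)_0)_{red} = \A_{red} = \R$, and since $\R$ is initial in $\Ci$-algebras, exactly one factorization exists.

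\emph{Part (ii).} Using the explicit description immediately following Definition~\ref{dfn:deRham}, namely $\cX_{dR}(\Speci(\B)) \simeq \cX(\Speci(\B_{red}))$, I would compute
$$\Speci(\A)_{dR}(\Speci(\B)) \simeq \Map_{\dgc}(\A,\B_{red}).$$
Since $\B_{red}$ is again $0$-truncated and reduced, repeating the argument from (i) verbatim---with $\R$ replaced by $\B_{red}$---shows this mapping space is contractible. As this holds for every test algebra $\B$, the sheaf $\Speci(\A)_{dR}$ is the terminal object of $\bH$.

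\emph{Part (iii).} Unwinding the definition of the formal neighborhood with $\cY = \ast$, $\cX = \Speci(\A)$, $f = x$ yields
$$\Speci(\A)_{((x))} \;\simeq\; \Speci(\A)\times_{\Speci(\A)_{dR}} \ast_{dR}.$$
Because $\ast$ is already reduced, $\ast_{dR}\simeq \ast$; combining this with the equivalence $\Speci(\A)_{dR}\simeq\ast$ established in (ii), the pullback collapses to $\Speci(\A)\times_\ast \ast \simeq \Speci(\A)$. The only genuinely delicate point in any of the three parts is confirming that higher homotopy in the derived algebra $\A$ cannot produce ``phantom'' derived morphisms into a $0$-truncated target; this is immediate from the general $\i$-categorical fact that mapping into an $n$-truncated object produces an $n$-truncated space, so the derived mapping space really does reduce to a set of classical $\Ci$-ring homomorphisms.
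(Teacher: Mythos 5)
Your proof is correct and follows essentially the same route as the paper's: compute $\Map(*,\sp(\A))$ and $\sp(\A)_{dR}$ via Yoneda, factor maps into reduced targets through $\A_{red}=\R$ (the paper compresses your truncatedness-plus-Lemma \ref{lem:redfact} step into the single equivalence $\Map(\A,\B_{red})\simeq\Map(\A_{red},\B_{red})$), invoke the initiality of $\R$, and then collapse the defining pullback of the formal neighborhood using $*_{dR}\simeq *$ and part (ii). No gaps; the extra detail you supply on why the derived mapping space into a $0$-truncated reduced algebra is discrete is exactly the implicit justification in the paper's chain of equivalences.
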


\begin{proof}
Consider the following string of natural equivalences
\begin{eqnarray*}
\Map\left(*,\sp\left(\A\right)\right) &\simeq& \Map\left(\sp\left(\R\right),\sp\left(\A\right)\right)\\
&\simeq& \Map\left(\A,\R\right)\\
&\simeq& \Map\left(\A_{red},\R\right)\\
&\simeq& \Map\left(\R,\R\right)\\
&\simeq& *.
\end{eqnarray*}
This establishes $i)$
Let $\B$ be an arbitrary (complete) $\SCi$-ring in $\cD$. Then we have the following natural equivalences
\begin{eqnarray*}
\Map\left(\sp\left(\B\right),\sp\left(\A\right)_{dR}\right) &\simeq&  \Map\left(\sp\left(\B_{red}\right),\sp\left(\A\right)\right)\\
&\simeq& \Map\left(\A,\B_{red}\right)\\
&\simeq& \Map\left(\A_{red},\B_{red}\right)\\
&\simeq& \Map\left(\R,\B_{red}\right)\\
&\simeq& *.
\end{eqnarray*}
It follows that $\sp\left(\A\right)_{dR}$ is the terminal object, establishing $ii).$ 
Finally, we have that
\begin{eqnarray*}
\sp\left(\A\right)_{\left(\left(x\right)\right)} &\simeq& * \times_{\sp\left(\A\right)_{dR}} \sp\left(\A\right)\\
&\simeq&	 * \times_{*} \sp\left(\A\right)\\
&\simeq& \sp\left(\A\right),
\end{eqnarray*}
which establishes $iii).$
\end{proof}

\begin{corollary}\label{cor:fnbds}
Let $x:* \to \R^{n|m}$ be a geometric point. Then its formal neighborhood around $x$ is equivalent to $$\bbD^{m|n}=\bbD^n \times \R^{0|m}.$$
\end{corollary}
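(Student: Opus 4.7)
The plan is to reduce the corollary to two already-established ingredients: the formal neighborhood computation for $\R^n$ in Proposition \ref{prop:fmrn}, and the observation that the odd affine superspace $\R^{0|m}$ is its own formal neighborhood at any point. The decomposition of the formal neighborhood of a product then finishes the argument.

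First, I would use the product decomposition of supermanifolds $\R^{n|m} \simeq \R^n \times \R^{0|m}$, under which a geometric point $x : \ast \to \R^{n|m}$ corresponds to a pair $(x_0, x_1)$ with $x_0 : \ast \to \R^n$ and $x_1 : \ast \to \R^{0|m}$. Applying Corollary \ref{cor:prodcomplete} then yields
$$\R^{n|m}_{\dpr{x}} \simeq \left(\R^n\right)_{\dpr{x_0}} \times \left(\R^{0|m}\right)_{\dpr{x_1}}.$$
By Proposition \ref{prop:fmrn}, the first factor is $\bbD^n$, so it remains only to identify the second factor with $\R^{0|m}$.

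For that, I would appeal to part iii) of the proposition preceding the corollary, applied to the (complete) $\SCi$-algebra $\A = \Ci(\R^{0|m}) \cong \wedge^\bullet \R^m$. Since every odd generator $\eta_i$ squares to zero and lies in $\Nil_{\Ci}(\A)$, the reduction $\A_{red}$ is $\R$; in particular $\R^{0|m}$ has a unique geometric point $x_1$. The cited proposition then gives both $(\R^{0|m})_{dR} \simeq \ast$ and $(\R^{0|m})_{\dpr{x_1}} \simeq \R^{0|m}$. Substituting into the product decomposition above produces
$$\R^{n|m}_{\dpr{x}} \simeq \bbD^n \times \R^{0|m} = \bbD^{n|m},$$
as required.

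There is no serious obstacle here — everything is a formal combination of results already in hand — but one small bookkeeping point worth flagging is that the statement of Corollary \ref{cor:prodcomplete} requires the target to be literally a product and the point to be a pair, so I would briefly remark that this is automatic from the canonical splitting $\R^{n|m} \simeq \R^n \times \R^{0|m}$ and the fact that $i:\SMfd \to \bH$ preserves finite products (being the composite of a product-preserving embedding into $\dgc^{op}$ followed by $\Speci$ and the Yoneda embedding). With that in place, the argument proceeds as outlined.
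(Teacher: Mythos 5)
Your argument is correct, and it reaches the conclusion by a slightly different composition of the paper's own intermediate results than the proof given there. You split $\R^{n|m}\simeq \R^n\times\R^{0|m}$, invoke Corollary \ref{cor:prodcomplete} to factor the formal neighborhood as $\left(\R^n\right)_{\dpr{x_0}}\times\left(\R^{0|m}\right)_{\dpr{x_1}}$, handle the even factor by Proposition \ref{prop:fmrn}, and handle the odd factor by part iii) of the proposition on complete algebras with $\A_{red}=\R$ (your check that $\Ci\left(\R^{0|m}\right)$ is finitely presented, hence complete, and has reduction $\R$ is exactly what is needed there, and part i) justifies calling $x_1$ the unique point). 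The paper instead never computes $\left(\R^{0|m}\right)_{\dpr{x_1}}$ directly: it uses only part ii) of that proposition, i.e.\ $\left(\R^{0|m}\right)_{dR}\simeq\ast$, to identify $\left(\R^{n|m}\right)_{dR}\simeq\left(\R^n\right)_{dR}$, and then pastes two pullback squares along the factorization of $\eta_{\R^{n|m}}$ through the projection $pr_1:\R^n\times\R^{0|m}\to\R^n$, with the bottom square supplied by Proposition \ref{prop:fmrn}. Both routes rest on the same two facts (the de Rham functor preserves finite products, and the odd line has trivial de Rham stack); yours is a bit more modular, since it reuses Corollary \ref{cor:prodcomplete} rather than redoing the pullback pasting by hand, while the paper's version avoids needing part iii) of the reduction-to-$\R$ proposition. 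Your closing remark about the splitting and product preservation of the embedding $\SMfd\to\bH$ is the right thing to flag and is indeed automatic, since $\Ci$ sends products (which are transverse pullbacks over the point) to coproducts and the Yoneda embedding into sheaves on the subcanonical site preserves limits.
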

 
\begin{proof}
The de Rham functor preserves all limits, hence 
$$\left(\R^{n|m}\right)_{dR} \simeq \left(\R^n\right)_{dR} \times * \simeq \left(\R^n\right)_{dR}.$$ It follows that the canonical map
$$\R^{n|m}=\R^n \times \R^{0|m} \to \left(\R^{n|m}\right)_{dR} \times \left(\R^n\right)$$ factors as
$$\R^{n} \times \R^{0|m} \stackrel{pr_1}{\longlongrightarrow} \R^n \stackrel{\eta}{\longrightarrow} \left(\R^n\right)_{dR}.$$
By pasting pullback diagrams we get a diagram of pullbacks
$$\xymatrix{\bbD^n \times \R^{0|m} \ar[r] \ar[d] & \R^n \times \R^{0|m} \ar[d]\\
\bbD^n \ar[r] \ar[d] & \R^n \ar[d]\\
\ast \ar[r] & \left(\R^n\right)_{dR}.}$$
The bottom square is a pullback diagram by virtue of Proposition \ref{prop:fmrn}.
\end{proof}

\subsection{Formally \'etale maps}

\begin{definition}
A morphism $\varphi:\cZ \to \cX$ in $\bH$ is \textbf{formally \'etale} if $\varphi \simeq \hat f$ for some $f:\cY \to \cX.$
\end{definition}

\begin{proposition}
$\varphi:\cZ \to \cX$ in $\bH$ is formally \'etale if and only if the naturality square
$$\xymatrix{\cZ \ar[d]_-{\varphi} \ar[r]^-{\eta_\cZ} & \cZ_{dR}\ar[d]^-{\varphi_{dR}}\\
\cX \ar[r]_-{\eta_\cX} & \cX_{dR}}$$
is Cartesian.
\end{proposition}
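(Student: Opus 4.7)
The key observation is that, by definition, the pullback $\cX \times_{\cX_{dR}} \cZ_{dR}$ is exactly the formal completion $\widehat{\cZ}_\varphi$, and the canonical map $\iota_\varphi : \cZ \to \widehat{\cZ}_\varphi$ produced by the naturality of $\eta$ is the comparison map from $\cZ$ to this pullback. So the naturality square is Cartesian if and only if $\iota_\varphi$ is an equivalence, if and only if $\varphi \simeq \hat\varphi$. I will use this to prove both directions.

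The plan for the ``if'' direction is immediate: if the naturality square is Cartesian, then $\cZ \simeq \cX \times_{\cX_{dR}} \cZ_{dR} = \widehat{\cZ}_\varphi$, and under this equivalence $\varphi$ is identified with $\hat{\varphi} : \widehat{\cZ}_\varphi \to \cX$, exhibiting $\varphi$ as formally \'etale (taking $f=\varphi$ in the definition).

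For the ``only if'' direction, suppose $\varphi \simeq \hat f$ for some $f : \cY \to \cX$, so that (up to equivalence) $\cZ = \widehat{\cY}_f = \cX \times_{\cX_{dR}} \cY_{dR}$, with $\varphi = \hat f$ and $\iota_f : \cY \to \cZ$ satisfying $f = \hat f \circ \iota_f = \varphi \circ \iota_f$. Applying the de Rham functor to this last identity gives $f_{dR} = \varphi_{dR} \circ (\iota_f)_{dR}$. By Proposition \ref{prop:drsame}, $(\iota_f)_{dR} : \cY_{dR} \to \cZ_{dR}$ is an equivalence. Therefore the pullback defining $\widehat{\cZ}_\varphi$ is computed as
$$\widehat{\cZ}_\varphi \;=\; \cX \underset{\cX_{dR},\;\varphi_{dR}}\times \cZ_{dR} \;\simeq\; \cX \underset{\cX_{dR},\;\varphi_{dR} \circ (\iota_f)_{dR}}\times \cY_{dR} \;=\; \cX \underset{\cX_{dR},\;f_{dR}}\times \cY_{dR} \;=\; \widehat{\cY}_f \;=\; \cZ.$$
One checks that the composite identification $\cZ \xrightarrow{\iota_\varphi} \widehat{\cZ}_\varphi \xrightarrow{\sim} \cZ$ is the identity (both maps are determined, via the pullback, by $\varphi$ and $\eta_\cZ$, and the identification of $\cZ_{dR}$ with $\cY_{dR}$ is canonical), so $\iota_\varphi$ is an equivalence and the naturality square is Cartesian.

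The only step with any subtlety is the compatibility check at the end: matching $\iota_\varphi$ with the equivalence produced from $\iota_f$ and Proposition \ref{prop:drsame}. This is purely a diagram chase using that $\eta$ is a natural transformation together with the universal property of the defining pullback $\widehat{\cY}_f = \cX \times_{\cX_{dR}} \cY_{dR}$; no further input is needed.
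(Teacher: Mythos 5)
Your proof is correct, and the ``if'' direction is identical to the paper's. For the ``only if'' direction you take a genuinely different route: you invoke Proposition \ref{prop:drsame} to identify $\cZ_{dR}\simeq\cY_{dR}$ over $\cX_{dR}$, transport the pullback $\widehat{\cZ}_\varphi=\cX\times_{\cX_{dR}}\cZ_{dR}$ along that equivalence back to $\widehat{\cY}_f=\cZ$, and then check that the comparison map $\iota_\varphi$ agrees with the resulting equivalence. The paper instead applies $(\blank)_{dR}$ to the defining pullback square of $\widehat{\cY}_f$ and pastes three squares horizontally: the middle one is a pullback because $(\blank)_{dR}$ preserves limits, the rightmost because idempotency makes its horizontal maps equivalences, and the outer rectangle is the defining pullback, so the leftmost square (the naturality square for $\hat f$) is Cartesian by cancellation. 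The two arguments rest on exactly the same inputs --- idempotency and limit-preservation of $(\blank)_{dR}$ (indeed Proposition \ref{prop:drsame} is itself proved from these) --- but the pasting formulation absorbs the bookkeeping of which map is being compared into the pasting lemma, whereas your version surfaces it as the final ``one checks'' step. That step is genuinely the crux: it amounts to showing $\eta_{\cZ}\simeq(\iota_f)_{dR}\circ p$ for $p:\widehat{\cY}_f\to\cY_{dR}$ the structural projection, which follows from the naturality square of $\eta$ at $p$ together with the identification $(\eta_{\cY})_{dR}\simeq\eta_{\cY_{dR}}$ coming from idempotency; so it is a diagram chase as you claim, but it needs those two specific ingredients rather than only the universal property of the pullback.
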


\begin{proof}
If the above diagram is a pullback square, it implies that $\varphi \simeq \hat \varphi,$ so $\varphi$ is formally \'etale. Conversely, suppose that $\varphi=\hat f,$ for $f:\cY \to \cX.$ Let $\cZ=\widehat{\cY}_f.$ Since
$$\xymatrix{\widehat{\cY}_f \ar[d]_-{\hat f} \ar[r] & \cY_{dR} \ar[d]^-{f_{dR}}\\
\cX \ar[r]_-{\eta_{\cX}} & \cX_{dR}}$$
is a pullback diagram, and $\left(\blank\right)_{dR}$  is idempotent and preserves pullbacks, the two right most squares in the following diagram are pullbacks:
$$\xymatrix@R=2cm@C=2cm{ \widehat{\cY}_f= \cZ \ar[d]_-{\hat f} \ar[r]^-{\eta_\cZ} & \left(\widehat{\cY}_f\right)_{dR}=\cZ_{dR} \ar[d]_-{\left(\hat f\right)_{dR}} \ar[r] & \left(\cY_{dR}\right)_{dR} \ar[d]_-{\left(f_{dR}\right)_{dR}} \ar[r]^-{\sim} & \cY_{dR} \ar[d]^-{f_{dR}}\\
\cX \ar[r]_-{\eta_\cX} & \cX_{dR} \ar[r]_-{\left(\eta_\cX\right)_{dR}} & \left(\cX_{dR}\right)_{dR} \ar[r]_-{\sim} & \cX_{dR}.}$$
The outermost square is also a pullback, by definition, hence so is the left most square.
\end{proof}

\begin{lemma}\label{lem:festab}
	Formally \'etale maps are stable under pullback.
	\end{lemma}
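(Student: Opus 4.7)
The plan is to verify the pullback characterization of formally \'etale maps given just before Lemma \ref{lem:festab}. So given $\varphi:\cZ \to \cX$ formally \'etale and an arbitrary $f:\cY \to \cX$, set $\cW := \cZ \times_{\cX} \cY$ with projection $\varphi':\cW \to \cY$; I must show that the naturality square of $\eta$ at $\varphi'$ is Cartesian, i.e.\ that the canonical map $\cW \to \cY \times_{\cY_{dR}} \cW_{dR}$ is an equivalence.

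The first step is to simplify $\cW_{dR}$. By the remark following Definition \ref{dfn:deRham}, the functor $\left(\blank\right)_{dR} = k_*k^*$ admits both a left and a right adjoint, hence preserves all small limits, and so $\cW_{dR} \simeq \cZ_{dR} \times_{\cX_{dR}} \cY_{dR}$. A routine pasting of pullback squares then yields
\begin{equation*}
\cY \times_{\cY_{dR}} \cW_{dR} \;\simeq\; \cY \times_{\cY_{dR}} \bigl(\cZ_{dR} \times_{\cX_{dR}} \cY_{dR}\bigr) \;\simeq\; \cY \times_{\cX_{dR}} \cZ_{dR},
\end{equation*}
where the second equivalence absorbs the $\cY_{dR}$-factor into the composite pullback.

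The second step uses the hypothesis that $\varphi$ is formally \'etale: we have $\cZ \simeq \cX \times_{\cX_{dR}} \cZ_{dR}$, and a further round of pullback pasting gives
\begin{equation*}
\cW = \cZ \times_{\cX} \cY \;\simeq\; \bigl(\cX \times_{\cX_{dR}} \cZ_{dR}\bigr) \times_{\cX} \cY \;\simeq\; \cY \times_{\cX_{dR}} \cZ_{dR}.
\end{equation*}
Comparing the two displayed identifications yields the desired equivalence $\cW \simeq \cY \times_{\cY_{dR}} \cW_{dR}$, hence $\varphi'$ is formally \'etale.

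I expect no real obstacle here: the entire argument is formal manipulation of pullback squares, depending only on the limit-preservation of $\left(\blank\right)_{dR}$ and the pullback characterization of formally \'etale maps. The only minor point of care is to check that the equivalences constructed really agree with the canonical map coming from the naturality square of $\eta$, but this is automatic from the universal properties at each step.
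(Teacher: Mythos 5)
Your argument is correct and is essentially the paper's own proof: both rest on the pullback characterization of formally \'etale maps, naturality of $\eta$, limit-preservation of $\left(\blank\right)_{dR}$, and pasting of pullback squares. The paper just organizes the same computation as a single pasted rectangle with a cancellation argument, which automatically disposes of the canonicity check you flag at the end (that your composite equivalence agrees with the naturality square's comparison map).
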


\begin{proof}
	Let $f:X \to Y$ be formally \'etale, and let $\varphi:Z \to Y.$ We wish to show that $pr_Z:Z \times_{Y} X \to Z$ is formally \'etale. Since $f$ is formally \'etale, all squares in the following diagram are pullbacks:
	$$\xymatrix{Z \times_{Y} X \ar[d] \ar[r] & X \ar[r]^-{\eta_X} \ar[d]_-{f} & X_{dR} \ar[d]^-{f_{dR}}\\
		Z \ar[r]^-{\varphi} & Y \ar[r]^-{\eta_Y} & Y_{dR}.}$$
	By the naturality of $\eta,$ this implies that all the squares in the following diagram are pullbacks:
	$$\xymatrix{Z \times_{Y} X \ar[d] \ar[r] & Z_{dR} \times_{Y_{dR}} X_{dR} \ar[d] \ar[r] & X_{dR} \ar[d]\\
		Z \ar[r]^-{\eta_Z} & Z_{dR} \ar[r]^-{\varphi_{dR}} & Y_{dR}.}$$
	Since $\left(\blank\right)$ preserves limits, we have $\left(Z \times_Y X\right)_{dR} \simeq Z_{dR} \times_{Y_{dR}} X_{dR}$ and the morphism $$Z_{dR} \times_{Y_{dR}} X_{dR} \to Z_{dR}$$ in the above diagram can be identified with $\left(pr_Z\right)_{dR}.$ Hence $pr_Z$ is formally \'etale.
	\end{proof}

\begin{lemma}\label{lem:repfet}
	The following are equivalent for a morphism $f:\cY \to \cX$:
	\begin{itemize}
		\item[i)] $f$ is formally \'etale
		\item[ii)] for all $\varphi:D \to \cX$ with $D \in \sD,$ $\cY \times_{\cX} D \to D$ is formally \'etale.
		\end{itemize}
	\end{lemma}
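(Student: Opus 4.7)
The direction (i)$\Rightarrow$(ii) is immediate from Lemma \ref{lem:festab}, since pullbacks along $D\to\cX$ give exactly the kind of squares demanded. So the substance is in (ii)$\Rightarrow$(i), where I must show the naturality square for $\eta$ at $f:\cY\to\cX$ is Cartesian. My strategy is to reduce checking this pullback to a question on mapping spaces out of representables $D\in\sD$, and then to construct the desired section by combining two universal properties.

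First, since $\sD$ is a generating site for $\bH$, checking that the natural map $\mu:\cY\to\cX\times_{\cX_{dR}}\cY_{dR}$ is an equivalence reduces by Yoneda to checking that $\Map_\bH(D,\mu)$ is an equivalence of spaces for every $D\in\sD$. Fix $D$ and a point of the target, i.e.\ a triple $(g_1,g_2,\beta)$ with $g_1:D\to\cX$, $g_2:D\to\cY_{dR}$, and $\beta$ a chosen equivalence $f_{dR}\circ g_2\simeq\eta_\cX\circ g_1$. Form the pullback $\cY_D:=\cY\times_\cX D$ along $g_1$; this is a formally \'etale morphism $f_D:\cY_D\to D$ by hypothesis. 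A lift of $(g_1,g_2,\beta)$ to $\Map_\bH(D,\cY)$ is the same datum as a section of $f_D$ matching $g_2$ after applying $\eta$.

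Next, since $(\blank)_{dR}$ preserves limits, the back face $(\cY_D)_{dR}\simeq D_{dR}\times_{\cX_{dR}}\cY_{dR}$ is Cartesian. Combining $g_2$ with $\eta_D:D\to D_{dR}$ and the compatibility produced by $\beta$ (together with naturality of $\eta$, which gives $f_{dR}\circ g_2\simeq(g_1)_{dR}\circ\eta_D$) yields a unique map $h:D\to(\cY_D)_{dR}$ whose two projections are $\eta_D$ and $g_2$. Now invoking the hypothesis that $f_D$ is formally \'etale, the naturality square
$$\xymatrix{\cY_D\ar[r]^-{\eta_{\cY_D}}\ar[d]_-{f_D}&(\cY_D)_{dR}\ar[d]^-{(f_D)_{dR}}\\ D\ar[r]_-{\eta_D}&D_{dR}}$$
is Cartesian, so the compatible pair $(\mathrm{id}_D,h)$ determines a unique section $s:D\to\cY_D$. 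Composing with $\cY_D\to\cY$ produces the required map $D\to\cY$, and standard chases using naturality of $\eta$ and the two pullback properties confirm it projects to $(g_1,g_2,\beta)$.

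The construction above is built entirely from universal properties (two Cartesian squares and the uniqueness of factorizations through them), so it is functorial in $D$ and in the datum $(g_1,g_2,\beta)$, which is what I need to conclude that $\Map_\bH(D,\mu)$ is an equivalence of spaces rather than just a bijection on $\pi_0$. The main obstacle I anticipate is bookkeeping the coherence data: one must verify the equivalences match up at the level of the full $\i$-categorical mapping spaces, not just at $\pi_0$. This should follow formally from the universal properties of Cartesian squares in the $\i$-topos $\bH$, but making it precise may require carefully assembling the argument either as a statement about the induced morphism of Cartesian fibrations over $\sD/\cX$, or by observing that the inverse construction sketched above is itself an $\i$-natural transformation of space-valued functors.
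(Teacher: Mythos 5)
Your direction $(i)\Rightarrow(ii)$ and your overall reduction (check the canonical map $\mu:\cY \to \cX\times_{\cX_{dR}}\cY_{dR}$ on mapping spaces out of representables $D\in\sD$, then use the hypothesis on $\cY\times_{\cX}D\to D$ together with the fact that $(\blank)_{dR}$ preserves limits and naturality of $\eta$) are exactly the paper's. The gap is the one you yourself flag at the end and do not close: constructing a lift of a single point $(g_1,g_2,\beta)$ of the target, even with ``uniqueness'' from the universal properties of the two Cartesian squares, does not by itself show that $\Map_{\bH}(D,\mu)$ is an equivalence of spaces, and the appeal to ``functoriality in $D$ and in the datum'' is precisely the statement that still requires proof -- you would have to exhibit your inverse construction as a map of space-valued functors (or as a map of fibrations over $\sD/\cX$), which is nontrivial bookkeeping and is not supplied.

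The paper avoids this issue entirely by never constructing a pointwise inverse: it fixes $g:D\to\cX$ and compares homotopy fibers over $g$ of the two maps to $\cX(D)$. The fiber of $\cY(D)\to\cX(D)$ over $g$ is $\Map_{\bH/\cX}(g,f)\simeq \Gamma_D\bigl(\cY\times_{\cX}D\to D\bigr)$, the space of sections of the pullback. Since $\cY\times_{\cX}D\to D$ is formally \'etale by hypothesis, its naturality square is Cartesian; pasting it with the Cartesian square $(\cY\times_{\cX}D)_{dR}\simeq D_{dR}\times_{\cX_{dR}}\cY_{dR}$ and then re-pasting along naturality of $\eta$ identifies $\cY\times_{\cX}D$ with $D\times_{\cX}\bigl(\cX\times_{\cX_{dR}}\cY_{dR}\bigr)$, so the section space above is canonically identified with $\Map_{\bH/\cX}(g,pr_{\cX})$, the corresponding fiber of the target, compatibly with $\mu$. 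Because this argument is carried out at the level of spaces throughout (fibers, section spaces, and pullback pastings), no extra coherence data needs to be tracked. If you restructure your step (ii)$\Rightarrow$(i) along these lines -- comparing fibers over $\Map(D,\cX)$ rather than lifting points of the target -- your argument becomes complete; as written, the last step is a genuine gap.
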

\begin{proof}
	$i) \Rightarrow ii)$ follows from the fact that formally \'etale maps are stable under pullback (Lemma \ref{lem:festab}). Conversely, suppose that $f$ satisfies $ii).$ We wish to show that the canonical map
	$$\xymatrix{\cY \ar@{-->}[rd] \ar@/^2.0pc/[rrd]^-{\eta_{\cY}}   \ar@/_2.0pc/[rdd]_-{f}& &\\
	& {\cX \times_{\cX_{dR}} \cY_{dR}} \ar[r] \ar[d]_-{pr_\cX} & \cY_{dR} \ar[d]^-{f_{dR}} \\
	& \cX \ar[r]^-{\eta_\cX} & \cX_{dR},}$$
	denoted by a dotted arrow above, is an equivalence. For this, it suffices to prove that for any $D \in \sD$ and $g:D \to \cX,$ the induced map
	$$\operatorname{hofib}_g\left(\cY\left(D\right) \to \cX\left(D\right)\right) \to \operatorname{hofib}_g\left({\cX \times_{\cX_{dR}} \cY_{dR}}\left(D\right) \to \cX\left(D\right)\right)$$ is an equivalence. This is equivalent to showing that the induced map
	$$\Map_{\bH/\cX}\left(g,f\right) \to \Map_{\bH/\cX}\left(g,pr_\cX\right)$$ is an equivalence.
	Notice that 
	$$\Map_{\bH/\cX}\left(g,f\right) \simeq \Gamma_D\left(\alpha:\cY \times_{\cX} D \to D\right).$$ By hypothesis, this is sections of a formally \'etale map. This implies that the left most square in the following diagram is is a pullback,
	$$\xymatrix{\cY \times_{\cX} D \ar[d]_-{\alpha} \ar[r] & \cY_{dR}\times_{\cX_{dR}} D_{dR} \ar[d]_-{\alpha_{dR}} \ar[r] & \cY_{dR} \ar[d]^-{f_{dR}}\\
		D \ar[r]^-{\eta_D} & D_{dR} \ar[r]^-{g_{dR}} & \cX_{dR}.}$$
		Therefore all of the squares above are pullback squares. By the naturality of $\eta$, it follows all the squares in the following diagram are also pullbacks
		$$\xymatrix{\cY\times_{\cX} D \ar[d] \ar[r] & \cX \times_{\cX_{dR}} \cY_{dR} \ar[d]_-{pr_{\cX}} \ar[r]& \cY_{dR} \ar[d]^-{f_{dR}}\\
			D \ar[r]^-{g} & \cX \ar[r]^-{\eta_{\cX}} & \cX_{dR}.}$$ 
	Hence, the space of sections of $\alpha$ can be canonically identified with $\Map_{\bH/\cX}\left(g,pr_\cX\right),$ completing the proof.
	\end{proof}

\begin{proposition} \label{prop:etaleisfet}
	If $f:X \to Y$ is an \'etale map (a.k.a local diffeomorphism) of $\SCi$-schemes, then $f$ is formally \'etale.
	\end{proposition}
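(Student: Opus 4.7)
The plan is to prove formal étaleness in three moves: reduce to an affine target, reduce further to basic open embeddings by exploiting the local structure of an étale map of $\Ci$-schemes, and then glue via descent. First, by Lemma \ref{lem:repfet}, it suffices to show that for every $g\colon D\to Y$ with $D\in\sD$, the base change $X\times_Y D\to D$ is formally \'etale. Since \'etale maps (local diffeomorphisms) are stable under pullback along morphisms of $\Ci$-schemes, we may assume $Y=\Speci\left(\A\right)$ is affine. Because $f\colon X\to Y$ is a local diffeomorphism onto its image and the basic opens $\Speci\left(\A\left[a^{-1}\right]\right)$ form a basis for the topology of $\Speci\left(\A\right)$ (Remark \ref{rmk:4.19} and Remark \ref{rmk:subcan}), we can cover $X$ by affine opens $V_\beta$ with each composite $V_\beta\hookrightarrow X\to \Speci\left(\A\right)$ equal to a basic open embedding $\Speci\left(\A\left[a_\beta^{-1}\right]\right)\hookrightarrow \Speci\left(\A\right)$. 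All finite intersections $V_{\beta_0}\cap\cdots\cap V_{\beta_n}$ are again basic opens of $\Speci\left(\A\right)$ (with localizing element the product $a_{\beta_0}\cdots a_{\beta_n}$), so every term of the \v{C}ech nerve $X_\bullet$ of $\coprod V_\beta\to X$ is a coproduct of basic open embeddings.

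The base case is the claim that $\Speci\left(\A\left[a^{-1}\right]\right)\to \Speci\left(\A\right)$ is formally \'etale for any $a\in\pi_0\left(\A\right)_0$. Evaluating both sides of the desired pullback square on a test object $\cC\in\sD$, the question reduces to whether
$$\Map_{\dgc}\left(\A\left[a^{-1}\right],\cC\right) \;\simeq\; \Map_{\dgc}\left(\A,\cC\right)\underset{\Map_{\dgc}\left(\A,\cC_{red}\right)}\times \Map_{\dgc}\left(\A\left[a^{-1}\right],\cC_{red}\right),$$
which by Proposition \ref{prop:locz} is equivalent to the assertion that an element $a\in\pi_0\left(\cC\right)_0$ is invertible if and only if its image in $\pi_0\left(\cC\right)_{red}=\pi_0\left(\cC_{red}\right)$ is invertible. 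Since $\sD$ is closed under truncation and reduction, both $\pi_0\left(\cC\right)$ and $\pi_0\left(\cC\right)_{red}$ are complete classical $\Ci$-algebras, and hence invertibility of an element is detected by non-vanishing at every $\R$-point of $\Speci\left(\pi_0\left(\cC\right)\right)$; as $\pi_0\left(\cC\right)$ and its reduction have the same $\R$-points, the two invertibility conditions coincide.

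Finally, descend. Recall that $\left(\blank\right)_{dR}=k_\ast k^\ast$ preserves both small colimits and small limits (since, as recorded in Section \ref{sec:adj}, $k^\ast\simeq p_!$ has both adjoints $k_!$ and $k_\ast\simeq p^\ast$, and $k_\ast$ has right adjoint $p_\ast$). Consequently, $\left(\blank\right)_{dR}$ sends the effective epimorphism $\coprod V_\beta\to X$ with its \v{C}ech nerve $X_\bullet$ to the effective epimorphism $\coprod\left(V_\beta\right)_{dR}\to X_{dR}$ with \v{C}ech nerve $\left(X_\bullet\right)_{dR}$. Using that colimits are universal in the $\i$-topos $\bH$, and applying the base case level-wise to obtain $X_n\simeq \Speci\left(\A\right)\times_{\Speci\left(\A\right)_{dR}}\left(X_n\right)_{dR}$ at each level of the \v{C}ech nerve, we compute
$$X \;\simeq\; \colim X_\bullet \;\simeq\; \colim \Speci\left(\A\right)\underset{\Speci\left(\A\right)_{dR}}\times \left(X_\bullet\right)_{dR} \;\simeq\; \Speci\left(\A\right)\underset{\Speci\left(\A\right)_{dR}}\times X_{dR},$$
which is exactly the assertion that $f$ is formally \'etale. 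The principal technical obstacle is verifying the descent step cleanly: one must check that $\left(\blank\right)_{dR}$ preserves the \v{C}ech nerve and commutes with the pullback against $\Speci\left(\A\right)\to\Speci\left(\A\right)_{dR}$, which in turn relies crucially on $\left(\blank\right)_{dR}$ being simultaneously a left and right adjoint together with the universality of colimits in $\bH$.
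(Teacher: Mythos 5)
Your overall strategy (verify the Cartesian square on affine test objects, prove the case of a basic localization via the universal property of $\A\to\A\left[a^{-1}\right]$ and insensitivity of invertibility to locally nilpotent perturbations, then glue by a \v{C}ech argument using that $\left(\blank\right)_{dR}$ preserves colimits and that colimits in $\bH$ are universal) is a legitimate route and differs from the paper's proof, which is a short direct ringed-space argument: a map $\Speci\left(\cC\right)\to Y\times_{Y_{dR}}X_{dR}$ is a lifting square along $Z_{red}\to Z$, and since reduction does not change the underlying topological space and $f^{\ast}\O_Y\simeq\O_X$ for an \'etale $f$, the lift exists and is essentially unique. However, your proof as written has a genuine error in the descent step. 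The finite intersections $V_{\beta_0}\cap\cdots\cap V_{\beta_n}$ are intersections \emph{inside $X$}, not intersections of the images inside $Y$, and they are not basic opens of $\Speci\left(\A\right)$ with localizing element $a_{\beta_0}\cdots a_{\beta_n}$ unless $f$ is injective. Already for the fold map $\R\sqcup\R\to\R$ with $V_1,V_2$ the two copies one has $V_1\cap V_2=\emptyset$ while $U_{a_1a_2}=\R$; for a non-injective local diffeomorphism with genuinely overlapping charts the intersection is merely some open subscheme of $V_{\beta_0}$ mapping to $Y$ by an open embedding whose image need not be basic. Consequently your base case does not apply levelwise to the \v{C}ech nerve and the displayed colimit computation does not go through as stated. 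The gap is repairable but requires an extra layer: first show that an arbitrary open embedding into $\Speci\left(\A\right)$ is formally \'etale (cover it by basic opens; \emph{there} the pairwise intersections really are basic, so your base case plus the same descent applies), and only then run the descent for a general \'etale $f$, whose \v{C}ech terms are coproducts of open embeddings.

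Two further points deserve attention. First, your reduction to affine $Y$ via Lemma \ref{lem:repfet} silently assumes that the pullback in $\bH$ of your \'etale map along an arbitrary $D\to Y$, $D\in\sD$, is again an \'etale map of $\SCi$-schemes; this is a representability statement which the paper only obtains \emph{after} this proposition (Corollary \ref{cor:repetisfet} is deduced from it), so you should either prove it by the same chart-by-chart universality argument or avoid the reduction and run the covering argument over a general target. Second, your base case rests on the claim that in a complete classical $\Ci$-algebra invertibility is detected by non-vanishing at all $\R$-points; this is true (if $a$ is nowhere vanishing then $U_a$ is the whole of $\underline{\Speci\left(\cC\right)}$, and completeness $\cC\simeq\Gamma\left(\O_\cC\right)$ together with the canonical map $\cC\left[a^{-1}\right]\to\O_\cC\left(U_a\right)=\cC$ exhibits $a$ as a unit; alternatively note that $1+n$ with $n$ locally nilpotent is nowhere vanishing), but it is not a formal triviality and should be argued rather than asserted.
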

\begin{proof}
We want to show that the canonical map $$X \to Y\times_{Y_{dR}} X_{dR}$$ is an equivalence. Let $Z$ be any $\SCi$-scheme. Then a morphism to the above fibered product is the same as a pair of morphisms making a commutative square
$$\xymatrix{Z_{red} \ar@{-->}[r]^-{g} \ar[d]_-{\pi} & X \ar[d]^-{f}\\
	Z \ar@{-->}[r]^-{h} & Y.}$$
Since $f$ is \'etale, we have that $f^*\O_Y \simeq \O_X,$ where here we mean the structure sheaf of the $\SCi$-schemes as ringed spaces. Notice that $\pi$ is the identity morphism on the underlying spaces of $Z$ and $Z_{red}.$ In particular, the spatial component of the map $h:Z \to Y$ is fixed by the fact that we must have $\underline{h}=\underline{f} \circ \underline{g},$ where the underline denotes the forgetful functor to the category of topological spaces. Since $f$ is \'etale, this means that $h^*\O_Y \simeq g^*f^*\O_Y \simeq g^* \O_X,$ hence the morphism $h=\left(\underline{h},\alpha\right)$ with $\alpha:h^*\O_Y \to \O_Z,$ gives rise to a morphism $$g':=\left(g,\alpha\right):Z \to X$$ such that $f \circ g \simeq h$ and $g' \circ \pi \simeq g.$ So, $g'$ determines the pair $\left(g,h\right)$ up to a contractible space of choices. It follows that the induced map $$\Map\left(Z,X\right) \to \Map\left( Y\times_{Y_{dR}} X_{dR}\right)$$ is an equivalence.
\end{proof}

\begin{corollary}\label{cor:repetisfet}
	Any representable \'etale map is formally \'etale.
	\end{corollary}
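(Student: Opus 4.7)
The plan is to combine the local-to-global criterion in Lemma \ref{lem:repfet} with the base case already established for honest étale maps of $\SCi$-schemes in Proposition \ref{prop:etaleisfet}. This reduces the corollary to little more than unpacking the definition of ``representable étale''.

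Concretely, let $f:\cY \to \cX$ be a representable étale morphism in $\bH$. By Lemma \ref{lem:repfet}, it suffices to verify that for every $\varphi:D \to \cX$ with $D \in \sD$, the pullback
$$\mathrm{pr}_D:\cY \times_\cX D \longrightarrow D$$
is formally étale. By the hypothesis of representability, the pullback $\cY \times_\cX D$ is again an $\SCi$-scheme (indeed, étale-locally equivalent to an affine object of $\sD$), and the induced map $\mathrm{pr}_D$ is an étale morphism of $\SCi$-schemes in the sense of Definition \ref{dfn:open_covering} together with the local models provided by representability. Proposition \ref{prop:etaleisfet} then applies verbatim to conclude that $\mathrm{pr}_D$ is formally étale.

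If one wants to be fussier about the representability hypothesis (e.g.\ $\cY \times_\cX D$ is only an $\SCi$-Deligne--Mumford stack rather than an honest affine $\SCi$-scheme), one can argue slightly more carefully: choose an étale cover of $\cY \times_\cX D$ by affine $\SCi$-schemes $\{D_\alpha\}$, in which case each composite $D_\alpha \to \cY \times_\cX D \to D$ is étale between affines, hence formally étale by Proposition \ref{prop:etaleisfet}. The stability of formally étale maps under pullback (Lemma \ref{lem:festab}) together with the descent property of the condition ``formally étale'' along effective epimorphisms (which follows directly from the fact that $\left(\blank\right)_{dR}$ preserves colimits and the pullback square in the definition can be checked after an effective epimorphism) then upgrades this local verification to formal étaleness of $\mathrm{pr}_D$ itself.

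Assembling these observations, the only potential obstacle is the descent step in the second paragraph, but even that is automatic from the fact that both source and target of the naturality square
$$\xymatrix{\cY \times_\cX D \ar[r]\ar[d] & (\cY \times_\cX D)_{dR} \ar[d] \\ D \ar[r] & D_{dR}}$$
can be written as colimits of the corresponding diagrams for the étale cover $\{D_\alpha\}$, and $\left(\blank\right)_{dR}$ commutes with these colimits. Applying Lemma \ref{lem:repfet} once more then yields the result.
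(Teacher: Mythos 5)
Your first paragraph is exactly the paper's proof: pull back along any $\varphi:D \to \cX$ with $D \in \sD$, note that representability makes $\mathrm{pr}_D$ an \'etale map of $\SCi$-schemes, apply Proposition \ref{prop:etaleisfet}, and conclude with Lemma \ref{lem:repfet}. The extra descent discussion in your second and third paragraphs is not needed under the paper's reading of ``representable \'etale'' (the pullback to an affine is already an \'etale map of $\SCi$-schemes), though it is essentially sound as written.
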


\begin{proof}
Let $f:\cX \to \cY$ be a representable \'etale map. Then, for any $D \in \sD$ and $\varphi:D \to \cY,$ we have that $\cX \times_{\cY} D$ is an \'etale map of $\SCi$-schemes, hence formally \'etale by Proposition \ref{prop:etaleisfet}. The result now follows from Lemma \ref{lem:repfet}.
\end{proof}

\begin{proposition}
A morphism $f: \cM \to \cN$ of supermanifolds is formally \'etale if and only if it is a local diffeomorphism.
\end{proposition}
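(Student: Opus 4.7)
The plan is to establish the two implications separately. For the ``if'' direction, if $f:\cM\to\cN$ is a local diffeomorphism then $f$ is a representable étale morphism of $\SCi$-schemes, hence formally étale by Corollary \ref{cor:repetisfet}.

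For the ``only if'' direction, suppose $f$ is formally étale. Since being a local diffeomorphism is a local property on $\cM$, I fix an arbitrary geometric point $x:\ast\to\cM$ and shrink to coordinate neighborhoods of $x$ and $f(x)$: we may assume $\cM=\R^{n|m}$ with $x=0$ and $\cN=\R^{p|q}$ with $f(x)=0$. The map $f$ remains formally étale on the opens in question by Lemma \ref{lem:festab}, noting that open embeddings are formally étale (Corollary \ref{cor:repetisfet}).

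Next I would examine the induced map on formal neighborhoods. Since $f$ is formally étale, the naturality square for $\eta$ applied to $f$ is Cartesian; pasting this with the defining pullback of $\cN_{\dpr{f(x)}}=\ast\times_{\cN_{dR}}\cN$ yields a canonical equivalence $\cM_{\dpr{x}}\stackrel{\sim}{\to}\cN_{\dpr{f(x)}}$. By Corollary \ref{cor:fnbds} this takes the concrete form $\bbD^n\times\R^{0|m}\simeq\bbD^p\times\R^{0|q}$, compatible with $f$. Probing both sides with the first-order formal super disk $\bbD^{n|m}_{(1)}=\sp\bigl(\Ci(x_1,\ldots,x_n;\eta_1,\ldots,\eta_m)/(x,\eta)^2\bigr)$ and applying the Yoneda lemma, one identifies the resulting spaces with the super tangent spaces $T_x\cM=\R^{n|m}$ and $T_{f(x)}\cN=\R^{p|q}$, and the induced map with the super differential $df_x$. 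The equivalence of formal neighborhoods therefore forces $df_x$ to be a linear isomorphism of super vector spaces; in particular $n=p$ and $m=q$.

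The final step invokes the inverse function theorem for supermanifolds: invertibility of $df_x$ implies $f$ restricts to a diffeomorphism on an open neighborhood of $x$. The main obstacle is the careful bookkeeping of the first-order stage of the formal-neighborhood filtration to guarantee that the abstract equivalence of formal stacks really does descend to invertibility of the differential; once this identification is made, the remainder is classical supergeometry, and as $x$ was arbitrary we conclude that $f$ is a local diffeomorphism.
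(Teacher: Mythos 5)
Your forward direction is the paper's: a local diffeomorphism is an \'etale map of $\SCi$-schemes, hence formally \'etale (Proposition \ref{prop:etaleisfet}, or via Corollary \ref{cor:repetisfet} as you phrase it). Your converse is also in the same spirit as the paper's proof --- extract invertibility of the super differential $Tf_x$ from probes by infinitesimally thickened points, then finish with the inverse function theorem --- but the paper's execution is more direct and your choice of probe has a genuine weak point. The paper never passes to coordinates or to the formal-neighborhood equivalence: it probes the Cartesian square $\cM \simeq \cN \times_{\cN_{dR}} \cM_{dR}$ with $D=\mathbb{D}_{\left(1\right)}$ and $D=\R^{0|1}$, using that $\cX_{dR}\left(D\right)\simeq \cX\left(\ast\right)$ for such $D$, so that the fiber of $\Map\left(D,\cM\right)\to\Map\left(\ast,\cM\right)$ over $x$ maps bijectively to the corresponding fiber for $\cN$; these fibers \emph{are} the even and odd components of the tangent spaces, and the induced map is $Tf_x$.

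By contrast, your single probe $\bbD^{n|m}_{\left(1\right)}$ does not compute $T_{f\left(x\right)}\cN$: based maps out of it into $\cN$ form the space of parity-preserving linear maps from $\R^{n|m}$ to $T_{f\left(x\right)}\cN$, i.e.\ $\left(T_{f\left(x\right)}\cN\right)_0^{\,n}\oplus\left(T_{f\left(x\right)}\cN\right)_1^{\,m}$, not $T_{f\left(x\right)}\cN$ itself, so the identification you assert is not literally correct. Bijectivity of post-composition with $Tf_x$ on this space does force $Tf_x$ to be an isomorphism when $n\ge 1$ and $m\ge 1$, but the argument degenerates when $n=0$ or $m=0$: the probe then sees only one parity and cannot exclude, say, extra even directions on $\cN$, so it cannot complete the proof in those cases. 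The fix is exactly the paper's choice: probe with the one-dimensional even first-order disk and the odd line separately. With that substitution, your detour through $\cM_{\dpr{x}}\simeq\cN_{\dpr{f\left(x\right)}}$ and Corollary \ref{cor:fnbds}, as well as the reduction to coordinate charts (which anyway needs a small argument, via Lemma \ref{lem:festab} and pasting, that restrictions stay formally \'etale), is harmless but unnecessary, since the tangent-space criterion for a local diffeomorphism is already pointwise.
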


\begin{proof}
	If $f$ is a local diffeomorphism, then it is \'etale, and hence formally \'etale by Proposition \ref{prop:etaleisfet}. Conversely, suppose that $f:\cM \to \cN$ is a formally \'etale map of supermanifolds. To show that $f$ is a local diffeomorphism, it suffices to show that for each point $x \in \cM,$ the tangent map
	$$Tf_x:T_x \cM \to T_x \cN$$ is an isomorphism of super vector spaces. Let $D$ be either $\mathbb{D}_{\left(1\right)}$ or $\mathbb{D}^{0|1}.$ In either case, for any $\cX,$ $$\cX_{dR}\left(D\right) \simeq \cX\left(*\right).$$ Since $f$ is formally \'etale, there is a pullback diagram
	$$\xymatrix{\cM \ar[d]_-{f} \ar[r]^-{\eta_\cM} & \cM_{dR} \ar[d]^-{f_{dR}}\\
		\cN \ar[r]^{\eta_\cN} & \cN_{dR}.}$$
	Then, for $x \in \cM$ of the induced map between the fiber over $x$ of the map
	$$\Map\left(D,\cM\right) \to \Map\left(*,\cM\right)$$ with the fiber over $f\left(x\right)$ of the map $$\Map\left(D,\cN\right) \to \Map\left(*,\cN\right)$$ is a bijection. But these maps can be identified with the even/odd components of $Tf_x$ when $D=\mathbb{D}_{\left(1\right)}$ and $D=\mathbb{D}^{0|1}$ respectively.
	\end{proof}

 \subsection{Formal smoothness}
 
 \begin{definition}
 	An object $\cX \in \bH$ is \textbf{formally smooth} if the unit map
 	$$\eta_\cX:\cX \to \cX_{dR}$$
 	to the de Rham stack is an epimorphism.
 \end{definition}
 
 \begin{lemma}
 	Suppose that $\left(X_\alpha\right)_\alpha$ is a collection of formally smooth objects. Then the coproduct $\underset{\alpha} \coprod X_\alpha$ is also formally smooth.
 \end{lemma}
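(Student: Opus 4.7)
The plan is to reduce the statement to two standard facts: that the de Rham functor commutes with coproducts, and that (effective) epimorphisms in the $\infty$-topos $\bH$ are stable under coproducts.

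First, I would recall that by the remark following Definition \ref{dfn:deRham}, the functor $(\blank)_{dR} = k_* k^*$ has both a left adjoint $k_! k^*$ and a right adjoint $p^* p_*$, and consequently preserves all small colimits. In particular, the natural comparison map
\[
\underset{\alpha}{\coprod} (X_\alpha)_{dR} \xrightarrow{\sim} \Bigl(\underset{\alpha}{\coprod} X_\alpha\Bigr)_{dR}
\]
is an equivalence. Under this identification, the unit $\eta_{\coprod X_\alpha}$ is identified with the coproduct of the units $\underset{\alpha}{\coprod} \eta_{X_\alpha}$, by the naturality of $\eta$ together with the universal property of the coproduct.

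Next, since each $X_\alpha$ is formally smooth, each $\eta_{X_\alpha}$ is an epimorphism (i.e.\ an effective epimorphism in the $\infty$-topos $\bH$). In any $\infty$-topos, effective epimorphisms are stable under small coproducts: they are characterized by the vanishing of the $(-1)$-truncation of the cofiber, equivalently by being $(-1)$-connective maps, and this class is closed under colimits and in particular under coproducts. Applying this to the family $(\eta_{X_\alpha})_\alpha$ shows that $\underset{\alpha}{\coprod} \eta_{X_\alpha}$ is an epimorphism, hence so is $\eta_{\coprod X_\alpha}$.

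There is no real obstacle here; the only thing to be mindful of is to verify explicitly that the identification $(\coprod X_\alpha)_{dR} \simeq \coprod (X_\alpha)_{dR}$ carries $\eta_{\coprod X_\alpha}$ to $\coprod \eta_{X_\alpha}$, which is a direct diagram chase using the naturality of $\eta$ and the fact that $(\blank)_{dR}$ preserves colimits as a left adjoint (to $p^* p_*$).
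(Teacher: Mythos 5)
Your proof is correct and follows essentially the same route as the paper: the de Rham functor is a left adjoint (the paper notes $(\blank)_{dR} = k_*k^* \simeq (kp)^*$, which has $(kp)_!$ as a left adjoint), hence preserves coproducts, so the unit of the coproduct is the coproduct of the units, and effective epimorphisms in an $\i$-topos are stable under coproducts. The only cosmetic remark is that the standard justification for the last step is that effective epimorphisms are exactly the $(-1)$-connective morphisms (rather than a statement about cofibers), but your conclusion and overall argument match the paper's.
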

 
 \begin{proof}
 	Recall from Definition \ref{dfn:deRham} and Section \ref{sec:adj} that $$\left(\blank\right)_{dR}=k_*k^*\simeq \left(kp\right)^*.$$ It follows that the de Rham functor is a left adjoint, and hence preserves coproducts. The result now follows.
 \end{proof}
 
 \begin{lemma}
 	If $p:\cY \to \cX$ is an epimorphism and $\cY$ is formally smooth, then so is $\cX.$
 \end{lemma}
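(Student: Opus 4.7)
The plan is to use the naturality square for the unit $\eta$ of the de Rham functor together with the two-out-of-three type property that if a composite is an effective epimorphism then the second factor is too. Concretely, from naturality of $\eta$ we have a commutative square
\[
\xymatrix{\cY \ar[r]^{\eta_\cY} \ar[d]_{p} & \cY_{dR} \ar[d]^{p_{dR}} \\ \cX \ar[r]_{\eta_\cX} & \cX_{dR},}
\]
so $\eta_\cX \circ p \simeq p_{dR} \circ \eta_\cY$. I will show the right hand side is an epimorphism, then deduce $\eta_\cX$ is an epimorphism by the fact that in an $\i$-topos, if a composite $g \circ f$ is an effective epimorphism, so is $g$.

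First I would verify that $p_{dR}$ is an epimorphism. This follows because the functor $(\blank)_{dR} \simeq k_\ast k^\ast$ admits both a left adjoint ($k_! k^\ast$) and a right adjoint ($p^\ast p_\ast$), as recorded in the remark right after Definition \ref{dfn:deRham}; hence it preserves all small colimits, and in particular preserves effective epimorphisms (an effective epimorphism in an $\i$-topos is characterized by being the geometric realization of its \v{C}ech nerve, and both the \v{C}ech nerve construction and the colimit computing it are preserved by any colimit-preserving functor which also preserves the relevant finite limits). Since $p$ is an epimorphism by hypothesis, $p_{dR}$ is as well.

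Next, $\eta_\cY$ is an epimorphism by the hypothesis that $\cY$ is formally smooth. Composing, $p_{dR} \circ \eta_\cY$ is an epimorphism (the class of epimorphisms in an $\i$-topos is closed under composition). Via the naturality square above, this identifies with $\eta_\cX \circ p$, which is therefore an epimorphism. Finally, since the second factor of a composition that is an effective epimorphism is itself an effective epimorphism, we conclude that $\eta_\cX$ is an epimorphism, i.e.\ $\cX$ is formally smooth.

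There is no real obstacle here: the only point needing care is the assertion that $(\blank)_{dR}$ preserves epimorphisms, and this is immediate from the existence of both adjoints recorded earlier in the paper. Everything else is a direct diagram chase with the naturality square of $\eta$.
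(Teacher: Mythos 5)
Your proof is correct and follows essentially the same route as the paper: the de Rham functor preserves effective epimorphisms (being a left adjoint that also preserves finite limits), so $p_{dR}\circ\eta_{\cY}\simeq\eta_{\cX}\circ p$ is an epimorphism, and the two-out-of-three property for effective epimorphisms (cf.\ \cite[Corollary 6.2.3.12]{htt}) gives that $\eta_{\cX}$ is one. Your added justification for why $(\blank)_{dR}$ preserves epimorphisms is a slightly more careful version of the paper's one-line appeal to it being a left adjoint, but the argument is the same.
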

 
 \begin{proof}
 	Since the de Rham functor is a left adjoint, it preserves epimorphisms. Consider the naturality square for $p$
 	$$\xymatrix@C=1.3cm@R=1.3cm{\cY \ar@{->>}[r]^-{\eta_\cY} \ar@{->>}[d]_-{p} & \cY_{dR} \ar@{->>}[d]^-{p_{dR}}\\
 		\cX \ar[r]_-{\eta_\cX} & \cX_{dR}.}$$
 	It follows from \cite[Corollary 6.2.3.12]{htt} that $\eta_\cX$ is also an epimorphism.
 \end{proof}
 
 \begin{proposition}
 	For all $\left(n,m\right)$ $\R^{\left(n,m\right)}$ is formally smooth.
 \end{proposition}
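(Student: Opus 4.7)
The strategy is to verify the effective-epi condition for $\eta:\R^{n|m}\to (\R^{n|m})_{dR}$ directly on test affines, using the fact that representables in $\sD$ are a generating site. Concretely, it suffices to show that for every $\A\in\sD$ and every $f\in (\R^{n|m})_{dR}(\A)$, there is a cover of $\Speci(\A)$ on which $f$ lifts through $\eta$. I will actually produce a global lift, which is strictly stronger.

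First, I would unwind the functor of points. By the very definition of the de Rham stack we have $(\R^{n|m})_{dR}(\A) \simeq \R^{n|m}(\A_{red})$; since $\A_{red}=(\pi_0 \A)_0/\Nil_{\Ci}$ is by construction a classical purely even $\Ci$-algebra, the $\SCi$-mapping space $\Map_{\dgc}(\Ci(\R^{n|m}),\A_{red})$ is discrete and its odd component is forced to vanish. Hence a point of $(\R^{n|m})_{dR}(\A)$ is just an $n$-tuple $(b_1,\ldots,b_n)\in(\A_{red})^n$, which by the freeness of $\Ci(\R^n)\simeq \Ci\{x_1,\ldots,x_n\}$ extends uniquely to a $\Ci$-homomorphism.

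Second, I would lift such a tuple along the composite surjection $\A_0=\A(\R)\twoheadrightarrow\pi_0\A(\R)=(\pi_0\A)_0\twoheadrightarrow\A_{red}$. The second map is surjective by construction of the reduction, and the first is surjective on $\pi_0$ because $\pi_0$ of a space surjects onto itself tautologically; so for each $i$ we may choose $\tilde{\tilde b}_i\in\A_0$ mapping to $b_i$. Packaging these together with zero odd entries and again invoking the freeness of $\Ci(\R^{n|m})\cong \Ci(\R^n)\otimes_{\R}\wedge^\bullet\R^m$ on $n$ even and $m$ odd generators, the tuple $(\tilde{\tilde b}_1,\ldots,\tilde{\tilde b}_n\,|\,0,\ldots,0)$ determines a $\SCi$-homomorphism $\Ci(\R^{n|m})\to \A$, i.e.\ a morphism $\tilde f:\Speci(\A)\to \R^{n|m}$. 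By construction $\eta\circ\tilde f$ recovers $f$ in $(\R^{n|m})_{dR}(\A)$.

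Since every test map $f$ from an affine factors through $\eta$ without even passing to a cover, $\eta$ is surjective on $\pi_0$-sheaves, hence an effective epimorphism in $\bH$, which is the definition of formal smoothness. There is no genuine obstacle: the odd directions of $\R^{n|m}$ contribute nothing because $\A_{red}$ has trivial odd part, and the only ingredients needed are the surjectivity of $\A_0\to\A_{red}$ on $\pi_0$ and the freeness of $\Ci(\R^{n|m})$, both of which are immediate. The mild care required is simply to distinguish the space-level surjection $\A(\R)\twoheadrightarrow\pi_0\A(\R)$ from the ring-level quotient to the reduction, and to observe that choosing lifts element-by-element is legitimate precisely because $\Ci(\R^{n|m})$ is free on its generators.
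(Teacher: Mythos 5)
Your proof is correct and follows essentially the same route as the paper's: unwind $(\R^{n|m})_{dR}(\A)\simeq\R^{n|m}(\A_{red})$, note that $\A_{red}$ is purely even so the data is just $n$ elements of $\A_{red}$, and use freeness of $\Ci(\R^{n|m})$ together with the surjection $\A_0\to\A_{red}$ to produce a global (not merely local) lift through $\eta$. The only cosmetic difference is that you take the $m$ odd entries to be zero where the paper takes them arbitrary; both choices work since $\A_{red}$ has trivial odd part.
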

 
 \begin{proof}
 	The map $\eta:\R^{\left(n,m\right)} \to \R^{\left(n,m\right)}_{dR}$ is an epimorphism if and only if any map $f:D \to \R^{\left(n,m\right)}_{dR}$ with $D \in \sD$ locally factors through $\eta.$ We will prove something stronger, namely that any such maps globally factors through $\eta.$ Let $D=\Speci\left(\A\right).$ The map $f$ corresponds to a homomorphism
 	$$f:\SCi\left(x_1,\ldots,x_n;\zeta_1,\ldots,\zeta_m\right) \to \A_{red}.$$ As the algebra $\SCi\left(x_1,\ldots,x_n;\zeta_1,\ldots,\zeta_m\right)$ is free, this corresponds to picking $n$ even elements of $\A_{red}$ and $m$ odd ones, but $\A_{red}$ is purely even so this is the same as picking $n$ elements. We want a lift of this map to $\A,$ but this is easy--- just pick $n$ even elements of $\A$ whose class in $\A_{red}$ agree with the chosen ones, and pick $m$ arbitrary odd elements of $\A.$
 \end{proof}
 
 \begin{corollary}\label{cor:mfdsmth}
 	If $\cM$ is a supermanifold, then it is formally smooth.
 \end{corollary}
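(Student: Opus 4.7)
The plan is to reduce the statement for an arbitrary supermanifold $\cM$ to the already-established case of $\R^{n|m}$ by using the two preceding lemmas (stability under coproducts and descent along epimorphisms), after first showing that every open subset of $\R^{n|m}$ is formally smooth.

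First, I would show that every open sub-supermanifold $U \hookrightarrow \R^{n|m}$ is formally smooth. An open embedding is a local diffeomorphism, hence \'etale, hence formally \'etale by Proposition \ref{prop:etaleisfet}. Formal \'etaleness means that the naturality square
\[
\xymatrix{U \ar[r]^-{\eta_U} \ar[d] & U_{dR} \ar[d] \\ \R^{n|m} \ar[r]_-{\eta_{\R^{n|m}}} & \R^{n|m}_{dR}}
\]
is a pullback in $\bH$. Since $\R^{n|m}$ is formally smooth, the bottom arrow is an epimorphism, and epimorphisms are stable under pullback in the $\i$-topos $\bH$, so $\eta_U$ is an epimorphism. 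Hence $U$ is formally smooth.

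Next, I would choose a smooth atlas for $\cM$, giving a family of open embeddings $\varphi_\alpha : U_\alpha \hookrightarrow \cM$ with each $U_\alpha$ an open subset of some $\R^{n_\alpha|m_\alpha}$ and $\bigcup_\alpha \varphi_\alpha(U_\alpha) = \cM$ on underlying spaces. By Definition \ref{dfn:open_covering}, the family $(\varphi_\alpha)$ is an open covering in the site underlying $\bH$, so the induced map
\[
p : \underset{\alpha} \coprod U_\alpha \longrightarrow \cM
\]
is an effective epimorphism in $\bH$. Each $U_\alpha$ is formally smooth by the previous paragraph, and by the lemma on coproducts the source $\coprod_\alpha U_\alpha$ is formally smooth.

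Finally, applying the lemma that the target of an epimorphism out of a formally smooth object is formally smooth to $p$, we conclude that $\cM$ is formally smooth. The only step requiring real thought is the first one (epimorphisms pull back along formally \'etale maps), but this is immediate from the definition of formal \'etaleness together with the pullback stability of epimorphisms in an $\i$-topos; the rest is a formal assembly of the preceding lemmas and the fact that supermanifolds carry coordinate atlases.
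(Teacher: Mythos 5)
Your proof is correct and is essentially the assembly the paper intends: cover $\cM$ by charts, use the preceding proposition that $\R^{n|m}$ is formally smooth together with the lemmas on coproducts and on descent of formal smoothness along epimorphisms applied to the effective epimorphism $\coprod_\alpha U_\alpha \to \cM$. Your extra step showing that open sub-supermanifolds $U \subseteq \R^{n|m}$ are formally smooth (via Proposition \ref{prop:etaleisfet} and pullback-stability of epimorphisms in an $\i$-topos) is sound, though it can be bypassed by choosing an atlas whose charts are diffeomorphic to $\R^{n|m}$ itself.
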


\section{Group objects}\label{sec:group}
Here, we briefly recall the notion of a group object in an $\i$-topos as introduced in \cite{pb}. We claim no originality for the ideas.

\begin{definition}
A \textbf{classifying stack for a group object} in an $\i$-topos $\cX$ is an object $G$ together with an effective epimorphism $$\pi_X:* \to X$$ from the terminal object $*.$ The \textbf{group object} associated to this classifying stack is $G:=\Omega_*\left(X\right).$
\end{definition}

\begin{remark}
If $H$ is a Lie group, then $\pi:* \epi \cB H$ is an effective epimorphism in $\bH,$ making it a classifying stack for a group object. Notice that
$$\Omega_*\left(\cB H\right) \simeq H.$$ Moreover, we claim that the group structure on $H$ is completely encoded by the fact that $H$ can be identified with the based loop object on $\cB H.$ Indeed, the \v{C}ech nerve of $\pi$ is a simplicial object
$$\mbox{\v{C}}_\pi:\Delta^{op} \to \bH$$ with $\mbox{\v{C}}_n=H^n,$ and $d_1:H \times H \to H$ can be identified with group multiplication.
\end{remark}

More generally, for any group object $G,$ the \v{C}ech nerve of $\pi_G$ produces a simplicial object
$$\mbox{\v{C}}_{\pi_G}:\Delta^{op} \to \cX$$ with $\left(\mbox{\v{C}}_{\pi_X}\right)_0=*.$ This is the usual definition of a group object given in \cite[Definition 7.2.2.1]{htt}. Conversely, the if $$\cG:\Delta^{op} \to \cX$$ is a group object in Lurie's sense, $\cG$ is equivalent to the \v{C}ech nerve of $*=\cG_0 \to \colim \cG,$ so the two definitions agree.

Notice since $\pi_G:* \epi X$ is an effective epimorphism, $X$ is equivalent to the colimit of the \v{C}ech nerve of $\pi_G.$ Since $\Omega_*\left(X\right)=G,$ we have $$X\simeq \underset{n \in \Delta^{op}} \colim \pi_X=\colim\left(\xymatrix{\ast & \ar@<-.5ex>[l] \ar@<.5ex>[l] G & G \times G \ar@<-.5ex>[l] \ar@<.5ex>[l] \ar[l]&  \ar@<-.7ex>[l]  \ar@<-.25ex>[l]\ar@<.25ex>[l] \ar@<0.7ex>[l] G \times G \times G \cdots}\right)$$
I.e. $X \simeq \cB G$--- the classifying stack of principal $G$-bundles.

Moreover, notice that, just as for a Lie group, the structure of the simplicial object $\mbox{\v{C}}_{\pi_G}$ encodes the group structure. E.g., the face map
$d_1:G \times G \to G$ is the group multiplication, and the other simplicial maps encode higher coherency information.

\begin{definition}
Given a group object $G,$ denote by $N\left(G\right)$ the simplicial object $\mbox{\v{C}}_{* \epi \cB G}.$
\end{definition}

\begin{definition}
A \textbf{principal $G$-bundle} or \textbf{$G$-torsor} over an object $S$ is a pullback square of the form
$$\xymatrix{P \ar[r] \ar[d] & \ast \ar[d]^-{\pi_G}\\
S \ar[r] & \cB G.}$$ $P$ is called the \textbf{total space} of the bundle.
\end{definition}

\begin{definition}\label{dfn:action}
An \textbf{action} of a group object $G$ on an object $V$ in $\cX$ is a pullback square
$$\xymatrix{V \ar[r]^-{\pi_{\rho}} \ar[d] & \ast \ar[d]^-{\pi_G}\\
S \ar[r] & \cB G}$$
exhibiting $V$ as the total space of a principal bundle over some object $S.$ In this case the object $S$ is called the \textbf{quotient stack} of $V$ by $G$ and is denoted by $\q{V}{G}.$
\end{definition}

\begin{remark}
Notice that, as a pullback of an effective epimorphism, $$\pi_{\rho}:V \to \q{V}{G}$$ is an effective epimorphism, and hence
$$\q{V}{G} \simeq \colim \mbox{\v{C}}_{\pi_\rho}.$$ By stacking pullback diagrams 
\begin{eqnarray*}
V \times_{\q{V}{G}} V &\simeq &* \times_{\cB G} V\\
&\simeq&\left(* \times_{\cB G} *\right) \times_* V\\
&\simeq& G \times V,
\end{eqnarray*}
and more generally, the $\left(n+1\right)^{st}$-fold fiber product of $V$ with itself over $\q{V}{G}$ is equivalent to $G^n \times V.$
So
$$\q{V}{G} \simeq \colim\left(\xymatrix{V & \ar@<-.5ex>[l] \ar@<.5ex>[l] G \times V & G \times G \times V \ar@<-.5ex>[l] \ar@<.5ex>[l] \ar[l]&  \ar@<-.7ex>[l]  \ar@<-.25ex>[l]\ar@<.25ex>[l] \ar@<0.7ex>[l] G \times G \times G \times V \cdots}\right)$$
The map $$\rho:=d_0:G \times V \to V$$ exhibits the action map of $G$ on $V,$ and the rest of the simplicial structure encodes the higher coherencies. Denote the above groupoid object by $N\left(G \ltimes V\right).$ In particular, for an ordinary smooth action of a Lie group $G$ on a manifold $V,$ this agrees with the actual nerve of the action groupoid.
\end{remark}

In particular, we proved the following proposition:

\begin{proposition}\label{prop:objprod}
For $G$ any group object with an action on an object $V,$ we have a canonical identification: $$V \times_{\q{V}{G}} V \simeq G \times V.$$
\end{proposition}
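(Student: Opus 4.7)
The plan is to simply unpack the definitions and apply the pasting lemma for pullbacks twice, which works in any $\i$-topos. The key observation is that the proposition is essentially a diagram-chase that was already sketched in the remark preceding it; my job is to spell out the two applications of pasting carefully and identify the ingredients.

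First, I would recall Definition \ref{dfn:action}, which says that giving an action of $G$ on $V$ with quotient $\q{V}{G}$ is exactly the data of a pullback square
$$\xymatrix{V \ar[r] \ar[d] & \ast \ar[d]^-{\pi_G} \\ \q{V}{G} \ar[r] & \cB G.}$$
In particular, $V \simeq \ast \times_{\cB G} \q{V}{G}$. Next, I would form the fibered product $V \times_{\q{V}{G}} V$ by stacking two such pullback diagrams vertically:
$$\xymatrix{V \times_{\q{V}{G}} V \ar[r] \ar[d] & V \ar[d] \ar[r] & \ast \ar[d]^-{\pi_G} \\ V \ar[r] & \q{V}{G} \ar[r] & \cB G.}$$
The right-hand square is a pullback by hypothesis, so by the pasting lemma the outer rectangle is a pullback iff the left-hand square is; hence both are, and we obtain $V \times_{\q{V}{G}} V \simeq \ast \times_{\cB G} V$.

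For the second step, I would use the factorization $\ast \times_{\cB G} V \simeq (\ast \times_{\cB G} \ast) \times_{\ast} V$, which again is an instance of pasting (one pulls back $V \to \ast \to \cB G$ along $\ast \to \cB G$, and then the inner square $\ast \times_{\cB G} \ast \to \ast$ is the loop-space square computing $\Omega_\ast(\cB G)$). By definition of the classifying stack, $\Omega_\ast(\cB G) \simeq G$, so this gives $\ast \times_{\cB G} V \simeq G \times V$. Combining, $V \times_{\q{V}{G}} V \simeq G \times V$, naturally in the given action data.

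There is no real obstacle here --- the argument is essentially formal and depends only on the pasting lemma for pullbacks in an $\i$-topos, which holds in $\sC$ in general, and on the defining identifications $V \simeq \ast \times_{\cB G} \q{V}{G}$ and $G \simeq \ast \times_{\cB G} \ast$. The only point requiring mild care is to check that under this chain of equivalences, the two projection maps $V \times_{\q{V}{G}} V \rightrightarrows V$ correspond, respectively, to the projection $G \times V \to V$ and the action map $\rho \colon G \times V \to V$ discussed in the preceding remark; this follows by tracing the simplicial identities $d_0, d_1$ of $N(G \ltimes V)$, which identify the two legs of the fiber product exactly as $\rho$ and $\operatorname{pr}_V$.
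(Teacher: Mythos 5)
Your proposal is correct and follows essentially the same route as the paper: the remark preceding the proposition establishes the identification by exactly this stacking of pullback squares, $V \times_{\q{V}{G}} V \simeq \ast \times_{\cB G} V \simeq \left(\ast \times_{\cB G} \ast\right) \times_{\ast} V \simeq G \times V$, using the defining pullback square of the action and $\Omega_\ast\left(\cB G\right) \simeq G$. Your added check that the two legs of the fiber product correspond to the projection and the action map is consistent with the paper's discussion of the simplicial object $N\left(G \ltimes V\right)$.
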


\section{Jet Bundles and Differential Operators}\label{sec:jet}
\subsection{Jet bundles and disk bundles}
\begin{definition}
Let $\pi:E \to M$ be an object of $\bH/M.$ Then the \textbf{formal disk bundle of $\pi$}, $\bT^\i_M\left(\pi\right) \to M$  is $$\eta^*\eta_!\left(\pi\right),$$ where $\eta:M \to M_{dR}$ is the unit map to the de Rham stack.
\end{definition}

More concretely, there is a pullback diagram
$$\xymatrix{\bT^\i_M\left(\pi\right) \ar[dd] \ar[r] & E \ar[d]^-{\pi}\\
& M \ar[d]^-{\eta}\\
M \ar[r]^-{\eta} & M_{dR}.}$$

\begin{definition}
	Given $M \in \bH,$ the \textbf{formal disk bundle of $M$,} denoted by $$\bT^\i M \to M$$ is the formal disk bundle of $id_M,$ i.e. the pullback $M \times_{M_{dR}} M \to M.$
	\end{definition}

The following proposition follows immediately from the fact that the de Rham functor $\left(\blank\right)_{dR}$ preserves finite products:

\begin{proposition}\label{prop:diagcomp}
	The formal disk bundle of $M$ is the formal completion of $M$ along the diagonal 
	$$\Delta:M \to M \times M,$$ that is $\bT^\i M \simeq \widehat{M}_{\Delta}.$ 
	\end{proposition}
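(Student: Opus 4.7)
The plan is to unfold both sides as explicit pullbacks and then rewrite one into the other using the standard categorical identity that $A\times_B A \simeq (A\times A)\times_{B\times B} B$ (the two ways of building a ``double fibre product'' via the diagonal).

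First I would spell out the definition of $\widehat{M}_\Delta$ from Section~\ref{sec:deRham}: it is the pullback
$$\xymatrix{\widehat{M}_\Delta \ar[r]\ar[d] & (M\times M)_{dR}\ar[d]^-{\Delta_{dR}}\\ M\times M \ar[r]_-{\eta_{M\times M}} & (M\times M)_{dR}.}$$
Wait, that is not the right shape; rereading Definition of $\widehat{\cY}_f$, we have $\widehat{M}_\Delta = (M\times M)\times_{(M\times M)_{dR}} M_{dR}$, where the map $M_{dR}\to (M\times M)_{dR}$ is $\Delta_{dR}$ and the map $M\times M \to (M\times M)_{dR}$ is $\eta_{M\times M}$.

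Next, since $(\blank)_{dR} = k_*k^*$ is a right adjoint composed with a left adjoint that preserves finite products (indeed, $k^*$ and $k_*$ both preserve finite products, as remarked after Definition~\ref{dfn:deRham}), we have a canonical equivalence $(M\times M)_{dR}\simeq M_{dR}\times M_{dR}$ under which $\Delta_{dR}$ is identified with the diagonal $\Delta_{M_{dR}}$ of $M_{dR}$, and $\eta_{M\times M}$ is identified with $\eta_M\times \eta_M$. Substituting, we obtain
$$\widehat{M}_\Delta \;\simeq\; (M\times M)\times_{M_{dR}\times M_{dR}} M_{dR}.$$

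The final step is a purely formal manipulation in any $\infty$-category with finite limits: for any morphism $f:A\to B$, pasting the two pullback squares
$$\xymatrix{A\times_B A \ar[r]\ar[d] & A\times A \ar[d]^-{f\times f}\\ B \ar[r]_-{\Delta_B} & B\times B}$$
(where the left vertical is $(p_1,p_2)$) exhibits $A\times_B A$ as $(A\times A)\times_{B\times B} B$. Applying this with $A=M$, $B=M_{dR}$, $f=\eta_M$ yields
$$\bT^\infty M \;=\; M\times_{M_{dR}} M \;\simeq\; (M\times M)\times_{M_{dR}\times M_{dR}} M_{dR} \;\simeq\; \widehat{M}_\Delta,$$
which is the desired equivalence. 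There is no real obstacle here: the only subtlety is keeping track of which maps are identified with which after invoking the product-preservation of the de~Rham functor, and recognizing the standard ``diagonal trick'' for fibre products, both of which are entirely formal.
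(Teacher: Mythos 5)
Your proof is correct and is essentially the paper's own argument: the paper derives the statement immediately from the fact that $\left(\blank\right)_{dR}$ preserves finite products (it has both adjoints), and your unwinding via $(M\times M)_{dR}\simeq M_{dR}\times M_{dR}$ together with the standard identification $A\times_B A \simeq (A\times A)\times_{B\times B} B$ just makes that one-line deduction explicit. No gaps.
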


\begin{remark}\label{rmk:124}
	For any $\pi:E \to M,$ there is a factorization
	$$\xymatrix{\bT^\i_M\left(\pi\right) \ar[d]_-{\pi_E} \ar[r]^-{\ev_E} & E \ar[d]^-{\pi}\\
		\bT^\i M  \ar[d]_-{\pi_M} \ar[r]^-{\ev} & M \ar[d]^-{\eta}\\
		M \ar[r]^-{\eta} & M_{dR}.}$$
	\end{remark}

The following proposition is immediate:

\begin{proposition} \label{prop:propdisk}
	Given any $F \to M$ in $\bH/M,$ and $D \in \bH,$ then $\bT^\i_M$ applied to $$F \times D \stackrel{pr}{\longrightarrow} F \to M$$ is equivalent to $$\bT^\i_M\left(F\right) \times D \stackrel{pr}{\longrightarrow} \bT^\i_M\left(F\right) \to M.$$ 
	\end{proposition}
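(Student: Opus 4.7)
The plan is to unwind the definition and reduce everything to the pasting lemma for pullbacks. By construction, for any $\pi \colon E \to M$, the formal disk bundle $\bT^\i_M(\pi)$ is the pullback
$$
\xymatrix{\bT^\i_M(\pi) \ar[r] \ar[d] & E \ar[d]^-{\eta \circ \pi}\\ M \ar[r]_-{\eta} & M_{dR},}
$$
since $\eta_!$ is composition with $\eta \colon M \to M_{dR}$ and $\eta^\ast$ is pullback along $\eta$. Applying this to the composite $F \times D \xrightarrow{pr} F \to M$, I would express $\bT^\i_M(F \times D \to M)$ as the pullback $M \times_{M_{dR}}(F \times D)$, where $F \times D \to M_{dR}$ is the composite through $F$.

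Next, I would invoke the pasting lemma for pullbacks to rewrite
$$
M \times_{M_{dR}}(F \times D) \;\simeq\; \bigl(M \times_{M_{dR}} F\bigr) \times_F (F \times D).
$$
The key observation is then that $F \times D \to F$ is itself a pullback: it is the base change of the unique map $D \to 1$ along $F \to 1$, where $1$ denotes the terminal object of $\bH$. Since pullbacks compose, the right-hand side above is equivalent to $\bigl(M \times_{M_{dR}} F\bigr) \times D$, which by definition is $\bT^\i_M(F) \times D$. Assembling everything, the structure morphism to $M$ arising from the pullback factors through $\bT^\i_M(F)$, giving precisely the claimed factorization.

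There is no real obstacle here: the proposition is a purely formal consequence of the pasting lemma and the fact that products distribute over pullbacks in an $\i$-topos. The only mild subtlety is keeping track of the two structure maps from $\bT^\i_M(F \times D)$ — one to $F$ (used in the intermediate step) and one to $M$ — and verifying that the resulting identification respects the factorization through $\bT^\i_M(F)$ exhibited in Remark \ref{rmk:124}. This is immediate once one draws out the three-tiered pullback diagram and notes that the outer square decomposes compatibly with the product by $D$.
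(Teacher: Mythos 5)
Your argument is correct: unwinding $\bT^\i_M=\eta^\ast\eta_!$ to the pullback $M\times_{M_{dR}}(F\times D)$, pasting to get $\left(M\times_{M_{dR}}F\right)\times_F\left(F\times D\right)$, and using that $F\times D\to F$ is the base change of $D\to 1$ is exactly the verification the paper has in mind — it states the proposition as ``immediate'' and supplies no proof, and your pasting-of-pullbacks computation, together with the check that the structure map to $M$ factors through $\bT^\i_M(F)$ as in Remark \ref{rmk:124}, fills in precisely those details.
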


\begin{lemma}\label{lem:fsmgp}
Let $G$ be a formally smooth group object. Then there is a canonical equivalence
$$G_{dR} \simeq \q{G}{\widehat{G}_e},$$ where $\widehat{G}_e$ is the formal completion of the identity.
\end{lemma}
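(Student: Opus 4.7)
The plan is to identify $\eta_G : G \to G_{dR}$ with the quotient map of $G$ by the right translation action of $\widehat{G}_e$. Formal smoothness of $G$ means $\eta_G$ is an effective epimorphism, so $G_{dR}$ is the colimit of its \v{C}ech nerve $\mbox{\v{C}}_{\eta_G}$. By Proposition \ref{prop:diagcomp}, the degree-one piece $G \times_{G_{dR}} G$ agrees with $\bT^\i G = \widehat{G \times G}_\Delta$. I will identify this with $G \times \widehat{G}_e$ compatibly with source and target, show that the identification exhibits $\mbox{\v{C}}_{\eta_G}$ as the action groupoid for $\widehat{G}_e$ acting on $G$ by right translation, and conclude $G_{dR} \simeq \q{G}{\widehat{G}_e}$ by passing to colimits.

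The key identification $\widehat{G \times G}_\Delta \simeq G \times \widehat{G}_e$ uses the \emph{shear} equivalence $\sigma : G \times G \to G \times G$, $(g,h) \mapsto (g, g \cdot h)$, with inverse $(g,h) \mapsto (g, g^{-1}h)$; both maps are available in $\bH$ because $G$ is a group object. This $\sigma$ carries the map $(id_G, e \circ !_G) : G \to G \times G$ (where $!_G : G \to \ast$ is the terminal map, so the composite is the constant map at $e \in G$) to the diagonal $\Delta$, so by functoriality of formal completion along equivalences, $\widehat{G \times G}_\Delta \simeq \widehat{G \times G}_{(id_G, e \circ !_G)}$. A direct unwinding of pullbacks---or Proposition \ref{prop:prodcomplete} applied with $\cY = G$ and components $(f_0, f_1) = (id_G, e \circ !_G)$---identifies the right-hand side with $G \times \widehat{G}_e$. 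Tracing the shear, the two projections $s, t : G \times_{G_{dR}} G \to G$ of the \v{C}ech nerve go over to the projection $\pi_G$ and the multiplication $\mu : G \times \widehat{G}_e \to G$ obtained by restricting group multiplication along $\widehat{G}_e \hookrightarrow G$; these are precisely the source and target maps of the action groupoid for right translation.

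To finish, I would observe that $\widehat{G}_e$ inherits a group structure as the fiber of the group homomorphism $\eta_G : G \to G_{dR}$ over the identity of $G_{dR}$, and the restricted right translation gives a well-defined $\widehat{G}_e$-action on $G$ whose action groupoid nerve $N(\widehat{G}_e \ltimes G)$ has colimit $\q{G}{\widehat{G}_e}$ by Definition \ref{dfn:action}. Since two groupoid objects in the $\infty$-topos $\bH$ that agree in degrees $0$ and $1$ with compatible source and target maps are canonically equivalent (the higher data being determined by the Segal conditions, which hold in both cases via descent), the degree-one identification upgrades to a simplicial equivalence $\mbox{\v{C}}_{\eta_G} \simeq N(\widehat{G}_e \ltimes G)$, and passing to colimits yields $G_{dR} \simeq \q{G}{\widehat{G}_e}$. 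The main subtlety is this last propagation from $1$-truncated data to the full simplicial object; it can alternatively be executed by iterating the shear equivalence on the $n$-fold fiber products $G \times_{G_{dR}} \cdots \times_{G_{dR}} G$, which Proposition \ref{prop:prodcomplete} then identifies with $G \times \widehat{G}_e^n$, and verifying the face map compatibilities term-by-term.
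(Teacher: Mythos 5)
Your degree-one computation is correct: the shear argument identifying $G \times_{G_{dR}} G \simeq \bT^\i G \simeq G \times \widehat{G}_e$, with the two projections of the \v{C}ech nerve going over to the projection and the restricted multiplication, is sound (and it is essentially how the paper deduces Corollary \ref{cor:gpderham} \emph{from} this lemma, via Proposition \ref{prop:objprod}). The gap is in the step that upgrades this to $G_{dR} \simeq \q{G}{\widehat{G}_e}$. Your claim that two groupoid objects in $\bH$ which agree in degrees $0$ and $1$ with compatible source and target maps are canonically equivalent is false: the Segal conditions determine the higher terms only as objects, whereas the composition map $d_1 \colon X_2 \to X_1$ is genuinely extra data. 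For instance, two inequivalent group structures on the same object give groupoid objects with $X_0 \simeq \ast$ and $X_1 \simeq G$ and identical (trivial) source and target maps, yet they are not equivalent. The fallback of ``iterating the shear and verifying the face map compatibilities term-by-term'' does not repair this: an equivalence of simplicial objects in an $\infty$-topos is an infinite homotopy-coherent datum, and a levelwise identification of the $n$-fold fiber products with $G \times \widehat{G}_e^{\,n}$ plus finitely many face-map checks does not produce one.

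There is also a circularity relative to the paper's own definitions. Definition \ref{dfn:action} defines an action of $\widehat{G}_e$ on $G$ to \emph{be} a pullback square against $\cB \widehat{G}_e$, so the action groupoid $N(\widehat{G}_e \ltimes G)$ and its colimit $\q{G}{\widehat{G}_e}$ are only available once you have produced a delooping $\cB\widehat{G}_e$ together with a classifying map out of the prospective quotient; likewise the group structure on $\widehat{G}_e$ and the asserted $\widehat{G}_e$-equivariance of $\eta_G$ require knowing that $\eta_G$ is a map of group objects, i.e.\ that it deloops. Supplying exactly this is the content of the paper's proof, and it is where formal smoothness really enters: one passes to $\cB G \to (\cB G)_{dR} \simeq \cB(G_{dR})$, forms the fiber $P$, uses formal smoothness (surjectivity of $\pi_0 G \to \pi_0 G_{dR}$, identified with $\pi_1 \cB G \to \pi_1 (\cB G)_{dR}$) and the long exact sequence of homotopy sheaves to conclude $\pi_0 P \simeq \ast$, so that $P \simeq \cB(\Omega_\ast P)$, identifies $\Omega_\ast P \simeq \widehat{G}_e$ by pasting pullbacks, and then the square with $G \to \ast$ on top and $G_{dR} \to P$ on the bottom is, by Definition \ref{dfn:action}, an action of $\widehat{G}_e$ on $G$ with quotient $G_{dR}$. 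To salvage your route you would either have to carry out this delooping anyway, or construct an actual map of simplicial objects from the \v{C}ech nerve of $\eta_G$ to an independently constructed action groupoid and prove it is an equivalence; as written, the proposal does neither.
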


\begin{proof}
Since the de Rham functor preserves limits and effective epimorphisms, $$*\simeq *_{dR} \to \left(\cB G\right)_{dR}$$ is an effective epimorphism. Due again to the preservation of limits, the fibered product $* \times_{\left(\cB G\right)_{dR}} * \simeq G_{dR}.$ It follows that $G_{dR}$ is a group object and $$\left(\cB G\right)_{dR} \simeq \cB\left(G_{dR}\right).$$ Denote by $P$ the fibered product
$$\xymatrix{P \ar[r] \ar[d] & \cB G \ar[d]^-{\eta}\\
\ast \ar[r] & \left(\cB G\right)_{dR}.}$$
Since $G$ is formally smooth, $$\eta_G:G \to G_{dR}$$ is an effective epimorphism, and hence
$$\pi_0\left(G\right) \to \pi_0\left(G_{dR}\right)$$ is as well. We can identify the latter map with the induced map
$$\pi_1\left(\cB G\right) \to \pi_1\left(\left(\cB G\right)_{dR}\right).$$ Analyzing the long exact sequence in homotopy sheaves from the fiber sequence
$$P \to \cB G \to \left(\cB G\right)_{dR},$$ we conclude that $\pi_0\left(P\right) \simeq *.$ Thus, $P$ is the classifying stack of a group object, and this group object is $\Omega_* P.$ Consider diagram of pullback squares
$$\xymatrix{\Omega_\ast P \ar[r] \ar[d] & \ast\simeq \ast_{dR} \ar[d] & \\
G \ar[r]_-{\eta_G} \ar[d] & G_{dR} \ar[r] \ar[d] & \ast \ar[d]\\
\ast \ar[r] & P \ar[r] \ar[d] & \cB G \ar[d]\\
& \ast \ar[r] & \cB\left(G_{dR}\right).}$$
It follows that there is a canonical identification $$\Omega_* P \simeq \widehat{G}_e,$$ and thus $$P \simeq \cB\left(\widehat{G}_e\right).$$ Therefore there is a canonically induced action of $\widehat{G}_e$ on $G,$ and we can identify $$G_{dR} \simeq \q{G}{\widehat{G}_e}.$$
\end{proof}


\begin{corollary}\label{cor:gpderham}
Let $G$ be a formally smooth group object. Denote by $\widehat{G}_{e}$ the formal completion of $G$ at the identity. Then there is a canonical equivalence
$$T^\i\left(G\right) \simeq G \times \widehat{G}_{e}.$$
\end{corollary}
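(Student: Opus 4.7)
The plan is to observe that this corollary follows almost immediately from Lemma \ref{lem:fsmgp} and Proposition \ref{prop:objprod}; the real content is already packaged in those two results, and what remains is simply to assemble them.

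First I would unwind the definition. By definition, $T^\i G$ is the formal disk bundle of the identity morphism $\mathrm{id}_G : G \to G$, which by construction sits in the pullback square
$$\xymatrix{T^\i G \ar[r] \ar[d] & G \ar[d]^-{\eta_G} \\ G \ar[r]^-{\eta_G} & G_{dR},}$$
so that $T^\i G \simeq G \times_{G_{dR}} G$. (This is also recorded in Proposition \ref{prop:diagcomp}, identifying $T^\i G$ with the formal completion of $G$ along the diagonal.)

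Next, I would invoke Lemma \ref{lem:fsmgp}: since $G$ is formally smooth, there is a canonical equivalence
$$G_{dR} \simeq G / \widehat{G}_e,$$
realizing $\eta_G : G \to G_{dR}$ as the quotient map of a canonical action of the formal group $\widehat{G}_e$ on $G$ (in the sense of Definition \ref{dfn:action}). In particular, the square above exhibits $G \to G_{dR}$ as a principal $\widehat{G}_e$-bundle.

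Finally, Proposition \ref{prop:objprod} applied to this action of $\widehat{G}_e$ on $G$ yields
$$T^\i G \;\simeq\; G \times_{G_{dR}} G \;\simeq\; G \times_{G/\widehat{G}_e} G \;\simeq\; \widehat{G}_e \times G \;\simeq\; G \times \widehat{G}_e,$$
which is the desired equivalence. There is essentially no technical obstacle here, since all the heavy lifting---the identification of $G_{dR}$ as a quotient, which depends on formal smoothness to guarantee that $\eta_G$ is an effective epimorphism, and the general fact that $V \times_{V/H} V \simeq H \times V$ for a group action---has been done already; the corollary is really just a specialization of these results to the action of $\widehat{G}_e$ on $G$.
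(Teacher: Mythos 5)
Your proof is correct and follows exactly the paper's route: identify $\eta_G$ with the quotient map $G \to \q{G}{\widehat{G}_e}$ via Lemma \ref{lem:fsmgp}, then apply Proposition \ref{prop:objprod} to the pullback square defining $\bT^\i G \simeq G \times_{G_{dR}} G$. Nothing is missing.
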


\begin{proof}
By Lemma \ref{lem:fsmgp}, we can identify $\eta_G$ with the quotient map $$G \to \q{G}{\widehat{G}_e}.$$ The result now follows from Proposition \ref{prop:objprod}.
\end{proof}

\begin{corollary}
	There is a canonical equivalence $$\bT^\i\left(\R^{\left(m,n\right)}\right) \cong \R^{\left(m,n\right)} \times \bbD^{\left(m,n\right)}.$$
			\end{corollary}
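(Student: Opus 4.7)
The plan is to invoke Corollary \ref{cor:gpderham} applied to $G = \R^{(m,n)}$, regarded as a group object via its additive (super)vector-space structure, and then identify the formal completion at the identity with the formal disk $\bbD^{(m,n)}$ using Corollary \ref{cor:fnbds}.

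More precisely, first I would observe that $\R^{(m,n)}$ has a canonical abelian group structure in $\bH$ coming from addition of super vector spaces, with identity element $0 : \ast \to \R^{(m,n)}$; in particular it is a group object in the sense of Section \ref{sec:group}. Next, I would invoke Corollary \ref{cor:mfdsmth} to conclude that $\R^{(m,n)}$ is formally smooth, since every supermanifold is. Having checked these two hypotheses, Corollary \ref{cor:gpderham} yields a canonical equivalence
\[
\bT^\i\bigl(\R^{(m,n)}\bigr) \;\simeq\; \R^{(m,n)} \times \widehat{\R^{(m,n)}}_{\,0},
\]
where $\widehat{\R^{(m,n)}}_{\,0}$ denotes the formal completion of $\R^{(m,n)}$ at the identity element $0$.

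Finally, by Corollary \ref{cor:fnbds}, this formal neighborhood at the origin is canonically equivalent to $\bbD^{(m,n)}$. Composing the two equivalences gives the claimed identification $\bT^\i(\R^{(m,n)}) \simeq \R^{(m,n)} \times \bbD^{(m,n)}$.

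There is essentially no obstacle: both key ingredients have been established above, and the only minor points to verify are that the group structure is the expected additive one (so that the identity element really is the origin at which Corollary \ref{cor:fnbds} computes the formal neighborhood) and that the equivalences are natural enough to compose. Both are immediate from the constructions.
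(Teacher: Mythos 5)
Your proof is correct and follows essentially the same route as the paper: recognize $\R^{(m,n)}$ as an abelian group object, apply Corollary \ref{cor:gpderham} to identify $\bT^\i(\R^{(m,n)})$ with $\R^{(m,n)} \times \widehat{\R^{(m,n)}}_{0}$, and then use Corollary \ref{cor:fnbds} to identify the formal completion at the origin with $\bbD^{(m,n)}$. Your explicit verification of formal smoothness via Corollary \ref{cor:mfdsmth} is a hypothesis the paper leaves implicit, but otherwise the arguments coincide.
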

			
\begin{proof}			
For all $n$ and $m,$ $\R^{\left(m,n\right)}$ is an abelian group object, so by Corollary \ref{cor:gpderham}, $$\bT^\i\left(\R^{\left(m,n\right)}\right) \cong \R^{\left(m,n\right)} \times \widehat{\R^{\left(m,n\right)}}_{0}.$$
The result now follows from Corollary \ref{cor:fnbds}.
\end{proof}




\begin{definition}
	Let $\A$ be a classical $\SCi$-algebra. Consider the codiagonal map $$\nabla:\A \oinfty \A \to \A,$$ and let $\I$ be its kernel. Then the \textbf{$k^{th}$-order infinitesimal neighborhood of the diagonal of $\Speci\left(\A\right)$} is $\Speci\left(\left(\A \oinfty \A\right)/\I^{k+1}\right).$ For $X$ an affine $\SCi$-scheme, we denote this by $X_{\left(k\right)},$ and denote the canonical maps $X_{\left(k\right)} \to X$ coming from first and second projection respectively by $\alpha^k$ and $\beta^k.$ 
	\end{definition}

\begin{lemma}\label{lem:nbdktriv}
	For any non-negative integers $n$ an $m$ and $k,$ $$\left(\R^{n|m}\right)_{\left(k\right)} \simeq \R^{n|m} \times \mathbb{D}_{\left(k\right)}^{n|m}.$$
\end{lemma}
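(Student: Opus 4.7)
The plan is to perform a linear change of coordinates on $\R^{n|m}\times\R^{n|m}$ that straightens out the diagonal, thereby identifying the ideal $\I=\ker(\nabla)$ with an ideal generated by ``second-factor coordinates''; the desired decomposition will then follow by recognizing the quotient as a pushout of $\SCi$-algebras and applying $\Speci$. The main technical input is Hadamard's lemma, ensuring that the algebraic ideal $\mathfrak{m}_0^{k+1}$ is actually a $\SCi$-congruence.

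First, equip $\R^{n|m}\times\R^{n|m}$ with coordinates $(x_i,\eta_j\mid y_i,\xi_j)$, so that $\A\oinfty\A\cong\Ci\{x,y;\eta,\xi\}$ via Theorem~\ref{thm:4.51}. Define the superdiffeomorphism $\phi:\R^{n|m}\times\R^{n|m}\to\R^{n|m}\times\R^{n|m}$ whose pullback on generators is $x_i\mapsto x_i$, $y_i\mapsto x_i+v_i$, $\eta_j\mapsto\eta_j$, $\xi_j\mapsto\eta_j+\beta_j$. This is a linear change of variables, hence a superdiffeomorphism, and under $\phi$ the diagonal $\Delta:\R^{n|m}\to\R^{n|m}\times\R^{n|m}$ corresponds to the sub-supermanifold $\R^{n|m}\times\{0\}$. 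Accordingly, $\phi^\ast$ carries the codiagonal $\nabla$ to the augmentation $v_i,\beta_j\mapsto 0$ in the second factor, and carries $\I$ to the ideal $J=(v_1,\ldots,v_n,\beta_1,\ldots,\beta_m)$ inside $\Ci\{x,v;\eta,\beta\}$.

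Next, I would compute $(\A\oinfty\A)/\I^{k+1}\cong\Ci\{x,v;\eta,\beta\}/J^{k+1}$. The ideal $J$ is the extension of the augmentation ideal $\mathfrak{m}_0=(v,\beta)$ of the ``second factor'' subalgebra $\Ci\{v;\beta\}$ along its inclusion into the coproduct $\Ci\{x;\eta\}\oinfty\Ci\{v;\beta\}$. Since extension of ideals commutes with finite sums and products of ideals, $J^{k+1}$ is the extension of $\mathfrak{m}_0^{k+1}$. By iterated Hadamard's lemma in the even directions (the odd directions contribute nilpotents that are already automatically handled), $\mathfrak{m}_0^{k+1}$ is a $\SCi$-congruence, and the quotient $\Ci\{v;\beta\}/\mathfrak{m}_0^{k+1}$ is the finite-dimensional $\SCi$-algebra defining $\bbD^{n|m}_{(k)}$. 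The standard pushout argument for quotients by extended ideals then yields the identification
\[
\Ci\{x,v;\eta,\beta\}/J^{k+1}\;\cong\;\Ci\{x;\eta\}\,\oinfty\,\bigl(\Ci\{v;\beta\}/\mathfrak{m}_0^{k+1}\bigr).
\]

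Finally, I would apply $\Speci$. Since $\Gamma\dashv\Speci$, the functor $\Speci$ carries coproducts of $\Ci$-complete algebras to products of affine $\SCi$-schemes; both $\Ci(\R^{n|m})$ and the finite-dimensional quotient are finitely presented, hence complete by Proposition~\ref{prop:compcomplete}. Therefore
\[
(\R^{n|m})_{(k)}\;=\;\Speci\bigl((\A\oinfty\A)/\I^{k+1}\bigr)\;\simeq\;\R^{n|m}\times\bbD^{n|m}_{(k)},
\]
as required. The principal delicacy is checking that the purely algebraic manipulations with the ideal $\mathfrak{m}_0^{k+1}$ descend correctly to $\SCi$-congruences — once Hadamard's lemma ensures this, the rest is a formal manipulation of coproducts and extensions of ideals.
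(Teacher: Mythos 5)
Your proposal is correct and is essentially the paper's own argument: the same linear change of variables (difference coordinates $v_i=y_i-x_i$, $\beta_j=\xi_j-\eta_j$) identifies $\I$ with the ideal generated by the second-factor coordinates, the quotient is then pulled out of one coproduct factor, and the product decomposition is read off. The only cosmetic difference is the appeal to Hadamard's lemma, which is not needed here since in the (super) $\Ci$ setting every homogeneous ideal is automatically a $\SCi$-congruence, as the paper records via \cite[Proposition 1.2]{MSIA} and \cite[Corollary 2.116]{dg1}.
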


\begin{proof}
	Notice that the codiagonal map $$\Ci\left(x_1,x_2,\ldots,x_n|\eta_1,\ldots,\eta_m\right) \oinfty \Ci\left(y_1,y_2,\ldots,y_n| \xi_1,\ldots,\xi_m\right) \to \Ci\left(t_1,t_2,\ldots,t_n|\theta_1,\ldots,\theta_m\right)$$ sends $x_i$ and $y_i$ to $t_i,$ and $\eta_j$ and $\xi_j$ to $\theta_j.$ It follows that the homogeneous ideal $\I=\ker\left(\nabla\right)$ is generated by $\left(x_1-y_1,\ldots,x_n-y_n|\eta_1-\xi_1,\ldots,\eta_m-\xi_m\right).$ Define $$z_i:=x_i-y_i$$ and $$\theta_i:=\eta_i-\xi_i.$$
	Then we have the following string of natural equivalences:

		\begin{eqnarray*}
\resizebox{!}{0.1in}{$\left(\Ci\left(\mathbf{x}|\mathbf{\eta}\right) \oinfty \Ci\left(\mathbf{y}|\mathbf{\xi}\right)\right)/\I^k$} &\cong& \resizebox{!}{0.13in}{$\Ci\left(\mathbf{x}| \mathbf{\eta}\right) \oinfty \Ci\left(\mathbf{y}|\mathbf{\xi}\right)/\left(x_1-y_1,\ldots,x_n-y_n|\eta_1-\xi_1,\ldots,\eta_m-\xi_m\right)^k$}\\
	&\cong& \resizebox{!}{0.15in}{$\left(\Ci\left(z_1,\ldots,z_n|\theta_1,\ldots,\theta_m\right) \oinfty \Ci\left(\mathbf{y}|\mathbf{\xi}\right)\right)/\left(z_1,\ldots,z_n,\theta_1,\ldots,\theta_m\right)^k$}\\
	&\cong& \resizebox{!}{0.15in}{$\left(\Ci\left(z_1,\ldots,z_n|\theta_1,\ldots,\theta_m\right)/\left(z_1,\ldots,z_n,\theta_1,\ldots,\theta_m\right)^k\right) \oinfty \Ci\left(\mathbf{y}|\mathbf{\xi}\right)$}\\
	&\simeq & \Ci\left(\mathbb{D}_{\left(k\right)}^{n|m} \times \R^{n|m}\right) 
	\end{eqnarray*}
	
\end{proof}

\begin{proposition}\label{prop:bTloc}
	Let $\cM$ be a finite dimensional supermanifold. Then $$\bT^\i\left(\cM\right)= \underset{k} \colim \cM_{\left(k\right)}.$$
	\end{proposition}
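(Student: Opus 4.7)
The plan is to first establish the identity for the local model $\cM = \R^{n|m}$ and then bootstrap to general $\cM$ by descent along an open cover.

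For the affine case, I would combine the explicit computations already assembled in the paper. By Lemma \ref{lem:nbdktriv}, $(\R^{n|m})_{(k)} \simeq \R^{n|m} \times \bbD^{n|m}_{(k)}$, and since filtered colimits commute with finite products in $\bH$, $\colim_k (\R^{n|m})_{(k)} \simeq \R^{n|m} \times \colim_k \bbD^{n|m}_{(k)}$. Now $\bbD^{n|m}_{(k)} \simeq \bbD^n_{(k)} \times \bbD^{0|m}_{(k)}$, the odd factor stabilizing to $\R^{0|m}$ once $k \ge m$, so the filtered colimit computes to $\bbD^n \times \R^{0|m} = \bbD^{n|m}$. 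Meanwhile $\R^{n|m}$ is an abelian group object which is formally smooth by Corollary \ref{cor:mfdsmth}, so Corollary \ref{cor:gpderham} and Corollary \ref{cor:fnbds} give $\bT^\i(\R^{n|m}) \simeq \R^{n|m} \times \widehat{\R^{n|m}}_0 \simeq \R^{n|m} \times \bbD^{n|m}$, matching the previous expression.

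For a general finite-dimensional supermanifold $\cM$, the plan is to pick a good open cover $\{U_\alpha \hookrightarrow \cM\}$ by open sub-supermanifolds diffeomorphic to $\R^{n|m}$ and with all finite intersections also so-diffeomorphic (such covers exist for smooth supermanifolds by standard arguments on the underlying manifold). Writing $\check{C}_\bullet$ for the \v{C}ech nerve of $\bigsqcup_\alpha U_\alpha \twoheadrightarrow \cM$, we have $\cM \simeq \colim_{[\ell] \in \Delta^{op}} \check{C}_\ell$ in $\bH$, with each $\check{C}_\ell$ a disjoint union of spaces of the form $\R^{n|m}$. I would then show that both functors $\bT^\i(\blank)$ and $\colim_k (\blank)_{(k)}$ transport this colimit presentation to the analogous colimit presentation of their values on $\cM$, and invoke the affine case termwise.

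The key input for descent is that any open embedding $j:V \hookrightarrow \cM$ is formally \'etale (Proposition \ref{prop:etaleisfet}, or Corollary \ref{cor:repetisfet}), and hence $\bT^\i V \simeq V \times_\cM \bT^\i \cM$: indeed, by the pullback defining formally \'etale maps, $V \times_{V_{dR}} V \simeq V \times_{\cM_{dR}} V \simeq V \times_\cM (\cM \times_{\cM_{dR}} \cM)$. Since $\bH$ is an $\i$-topos, base change along $\bT^\i \cM \to \cM$ preserves colimits, so $\bT^\i \cM \simeq \bT^\i \cM \times_\cM \colim_\ell \check{C}_\ell \simeq \colim_\ell \bT^\i \check{C}_\ell$. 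On the other side, for any open embedding $V \hookrightarrow \cM$ one has $V_{(k)} \simeq V \times_\cM \cM_{(k)}$ on the nose (both being the open sub-$\SCi$-scheme of $\cM_{(k)}$ with underlying space $V$), so by a parallel argument $\cM_{(k)} \simeq \colim_\ell (\check{C}_\ell)_{(k)}$; interchanging the filtered $\colim_k$ with $\colim_\ell$ then yields $\colim_k \cM_{(k)} \simeq \colim_\ell \colim_k (\check{C}_\ell)_{(k)}$. Applying the affine case to each $\check{C}_\ell$ (a disjoint union of $\R^{n|m}$'s), both colimits agree termwise, and hence globally.

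The main obstacle I anticipate is the bookkeeping in the descent step rather than any real technical difficulty: one must carefully verify the interchange of $\colim_k$ with the geometric realization of the \v{C}ech nerve, and check that the pullback identity $V_{(k)} \simeq V \times_\cM \cM_{(k)}$ holds both at the level of $\SCi$-schemes and after passing to $\bH$. Both points are clean in the $\i$-topos framework since filtered colimits commute with finite limits and since effective epimorphisms are universal, but they require some care to state correctly.
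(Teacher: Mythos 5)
Your proposal is correct and is essentially the paper's own argument: the affine case via Lemma \ref{lem:nbdktriv} together with Corollaries \ref{cor:gpderham} and \ref{cor:fnbds}, followed by globalization over a cover by charts diffeomorphic to $\R^{n|m}$ using that open inclusions are formally \'etale (Proposition \ref{prop:etaleisfet}) and that colimits in $\bH$ are universal. The one place you substitute an assertion for an argument is the identity $V_{\left(k\right)} \simeq V \times_{\cM} \cM_{\left(k\right)}$ for open embeddings, which is precisely what the paper establishes as the cosheaf property of $U \mapsto U_{\left(k\right)}$ (Proposition \ref{prop:nbdcosh}, resting on the jet-bundle identification of Lemma \ref{lem:suppsh}); since you flag this verification yourself, it is a citation to supply rather than a genuine gap.
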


\begin{proof}
		Write $\cM \simeq \underset{\alpha} \colim U_\alpha$  in $\bH.$ with each $U_\alpha$ diffeomorphic to $\R^{n|m}.$ It follows from Proposition \ref{prop:nbdcosh} and Lemma \ref{lem:nbdktriv} that
	\begin{eqnarray*}
		\cM_{\left(k\right)} &\simeq& \underset{\alpha} \colim \left(U_\alpha\right)_{\left(k\right)}\\
		&\simeq & \underset{\alpha} \colim U_\alpha \times \mathbb{D}_{\left(k\right)}^{n|m}.
	\end{eqnarray*}
Hence
\begin{eqnarray*}
	\underset{k} \colim \cM_{\left(k\right)} & \simeq & \underset{k} \colim \underset{\alpha} \colim \left(U_\alpha \times \mathbb{D}_{\left(k\right)}^{n|m}\right)\\
	&\simeq &   \underset{\alpha} \colim \underset{k} \colim \left(U_\alpha \times \mathbb{D}_{\left(k\right)}^{n|m}\right)\\
	&\simeq & \underset{\alpha} \colim \mathbb{T}^\i\left(U_\alpha\right)\
		\end{eqnarray*}
		Note however, that since each open inclusion $U_\alpha \hookrightarrow \cM$ is formally \'etale, it follows that we have two pullback squares
		$$\xymatrix{\bT^\i U_\alpha \ar[d] \ar[r] & U_\alpha \ar[d]\\
			U_\alpha \ar[r] \ar[d] & U_{dR} \ar[d]\\
			\cM \ar[r] & \cM_{dR},}$$
		and hence $$\bT^\i U_\alpha \simeq \bT^\i_\cM U_\alpha \simeq \bT^\i \cM \times_\cM U_\alpha.$$
		It follows thus that
		\begin{eqnarray*}
			\underset{k} \colim \cM_{\left(k\right)} & \simeq & \underset{\alpha} \colim \bT^\i \cM \times_\cM U_\alpha\\
			&\simeq & \bT^\i\c M \times_\cM \cM\\
			&\simeq & \bT^\i \cM.
			\end{eqnarray*}
		\end{proof}
	
\begin{lemma}\label{lem:suppsh}
	If $\cM$ is a supermanifold, then there is an isomorphism of presheaves on $\cM_0$  $$U \mapsto \Gamma_\cM\left(\J^k\left(U,\R\right)\right) \cong \Ci\left(U_{\left(k\right)}\right).$$
	\end{lemma}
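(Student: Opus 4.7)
The plan is to recognize both sides as expressing the classical jet-neighborhood correspondence in the super setting: a $k$-jet of a function is equivalent data to a function on the $k$-th order infinitesimal neighborhood of the diagonal. This is a well-known fact for ordinary manifolds; here we need to carefully extend it to the $\Z_2$-graded case.

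First I would set $\A = \Ci\left(\cM|_U\right)$ and invoke Theorem~\ref{thm:4.51}, which gives $\A \oinfty \A \simeq \Ci\left(\cM|_U \times \cM|_U\right)$ since the two projections are transverse. Unwinding the definition of $U_{\left(k\right)}$, this identifies $\Ci\left(U_{\left(k\right)}\right)$ with $\Ci\left(U \times U\right)/I^{k+1}$, where $I = \ker\left(\nabla\right)$ is the vanishing ideal of the diagonal $U \hookrightarrow U \times U$. On the other side, a global section of the jet bundle $\J^k\left(\cM|_U, \R\right) \to U$ is by definition a smooth assignment to each $x \in U$ of a $k$-jet at $x$ of a smooth function $U \to \R$; I would produce the candidate isomorphism by sending a function $F \in \Ci\left(U \times U\right)$ to the section $x \mapsto j^k_x\left(y \mapsto F\left(x,y\right)\right)$ and check that this factors through $I^{k+1}$ and induces a bijection.

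To verify this carefully in the super setting, I would work locally on a chart $U \cong \R^{n|m}$ and apply Lemma~\ref{lem:nbdktriv} to get $U_{\left(k\right)} \cong U \times \bbD_{\left(k\right)}^{n|m}$, whence
$$\Ci\left(U_{\left(k\right)}\right) \cong \Ci\left(U\right) \underset{\R}\otimes \R\left[z_1,\ldots,z_n,\theta_1,\ldots,\theta_m\right]/\left(z,\theta\right)^{k+1},$$
with $z_i = x_i - y_i$ and $\theta_j = \eta_j - \xi_j$ exactly as in the proof of Lemma~\ref{lem:nbdktriv}. On the jet side, the standard local trivialization of $\J^k\left(\R^{n|m}, \R\right)$ identifies its fiber at each point with the same space of super-polynomials in $n$ even and $m$ odd variables of total degree $\le k$. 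A direct comparison of bases then matches the two descriptions, and one checks compatibility with the action of $\Ci\left(U\right)$ on both sides.

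Finally, to promote this to an isomorphism of presheaves on $\cM_0$, I would observe that both $U \mapsto \Gamma_{\cM|_U}\left(\J^k\left(\cM|_U, \R\right)\right)$ and $U \mapsto \Ci\left(U_{\left(k\right)}\right)$ are compatible with restriction to open sub-supermanifolds---the latter because open inclusions are formally \'etale by Proposition~\ref{prop:etaleisfet} and so commute with formation of formal neighborhoods---and that the candidate isomorphism is manifestly natural in $U$, so a chart-by-chart verification suffices. The main obstacle is the bookkeeping around the super-Taylor expansion, keeping signs and $\Z_2$-degrees straight between the two descriptions; once that is pinned down, no new ideas beyond those already deployed in Lemma~\ref{lem:nbdktriv} and Theorem~\ref{thm:4.51} are required.
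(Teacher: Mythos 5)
Your candidate map is the right one and agrees with the paper's: the paper defines $\psi_U\colon \Ci\left(U_{\left(k\right)}\right) \to \Gamma_\cM\left(\J^k\left(U,\R\right)\right)$ by restricting a representative $f \in \Ci\left(U\times U\right)$ to the fibers $\left\{x\right\}\times U$ and taking the $k$-jet along the diagonal, exactly as you propose. Your local computation on a chart, via Lemma \ref{lem:nbdktriv} and the trivialization $\J^k\left(\R^{n|m},\R\right)\cong \R^{n|m}\times \J^k_0$, is also fine.

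The gap is in the last step, where you claim that ``a chart-by-chart verification suffices.'' A natural transformation of presheaves that is an isomorphism on a basis of charts is an isomorphism everywhere only if \emph{both} presheaves are sheaves. The target $U\mapsto \Gamma_\cM\left(\J^k\left(U,\R\right)\right)$ is a sheaf, but the source $U\mapsto \Ci\left(U_{\left(k\right)}\right) = \Ci\left(U\times U\right)/I^{k+1}$ is not known to be one at this stage: given compatible classes on a cover $\left(V_\alpha\right)$, the representatives only agree modulo $I^{k+1}$ on overlaps, and the $V_\alpha\times V_\alpha$ only cover a neighborhood of the diagonal in $U\times U$, so there is no direct gluing. (In the paper, the sheaf property of $U\mapsto \Ci\left(U_{\left(k\right)}\right)$ is a \emph{consequence} of this lemma, recorded afterwards in Proposition \ref{prop:nbdcosh}; using it here would be circular.) The real content of the statement is the surjectivity of $\psi_U$ for an arbitrary open $U$: given a global section $\sigma$ of $\J^k\left(U,\R\right)$, one must manufacture a single function on $U\times U$ whose fiberwise $k$-jets along the diagonal realize $\sigma$. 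The paper does this with a locally finite atlas, a subordinate partition of unity, the explicit Taylor-polynomial representatives $f^i_\sigma\left(x,y\right)=\sum_{|I|\le k}\sigma^i_I\left(x\right)\left(x-y\right)^I$, and a bump function supported near the diagonal to extend the locally defined sum to all of $U\times U$; this produces an explicit two-sided inverse $\theta$ to $\psi_U$. Your proposal needs this (or an equivalent globalization argument) to close; without it the reduction to charts does not go through.
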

\begin{proof}
For simplicity, we will prove this for an even manifold $M$.
Fix $U \subseteq M$ an open subset. Define
\begin{eqnarray*}
\psi_U:\Ci\left(U_{\left(k\right)}\right) &\to& \Gamma_\cM\left(\J^k\left(U,\R\right)\right)\\
\left[f\right] &\mapsto& \left(x \mapsto j^1_{\left(x,x\right)}\left(f|_{\left\{x\right\}\times X}\right) \right),
\end{eqnarray*}
where
\begin{eqnarray*}
	\Ci\left(U \times U\right) & \to & \Ci\left(U \times U\right)/\I^{k+1}\\
	f &\mapsto& \left[f\right].
	\end{eqnarray*}
This clearly defines a natural transformation of presheaves. We claim that $\psi_U$ is an isomorphism. We will construct a map in the opposite direction. Fix a locally finite atlas $\left(V_i \hookrightarrow U\right),$ with $\varphi_i:V_i \stackrel{\cong}{\longrightarrow} \R^n$  and a partition of unity $\lambda$ subordinate to it. Note that each coordinate patch $\varphi_i$ induces a trivialization $$\J^k\left(U,\R\right)|_{V_i} \cong V_i \times \J^k_0\left(\R^n\right),$$ where $\J^k_0\left(\R^n\right)$ is the set of $k$-jets of smooth functions on $\R^n$ at the origin. I.e., these are induced jet coordinates on $\J^k\left(U,\R\right),$ $\left(x^1,\ldots,x^n,u^I\right),$ with $u^I$ ranging over all multi-indices with $|I| \le k.$ By multiplying with $1=\underset{i} \sum \lambda_i,$ we get that
$$\sigma=\underset{i} \sum \sigma^i,$$ with each $\sigma^i$ a section supported on $V_i.$ Hence, in local coordinates we can write
$$\sigma^i= \underset{|I|\le k} \sum \sigma^i_I u^I,$$ with each $\sigma^i_I$ a smooth function on $V_i.$ Define $f^i_\sigma:V_i \times V_i \to \R$ by
$$f^i_\sigma\left(x,y\right)=\underset{|I| \le k} \sum \sigma^i_I\left(x\right)\left(x-y\right)^I.$$
Then $\underset{i} \sum f^i_\sigma\left(x,y\right)$ is a smooth function defined on some neighborhood $W$ of the diagonal in $U \times U,$ independent of $\sigma.$ Let $\beta$ be a bump function supported in $W.$ Then $$f_\sigma:=\beta \cdot \underset{i} \sum f^i_\sigma\left(x,y\right)$$ is a global function on $U \times U.$ Fixing the choice of coordinate charts and W, define $\theta\left(\sigma\right)=\left[f_\sigma\right].$ Notice that $\psi_U\left(\theta\left(\sigma\right)\right) =\sigma$ and $\theta\left(\psi_U\left(\left[f\right]\right)\right)=\left[f\right].$ It follows that $\psi_U$ is an isomorphism.
\end{proof}

\begin{proposition}\label{prop:nbdcosh}
	The assignment $U \mapsto U_{\left(k\right)}$ is a cosheaf on $\cM_0$ with values in $\bH.$
	\end{proposition}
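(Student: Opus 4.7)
The plan is to exploit the fact that the underlying topological space of $U_{(k)}$ coincides with that of $U$, so that the assignment $U \mapsto U_{(k)}$ amounts to restriction of the structure sheaf of $\cM_{(k)}$ to open subsets of $\cM_0$. Cosheaf-ness then follows from the basic observation that open covers of a $\Ci$-scheme by open subschemes are effective epimorphisms in $\bH$.

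The first step is to establish the following compatibility lemma: for an open inclusion $V \hookrightarrow U \subseteq \cM_0$, the induced map $V_{(k)} \to U_{(k)}$ is an open immersion of $\Ci$-schemes whose image is exactly the open subscheme of $U_{(k)}$ with underlying space $V$. Since $\I$ is a nilpotent ideal, the real spectrum of $(\Ci(\cM|_U)\oinfty \Ci(\cM|_U))/\I^{k+1}$ agrees with that of $\Ci(\cM|_U)$ and hence with $U$. To compare structure sheaves, I would invoke Lemma \ref{lem:suppsh} to identify $V \mapsto \Ci(V_{(k)})$ with $V \mapsto \Gamma_\cM(\J^k(V,\R))$, where the latter is manifestly a sheaf on $\cM_0$; naturality then shows that $\Ci(V_{(k)})$ is computed as sections over $V$ of the structure sheaf of $U_{(k)}$. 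Alternatively, Lemma \ref{lem:nbdktriv} gives a local trivialization $U_{(k)} \simeq U \times \mathbb{D}^{n|m}_{(k)}$, from which compatibility under restriction to $V$ is immediate.

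Second, given an open cover $\{U_\alpha \hookrightarrow U\}$, the family $\{U_{\alpha,(k)} \to U_{(k)}\}$ is an open cover of $\Ci$-schemes in the sense of Definition \ref{dfn:open_covering}: the underlying continuous maps coincide with the open inclusions $U_\alpha \hookrightarrow U$ and are jointly surjective, and each $U_{\alpha,(k)} \to U_{(k)}$ is an open immersion by Step 1. By Remark \ref{rmk:subcan}, the open-cover topology is subcanonical on the relevant site, and its covering families become effective epimorphisms in $\bH$, so $\coprod_\alpha U_{\alpha,(k)} \twoheadrightarrow U_{(k)}$ is an effective epimorphism.

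Finally, since every effective epimorphism in $\bH$ is the colimit of its \v{C}ech nerve, it suffices to identify the $n$-fold fibered products $U_{\alpha_0,(k)} \times_{U_{(k)}} \cdots \times_{U_{(k)}} U_{\alpha_n,(k)}$ with $U_{\alpha_0\cdots\alpha_n,(k)}$ where $U_{\alpha_0\cdots\alpha_n} := U_{\alpha_0}\cap\cdots\cap U_{\alpha_n}$. But by Step 1, both sides are computed as the open subscheme of $U_{(k)}$ sitting over $U_{\alpha_0\cdots\alpha_n}$, since intersections of open subschemes of a $\Ci$-scheme are computed on underlying spaces. This exhibits $U_{(k)}$ as the colimit of the \v{C}ech diagram $[n] \mapsto \coprod U_{\alpha_0\cdots\alpha_n,(k)}$, which is precisely the cosheaf condition.

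The main obstacle is Step 1: the identification of $V_{(k)}$ with the open subscheme of $U_{(k)}$ over $V$. Algebraically this demands that localization at a function $f \in \Ci(\cM|_U)_0$ commutes with the quotient by $\I^{k+1}$, i.e., that $((\Ci(\cM|_U)\oinfty\Ci(\cM|_U))/\I^{k+1})[f^{-1}] \cong (\Ci(\cM|_{V_f})\oinfty\Ci(\cM|_{V_f}))/\I^{k+1}$ where $V_f$ is the basic open $\{f\ne 0\}$. This follows from flatness of $\Ci$-localization (Lemma \ref{lem:5.1.13}) together with the standard fact that localization commutes with quotients by ideals, but it is cleanest to absorb the entire calculation into the jet-bundle reformulation of Lemma \ref{lem:suppsh}, where sheaf-ness of $V \mapsto \Gamma_\cM(\J^k(V,\R))$ is manifest.
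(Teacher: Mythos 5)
Your proposal is correct and takes essentially the same route as the paper: the paper's (very terse) proof identifies $U \mapsto \Ci\left(U_{\left(k\right)}\right)$ with the sheaf of sections of $\J^k\left(\cM,\R\right)$ via Lemma \ref{lem:suppsh} and then concludes by ``applying $\Speci$.'' Your Steps 1--3 merely unpack that last step --- identifying $V_{\left(k\right)}$ as the open sub-$\Ci$-scheme of $U_{\left(k\right)}$ over $V$, noting that the resulting open cover yields an effective epimorphism in $\bH$, and computing the \v{C}ech nerve --- which is a faithful (and welcome) elaboration of the intended argument rather than a different one.
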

\begin{proof}
	The assignment $U \mapsto \Ci\left(U_{\left(k\right)}\right)$ can be identified the sheaf of sections of $\J^k\left(\cM,\R\right),$ since for all $U,$ 
	$$\J^k\left(\cM,\R\right)|_{U}\cong \J^k\left(U,\R\right).$$ Applying $\Speci,$ the result now follows.
	\end{proof}

More generally, there is the following result:

\begin{proposition}\cite[Chapter 7]{SYN} \label{prop:SYN}
	Let $\pi:E \to M$ be a surjective submersion between finite dimensional manifolds. Then $\J^k\left(M,E\right) \to M$ can be identified with $\beta^k_*\alpha^*_k\left(\pi\right)$ in $\bH/M.$
	\end{proposition}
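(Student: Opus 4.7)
The plan is to compute the functor of points of both sides on test objects $\sigma: S \to M$ in $\bH/M$ and check they agree, after reducing to a local trivialization of $\pi$. Using the adjunction $\alpha^{k*} \dashv \beta^k_*$ induced by the étale geometric morphism $\bH/M_{(k)} \to \bH/M$, there is a natural equivalence
$$\operatorname{Maps}_{\bH/M}(S, \beta^k_*\alpha^{k*}\pi) \simeq \operatorname{Maps}_{\bH/M_{(k)}}(\beta^{k*}S, \alpha^{k*}\pi),$$
whose right-hand side unwinds to the space of morphisms $S\times_{\sigma,M,\beta^k} M_{(k)}\to E$ whose composite with $\pi$ equals $\alpha^k\circ\operatorname{pr}_2$. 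This description makes it manifest that $\pi\mapsto \beta^k_*\alpha^{k*}\pi$ is natural in $\pi$ and local on $M$: for an open inclusion $U\hookrightarrow M$, which is formally étale by Proposition \ref{prop:etaleisfet}, base change commutes with $\beta^k_*\alpha^{k*}$, matching the corresponding locality of $\J^k(M,E)$ from the classical description of jet bundles.

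Next I would verify the identification in the trivial case. Cover $M$ by charts $U\cong \R^{n|m}$ over which $\pi$ is trivialized as $\operatorname{pr}_1: U\times F\to U$. By Lemma \ref{lem:nbdktriv}, $U_{(k)}\cong U\times \mathbb{D}^{n|m}_{(k)}$ in such a way that $\beta^k$ is the projection to the first factor and $\alpha^k$ is the additive ``translation'' map $(u,d)\mapsto u+d$ for the abelian group structure on $\R^{n|m}$. Under this identification, $\beta^{k*}S\cong S\times \mathbb{D}^{n|m}_{(k)}$ and $\alpha^{k*}\pi\cong F\times U_{(k)}\to U_{(k)}$, so
$$\operatorname{Maps}_{\bH/U_{(k)}}(\beta^{k*}S,\alpha^{k*}\pi)\;\simeq\;\operatorname{Maps}(S\times \mathbb{D}^{n|m}_{(k)},\,F).$$
On the other hand, the classical description of $\J^k(U,U\times F)\to U$ identifies it with $U\times J^k_0(\R^{n|m},F)$, where $J^k_0(\R^{n|m},F)$ is the space of $k$-jets at $0$ of smooth maps $\R^{n|m}\to F$. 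The core representability statement is that $J^k_0(\R^{n|m},F)$ represents the functor $T\mapsto \operatorname{Maps}(T\times \mathbb{D}^{n|m}_{(k)},F)$; in the case $F=\R$ this is the functorial content of Lemma \ref{lem:suppsh}, and for general $F$ it follows by covering $F$ with Euclidean charts and using that $\mathbb{D}^{n|m}_{(k)}$ has the homotopy type of a point so maps out of $S\times \mathbb{D}^{n|m}_{(k)}$ factor through a trivializing neighborhood. Sections $S\to \sigma^*\J^k(U,U\times F)$ thus match maps $S\times \mathbb{D}^{n|m}_{(k)}\to F$, yielding the equivalence in the trivial case.

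Finally I would globalize. Both assignments $\pi\mapsto \J^k(M,E)$ and $\pi\mapsto \beta^k_*\alpha^{k*}\pi$ are natural in $\pi$; both give sheaves on $M$ (the first by construction, the second because $\beta^k_*$ preserves limits and $M_{(k)}$ is a cosheaf in $M$ by Proposition \ref{prop:nbdcosh}); and both send open-cover coverings of $\pi$ to descent data. The local identification of the previous step is therefore compatible with transition cocycles and glues to a global equivalence $\J^k(M,E)\simeq \beta^k_*\alpha^{k*}\pi$ in $\bH/M$. The main obstacle will be the representability step in the trivial case: while intuitively clear and classical in the even smooth setting, giving a clean, bootstrapped argument valid in the full derived supergeometric framework --- matching the Koszul-theoretic computation of $\mathbb{D}^{n|m}_{(k)}$ with the super-Taylor expansion of a smooth map into an arbitrary fiber $F$ --- is where the real work lies.
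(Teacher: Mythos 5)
You should first note that the paper does not actually prove this proposition: it is imported wholesale from \cite[Chapter 7]{SYN}, with the only locally proved ingredients being Lemma \ref{lem:suppsh} (sections of $\J^k\left(U,\R\right)$ agree with $\Ci\left(U_{\left(k\right)}\right)$) and Proposition \ref{prop:nbdcosh}. Your sketch is essentially a reconstruction of that classical synthetic/Weil-prolongation argument, and its overall architecture (adjunction, reduction to a trivializing chart via Lemma \ref{lem:nbdktriv}, identification of $\alpha^k$ with translation and $\beta^k$ with projection, then gluing) is sound; one small slip is that the adjunction you invoke is $\beta^{k*}\dashv\beta^k_*$, not ``$\alpha^{k*}\dashv\beta^k_*$'', though your displayed formula uses the correct one.

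The genuine gap is exactly the step you flag and then defer: the representability of $T\mapsto \Map\left(T\times\mathbb{D}^{n}_{\left(k\right)},F\right)$ by $J^k_0\left(\R^n,F\right)$, which is not a technicality but the entire content of the cited result, so as written your argument is a strategy rather than a proof. Two points make it more than routine in the present setting. First, the comparison must be natural in \emph{arbitrary} derived test objects $S=\Speci\left(\A\right)\in\sD$, whereas Lemma \ref{lem:suppsh} is only an isomorphism of presheaves on $\mathbf{Op}\left(M_0\right)$ of global sections; even for $F=\R$ you still need to upgrade it to a functor-of-points statement, e.g.\ by using that $\Ci\left(U_{\left(k\right)}\right)$ is a finitely generated projective $\Ci\left(U\right)$-module, so that $\beta^k_*\alpha^{k*}$ of the trivial line bundle is the representable sheaf of the dual vector bundle, and that the derived product $S\times\mathbb{D}^{n}_{\left(k\right)}$ has the expected structure ring $\A\oinfty\Ci\left(\mathbb{D}^n_{\left(k\right)}\right)\simeq\A\otimes_{\R}\Ci\left(\mathbb{D}^n_{\left(k\right)}\right)$ (which requires the Koszul-type regularity argument, as in the proof that $\Nil_k$ is representable). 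Second, your reduction from general $F$ to $\R$ via ``maps out of $S\times\mathbb{D}^{n|m}_{\left(k\right)}$ factor through a chart of $F$'' is only valid locally on $S$ (the underlying space of $S\times\mathbb{D}^{n|m}_{\left(k\right)}$ agrees with that of $S$, so the factorization exists on an open cover of $S$), and must then be combined with descent and with a check that the resulting identifications are compatible with the transition cocycles of $\pi$; none of this is fatal, but it is precisely the work that the paper outsources to \cite[Chapter 7]{SYN}, and until it is carried out your proposal does not yet constitute an independent proof.
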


\begin{corollary}\label{cor:jetfrech}
	Let $\pi:E \to M$ be a surjective submersion between finite dimensional manifolds. Then the following are equivalent:
	\begin{itemize} 
		\item[1.] $S_{\mathsf{Conv}}\left(\J^\i\left(\pi\right)\right) \to M,$ where $\J^\i\left(\pi\right)$ is given its usual Frech\'et manifold structure.
		\item[2.] The limit $\underset{k} \lim \beta^k_*\alpha^*_k\left(\pi\right).$
		\end{itemize}
\end{corollary}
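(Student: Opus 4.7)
The plan is to produce an equivalence
$$S_{\mathsf{Conv}}\left(\J^\infty(\pi)\right) \;\simeq\; \underset{k}\lim \beta^k_*\alpha^k_*(\pi)$$
in $\bH/M$ by identifying both sides with $\Speci\bigl(\underset{k}\colim \Ci(\J^k(E,M))\bigr)$ equipped with its canonical projection to $M$. First, I would apply Proposition \ref{prop:SYN} to rewrite each $\beta^k_*\alpha^k_*(\pi)$ as the finite jet bundle $\J^k(E,M)\to M$, viewed as an object of $\bH/M$ through the canonical embedding $\SMfd\hookrightarrow\Loc\hookrightarrow\bH$. Since the forgetful functor $\bH/M\to\bH$ preserves connected (hence cofiltered) limits, it suffices to compute the limit of the inverse system $\bigl(\J^k(E,M)\bigr)_{k\in\mathbb{N}}$ in $\bH$ and then check that the resulting projection to $M$ is the right one; the latter is automatic, since each transition map and each projection $\J^k(E,M)\to M$ is compatible with the cone.

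Second, I would identify the $\bH$-limit by computing its functor of points on affine test objects. For $\Speci(\cB)\in\sD$, the chain of natural equivalences
\begin{align*}
\Map_{\bH}\!\left(\Speci(\cB),\underset{k}\lim \J^k(E,M)\right)
&\simeq \underset{k}\lim\Map_{\bH}\!\left(\Speci(\cB),\J^k(E,M)\right)\\
&\simeq \underset{k}\lim\Hom_{\dgc}\!\bigl(\Ci(\J^k(E,M)),\cB\bigr)\\
&\simeq \Hom_{\dgc}\!\Bigl(\underset{k}\colim \Ci(\J^k(E,M)),\,\cB\Bigr)
\end{align*}
uses, in order, the universal property of the limit in $\bH$; the adjunction $\Gamma\dashv\Speci$ together with the full faithfulness of the inclusion of finite-dimensional manifolds into both $\Loc$ and $\bH$ (so that $\Map_{\bH}$ matches $\Map_{\Loc}$ onto $\J^k(E,M)$); and the continuity of $\Hom_{\dgc}(-,\cB)$. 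The last expression, in turn, is precisely the functor of points of $\Speci\bigl(\underset{k}\colim \Ci(\J^k(E,M))\bigr)$, which is a sheaf on $\sD$ because it is represented by a $\Ci$-scheme and the open-cover topology on $\sD$ is subcanonical (Remark \ref{rmk:subcan} together with Remark \ref{rmk:restrict} applied to the underlying site of $\Speci$).

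Third, I would invoke Corollary \ref{cor:colimfre} together with (the proof of) Lemma \ref{lem:frecol}: these together assert that the $\Ci$-scheme $\Speci\bigl(\underset{k}\colim \Ci(\J^k(E,M))\bigr)$ has underlying topological space equal to $\J^\infty(E,M)$ with its standard Fréchet topology and structure sheaf equal to the sheaf of smooth functions for the inherited Fréchet manifold structure. Locally, this follows since $\J^\infty(E,M)$ is locally diffeomorphic to $\R^\infty$, and the local description of smooth functions on $\R^\infty$ as those factoring locally through some finite $\R^k$ is exactly the sheafification in the proof of Lemma \ref{lem:frecol}. This identifies the limit with $S_{\mathsf{Conv}}\bigl(\J^\infty(\pi)\bigr)$ in $\bH$, and the canonical projection to $M$ is the one induced from the cone, completing the proof in $\bH/M$.

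The main obstacle I anticipate is step two, and specifically verifying that the inverse limit of the $\Ci$-schemes $\J^k(E,M)$ computed in $\bH$ agrees with the \emph{locally ringed space} limit $\Speci\bigl(\underset{k}\colim \Ci(\J^k(E,M))\bigr)$. The colimit algebra is not itself in $\sD$ (it is not finitely presented), so one cannot directly apply the Yoneda embedding; instead one must argue, as above, that the functor of points this algebra defines via $\Gamma\dashv\Speci$ is the correct sheaf, and that sheafification does not disturb the identification. A second, minor, point to be careful about is the compatibility of the resulting limit object with its projection to $M$, but this is formal because $M$ itself sits in the projective system as the $(-1)$-jet bundle or equivalently because each map $\J^k(E,M)\to M$ is part of the defining cone.
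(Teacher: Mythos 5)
Your proof is correct and follows essentially the same route as the paper, which simply cites Proposition \ref{prop:SYN} to identify each $\beta^k_*\alpha^*_k(\pi)$ with the finite jet bundle and Corollary \ref{cor:colimfre} (via Lemma \ref{lem:frecol}) to identify $\Speci$ of the colimit algebra with the Fr\'echet manifold $\J^\i(\pi)$. Your elaboration of how the $\bH$-limit of the representable sheaves is computed as $\Map_{\dgc}\bigl(\colim_k \Ci(\J^k(E,M)),-\bigr)$ is exactly the content the paper leaves implicit in the word ``immediately.''
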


\begin{proof}
This follows immediately from Corollary \ref{cor:colimfre} and Proposition \ref{prop:SYN}.
\end{proof} 
	
\begin{proposition}
Let $$\xymatrix{P\ar[r]^-{b} \ar[d]_-{a} \ar[r] & Z \ar[d]^-{g} \\
X \ar[r]^-{f} & Y}$$
be a pullback diagram in an infinity topos $\cX.$ Then there is a canonical equivalence of endofunctors of $\cX/Z$ 
$$b_*a^* \simeq g^*f_*.$$
\end{proposition}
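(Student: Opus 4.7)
The plan is to prove this as a formal consequence of the universal property of pullbacks together with adjoint/mate calculus, without using any special feature of $\bH$ beyond the fact that it is an $\infty$-topos.

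First I would establish the ``left'' Beck--Chevalley equivalence
$$a_! b^* \simeq f^* g_!$$
as functors $\cX/Z \to \cX/X$, where $a_!$ and $g_!$ denote composition with $a$ and $g$ respectively. This is immediate from the pullback pasting lemma: given $(W \to Z) \in \cX/Z$, one computes
$$a_! b^*(W \to Z) = (P \times_Z W \to P \to X)\quad\text{and}\quad f^* g_!(W \to Z) = (X \times_Y W \to X),$$
and these agree canonically because the outer rectangle in the pasting
$$\xymatrix{P \times_Z W \ar[r] \ar[d] & W \ar[d] \\ P \ar[d] \ar[r]^-b & Z \ar[d]^-g \\ X \ar[r]^-f & Y}$$
is a pullback, and the hypothesis $P \simeq X\times_Y Z$ identifies it with $X \times_Y W$. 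Note that the existence of all adjoints $a_! \dashv a^* \dashv a_*$, $b_! \dashv b^* \dashv b_*$, $f^* \dashv f_*$, $g^* \dashv g_*$ is automatic in the $\infty$-topos $\cX$ (since slice $\infty$-topoi are again $\infty$-topoi and étale geometric morphisms come with their left adjoint).

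Second, I would pass to right adjoints via the mate construction. The right adjoint of $f^* g_!$ is $g^* f_*$, and the right adjoint of $a_! b^*$ is $b_* a^*$. Given an equivalence $\alpha\colon f^* g_! \xrightarrow{\sim} a_! b^*$, its mate is the natural transformation
$$b_* a^* \xrightarrow{\eta\, b_* a^*} g^* f_* f^* g_! \, b_* a^* \xrightarrow{g^* f_* \alpha\, b_* a^*} g^* f_* a_! b^* b_* a^* \xrightarrow{g^* f_* a_! \epsilon\, a^*} g^* f_* a_! a^* \xrightarrow{g^* f_* \epsilon} g^* f_*,$$
(with suitable reindexing; I would write this more cleanly by invoking \cite[Proposition 6.1.1.1]{htt} or the general fact that mates of equivalences are equivalences). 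Since $\alpha$ is an equivalence, so is its mate, yielding the desired equivalence $b_* a^* \simeq g^* f_*$.

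There is no real obstacle here; the argument is entirely formal and the only care required is bookkeeping the source and target $\infty$-categories of the various functors and verifying that all adjoints in sight genuinely exist in the $\infty$-categorical sense. The main calculation — step one — is just the pullback pasting lemma, which is encoded in the very definition of $P$ as a pullback.
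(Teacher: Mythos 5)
Your proposal is correct and follows essentially the same route as the paper: establish the equivalence $a_!b^* \simeq f^*g_!$ object-wise via the pullback pasting lemma, then pass to right adjoints (the paper invokes uniqueness of adjoints where you spell out the mate construction, but this is the same step). The only cosmetic remark is that, as you correctly track, the resulting functors $b_*a^*$ and $g^*f_*$ go from $\cX/X$ to $\cX/Z$ rather than being endofunctors of $\cX/Z$ as the statement loosely says.
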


\begin{proof}
Let $w:W \to Z$ be an object of $\cX/Z.$
Consider the diagram of pullback squares
$$\xymatrix{P \times_{Z} W \ar[r]^-{\pi_2} \ar[d]_-{\pi_1} & W \ar[d]^-{w}\\
P \ar[r]^-{b} \ar[d]_-{a} & Z \ar[d]^-{g}\\
X \ar[r]_-{f} & Y}.$$
On one hand, $b^*w$ is $\pi_1:P \times_{Z} W \to P \in \cX/P,$ and hence
$a_!b^*w$ is just $a \circ \pi_1 \in \cX/X.$ On the other hand, $g_!w$ is $g \circ w \in \cX/Y,$ and, by pasting pullback diagrams, we see that $f^*g_!w$ is thus 
$X \times_{Y} W \to X.$ However, this is just the left vertical map in the outer pullback square (formed by composition), and hence is canonically equivalent to $a \circ \pi_1$ also. Hence, there is a canonical equivalence of functors
$$a_!b^*w \simeq f^*g_!.$$ Both of these functors are left adjoints, so there corresponding right adjoint must also be equivalent, and these are $b_*a^*$ and $g^*f_*$ respectively.
\end{proof}

\begin{lemma}\label{lem:wdw}
Suppose that $$\xymatrix{P\ar[r]^-{b} \ar[d]_-{a} \ar[r] & Z \ar[d]^-{g} \\
X \ar[r]^-{f} & Y}$$
is a pullback diagram in an $\i$-topos $\cX$ and
$$P \simeq \underset{k\in {\cK}_0}{\colim} P_k$$ is an expression of $P$ as a colimit over some small $\i$-category $\cK.$ For each $k \in \cK_0,$ consider the induced maps $$b_k:P_k \to Z$$ and $$a_k:P_k \to X.$$ Notice that for each such $k,$ there is a commutative square
$$\xymatrix{P_k\ar[r]^-{b_k} \ar[d]_-{a_k} \ar[r] & Z \ar[d]^-{g} \\
X \ar[r]^-{f} & Y}$$
(which is no longer necessarily a pullback diagram). Then there is a canonical equivalence of endofunctors of $\cX/X$
$$\underset{k \in \cK_0} \lim b^k_*a^*_k \simeq b^*a_*.$$
\end{lemma}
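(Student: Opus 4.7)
The plan is a direct computation using universality of colimits in an $\i$-topos together with standard slice adjunctions and the Yoneda lemma. Interpreting the statement as the identification $b_\ast a^\ast \simeq \lim_k (b_k)_\ast a_k^\ast$ of functors $\cX/X \to \cX/Z$ (matching the preceding proposition, whose proof treats the types in the same way), I would proceed as follows. Write $\iota_k : P_k \to P$ for the canonical maps into the colimit, so that $b_k = b \circ \iota_k$ and $a_k = a \circ \iota_k$; in particular $(\iota_k)^\ast a^\ast \simeq a_k^\ast$. Observe that the pullback hypothesis on the original square is not actually used in this lemma---only the colimit decomposition of $P$ and the induced factorizations of $b$ and $a$ play a role.

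I would test the proposed equivalence against arbitrary $v : V \to X$ in $\cX/X$ and $w : W \to Z$ in $\cX/Z$. Applying the adjunction $b^\ast \dashv b_\ast$ yields
$$\Map_{\cX/Z}\bigl(w, b_\ast a^\ast v\bigr) \simeq \Map_{\cX/P}\bigl(b^\ast w, a^\ast v\bigr).$$
Since $b^\ast w \simeq W \times_Z P$ and colimits in the $\i$-topos $\cX$ are universal,
$$b^\ast w \simeq \colim_k \bigl(W \times_Z P_k\bigr) \simeq \colim_k (\iota_k)_!(b_k)^\ast w$$
as objects of $\cX/P$, where I use that $(\iota_k)_!(b_k)^\ast w$ is precisely $W \times_Z P_k$ viewed as an object over $P$ via $\iota_k$. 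Mapping out of this colimit turns into a limit, and the adjunctions $(\iota_k)_! \dashv (\iota_k)^\ast$ and $(b_k)^\ast \dashv (b_k)_\ast$ together with $(\iota_k)^\ast a^\ast \simeq a_k^\ast$ give
$$\Map_{\cX/P}\bigl((\iota_k)_!(b_k)^\ast w, a^\ast v\bigr) \simeq \Map_{\cX/P_k}\bigl((b_k)^\ast w, a_k^\ast v\bigr) \simeq \Map_{\cX/Z}\bigl(w, (b_k)_\ast a_k^\ast v\bigr).$$

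Assembling these equivalences and pulling the limit outside the mapping space produces
$$\Map_{\cX/Z}\bigl(w, b_\ast a^\ast v\bigr) \simeq \lim_k \Map_{\cX/Z}\bigl(w, (b_k)_\ast a_k^\ast v\bigr) \simeq \Map_{\cX/Z}\bigl(w, \lim_k (b_k)_\ast a_k^\ast v\bigr),$$
naturally in both $w$ and $v$. By the Yoneda lemma applied in $\cX/Z$, this yields the desired natural equivalence $b_\ast a^\ast v \simeq \lim_k (b_k)_\ast a_k^\ast v$, hence an equivalence of functors. The only nontrivial ingredient is universality of colimits in an $\i$-topos; the remainder is formal manipulation of adjunctions. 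Accordingly there is no substantive obstacle---the mild subtlety is purely bookkeeping, namely keeping straight which slice $\i$-category each object lives in and recognizing $(\iota_k)_!(b_k)^\ast w$ as the pullback $W \times_Z P_k$ regarded as an object of $\cX/P$ via $\iota_k$.
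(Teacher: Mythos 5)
Your proposal is correct and is essentially the argument the paper gives: both hinge on universality of colimits to write $b^*w \simeq \colim_k (\iota_k)_!(b_k)^*w$ and then unwind the slice adjunctions, the only cosmetic difference being that the paper packages this as the identification $a_!b^* \simeq \colim_k (a_k)_!(b_k)^*$ of left adjoints followed by uniqueness of adjoints, whereas you run the same mapping-space computation directly and close with Yoneda. Your observation that the pullback hypothesis on the outer square is never used is also accurate.
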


\begin{proof}
Let $w:W \to Z \in \cX/Z$ be arbitrary. By the universality of colimits, we have a pullback diagram
$$\xymatrix{{\underset{k \in \cK_0} \colim \left(P_k \times_{Z} W\right)} \ar[r] \ar[d] & W \ar[d]^-{w}\\
P \ar[r]_-{b} & Z.}$$
Let $\mu$ be a colimiting cocone with components $\mu_k.$ Then we have 
$$b^*w \simeq \underset{k \in \cK_0} \colim \left(\mu_k\right)_!\left(b_k^* w\right).$$ 
Since $a_!$ preserves colimits,
\begin{eqnarray*}
a_!b^*w \simeq \underset{k \in \cK_0} \colim a_!\left(\mu_k\right)_!\left(b_k^* w\right) &\simeq & \underset{k \in \cK_0} \colim \left(a\circ \mu_k\right)_!\left(b_k^* w\right)\\
\end{eqnarray*}
So, for any $w$ in $\cX/W$ and $q:Q \to X$ in $\cX/X,$
\begin{eqnarray*}
\Map_{\cX/X}\left(a_!b^*w,q\right) & \simeq & \Map_{\cX/X}\left(\underset{k \in \cK_0} \colim \left(a_k\right)_!\left(b_k^* w\right),q\right)\\
& \simeq &\underset{k \in \cK_0} \lim  \Map_{\cX/X}\left(\left(a_k\right)_!\left(b_k^* w\right),q\right)\\
&\simeq & \underset{k \in \cK_0} \lim  \Map_{\cX/W}\left(w,b^k_*a_k^* q\right)\\
&\simeq & \Map_{\cX/W}\left(w,\underset{k \in \cK_0} \lim b^k_*a_k^* q\right).
\end{eqnarray*}
The result now follows by the uniqueness of adjoints.
\end{proof}

\begin{corollary}\label{cor:frechmfd}
Let $\pi:E \to M$ be a surjective submersion between finite dimensional manifolds. Then there is a canonical identification
$$\eta_M^*\eta^M_*\left(\pi\right) \simeq S_{\mathsf{Conv}}\left(\J^\i\left(\pi\right)\right) \to M,$$ where $\J^\i\left(\pi\right)$ is the infinite jet bundle regarded as a Frech\'et manifold.
\end{corollary}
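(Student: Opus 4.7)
\textbf{Proof proposal for Corollary \ref{cor:frechmfd}.} The plan is to assemble three ingredients already in the paper: the Beck--Chevalley formula for the pullback square defining $\bT^\infty M$, the colimit decomposition $\bT^\infty M \simeq \underset{k}{\colim}\, M_{(k)}$ from Proposition \ref{prop:bTloc}, and the identification of $S_{\mathsf{Conv}}(\J^\infty(\pi))$ with $\underset{k}{\lim}\, \beta^k_*\alpha_k^*(\pi)$ from Corollary \ref{cor:jetfrech}. The hope is that these interlock cleanly via Lemma \ref{lem:wdw}.

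First, I would record the pullback square
$$\xymatrix{\bT^\infty M \ar[r]^-{\beta} \ar[d]_-{\alpha} & M \ar[d]^-{\eta} \\ M \ar[r]_-{\eta} & M_{dR},}$$
which is nothing but the defining pullback of the formal disk bundle $\bT^\infty M = M \times_{M_{dR}} M$. The projections $\alpha,\beta: \bT^\infty M \to M$ are the two canonical maps. By the preceding Beck--Chevalley proposition applied to this pullback square, there is a canonical equivalence of functors $\bH/M \to \bH/M$,
$$\eta^* \eta_* \;\simeq\; \beta_*\alpha^*.$$
(Here $X = Z = M$ and $Y = M_{dR}$, so both sides are genuine endofunctors of $\bH/M$.)

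Next, by Proposition \ref{prop:bTloc}, $\bT^\infty M \simeq \underset{k}{\colim}\, M_{(k)}$ in $\bH$, with the maps $\alpha^k, \beta^k : M_{(k)} \to M$ forming compatible cocones for $\alpha$ and $\beta$ respectively. The pullback square above together with this colimit decomposition is precisely the input needed for Lemma \ref{lem:wdw}: taking $P = \bT^\infty M$, $P_k = M_{(k)}$, $X = Z = M$, $Y = M_{dR}$, $a = \alpha$, $b = \beta$, $f = g = \eta$, $a_k = \alpha^k$, $b_k = \beta^k$, the lemma yields
$$\beta_*\alpha^* \;\simeq\; \underset{k}{\lim}\, \beta^k_*\alpha_k^*$$
as endofunctors of $\bH/M$. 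Composing with the Beck--Chevalley identification above, we obtain
$$\eta^*\eta_*(\pi) \;\simeq\; \underset{k}{\lim}\, \beta^k_*\alpha_k^*(\pi)$$
for any $\pi \in \bH/M$.

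Finally, Corollary \ref{cor:jetfrech} identifies the right-hand side with $S_{\mathsf{Conv}}(\J^\infty(\pi))$, giving the desired equivalence
$$\eta_M^*\eta^M_*(\pi) \;\simeq\; S_{\mathsf{Conv}}(\J^\infty(\pi)) \to M.$$
The main potential obstacle is bookkeeping: making sure that the cocone $(\alpha^k, \beta^k)$ in Proposition \ref{prop:bTloc} is the same cocone appearing in Corollary \ref{cor:jetfrech} (i.e. the first and second projections $M_{(k)} \rightrightarrows M$ of the $k$-th infinitesimal neighborhood of the diagonal) and that Lemma \ref{lem:wdw} applies with the indexing variance as stated (the lemma is proven for an arbitrary small colimit in the top-left corner of a fixed pullback square, which is exactly our setting). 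Both of these are essentially tautological once unraveled, so no genuine extra work beyond citing the three referenced results should be required.
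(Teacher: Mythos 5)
Your proposal is correct and follows essentially the same route as the paper's own proof: the defining pullback square for $\bT^\i M$, Proposition \ref{prop:bTloc} to decompose it as $\underset{k}{\colim}\, M_{(k)}$, Lemma \ref{lem:wdw} to pass to the limit of $\beta^k_*\alpha_k^*$, and Corollary \ref{cor:jetfrech} to conclude. The only difference is that you make explicit the Beck--Chevalley identification $\eta^*\eta_* \simeq \beta_*\alpha^*$, which the paper leaves implicit; this is a harmless (indeed clarifying) expansion, not a different argument.
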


\begin{proof}
By definition of $T^\i M,$ we have a pullback diagram
$$\xymatrix{T^\i M \ar[d] \ar[r] & M \ar[d]^-{\eta_M}\\
M \ar[r]_-{\eta_M} & M_{dR}.}$$
By Proposition \ref{prop:bTloc}, we have
$$T^\i M \simeq \underset{k} \colim M_{(k)},$$
so we are in the situation of Lemma \ref{lem:wdw}.
Therefore 
$$\eta^*\eta_* \left(\pi\right) \simeq \underset{k} \lim b^k_*a_k^*\left(\pi\right).$$
So we are done by Corollary \ref{cor:jetfrech}.
\end{proof}

This justifies the following definition in general:

\begin{definition}
	Let $\pi:E \to M$ be an object of $\bH/M,$ and consider the unit map $$\eta:M \to M_{dR}.$$ Then \textbf{the infinite jet bundle of $\pi$} is the object $\eta^*\eta_*\left(\pi\right)$ in $\bH/M.$
	\end{definition}

\begin{remark}
Observe that we have an adjunction:
$$\bT^\i_M \dashv \J^\i.$$
\end{remark}

\subsubsection{Jet prolongations}
Given a fiber bundle $\pi:E \to M,$ and a section $\sigma,$ for any $0 \le k \le \i,$ there is its $k^{th}$-jet prolongation $j^k\left(\sigma\right)$ which is a section of $\J^k\left(\pi\right).$ This construction leads to a set-theoretic map $$j^k:\Gamma\left(\pi\right) \to \Gamma\left(\J^k\left(\pi\right)\right)$$ and one way wonder if this map is in a suitable sense smooth, or better, in topos-theoretic language, whether or not this is just the evaluation on the point of a map $$j^k:\Sec_M\left(\J^k\left(\pi\right)\right) \to \Sec_M\left(\J^k\left(\pi\right)\right)$$ in $\bH.$ We shall show that this is indeed the case.

We will firstly dispense with the case $k=\i,$ which will hold for an arbitrary $\pi:E \to M$ in $\bH.$

\begin{lemma}
	There is a canonical natural transformation $$J^\i:\Sec_M\left(\blank\right) \times M \Rightarrow \J^\i.$$
	\end{lemma}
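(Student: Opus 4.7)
\medskip

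\noindent\textbf{Proof plan.} The idea is to exhibit the natural transformation as an instance of a counit after using the factorization of the terminal map through the de Rham stack. Explicitly, the structural map $\pi_M\colon M\to *$ factors as
$$M \stackrel{\eta}{\longrightarrow} M_{dR} \stackrel{\pi_{M_{dR}}}{\longrightarrow} *,$$
so the induced geometric morphisms on slice $\i$-topoi satisfy $\pi_M^\ast \simeq \eta^\ast\circ \pi_{M_{dR}}^\ast$ and consequently $(\pi_M)_\ast \simeq (\pi_{M_{dR}})_\ast\circ \eta_\ast$. In particular, for any $\pi\colon E\to M$, the object $\Sec_M(\pi)\times M \to M$ is, by definition of the product in $\bH/M$, canonically equivalent to $\pi_M^\ast(\pi_M)_\ast(\pi)$, hence to $\eta^\ast\pi_{M_{dR}}^\ast (\pi_{M_{dR}})_\ast \eta_\ast(\pi).$

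The plan is to define $J^\infty$ as the image under $\eta^\ast$ of the counit $$\varepsilon\colon \pi_{M_{dR}}^\ast(\pi_{M_{dR}})_\ast \Rightarrow \mathrm{id}_{\bH/M_{dR}}$$
evaluated at $\eta_\ast(\pi)$. This yields a morphism
$$J^\infty_\pi:\ \eta^\ast \pi_{M_{dR}}^\ast (\pi_{M_{dR}})_\ast \eta_\ast(\pi) \ \longrightarrow\ \eta^\ast \eta_\ast(\pi)$$
in $\bH/M$, i.e.\ a morphism $\Sec_M(\pi)\times M \to \J^\infty(\pi)$. Naturality in $\pi$ is automatic since all three ingredients $\pi_M^\ast$, $(\pi_M)_\ast$, $\eta^\ast$, $\eta_\ast$ are functors and $\varepsilon$ is a natural transformation.

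The only nontrivial step is the identification $\Sec_M(\pi)\times M \simeq \pi_M^\ast(\pi_M)_\ast(\pi)$ as objects of $\bH/M$; this is immediate from the definition $\Sec_M(\pi):=(\pi_M)_\ast(\pi)$ together with the fact that for any $F\in\bH$, the pullback functor $\pi_M^\ast$ sends $F$ to $F\times M\to M$. Sanity check against the classical picture: when $\pi\colon E\to M$ is a fiber bundle between finite-dimensional manifolds, Corollary \ref{cor:frechmfd} identifies $\J^\infty(\pi)$ with the usual Fr\'echet jet bundle, and unwinding the unit/counit of $\pi_{M_{dR}}^\ast\dashv(\pi_{M_{dR}})_\ast$ at the generalized point $(\sigma,x)$ of $\Sec_M(\pi)\times M$ recovers exactly $j^\infty(\sigma)(x)$, justifying the notation $J^\infty$. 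I do not expect an obstacle beyond carefully keeping track of the two geometric morphisms, which is purely formal.
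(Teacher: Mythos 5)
Your proposal is correct and is essentially identical to the paper's own proof: both factor the terminal map of $M$ through $\eta:M\to M_{dR}$, identify $\Sec_M\left(\blank\right)\times M$ with $\eta^\ast s^\ast s_\ast \eta_\ast$ (your $s$ being $\pi_{M_{dR}}$), and obtain $J^\i$ by applying $\eta^\ast$ to the counit of $s^\ast\dashv s_\ast$ at $\eta_\ast\left(\pi\right)$. No gaps.
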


\begin{proof}
Consider the diagram
$$\xymatrix{E \ar[rd]^-{\pi} & &\\
	& M \ar[ld]_-{t} \ar[r]^-{\eta} & M_{dR} \ar[lld]^-{s}\\
	\ast & &}$$
Notice that $t=s \eta,$ so on one hand we have
\begin{eqnarray*}
	\Sec_M\left(\blank\right) \times M &\simeq& t^*t_*\\
	&\simeq&  \eta^*s^*s_*\eta_*.
	\end{eqnarray*}
On the other hand, we have that $\J^\i=\eta^*\eta_*,$ so the counit $$s^*s_* \Rightarrow id_{\bH/M_{dR}}$$ induces a canonical natural transformation $$J^\i:\Sec_M\left(\blank\right) \times M \Rightarrow \J^\i.$$
\end{proof}

Now for any $E \to M$ in $\bH/M,$ $\Sec_M\left(E\right)=t_*\left(E\right),$ and there is a canonical map
$$t_*\left(E\right) \to t^*t_*t^*\left(E\right)$$ induced by the unit map $$id \Rightarrow t_*t^*.$$ We can now consider the composite 
$$t_*\left(\pi\right) \to t_*t^*t_*\left(\pi\right) \stackrel{t_*J^\i}{\longlongrightarrow} t_*\eta^*\eta_*\left(E\right),$$ which we denote by $j^\i\left(E\right).$ Clearly, these maps assemble into a natural transformation $$j^\i:\Sec_M \Rightarrow \Sec_M \circ \J^\i.$$
\begin{definition}
The above transformation $j^\i$ is called \textbf{the $\i$-jet prolongation}
\end{definition}


Now, let $M$ be a manifold, but allow $E$ to be arbitrary. Consider the diagram
$$\xymatrix{E \ar[rd]^-{\pi} & & \\
	M_{\left(k\right)} \ar@<-0.5ex>[r]_-{\beta} \ar@<+0.5ex>[r]^-{\alpha} & M \ar[r]^-{t} & \ast.}$$
Notice that since $\ast$ is terminal, $t\circ \alpha \simeq t \circ \beta.$ Consequently
\begin{eqnarray*}
	\Gamma_M\left(\J^k\left(\pi\right)\right) &\simeq & t_* \beta_* \alpha^*\left(\pi\right)\\
	& \simeq & \left(t \beta\right) _* \alpha^*\left(\pi\right)\\
	& \simeq & \left(t \alpha\right)_* \alpha^* \left(\pi\right)\\
	& \simeq & t_* \alpha_* \alpha^*\left(\pi\right).
	\end{eqnarray*}
Denote by $\lambda$ the unit of the adjunction $\alpha^* \dashv \alpha_*.$ Then we have the canonical map
$$t_*\left(\lambda_\pi\right):t_*\left(\pi\right) \to t_* \alpha_* \alpha^*\left(\pi\right),$$
which can be identified with a map $$\Sec_M\left(\pi\right) \to \Sec_M\left(\J^k\left(\pi\right)\right),$$ which can easily be checked to be a natural transformation
$$j^k:\Sec_M \Rightarrow \Sec_M \circ \J^k.$$
\begin{definition}
	The above transformation $j^k$ is called \textbf{the $k$-jet prolongation}.
	\end{definition}


\subsection{Differential operators}

Notice that if $D:\J^\i E \to E$ is a morphism in $\bH/M,$ there is an induced map $\widehat{D}$ of stacks of sections, defined as the composite
$$\Sec_M\left(E\right) \stackrel{j^\i}{\longrightarrow} \Sec_M\left(\J^\i E\right) \stackrel{\Sec_M\left(D\right)}{\longlongrightarrow} \Sec_M\left(F\right).$$

\begin{definition}
	Given $E,F \in \bH/M,$ the \textbf{space of (non-linear) differential operators} from $\Sec_M\left(E\right)$ to $\Sec_M\left(F\right)$ is the essential image of
	$$\Map_{\bH/M}\left(\J^\i E , F\right) \stackrel{\Sec_M}{\longlongrightarrow} \Map_{\bH}\left(\Sec_M\left(\J^\i E\right) , \Sec_M\left(F\right)\right) \stackrel{\left(j^\i\right)^*}{\longlongrightarrow}  \Map_{\bH}\left(\Sec_M\left(E\right) , \Sec_M\left(F\right)\right).$$
	The space of \textbf{formal differential operators} is simply the space $\Map_{\bH/M}\left(\J^\i E , F\right).$
	A $\widehat{D}$ is a differential operator \textbf{of order $k$} if there is a factorization of $D$ as
	$$\J^\i\left(E\right) \to \J^k\left(E\right) \to \F.$$
		\end{definition}
	
	Note that since $\J^\i=\eta^*\eta_*,$ and $\eta^* \dashv \eta_*,$ $\J^\i$ inherits the structure of a comonad. Consider the comonadic adjunction
	$$\xymatrix{\bH/M \ar@<-0.5ex>[r]_-{C} & \mathbf{CoAlg}_\J^\i\left(\bH/M\right) \ar@<-0.5ex>[l]_-{U}}$$ between the $\i$-category of Eilenberg-Moore coalgebras for $\J^\i$ and $\bH/M,$ where $U$ is the forgetful functor, and $C$ is the cofree coalgebra functor, with $C \vdash U.$ Notice that for $X,Y \in \bH/M,$
	\begin{eqnarray*}
	\Map\left(CX,CY\right) &\simeq& \Map\left(UCX,Y\right)\\
	&\simeq & \Map\left(\J^\i X,Y\right),
	\end{eqnarray*}
	that is the space of $\J^\i$-coalgebra maps from $CX$ to $CY$ is the same as the space of formal differential operators from $X$ to $Y$.
	
	\begin{definition}
		The \textbf{$\i$-category of formal differential operators on M,} $\mathsf{FDiff_M},$ is the essential image of $C$ above.
		\end{definition}
	
	\begin{lemma}\label{lem:diffcomp}
		The composition of two differential operators is a differential operator.
		\end{lemma}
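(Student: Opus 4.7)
The plan is to show that the composition of two differential operators $\widehat{D}: \Sec_M(E) \to \Sec_M(F)$ and $\widehat{D'}: \Sec_M(F) \to \Sec_M(G)$, with underlying formal operators $D: \J^\i E \to F$ and $D': \J^\i F \to G$, is of the form $\widehat{\tilde{D}}$ for a suitable $\tilde{D}: \J^\i E \to G$. The natural candidate, motivated by the coKleisli composition for the comonad $(\J^\i, \epsilon, \delta)$ on $\bH/M$, is
$$\tilde{D} := D' \circ \J^\i(D) \circ \delta_E: \J^\i E \to G,$$
where $\delta: \J^\i \Rightarrow \J^\i \circ \J^\i$ is the comultiplication coming from the adjunction $\eta^* \dashv \eta_*$. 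It then remains to verify the equivalence $\widehat{D'} \circ \widehat{D} \simeq \widehat{\tilde{D}}$ in $\bH$.

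To isolate a single coherence identity, unwind both sides. By definition $\widehat{D} = \Sec_M(D) \circ j^\i_E$, and similarly for $\widehat{D'}$. Naturality of $j^\i: \Sec_M \Rightarrow \Sec_M \circ \J^\i$ applied to $D$ gives $j^\i_F \circ \Sec_M(D) \simeq \Sec_M(\J^\i D) \circ j^\i_{\J^\i E}$, so that
$$\widehat{D'} \circ \widehat{D} \simeq \Sec_M(D' \circ \J^\i D) \circ j^\i_{\J^\i E} \circ j^\i_E,$$
whereas functoriality of $\Sec_M$ yields $\widehat{\tilde{D}} \simeq \Sec_M(D' \circ \J^\i D) \circ \Sec_M(\delta_E) \circ j^\i_E$. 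The lemma therefore reduces to the coherence identity
$$j^\i_{\J^\i E} \circ j^\i_E \simeq \Sec_M(\delta_E) \circ j^\i_E \qquad (\ast)$$
asserting that the pair $(\Sec_M, j^\i)$ is a left $\J^\i$-comodule in the appropriate functor $\i$-category.

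To establish $(\ast)$, I would exploit the factorization $t = s \circ \eta$ used in the very definition of $j^\i$, which gives $\Sec_M = t_* \simeq s_* \eta_*$. Whenever a right adjoint $R$ factors through a sub-adjunction $L_2 \dashv R_2$ as $R \simeq R_1 \circ R_2$, the functor $R$ inherits a canonical structure of left comodule over the induced comonad $T_2 = L_2 R_2$, whose structure map is the whiskering of the unit of $L_2 \dashv R_2$ with $R_1$; the counit and comultiplication compatibilities follow formally from the triangle identities. Applied in our setting to $R_2 = \eta_*$ and $R_1 = s_*$, this reconstructs exactly the transformation $j^\i$ and gives $(\ast)$ together with the counit compatibility $\Sec_M(\epsilon_E) \circ j^\i_E \simeq \mathrm{id}$. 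Equivalently, this is the statement that $\widehat{(\blank)}$ is the action on morphisms of a functor from the coKleisli $\i$-category of $\J^\i$ into $\bH$ extending $\Sec_M$.

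The main obstacle is ensuring that $(\ast)$ holds coherently at the $\i$-categorical level, not just as a pointwise equivalence, so that the argument genuinely produces a functor on coKleisli rather than a homotopy-commutative square in isolation. This is circumvented by working abstractly with the $\i$-categorical theory of (co)monads and (co)Kleisli categories developed in Section \ref{sec:monad}, where the requisite tower of higher coherences is built into the adjunction data. As a sanity check, when $M$ is a finite-dimensional manifold, $(\ast)$ reduces via Corollary \ref{cor:jetfrech} to the familiar classical compatibility between iterated prolongation of sections and the standard comultiplication on the ordinary infinite jet bundle.
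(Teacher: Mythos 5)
Your proposal is correct and follows essentially the same route as the paper: the paper also defines the composite formal operator as the coKleisli composite $D'\circ \J^\i(D)\circ \mu_E$ coming from the comonad structure on $\J^\i=\eta^*\eta_*$ (i.e.\ composition of cofree coalgebras in $\CoAlg_{\J^\i}\left(\bH/M\right)$) and then asserts that a careful unwinding identifies the induced map on sections with $\widehat{D'}\circ\widehat{D}$. Your verification via naturality of $j^\i$ and the comodule identity $j^\i_{\J^\i E}\circ j^\i_E\simeq \Sec_M\left(\delta_E\right)\circ j^\i_E$, with $j^\i$ recognized as the whiskered unit of $\eta^*\dashv\eta_*$ through the factorization $t=s\circ\eta$, is precisely the unwinding the paper leaves implicit.
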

	\begin{proof}
		The follows by using the composition in the $\i$-category $\mathbf{CoAlg}_\J^\i\left(\bH/M\right).$ Suppose that $D:\J^\i\left(E\right) \to F$ and $D':\J^\i\left(F\right) \to G$ are formal differential operators. Define $D' \star D$ to be the following composition
	$$\J^\i\left(E\right) \stackrel{\mu_E}{\longlongrightarrow} \J^\i \J^\i\left(E\right) \stackrel{\J^\i\left(D\right)}{\longlongrightarrow} \J^\i\left(F\right) \stackrel{D'}{\longrightarrow} G,$$ where $\mu$ comes from the comonad structure on $\J^\i=\eta^*\eta_*.$ A careful unwinding of the definitions shows that the induced differential operator $$\Sec_M\left(E\right) \to \Sec_M\left(G\right)$$ is equivalent the composite $\hat{D'} \circ \hat{D}.$
	\end{proof}

Often, but certainly not always, a given differential operator $B$ has a unique formal differential operator $D$ inducing it. For example, suppose that $M$ is a manifold, and $E \to M$ is a surjective submersions onto $M,$ then this is a classical fact.

\begin{proposition}
	Let $E \to M$ be in $\bH/M$. All differential operator from $\Sec_M\left(E\right)$ to $\Sec_M\left(F\right),$ for all $F \to M$ in $\bH/M,$ uniquely determine a corresponding formal differential operator inducing them, (up to equivalence) if and only if $$J^\i_M:M \times \Sec_M\left(E\right) \to \J^\i E$$ is an epimorphism.
	\end{proposition}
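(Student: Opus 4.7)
The plan is to reformulate both conditions in the proposition as a statement about a single map of mapping spaces, and then invoke the standard characterization of epimorphisms in an $\i$-topos: namely, that a morphism $f : X \to Y$ in an $\i$-topos $\cE$ is an effective epimorphism if and only if for every $Z \in \cE$, the precomposition map $f^{\ast} : \Map_{\cE}(Y,Z) \to \Map_{\cE}(X,Z)$ is a $(-1)$-truncated morphism of spaces (i.e.\ a monomorphism). Apply this in $\bH/M$, which is an $\i$-topos because $\bH$ is.

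First I would transpose the right-hand side using the adjunction $\pi_M^{\ast} \dashv (\pi_M)_{\ast} = \Sec_M$, giving a natural equivalence
$$\Map_{\bH}\bigl(\Sec_M(E),\Sec_M(F)\bigr) \;\simeq\; \Map_{\bH/M}\bigl(M \times \Sec_M(E),\,F\bigr).$$
Under this equivalence, I claim the map
$$\Map_{\bH/M}(\J^\i E,F) \;\longrightarrow\; \Map_{\bH}(\Sec_M(E),\Sec_M(F)),\qquad D \longmapsto \Sec_M(D)\circ j^\i_E,$$
which defines differential operators out of formal ones, becomes precisely precomposition with $J^\i_E : M \times \Sec_M(E) \to \J^\i E$. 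To verify this, recall that by construction $j^\i_E = \Sec_M(J^\i_E)\circ \eta_{\Sec_M(E)}$, where $\eta$ is the unit of $\pi_M^{\ast} \dashv \Sec_M$. A routine adjunction calculation (using naturality of the counit and the triangle identities) then shows that the transpose of $\Sec_M(D)\circ \Sec_M(J^\i_E)\circ \eta_{\Sec_M(E)}$ in $\bH/M$ equals $D\circ J^\i_E$. Hence the relevant map is literally
$$\bigl(J^\i_E\bigr)^{\!\ast} : \Map_{\bH/M}(\J^\i E, F) \longrightarrow \Map_{\bH/M}\!\left(M \times \Sec_M(E),\,F\right).$$

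Once this identification is in place, the proposition becomes a tautology given the epimorphism-mono duality. Saying that every differential operator uniquely determines (up to equivalence) its formal lift, for every $F \in \bH/M$, is exactly the statement that $\bigl(J^\i_E\bigr)^{\!\ast}$ is a monomorphism of spaces for every $F$. By the characterization quoted at the outset, this is equivalent to $J^\i_E$ being an effective epimorphism in $\bH/M$, and effective epimorphisms in an $\i$-topos coincide with epimorphisms in the sense used in Section \ref{sec:group}. This gives both implications at once.

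The only mildly delicate step is the adjoint identification $j^\i_E \leftrightarrow J^\i_E$; the rest is formal. I would therefore concentrate care on writing out the triangle-identity argument cleanly (or equivalently, observing that for any $X \in \bH$ the unit $\eta_X : X \to \Sec_M(\pi_M^{\ast} X)$ is by definition the transpose of $\mathrm{id}_{\pi_M^{\ast} X}$, so that the transpose of any $\Sec_M(g)\circ \eta_X$ is $g$ itself). With that single naturality check, the proposition reduces to invoking the standard topos-theoretic fact, and no further technical machinery is required.
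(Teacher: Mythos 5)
Your reduction is, up to the last step, the same as the paper's: you transpose along $\pi_M^{\ast} \dashv \Sec_M$ and check (via the triangle identities, or equivalently via the paper's explicit diagram with the counit $\epsilon$ of $t^{\ast}\dashv \Sec_M$) that the assignment $D \mapsto \widehat{D}$ corresponds to precomposition with $J^\i_M$. The gap is in the lemma you then invoke. It is \emph{not} true in an $\i$-topos that a morphism is an effective epimorphism if and only if precomposition into every object is a $\left(-1\right)$-truncated map of mapping spaces. The maps with the latter property are the epimorphisms in the $\i$-categorical sense (equivalently, those whose codiagonal $Y \sqcup_X Y \to Y$ is an equivalence), and this class is strictly smaller than the class of effective epimorphisms. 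Already in $\Spc$, the map $\ast \to \cB G$ for a nontrivial discrete group $G$ is an effective epimorphism, yet precomposition $\Map\left(\cB G, Z\right) \to \Map\left(\ast, Z\right)$ is not a monomorphism: taking $Z=\cB G$, the fiber over the base point is the space of pointed self-maps of $\cB G$, whose $\pi_0$ already contains both the identity and the constant map. So if ``epimorphism'' in the statement is read as effective epimorphism --- which is how the word is used elsewhere in the paper, e.g.\ in Sections \ref{sec:group} and \ref{sec:monad} --- your argument for the ``if'' direction fails, and the quoted characterization cannot be repaired.

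What the paper actually does at this point is weaker and essentially tautological: having identified $\widehat{D}$ with $D \circ J^\i_M$, it takes the resulting injectivity condition itself as the meaning of ``epimorphism'' (``by definition''), i.e.\ it is implicitly using the $\i$-categorical notion rather than effective epimorphisms. To make your write-up correct, either state the conclusion with that notion (precomposition a monomorphism for all targets, equivalently the codiagonal an equivalence), or, if effective epimorphisms are intended, supply additional input --- this is exactly why Proposition \ref{prop:surjdiffop} does not deduce uniqueness from surjectivity of $J^\i_M$ on points alone, but adds the hypothesis that the relevant objects are concrete sheaves. A smaller mismatch worth fixing as well: ``uniquely determines the formal operator up to equivalence'' is injectivity on $\pi_0$ of mapping spaces, which is strictly weaker than the precomposition map being $\left(-1\right)$-truncated, so your translation of the uniqueness condition overshoots slightly even before the epimorphism step.
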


\begin{proof}
Suppose that $$D,D':\J^\i E \to F$$ are two formal differential operators that $\widehat{D} \simeq \widehat{D'}.$ Notice that $$\widehat{D} \simeq \Sec_M\left(D \circ \phi\right),$$ and similarly for $D'.$ Moreover, we have a commutative diagram
$$\xymatrix{M \times \Sec_M\left(E\right) \ar[r]^-{J^\i_M} \ar[rd]_-{id_M \times j^\i} & \J^\i E \ar[r]^{D} & F \\
	& M \times \Sec_M\left(\J^\i E\right) \ar[u]^-{\epsilon_{\J^\i E}} \ar[r]_-{\widehat{D}} & M \times \Sec_M\left(F\right) \ar[u]_-{\epsilon_F},}$$
where $\epsilon$ is the counit of the adjunction $t^* \dashv t_*=\Sec_M$ (and similarly for $D'$). It follows that $D \circ J^\i_M$ is completely determined by $\widehat{D}.$ This completely determines $D$ for all differential operators $D$ if and only if $J^\i_M$ is an epimorphism (by definition).
\end{proof}

\begin{proposition}\label{prop:surjdiffop}
	If $E \to M$ is a surjective submersion of manifolds, and let $F \to M$ be such that $F$ is a concrete sheaf (aka diffeological space), then any differential operator 
	$$B:\Sec_M\left(E\right) \to \Sec_M\left(F\right)$$ is induced by a unique formal differential operator
	$$D:\J^\i E \to F.$$
	\end{proposition}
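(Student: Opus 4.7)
The plan is to combine the preceding proposition with the defining property of concrete sheaves. By the preceding proposition, the obstruction to uniqueness of a lift $D$ is exactly the failure of $J^\i_M : M \times \Sec_M(E) \to \J^\i E$ to be an epimorphism in $\bH$. So the task reduces to showing that any two formal operators $D, D' : \J^\i E \to F$ with $D \circ J^\i_M \simeq D' \circ J^\i_M$ satisfy $D \simeq D'$. Since $J^\i_M$ need not be a genuine epimorphism, this cannot follow formally; instead, I would exploit the much weaker condition that $J^\i_M$ is \emph{surjective on set-theoretic points}, which in combination with the concreteness of $F$ suffices.

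First I would invoke Corollary \ref{cor:frechmfd} to identify $\J^\i E$ with the classical infinite jet bundle $\J^\i(\pi)$ equipped with its Fr\'echet manifold structure, realized inside $\Ci$-schemes via $S_{\mathsf{Conv}}$. In particular $\J^\i E$ is a concrete sheaf, and its set of points $|\J^\i E|$ is the ordinary set of $\i$-jets $j^\i_x \sigma$, where $x \in M$ and $\sigma$ is a local section of $\pi$ defined in some neighborhood of $x$. Because $F$ is a concrete sheaf, any morphism with codomain $F$ (out of a concrete source) is determined by the underlying set-theoretic map on points; so to show $D \simeq D'$, it is enough to check that the underlying maps $|\J^\i E| \to |F|$ agree.

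Next I would show the key technical point: every $\i$-jet $j^\i_x \sigma \in |\J^\i E|$ lies in the image of the underlying map of sets induced by $J^\i_M$, i.e. is equal to $j^\i_x \tilde\sigma$ for some \emph{global} section $\tilde\sigma \in \Sec_M(E)$. Because $\pi$ is a surjective submersion, near $x$ the bundle $E$ is fibered-diffeomorphic to a trivial submersion $U \times \R^k \to U$, so Borel's theorem produces a local section of $\pi$ over a neighborhood $U$ of $x$ realizing any prescribed $\i$-jet at $x$; a partition of unity subordinate to $\{U, M \setminus \{x\}\}$ together with a bump-function gluing against an arbitrary fixed section on the complement then extends this local section to a global one with the same $\i$-jet at $x$. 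Once this is done, $D \circ J^\i_M \simeq D' \circ J^\i_M$ forces $D$ and $D'$ to coincide at every point of $|\J^\i E|$, and concreteness of $F$ upgrades this pointwise equality to $D \simeq D'$.

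The main obstacle is the global extension of local sections in the previous paragraph: if $\pi$ admits no global section at all, one cannot simply glue against an existing $\tau$. I expect this to be handled either by an additional (tacit) hypothesis that global sections exist on a fixed open neighborhood large enough to reach every point (which is automatic for fiber bundles and covers all intended applications), or by a local-to-global reduction: the statement $D \simeq D'$ can be checked locally on $M$ using that open inclusions $U \hookrightarrow M$ are formally \'etale (Proposition \ref{prop:etaleisfet}), so one restricts to an open $U$ over which $E|_U$ is trivial, proves $D|_U \simeq D'|_U$ by the argument above (where global sections of $E|_U \to U$ are plentiful), and reassembles via the sheaf condition. Either route yields the conclusion.
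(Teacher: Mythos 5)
Your proof is essentially the paper's own: the paper likewise reduces the statement to the fact that $\J^\i E$ and $F$ are concrete sheaves, so a formal operator $D$ is determined by its underlying map $D\left(\ast\right)$ on points, and then concludes $D=D'$ from the surjectivity of $J^\i_M\left(\ast\right)$. The only difference is that the paper simply asserts that surjectivity (``each point of $\J^\i E$ is an infinite jet class of a section of $E$ at a point'') with none of your Borel-theorem/bump-function elaboration, and it does not address the issue you raise about realizing jets by \emph{global} sections --- a caveat worth keeping in mind, since your fallback of restricting to a trivializing open $U$ would not by itself help: the hypothesis $D\circ J^\i_M\simeq D'\circ J^\i_M$ only gives agreement on jets of restrictions of global sections of $E\to M$, not on jets of arbitrary sections of $E|_U\to U$.
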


\begin{proof}
Each point of $\J^\i E$ is an infinite jet class of a section of $E$ at a point, so it follows that $\phi$ is surjective on points, or, more precisely, that $J^\i_M\left(\ast\right)$ is surjective. However, both $\J^\i E$ and $F,$ are concrete sheaves, so any differential operator (i.e. any map) $D:\J^\i E \to F$ is uniquely determined by
$$D\left(\ast\right):\J^\i E\left(\ast\right) \to F\left(\ast\right),$$ hence $D=D'$ if and only if $$D\left(\ast\right)=D'\left(\ast\right).$$ Now suppose that $D \circ J^\i_M = D' \circ J^\i_M,$ then $$D\left(\ast\right) \circ J^\i_M\left(\ast\right)=D'\left(\ast\right) \circ J^\i_M\left(\ast\right),$$
and since $J^\i_M\left(\ast\right)$ is an epimorphism of sets, this is implies that $D\left(\ast\right) =D'\left(\ast\right),$ and hence $D=D'.$
\end{proof}

\begin{corollary}\label{cor:diffokpoints}
	If $E \to M$ is a surjective submersion of manifolds, and let $F \to M$ be such that $F$ is a concrete sheaf (aka diffeological space), then any differential operator 
	$$B:\Sec_M\left(E\right) \to \Sec_M\left(F\right)$$ is uniquely determined by its value on the point, i.e. the map of sets
	$$B\left(\ast\right):\Gamma_M\left(E\right) \to \Gamma_M\left(F\right).$$
	\end{corollary}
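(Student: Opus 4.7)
My plan is to bootstrap directly off Proposition \ref{prop:surjdiffop} together with the concreteness hypothesis. Given a differential operator $B:\Sec_M(E)\to \Sec_M(F)$, Proposition \ref{prop:surjdiffop} produces a unique formal differential operator $D:\J^\i E\to F$ with $\widehat D=B$. So the content of the corollary reduces to showing that the map of stacks $D$ is determined by its value $D(\ast)$ on the point, and that $D(\ast)$ in turn is determined by $B(\ast)$.

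For the first point I would invoke that both $\J^\i E$ and $F$ are concrete sheaves. Concreteness of $F$ is hypothesized, and concreteness of $\J^\i E$ follows from Corollary \ref{cor:jetfrech} (or Corollary \ref{cor:frechmfd}), which identifies $\J^\i E$ with $S_{\mathsf{Conv}}(\J^\i(\pi))$ for the Fréchet-manifold infinite jet bundle of the surjective submersion $\pi:E\to M$; Fréchet manifolds embed into diffeological spaces and hence into concrete sheaves. Consequently, any two formal differential operators $D_1,D_2:\J^\i E\to F$ agreeing on $\ast$ are equal.

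For the second point, I would use exactly the commutative square appearing in the proof of Proposition \ref{prop:surjdiffop}: evaluating on the point,
\[
B(\ast) \;=\; \widehat D(\ast) \;=\; \varepsilon_F(\ast)\circ \Sec_M(D)(\ast)\circ (\mathrm{id}_M\times j^\i)(\ast) \;=\; D(\ast)\circ J^\i_M(\ast),
\]
where we view a section $\sigma\in\Gamma_M(E)$ evaluated at $x\in M$ as the jet $j^\i_x\sigma\in \J^\i E$. Since $\pi:E\to M$ is a surjective submersion of finite-dimensional manifolds, local sections extend by bump-function surgery to global sections realizing any prescribed jet at a point, so the map $J^\i_M(\ast):M\times\Gamma_M(E)\to \J^\i E$ is surjective on underlying sets. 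Hence $D(\ast)$ is uniquely pinned down by $B(\ast)$.

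Combining, if $B_1,B_2:\Sec_M(E)\to\Sec_M(F)$ are two differential operators with $B_1(\ast)=B_2(\ast)$, then the associated formal differential operators satisfy $D_1(\ast)\circ J^\i_M(\ast)=D_2(\ast)\circ J^\i_M(\ast)$; surjectivity of $J^\i_M(\ast)$ forces $D_1(\ast)=D_2(\ast)$; concreteness of $\J^\i E$ and $F$ forces $D_1=D_2$; and uniqueness in Proposition \ref{prop:surjdiffop} forces $B_1=B_2$. I expect no serious obstacle here — the only subtle ingredient is verifying that $\J^\i E$ is concrete, which is exactly what Corollary \ref{cor:jetfrech} delivers, so the corollary is essentially a formal consequence of machinery already set up.
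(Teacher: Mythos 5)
Your argument is correct and is essentially the paper's own proof: the paper disposes of this corollary in one line by noting that $B\left(\ast\right)$ determines $D\left(\ast\right)$ via the surjectivity of $J^\i_M\left(\ast\right)$ established in the proof of Proposition \ref{prop:surjdiffop}, and then concreteness of $\J^\i E$ and $F$ pins down $D$ and hence $B$. Your only addition is spelling out why $\J^\i E$ is concrete (via Corollary \ref{cor:jetfrech}), which the paper's proof of Proposition \ref{prop:surjdiffop} simply asserts.
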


\begin{proof}
	This follows immediately from the proof of Proposition \ref{prop:surjdiffop} since $B\left(\ast\right)$ uniquely determines $D\left(\ast\right)$.
	\end{proof}

\section{Monadic and comonadic descent}\label{sec:monad}
Let $\sC$ be an $\i$-category. Recall that a \emph{monad} on $\sC$ is a monoid object in the monoidal $\i$-category $\Fun\left(\sC,\sC\right),$ with monoidal product given by composition.

Recall that given an adjunction
$$\xymatrix{\sD \ar@<-0.5ex>[r]_-{R} & \sC \ar@<-0.5ex>[l]_-{L},}$$
	it produces a monad $T:=RL$ on the category $\sC,$ and a comonad $C:=LR$ on $\sD.$ Given any object $X \in \sD,$ $RX$ carries the canonical structure of $T$-algebra. This produces a canonical functor
	$$k:\sD \to \Alg_{T}\left(\sC\right)$$ which commutes over $\sC.$ The adjunction is called \emph{monadic} if $k$ is an equivalence of $\i$-categories. Dually, given an object $Y \in \sC,$ the object $LY$ carries the canonical structure of a $C$-coalgebra, and the adjunction is called \emph{comonadic} if the induced functor
	$$\sC \to \CoAlg_{C}\left(\sD\right)$$ is an equivalence.
	
	We have the following theorem characterizing when this happens:

\begin{theorem}[$\i$-Categorical Barr-Beck Theorem]\label{thm:BarrBeck} \cite[Theorem 4.7.3.5]{LurieHA}
An adjunction 
$$\xymatrix{\sC \ar@<-0.5ex>[r]_-{G} & \sD \ar@<-0.5ex>[l]_-{F}}$$
is monadic, if and only if

\begin{itemize}
\item[i)] The functor $G:\sD \to \sC$ is conservative
\item[ii)] If $$V^\bullet:\Delta^{op} \to \sD$$ is a simplicial object of $\sD$ which is $G$-split, then a colimit for $V^\bullet$ exists and $G$ preserves it.
\end{itemize}
\end{theorem}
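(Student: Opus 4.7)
The plan is to verify both directions of the equivalence, with the forward direction being essentially formal and the reverse direction requiring the construction of an inverse to the comparison functor via the bar construction.

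First, for the easy direction, suppose the adjunction is monadic, so $\sD \simeq \Alg_T(\sC)$ where $T = GF$, and $G$ factors as the forgetful functor $U:\Alg_T(\sC) \to \sC$. I would verify directly that $U$ is conservative because a morphism of $T$-algebras $f:(X,\alpha)\to(Y,\beta)$ is by definition a morphism in $\sC$ compatible with the algebra structures, and being an equivalence is detected on the underlying morphism in $\sC$. For condition (ii), given a $U$-split simplicial object in $\Alg_T(\sC)$, its colimit in $\sC$ inherits a canonical $T$-algebra structure (since $T$, as a left adjoint composed with a right adjoint, applied to a $U$-split simplicial object, still admits the splitting after pre- and post-composition, so the structure map assembles coherently), and this colimit is preserved by $U$ on the nose.

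For the harder direction, assume $G$ is conservative and preserves $G$-split geometric realizations. I would construct a candidate inverse $\Phi:\Alg_T(\sC) \to \sD$ to the canonical comparison functor $k:\sD \to \Alg_T(\sC)$ via the bar resolution. Given a $T$-algebra $(X,\alpha:TX\to X)$, form the simplicial object $B_\bullet(F,T,X) \in \sD^{\Delta^{op}}$ with $B_n = FT^n X$ and face/degeneracy maps built from the multiplication of $T$, the counit $FG \to \id$, and the algebra structure $\alpha$. The crucial observation is that $G \circ B_\bullet(F,T,X)$ is a split simplicial object in $\sC$: the splitting is provided by the unit $\eta:\id \to T = GF$ together with $\alpha$, giving the extra degeneracies on the augmented object $T^{n+1}X \to \cdots \to X$. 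By hypothesis (ii), the colimit $|B_\bullet(F,T,X)|$ exists in $\sD$, and $G$ preserves it; define $\Phi(X,\alpha) := |B_\bullet(F,T,X)|$.

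The main step is then to show $k \circ \Phi \simeq \id_{\Alg_T(\sC)}$ and $\Phi \circ k \simeq \id_\sD$. For the first, note that $G(\Phi(X,\alpha)) \simeq |G B_\bullet(F,T,X)| \simeq X$ by the splitting (the split augmentation identifies the colimit with the augmentation), and one checks the induced $T$-algebra structure on $X$ agrees with $\alpha$ by naturality of the splitting. For the second, for any $Y \in \sD$, the simplicial object $B_\bullet(F,T,GY)$ becomes split after applying $G$ via the same mechanism but using the counit at $Y$; since $G$ is conservative and preserves the realization, it suffices to observe that $G$ sends the canonical comparison $\Phi(k(Y)) \to Y$ to an equivalence, which again follows from the augmented split simplicial object argument.

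The principal obstacle is carrying out the bar construction rigorously in the $\infty$-categorical setting, since writing down simplicial objects with all the required coherence is nontrivial: one must produce $B_\bullet(F,T,X)$ as a genuine functor $\Delta^{op}\to\sD$, which in practice requires working with the $\infty$-operadic structure encoding the $T$-algebra $(X,\alpha)$ and extracting the two-sided bar construction as in \cite[Section 4.7.2]{LurieHA}. Once this coherence machinery is in place, the splittings and conservativity arguments are straightforward, but bypassing the explicit combinatorial verification in favor of an abstract construction is what makes the $\infty$-categorical version substantially more delicate than its classical counterpart.
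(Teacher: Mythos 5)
The paper does not prove this statement at all: it is imported verbatim from Lurie \cite[Theorem 4.7.3.5]{LurieHA}, so there is no internal argument to compare against. Your sketch is essentially the standard proof underlying Lurie's theorem---the comparison functor, the two-sided bar resolution $B_\bullet(F,T,X)$ whose image under $G$ is split, existence and preservation of its realization by hypothesis (ii), and conservativity of $G$ to upgrade the comparison maps to equivalences---and the points you flag as delicate (constructing $B_\bullet(F,T,X)$ as a coherent simplicial object, identifying the induced $T$-algebra structure on the realization with $\alpha$, and, in the easy direction, showing that the forgetful functor actually creates $G$-split realizations rather than merely ``assembling'' a structure map) are precisely the ones handled by the machinery of \cite[Section 4.7]{LurieHA}, which is the appropriate level of detail for a result the paper itself treats as a black box.
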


\begin{definition}
	For a monad $T$ on an $\i$-category $\sC,$ the \textbf{Kleisli category} of $T$ is the full subcategory $\Kl\left(T\right)$ of $\Alg_T\left(\sC\right)$ on the free $T$-algebras, i.e. the essential image of the left adjoint $F$ to the forgetful functor $$U:\Alg_T\left(\sC\right) \to \sC.$$ The objects of $\Kl\left(T\right)$ may identified with the objects of $\sC,$ and under this identification the morphisms $X \to Y$ correspond to morphism $X \to TY$ in $\sC.$ The composition of two such morphisms $$f:X \to TY$$ and $$g:Y \to TZ$$ is given as the composition
	$$X \stackrel{f}{\longrightarrow} TY \stackrel{Tg}{\longrightarrow} T^2 Z \stackrel{\mu_Z}{\longrightarrow} TZ,$$ where $$\mu:T^2 \to T$$ is the monad product.
	
	Note that since the essential image of $F$ factors through $\Kl\left(T\right),$ there is an induced adjunction 
	$$\xymatrix@C=2cm{\Kl\left(T\right) \ar@<-0.5ex>[r]_-{U_{\Kl}} & \sC.  \ar@<-0.5ex>[l]_-{F_{\Kl}}}$$
	We make an analogous (dual) definition of $\Kl\left(C\right)$ is the case of a comonad, where the morphisms $X \to Y$ correspond to morphism $CX \to Y$ in the underlying $\i$-category.
\end{definition}

The following lemma gives as useful description of the $\i$-category of algebras for a monad in terms of its Kleisli category:

\begin{lemma}\label{lem:AlgKL}
	Given a monad $T$ on an $\i$-category $\sC,$ there is a pullback diagram of $\i$-categories	$$\xymatrix{\Alg_T\left(\sC\right) \ar@{^(_->}[r] \ar[d] & \Psh\left(\Kl\left(T\right)\right) \ar[d]^-{\left(F_\Kl\right)^\ast} \\
		\sC \ar@{^(_->}[r]^-{y} & \Psh\left(\sC\right).}$$
	\end{lemma}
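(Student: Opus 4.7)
The plan is to construct the top horizontal fully faithful functor and then verify the universal property of the pullback. Define $\Phi:\Alg_T(\sC) \to \Psh(\Kl(T))$ by viewing $\Kl(T) \hookrightarrow \Alg_T(\sC)$ as the full subcategory of free $T$-algebras and composing the Yoneda embedding of $\Alg_T(\sC)$ with restriction along this inclusion; concretely, $\Phi(X)(Y) = \Map_{\Alg_T(\sC)}(F_{\Kl}(Y),X)$. Commutativity of the square is formal: by the free--forgetful adjunction $F_{\Kl} \dashv U_{\Kl}$, the restriction along $F_{\Kl}^{op}$ of $\Phi(X)$ sends $C \in \sC$ to $\Map_{\Alg_T}(F(C),X) \simeq \Map_\sC(C,U(X))$, which is precisely $y(U(X))$.

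The next step is fully faithfulness of $\Phi$, which reduces to showing $\Kl(T)$ is dense in $\Alg_T(\sC)$. For this I would invoke the $\infty$-categorical Barr--Beck theorem (Theorem \ref{thm:BarrBeck}): every $X \in \Alg_T(\sC)$ admits a canonical $U$-split simplicial resolution, the bar resolution $\mathrm{Bar}_\bullet(X)$, which is a simplicial object in $\Kl(T) \subseteq \Alg_T(\sC)$ whose colimit in $\Alg_T(\sC)$ is $X$. Given this, for any $X,Y \in \Alg_T(\sC)$,
$$\Map_{\Alg_T}(X,Y) \simeq \lim_{[n] \in \Delta} \Map_{\Alg_T}(\mathrm{Bar}_n(X), Y) \simeq \lim_{[n] \in \Delta} \Phi(Y)(\mathrm{Bar}_n(X)),$$
which is exactly $\Map_{\Psh(\Kl(T))}(\Phi(X), \Phi(Y))$ after recognising the left-hand presheaf as a colimit of representables along the bar resolution.

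For the pullback property, suppose we are given a presheaf $\phi \in \Psh(\Kl(T))$ together with an object $C \in \sC$ and an equivalence $(F_{\Kl})^{\ast}\phi \simeq y(C)$ in $\Psh(\sC)$. I need to show that the space of lifts of $(\phi,C)$ to $\Alg_T(\sC)$ is contractible. The key observation is that an extension of the representable $\Map_{\sC}(-,C)$ from $\sC^{op}$ to $\Kl(T)^{op}$ is equivalent data to a $T$-algebra structure on $C$: unwinding, the image of the identity morphism $TC \to TC$ in $\Kl(T)$ under such an extension yields a map $TC \to C$ in $\sC$, while functoriality of $\phi$ on morphisms of the form $T^2 Y \to T^2 Y$ and $Y \to Y$ encodes associativity and unitality coherently in $\Delta^{op}$. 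I would make this rigorous by identifying $\Kl(T)^{op}$ as a suitable (op)lax colimit / cocartesian fibration associated to $T$, so that sections extending $y(C)$ are computed as mapping spaces in a homotopy coherent manner.

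The main obstacle is the final step: rather than arguing by hand that such extensions classify algebra structures, the cleanest route is to show that both $\Alg_T(\sC)$ and the pullback $\Psh(\Kl(T)) \times_{\Psh(\sC)} \sC$ corepresent the same functor on $\wCati$. For any $\sE$, a functor into the pullback is a pair of a functor $\sE \to \sC$ and a functor $\sE \to \Psh(\Kl(T))$ with a compatibility, but the latter by Yoneda amounts to coherent factorisations $y(F(-)) \Rightarrow y(U(-))$ compatible with the Kleisli morphisms, which is exactly the data of a $T$-algebra structure on each object $F(e)$. Carrying out this identification coherently---i.e.\ upgrading the 1-categorical Kleisli correspondence to an equivalence of $\infty$-categories---is the technical heart of the argument; it can be handled by modelling $\Alg_T(\sC)$ as cocartesian sections of the bar construction fibration and matching this with the Cartesian fibration over $\sC$ classified by the restriction functor $(F_{\Kl})^{\ast}$.
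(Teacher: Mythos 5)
Your construction of the comparison functor, the commutativity of the square, and the fully faithfulness argument via the bar resolution are essentially sound (the last is a legitimate alternative to the paper's route: the bar resolution becomes split after applying the forgetful functor, so its realization is absolute and is preserved by the corepresentables $\Map_{\Alg_T(\sC)}(F_\Kl(C),-)$, which gives density of $\Kl(T)$). The genuine gap is the pullback property itself, which you explicitly defer: you never show that a presheaf $\phi \in \Psh(\Kl(T))$ with $(F_\Kl)^{\ast}\phi \simeq y(C)$ arises from an essentially unique $T$-algebra structure on $C$, i.e. that the comparison functor $\Alg_T(\sC) \to \sC \times_{\Psh(\sC)} \Psh(\Kl(T))$ is an equivalence. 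Your first sketch (reading a structure map $TC \to C$ off the Kleisli morphism and asserting that functoriality "encodes associativity and unitality coherently in $\Delta^{op}$") is precisely the point where an $\i$-categorical argument must produce an infinite tower of coherences, and no mechanism for doing so is given. Your second sketch does not typecheck as stated: $(F_\Kl)^{\ast}$ is a functor between presheaf $\i$-categories, not something classifying a Cartesian fibration over $\sC$, and the "bar construction fibration" whose cocartesian sections are supposed to model $\Alg_T(\sC)$ is neither defined nor is that modelling established. So the technical heart of the lemma is missing from the proposal.

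The paper closes exactly this gap with a short monadicity argument one level up. The adjunction $(F_\Kl)_! \dashv (F_\Kl)^{\ast}$ between presheaf $\i$-categories has $(F_\Kl)^{\ast}$ conservative (because $F_\Kl$ is essentially surjective) and colimit-preserving (because it admits the further right adjoint $(F_\Kl)_{\ast}$), so by the Barr--Beck theorem it is monadic; the induced monad is $T_! \simeq (U_\Kl)_! \circ (F_\Kl)_!$, the unique colimit-preserving extension of $T$ characterized by $T_! \circ y \simeq y \circ T$, giving an equivalence $\Psh(\Kl(T)) \simeq \Alg_{T_!}(\Psh(\sC))$. The Yoneda embedding then induces a fully faithful $\Alg_T(\sC) \hookrightarrow \Alg_{T_!}(\Psh(\sC))$, and under the equivalence its essential image consists exactly of the presheaves on $\Kl(T)$ whose restriction along $F_\Kl$ is representable, which is the pullback statement. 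If you want to salvage your outline, replacing your final step with this presheaf-level Barr--Beck argument is the cleanest way to obtain the coherent Kleisli correspondence you were reaching for, with no by-hand coherence bookkeeping.
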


\begin{proof}
	Since $\Kl\left(T\right)$ is locally small, the above pullback remains unchanged under change to a larger universe. Make this change if necessary, so that $\Kl\left(T\right)$ is essentially small with respect to this universe. Consider the functor $$\left(F_\Kl\right)^*:\Psh\left(\Kl\left(T\right)\right) \to \Psh\left(\sC\right).$$ It has a left adjoint $$\left(F_\Kl\right)_!=\Lan_{F_\Kl}\left(\blank\right)$$ and a right adjoint
	$$\left(F_\Kl\right)_*=\operatorname{Ran}_{F_\Kl}\left(\blank\right)$$ given by global left and right Kan extension respectively. Similarly, there is a sequence of adjunctions
	$$\left(U_\Kl\right)_! \dashv \left(U_\Kl\right)^* \dashv \left(U_{\Kl}\right)_*.$$ Since $F_{\Kl} \dashv U_{\Kl},$ it readily follows that $$\left(F_\Kl\right)_!\simeq \left(U_\Kl\right)^*$$ and $$\left(F_\Kl\right)^* \simeq \left(U_{\Kl}\right)_!.$$ Since 
	$$\left(F_\Kl\right)_! \dashv \left(F_\Kl\right)^* \simeq \left(U_\Kl\right)_!,$$
	it follows that the induced monad from the above adjunction can be identified with
	$$\left(U_\Kl\right)_! \circ \left(F_\Kl\right)_! \simeq T_!.$$ We also have the description
	$$T_! \simeq \Lan_{y} \left(y \circ T\right),$$ and so $T_!$ is the unique colimit preserving endofunctor of $\Psh\left(\sC\right)$ which satisfies
	$$T_!y\left(C\right) \simeq y\left(TC\right)$$ for all $C \in \sC.$ It follows that the Yoneda embedding induces a fully faithful inclusion
	$$\Alg_{T}\left(\sC\right) \hookrightarrow \Alg_{T_!}\left(\Psh\left(\sC\right)\right).$$
	Observe that since $F_{\Kl}$ is essentially surjective, it follows immediately that $\left(F_\Kl\right)^*$ is conservative, and since it has a right adjoint $\left(F_\Kl\right)_*$ it preserves all colimits. It follows from Theorem \ref{thm:BarrBeck}, that the adjunction
	$$\left(F_\Kl\right)_! \dashv \left(F_\Kl\right)^* \simeq \left(U_\Kl\right)_!,$$ is monadic, and hence there is a canonically induced equivalence of $\i$-categories
	$$k:\Psh\left(\Kl\left(T\right)\right) \stackrel{\sim}{\longrightarrow} \Alg_{T_!}\left(\Psh\left(\sC\right)\right).$$
	Using the above equivalence $k,$ we can identify the essential image of $\Alg_{T}\left(\sC\right)$ under the fully faithful inclusion into $\Alg_{T_!}\left(\Psh\left(\sC\right)\right)$ with the full subcategory of $\Psh\left(\Kl\left(T\right)\right)$ on those presheaves $X$ such that $\left(F_{\Kl}\right)^*$ is representable. The result now follows.
	\end{proof}

\begin{lemma} \label{lem:epicon}
	Let $f:E \to F$ be an epimorphism in an $\i$-topos $\cE$. Then 
	$$f^*:\cE/F \to \cE/E$$ is conservative.
	\end{lemma}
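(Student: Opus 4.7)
The plan is to invoke the descent property of effective epimorphisms in an $\infty$-topos. Recall that in $\cE$ every epimorphism is effective, so $f \colon E \to F$ admits a \v{C}ech nerve
$$\check{C}(f)_\bullet \colon \Delta^{op} \to \cE, \qquad \check{C}(f)_n \simeq \underbrace{E \times_F \cdots \times_F E}_{n+1},$$
whose colimit is $F$. By Lurie's descent theorem (\cite[Theorem 6.1.3.9]{htt}), the canonical functor
$$\cE/F \stackrel{\sim}{\longrightarrow} \lim_{[n] \in \Delta} \cE/\check{C}(f)_n$$
is an equivalence of $\infty$-categories, where the transition functors are the pullbacks along the simplicial structure maps.

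First I would use the standard fact that a morphism in a limit of $\infty$-categories is an equivalence if and only if its image in each component is an equivalence. So given $\varphi \colon X \to Y$ in $\cE/F$ with $f^*\varphi$ an equivalence, it suffices to show that for every $n \ge 0$ the pullback of $\varphi$ along $\check{C}(f)_n \to F$ is an equivalence. Next I would exploit the key elementary observation that for each $n$ the structure map $\check{C}(f)_n \to F$ factors through $f$: concretely, any of the projections $p_0 \colon \check{C}(f)_n \to \check{C}(f)_0 = E$ is a morphism over $F$, so the composite $f \circ p_0$ equals the canonical map $\check{C}(f)_n \to F$. Consequently the pullback functor $\cE/F \to \cE/\check{C}(f)_n$ factors as
$$\cE/F \xrightarrow{\,f^*\,} \cE/E \xrightarrow{\,p_0^*\,} \cE/\check{C}(f)_n.$$

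Therefore, if $f^*\varphi$ is already an equivalence in $\cE/E$, its further pullback under any $p_0^*$ is again an equivalence, so the image of $\varphi$ in each $\cE/\check{C}(f)_n$ is an equivalence, and by the descent equivalence above $\varphi$ itself is an equivalence in $\cE/F$. There is no serious obstacle here; the only subtlety is to recognise that descent is exactly the tool that converts the pointwise observation at level $0$ into a global statement, and to correctly identify that every level of the \v{C}ech nerve maps to $F$ through $f$ so that conservativity at level $0$ propagates to all levels.
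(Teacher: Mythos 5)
Your proof is correct, but it takes a genuinely different route from the one in the paper. The paper's argument is very short: given $h$ in $\cE/F$ with $f^*h$ an equivalence, it forms the prism of pullback squares along $f$, observes that the comparison maps $f^*A \to A$ and $f^*B \to B$ are effective epimorphisms (being pullbacks of $f$), and then concludes by citing \cite[Lemma 6.2.3.16]{htt}, which packages exactly the statement that equivalences are detected by pullback along an effective epimorphism. You instead re-derive that detection principle from scratch: you invoke the descent equivalence $\cE/F \simeq \lim_{[n]\in \Delta}\cE/\check{C}(f)_n$ of \cite[Theorem 6.1.3.9]{htt}, use that equivalences in a limit of $\infty$-categories are detected componentwise, and note that each augmentation $\check{C}(f)_n \to F$ factors through $f$ via a projection, so conservativity at level $0$ propagates to all levels. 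Both arguments are valid; yours is more self-contained conceptually (it exhibits descent as the reason the lemma holds) at the cost of heavier machinery and of the standard-but-nontrivial fact about equivalences in limits, while the paper's buys brevity by citing the targeted HTT lemma. One small caveat: your parenthetical ``every epimorphism is effective'' is really just the convention in force here (``epimorphism'' means effective epimorphism, i.e.\ a cover), and that convention is precisely what your descent step needs, since $F \simeq \colim_{[n]\in\Delta^{op}} \check{C}(f)_n$ holds for effective epimorphisms; for a merely right-cancellable map this identification would not be available.
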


\begin{proof}
	Suppose that $a:A \to Y$ $b:B \to Y$ and $h:a \to b$ in $\cE/Y$ are such that $f^*h$ is an equivalence. Consider the following commutative diagram in which each square is a pullback square
	$$\xymatrix@R=0.5cm@C=1.2cm{ & f^\ast B \ar@{->>}[r]^-{q} \ar[lddd] & B \ar[lddd]  \\
		f^\ast A \ar[dd] \ar[ru]^-{\underset{\sim}{f^\ast h}} \ar@{->>}[r]^-{p} & A \ar[dd] \ar[ru]^-{h}&\\
		& & \\
		X \ar@{->>}[r]^-{f} & Y. &}$$
	The double headed arrows are effective epimorphisms. The result now follows from \cite[Lemma 6.2.3.16]{htt}.
	\end{proof}

\begin{theorem}
	Let $f:E \to F$ be an epimorphism in an $\i$-topos $\cE.$ Let $M$ be the monad $f_*f^*$ and let $C$ be the comonad $f^*f_!,$ both of which are on $\cE/E$. Then there are canonical equivalences
	$$\mathbf{CoAlg}_{C}\left(\cE/E\right) \simeq \cE/F \simeq \mathbf{Alg}_M\left(\cE/E\right)$$ between the Eilenberg-Moore category of coalgebras for $C,$ and the slice $\i$-topos $\cE/F,$ and the Eilenberg-Moore category of algebras for $M.$ More precisely, the adjunction $f_! \dashv f^*$ is monadic and $f^* \dashv f_*$ is comonadic.
	\end{theorem}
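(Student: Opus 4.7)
The plan is to invoke the $\i$-categorical Barr--Beck theorem (Theorem \ref{thm:BarrBeck}) and its dual in a very direct manner, using the fact that $f^\ast$ is a right adjoint to $f_!$ and simultaneously a left adjoint to $f_\ast$, hence preserves all small limits and colimits.

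First I would observe that, since $f:E\to F$ is an epimorphism in $\cE$, Lemma \ref{lem:epicon} gives us that the pullback functor $f^\ast:\cE/F\to \cE/E$ is conservative. This is the only place that the hypothesis ``$f$ is an epimorphism'' enters the argument; it is what prevents the statement from being a completely formal consequence of the adjunctions. Note that $f^\ast$ automatically has a left adjoint $f_!$ (postcomposition with $f$) and a right adjoint $f_\ast$ (which exists because $\cE/E$ is also an $\i$-topos, by \cite[Proposition 6.3.5.1]{htt}), so in particular $f^\ast$ preserves all small colimits and all small limits.

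Next I would verify monadicity of the adjunction $f_!\dashv f^\ast$. By Theorem \ref{thm:BarrBeck}, we must check that $f^\ast:\cE/F\to \cE/E$ is conservative, which we just did, and that $f^\ast$ preserves colimits of $f^\ast$-split simplicial diagrams. Since $f^\ast$ preserves \emph{all} small colimits, the latter condition is automatic. Hence $f_!\dashv f^\ast$ is monadic, which gives the canonical equivalence $\cE/F\simeq \mathbf{Alg}_{f^\ast f_!}(\cE/E)$. Dually, for comonadicity of $f^\ast\dashv f_\ast$, we apply the dual Barr--Beck theorem: we need $f^\ast$ conservative, and $f^\ast$ to preserve limits of $f^\ast$-split cosimplicial diagrams. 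Again, the first follows from Lemma \ref{lem:epicon}, and the second is automatic since $f^\ast$ preserves all small limits. This yields the equivalence $\cE/F\simeq \mathbf{CoAlg}_{f^\ast f_\ast}(\cE/E)$.

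The only real content of the argument is Lemma \ref{lem:epicon}; everything else is a formal consequence of the fact that $f^\ast$ sits in the middle of a triple of adjunctions $f_!\dashv f^\ast\dashv f_\ast$. In particular there is no genuine obstacle: both hypotheses of Barr--Beck beyond conservativity become vacuous because $f^\ast$ is a two-sided adjoint. (We also note for bookkeeping that the notation in the statement is most naturally read as $M:=f^\ast f_!$ for the monad on $\cE/E$ coming from $f_!\dashv f^\ast$, and $C:=f^\ast f_\ast$ for the comonad on $\cE/E$ coming from $f^\ast\dashv f_\ast$, which are the ones computing $\cE/F$ as algebras, respectively coalgebras.)
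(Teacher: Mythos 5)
Your proof is correct and follows essentially the same route as the paper: conservativity of $f^\ast$ from Lemma \ref{lem:epicon}, plus the observation that $f^\ast$ preserves all limits and colimits because it sits in the triple $f_!\dashv f^\ast\dashv f_\ast$, so both Barr--Beck conditions beyond conservativity are vacuous. Your parenthetical correction of the monad/comonad notation ($M=f^\ast f_!$ and $C=f^\ast f_\ast$ on $\cE/E$) is also the right reading of the statement.
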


\begin{proof}
	By the dual to \cite[Theorem 4.7.3.5]{LurieHA}, to show that the adjunction
$$\xymatrix@1{\cE/E \ar@<-0.5ex>[r]_-{f_*} & \cE/F \ar@<-0.5ex>[l]_-{f^*}}$$
is comonadic, it suffices to show that certain types of limits are preserved by $f^*$--- but here there is nothing to show since $f^*$ is right adjoint to $f_!,$ and $f^*$ is conservative--- which is the content of Lemma \ref{lem:epicon}. The dual argument is analogous: we need that $f^*$ is conservative--- which we just proved--- and we need $f^*$ to preserve certain colimits, but it preserves all of them since it is a left adjoint.
\end{proof}

\begin{corollary}
	If $\cX \in \bH$ is formally smooth, then there are canonical equivalences of $\i$-categories
	$$\mathbf{Alg}_{\bT^\i_M}\left(\bH/E\right) \simeq \mathbf{CoAlg}_{\J^\i_{\cX}}\left(\bH/\cX\right) \simeq \bH/\cX_{dR}$$
	\end{corollary}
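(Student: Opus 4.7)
The plan is to reduce the statement to a direct application of the preceding theorem on monadic and comonadic descent along epimorphisms in an $\i$-topos. By the definition of formal smoothness, the unit map $\eta_{\cX} : \cX \to \cX_{dR}$ is an effective epimorphism in $\bH$. Hence $\eta_{\cX}$ qualifies as an epimorphism $f : E \to F$ with $E = \cX$ and $F = \cX_{dR}$ in the setting of the preceding theorem.

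First I would unpack the two induced adjunctions between $\bH/\cX$ and $\bH/\cX_{dR}$. The adjunctions $\eta_! \dashv \eta^*$ and $\eta^* \dashv \eta_*$ are obtained by base change along $\eta_{\cX}$ in the $\i$-topos $\bH$. Their induced monad on $\bH/\cX$ coming from $\eta_! \dashv \eta^*$ is $\eta^* \eta_!$, which by definition of the formal disk bundle functor (Definition of $\bT^\i_{\cX}$, applied to an object $\pi : E \to \cX$ via $\eta^* \eta_!(\pi)$) is precisely $\bT^\i_{\cX}$. Dually, the comonad on $\bH/\cX$ coming from $\eta^* \dashv \eta_*$ is $\eta^* \eta_* = \J^\i_{\cX}$ by definition of the infinite jet bundle.

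Next I would invoke the preceding theorem (monadic/comonadic descent along epimorphisms), applied to $f = \eta_{\cX}$. The theorem, combined with Lemma \ref{lem:epicon}, asserts that the adjunction $\eta_! \dashv \eta^*$ is monadic and that $\eta^* \dashv \eta_*$ is comonadic. This yields canonical equivalences
\[
\bH/\cX_{dR} \;\simeq\; \mathbf{Alg}_{\eta^*\eta_!}\!\left(\bH/\cX\right) \;=\; \mathbf{Alg}_{\bT^\i_{\cX}}\!\left(\bH/\cX\right),
\]
\[
\bH/\cX_{dR} \;\simeq\; \mathbf{CoAlg}_{\eta^*\eta_*}\!\left(\bH/\cX\right) \;=\; \mathbf{CoAlg}_{\J^\i_{\cX}}\!\left(\bH/\cX\right).
\]
Composing these two equivalences through $\bH/\cX_{dR}$ yields the desired chain.

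The only substantive input beyond bookkeeping is the hypothesis that $\eta_{\cX}$ be an epimorphism, which is supplied by formal smoothness; everything else is a direct identification of the monad/comonad in question with the functors $\bT^\i_{\cX}$ and $\J^\i_{\cX}$ already defined via $\eta^*\eta_!$ and $\eta^*\eta_*$ respectively. The one mild subtlety to check carefully is that the Eilenberg--Moore $\i$-categories used in the preceding theorem coincide with the notations $\mathbf{Alg}_{\bT^\i_{\cX}}$ and $\mathbf{CoAlg}_{\J^\i_{\cX}}$ used in the statement, which is a matter of definitions. Thus the corollary follows with no further work.
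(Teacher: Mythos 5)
Your proof is correct and follows exactly the route the paper intends: the corollary is an immediate application of the preceding monadic/comonadic descent theorem to $f=\eta_{\cX}$, using that formal smoothness means $\eta_{\cX}:\cX \to \cX_{dR}$ is an epimorphism, together with the definitional identifications $\bT^\i_{\cX}=\eta^*\eta_!$ and $\J^\i_{\cX}=\eta^*\eta_*$. No gaps; nothing further is needed.
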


\begin{lemma}\label{lem:cons_spt}
	Let $f:E \to F$ be an epimorphism in an $\i$-topos $\cE$. Then the induced functor
	$$f^\star:\Stab\left(\cE/F\right) \to \Stab\left(\cE/E\right)$$ is conservative.
	\end{lemma}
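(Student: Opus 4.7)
The plan is to reduce the claim to the objectwise conservativity of $f^\ast\colon \cE/F \to \cE/E$, which is already established in Lemma \ref{lem:epicon}. The functor $f^\ast$ is the inverse image of a geometric morphism $\cE/E \to \cE/F$, and in particular preserves finite limits. By standard facts about stabilization of presentable $\i$-categories (see, e.g., \cite[\S 1.4]{LurieHA}), postcomposition with $f^\ast$ therefore induces a functor
$$f^\star:\Stab(\cE/F) \to \Stab(\cE/E)$$
which, under the identifications $\Stab(\cE/F)\simeq \mathbf{Exc}_*(\Spc^{\fin}_*,\cE/F)$ and similarly for $E$, is literally given by $X\mapsto f^\ast\circ X$.

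With this description, conservativity is essentially automatic. First, I would note that a morphism $\alpha\colon X\Rightarrow Y$ in $\mathbf{Exc}_*(\Spc^{\fin}_*,\cE/F)$ is an equivalence if and only if each component $\alpha_Z\colon X(Z)\to Y(Z)$, for $Z\in \Spc^{\fin}_*$, is an equivalence in $\cE/F$ (this holds in any functor $\i$-category and passes to the reflective subcategory of excisive functors since the reflector is left exact, cf.\ Remark \ref{rmk:tan_topos}). Second, suppose $f^\star\alpha$ is an equivalence in $\Stab(\cE/E)$; then for every $Z\in \Spc^{\fin}_*$, the component $f^\ast(\alpha_Z) = (f^\star\alpha)_Z$ is an equivalence in $\cE/E$. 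By Lemma \ref{lem:epicon}, $f^\ast\colon \cE/F\to \cE/E$ is conservative, so each $\alpha_Z$ is an equivalence, and hence so is $\alpha$.

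I do not expect any real obstacle here: the entire argument is a pointwise check, the delicate input (conservativity of $f^\ast$) having already been established. The only minor care needed is to ensure that the functor between stabilizations really is induced objectwise by $f^\ast$. This can be argued either by appealing to the fact that $f^\ast$ is left exact (so prolongs to spectrum objects by pointwise application, preserving $\Omega$-spectra) or, more conceptually, by recalling from Section \ref{sec:funmod} and Remark \ref{rmk:tan_topos} that $f^\star$ on stabilizations is the restriction of $f^\ast\colon \T\cE/F \to \T\cE/E$ along the fibers of the Cartesian fibration $\T\cE\to \cE$. Either viewpoint makes the pointwise description transparent and completes the proof.
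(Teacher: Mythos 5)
Your proof is correct, but it takes a genuinely different route from the paper's. You work directly with the model of the stabilization as reduced excisive functors $\Spc^{\fin}_* \to \left(\cE/F\right)_*$, observe that $f^\star$ is objectwise postcomposition with the left exact functor $f^*$, and reduce conservativity to a pointwise check via Lemma \ref{lem:epicon} applied in $\cE$ itself. The paper instead passes to the tangent $\i$-topos: it identifies $\Stab\left(\cE/E\right)$ with the fiber of $q:\T\cE \to \cE$ over $E$ via a conservative functor $\mu_E$, shows that under this identification $f^\star$ becomes pullback along $\zeta\left(f\right)$, where $\zeta:\cE \to \T\cE$ is the zero-section left adjoint to $q$, notes that $\zeta\left(f\right)$ is an epimorphism because $\zeta$ is a left adjoint and $f$ is an epimorphism, and then invokes Lemma \ref{lem:epicon} inside the $\i$-topos $\T\cE$ (using Remark \ref{rmk:tan_topos} to know $\T\cE$ is a topos). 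Both arguments ultimately rest on Lemma \ref{lem:epicon}; yours applies it in $\cE$ and the paper's in $\T\cE$. Your route is more elementary and self-contained: the two points you flag are indeed the only obligations, and both are unproblematic --- equivalences between excisive functors are detected objectwise simply because $\mathbf{Exc}_*\left(\Spc^{\fin}_*,\cE/F\right)$ is a \emph{full} subcategory of the functor $\i$-category (so you do not even need left exactness of the reflector $P_1$ for this step), and the identification of $f^\star$ with objectwise postcomposition is exactly how the paper constructs $\Stab\left(f\right)$ in the first place, since $f^*$ preserves finite limits. What the paper's approach buys is a uniform geometric picture (pullback squares in $\T\cE$) that is reused elsewhere; what yours buys is brevity and independence from the tangent-topos formalism.
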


\begin{proof}
	Informally, objects of the tangent $\i$-topos consist of pairs $\left(E,\cG\right)$ of an object $E \in \cE,$ and $\cG \in \Stab\left(\cE/E\right),$ and morphisms $$\left(E,\cG\right) \to \left(F,\cF\right)$$ consists of pairs $\left(f,\alpha\right)$ where $$g:E \to F$$ and $$\alpha:g^*\cF \to \cG.$$ Consider the functor $$\zeta:\cE \to \T\cE$$ which sends an object $E$ to $\left(E,0\right).$ It is left adjoint to the forgetful functor $$q:\T\cE \to \cE.$$ by the initiality of the zero spectrum. Notice that $\Stab\left(\cE/E\right)$ can be identified with the fiber
	$$\xymatrix{\Stab\left(\cE/E\right) \ar[r]^-{\mu} \ar[d] & \T\cE \ar[d]^-{q}\\
		\ast \ar[r]^-{E} & \cE.}$$
	Observe that a morphism in $\T\cE$ corresponding to a pair $\left(g,\alpha\right)$ is an equivalence if and only if both $g$ and $\alpha$ are. It follows that the above canonical functor
	\begin{eqnarray*}
		\mu_E:\Stab\left(\cE/E\right) &\to& \T\cE\\
		\cG &\mapsto& \left(E,\cG\right)
	\end{eqnarray*}
	is conservative. Let $\cF$ be an object in the stabilization of $\cE/F.$ Then we have a canonical pullback diagram in $\T\cE$
	$$\xymatrix@C=2.5cm{\mu_E\left(\eta^\star\cF\right)=\left(E,\eta^\star \cF\right) \ar[d] \ar[r] & \left(F,\cF\right)=\mu_F\left(\cF\right) \ar[d]\\
		\left(E,0\right)=\mu_E\left(0\right)=\zeta\left(E\right) \ar[r]^-{\zeta\left(f\right)} & \zeta\left(F\right)=\left(F,0\right)=\mu_F\left(0\right).}$$
In other words, $$\mu_E\left(f^\star \cF\right)=\zeta\left(f\right)^*\mu_F\left(\cF\right),$$
where $$\zeta\left(f\right)^*:\T\cE/\zeta\left(F\right) \to \T\cE/\zeta\left(E\right).$$ Since both $\mu_E$ and $\mu_F$ are conservative, it suffices to show that $\zeta\left(f\right)^*$ is conservative. However, since $\zeta$ is a left adjoint, it preserves epimorphisms, and hence $\zeta\left(f\right)$ is an epimorphism, and we are done by Lemma \ref{lem:epicon}.
\end{proof}

\begin{lemma}
	Let $F:\cE \to \cF$ be an essential geometric morphism of $\i$-topoi, i.e. the inverse image functor $F^*$ has a left adjoint $F_!.$ Then for any presentable $\i$-category $\sC,$ there is a triple of adjunctions
$$
\xymatrix@R=3cm{
	\Shv\left(\cE,\sC\right)
	\ar@<9pt>[rr]|-{f_!}
	\ar@<0pt>@{<-}[rr]|-{f^\ast}
	\ar@<-9pt>[rr]|-{f_\ast}
	&&
	\;\Shv\left(\cF,\sC\right)}.$$
\end{lemma}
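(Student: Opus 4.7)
The strategy is to reduce the entire statement to a purely $\infty$-categorical fact about the tensor product on the $\infty$-category $\mathbf{Pr}^L$ of presentable $\infty$-categories and colimit-preserving functors. The key identification is
\[
\Shv\!\left(\cE,\sC\right) \;\simeq\; \cE \otimes \sC
\]
in $\mathbf{Pr}^L$, valid because $\cE$ is an $\infty$-topos (hence presentable) and $\sC$ is presentable; this is Lurie, \emph{Higher Algebra}, Proposition 4.8.1.17. Under this identification, sheaves with values in $\sC$ depend $2$-functorially on $\cE \in \mathbf{Pr}^L$.

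Now, an essential geometric morphism $F \colon \cE \to \cF$ is the same datum as a triple of adjoint functors
\[
F_! \dashv F^\ast \dashv F_\ast,
\]
with $F^\ast \colon \cF \to \cE$ and $F_!, F_\ast \colon \cE \to \cF$. Both $F_!$ and $F^\ast$ are morphisms in $\mathbf{Pr}^L$, since they are colimit-preserving functors between presentable $\infty$-categories ($F^\ast$ is colimit-preserving because it is a left adjoint). Since tensoring with $\sC$ defines a $2$-functor $(\blank) \otimes \sC \colon \mathbf{Pr}^L \to \mathbf{Pr}^L$, it sends adjunctions to adjunctions. Applying it to $F_! \dashv F^\ast$ and transporting across the identification above produces the adjunction
\[
f_! \,\dashv\, f^\ast, \qquad f_! := F_! \otimes \mathrm{id}_\sC, \quad f^\ast := F^\ast \otimes \mathrm{id}_\sC.
\]

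It remains to produce $f_\ast$ as a right adjoint to $f^\ast$. But $f^\ast$ is by construction a morphism in $\mathbf{Pr}^L$, so it is a colimit-preserving functor between presentable $\infty$-categories and hence admits a right adjoint $f_\ast$ by the adjoint functor theorem (\cite[Corollary 5.5.2.9]{htt}). This yields the desired triple $f_! \dashv f^\ast \dashv f_\ast$. There is no real obstacle: the only substantive content is the recognition of $\Shv(\cE,\sC)$ as the $\mathbf{Pr}^L$-tensor product and the $2$-functoriality of this tensor, both of which are standard. Note in particular that the hypothesis of essentialness is used exactly where it is needed, namely to ensure that $F^\ast$ is itself in $\mathbf{Pr}^L$ (so that $f^\ast$ is, and therefore has a right adjoint $f_\ast$); for a general geometric morphism one would only obtain the lower adjunction $f^\ast \dashv f_\ast$.
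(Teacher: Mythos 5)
Your proposal is correct and follows essentially the same route as the paper: identify $\Shv\left(\cE,\sC\right)$ with the tensor product $\cE \otimes \sC$ in $\mathfrak{Pr}^L$, set $f_! := F_! \otimes \sC$ and $f^\ast := F^\ast \otimes \sC$, and obtain $f_\ast$ from the adjoint functor theorem. The only difference is that you invoke the $2$-functoriality of $\left(\blank\right) \otimes \sC$ (hence preservation of adjunctions internal to $\mathfrak{Pr}^L$) as a standard black box, whereas the paper verifies exactly this point by hand, transporting the unit, counit, and triangle identities of $F_! \dashv F^\ast$ through the induced functors $\Fun^L\left(\cE,\cE\right) \to \Fun^L\left(\cE \otimes \sC,\cE \otimes \sC\right)$ and $\Fun^L\left(\cF,\cF\right) \to \Fun^L\left(\cF \otimes \sC,\cF \otimes \sC\right)$.
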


\begin{proof}
	Notice that $\Shv\left(\cE,\sC\right)$ is just the tensor product of $\cE$ and $\sC$ in the symmetric monoidal $\i$-category $\mathfrak{Pr}^L$ of presentable $\i$-categories and left adjoint functors. The inverse image functor $$F^*:\cF \to \cE$$ is an arrow in this category, so define $$f^*:=F^*\otimes \sC.$$ This functor is a left adjoint, so denote its right adjoint by $f_*.$ The functor $F_!:\cE \to \cF$ is also an arrow in $\mathfrak{Pr}^L.$ Denote by
	$$f_!:=F_! \otimes \sC.$$ This functor has a right adjoint, and to finish proving the lemma, we need to show it can be identified with $f^*.$ Consider the canonical functor
	$$\Fun^L\left(\cE,\cE\right) \to \Fun^L\left(\cE \otimes \sC,\cE \otimes \sC\right)$$ induced by $$\left(\blank\right) \otimes \sC:\mathfrak{Pr}^L \to \mathfrak{Pr}^L,$$ and consider the analogous functor $$\Fun^L\left(\cF,\cF\right) \to \Fun^L\left(\cF \otimes \sC,\cF \otimes \sC\right).$$
The diagrams expressing the triangle equations for the unit $\eta$ and counit $\varepsilon$ for the adjunction $F_! \dashv F^*$ are commutative diagrams in $\Fun^L\left(\cE,\cE\right)$ and $\Fun^L\left(\cF,\cF\right)$. Their images in  $\Fun^L\left(\cE \otimes \sC,\cE \otimes \sC\right)$ and $\Fun^L\left(\cF \otimes \sC,\cF \otimes \sC\right)$ respectively are the triangle equations expressing $\eta \otimes \sC$ and $\varepsilon \otimes \sC$ as the unit and co-unit for an adjunction between $f_!$ and $f^*.$
\end{proof}

\begin{corollary}
	If $f:E \to F$ is an epimorphism in an $\i$-topos $\cE$, the adjunction
$$\xymatrix@1{\Stab\left(\cE/E\right) \ar@<+0.5ex>[r]^-{f_!} & \Stab\left(\cE/F\right) \ar@<0.5ex>[l]^-{f^\star}}$$
is monadic, and the adjunction
$$\xymatrix@1{\Stab\left(\cE/E\right) \ar@<-0.5ex>[r]_-{f_\star} & \Stab\left(\cE/F\right) \ar@<-0.5ex>[l]_-{f^\star}}$$
is comonadic.
\end{corollary}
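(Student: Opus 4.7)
The plan is to apply the $\i$-categorical Barr--Beck Theorem (Theorem \ref{thm:BarrBeck}) and its dual twice, leveraging the fact that the preceding material has already furnished the essential structural input: a triple adjunction $f_! \dashv f^\star \dashv f_\star$ on stabilizations, together with conservativity of the middle functor.

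First I would observe that the previous lemma, applied to $\sC = \Spt$, produces the full triple adjunction $f_! \dashv f^\star \dashv f_\star$ between $\Stab(\cE/E) \simeq \Shv(\cE/E,\Spt)$ and $\Stab(\cE/F) \simeq \Shv(\cE/F,\Spt)$ from the essential geometric morphism $f:\cE/E \to \cE/F.$ In particular, in both adjunctions under consideration $f^\star$ is the functor whose behavior with respect to limits and colimits governs the hypotheses of Barr--Beck.

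Next, to establish monadicity of $f_! \dashv f^\star,$ I would verify the two clauses of Theorem \ref{thm:BarrBeck} for the right adjoint $f^\star$. Clause (i), conservativity of $f^\star,$ is Lemma \ref{lem:cons_spt}. Clause (ii) asks that $f^\star$ preserve colimits of $f^\star$-split simplicial objects; but in fact $f^\star$ is itself a left adjoint (to $f_\star$), so it preserves all small colimits, and in particular the ones required. Dually, to establish comonadicity of $f^\star \dashv f_\star,$ I would invoke the opposite form of Barr--Beck for the left adjoint $f^\star$: again conservativity is Lemma \ref{lem:cons_spt}, and the required preservation of totalizations of $f^\star$-split cosimplicial objects follows from $f^\star$ being a right adjoint (to $f_!$), hence limit-preserving. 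Both halves of the corollary thus drop out together.

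There is essentially no obstacle here: every nontrivial input has been prepared in the preceding lemmas. The only point demanding a touch of care is making sure that the triple adjunction promised by the lemma on essential geometric morphisms genuinely applies to the slice topoi $\cE/E,\cE/F$ (which it does because an epimorphism $f:E \to F$ gives rise to an essential geometric morphism $f:\cE/E \to \cE/F$ with $f_! = (\pi_F)_! \circ f^\star$--style composition already used in Section \ref{sec:adj}), and checking that the two equivalences produced,
\[
\Stab(\cE/F) \simeq \Alg_{f^\star f_!}\bigl(\Stab(\cE/E)\bigr)
\qquad\text{and}\qquad
\Stab(\cE/F) \simeq \CoAlg_{f^\star f_\star}\bigl(\Stab(\cE/E)\bigr),
\]
really are implemented by the canonical comparison functors, which is automatic from the Barr--Beck formalism.
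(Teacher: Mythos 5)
Your proof is correct and follows essentially the same route as the paper: identify $\Stab(\cE/E)\simeq\Shv(\cE/E,\Spt)$, obtain the triple adjunction $f_!\dashv f^\star\dashv f_\star$ from the preceding lemma applied to $\sC=\Spt$, invoke Lemma \ref{lem:cons_spt} for conservativity of $f^\star$, and conclude by Barr--Beck since $f^\star$ preserves all limits and colimits by virtue of having both adjoints. Nothing further is needed.
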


\begin{proof}
	Note that for an $\i$-topos $\cF,$ there are canonical equivalences
	$$\Shv\left(\cF,\Spt\right) \simeq \cF \otimes \Spt \simeq \Stab\left(\cF\right).$$
	By Lemma \ref{lem:cons_spt}, $f^\star$ is conservative, and as it has both a left and a right adjoint, the result follows from Theorem \ref{thm:BarrBeck}.
	\end{proof}

\begin{corollary}\label{cor:modcom}
	Let $\left(\cE,\O\right)$ be a $\SCi$-ringed $\i$-topos. Suppose that $f:E \to F$ in $\cE$ is an epimorphism. Then the adjunction
	$$\xymatrix@1{\Mod_{\O_E} \ar@<-0.5ex>[r]_-{f_\star} & \Mod_{\O_F} \ar@<-0.5ex>[l]_-{f^\star}}$$
is comonadic.
\end{corollary}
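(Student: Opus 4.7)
The plan is to verify the hypotheses of the dual of the $\i$-categorical Barr-Beck theorem (Theorem \ref{thm:BarrBeck}): namely, that $f^\star$ is conservative and preserves totalizations of $f^\star$-split cosimplicial objects. In fact I aim to show $f^\star$ preserves all small limits, which handles the second condition trivially.

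For conservativity, I would compare modules with the underlying stabilization. Using the description $\T\dgc \simeq \int_{\dgc} \Mod_{(-)}$ from Remark \ref{rmk:cocart}, there is a canonical forgetful functor $U_F: \Mod_{\O_F} \to \Stab(\cE/F)$ sending a module sheaf to its underlying spectrum sheaf, and similarly $U_E$; both are conservative since a map between module sheaves over a fixed base is an equivalence iff its underlying spectrum map is. Because the module-level $f^\star$ is built from restriction of $\T\dgc$-valued sheaves along the \'etale geometric morphism $\cE/E \to \cE/F$ together with the canonical equivalence $f^*\O_F \simeq \O_E$ (coming from the definition of $\O_E$ as the pullback of $\O$ to the slice), the square
$$\xymatrix@C=1.5cm{\Mod_{\O_F} \ar[r]^-{f^\star} \ar[d]_-{U_F} & \Mod_{\O_E} \ar[d]^-{U_E}\\ \Stab(\cE/F) \ar[r]^-{f^\star} & \Stab(\cE/E)}$$
commutes up to canonical equivalence. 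Lemma \ref{lem:cons_spt} provides conservativity of the bottom arrow, and a diagram chase forces conservativity of the top arrow.

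For limit preservation I would mimic the spectrum-sheaf strategy of the preceding corollary and exhibit a left adjoint $f^!$ to $f^\star$, so that $f^\star$ is simultaneously a left and right adjoint and hence preserves all small limits. The key point is that since $\cE/E \to \cE/F$ is \'etale, $f^*$ on sheaves has a left adjoint $f_!$ given by composition with $f$, and this adjoint lifts naturally to modules: an $\O_E$-module sheaf on $\cE/E$, evaluated at $(X \to E)$, is the same data as an $\O_F$-module on the composite $(X \to E \to F)$ because $\O_E = f^*\O_F$, and the composition-with-$f$ assignment transports this into an $\O_F$-module sheaf on $\cE/F$. Checking the triangle identities against $f^\star$ then establishes the desired left adjoint.

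Combining the two facts and invoking Theorem \ref{thm:BarrBeck} in dual form, the comparison functor $\Mod_{\O_F} \to \mathbf{CoAlg}_{f^\star f_\star}(\Mod_{\O_E})$ is an equivalence, yielding comonadicity. The main technical subtlety I anticipate lies in verifying that $f_!$ on sheaves really descends to a left adjoint $f^!$ on modules, in particular that the equivalence $f^*\O_F \simeq \O_E$ is natural enough to make the descent formal. If this step turns out to be more delicate than expected, the fallback is to verify only the weaker Barr-Beck hypothesis directly: totalizations of $f^\star$-split cosimplicial objects are absolute limits and are therefore preserved by any functor, while their existence in $\Mod_{\O_F}$ follows from presentability (Remark \ref{rmk:presMod}).
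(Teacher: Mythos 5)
Your conservativity argument is fine and is in substance the paper's own: since $f^\star \O_F \simeq \O_E$, the module-level pullback is just the restriction of the stabilization-level $f^\star$, the forgetful functors $\Mod_{\O_E} \simeq \Mod_{H\O_E}\left(\Stab\left(\cE/E\right)\right) \to \Stab\left(\cE/E\right)$ (and likewise over $F$) are conservative, and Lemma \ref{lem:cons_spt} does the rest. The gap is in the limit-preservation half. Your fallback is incorrect: in the dual of Theorem \ref{thm:BarrBeck} the hypothesis concerns cosimplicial objects $V^\bullet$ in $\Mod_{\O_F}$ that become split only \emph{after} applying $f^\star$. The totalization of $V^\bullet$ formed in $\Mod_{\O_F}$ is then not an absolute limit --- only the totalization of the already-split object $f^\star V^\bullet$ in $\Mod_{\O_E}$ is --- so there is no reason it is ``preserved by any functor.'' You have conflated ``$f^\star$-split'' with ``split''; the preservation statement is precisely the content you must verify, so the fallback is circular and would fail.

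Your main route (exhibiting a left adjoint to $f^\star$ on modules by lifting $f_!$ from sheaves of spectra) can be made to work, but the step you yourself flag --- that the sheaf-level $f_!$, which is a left Kan extension and not mere reindexing, carries an $\O_E$-module structure to an $\O_F$-module structure --- requires a projection-formula argument that you do not supply, so as written the proposal does not close. It is also unnecessary: because $f^\star$ on modules is the restriction of $f^\star$ on stabilizations, and the monadic forgetful functor $U$ creates limits (so in particular reflects them), limit preservation for the module-level $f^\star$ follows immediately from the fact that the stabilization-level $f^\star$ has the left adjoint $f_!$ furnished by the lemma on essential geometric morphisms. This is exactly how the paper argues, and it avoids any descent of $f_!$ to modules. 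Until you either repair the fallback or complete the $f_!$-lifting, comonadicity is not established.
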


\begin{proof}
Notice that $f^\star \O_F \simeq \O_E,$ so that $f^\star\cF \simeq f^* \cF$ for all $\O_F$-modules $\cF.$ It follows that this $f^\star$ is the restriction of the one between stabilizations. Observe that
$$\Mod_{\O_E} \simeq \Mod_{H\O_E}\left(\Stab\left(\cE/E\right)\right),$$ hence $\Mod_{\O_E}$ is monadic over $\Stab\left(\cE/E\right)$ via $$H\O_E \wedge \left(\blank\right) \dashv U,$$ where $U$ is the forgetful functor. It follows that $U$ is conservative. The same is true for $F.$ Hence $f^\star$ is conservative. It also preserves limits, since it does on stabilizations and $U$ reflects limits. We are now done by Theorem \ref{thm:BarrBeck}.
\end{proof}

\begin{lemma}\label{lem:fstarqc}
	Let $f:E \to F$ be an epimorphism in $\bH$. Then if for an $\O_F$-module $\cF,$ $f^\star \cF$ is a quasi-coherent sheaf over $E,$ then $\cF$ is quasi-coherent over $F.$
	\end{lemma}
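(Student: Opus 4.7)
\textbf{Proof plan for Lemma \ref{lem:fstarqc}.} The plan is to deduce the result by descent along the effective epimorphism $f:E\to F$. By Proposition \ref{prop:QCsheaf} (extended to $\bH$ via left Kan extension) and Lemma \ref{lem:sheafslice}, both $\QC$ and $\Mod_{\O}$ are sheaves of presentable $\i$-categories on $\bH$. Since $f$ is an effective epimorphism, $F$ is the colimit of the \v{C}ech nerve $E_{\bullet}^F$ of $f$ in $\bH$; by the descent property,
\[
\QC(F) \simeq \lim_{[n]\in\Delta}\QC(E_n^F)\qquad\text{and}\qquad \Mod_{\O_F}\simeq \lim_{[n]\in\Delta}\Mod_{\O_{E_n^F}}.
\]
By the naturality of $\Lambda^\star:\QC\to\Mod_{\O}$ established in Proposition \ref{prop:lamlam}, and the fact that each component of this natural transformation is fully faithful, the induced functor
\[
\Lambda_F^\star\colon \QC(F)\hookrightarrow \Mod_{\O_F}
\]
is identified with the functor obtained by taking the limit over $\Delta$ of the fully faithful inclusions $\Lambda_{E_n^F}^\star\colon \QC(E_n^F)\hookrightarrow \Mod_{\O_{E_n^F}}$.

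Given this setup, an $\O_F$-module $\cF$ corresponds to a compatible family $\cF_n := p_n^\star\cF \in \Mod_{\O_{E_n^F}}$, where $p_n\colon E_n^F\to F$ denotes the canonical projection. To show $\cF\in\QC(F)$, it suffices by the identification of the limit to check that $\cF_n\in\QC(E_n^F)$ for all $n\ge 0$ (the coherence data is automatic, since the inclusions are fully faithful and natural). For $n=0$ we have $p_0=f$ and $\cF_0=f^\star\cF$, which is quasi-coherent by hypothesis. For $n\ge 1$, the projection $p_n\colon E_n^F\to F$ factors through $f$ via any face map $d_i\colon E_n^F\to E$ (for instance $d_0$), so that $\cF_n \simeq d_0^\star f^\star\cF$. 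Since $f^\star\cF\in\QC(E)$ and, by Lemma \ref{lem:pbqce}, quasi-coherence is preserved under arbitrary pullback of $\O$-modules, we conclude $\cF_n\in\QC(E_n^F)$ for every $n$.

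The step which requires some care is verifying that $\Lambda_F^\star$ really is the limit functor of the maps $\Lambda_{E_n^F}^\star$, and hence that its essential image coincides with those objects whose image under every $p_n^\star$ lies in $\QC(E_n^F)$. This is the key technical point: it follows from the fact that $\Lambda^\star$ is a morphism of sheaves on $\bH$ with values in $\PrL$ (Proposition \ref{prop:lamlam}), so evaluation at $F\simeq \mathrm{colim}\, E_\bullet^F$ produces the asserted limit diagram. Everything else reduces to the stability of quasi-coherence under pullback and to the fact that full faithfulness is preserved by limits of $\i$-categories, since mapping spaces in a limit are computed as limits of mapping spaces.
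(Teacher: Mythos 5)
Your argument is correct, but it follows a genuinely different route from the one in the paper. You run \v{C}ech descent along $f$ itself: since $\QC$ and $\Mod_{\O}$ are both limit-preserving functors $\bH^{op}\to\widehat{\Cat}_\i$ and $f$ is an effective epimorphism, both evaluate on $F$ as limits over the \v{C}ech nerve, and $\Lambda^\star$ being a natural transformation with fully faithful components identifies $\QC(F)$ with the full subcategory of $\Mod_{\O_F}$ on objects all of whose components $p_n^\star\cF$ are quasi-coherent; these are pullbacks of $f^\star\cF$, hence quasi-coherent. The paper instead first reduces to $F=\Speci(\A)$ affine via Lemma \ref{lem:pbqce}, uses that the epimorphism $f$ then admits local sections $\lambda_\alpha$ over an open cover $(U_\alpha)$ of $F$ (so $\cF|_{U_\alpha}\simeq\lambda_\alpha^\star f^\star\cF$ is quasi-coherent), and concludes by checking that the counit $\MSpec_{\bH/F}(\Gamma_F\cF)\to\cF$ becomes an equivalence after pullback along the conservative functor $\pi^\star$ for $\pi:\coprod_\alpha U_\alpha\to F$ (Corollary \ref{cor:modcom}), using the explicit $\widehat{\otimes}$ description of $\MSpec$. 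Your approach is more uniform and in effect proves Corollary \ref{cor:pbqc} directly in one stroke, at the cost of leaning on the assertion that the essential image of a limit of fully faithful functors is detected componentwise; this is true, but it is exactly the point that the paper formalizes in the proof of Lemma \ref{lem:pbqce} by passing to Cartesian sections of the associated Cartesian fibrations, and your write-up should invoke that same formalization rather than the phrase ``the coherence data is automatic.'' The paper's proof buys something in exchange for its detours: it exhibits the quasi-coherent model of $\cF$ concretely as $\MSpec_{\bH/F}(\Gamma_F\cF)$, which is reused elsewhere; also note the minor imprecision that for $n\ge 1$ the map $E_n^F\to E$ you want is one of the $n+1$ projections (a composite of face maps), not a single face map $d_0:E_n^F\to E_{n-1}^F$.
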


\begin{proof}
Notice that by Lemma \ref{lem:pbqce}, we can reduce to the case when $F \in \sD.$ In this case, since $f$ is an epimorphism, there is a cover $\left(U_\alpha \hookrightarrow F\right)_\alpha$ by affine open $\SCi$-subschemes over which $f$ admits local sections $\lambda_\alpha.$ It follows that for all $\alpha,$ $$\cF|_{U_\alpha} \simeq \lambda_\alpha^\star f^\star \cF,$$ and hence $\cF|_{U_\alpha}$ is quasi-coherent for all $\alpha.$ Let $M:=\Gamma_F\left(\cF\right) \in \widehat{\Mod}_{\A},$ for $F=\Speci\left(\A\right).$ We want to show that the canonical map
$$\epsilon:\MSpec_{\bH/F}\left(M\right) \to \cF$$ is an equivalence. Since
$$\pi:\underset{\alpha} \coprod U_\alpha \to \cF$$ is an epimorphism, it follows from Corollary \ref{cor:modcom} that $\pi^\star$ is conservative, so it suffices to check that $\pi^\star\left(\epsilon\right)$ is an equivalence. By hypothesis, letting $M_\alpha:=\cF\left(U_\alpha\right) \in \widehat{\Mod}_{\A_\alpha},$ where $U_\alpha=\Speci\left(\A_\alpha\right)$ for each $\alpha,$ we have that the canonical maps
$$\MSpec_{\bH/U_\alpha}\left(M_\alpha\right) \to \cF|_{U_\alpha}$$ are equivalences. But since the restriction of $\cF$ to the topos of sheaves over the underlying space of $F$ is an $\O_\A$-modules sheaf, and all such module sheaves are quasi-coherent, it follows that for all $\alpha,$
$$\cF\left(U_\alpha\right) \simeq \A_\alpha \underset{\A} {\widehat{\otimes}} M$$
and thus 
$$\MSpec_{\bH/U_\alpha}\left(M_\alpha\right) \simeq \MSpec_{\bH/F}\left(M\right)|_{U_\alpha},$$ and we are done.
\end{proof}

\begin{corollary}\label{cor:pbqc}
Let $f:E \to F$ be an epimorphism in $\bH$. Then the following diagram of $\i$-categories is a pullback:
$$\xymatrix@C=1.5cm{\QC\left(F\right) \ar[r]^-{\Lambda^\star\left(F\right)} \ar[d]_-{f^\star} & \Mod_{\O_F} \ar[d]^-{f^\star}\\
\QC\left(E\right) \ar[r]_-{\Lambda^\star\left(E\right)} & \Mod_{\O_E}}$$
\end{corollary}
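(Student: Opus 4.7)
The plan is to reduce the pullback claim to the essential surjectivity statement already established in Lemma \ref{lem:fstarqc}, exploiting the full faithfulness of the comparison functors $\Lambda^\star_\cX$ furnished by Proposition \ref{prop:lamlam}.

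First, I would set up the comparison. By Proposition \ref{prop:lamlam}, both horizontal functors $\Lambda^\star(F)$ and $\Lambda^\star(E)$ are fully faithful, and the square commutes (up to canonical equivalence) because $\Lambda^\star$ is a morphism of $\Shv(\bH,\PrL)$-valued sheaves, so $f^\star\circ \Lambda^\star(F)\simeq \Lambda^\star(E)\circ f^\star$. Let $P$ denote the pullback of the cospan
$$\QC(E)\xrightarrow{\Lambda^\star(E)} \Mod_{\O_E}\xleftarrow{f^\star}\Mod_{\O_F}$$
in $\widehat{\Cati}$. Concretely, an object of $P$ is a triple $(\cF,\cG,\alpha)$ where $\cF\in\Mod_{\O_F}$, $\cG\in\QC(E)$, and $\alpha:f^\star\cF\xrightarrow{\sim}\Lambda^\star_E\cG$ in $\Mod_{\O_E}$.

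Next I would identify $P$ with a familiar full subcategory of $\Mod_{\O_F}$. Since $\Lambda^\star_E$ is fully faithful, the datum $(\cG,\alpha)$ is either empty or contractible for any fixed $\cF$; it is non-empty precisely when $f^\star\cF$ lies in the essential image of $\Lambda^\star_E$, that is, when $f^\star\cF$ is quasi-coherent over $E$. Thus the forgetful functor $P\to\Mod_{\O_F}$ induces an equivalence between $P$ and the full subcategory $\Mod^{qc/E}_{\O_F}\subseteq \Mod_{\O_F}$ spanned by those $\cF$ with $f^\star\cF\in\QC(E)$.

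Finally, I would verify that the canonical functor $\Psi:\QC(F)\to P$ induced by the commutative square is an equivalence. Under the identification $P\simeq \Mod^{qc/E}_{\O_F}$, the functor $\Psi$ is just the restriction of $\Lambda^\star_F$ to its target landing in $\Mod^{qc/E}_{\O_F}$, which is well-defined because quasi-coherent sheaves pull back to quasi-coherent sheaves. Full faithfulness of $\Psi$ is inherited from that of $\Lambda^\star_F$. For essential surjectivity, given $\cF\in\Mod_{\O_F}$ with $f^\star\cF$ quasi-coherent, Lemma \ref{lem:fstarqc} (which uses crucially that $f$ is an epimorphism, via comonadicity of $f^\star$ on module categories from Corollary \ref{cor:modcom}) tells us exactly that $\cF$ itself is quasi-coherent, so $\cF$ is in the essential image of $\Lambda^\star_F$.

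There is no serious obstacle beyond what has already been done: the whole content of the corollary is packaged in Lemma \ref{lem:fstarqc}, together with the fact that $\Lambda^\star$ is a natural transformation of sheaves of presentable $\i$-categories with component-wise fully faithful maps. The only place one must be a bit careful is checking that the $\i$-categorical pullback really is computed, up to equivalence, by the naive 1-categorical description of its objects; this is standard since both horizontal maps are fully faithful (inclusions of full subcategories), so the pullback in $\widehat{\Cati}$ coincides with the full subcategory of $\Mod_{\O_F}$ cut out by the essential-image condition on $f^\star$.
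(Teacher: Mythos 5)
Your proposal is correct and matches the paper's intended argument: the corollary is stated as an immediate consequence of Lemma \ref{lem:fstarqc} together with the component-wise full faithfulness of $\Lambda^\star$ from Proposition \ref{prop:lamlam}, which is exactly the reduction you carry out. Your extra remark that the $\i$-categorical pullback along a fully faithful functor is the full subcategory of $\Mod_{\O_F}$ cut out by the condition that $f^\star\cF$ be quasi-coherent is the correct (and standard) justification for the step the paper leaves implicit.
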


Let $L:\sC \to \sD$ be a functor between $\i$-categories associated to a coCartesian fibration $$p:\mathcal{M} \to \Delta^1,$$ with $p^{-1}\left(0\right)=\sC$ and $p^{-1}\left(1\right)=\sD.$ By the universal property of pullbacks, there is a canonical isomorphism of simplicial sets,
$$\Fun_{\Delta^1}\left(\sC,\cM\right) \cong \Fun\left(\sC,\sC\right),$$ and similarly
$$\Fun_{\Delta^1}\left(\sD,\cM\right) \cong \Fun\left(\sD,\sD\right).$$ Denote by
$$i_0:\sC \hookrightarrow \cM$$ and $$i_1:\sD \hookrightarrow \cM$$ the canonical inclusions. Then restriction along these functors determine canonical maps $$i^*_0\Fun_{\Delta^1}\left(\cM,\cM\right) \to \Fun_{\Delta^1}\left(\sC,\sM\right)$$ and 
$$i_1^*:\Fun_{\Delta^1}\left(\cM,\cM\right) \to \Fun_{\Delta^1}\left(\sD,\sM\right).$$

\begin{lemma}\label{lem:adf}
In the situation above, suppose that $p$ is also a Cartesian fibration, exhibiting a right adjoint $R$ of $L.$ Then $i_1^*$ has a fully faithful right adjoint.
\end{lemma}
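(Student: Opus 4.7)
My plan is to first identify $\Fun_{\Delta^1}\left(\cM,\cM\right)$ concretely in terms of the adjunction $L \dashv R$. Since $p:\cM \to \Delta^1$ is a coCartesian fibration classifying $L:\sC \to \sD,$ by straightening one has a natural equivalence
$$\Fun_{\Delta^1}\left(\cM,\cM\right) \simeq \Fun\left(\sC,\sC\right) \underset{\Fun\left(\sC,\sD\right)} \times \Fun\left(\Delta^1,\Fun\left(\sC,\sD\right)\right)\underset{\Fun\left(\sC,\sD\right)} \times \Fun\left(\sD,\sD\right),$$
where the left map is $F_0 \mapsto LF_0$ and the right map is $F_1 \mapsto F_1 L.$ Informally, objects are triples $\left(F_0,F_1,\alpha\right)$ with $F_0:\sC \to \sC,$ $F_1:\sD \to \sD,$ and $\alpha:LF_0 \Rightarrow F_1 L$ a natural transformation. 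Under this identification, $i_0^\ast$ and $i_1^\ast$ are the projections to the first and third factor respectively.

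Next, I would construct the right adjoint $r$ to $i_1^\ast$ explicitly. Given $G \in \Fun\left(\sD,\sD\right),$ define
$$r\left(G\right):=\left(RGL,\, G,\, \varepsilon_{GL}:LRGL \Rightarrow GL\right),$$
where $\varepsilon:LR \Rightarrow \id_{\sD}$ is the counit of $L \dashv R.$ To verify that this is right adjoint to $i_1^\ast,$ I would compute mapping spaces: given $\left(H_0,H_1,\beta\right) \in \Fun_{\Delta^1}\left(\cM,\cM\right),$ a morphism to $r\left(G\right)$ consists of natural transformations $\phi_0:H_0 \Rightarrow RGL,$ $\phi_1:H_1 \Rightarrow G,$ and a homotopy making the square
$$\xymatrix{LH_0 \ar[d]_-{\beta} \ar[r]^-{L\phi_0} & LRGL \ar[d]^-{\varepsilon_{GL}} \\ H_1 L \ar[r]^-{\phi_1 L} & GL}$$
commute. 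The universal property of $\varepsilon$ (i.e. the adjunction $L \dashv R$) then identifies the data $\left(\phi_0,\text{square}\right)$, once $\phi_1$ is fixed, with a contractible choice—namely $\phi_0$ is forced to be the mate $R\left(\phi_1 L \circ \beta\right) \circ \eta_{H_0}$ of the bottom-right composite. This yields a natural equivalence
$$\Map_{\Fun_{\Delta^1}\left(\cM,\cM\right)}\left(\left(H_0,H_1,\beta\right),r\left(G\right)\right) \simeq \Map_{\Fun\left(\sD,\sD\right)}\left(H_1,G\right),$$
proving $r \dashv i_1^\ast$ in reverse, i.e. $i_1^\ast \dashv r.$

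Finally, fully faithfulness of $r$ is immediate from the construction, since the counit $i_1^\ast r\left(G\right) \to G$ is by definition the identity on $G.$ The main technical obstacle is the rigorous $\infty$-categorical straightening identification in the first step; this is where care is needed, but it is essentially a combination of \cite[\S 3.2]{htt} applied to the fibration $p$ viewed in both its coCartesian and Cartesian guises, together with the resulting mate-calculus between the two straightenings, which is exactly what produces the equivalence $R \dashv L$ from $p.$ Once this identification is in place, the construction of $r$ and the verification of the adjunction are formal manipulations of units and counits. An analogous argument shows dually that $i_0^\ast$ admits a fully faithful \emph{left} adjoint, sending $F_0 \in \Fun\left(\sC,\sC\right)$ to $\left(F_0, LF_0 R, \id\right)$ (using the Cartesian model), although this second statement is not needed here.
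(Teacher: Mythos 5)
Your proposal is correct and follows essentially the same route as the paper: identify $\Fun_{\Delta^1}\left(\cM,\cM\right)$ with the $\i$-category of lax squares $\left(F_0,F_1,\alpha:LF_0\Rightarrow F_1L\right)$ (the paper does this via the relative-nerve model and an explicit trivial Kan fibration onto the pullback, where you invoke straightening), then define the right adjoint $G\mapsto\left(RGL,G,\varepsilon_{GL}\right)$ and verify the adjunction by the same mate/counit mapping-space computation, with fully faithfulness read off from the counit $i_1^\ast r\left(G\right)\simeq G$. The only difference is that the paper supplies the identification you flag as the technical obstacle by direct inspection of the relative nerve rather than by a straightening argument.
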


\begin{proof}
If we take $\cM$ to explicitly be relative nerve (\cite[Definition 3.2.5.2]{htt}) of the functor
$$\Delta^1 \to \Set^{\Delta^{op}}$$ whose image is $$L:\sC \to \sD,$$ then it follows by direct inspection that the canonical dotted map $\Psi$ below is a trivial Kan fibration:
$$\xymatrix{\Fun_{\Delta^1}\left(\cM,\cM\right) \ar@{-->}[rd]^-{\Psi} \ar@/^2.0pc/[rrrd] \ar@/_{2.0pc}/[rdd] &  & &\\
& \cP \ar[rr] \ar[d] & & \Fun\left(\sC,\sD\right)^{\Delta^1} \ar[d]^-{\langle ev_0,ev_1\rangle} \\
& \Fun\left(\sC,\sC\right) \times \Fun\left(\sD,\sD\right) \ar[rr]_-{L_\ast \times L^\ast} && \Fun\left(\sC,\sD\right) \times \Fun\left(\sC,\sD\right),}$$
where $\cP$ is the pullback in simplicial sets. Informally, an object of $\cP$ is a diagram of the form
$$\xymatrix{\sC \ar[r]^-{F} \ar[d]_-{L} & \sC \ar[d]^-{L} \ar@{=>}[ld]^-\alpha\\
\sD \ar[r]_-{G}& \sD.}$$
Notice that the right most arrow in the pullback diagram defining $\cP$ is a categorical fibration, hence the diagram is also a homotopy pullback in the Joyal model structure. In particular, this is a pullback in the $\i$-category of $\i$-categories. It follows that mapping spaces may be computed as pullbacks (in the $\i$-category $\Spc$)
$$\xymatrix{\Map_{\cP}\left(\left(F,G,\alpha\right),\left(F',G',\alpha'\right)\right) \ar[d] \ar[r] & \Map_{\Fun\left(\sC,\sD\right)^{\Delta^1}}\left(\alpha,\alpha'\right) \ar[d]\\
\Map_{\Fun\left(\sC,\sC\right)}\left(F,F'\right) \times \Map_{\Fun\left(\sD,\sD\right)}\left(G,G'\right) \ar[r] & \Map_{\Fun\left(\sC,\sD\right)}\left(LF,LF'\right) \times \Map_{\Fun\left(\sC,\sD\right)}\left(GL,GL'\right).}$$
Moreover, we have a pullback diagram
$$\xymatrix{\Map_{\Fun\left(\sC,\sD\right)^{\Delta^1}}\left(\alpha,\alpha'\right) \ar[r] \ar[d] & \Map_{\Fun\left(\sC,\sD\right)}\left(GL,G'L\right) \ar[d]^-{\alpha^\ast}\\
\Map_{\Fun\left(\sC,\sD\right)}\left(LF,LF'\right) \ar[r]_-{\alpha'_\ast} & \Map_{\Fun\left(\sC,\sD\right)}\left(LF,G'L\right).}$$
It follows that we have a pullback diagram
$$\xymatrix{\Map_{\cP}\left(\left(F,G,\alpha\right),\left(F',G',\alpha'\right)\right) \ar[d] \ar[r] & \Map_{\Fun\left(\sD,\sD\right)}\left(G,G'\right) \ar[d]^-{\alpha^\ast \circ L}\\
\Map_{\Fun\left(\sC,\sC\right)}\left(F,F'\right) \ar[r]_-{\alpha'_\ast \circ L} & \Map_{\Fun\left(\sC,\sD\right)}\left(LF,G'L\right).}$$
Now suppose $G:\sD \to \sD$ is a fixed functor. Then there is a canonical diagram
$$\xymatrix@C=2cm{\sC \ar[r]^-{RGL} \ar[d]_-{L} & \sC \ar[d]^-{L} \ar@{=>}[ld]^-{\varepsilon_{GL}}\\
\sD \ar[r]_-{G}& \sD,}$$
where $\varepsilon:LR \Rightarrow id_{\sD}$ is the co-unit of the adjunction. This defines a functor 
\begin{eqnarray*}
Q_1:\Fun\left(\sD,\sD\right) &\longrightarrow & \cP\\
G &\mapsto & \left(RGL,G,\varepsilon_{GL}\right).
\end{eqnarray*}
We will show that $Q_1$ is right adjoint to functor
\begin{eqnarray*}
pr_1:\cP &\longrightarrow & \Fun\left(\sD,\sD\right)\\
\left(F,G,\alpha\right) &\mapsto & G.
\end{eqnarray*}
Since $pr_1 \circ \Psi=i_1^*,$ this will prove that $i_1^*$ has a right adjoint. Notice that since $\varepsilon$ is the co-unit of the adjunction $L \dashv R,$ for any $$G:\sD \to \sD$$ and $$F':\sC \to \sC,$$
the composite
$$\xymatrix{\Map_{\Fun\left(\sC,\sC\right)}\left(F',RGL\right) \ar[r]^-{L} & \Map_{\Fun\left(\sC,\sD\right)}\left(LF',LRGL\right) \ar[r]^-{\left(\epsilon_{GL}\right)_\ast} & \Map_{\Fun\left(\sC,\sD\right)}\left(LF',GL\right),}$$
is a equivalence. It follows that for any $\left(F',G',\alpha'\right)$ in $\cP,$ since we have a pullback diagram
$$\xymatrix{\Map_{\cP}\left(\left(F',G',\alpha'\right),\left(RGL,G,\varepsilon_{GL}\right)\right) \ar[d] \ar[r] & \Map_{\Fun\left(\sD,\sD\right)}\left(G',G\right) \ar[d]^-{\alpha^\ast \circ L}\\
\Map_{\Fun\left(\sC,\sC\right)}\left(F',RGL\right) \ar[r]_-{\left(\varepsilon_{GL}\right)_\ast \circ L} & \Map_{\Fun\left(\sC,\sD\right)}\left(LF',GL\right),}$$
the map
$$\Map_{\cP}\left(\left(F',G',\alpha'\right),\left(RGL,G,\varepsilon_{GL}\right)\right) \longrightarrow \Map_{\Fun\left(\sD,\sD\right)}\left(G',G\right)$$ is an equivalence. Thus $Q_1$ is right adjoint to $i_1^*$

Notice moreover that for any endofunctor $G$ on $\sD,$
\begin{eqnarray*}
i_1^* \circ Q_1\left(G\right) &\simeq & i_1^*\left(RGL,G,\varepsilon_{GL}\right)\\
&\simeq & G.
\end{eqnarray*}
Hence the co-unit of the adjunction $i_1^* \dashv Q_1$ is an equivalence, and we deduce that $Q_1$ is fully faithful.
\end{proof}

\begin{proposition}\label{prop:lxmon}
Let $L \dashv R$ be an adjunction, with $L:\sC \to \sD$ a functor between $\i$-categories. Then the functor
\begin{eqnarray*}
\Xi:\Fun\left(\sD,\sD\right) &\to & \Fun\left(\sC,\sC\right)\\
G & \mapsto & R\circ G \circ L
\end{eqnarray*}
is lax monoidal with respect to the composition monoidal structures.
\end{proposition}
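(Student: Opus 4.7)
The plan is to realize $\Xi$ as a composite of a lax monoidal functor with a strong monoidal functor, by exploiting the intermediary monoidal $\i$-category $\Fun_{\Delta^1}(\cM,\cM)$ and the adjunction constructed in Lemma \ref{lem:adf}. First, for any $\i$-category $\cE$, the endomorphism $\i$-category $\Fun(\cE,\cE)$ carries a canonical $\mathbb{E}_1$-monoidal structure whose tensor product is composition (see \cite[Section 4.7]{LurieHA}). Since the composition of two endofunctors of $\cM$ over $\Delta^1$ remains over $\Delta^1$ (because $p \circ F_1 \circ F_2 = p \circ F_2 = p$), the subcategory $\Fun_{\Delta^1}(\cM,\cM) \subseteq \Fun(\cM,\cM)$ inherits such a monoidal structure. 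Moreover, the restriction functors
$$i_0^{\ast}: \Fun_{\Delta^1}(\cM,\cM) \to \Fun(\sC,\sC), \qquad i_1^{\ast}: \Fun_{\Delta^1}(\cM,\cM) \to \Fun(\sD,\sD)$$
are both strong monoidal: the fibers $\sC, \sD$ are preserved by any $F \in \Fun_{\Delta^1}(\cM,\cM)$, and composition of such endofunctors is computed fiberwise.

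Next, Lemma \ref{lem:adf} exhibits a fully faithful right adjoint $\tilde{Q}_1 : \Fun(\sD,\sD) \to \Fun_{\Delta^1}(\cM,\cM)$ to $i_1^{\ast}$; after identifying $\Fun_{\Delta^1}(\cM,\cM)$ with $\cP$ via the trivial Kan fibration $\Psi$, it sends $G$ to the triple $(RGL, G, \varepsilon_{GL})$, where $\varepsilon: LR \Rightarrow \mathrm{id}_{\sD}$ is the counit of $L \dashv R$. By the $\i$-categorical version of doctrinal adjunction (cf.\ \cite[Corollary 7.3.2.7]{LurieHA}), any right adjoint of a strong monoidal functor between monoidal $\i$-categories inherits a canonical lax monoidal refinement. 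Thus $\tilde{Q}_1$ acquires a lax monoidal structure, and the desired functor
$$\Xi \simeq i_0^{\ast} \circ \tilde{Q}_1$$
is lax monoidal, being the composite of a lax monoidal functor with a strong monoidal one.

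One then checks that the induced structure maps recover the expected formulas. The unit $\mathrm{id}_{\sC} \to RL = \Xi(\mathrm{id}_{\sD})$ agrees with the unit $\eta$ of $L \dashv R$, and the composition constraint
$$\Xi(G) \circ \Xi(H) \;=\; RGLRHL \;\longrightarrow\; RGHL \;=\; \Xi(G \circ H)$$
is obtained by whiskering $\varepsilon: LR \to \mathrm{id}_{\sD}$ against $G$ and $H$ in the middle. Both identifications follow by tracing how the lax structure maps on $\tilde{Q}_1$ are produced from the strong monoidal structure of $i_1^{\ast}$ together with the unit and counit of $i_1^{\ast} \dashv \tilde{Q}_1$, which in turn encode $\eta$ and $\varepsilon$.

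The principal technical obstacle is making the doctrinal adjunction argument rigorous at the level of $\i$-categories. This amounts to upgrading $i_1^{\ast}$ to a morphism of $\mathbb{E}_1$-monoidal $\i$-categories (equivalently, a map of associative operad objects in $\mathrm{Cat}_{\i}$) and then applying the relative adjoint functor theorem inside the $\i$-category of $\mathrm{Ass}^{\otimes}$-monoidal functors. Once this lift is established, the identification of the resulting lax structure with the one described above is formal; the bulk of the proof lies in verifying the monoidal lift, for which the explicit description of $\tilde{Q}_1$ from Lemma \ref{lem:adf} is essential.
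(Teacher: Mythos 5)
Your proposal is correct and follows essentially the same route as the paper: both identify $\Xi$ with the composite $i_0^{\ast}\circ \tilde{Q}_1$ through the monoidal $\i$-category $\Fun_{\Delta^1}\left(\cM,\cM\right)$, use Lemma \ref{lem:adf} to produce the right adjoint $\tilde{Q}_1$ of the monoidal functor $i_1^{\ast}$, and invoke the fact that right adjoints of monoidal functors are lax monoidal. Your explicit verification of the unit and composition constraints is a harmless elaboration beyond what the paper records.
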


\begin{proof}
If $p:\cM \to \Delta^1$ is a biCartesian fibration exhibiting the adjunction, the functor
$$i^*_0:\Fun_{\Delta^1}\left(\cM,\cM\right) \to \Fun_{\Delta^1}\left(\sC,\cM\right)\cong \Fun\left(\sC,\sC\right)$$ is a strict map of simplicial monoids, and thus a monoidal functor between the associated monoidal $\i$-categories. Similarly for $i_1^*.$ By Lemma \ref{lem:adf}, $i_1^*$ has a right adjoint $\tilde Q_1.$ Under the equivalence $\Psi$ from the proof of Lemma \ref{lem:adf}, the composite of $i_0^* \tilde Q_1$ can be identified with $pr_0 \circ Q_1,$ where $Q_1$ is the explicit right adjoint to $pr_1$ given by
$$Q_1\left(G\right)=\left(LGR,G,\varepsilon_{GL}\right).$$ Thus, the functor $\Xi$ can be identified with $i_0^* \tilde Q_1.$ Since $\tilde Q_1$ is right adjoint to the monoidal functor $i_1^*,$ it is lax monoidal. Since $i_0^*$ is monoidal, it follows that $\Xi$ is also lax monoidal.
\end{proof}

\begin{lemma}\label{lem:monsup}
Let $L \dashv R$ exhibit $\sC$ as a coreflective subcategory of $\sD.$ Denote the co-unit of this adjunction by $\varepsilon.$ Suppose that $J$ is an endofunctor of $\sD$ such that for all $n,$ $RJ\left(\varepsilon_{J^nLX}\right)$ is an equivalence. Denote by $\Fun_J\left(\sD,\sD\right)$ the full subcategory of $\Fun\left(\sD,\sD\right)$ on $J^n,$ for all $n \ge 0,$ (with $J^0=id_{\sD}$). Then the composite
$$\Fun_J\left(\sD,\sD\right) \hookrightarrow \Fun\left(\sD,\sD\right) \stackrel{\Xi}{\longlongrightarrow} \Fun\left(\sC,\sC\right)$$ is monoidal.
\end{lemma}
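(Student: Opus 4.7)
The plan is to invoke Proposition~\ref{prop:lxmon} to obtain a lax monoidal structure on $\Xi$, and then to verify that on the sub-$\i$-category $\Fun_J(\sD,\sD)$ the lax structure maps are all equivalences. By Proposition~\ref{prop:lxmon}, $\Xi : \Fun(\sD,\sD) \to \Fun(\sC,\sC)$, $G \mapsto RGL$, is canonically lax monoidal for composition; tracing the construction in its proof, the lax structure map at a pair $(G_1,G_2)$ is the natural transformation
\[
\mu_{G_1,G_2} \;=\; RG_1(\varepsilon_{G_2 L}) \;:\; RG_1 L \circ R G_2 L \;\longrightarrow\; R G_1 G_2 L,
\]
induced by the counit $\varepsilon : LR \Rightarrow \mathrm{id}_\sD$, while the unit coherence is the adjunction unit $\eta : \mathrm{id}_\sC \to RL$. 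Because $\Fun_J(\sD,\sD)$ contains $\mathrm{id}_\sD = J^0$ and is closed under composition (as $J^m \circ J^n \simeq J^{m+n}$), it is a monoidal sub-$\i$-category and the restriction of $\Xi$ inherits a lax monoidal structure. Since $L$ is fully faithful by the coreflective hypothesis, the unit $\eta$ is already an equivalence, so upgrading the restriction to strong monoidal reduces to showing that $RJ^m(\varepsilon_{J^n L X})$ is an equivalence for every $m,n \geq 0$ and every $X \in \sC$.

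I would prove this by induction on $m$. For $m=0$ the triangle identity $R(\varepsilon) \circ \eta R = \mathrm{id}$, together with $\eta$ being an equivalence, yields that $R(\varepsilon_Y)$ is an equivalence for all $Y$; the case $m=1$ is the hypothesis itself. For the inductive step, fix $m \geq 1$, $n \geq 0$, and $X \in \sC$, and apply $R J^m$ to the naturality square for $\varepsilon$ on the morphism $J(\varepsilon_{J^n L X}) : J L R J^n L X \to J^{n+1} L X$:
\[
\xymatrix@C=1.6cm{
RJ^m L R J L R J^n L X \ar[r]^-{RJ^m L R J(\varepsilon_{J^n L X})} \ar[d]_-{RJ^m(\varepsilon_{J L R J^n L X})} & RJ^m L R J^{n+1} L X \ar[d]^-{RJ^m(\varepsilon_{J^{n+1} L X})} \\
RJ^{m+1} L R J^n L X \ar[r]_-{RJ^{m+1}(\varepsilon_{J^n L X})} & RJ^{m+n+1} L X.
}
\]
The top horizontal arrow is $R J^m L$ applied to $RJ(\varepsilon_{J^n L X})$, which is an equivalence by the $m = 1$ hypothesis (using that $L$, $J^m$, and $R$ preserve equivalences). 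The right-hand vertical arrow is an equivalence by the inductive hypothesis at index $n+1$. Using the key observation that $L R J^n L X = L(R J^n L X)$ is in the essential image of $L$, the left-hand vertical arrow may be rewritten as $RJ^m(\varepsilon_{J^1 L(R J^n L X)})$, which is an equivalence by the inductive hypothesis at index $1$ with element $R J^n L X \in \sC$. Since the square commutes, the bottom arrow $RJ^{m+1}(\varepsilon_{J^n L X})$ is an equivalence, completing the induction.

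The main obstacle is the inductive step; its resolution rests entirely on the observation that $L R J^n L X = L(R J^n L X)$ already lies in the essential image of $L$, which ensures that the left vertical arrow in the naturality square is of the precise form $R J^m(\varepsilon_{J^k L Y})$ with $Y \in \sC$ needed to invoke the inductive hypothesis. Without this rewriting the induction would not close, since the hypothesis provides $R$-acyclicity of $J(\varepsilon_{-})$ only on objects of the form $J^n L X$ with $X \in \sC$.
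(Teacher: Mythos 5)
Your proposal is correct, and its framing coincides with the paper's: both invoke Proposition \ref{prop:lxmon} for the lax monoidal structure on $\Xi$, observe that the unit coherence is the adjunction unit $\eta$, hence an equivalence because $L$ is fully faithful (coreflectivity), and then reduce to showing the remaining comparison maps are equivalences. Where you diverge is in how those maps are handled. The paper only treats the $n$-ary comparison maps on the tuples $(J,J,\dots,J)$, via the one-line recursion $\mu_{n+1}\simeq RJ\left(\varepsilon_{J^nLX}\right)\circ RJL\left(\mu_n\right)$ with $\mu_0$ the unit; this uses the hypothesis verbatim and nothing more, and the comparison maps for general tuples $(J^{m_1},\dots,J^{m_k})$ then follow implicitly from associativity coherence and two-out-of-three. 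You instead prove the strictly stronger statement that $RJ^m\left(\varepsilon_{J^nLX}\right)$ is an equivalence for \emph{all} $m,n\ge 0$, i.e.\ that every binary structure map of the restricted functor is an equivalence, by induction on $m$ using the naturality square of $\varepsilon$ at $J\left(\varepsilon_{J^nLX}\right)$ together with the rewriting $LRJ^nLX=L\left(RJ^nLX\right)$, which places the left vertical arrow back in the scope of the inductive hypothesis; your base cases ($m=0$ from the triangle identity plus $\eta$ an equivalence, $m=1$ the hypothesis) and the two-out-of-three conclusion are all sound. What each approach buys: the paper's recursion is shorter and never needs anything beyond the hypothesized equivalences, while yours delivers exactly the binary-plus-unit criterion for strong monoidality, so the coherence bookkeeping hidden in the paper's argument becomes explicit, at the cost of an extra induction. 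One shared caveat, not a gap peculiar to your write-up: both arguments take for granted the identification of the lax structure maps of $\Xi$ with the counit-induced maps $RG_1\left(\varepsilon_{G_2L}\right)$ (the paper's proof asserts this with "these maps are induced by the co-unit"), which is the standard mate computation for the right adjoint $Q_1$ of the monoidal functor $i_1^\ast$.
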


\begin{proof}
The unit of the adjunction $L \dashv R$ is what is responsible for the non-strictness of the unit identities of the lax monoidal functor $\Xi.$ Since $L$ is fully faithful, the unit of the adjunction $L \dashv R$ is an equivalence. Hence, to show that $\Xi|_{\Fun_J\left(\sD,\sD\right)}$ is monoidal, it suffices to prove that all the comparison maps
$$\mu_n:\Xi\left(J\right) \circ \Xi\left(J\right) \circ \ldots \circ \Xi\left(J\right) \to \Xi\left(J \circ J \circ \ldots \circ J\right)$$ 
are equivalences. These maps are induced by the co-unit, and we have equivalences $$\mu_{n+1} \simeq RJ\left(\varepsilon_{J^nLX}\right) \circ RJL\left(\mu_n\right)$$ for all $n \ge 0.$ Moreover, $\mu_0$ is the unit, which is an equivalence.
\end{proof}

\begin{remark}\label{rmk:monom}
By essentially the same proof, since the monoidal structure on $$\Fun_{\Delta^1}\left(\cM,\cM\right) \simeq \left[\Fun\left(\sC,\sC\right) \times \Fun\left(\sD,\sD\right)\right] \times_{\Fun\left(\sC,\sD\right) \times \Fun\left(\sC,\sD\right)} \Fun\left(\sC,\sD\right)^{\Delta^1}$$ is given 	``pointwise,'' i.e.
$$\left(F',G',\alpha'\right) \otimes \left(G,G,\alpha\right)=\left(F\circ F', G\circ G',G'\left(\alpha\right) \circ \alpha'_F\right),$$ under the same assumptions as Lemma \ref{lem:monsup}, the composite
$$\Fun_J\left(\sD,\sD\right) \hookrightarrow \Fun\left(\sD,\sD\right) \stackrel{Q_1}{\longlongrightarrow} \Fun_{\Delta^1}\left(\cM,\cM\right)$$ is also monoidal.
\end{remark}

\begin{lemma}\label{lem:amthm}
Let $L \dashv R$ exhibit $\sC$ as a coreflective subcategory of $\sD.$ Denote the co-unit of this adjunction by $\varepsilon.$ Let $J$ be a comonad on $\sD$ such that for all $n,$ $RJ\left(\varepsilon_{J^nLX}\right)$ is an equivalence. Denote by $C=LR$ the comonad induced by the adjunction. Then $CJC$ has a canonical structure of a comonad and there is a commutative diagram of comonads (monoid objects in the opposite category of $\Fun\left(\sD,\sD\right)$)
$$\xymatrix{CJC \ar[d] \ar[r] & J \ar[d]\\
C \ar[r] & id_\sD.}$$
\end{lemma}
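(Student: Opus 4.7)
The plan is to transfer the comonad (comonoid) structure on $J$ through the monoidal functor $\Xi$ of Lemma \ref{lem:monsup} to obtain a comonad on $\sC$, then push it forward through the ``adjoint direction'' $\Phi := L(-)R$ to obtain the comonad on $CJC$, and finally assemble the four comonad morphisms of the square from the various co-units.

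First, I would invoke Lemma \ref{lem:monsup}: the hypothesis that $RJ(\varepsilon_{J^n LX})$ is an equivalence for all $n$ is precisely what makes the restriction $\Xi|_{\Fun_J(\sD,\sD)}:\Fun_J(\sD,\sD)\to\Fun(\sC,\sC)$ strong monoidal. Since $J$ is a comonoid in the composition monoidal structure on $\Fun(\sD,\sD)$ --- this is exactly the data of a comonad --- and the entirety of its comultiplication and co-unit lives in the monoidal subcategory $\Fun_J(\sD,\sD)$, its image $\Xi(J) = RJL$ inherits the structure of a comonoid, i.e., of a comonad on $\sC$. Explicitly, its co-unit is the composite $RJL \xrightarrow{R\varepsilon^J L} RL \xrightarrow{\eta^{-1}} \mathrm{id}_{\sC}$ (where $\eta^{-1}$ makes sense because $L$ is fully faithful, so the unit $\eta$ of $L\dashv R$ is an equivalence), and its comultiplication is obtained from that of $J$ using the monoidal compositors of $\Xi$.

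Next, I would equip $\Phi = L(-)R:\Fun(\sC,\sC)\to\Fun(\sD,\sD)$ with a canonical oplax monoidal structure: the compositor $\Phi(F'F)\to \Phi(F')\circ\Phi(F)=LF'RLFR$ is $LF'\eta FR$, which is actually an equivalence since $\eta$ is; the unit comparison $\Phi(\mathrm{id}_\sC)=LR=C\to\mathrm{id}_\sD$ is the co-unit $\varepsilon$, which is not an equivalence in general. Oplax monoidal functors preserve comonoid objects and morphisms between them, so applying $\Phi$ to the comonad $\Xi(J)$ produces the desired comonad structure on $\Phi(\Xi(J))=LRJLR=CJC$, and applying $\Phi$ to the comonad morphism $\Xi(J)\to\mathrm{id}_\sC$ (namely its co-unit) yields the left vertical arrow of the square as a morphism of comonads $CJC\to C$. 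The bottom and right arrows $\varepsilon:C\to\mathrm{id}_\sD$ and $\varepsilon^J:J\to\mathrm{id}_\sD$ are co-units and hence automatically comonad morphisms. For the top arrow $CJC\to J$, one uses the whiskered composite $\varepsilon\,J\,\varepsilon:LRJLR\to J$, and its status as a morphism of comonads follows from the observation that the natural transformation $\Phi\circ\Xi\Rightarrow \mathrm{id}_{\Fun(\sD,\sD)}$ with object-wise component $\varepsilon\,G\,\varepsilon:CGC\to G$ is an oplax monoidal natural transformation on $\Fun_J(\sD,\sD)$. Commutativity of the outer square at the level of underlying natural transformations is then a routine naturality check, since both composites $CJC\to\mathrm{id}_\sD$ agree with the co-unit of the comonad $CJC$, which equals $\varepsilon\circ\varepsilon_C\circ C\varepsilon^J C=\varepsilon^J\circ \varepsilon\,J\,\varepsilon$.

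The main obstacle will be the higher-coherence verification: in the $\infty$-categorical setting, checking that the above constructions assemble into morphisms of comonads --- and into a commutative square of such --- requires coherence data beyond the $1$-categorical level. The cleanest way to handle this, and which I would pursue, is via Remark \ref{rmk:monom}: the functor $Q_1:\Fun_J(\sD,\sD)\to \Fun_{\Delta^1}(\cM,\cM)$, $G\mapsto (RGL,G,\varepsilon_{GL})$, is already monoidal on $\Fun_J(\sD,\sD)$, so $Q_1(J)$ is a comonoid in $\Fun_{\Delta^1}(\cM,\cM)$ in an intrinsically coherent fashion. The entire square of comonads of the lemma is then extracted by applying further monoidal evaluation/projection functors from this structured comonoid, so that all the coherences of comonad morphisms and of square commutativity are packaged together rather than being assembled from disparate pieces.
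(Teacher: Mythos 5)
Your proposal is correct and takes essentially the same route as the paper: the paper likewise evades the hand-built coherence problem by working with comonoid objects in $\Fun_{\Delta^1}\left(\cM,\cM\right)$ via the monoidal functor $Q_1$ of Remark \ref{rmk:monom}, exactly as your final paragraph suggests. The one ingredient worth making explicit is that the coherent oplax monoidal structure on your $\Phi=L\left(\blank\right)R$ is supplied by the \emph{left} adjoint $Q_0$ of the monoidal functor $i_0^\ast$, so that $Q_0\circ i_0^\ast$ is an oplax monoidal comonad on $\Fun_{\Delta^1}\left(\cM,\cM\right)$ whose value on $Q_1\left(J\right)$ yields the comonoid $\left(RJL,CJC,\ldots\right)$, whose counit at $Q_1\left(J\right)$ yields $CJC\to J$, and whose image of the terminal comonoid map $Q_1\left(J\right)\to 1\!\!1$ yields $CJC\to C$; commutativity of the square is then immediate because $id_{\sD}$ is the terminal comonad.
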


\begin{proof}
By Lemma \ref{lem:adf}, $Q_1$ is fully faithful, so by Remark \ref{rmk:monom}, it follows that the composite $$\Fun_J\left(\sD,\sD\right) \hookrightarrow \Fun\left(\sD,\sD\right) \stackrel{Q_1}{\longlongrightarrow} \Fun_{\Delta^1}\left(\cM,\cM\right)$$ is a fully faithful monoidal functor. Thus, $Q_1\left(J\right)=\left(RJL,J,\varepsilon_{JL}\right)$ is a comonoid object of $\Fun_{\Delta^1}\left(\cM,\cM\right).$ Properly dualizing Proposition \ref{prop:lxmon}, $i_0^*$ has a \emph{left} adjoint $Q_0$ given by 
$$Q_0\left(F\right)=\left(F,LFR,LF\left(\eta\right)\right).$$ (Note moreover, $\eta$ is an equivalence). Hence there is a comonad $Q_0 \circ i_0^*$ on $\Fun_{\Delta^1}\left(\cM,\cM\right),$ and since $i_0^*$ is monoidal (as a strict map of simplicial monoids), $Q_0 \circ i_0^*$ is oplax monoidal. It follows that the canonical (essentially unique) map of comonoids
$$\left(RJL,J,\varepsilon_{JL}\right) \to \left(id_\sC,\id_\sD,LF\left(\eta\right)\right)=1\!\!1$$ induces a map of comonoids
$$Q_0 \circ i_0^*\left(RJL,J,\varepsilon_{JL}\right) \to Q_0 \circ i_0^*\left(id_\sC,\id_\sD,LF\left(\eta\right)\right).$$ Notice
\begin{eqnarray*}
Q_0 \circ i_0^*\left(RJL,J,\varepsilon_{JL}\right) &\simeq & Q_0\left(RJL\right)\\
&\simeq & \left(RJL,LRJLR,LF\left(\eta\right)\right)\\
&=& \left(RJL,CJC,LF\left(\eta\right)\right)
\end{eqnarray*}
and 
\begin{eqnarray*}
Q_0 \circ i_0^*\left(id_\sC,\id_\sD,L\left(\eta\right)\right) &\simeq & Q_0\left(id_\sC\right)\\
&\simeq & \left(id_\sC,LR,L\left(\eta\right)\right)\\
&=& \left(id_\sC,C,L\left(\eta\right)\right).
\end{eqnarray*}
So there is a canonical map of comonoid objects
$$\left(RJL,CJC,LF\left(\eta\right)\right) \to \left(id_\sC,C,L\left(\eta\right)\right).$$ Since $i_1^*$ is monoidal, we get a canonical map of comonads
$CJC \to C$ as desired.

Since the comonad $Q_0\circ i_0^*$ is the composition of two oplax monoidal functors, it becomes a comonoid object in the monoidal $\i$-category $$\Fun^\mathsf{lax}\left(\Fun_{\Delta^1}\left(\cM,\cM\right)^\otimes,\Fun_{\Delta^1}\left(\cM,\cM\right)^\otimes\right).$$ It follows that the co-unit of $Q_0\circ i_0^*$ is a map of oplax monoidal functors $$Q_0\circ i_0^* \to id_{\Fun_{\Delta^1}\left(\cM,\cM\right)^\otimes}.$$ Therefore, the co-unit evaluated at $\Xi\left(J\right)$ gives a map of comonoids
$$Q_0\circ i_0^*\Xi\left(J\right)=\left(RJL,CJC,LF\left(\eta\right)\right) \to \left(RJL,J,\varepsilon_{JL}\right)=\Xi\left(J\right).$$
Applying the monoidal functor $i_1^*,$ this induces a map of comonads $CJC \to J.$ Finally, the diagram commutes since $id_\sD$ is the terminal comonad on $\sD.$
\end{proof}

\begin{remark}
Let $p:\cM \to \Delta^1$ be a biCartesian fibration exhibiting an adjunction $L \dashv R$ with $$L:\sC \hookrightarrow \sD.$$ 
We can identify $\cM$ with the (contravariant) relative nerve (\cite[Definition 3.2.5.2]{htt}) of the functor
$$R:\left(\Delta^1\right)^{op} \to \Set^{\Delta^{op}}$$ whose image is $$R:\sD \to \sC.$$ The commutative diagram of simplicial sets
$$\xymatrix{\sD \ar[r]^-{R} \ar[d]_-{R} & \sC \ar[d]^-{id_\C}\\
\sC \ar[r]_-{id_\sC} & \sC}$$
is a natural transformation $$\mu:R \Rightarrow \Delta_{\sC}.$$ Since the relative nerve construction if functorial, it induces a morphism
$$N_{\mu}\left(\Delta^1\right):\cM=N_{R}\left(\Delta^1\right) \to N_{\Delta_{\sC}}\left(\Delta^1\right)=\Delta \times \sC$$ in $\Set^{\Delta^{op}}/\Delta^1.$ Composition with the projection to $\sC$ yields a functor $$\tau:\cM \to \sC$$ such that $\tau \circ i_0=id_{\sC}.$ Informally, it is given by $$\tau\left(0,C\right)=C$$ and
$$\tau\left(1,D\right)=R\left(D\right).$$
\end{remark}

It is convenient to recall the framework of homotopy coherent monads of \cite{RiehlVerityMon}. We will adapt the set up for comonads. Let $\underline{\mathsf{Adj}}$ denote the $2$-category with a generic adjunction. It has two objects $+$ and $-.$ Moreover,
$$\xymatrix{\underline{\mathsf{Adj}}\left(+,+\right)=\Delta_+ & \underline{\mathsf{Adj}}\left(-,-\right)=\Delta_+^{op}\\
\underline{\mathsf{Adj}}\left(-,+\right)=\Delta_\i & \underline{\mathsf{Adj}}\left(+,-\right)=\Delta_\i^{op},}$$
where $\Delta_\infty$ is the full wide subcategory of $\Delta$ on those morphisms which preserves top elements. Denote by $\underline{\mathsf{CMnd}}$ the full subcategory on the object $-.$ This is the same as the $2$-category with one object corresponding to the monoidal structure on $\Delta_+^{op}$ given by the join construction. Regard this strict $2$-category as a category enriched in simplicial sets, via the nerve construction. Denote by $\underline{\Set}^{\Delta^{op}}$ the category simplicial sets equipped with its canonical self-enrichment given by the fact that it is Cartesian closed. Furthermore, denote by $\underline{\mathsf{QCat}}$ the full subcategory on the quasicategories. Composition in the $2$-category $\underline{\mathsf{Adj}}$ equips $\Delta_\i^{op}$ with the canonical structure of a left module for the monoidal category $\Delta_+^{op},$ witnessed by a canonical $2$-functor $$\mathbb{W}:\underline{\mathsf{CMnd}} \to \Cat$$ sending $-$ to $\Delta_\i^{op},$ where the induced strict monoidal functor
$$\Delta_+^{op} \to \Fun\left(\Delta_\i^{op},\Delta_\i^{op}\right)$$
encodes the action. Using the nerve construction, this can be regarded as an enriched functor
$$\mathbb{W}:\underline{\mathsf{CMnd}} \to \underline{\Set}^{\Delta^{op}}$$
of simplicial categories. A \emph{homotopy coherent comonad} on a quasicategory $\sD$ is a simplicially enriched functor
$$\overline{J}:\underline{\mathsf{CMnd}} \to \underline{\mathsf{QCat}}$$ which sends $-$ to $\sD.$ This is the same data as a strict map of simplicial monoids
$$N\left(\Delta_+^{op}\right) \to \Fun\left(\sD,\sD\right).$$ The $\i$-category $\CoAlg_J\left(\sD\right)$ can be modeled by the quasicategory obtained as the weighted limit $\left\{\mathbb{W},J\right\}$ in the simplicially enriched category $\underline{\mathsf{QCat}}.$ It has the following universal property:

For any quasicategory $\cK,$ there is a canonical isomorphism of simplicial sets
$$\Fun\left(\cK,\CoAlg_J\left(\sD\right)\right) \cong
\mathsf{Nat}_{\Delta}\left(\mathbb{W}\left(\blank\right),\Fun\left(\cK,\overline{J}\left(\blank\right)\right)\right),$$ where the right hand side is the simplicial set of simplicially enriched natural transformations of simplicial functors
$$\underline{\mathsf{CMnd}} \to \underline{\Set}^{\Delta^{op}}.$$
This universal property allows a direct description of the simplicial set $\CoAlg_J\left(\sD\right),$ by letting $\cK=\Delta^n,$ and taking $0$-simplices of functor quasicategories. Unwinding the definitions, an $n$-simplex of $\CoAlg_J\left(\sD\right)$ can be described as a map
$$\sigma:\Delta_\i^{op} \to \Fun\left(\Delta^n,\sD\right)$$ such that the following diagram commutes
$$\xymatrix@C=2cm{\Delta_+^{op} \ar[r]^-{\Fun\left(\Delta^n,\overline{J}\left(\blank\right)\right)} \ar[d]_-{\mathbb{W}} & \Fun\left( \Fun\left(\Delta^n,\sD\right),\Fun\left(\Delta^n,\sD\right)\right) \ar[d]^-{\left(\blank\right) \circ \sigma}\\
\Fun\left(\Delta_+^{op},\Delta_+^{op}\right) \ar[r]_-{\sigma \circ \left(\blank\right)} & \Fun\left(\Delta_\i^{op},\Fun\left(\Delta^n,\sD\right),\Fun\left(\Delta^n,\sD\right)\right).}$$

The comonad $J$ is encoded as a strict map of simplicial monoids $$\overline{J}:\Delta_+^{op} \to \Fun\left(\sD,\sD\right).$$ For a given object $D$ in $\sD,$ we get by composition a functor
$$\mathsf{ev}_D \circ \overline{J}:\Delta_+^{op} \to \sD.$$ Unwinding the definitions above even further, the structure of a $J$-coalgebra on $D$ is a lift
$$\xymatrix@C=2cm{\Delta_+^{op}  \ar@{^{(}->}[d] \ar[r]^-{\mathsf{ev}_D \circ \overline{J}} & \sD\\
\Delta_\i^{op} \ar@{-->}[ru]_-{A} &}$$
such that if $\beta=\left(D=A_0 \to JD=A_1\right),$ then (the $1$-skeletal image of) $A$ can be described as
$$\xymatrix@C=75pt{ D \ar@<-.75ex>@{-->}[r]|(.4)\beta  & JD \ar@<-.75ex>[l]|(.4){\epsilon_D} \ar@<+.75ex>[r]|(.4){\Delta_D} \ar@<-2.25ex>@{-->}[r]|(.4){J\left(\beta\right)} & J^2D \ar@<+.75ex>[l]|(.4){J\left(\epsilon_D\right)} \ar@<-2.25ex>[l]|(.4){\epsilon_{JD}} \ar@<2.25ex>[r]|(0.4){\Delta_{JD}} \ar@<-.75ex>[r]|(0.4){J\left(\Delta_D\right)} \ar@<-3.75ex>@{-->}[r]|(0.4){J^2\left(\beta\right)}& J^3D \ar@<-3.75ex>[l]|(.4){\epsilon_{J^2D}} \ar@<-.75ex>[l]|(.4){J\left(\epsilon_{JD}\right)} \ar@<2.25ex>[l]|(0.4){J^2\left(\epsilon_D\right)}\cdots}$$
Notice that the diagram without the dotted arrows is precisely $\mathsf{ev}_D \circ \overline{J},$ and the dotted arrows are provided by the lift. We stress that one needs the full diagram, not just $\beta,$ as the lift also includes higher simplices that provide coherence data for the algebraic relations needed for the above to describe a homotopy coherent functor of $\i$-categories $\Delta_\i^{op} \to \sD.$ Nonetheless, we will often denote a $J$-coalgebra informally as $$\beta:D \to JD.$$

\begin{remark}\label{rmk:goon}
Let $\cK$ be an $\i$-category and let $J$ be a comonad on $\sD.$ Then composition with $J$ induces a comonad $J_*$ on $\Fun\left(\cK,\sD\right).$ Unwinding the definitions, one has a canonical isomorphism of simplicial sets
$$\CoAlg_{J_*}\left(\Fun\left(\cK,\sD\right)\right) \cong \Fun\left(\cK,\CoAlg_J\left(\sD\right)\right).$$
Applying this to $\cK=\Delta^1$ implies that for $\left(X,\beta\right)$ and $\left(X',\beta'\right)$ $J$-coalgebras, we have a pullback diagram
$$\xymatrix{\Map_{\CoAlg_J\left(\sD\right)}\left(\left(X,\beta\right),\left(X',\beta'\right)\right) \ar[d] \ar[r] & \CoAlg_{J_*}\left(\sD^{\Delta^1}\right) \ar[d]\\
\ast \ar[r]_-{\left(\left(X,\beta\right),\left(X',\beta'\right)\right)} & \CoAlg_J\left(\sD\right).}$$
This means an $n$-simplex of $\Map_{\CoAlg_J\left(\sD\right)}\left(\left(X,\beta\right),\left(X',\beta'\right)\right)$ can be identified with an $n$-simplex of $\CoAlg_{J_*}\left(\sD^{\Delta^1}\right)$ which upon evaluation and $0$ and $1$ is a degenerate $n$-simple on $\left(X,\beta\right)$ and $\left(X',\beta'\right)$ respectively. But an $n$-simplex of $\CoAlg_{J_*}\left(\sD^{\Delta^1}\right)$ is the same data as a $J_*$-coalgebra in $\Fun\left(\Delta^n \times \Delta^1,\sD\right).$ This is the same as a functor
$$\Delta_\i^{op} \to \Fun\left(\Delta^n \times \Delta^1,\sD\right)$$ which is pointwise a $J$-coalgebra. This is the same as an $n$-simplex in
$\Fun\left(\Delta_\i^{op},\Fun\left(\Delta^1,\sD\right)\right)$ which is pointwise a $J$-coalgebra, and that is the same as an $n$-simplex in $\Fun\left(\Delta^1,\Fun\left(\Delta_\i^{op},\sD\right)\right)$ which takes values in $J$-coalgebras. It follows that $\CoAlg_J\left(\sD\right)$ is a full subcategory of $\Fun\left(\Delta_\i^{op},\sD\right).$
\end{remark}


\begin{theorem}\label{thm:odpsec}
Let $L \dashv R$ exhibit $\sC$ as a coreflective subcategory of $\sD.$ Denote the co-unit of this adjunction by $\varepsilon.$ Let $J$ be a comonad on $\sD$ such that for all $n,$ $RJ\left(\varepsilon_{J^nLX}\right)$ is an equivalence. Then there is a canonical structure on the functor $RJL$ of a comonad on $\sC$ and there is a pullback diagram
$$\xymatrix{\CoAlg_{RJL}\left(\sC\right) \ar[r] \ar[d]_-{U_{RJL}} & \CoAlg_{J}\left(\sD\right) \ar[d]^-{U_J}\\
\sC \ar@{^{(}->}[r]_-{L} & \sD.}$$
\end{theorem}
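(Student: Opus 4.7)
The plan is to leverage the monoidal/comonoidal structure established in the previous lemmas. First, since $L$ is fully faithful, the unit $\eta: \id_\sC \to RL$ is an equivalence, which together with the standing hypothesis $RJ(\varepsilon_{J^n LX}) \simeq \id$ places us in the setting of Lemma~\ref{lem:monsup}. Consequently, the restriction $\Xi|_{\Fun_J(\sD,\sD)}$ is monoidal, and since $J$ is a comonoid in $\Fun_J(\sD,\sD)$ with respect to composition (i.e., a comonad), its image $\Xi(J) = RJL$ is a comonoid in $\Fun(\sC,\sC)$, i.e., a comonad on $\sC$. Explicitly, the counit is $RJL \xrightarrow{R\varepsilon^J L} RL \xrightarrow{\eta^{-1}} \id_\sC$ and the comultiplication is $RJL \to RJJL \simeq RJLRJL$, where the displayed equivalence is the monoidality constraint for $\Xi$ applied to the pair $(J,J)$.

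Next, construct a comparison functor $\Phi: \CoAlg_{RJL}(\sC) \to \sC \times_{\sD} \CoAlg_{J}(\sD)$. Given an $RJL$-coalgebra $\gamma: X \to RJLX$, send it to the pair $(X, LX)$ where $LX$ carries the $J$-coalgebra structure
$$LX \xrightarrow{\;L\gamma\;} LRJLX \xrightarrow{\;\varepsilon_{JLX}\;} JLX.$$
To promote this to a genuine $J$-coalgebra in the homotopy-coherent sense of Riehl--Verity, we use the higher coherence data supplied by the monoidality of $\Xi$, or equivalently by the fully faithful monoidal embedding $Q_1$ of Lemma~\ref{lem:amthm}: the $n$-fold iterated structure maps of an $RJL$-coalgebra transport through $L$ and the various counits $\varepsilon_{J^k LX}$ to produce the simplicial diagram $\Delta_\infty^{op} \to \sD$ encoding a $J$-coalgebra on $LX$.

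The heart of the proof is to exhibit an inverse. Given $(X, D, \beta)$ in the pullback, with $D \simeq LX$ and $\beta: D \to JD$ a $J$-coalgebra, we may replace $D$ by $LX$; then $\gamma : X \xleftarrow[\sim]{\eta_X} RLX \xrightarrow{R\beta} RJLX$ defines the underlying structure map of an $RJL$-coalgebra on $X$. To assemble a full homotopy-coherent $RJL$-coalgebra we need compatible $n$-fold iterates $X \to (RJL)^n X$. These are obtained by applying $R$ to the iterated structure map $\beta^{(n)} : LX \to J^n LX$ and then using the canonical equivalences
$$(RJL)^{n+1} X \;\simeq\; RJL(RJ^n LX) = RJLRJ^n LX \xrightarrow{\;RJ\,\varepsilon_{J^n LX}\;} RJ^{n+1} LX,$$
in which the final arrow is an equivalence by the hypothesis of the theorem. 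Iterating yields $(RJL)^n X \simeq RJ^n LX$ for all $n \geq 0$, which allows us to pull the $\Delta_\infty^{op}$-diagram encoding $\beta$ back along $R$ to one encoding $\gamma$.

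The chief technical obstacle is to assemble the individual identifications $(RJL)^n X \simeq RJ^n LX$ into a natural equivalence of $\Delta_\infty^{op}$-diagrams, so that the weighted-limit descriptions $\CoAlg_{RJL}(\sC) = \{\mathbb{W}, \overline{RJL}\}$ and $\CoAlg_J(\sD) = \{\mathbb{W}, \overline{J}\}$ can be compared directly. This amounts to verifying that the monoidality isomorphisms of $\Xi|_{\Fun_J(\sD,\sD)}$ intertwine the simplicial face and degeneracy operators on the two sides. I would handle this by re-expressing everything via the fully faithful monoidal functor $Q_1: \Fun_J(\sD,\sD) \hookrightarrow \Fun_{\Delta^1}(\cM,\cM)$ from Lemma~\ref{lem:amthm}: a $Q_1(J)$-coalgebra in the arrow $\i$-category $\cM$ unwinds to precisely a triple $(X, D, \alpha)$ consisting of an $RJL$-coalgebra on $X$ (via $i_0^*$), a $J$-coalgebra on $D$ (via $i_1^*$), and a compatible structure morphism $\alpha: LX \to D$, with all higher coherences automatic from the monoidality. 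Projecting to $\sC \times_\sD \CoAlg_J(\sD)$ and inverting via $\Phi, \Psi$ then gives the desired pullback square.
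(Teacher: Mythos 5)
Your overall architecture mirrors the paper's proof: the comonad structure on $RJL$ is extracted from the monoidality of $\Xi|_{\Fun_J\left(\sD,\sD\right)}$ (Lemma \ref{lem:monsup}), the forward functor transports a coalgebra structure along $L$ and the counit, and the inverse is built from $\tilde \beta = R\left(\beta\right)\circ \eta_X$ together with the identifications $\left(RJL\right)^n X \simeq RJ^nLX$ forced by the hypothesis that each $RJ\left(\varepsilon_{J^nLX}\right)$ is an equivalence. You also correctly isolate the one genuinely hard point: assembling these level-wise identifications into a homotopy-coherent $RJL$-coalgebra, i.e.\ producing a lift of $\mathsf{ev}_X \circ i_0^\ast \circ \mathbb{J}:\Delta_+^{op} \to \sC$ to $\Delta_\i^{op}$.

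However, your proposed resolution of that point does not work as stated. A $Q_1\left(J\right)$-coalgebra in $\cM$ is a coalgebra structure on a \emph{single} object of $\cM$, and every object of $\cM$ lies in exactly one fiber of $\cM \to \Delta^1$; since $Q_1\left(J\right)$ is an endofunctor over $\Delta^1$, such a coalgebra is either just an $RJL$-coalgebra in $\sC$ or just a $J$-coalgebra in $\sD$ --- it does not unwind to a triple $\left(X,D,\alpha\right)$ with $\alpha:LX \to D$. The compatibility you want lives on a morphism of $\cM$ over $0 \to 1$ (equivalently on a section of $\cM \to \Delta^1$), and upgrading that to a statement about coalgebra categories would require showing that the biCartesian fibration $\cM \to \Delta^1$ lifts to one on coalgebras; the existence of the coCartesian lift there --- that every $J$-coalgebra structure on an object of the form $LX$ descends coherently to an $RJL$-coalgebra structure on $X$ --- is essentially the theorem itself, not a shortcut to it. The paper closes exactly this gap by an explicit skeletal induction: it produces the equivalence $\mu_X:\mathsf{ev}_X \circ i_0^\ast \circ \mathbb{J} \Rightarrow \left(R \circ B\right)|_{\Delta_+^{op}}$, where $B:\Delta_\i^{op} \to \sD$ encodes the given $J$-coalgebra on $LX$, and then constructs the lift $A:\Delta_\i^{op} \to \sC$ one skeleton $\left(\Delta_\i\right)^{op}_{\le n}$ at a time, using naturality of $\varepsilon$ and the recursion $\mu_{n}=RJ\left(\varepsilon_{J^{n-1}LX}\right) \circ RJL\left(\mu_{n-1}\right)$ to verify that the $n$-th added arrow may be taken to be $\left(RJL\right)^{n-1}\left(\tilde\beta\right)$, and finally upgrades the construction to a functor $\kappa:\sC\times_{\sD}\CoAlg_J\left(\sD\right) \to \CoAlg_{RJL}\left(\sC\right)$ by running the same induction in $\Fun\left(\Delta^n,\sC\right)$. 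Some such argument is required; ``all higher coherences are automatic from the monoidality'' is precisely the assertion that has to be proved.
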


\begin{proof}
By the proof of Lemma \ref{lem:amthm}, $Q_1\left(J\right)=\left(RJL,J,\varepsilon_{JL}\right)$ is a comonoid object of $\Fun_{\Delta^1}\left(\cM,\cM\right),$ and since $i_0^*$ is monoidal,
$$RJL=i_0^*Q_1\left(J\right)$$ is a comonad on $\sC.$ By the proof of the same lemma, the map
$$CJC \stackrel{C\left(\epsilon_C\right)}{\longlongrightarrow} C$$ is a morphism of comonads, where $\epsilon$ is the co-unit of $J$. Thus there is an induced functor
$$\CoAlg_{CJC}\left(\sD\right) \to \CoAlg_{C}\left(\sD\right).$$
This implies that if $\beta:X \to CJCX$ is a co-algebra for $CJC,$ that the composite
$$\xymatrix{X \ar[r]^-{\beta} \ar@{-->}[rd]_-{\sim} & CJCX \ar[d]^-{C\left(\epsilon_{CX}\right)}\\
& CX}$$
is an a co-algebra for $C.$ But $C$ is an idempotent comonad, so this implies that the dotted arrow is an equivalence. It follows that the underlying object of any $CJC$ co-algebra lies in the essential image of $C,$ or equivalently, the essential image of $L.$ (More precisely, a $CJC$ co-algebra has higher coherency data involving higher powers of $CJC,$ and each of these maps to higher powers of $C$ in a coherent way, exhibiting each $CJC$-coalgebra as a $C$-coalgebra.)

Notice that we have a commutative diagram (up to canonical homotopy)
$$\xymatrix{\sC \ar[r]^-{RJL} \ar@{^{(}->}[d]_-L & \sC \ar@{^{(}->}[d]^-L\\
\sC \ar[r]_-{CJC} & \sD,}$$
since
\begin{eqnarray*}
CJC \circ L &\simeq & CJLRL\\
&\simeq & CJL\\
&\simeq & L \circ RJL
\end{eqnarray*}
as $RL \simeq id_{\sC}.$
Since $L$ is fully faithful, this implies that $CJC$ restricts to $RJL$ on the subcategory $\sC,$ and thus $\CoAlg_{RJL}\left(\sC\right)$ is equivalent to the full subcategory of $\CoAlg_{CJC}\left(\sD\right)$ on those in the essential image of $\sC,$ and this equivalence is realized at the level of objects by sending a $RJL$-coalgebra $X$ in $\sC$ to $LX.$ But we've already seen that all $CJC$-coalgebras have their underling object in the essential image of $L.$ Thus we have commutative diagram
$$\xymatrix{\CoAlg_{RJL}\left(\sC\right) \ar[d]_-{U_{RJL}} \ar[r]^-{\sim} & \CoAlg_{CJC}\left(\sD\right) \ar[d]^-{U_{CJC}}\\
\sC \ar[r]_-{\sim} & \CoAlg_C{\sD}.}$$
Since we also have a map of comonads $CJC \to J,$ there is an induced functor
$$\rho:\CoAlg_{CJC}\left(\sD\right) \to \CoAlg_{J}\left(\sD\right).$$
Informally, given any $CJC$-coalgebra $\beta:X \to CJCX,$ the composite
$$X \stackrel{\beta}{\to} CJCX \stackrel{\varepsilon_{JCX}}{\longlongrightarrow} JCX \stackrel{J\left(\varepsilon_{X}\right)}{\longlongrightarrow} JX$$ exhibits $X$ as a $J$-coalgebra. Combining this observation with the functor $\rho,$ we conclude that if $$X \to RJLX$$ is an $RJL$-coalgebra, then $$LX \to JLX$$ is a $J$-coalgebra where the latter map is induced by the adjunction $L \dashv R.$ In particular, there is a functor
$$\tilde \rho:\CoAlg_{RJL}\left(\sC\right) \to \CoAlg_J\left(\sD\right).$$

We will now show the converse, namely, if
$$\beta:LX \to JLX$$
then the adjoint map $$\tilde \beta:X \to RJLX$$ is an $RJL$-coalgebra. Recall that $Q_1\left(J\right)$ is a comonoid object in $\Fun_{\Delta^1}\left(\cM,\cM\right).$ Encode this by a strict map of simplicial monoids
$$\mathbb{J}:\Delta_+^{op} \to \Fun_\Delta^1\left(\cM,\cM\right).$$ Then $i_0^* \circ \mathbb{J}=:\overline{RJL}$ and $i_1^* \circ \mathbb{J}=:\overline{J}$ encode the comonad structure of $RJL$ and $J$ respectively. Moreover, it follows from Remark \ref{rmk:monom} that for any object $X$ of $\sC,$ the diagram
$$\xymatrix{\Delta_+^{op} \ar[d]_-{\mathbb{J}} \ar[r]^-{\mathbb{J}} & \Fun_{\Delta^1}\left(\cM,\cM\right) \ar[rd]^-{i_1^\ast} & & \\
\Fun_{\Delta^1}\left(\cM,\cM\right) \ar[rd]_-{i_0^\ast} & & \Fun\left(\sD,\sD\right) \ar[rd]^-{\mathsf{ev}_{LX}}  &\\
& \Fun\left(\sC,\sC\right)\ar[rd]_-{\mathsf{ev}_X} & & \sD \ar[ld]^-{R}\\
 & & \sC& .}$$
commutes up to a canonical homotopy
$$\mu_X:\mathsf{ev}_{X} \circ i_0^* \circ \mathbb{J} \stackrel{\sim}{\Rightarrow} R \circ \mathsf{ev}_{LX} \circ i_1^* \circ \mathbb{J}.$$ Suppose that $$B:\Delta^{op}_\i \to \cD$$ encodes the $J$-coalgebra on $LX,$ with $$\beta=\left(B_0=LX \to B_1=JLX\right).$$ 
Then $$B|_{\Delta_+^{op}} = \mathsf{ev}_{LX} \circ i_1^* \circ \mathbb{J}.$$ It follows that 
$$\mu_X:\mathsf{ev}_{X} \circ i_0^* \circ \mathbb{J} \stackrel{\sim}{\Rightarrow} \left(R \circ B\right)|_{\Delta^{op}_+}.$$ Since $\mu_X$ is component-wise an equivalence in $\sC,$ there exists a lift
$$\xymatrix@C=2cm{\Delta_+^{op}  \ar@{^{(}->}[d] \ar[r]^-{\mathsf{ev}_X \circ i_0^\ast \circ \mathbb{J}} & \sC\\
\Delta_\i^{op} \ar@{-->}[ru]_-{A} &}$$
and an equivalence $$\widetilde{\mu}_X:A \to R \circ B$$ whose components are the same as $\mu_X$ and are the vertical maps in the following diagram, which commutes up to canonical coherent homotopies, with the dashed arrows indicating the parts of the diagram $A$ which are not included in $\mathsf{ev}_{X} \circ i_0^* \circ \mathbb{J}$:
\begin{center}
\resizebox{6in}{!}{$\xymatrix@C=150pt@R=5cm{ X  \ar[d]_-{\eta_X} \ar[d]^-{\rotatebox{90}{$\sim$}} \ar@<-.75ex>@{.>}[r]|(.4){\widetilde{\beta}}  & RJLX \ar[d]_-{id}  \ar@<-.75ex>[l]|(.4){\epsilon'_X} \ar@<+.75ex>[r]|(.4){\Delta'_X} \ar@<-2.25ex>@{.>}[r] & RJL\left(RJLX\right) \ar[d]_-{\mu_1=RJ\left(\varepsilon_{JLX}\right)} \ar[d]^-{\rotatebox{90}{$\sim$}}  \ar@<+.75ex>[l]|(.4){RJL\left(\epsilon'_X\right)} \ar@<-2.25ex>[l]|(.4){\epsilon'_{RJLX}} \ar@<2.25ex>[r]|(0.4){\Delta_{RJLX}} \ar@<-.75ex>[r]|(0.4){RJL\left(\Delta'_X\right)} \ar@<-3.75ex>@{.>}[r] & \left(RJL\right)^3 X \ar[d]_-{\mu_2=RJ\left(\varepsilon_{J^2LX}\right) \circ RJL\left(RJ\left(\varepsilon_{JLX}\right)\right)} \ar[d]^-{\rotatebox{90}{$\sim$}} \ \ar@<-3.75ex>[l]|(.4){\epsilon'_{RJLRJLX}} \ar@<-.75ex>[l]|(.4){RJL\left(\epsilon'_{RJLX}\right)} \ar@<2.25ex>[l]|(0.4){RJLRJL\left(\epsilon'_X\right)}\cdots\\
RLX \ar@<-.75ex>[r]|(.4){R\left(\beta\right)}  & RJLX \ar@<-.75ex>[l]|(.4){R\left(\epsilon_{LX}\right)} \ar@<+.75ex>[r]|(.4){\Delta_{LX}} \ar@<-2.25ex>[r]|(.4){RJ\left(\beta\right)} & RJ^2LX \ar@<+.75ex>[l]|(.4){RJ\left(\epsilon_{LX}\right)} \ar@<-2.25ex>[l]|(.4){R\left(\epsilon_{JLX}\right)} \ar@<2.25ex>[r]|(0.4){R\left(\Delta_{JD}\right)} \ar@<-.75ex>[r]|(0.4){RJ\left(\Delta_{LX}\right)} \ar@<-3.75ex>[r]|(0.4){RJ^2\left(\beta\right)}& RJ^3LX \ar@<-3.75ex>[l]|(.4){R\left(\epsilon_{J^2LX}\right)} \ar@<-.75ex>[l]|(.4){RJ\left(\epsilon_{JLX}\right)} \ar@<2.25ex>[l]|(0.4){RJ^2\left(\epsilon_{LX}\right)}\cdots
}$}
\end{center}
where $\epsilon$ and $\Delta$ are the co-unit and co-multiplication of $J$ and $\epsilon'$ and $\Delta'$ are the co-unit and co-multiplication of $RJL.$ The notation $\mu_n$ refers to the proof of Lemma \ref{lem:monsup}. The first dashed arrow is $R\left(\beta\right) \circ \eta_X,$ which is $\tilde \beta$ by definition. We will inductively build the desired lift by defining a lift
 $$\xymatrix@C=2cm{\left(\Delta_+\right)_{\le n}^{op} \ar@{^{(}->}[d] \ar[r]^-{\mathsf{ev}_X \circ i_0^\ast \circ \mathbb{J}} & \sC\\
\left(\Delta_\i\right)_{\le n}^{op} \ar@{-->}[ru]_-{\tilde A_n} &}$$
for each $n$ by induction.
Recall from the proof of Lemma \ref{lem:monsup} that
$$\mu_n=RJ\left(\varepsilon_{J^{n-1}LX} \circ L\left(\mu_{n-1}\right)\right).$$
We will take $\tilde A_1$ to be induced in the obvious way from the first commuting square in the diagram. Suppose the lift $\tilde A_n$ has been defined. By naturality and inductive hypothesis, the following diagram commutes up to canonical homotopy
$$\xymatrix@C=3.5cm@R=2cm{L\left(\left(RJL\right)^{n-1}X\right) \ar@{.>}[r]^-{L\left(\left(RJL\right)^{n-1}\left(\tilde \beta\right)\right)} \ar[d]_-{L\left(\mu_{n-1}\right)} & L\left(\left(RJL\right)^{n}X\right) \ar[d]^-{L\left(\mu_n\right)}\\
L\left(RJ^{n-1}LX\right) \ar[r]^-{L\left(RJ^{n-1}\left(\beta\right)\right)} \ar[d]_-{\varepsilon_{J^{n-1}LX}} & L\left(RJ^nLX\right) \ar[d]^-{\varepsilon_{J^nLX}}\\
J^{n-1}LX \ar[r]_-{J^{n-1}\left(\beta\right)} & J^nLX}$$
The image of this diagram under $RJ$ yields a commuting diagram
$$\xymatrix@C=3cm@R=1.5cm{\left(RJL\right)^{n}X \ar@{.>}[r]^-{\left(\left(RJL\right)^{n}\left(\tilde \beta\right)\right)} \ar[d]_-{RJL\left(\mu_{n-1}\right)} & \left(\left(RJL\right)^{n+1}X\right) \ar[d]^-{RJL\left(\mu_n\right)}\\
\left(RJLJ^{n-1}LX\right) \ar[r]^-{\left(RJ^{n}\left(\beta\right)\right)} \ar[d]_-{RJ\left(\varepsilon_{J^{n-1}LX}\right)} & \left(RJLJ^{n}LX\right) \ar[d]^-{R\left(\varepsilon_{J^nLX}\right)}\\
RJ^{n}LX \ar[r]_-{RJ^{n}\left(\beta\right)} & RJ^{n+1}LX.}$$
Taking the outer square induces a lift $\tilde A_{n+1}.$ Thus, there exists a lift $A$ as claimed. Notice that the $n^{th}$ dotted arrow is $\left(RJL\right)^{n-1}\left(\tilde \beta\right).$ It follows that the lift $A$ exhibits $$\tilde \beta:X \to RJLX$$ as an $RJL$-coalgebra. Notice that for any $\i$-category $\cK,$ the same logic can be applied to construct in the same inductive fashion an $RJL_*$-coalgebra in $\Fun\left(\cK,\sC\right)$ on $R\ \circ G$ for any $J_*$-coalgebra $G$ in $\Fun\left(\cK,\sD\right)$ for which $G=L \circ F,$ for some $$F:\cK \to \sC.$$ Moreover, this can be done in such a way that it agrees with the induced $RJL$-coalgebra structure on $RLF\left(x\right)$ from the $J$-coalgebra structure on $LF\left(x\right),$ for every object $x$ of $\cK.$ Applying this to $\cK=\Delta^n$ implies that the construction induces a functor of $\i$-categories
$$\kappa:\sC \times_{\sD} \CoAlg_J\left(\sD\right) \to \CoAlg_{RJL}\left(\sC\right),$$
where
$$\xymatrix{\sC \times_{\sD} \CoAlg_J\left(\sD\right) \ar[r] \ar[d]_-{U^{\sC}_J} & \CoAlg_J\left(\sD\right) \ar[d]^-{U_J} \\
\sC \ar[r]_-{L} & \sD}$$
pullback diagram of $\i$-categories.
Moreover, the functor $$\tilde \rho:\CoAlg_{RJL}\left(\sC\right) \to \CoAlg_J\left(\sD\right)$$ induces a functor
$$\widehat{\rho}:\CoAlg_{RJL}\left(\sC\right) \to \sC \times_{\sD} \CoAlg_J\left(\sD\right).$$ It is easy to check that these two functors are inverse to one another.
\end{proof}

\section{Linear differential operators and D-modules}\label{sec:DMod}

\begin{definition}
Let $\cX \in \bH$ and consider the morphism $\eta:\cX \to \cX_{dR}$ to its de Rham stack and the induced adjunction between the $\i$-categories of modules: 
$$\xymatrix{\Mod_{\O_\cX} \ar@<-0.5ex>[r]_-{\eta_\star} & \Mod_{\O_{\cX_{dR}}}. \ar@<-0.5ex>[l]_-{\eta^\star}}$$
Denote the comonad
$$\eta^\star \eta_\star:\Mod_{\O_\cX} \to \Mod_{\O_\cX}$$ by $\J^\i.$ For $\cF$ an $\O_\cX$-module, $\J^\i \cF$ is called the \textbf{jet module} of $\cF.$
\end{definition}

\begin{definition}
Let $\cX$ be a derived stack and let $\cF$ and $\cG$ be $\O_\cX$-modules A \textbf{linear differential operator} from $\cF$ to $\cG$ is a morphism $\J^\i\cF \to \cG$. We will call the $\O_{\cX}$-module $$\Diff\left(\cF,\cG\right):=\Mod_{\cO_X}\left(\J^\i\cF,\cG\right)$$ the \textbf{module of linear differential operators} from $\cF$ to $\cG.$ By the \textbf{sheaf of differential operators on $\cX$} we mean the $\O_\cX$-module $$\Diff\left(\cX\right):=\Diff\left(\O_\cX,\O_\cX\right).$$
 \end{definition}
 
 \begin{lemma}
 With $\cF$ and $\cG$ as above, there is a canonical equivalence of $\O_{\cX_{dR}}$-modules
 $$\Mod_{\O_{\cX_{dR}}}\left(\eta_\star \cF,\eta_\star \cG\right) \simeq \eta_\star \Diff\left(\cF,\cG\right).$$
 \end{lemma}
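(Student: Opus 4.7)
The plan is to prove the claimed equivalence by the Yoneda lemma, reducing it to a natural equivalence of mapping spaces out of an arbitrary $\O_{\cX_{dR}}$-module $\mathcal{H}$, and then untangling both sides using the tensor–hom adjunction on each $\i$-category of modules together with the fact that $\eta^\star$ is symmetric monoidal.

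Concretely, recall that $\Mod_{\O_\cX}$ is a closed symmetric monoidal stable presentable $\i$-category, and similarly for $\Mod_{\O_{\cX_{dR}}}$; the inverse image functor $\eta^\star$ along the morphism of $\SCi$-ringed $\i$-topoi is symmetric monoidal, because it is given by $\O_\cX \otimes_{\eta^*\O_{\cX_{dR}}} \eta^*(-)$ (see Section~\ref{sec:funmod}), and the adjunction $\eta^\star \dashv \eta_\star$ holds at the level of modules. Using this, for any $\mathcal{H}\in \Mod_{\O_{\cX_{dR}}}$ I would compute
$$\Map\bigl(\mathcal{H},\,\Mod_{\O_{\cX_{dR}}}(\eta_\star\cF,\eta_\star\cG)\bigr)\simeq \Map\bigl(\mathcal{H}\otimes_{\O_{\cX_{dR}}}\eta_\star\cF,\,\eta_\star\cG\bigr)\simeq \Map\bigl(\eta^\star(\mathcal{H}\otimes_{\O_{\cX_{dR}}}\eta_\star\cF),\,\cG\bigr),$$
using the internal hom on the de Rham side and then the adjunction $\eta^\star\dashv\eta_\star$. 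Since $\eta^\star$ is symmetric monoidal, the last term equals
$$\Map\bigl(\eta^\star\mathcal{H}\otimes_{\O_\cX}\eta^\star\eta_\star\cF,\,\cG\bigr)\simeq \Map\bigl(\eta^\star\mathcal{H},\,\Mod_{\O_\cX}(\J^\i\cF,\cG)\bigr)\simeq \Map\bigl(\mathcal{H},\,\eta_\star\Diff(\cF,\cG)\bigr),$$
where the final step uses once more the adjunction $\eta^\star\dashv\eta_\star$. These equivalences are natural in $\mathcal{H}$, so by the Yoneda lemma the desired equivalence of $\O_{\cX_{dR}}$-modules follows.

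The only point that requires any real justification is the symmetric monoidality of $\eta^\star$, i.e.\ the compatibility of the inverse image with the tensor product of module sheaves. This is the $\i$-categorical analogue of the classical projection formula for a morphism of ringed spaces and holds in this generality because $\eta^\star$ is the composite of $\eta^*$ (which is symmetric monoidal on $\i$-topoi) with extension of scalars along $\eta^*\O_{\cX_{dR}}\to \O_\cX$, both of which are symmetric monoidal left adjoints. Once this is in hand, the string of adjunctions above is formal, and no calculation beyond unwinding definitions is required; the argument applies equally well to any morphism of $\SCi$-ringed $\i$-topoi, so the statement in fact holds for $\eta$ replaced by any such morphism.
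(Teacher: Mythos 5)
Your argument is correct and is essentially identical to the paper's own proof: the paper also fixes an arbitrary $\O_{\cX_{dR}}$-module $\cH$ and runs the same chain of equivalences (internal hom adjunction on the de Rham side, the adjunction $\eta^\star\dashv\eta_\star$, monoidality of $\eta^\star$ to identify $\eta^\star\eta_\star\cF$ with $\J^\i\cF$, internal hom adjunction over $\cX$, and the adjunction once more), concluding by Yoneda. Your added remark justifying the symmetric monoidality of $\eta^\star$ via its description as $\O_\cX\otimes_{\eta^*\O_{\cX_{dR}}}\eta^*(-)$ is a harmless elaboration of a step the paper uses implicitly.
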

 \begin{proof}
 Let $\cH$ be any $\O_{\cX_{dR}}$-module. Then we have the following string of natural equivalences
 \begin{eqnarray*}
 \Map_{\O_{\cX}}\left(\cH,\Mod_{\O_{\cX_{dR}}}\left(\eta_\star \cF,\eta_\star \cG\right)\right) &\simeq &  \Map_{\O_{\cX_{dR}}}\left(\cH \underset{\O_{\cX_{dR}}} \otimes \eta_\star \cF,\eta_\star \cG\right)\\
 &\simeq& \Map_{\O_{\cX}}\left(\eta^\star\left(\cH \underset{\O_{\cX_{dR}}} \otimes \eta_\star \cF\right),\cG\right)\\
 &\simeq& \Map_{\O_{\cX}}\left(\eta^\star\cH \underset{\O_{\cX}} \otimes \eta^\star \eta_\star \cF,\cG\right)\\
 &\simeq& \Map_{\O_{\cX}}\left(\eta^\star\cH,\Mod_{\O_{\cX}}\left(\J^\i\cF,\cG\right)\right)\\
 &\simeq& \Map_{\O_{\cX_{dR}}}\left(\cH,\eta_\star\Mod_{\O_{\cX}}\left(\J^\i\cF,\cG\right)\right).
 \end{eqnarray*}
 \end{proof}
 
\begin{remark}\label{rmk:assalg}
Notice that $\Mod_{\O_{\cX_{dR}}}\left(\eta_\star \O_{\cX},\eta_\star \cO_{\cX} \right)$ is the internal endomorphism algebra of $\eta_\star \O_{\cX},$ hence has the structure of an associative algebra object. In particular, it has the structure of a sheaf on $\bH/\cX_{dR}$ with values in associative $\mathbb{R}$-algebras.
\end{remark}
 
\begin{proposition}
Let $u:U \to \cX$ be formally \'etale. Then $\Diff\left(\cX\right)\left(U\right)$ has the canonical structure of an associative $\mathbb{R}$-algebra.
\end{proposition}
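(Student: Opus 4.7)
The plan is to promote the algebra structure on $\eta_\star \Diff(\cX)$ coming from Remark \ref{rmk:assalg} to the desired algebra structure on $\Diff(\cX)(U)$ by using that formal étaleness means the naturality square for $\eta$ along $u$ is a pullback, which then allows us to identify $\Diff(\cX)(U)$ with the sections of $\eta_\star \Diff(\cX)$ over $U_{dR}$.

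More concretely, I would proceed as follows. First, by Remark \ref{rmk:assalg} combined with the preceding lemma, the $\cO_{\cX_{dR}}$-module $\eta_\star \Diff(\cX)$ is canonically an associative algebra object in $\Mod_{\cO_{\cX_{dR}}}$; that is, a sheaf of associative $\R$-algebras on $\bH/\cX_{dR}$. In particular, for any map $V \to \cX_{dR}$, the value $(\eta_\star \Diff(\cX))(V)$ has a canonical structure of associative $\R$-algebra, since the global sections functor $\Gamma_V$ is lax symmetric monoidal (as the right adjoint to the symmetric monoidal pullback $(\pi_V)^\star$).

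Second, I would specialize $V = U_{dR}$ along $u_{dR}: U_{dR} \to \cX_{dR}$. This gives an associative $\R$-algebra
\[
A := (\eta_\star \Diff(\cX))(u_{dR}) \simeq \Map_{\cO_{\cX_{dR}}}\bigl((u_{dR})_{!}\cO_{U_{dR}},\, \eta_\star \Diff(\cX)\bigr).
\]
Third, I would identify $A$ with $\Diff(\cX)(U)$ using the pullback square
\[
\xymatrix{U \ar[d]_-{u} \ar[r]^-{\eta_U} & U_{dR} \ar[d]^-{u_{dR}}\\ \cX \ar[r]_-{\eta_\cX} & \cX_{dR}}
\]
provided by formal étaleness of $u$. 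Applying $(\eta_\cX)^\star \dashv (\eta_\cX)_\star$ adjunction and base change for the lower-shriek functors along this pullback square (giving $\eta_\cX^\star (u_{dR})_! \simeq u_! \eta_U^\star$), we compute
\[
A \simeq \Map_{\cO_\cX}\bigl(\eta_\cX^\star (u_{dR})_!\cO_{U_{dR}},\, \Diff(\cX)\bigr) \simeq \Map_{\cO_\cX}\bigl(u_! \cO_U,\, \Diff(\cX)\bigr) \simeq \Diff(\cX)(U).
\]

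The main technical point is the validity of the base change equivalence $\eta_\cX^\star (u_{dR})_! \simeq u_! \eta_U^\star$ at the level of module sheaves over the ringed topoi in play; this is the step that genuinely uses the formal étaleness hypothesis (rather than just the existence of the square). This base change holds in an $\i$-topos for any pullback square (universality of colimits), and the structure sheaf compatibility $\eta_U^\star \cO_{U_{dR}} \simeq \cO_U$ comes from the fact that $\eta_U$ is a map in $\bH$ whose induced pullback preserves the restricted structure sheaf by construction. Once this identification is in place, the algebra structure transported from $A$ gives the desired $\R$-algebra structure on $\Diff(\cX)(U)$, and naturality of all the constructions shows it is independent of choices.
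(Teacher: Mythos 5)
Your proposal is correct and takes essentially the same route as the paper: the algebra structure comes from Remark \ref{rmk:assalg} applied to $\eta_\star\Diff\left(\cX\right)\simeq\Mod_{\O_{\cX_{dR}}}\left(\eta_\star\O_{\cX},\eta_\star\O_{\cX}\right)$, and formal \'etaleness supplies the identification of $\Diff\left(\cX\right)\left(U\right)$ with the sections of this sheaf of algebras over $U_{dR}$. The only difference is bookkeeping: the paper obtains that identification directly from $U\simeq\eta^*\left(U_{dR}\right)$ together with the definition of the pushforward, $\left(\eta_\star\Diff\left(\cX\right)\right)\left(U_{dR}\right)\simeq\Diff\left(\cX\right)\left(\eta^*U_{dR}\right)$, so your corepresentability-plus-base-change detour, though valid, is not needed.
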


\begin{proof}
Since $u$ is formally \'etale, we have a canonical identification $U \simeq \eta^*\left(U_{dR}\right).$ Hence
\begin{eqnarray*}
\Diff\left(\cX\right)\left(U\right) &\simeq& \Diff\left(\cX\right)\left(\eta^*\left(U_{dR}\right)\right) \\
&\simeq& \left(\eta_\star \Diff\left(\cX\right)\right)\left(U_{dR}\right)\\
&\simeq& \Mod_{\O_{\cX_{dR}}}\left(\eta_\star \O_{\cX},\eta_\star \cO_{\cX} \right)\left(U_{dR}\right).
\end{eqnarray*}
The result now follows form Remark \ref{rmk:assalg}.
\end{proof}

\begin{corollary}
For $\cX$ any $\SCi$-Deligne Mumford stack, for example a derived orbifold, then $\Diff\left(\cX\right)$ restricts to a sheaf $\mathcal{D}_\cX$ of associative $\mathbb{R}$-algebras on the small site of $\cX.$
\end{corollary}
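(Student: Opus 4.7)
The plan is to exhibit the restriction of $\Diff(\cX)$ to the small site of $\cX$ as the pullback, under a canonical de Rham functor, of a sheaf of associative $\R$-algebras already present on $\bH/\cX_{dR}$, namely the internal endomorphism algebra $\mathscr{E}nd(\eta_\star\O_\cX) := \Mod_{\O_{\cX_{dR}}}(\eta_\star\O_\cX,\eta_\star\O_\cX)$ of Remark \ref{rmk:assalg}.

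First I would use the identification $\cX \simeq \DMsCi^{\et}/(\cX,\O_\cX)$ to view objects of the small site as representably \'etale morphisms $u:U \to \cX$. By Corollary \ref{cor:repetisfet}, every such $u$ is formally \'etale, and therefore $U \simeq \eta_\cX^\ast(U_{dR})$ and there is an induced morphism $u_{dR}:U_{dR} \to \cX_{dR}$. Moreover, every morphism in the small site is itself representably \'etale, hence formally \'etale, so the assignment $u \mapsto u_{dR}$ is functorial, producing a functor $(-)_{dR}:\cX \to \bH/\cX_{dR}$.

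Next, by the calculation in the proof of the previous proposition, for each $u \in \cX$ there is a natural equivalence
\[
\Diff(\cX)(U) \simeq (\eta_\star\Diff(\cX))(U_{dR}) \simeq \mathscr{E}nd(\eta_\star\O_\cX)(u_{dR}).
\]
I would upgrade these object-wise equivalences to an equivalence of sheaves of $\O_{\cX}$-modules on the small site,
\[
\Diff(\cX)|_\cX \;\simeq\; \bigl((-)_{dR}\bigr)^\ast\mathscr{E}nd(\eta_\star\O_\cX).
\]
Since $\mathscr{E}nd(\eta_\star\O_\cX)$ is an $\mathbb{E}_1$-algebra object in $\Mod_{\O_{\cX_{dR}}}$, and since pullback along $(-)_{dR}$ preserves $\mathbb{E}_1$-algebra structures, the right hand side canonically promotes to a sheaf of associative $\R$-algebras on $\cX$. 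Transporting this structure along the equivalence defines $\cD_\cX$. The sheaf condition is inherited from $\mathscr{E}nd(\eta_\star\O_\cX)$, which is a sheaf on $\bH/\cX_{dR}$.

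The technical obstacle is the coherence of the comparison in the middle step: one needs that the pointwise equivalences $\Diff(\cX)(U) \simeq \mathscr{E}nd(\eta_\star\O_\cX)(u_{dR})$ assemble into a natural transformation of $\i$-presheaves, compatibly with the associative algebra structure on the target. This will follow from naturality of the adjunction equivalences $(\eta^\star\dashv\eta_\star)$ applied to the pullback squares witnessing formal \'etaleness of the morphisms in $\cX$, but writing out the homotopy coherence explicitly is the most delicate part of the argument.
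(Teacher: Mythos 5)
Your proposal is correct and follows essentially the same route as the paper: the paper deduces the corollary immediately from the preceding proposition, whose proof is exactly your chain of identifications $\Diff\left(\cX\right)\left(U\right) \simeq \left(\eta_\star \Diff\left(\cX\right)\right)\left(U_{dR}\right) \simeq \Mod_{\O_{\cX_{dR}}}\left(\eta_\star \O_{\cX},\eta_\star \O_{\cX}\right)\left(U_{dR}\right)$ for formally \'etale $U$ (using that the objects of the small site are representably \'etale, hence formally \'etale), together with Remark \ref{rmk:assalg} giving the associative algebra structure on the endomorphism sheaf over $\cX_{dR}$. The only difference is that you make explicit the coherence of assembling the pointwise identifications into a map of sheaves via the functor $u \mapsto u_{dR}$, a point the paper leaves implicit.
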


\begin{remark}\label{lem:dmodqc}
For $\cX$ any $\SCi$-Deligne Mumford stack, since $\Diff\left(\cX\right)$ is a sheaf of $\O^{\bH}_{\cX}$-modules (on the large site), its restriction to the small site is a sheaf of $\O_{\cX}$-modules, hence a quasicoherent sheaf by Lemma \ref{lem:orbqc}.
\end{remark}

\begin{definition}
Let $\cX$ any $\SCi$-Deligne Mumford stack. The $\i$-category $\DMod\left(\cX\right)$ of (left) $\cD$-modules over $\cX,$ is the $\i$-category of left modules of $\cD_{\cX}$ in $\QC\left(\cX\right).$
\end{definition}

Let $\left(\tilde \cX,\O_{\tilde \cX}\right)$ be a $\SCi$-Deligne Mumford stack with functor of points $\cX$. Notice that there is a canonical map of sheaves of $\R$-algebras $$b:\O_{\tilde\cX} \hookrightarrow \cD_{\cX}$$ over $\tilde \cX$ which hence induces an adjunction
$$\xymatrix@C=2cm{\QC\left(\cX\right)=\Mod_{\O_{\tilde \cX}} \ar@<-0.5ex>[r]_-{b_!} & \DMod\left(\cX\right) \ar@<-0.5ex>[l]_-{b^*},}$$
with $b_! \dashv b^*,$ where $b^*$ is the restriction of scalars functor, and where $b_!$ is the induced module functor. Thus, $b^*b_!:=\De_\cX$ is a monad on $\QC\left(\cX\right).$ Explicitly, it is given by
$$\De_{\cX}\left(\cF\right)=\mathscr{D}_{\cX} \underset{\O_{\tilde \cM}} \otimes \cF.$$

\begin{proposition}\label{prop:locfinjet}
Let $M$ be a smooth manifold. Then $\cD_M$ is the sheaf which assigns each open subset $U$ of $M$ the ring of linear differential operators of $\Ci\left(U\right)$ which are \underline{locally} of finite order.
\end{proposition}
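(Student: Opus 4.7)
The plan is to unpack both sides of the asserted equality on the basis of small opens and verify that they agree as sheaves of associative rings.

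First, I would make $\J^\infty \O_M$ explicit. The jet comonad on $\Mod_{\O_M}$ is $\eta^\star\eta_\star$ for $\eta\colon M\to M_{dR}$, and the analogue at the level of the slice topos is $\eta^*\eta_*$. By Proposition \ref{prop:bTloc}, $\bT^\infty M \simeq \colim_k M_{(k)}$, and the proof of Corollary \ref{cor:frechmfd} (via the dual of Lemma \ref{lem:wdw}) then gives, for any module sheaf $\cF$, a canonical equivalence
\[\J^\infty \cF \;\simeq\; \lim_k \beta^k_\star \alpha_k^\star \cF \;=:\; \lim_k \J^k\cF,\]
where $\alpha_k,\beta_k\colon M_{(k)}\to M$ are the two projections. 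Specialising to $\cF = \O_M$, Proposition \ref{prop:SYN} and Lemma \ref{lem:suppsh} identify $\J^k\O_M$ with the classical $\Ci_M$-module of sections of the $k$-th jet bundle $\J^k(M\times\R\to M)$, and the tower $\{\J^k\O_M\}$ with the tower whose inverse limit is the sheaf of sections of the Fréchet bundle $\J^\infty(M\times\R)\to M$ appearing in Corollary \ref{cor:jetfrech}.

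Second, I would compute $\Diff(M)(U) = \Mod_{\O_U}(\J^\infty\O_U,\O_U)$ on a coordinate chart $U$ and then sheafify. For $V\subseteq U$ small enough that $\J^\infty(V\times\R)\to V$ trivialises to $V\times \J^\infty_0(\R^n)$, a $\Ci_V$-linear map $\J^\infty\O_V\to\O_V$ is, via the equivalence above and the universal property of inverse limits in $\Mod_{\O_V}$ (using that $\Ci(V)$ is a projective/free module pointwise and that the transition maps in the tower $\{\J^k\O_V\}$ are surjective), the same datum as a compatible system of $\Ci_V$-linear maps $\J^k\O_V \to \O_V$ stabilising at some finite $k$; equivalently, a $\Ci_V$-linear map out of $\lim_k \J^k\O_V$ which is continuous for the Fréchet topology. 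By the classical Peetre-type correspondence, $\Ci_V$-linear maps $\Gamma(\J^k(V\times\R))\to \Ci(V)$ correspond bijectively to linear differential operators $\Ci(V)\to \Ci(V)$ of order at most $k$ via precomposition with the $k$-jet prolongation $j^k$. Hence sections of $\Diff(M)$ over $V$ are exactly finite-order linear differential operators on $\Ci(V)$, and sheafification of ``finite order on small $V$'' gives ``locally finite order on $U$''.

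Third, I would verify that this bijection is an isomorphism of rings. The ring structure on $\Diff(M)(U)$ coming from the internal-$\mathrm{Hom}$ description (Remark \ref{rmk:assalg}) is composition of endomorphisms of $\J^\infty\O_U$ restricted to $\O_U$; unwinding through the $\eta^\star\dashv\eta_\star$ adjunction as in the proof of Lemma \ref{lem:diffcomp} shows that this is the Kleisli-composition for the comonad $\J^\infty$ in the linear setting, which on a patch $V$ reduces to the classical composition formula $D\circ D'(f) = D(D'(f))$ for differential operators.

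The main obstacle will be the second step, specifically justifying that any $\Ci_V$-linear map out of $\lim_k \J^k\O_V$ in our abstract module $\i$-category must factor through a finite stage when $V$ is small, and hence identifies with a Fréchet-continuous linear functional. In the bounded (classical) case this is essentially the content of the Peetre theorem together with a Mittag-Leffler/Nakayama-type cofinality argument for the inverse limit in finitely generated modules; in the derived setting one needs, additionally, that $\Map$ out of $\lim_k \J^k\O_V$ into the discrete object $\O_V$ coincides with $\colim_k \Map(\J^k\O_V,\O_V)$ on small enough $V$, which will follow from local freeness of each $\J^k\O_V$ together with the fact that the $k$-th jet module is finitely generated projective over $\Ci(V)$ when $V$ is diffeomorphic to $\R^n$. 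The rest of the argument is bookkeeping that the identifications above are natural in $V$ and hence descend to the claimed sheaf-level statement.
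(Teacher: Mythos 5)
There is a genuine gap at the step you yourself flag as the main obstacle, and it cannot be repaired by the tools you propose. Your plan requires that on a trivialising chart $V$ every $\Ci_V$-linear map out of $\J^\i\O_V \simeq \lim_k \J^k\O_V$ factor through a finite stage, i.e. that $\Map\left(\lim_k \J^k\O_V,\O_V\right)\simeq \colim_k\Map\left(\J^k\O_V,\O_V\right)$. This is false on \emph{every} non-empty chart, no matter how small: take bump functions $f_k$ on $V$ whose supports form a locally finite family escaping every compact subset of $V$ (or accumulating only at the boundary), and set $D=\sum_{k\ge 0} f_k\,\partial_1^{\,k}$. This is a locally finite sum, hence a well-defined $\Ci_V$-linear morphism of sheaves $\J^\i\O_V\to\O_V$, but it factors through no finite jet stage. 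Local freeness and finite generation of each $\J^k\O_V$ are irrelevant here --- maps \emph{out of} an inverse limit are simply not computed as a colimit of maps out of the stages --- and this failure is precisely why the proposition asserts ``locally of finite order'' rather than ``of finite order.'' The subsequent sheafification move does not rescue the argument as written: $\Diff\left(M\right)$ restricted to the small site is already a sheaf, so identifying it with the sheafification of the finite-order presheaf requires exactly the local characterization of sections of $\cD_M$ that was asserted incorrectly; what one must prove is that every section is \emph{locally} of finite order, i.e. a Peetre-type rigidity statement for morphisms out of $\J^\i\O$, not a finite-stage factorization.

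For comparison, the paper's proof sidesteps the tower computation at this point: it first reduces to global sections using quasi-coherence of $\cD_M$, then invokes Corollaries \ref{cor:frechmfd} and \ref{cor:frechff} to identify $\O_M$-linear maps $\J^\i\O_M\to\O_M$ with fibrewise-linear morphisms of Fr\'echet vector bundles $\J^\i\left(M\times\R\right)\to M\times\R$, and these are exactly the operators $\sum_{k} f_k D_k$ with locally finite sums --- the ``locally finite order'' phenomenon enters through the smooth/Fr\'echet structure of $\J^\i$, not through a finite-stage factorization. Your first step (identifying $\J^\i\O_M$ with $\lim_k\J^k\O_M$ via Proposition \ref{prop:bTloc} and Lemma \ref{lem:wdw}) and your ring-structure step are fine in spirit, but the middle of the argument needs to be replaced by a correct description of morphisms out of the limit (locally finite sums), at which point you are essentially reproducing the paper's Fr\'echet-bundle argument.
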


\begin{proof}
By Remark \ref{lem:dmodqc}, $\cD_M$ is quasicoherent. Since for each $U \subseteq M$, $U$ is also a manifold and $\cD_U=\cD_M|_{U},$ it suffices to prove the statement at the level of global sections. Global sections of $\cD_M$ is the same as global sections of $\Diff\left(M\right)$ on the large site. Since $\cD_M$ and $\J^\i \O_M$ are concentrated in degree zero, this is the same as the subset of maps
$$\xymatrix{\J^\i_M\left(M \times \R\right) \ar[rr]  \ar[rd] & & M \times \R \ar[ld]\\
& M,}$$
i.e. elements of $\Map_{\bH/M}\left(\J^\i_M\left(M\times \R\right),M \times \R\right),$ which respect the $\O_{M}$-module structure. By Corollaries \ref{cor:frechmfd} and \ref{cor:frechff}, this is the same as morphisms of Frech\'et manifolds over $M,$ which are fiberwise linear, i.e. morphisms of Frech\'
et vector bundles from $\J^\i_M\left(M \times \R\right)$ to $\R \times M.$ These can be identified with the linear differential operators that are locally of finite order, i.e. a locally finite sum
$$\sum\limits_{k=0}^{\i} f_k D_k,$$ with each $D_k$ a linear operator of order $k.$
\end{proof}

\begin{corollary}
Let $M$ be a manifold, then there is an equivalence of quasicoherent sheaves
$$\cD_M \simeq \underset{k} \colim \left(\J^k\O_M\right)^\vee,$$
where $\left(\J^k\O_M\right)^\vee$ is dual to the module of sections of the $k^{th}$ order jet bundle (of the trivial $\R$-bundle).
\end{corollary}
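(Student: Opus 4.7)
The plan is to combine Proposition \ref{prop:locfinjet} with a direct identification of $(\J^k \O_M)^\vee$ with the sheaf of differential operators of order at most $k$, and then interpret the colimit sheaf-theoretically. Since $M$ is a manifold, Lemma \ref{lem:orbqc} identifies $\QC(M)$ with $\Mod_{\O_M}$ on the small site of $M$, and since this inclusion has a right adjoint $\Lambda_\star$, the colimit in $\QC(M)$ coincides with the colimit in the $\i$-category of sheaves of $\O_M$-modules on $M$; concretely it is computed by first taking the presheaf colimit and then sheafifying.

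First I would establish that for each $k$, $(\J^k \O_M)^\vee \simeq \Mod_{\O_M}(\J^k\O_M,\O_M)$ is precisely the sheaf of linear differential operators of order $\leq k$. This is classical: by Corollaries \ref{cor:jetfrech} and \ref{cor:frechmfd}, the finite order jet bundle $\J^k\O_M$ represents $k$-jets, and an $\O_M$-linear map $\J^k\O_M \to \O_M$ is exactly a differential operator of order at most $k$. The natural surjections $\J^{k+1}\O_M \twoheadrightarrow \J^k\O_M$ induce inclusions $(\J^k\O_M)^\vee \hookrightarrow (\J^{k+1}\O_M)^\vee$, so the filtered diagram is simply the increasing filtration by order.

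Next I would construct a canonical map $\phi:\underset{k}{\colim}\, (\J^k\O_M)^\vee \to \cD_M$. Using Corollary \ref{cor:frechmfd} and the fact that the Fr\'echet infinite jet bundle is the limit of the finite jet bundles (via Corollary \ref{cor:colimfre} at the algebra level, which realizes the infinite jet module as a completion of the finite ones), the canonical projections $\J^\i\O_M \to \J^k\O_M$ give, by precomposition, maps $(\J^k\O_M)^\vee \to \cD_M = \Mod_{\O_M}(\J^\i\O_M,\O_M)$, forming a cocone under the filtered diagram and hence inducing $\phi$.

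Finally, I would check that $\phi$ is an equivalence by comparing sections over arbitrary opens $U \subseteq M$. The sheafification of the presheaf colimit assigns to $U$ the set of data $(U_\alpha, D_\alpha)$ where $\{U_\alpha\}$ covers $U$, each $D_\alpha$ is a differential operator on $U_\alpha$ of some finite order (possibly depending on $\alpha$), and the $D_\alpha$ agree on overlaps. Such data glue to a unique operator on $U$ which is by construction locally of finite order; conversely, by definition any element of $\cD_M(U)$ as described by Proposition \ref{prop:locfinjet} is locally representable by a finite-order operator, hence defines such a descent datum. This gives inverse maps on sections and thus the desired equivalence of quasi-coherent sheaves. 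The main obstacle is the bookkeeping around the colimit in $\QC(M)$: I must verify that the colimit in $\QC(M)$ really is computed as presheaf colimit followed by sheafification (using that $\QC(M) \hookrightarrow \Mod_{\O_M}$ is cocontinuous by Proposition \ref{prop:lamlam}) and that local finite order, in the sense of Proposition \ref{prop:locfinjet}, matches exactly the local-factorization condition extracted from sheafification; the partition-of-unity argument used in the classical proof of Proposition \ref{prop:locfinjet} is what makes this matching tight.
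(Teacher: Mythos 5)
Your proposal is correct and follows essentially the same route as the paper: identify $\left(\J^k\O_M\right)^\vee$ with differential operators of order $\le k$, and then observe via Proposition \ref{prop:locfinjet} that the sheafified filtered colimit is exactly the sheaf of operators locally of finite order. The extra bookkeeping you flag about colimits in $\QC\left(M\right)$ is harmless but essentially automatic here, since for a manifold every sheaf of $\O_M$-modules on the small site is quasi-coherent (Theorem \ref{thm:5.1.33}, Lemma \ref{lem:orbqc}), so the colimit is just the usual sheafified presheaf colimit.
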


\begin{proof}
The description of $\cD_M$ from Proposition \ref{prop:locfinjet} identifies $\cD_M$ with global sections of the sheafification of the pointwise filtered colimit of the modules of finite order linear differential operators. Since the dual space to $\J^k\O_M$ is the space of linear differential operators of degree $k,$ this completes the proof.
\end{proof}

\begin{proposition}\label{prop:Dsuper}
Suppose that $\cM$ is a supermanifold. Then 
$$\cD_\cM \simeq \underset{k} \colim \left(\J^k\O_\cM\right)^\vee.$$
\end{proposition}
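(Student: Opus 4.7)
The plan is to argue locally, reducing to a chart $\cU \cong \R^{n|m}$. Both $\cD_\cM$ and the filtered colimit $\underset{k} \colim \left(\J^k \O_\cM\right)^\vee$ are quasi-coherent $\O_\cM$-modules on the small site of $\cM$: the former by Remark \ref{lem:dmodqc}, and the latter because each $\J^k \O_\cM$ is quasi-coherent (being the sheaf of sections of a finite-rank super vector bundle), hence so is its dual $\Mod_{\O_\cM}(\J^k \O_\cM, \O_\cM)$, and filtered colimits of quasi-coherent sheaves are again quasi-coherent. It therefore suffices to produce a natural comparison morphism $\Phi$ and verify that it is an equivalence over every coordinate chart.

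First I would construct $\Phi$. Since $\J^\i \O_\cM = \eta^\star \eta_\star \O_\cM$, and by Lemma \ref{lem:wdw} applied to the colimit description $\bT^\i \cM \simeq \underset{k} \colim \cM_{(k)}$ from Proposition \ref{prop:bTloc} (the super analogue of the argument proving Corollary \ref{cor:frechmfd}), one has $\J^\i \O_\cM \simeq \underset{k} \lim \J^k \O_\cM$ as an $\O_\cM$-module. Dualizing and applying the definition of $\cD_\cM$ yields a canonical morphism
$$\Phi\colon \underset{k} \colim \left(\J^k \O_\cM\right)^\vee \longrightarrow \left(\J^\i \O_\cM\right)^\vee \simeq \Diff(\O_\cM, \O_\cM) = \cD_\cM.$$
Next I would check that $\Phi$ is an equivalence on a chart $\cU \cong \R^{n|m}$. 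By Lemma \ref{lem:nbdktriv} combined with the super analogue of Proposition \ref{prop:SYN} (which is local and extends formally once odd coordinates are adjoined, since the needed super-Koszul resolution is available by Proposition \ref{prop:kosz}), the module $\J^k \O_\cM(\cU)$ is free of finite total super-rank over $\Ci(\R^{n|m})$, with basis given by formal jet-coordinates $u^{I,J}$ indexed by bosonic multi-indices $I$ and fermionic multi-indices $J$ satisfying $|I|+|J|\le k$. Its $\O_\cM$-linear dual is then free on the dual basis, and under the pairing induced by the jet prolongation $j^k$ this corresponds precisely to differential operators of order $\le k$ on $\cU$, i.e., to finite sums $\sum_{|I|+|J|\le k} f_{I,J}\, \partial_x^I \partial_\eta^J$ with $f_{I,J} \in \Ci(\cU)$. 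Passing to the filtered colimit in $k$ produces exactly the module of \emph{locally} finite-order differential operators on $\cU$, which by Proposition \ref{prop:locfinjet} is $\cD_\cM(\cU)$.

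The main technical obstacle is bookkeeping the super structure in two places: first, confirming that the super-Koszul computation underlying Lemma \ref{lem:nbdktriv} gives a free module of finite super-rank at each finite jet order (and in particular that the odd generators truncate, as in the identification $\bbD^{0|n} \simeq \R^{0|n}$ used in Corollary \ref{cor:fnbds}); and second, ensuring that the $\O_\cM$-linear duality preserves parity correctly, so that the elements dual to odd jet coordinates pair under $j^k$ to give genuine odd partial derivatives $\partial_\eta^J$. Once these local identifications are secure, gluing along open inclusions is automatic from the sheaf property of both sides, and naturality of $\Phi$ along open restrictions follows from the naturality of $\J^k$ in $\cM$.
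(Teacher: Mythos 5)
There is a genuine gap at the final step of your chart computation. You identify $\underset{k}\colim\,(\J^k\O_\cM)^\vee(\cU)$ with locally finite-order differential operators on $\cU\cong\R^{n|m}$ and then conclude ``which by Proposition \ref{prop:locfinjet} is $\cD_\cM(\cU)$.'' But Proposition \ref{prop:locfinjet} is stated and proven only for ordinary (even) manifolds $M$: its proof rests on the Fr\'echet vector bundle description of $\J^\i$ coming from Corollaries \ref{cor:frechmfd} and \ref{cor:frechff}, which apply to surjective submersions of finite-dimensional even manifolds. The assertion that $\cD_{\R^{n|m}}=\bigl(\J^\i\O_{\R^{n|m}}\bigr)^\vee$ consists precisely of \emph{locally finite-order} super differential operators is exactly the nontrivial content that Proposition \ref{prop:Dsuper} (in combination with Proposition \ref{prop:locfinjet}) is supposed to deliver, so invoking it here is circular. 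Equivalently: the real crux is showing that the canonical map $\underset{k}\colim\,(\J^k\O_\cM)^\vee\to\bigl(\underset{k}\lim\,\J^k\O_\cM\bigr)^\vee$ is an equivalence, i.e.\ that no ``infinite-order'' operators appear. Dualization does not in general turn an inverse limit into the colimit of duals, so this cannot be dispensed with by freeness of each finite stage plus quasi-coherence of both sides; some continuity input (the Fr\'echet bundle argument in the even case) or a reduction to a case where that input is available is required. Your ``main technical obstacle'' paragraph addresses parity bookkeeping but not this point, which is the actual issue.

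The paper's proof handles this by a different route: working on $\R^{p|q}$, it uses $(\R^{p|q})_{(k)}\simeq(\R^p)_{(k)}\times\R^{0|q}$ for $k\ge q$ to split $\J^\i\O_{\R^{p|q}}\simeq pr^\star\bigl(\J^\i\O_{\R^p}\bigr)\otimes\bigl(\O_{\R^{p|q}}\otimes_\R \Ci(\R^{0|q})\bigr)$ with $pr:\R^{p|q}\to\R^p$ the projection, exploits that $\Ci(\R^{0|q})$ is a finite-dimensional super vector space to pull the odd factor through the dual, and proves by a global-sections computation that $\bigl(pr^\star\J^\i\O_{\R^p}\bigr)^\vee\simeq pr^\star\bigl((\J^\i\O_{\R^p})^\vee\bigr)$; this reduces everything to the even case, where the identification $\cD_{\R^p}\simeq\underset{k}\colim\,(\J^k\O_{\R^p})^\vee$ is legitimately available from Proposition \ref{prop:locfinjet} and its corollary. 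To repair your argument you would either need to prove a super analogue of Proposition \ref{prop:locfinjet} directly (extending the Fr\'echet-bundle argument to $\R^{n|m}$), or perform a reduction to the even case along the lines above.
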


\begin{proof}
As this is a statement about sheaves, it suffices to check it locally, so it suffices to prove it for $\cM=\R^{p|q}.$ By Lemma \ref{lem:wdw}, it follows that there is a canonical equivalence
$$\Gamma\left(\J^\i_{\R^{p|q}}\left(\R \times \R^{p|q}\right)\right) \cong \underset{k} \lim \Ci\left(\R^{p|q} \times \left(\R^{p|q}\right)_{(k)}\right).$$
Since for $k \ge q,$ $\left(\R^{p|q}\right)_{(k)} \simeq \left(\R^p\right)_{\left(k\right)} \times \R^{0|q},$ it follows that
$$\Gamma\left(\J^\i_{\R^{p|q}}\left(\R \times \R^{p|q}\right)\right) \cong \underset{k} \lim \left[\Ci\left(\R^{p|q}\right) \underset{\R} \otimes \Ci\left(\left(\R^{p}\right)_{\left(k\right)}\right) \underset{\R} \otimes \Ci\left(\R^{0|q}\right)\right].$$
Thus
$$\J^\i\O_{\R^{p|q}}  \simeq \underset{k} \lim \left[\O_{\R^{p|q}} \underset{\R} \otimes \left(\Ci\left(\R^p_{\left(k\right)}\right) \underset{\R} \otimes \Ci\left(\R^{0|q}\right)\right)\right].$$
Note that $\Ci\left(\R^p_{\left(k\right)}\right)$ is a finite dimensional vector space and $\Ci\left(\R^{0|q}\right)$ is a finite dimensional super vector space, and we are inducing these $\R$-module to $\Ci\left(\R^{p|q}\right).$ It follows that we can rewrite this as
$$\J^\i\O_{\R^{p|q}}  \simeq \left(\underset{k} \lim \left[\O_{\R^{p|q}} \underset{\R} \otimes \Ci\left(\R^p_{\left(k\right)}\right)\right]\right) \otimes \left[\O_{\R^{p|q}} \underset{\R} \otimes \Ci\left(\R^{0|q}\right)\right].$$
Note that
\begin{eqnarray*}
 \Gamma\left(\underset{k} \lim \left[\O_{\R^{p|q}} \underset{\R} \otimes \Ci\left(\R^p_{\left(k\right)}\right)\right]\right) &\simeq & \underset{k} \lim \left[\Ci\left(\R^{p|q}\right) \underset{\R} \otimes \Ci\left(\R^p_{\left(k\right)}\right)\right]\\
 &\simeq & \underset{k} \lim \left[\Ci\left(\R^{p}\right) \underset{\R} \otimes \Ci\left(\R^{0|q}\right) \underset{\R} \otimes \Ci\left(\R^p_{\left(k\right)}\right)\right]\\
  &\simeq &\Ci\left(\R^{0|q}\right) \underset{\R} \otimes \left(\underset{k} \lim \left[\Ci\left(\R^{p}\right) \underset{\R} \otimes  \Ci\left(\R^p_{\left(k\right)}\right)\right]\right)\\
 & \simeq & \Ci\left(\R^{p|q}\right) \underset{\Ci\left(\R^{p}\right)} \otimes \left(\underset{k} \lim \left[\Ci\left(\R^{p}\right) \underset{\R} \otimes  \Ci\left(\R^p_{\left(k\right)}\right)\right]\right)\\
& \simeq & \Ci\left(\R^{p|q}\right) \underset{\Ci\left(\R^{p}\right)} \otimes \Gamma\left(\J^\i \O_{\R^p}\right).
\end{eqnarray*}
So if $pr:\R^{p|q} \to \R^p$ denotes the canonical projection, we have
$$\underset{k} \lim \left[\O_{\R^{p|q}} \underset{\R} \otimes \Ci\left(\R^p_{\left(k\right)}\right)\right] \simeq pr^\star \J^\i \O_{\R^p},$$ and thus
$$\J^\i\O_{\R^{p|q}} \simeq pr^\star\left(\J^\i \O_{\R^p}\right) \otimes \left(\O_{\R^{p|q}} \underset{\R} \otimes \Ci\left(\R^{0|q}\right)\right).$$
Since $\Ci\left(\R^{0|q}\right)$ is a finite dimensional super vector space, we have
\begin{eqnarray*}
\cD_{\R^{p|q}} &\simeq& \Mod_{\QC\left(\R^{p|q}\right)}\left(\J^\i \O_{\R^{p|q}},\O_{\R^{p|q}}\right)\\
&\simeq & \Mod_{\QC\left(\R^{p|q}\right)}\left(pr^\star\left(\J^\i \O_{\R^p}\right) \otimes \left(\O_{\R^{p|q}} \underset{\R} \otimes \Ci\left(\R^{0|q}\right)\right),\O_{\R^{p|q}}\right)\\
&\simeq & \Mod_{\QC\left(\R^{p|q}\right)}\left(pr^\star\left(\J^\i \O_{\R^p}\right) ,\Mod_{\QC\left(\R^{p|q}\right)}\left(\O_{\R^{p|q}} \underset{\R} \otimes \Ci\left(\R^{0|q}\right),\O_{\R^{p|q}}\right)\right)\\
&\simeq & \Mod_{\QC\left(\R^{p|q}\right)}\left(pr^\star\left(\J^\i \O_{\R^p}\right) ,\left(\O_{\R^{p|q}} \underset{\R} \otimes \Ci\left(\R^{0|q}\right)\right)^\vee\right)\\
&\simeq & \Mod_{\QC\left(\R^{p|q}\right)}\left(pr^\star\left(\J^\i \O_{\R^p}\right),\O_{\R^{p|q}}\right) \otimes \left(\O_{\R^{p|q}} \underset{\R} \otimes \Ci\left(\R^{0|q}\right)\right)^\vee\\
&\simeq & \left(pr^\star\left(\J^\i \O_{\R^p}\right)\right)^\vee \otimes \left(\O_{\R^{p|q}} \underset{\R} \otimes \Ci\left(\R^{0|q}\right)\right)^\vee.
\end{eqnarray*}
Notice that the underlying topological space of $\R^{p|q}$ is $\R^p,$ so $pr_\star$ is simply the restriction of scalars for the morphism of sheaves of $\SCi$-rings
$$pr:\O_{\R^{p|q}} \to \O_{\R^p}$$ on $\R^p.$
It follows that
\begin{eqnarray*}
\Gamma \left[\left(pr^\star\left(\J^\i \O_{\R^p}\right)\right)^\vee\right] &\simeq & \Map_{\QC\left(\R^{p|q}\right)}\left(pr^\star\left(\J^\i \O_{\R^p}\right),\O_{\R^{p|q}}\right)\\
&\simeq & \Map_{\QC\left(\R^{p}\right)}\left(\J^\i \O_{\R^p},pr_\star \O_{\R^{p|q}}\right)\\
&\simeq & \Map_{\QC\left(\R^{p}\right)}\left(\J^\i \O_{\R^p},\O_{\R^{p}}\underset{\R} {\otimes}\Ci\left(\R^{0|q}\right)\right)\\
&\simeq & \Map_{\QC\left(\R^{p}\right)}\left(\J^\i \O_{\R^p},\O_{\R^{p}}\right) \underset{\R} {\otimes}\Ci\left(\R^{0|q}\right)\\
&\simeq & \Map_{\QC\left(\R^{p}\right)}\left(\J^\i \O_{\R^p},\O_{\R^{p}}\right) \underset{\Ci\left(\R^{p}\right)} {\otimes}\Ci\left(\R^{p|q}\right)\\
&\simeq & \Gamma\left(\left(\J^\i \O_{\R^p}\right)^\vee\right) \underset{\Ci\left(\R^{p}\right)} {\otimes}\Ci\left(\R^{p|q}\right)\\
&\simeq & \Gamma\left( \left(\J^\i \O_{\R^p}\right)^\vee \underset{\O_{\R^{p}}} {\otimes} \O_{\R^{p|q}}\right)\\
&\simeq &\Gamma\left( pr^\star\left(\left(\J^\i \O_{\R^p}\right)^\vee\right) \right),
\end{eqnarray*}
and hence 
$$\left(pr^\star\left(\J^\i \O_{\R^p}\right)\right)^\vee \simeq \left(pr^\star\left(\J^\i \O_{\R^p}\right)^\vee\right).$$
Thus we have that 
\begin{eqnarray*}
\cD_{\R^{p|q}} &\simeq & \left(pr^\star\left(\J^\i \O_{\R^p}\right)^\vee\right) \otimes \left(\O_{\R^{p|q}} \underset{\R} \otimes \Ci\left(\R^{0|q}\right)\right)^\vee\\
&\simeq & pr^\star\left(\underset{k} \colim \left(\J^k\O_{\R^p}\right)^\vee\right) \otimes \left(\O_{\R^{p|q}} \underset{\R} \otimes \Ci\left(\R^{0|q}\right)\right)^\vee\\
&\simeq & \underset{k} \colim \left[\left(pr^\star\left(\left(\J^k\O_{\R^p}\right)^\vee\right)\right) \otimes \left(\O_{\R^{p|q}} \underset{\R} \otimes \Ci\left(\R^{0|q}\right)\right)^\vee\right]\\
&\simeq & \underset{k} \colim \left[\left(\O_{\R^{p|q}} \underset{\O_{\R^p}} \otimes \left(\J^k\O_{\R^p}\right)^\vee\right) \otimes \left(\O_{\R^{p|q}} \underset{\R} \otimes \Ci\left(\R^{0|q}\right)\right)^\vee\right]\\
&\simeq & \underset{k} \colim \left[\left(\O_{\R^{p|q}} \underset{\R} \otimes \Ci\left(\R^{p}_{\left(k\right)}\right)^\vee\right) \otimes \left(\O_{\R^{p|q}} \underset{\R} \otimes \Ci\left(\R^{0|q}\right)\right)^\vee\right]\\
&\simeq & \underset{k} \colim \left[\left(\O_{\R^{p|q}} \underset{\R} {\otimes} \left(\Ci\left(\R^{p}_{\left(k\right)}\right) \otimes \O_{\R^{p|q}} \underset{\R} \otimes \Ci\left(\R^{0|q}\right)\right)\right)^\vee\right]\\
&\simeq & \underset{k} \colim \left[\left(\J^k\O_{\R^{p|q}}\right)^\vee\right].
\end{eqnarray*}
\end{proof}

\begin{proposition}\label{prop:lamadj}
Let $\cM$ be a supermanifold. Then the functor $\Lambda_\star\J^\i\Lambda^\star$ is right adjoint to $\De_{\cM}.$
\end{proposition}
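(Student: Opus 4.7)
The plan is to identify the right adjoint of the monad $\De_\cM=b^\star b_!$ via the standard change-of-rings triple and then match this right adjoint with $\Lambda_\star\J^\i\Lambda^\star$ using Proposition \ref{prop:Dsuper} and the colimit decomposition of the formal disk bundle.

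First, I would note that the morphism of sheaves of $\R$-algebras $b:\O_\cM\hookrightarrow\cD_\cM$ gives rise to an adjoint triple $b_!\dashv b^\star\dashv b_\star$ between $\QC(\cM)$ and $\DMod(\cM)$, where $b_\star\cF=\underline{\Hom}_{\O_\cM}(\cD_\cM,\cF)$ with the canonical $\cD_\cM$-action. Consequently $\De_\cM=b^\star b_!$ has a right adjoint as an endofunctor of $\QC(\cM)$, given by $b^\star b_\star\simeq\underline{\Hom}_{\O_\cM}(\cD_\cM,-)$. Using Proposition \ref{prop:Dsuper} to write $\cD_\cM\simeq\colim_k(\J^k\O_\cM)^\vee$, together with the fact that each $\J^k\O_\cM$ is locally free of finite rank and hence dualizable in $\QC(\cM)$, I obtain
$$\underline{\Hom}_{\O_\cM}(\cD_\cM,\cF)\;\simeq\;\lim_k\underline{\Hom}_{\O_\cM}\bigl((\J^k\O_\cM)^\vee,\cF\bigr)\;\simeq\;\lim_k\bigl(\J^k\O_\cM\otimes_{\O_\cM}\cF\bigr).$$
Writing $\alpha_k,\beta_k:\cM_{(k)}\to\cM$ for the two projections of the $k$th infinitesimal neighborhood of the diagonal and using the supergeometric version of Proposition \ref{prop:SYN} (which follows locally from Lemma \ref{lem:nbdktriv} and Proposition \ref{prop:nbdcosh}), this rewrites as $\lim_k\beta^k_\star\alpha_k^\star\cF$.

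To identify this with $\Lambda_\star\J^\i\Lambda^\star\cF$, I would use the base change formula $\eta^\star\eta_\star\simeq b_\star a^\star$ arising from the defining pullback square of $\bT^\i\cM$, together with Proposition \ref{prop:bTloc} presenting $\bT^\i\cM$ as $\colim_k\cM_{(k)}$ in $\bH$, with the legs $a,b$ restricting to $\alpha_k,\beta_k$. Invoking the module-level analogue of Lemma \ref{lem:wdw}, which follows from the fact that quasi-coherent sheaves assemble into a limit-preserving functor on $\bH$ (Proposition \ref{prop:QCsheaf}) and from Corollary \ref{cor:pbqc} controlling the base-change behaviour, I would conclude that $\Lambda_\star b_\star a^\star\Lambda^\star\simeq\lim_k\beta^k_\star\alpha_k^\star$ as endofunctors of $\QC(\cM)$. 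Combined with the previous paragraph, this exhibits $\Lambda_\star\J^\i\Lambda^\star$ as the right adjoint of $\De_\cM$, as desired.

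The main obstacle will be making the module-level version of Lemma \ref{lem:wdw} rigorous in the present setting: $\bT^\i\cM$ does not itself lie in $\sD$, so one cannot directly appeal to $\QC$ being a sheaf on $\sD$, and instead one must exchange the limit defining $\QC(\bT^\i\cM)$ indexed by $\sD/\bT^\i\cM$ with the colimit presentation $\bT^\i\cM\simeq\colim_k\cM_{(k)}$, and verify that $\Lambda_\star$ commutes with the resulting cofiltered limit. A parallel subtlety arises in the identification $\J^k\O_\cM\otimes_\O\cF\simeq\beta^k_\star\alpha_k^\star\cF$, which, although classical over ordinary manifolds thanks to the finite-dimensional, locally free nature of $\J^k$, has to be checked carefully in the super and derived context and transported through the coreflection $\Lambda^\star\dashv\Lambda_\star$ without losing completeness; this in practice reduces to a local computation on an affine chart $\Speci(\A)\to\cM$ where dualizability and the description of $\cM_{(k)}$ from Lemma \ref{lem:nbdktriv} are manifest.
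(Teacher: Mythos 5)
Your proposal is correct in substance and uses the same two key ingredients as the paper --- the identification $\cD_\cM\simeq\underset{k}\colim\left(\J^k\O_\cM\right)^\vee$ from Proposition \ref{prop:Dsuper} together with dualizability of each $\J^k\O_\cM$, and the presentation $\bT^\i\cM\simeq\underset{k}\colim\,\cM_{(k)}$ --- but it assembles them differently. You first pin down the right adjoint of $\De_\cM=b^\star b_!$ abstractly as $b^\star b_\star\simeq\Mod_{\O_\cM}\left(\cD_\cM,-\right)$ and then try to match it with $\Lambda_\star\J^\i\Lambda^\star$; the paper instead computes $\Lambda_\star\J^\i\Lambda^\star\cF$ directly and verifies the adjunction by an explicit mapping-space calculation against $\De_\cM(\cG)=\cD_\cM\otimes\cG$, deferring the identification with $b^\star b_\star$ to a remark afterwards (via uniqueness of adjoints). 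The more significant divergence is in how you compute $\Lambda_\star\J^\i\Lambda^\star$: you route it through the base-change formula $\eta^\star\eta_\star\simeq\beta_\star\alpha^\star$ and a module-level analogue of Lemma \ref{lem:wdw}, and you correctly flag that making this rigorous (exchanging $\Lambda_\star$ with the cofiltered limit, and checking $\J^k\O_\cM\otimes\cF\simeq\beta^k_\star\alpha_k^\star\cF$ in the derived super setting) is the hard part. The paper sidesteps all of this: since $\cM$ is affine, $\Lambda_\star\J^\i\Lambda^\star\cF$ is determined by its global sections, which equal $\Lambda^\star\cF\left(\bT^\i\cM\right)$; because $\Lambda^\star\cF$ is an $\O$-module on the big site it is a limit-preserving functor, so its value on $\underset{k}\colim\,\cM_{(k)}$ is $\underset{k}\lim\,\Lambda^\star\cF\left(\cM_{(k)}\right)$, and quasi-coherence (Theorem \ref{thm:qccond}) gives $\Lambda^\star\cF\left(\cM_{(k)}\right)\simeq\Ci\left(\cM_{(k)}\right)\widehat{\otimes}_{\Ci(\cM)}\cF(\cM)$, whence $\Lambda_\star\J^\i\Lambda^\star\cF\simeq\underset{k}\lim\left[\J^k\O_\cM\otimes\cF\right]$ with no base-change argument needed. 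So the obstacle you identify is genuine for your route but is avoided, not solved, by the paper; if you adopt the evaluation-on-the-colimit step in place of the Lemma \ref{lem:wdw} analogue, your argument closes cleanly.
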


\begin{proof}
Let $\cF$ be an arbitrary quasicoherent sheaf on $\cM.$ Since $\Lambda_\star \J^\i \Lambda^\star \cF$ is also quasicoherent, it is determined by its global sections. These are the same as global sections of $\J^\i \Lambda^\star\cF.$ By Proposition \ref{prop:bTloc} and Lemma \ref{lem:suppsh} it follows that
\begin{eqnarray*}
\J^\i\Lambda^\star \cF\left(\cM\right) &=& \left(\eta^\star \eta_\star \Lambda^\star \cF\right)\left(\cM\right)\\
&\simeq & \Lambda^\star \cF\left(\eta^*\eta_!\left(id_M\right)\right)\\
&\simeq & \Lambda^\star \cF\left(T^\i \cM\right)\\
&\simeq & \Lambda^\star \cF\left(\underset{k} \colim \cM_{\left(k\right)}\right)\\
&\simeq & \underset{k} \lim \Lambda^\star \cF\left(\cM_{\left(k\right)}\right)\\
&\simeq & \underset{k} \lim \Ci\left(\cM_{\left(k\right)}\right) \underset{\Ci\left(\cM\right)} {\widehat{\otimes}} \cF\left(\cM\right)\\
&\simeq & \underset{k} \lim \Gamma_M\left(\J^k_M\left(M \times \R\right)\right) \underset{\Ci\left(\cM\right)} {\widehat{\otimes}} \cF\left(\cM\right).
\end{eqnarray*}
Thus, we deduce that there is an equivalence of quasicoherent sheaves:
$$\Lambda_\star\J^\i\Lambda^\star \cF \simeq \underset{k} \lim \left[\J^k\left(\O_{\cM}\right) \otimes \cF\right].$$
Since $\J^k\left(\O_{\cM}\right)$ is locally free of finite rank (it is a finite dimensional vector bundle), we have
$$\Lambda_\star\J^\i\Lambda^\star \cF \simeq \underset{k} \lim \Mod_{\QC\left(\cM\right)}\left(\left(\J^k\left(\O_{\cM}\right)\right)^{\vee},\cF\right).$$
Let $\cG$ be any quasicoherent sheaf. Then

\begin{eqnarray*}
\Map_{\QC\left(\cM\right)}\left(\cG,\Lambda_\star\J^\i\Lambda^\star \cF\right) & \simeq & \Map_{\QC\left(\cM\right)}\left(\cG,\underset{k} \lim \Mod_{\QC\left(\cM\right)}\left(\left(\J^k\left(\O_{\cM}\right)\right)^{\vee},\cF\right)\right)\\
&\simeq & \underset{k} \lim\ \Map_{\QC\left(\cM\right)}\left(\cG,\Mod_{\QC\left(\cM\right)}\left(\left(\J^k\left(\O_{\cM}\right)\right)^{\vee},\cF\right)\right)\\
&\simeq & \underset{k} \lim \Map_{\QC\left(\cM\right)}\left(\left(\J^k\left(\O_{\cM}\right)\right)^{\vee}\otimes \cG,\cF\right)\\
&\simeq &  \Map_{\QC\left(\cM\right)}\left(\underset{k} \colim\left[\left(\J^k\left(\O_{\cM}\right)\right)^{\vee}\otimes \cG\right],\cF\right)\\
&\simeq &  \Map_{\QC\left(\cM\right)}\left(\underset{k} \colim \left[\left(\J^k\left(\O_{\cM}\right)\right)^{\vee}\right] \otimes \cG,\cF\right)\\
&\simeq &  \Map_{\QC\left(\cM\right)}\left( \cD_{\cM} \otimes \cG,\cF\right)\\
&\simeq & \Map_{\QC\left(\cM\right)}\left(\De_{\cM}\left(\cG\right),\cF\right).
\end{eqnarray*}
\end{proof}

\begin{remark}\label{rmk:bees}
Recall that there is a canonical homomorphism of sheaves $$b:\O_{\tilde\cM} \hookrightarrow \cD_{\cM}$$ and we have
$$\xymatrix@C=2cm{\QC\left(\cM\right)=\Mod_{\O_{\tilde \cM}} \ar@<-0.5ex>[r]_-{b_!} & \DMod\left(\cM\right) \ar@<-0.5ex>[l]_-{b^*}.}$$
So $b_! \dashv b^*.$ There is a further adjoint $b_*$ such that
$$b_! \dashv b^* \dashv b_*,$$
where $$b_*:\QC\left(\cM\right) \to \DMod\left(\cM\right)$$ sends a quasicoherent sheaf $\cF$ to its \emph{coinduced $\cD$-module}. The underlying sheaf of vector spaces of $b_*\cF$ is the same as
$\Mod_{\O_\cM}\left(\cD_\cM,\cF\right),$ but the $\cD$-module structure is defined by
$$\left(D \cdot \varphi\right)\left(x\right)=\varphi\left(x \cdot D\right).$$
Notice that $$\De_{\cX}=b^*b_! \dashv b^*b_*,$$ so by Proposition \ref{prop:lamadj} and the uniqueness of adjoints, we must have that
$$\Lambda_\star\J^\i\Lambda^\star \simeq b^*b_*.$$ Notice that $b^*b_*$ has a canonical comonad structure.
\end{remark}

\begin{lemma}\label{lem:finn}
Let $\cM$ be a supermanifold and let $\cF$ be quasi-coherent sheaf over $\cF.$ Denote by $\varepsilon$ the counit of the adjunction
$$\Lambda^\star \dashv \Lambda_\star.$$ Then for all $n \ge 1,$

$$\Lambda_\star\left(\varepsilon_{\left(\J^\i\right)^n\Lambda^\star\cF}\right):\left(\Lambda_\star \J^\i \Lambda^\star\right)^n\left(\cF\right) \to \Lambda_\star \left(\J^\i\right)^n\Lambda^\star \cF$$
is an equivalence.
\end{lemma}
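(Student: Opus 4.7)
The plan is to prove a strengthening: namely that $(\J^\i)^n \Lambda^\star \cF$ is itself quasi-coherent (i.e.\ lies in the essential image of $\Lambda^\star$) for all $n \geq 0$. Granted this, the counit $\varepsilon_{(\J^\i)^n \Lambda^\star \cF}$ is an equivalence (since $\Lambda^\star$ is fully faithful), so the map in the statement — interpreted as the natural iterated comparison $\mu_n$ of Lemma \ref{lem:monsup} built from these counits via the recursion $\mu_{n+1} = \Lambda_\star \J^\i(\varepsilon_{(\J^\i)^n\Lambda^\star\cF}) \circ \Lambda_\star \J^\i \Lambda^\star(\mu_n)$ — is an equivalence by induction on $n$, with trivial base case.

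The inductive step reduces to showing that $\J^\i = \eta^\star \eta_\star$ preserves quasi-coherence. Given $\cG$ quasi-coherent on $\cM$, I will verify the base-change criterion of Theorem \ref{thm:qccond}: for any $\varphi : \Speci(\B) \to \Speci(\A)$ over $\cM$, the canonical morphism
$$\B \,\widehat{\otimes}_\A (\J^\i\cG)(f) \longrightarrow (\J^\i\cG)(g)$$
is an equivalence, where $g = f \circ \varphi$. Unwinding, $(\J^\i\cG)(f) \simeq \cG(\Speci(\A)\times_{\cM_{dR}}\cM \to \cM)$, and similarly for $g$. Since both $\eta^\star$ and $\eta_\star$ are left exact, pullback along $\varphi$ commutes with the formation of these relative formal disk bundles; under this identification the map in question becomes the natural base-change map for $\cG$ pulled back along the induced morphism of relative formal disk bundles. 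By Proposition \ref{prop:bTloc}, each such formal disk bundle is a filtered colimit of affine infinitesimal thickenings of the form $\Speci(\A)_{(k)}$, and hence the required base change is obtained as a sequential limit of base changes for $\cG$ evaluated on these affine layers — which hold by the quasi-coherence of $\cG$.

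The main obstacle will be interchanging the sequential limit over infinitesimal orders with the completed tensor product $\B \,\widehat{\otimes}_\A (-)$, an operation which is not formal in the $\Ci$-setting where $\widehat{\otimes}$ is not compactly generated. The supermanifold hypothesis on $\cM$ is essential here: by Lemma \ref{lem:nbdktriv} the thickenings decompose locally as $\Speci(\A) \times \mathbb{D}^{n|m}_{(k)}$ with the disks $\mathbb{D}^{n|m}_{(k)}$ finite-dimensional, and the local structure of quasi-coherent sheaves as complete modules then reduces the interchange to a statement about finitely generated graded modules where it holds classically. Alternatively, one can pull back along an open affine cover of $\cM$ and use Corollary \ref{cor:pbqc} to descend quasi-coherence from the affine pieces, where by Theorem \ref{thm:5.1.33} every $\O$-module sheaf is automatically quasi-coherent and the relevant completions commute with the limits of interest.
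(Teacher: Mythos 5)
Your proposed strengthening is false, and this is exactly why the paper only makes an assertion after applying $\Lambda_\star$. The jet module $\J^\i\Lambda^\star\cF$ is a sequential limit, computed in $\Mod_{\O_\cM}$, of the quasi-coherent sheaves obtained from the finite-order jets; since $\QC(\cM)$ is merely \emph{coreflective} in $\Mod_{\O_\cM}$, it is not closed under such limits, and quasi-coherence genuinely fails already for $n=1$. Concretely, take $\cM=\R$ and $\cF=\O_\R$. For $f=id_\R$ one computes $(\J^\i\Lambda^\star\O_\R)(f)\simeq \underset{k}\lim\, \Ci(\R)[t]/t^{k+1}\simeq \Ci(\R)[[t]]$, while for $g:\Speci(\Ci(\R^2))\to\R$ the projection one gets $(\J^\i\Lambda^\star\O_\R)(g)\simeq \Ci(\R^2)[[t]]$. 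The base-change map of Theorem \ref{thm:qccond}, $\Ci(\R^2)\,\widehat{\otimes}_{\Ci(\R)}\,\Ci(\R)[[t]]\to \Ci(\R^2)[[t]]$, is not an equivalence: every element of the source is, locally on $\R^2$, a finite sum $\sum_{j=1}^N a_j(x,y)\,s^j(x)$ with $s^j\in\Ci(\R)[[t]]$ (completion only glues such finite sums along a cover), so near any point its coefficients span an at most $N$-dimensional space of functions of $y$; hence $\sum_i y^i t^i$ is not in the image on any open set. So $\J^\i$ does not preserve quasi-coherence, the counit $\varepsilon_{\J^\i\Lambda^\star\cF}$ is \emph{not} an equivalence before coreflecting, and your induction has no valid inductive step. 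Neither of your proposed repairs addresses this: the finite-dimensionality of each layer $\mathbb{D}^{n|m}_{(k)}$ is irrelevant because the failure occurs in the limit over $k$ (base change, being a left adjoint, does not commute with sequential limits), and the appeal to Theorem \ref{thm:5.1.33} conflates the small site with the big site --- that theorem says every module sheaf on $\Shv(\underline{\Speci(\A)})$ is quasi-coherent, whereas the relevant notion here is quasi-coherence of $\O$-modules on $\bH/\Speci(\A)$, which is a nontrivial condition (Theorem \ref{thm:QCff}); Corollary \ref{cor:pbqc} can only test that condition along a cover, not create it.

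The lemma is precisely the hypothesis needed to apply Theorem \ref{thm:odpsec}, a result designed for comonads that do \emph{not} restrict to the coreflective subcategory, so any correct proof must work at the level of the coreflection. The paper does this by a direct computation: using Proposition \ref{prop:bTloc} and the local description of the infinitesimal neighborhoods, it identifies $\Lambda_\star(\J^\i)^n\Lambda^\star\cF$ with the limit in $\QC(\cM)$ of $\J^{k_1}\O_\cM\otimes\cdots\otimes\J^{k_n}\O_\cM\otimes\cF$, and then uses that each $\J^k\O_\cM$ is locally free of finite rank to dualize, pull the colimits out of the mapping spaces, and recognize both $(\Lambda_\star\J^\i\Lambda^\star)^n(\cF)$ and $\Lambda_\star(\J^\i)^n\Lambda^\star\cF$ as corepresenting $\Map_{\QC(\cM)}(\De_{\cM}^{\,n}(-),\cF)$, i.e. as the value at $\cF$ of the right adjoint of $\De_{\cM}^{\,n}$ (Proposition \ref{prop:lamadj}). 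Your argument would need to engage with a computation of this kind rather than reduce to a preservation statement that fails.
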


Similarly to the proof of Proposition \ref{prop:lamadj}, if $\cF$ is an arbitrary quasicoherent sheaf on $\cM,$ 
\begin{eqnarray*}
\Gamma \Lambda_\star \left(\J^\i\right)^n\Lambda^\star \cF &\simeq & \left(\Lambda^\star \cF\right)\left(\left(\eta_!\eta^*\right)^n \left(id_{\cM}\right)\right)\\
&\simeq & \left(\Lambda^\star \cF\right)\left(\left(T_{\cM}^\i\right)^n\left(id_{\cM}\right)\right)\\
&\simeq & \left(\Lambda^\star \cF\right)\left(T^\i \cM \times_{\cM} T^\i \cM \times _{\cM} \ldots \times_{\cM} T^\i \cM\right)\\
&\simeq & \left(\Lambda^\star \cF\right)\left(\underset{k_1,k_2,\ldots,k_n} \colim \left[\cM_{\left(k_1\right)} \times_\cM \cM_{\left(k_2\right)} \times _{\cM} \ldots \times_{\cM} \cM_{\left(k_n\right)}\right]\right)\\
&\simeq & \underset{k_1,k_2,\ldots,k_n} \lim\left(\Lambda^\star \cF\right)\left( \left[\cM_{\left(k_1\right)} \times_\cM \cM_{\left(k_2\right)} \times _{\cM} \ldots \times_{\cM} \cM_{\left(k_n\right)}\right]\right)\\
&\simeq & \underset{k_1,k_2,\ldots,k_n} \lim \left[\Ci\left(\cM_{\left(k_1\right)} \times_{\cM} \cM_{\left(k_2\right)} \times_{\cM} \ldots \times_{\cM} \cM_{\left(k_n\right)}\right) \underset{\Ci\left(\cM\right)} {\otimes} \cF\left(\cM\right)\right]
\end{eqnarray*}
Notice that each $\Ci\left(\cM\right)_{\left(k\right)}$ is a split nilpotent extension over $\Ci\left(\cM\right),$ hence the $\Ci$-tensor product agrees with that of commutative $\R$-algebras, and we deduce
\begin{eqnarray*}
\Gamma \Lambda_\star \left(\J^\i\right)^n\Lambda^\star \cF &\simeq & \underset{k_1,k_2,\ldots,k_n} \lim \left[\Ci\left(\cM_{\left(k_1\right)}\right) \underset{\Ci\left(\cM\right)} {\otimes}  \ldots \underset{\Ci\left(\cM\right)} {\otimes} \Ci\left(\cM_{\left(k_n\right)}\right) \underset{\Ci\left(\cM\right)} {\otimes} \cF\left(\cM\right)\right]\\
&\simeq & \underset{k_1,k_2,\ldots,k_n} \lim \left[\Gamma\left(\J^{k_1} \O_{\cM}\right) \underset{\Ci\left(\cM\right)} {\otimes}  \ldots \underset{\Ci\left(\cM\right)} {\otimes} \Gamma\left(\J^{k_n} \O_{\cM}\right) \underset{\Ci\left(\cM\right)} {\otimes} \cF\left(\cM\right)\right]
\end{eqnarray*}
Hence
$$\Lambda_\star \left(\J^\i\right)^n\Lambda^\star \cF \simeq  \underset{k_1,k_2,\ldots,k_n} \lim \left[\J^{k_1} \O_{\cM} \otimes \ldots \J^{k_n} \O_{\cM} \otimes \cF\right].$$
So, for any quasicoherent sheaf $\cF,$ we have
\begin{eqnarray*}
\Map_{\QC\left(\cM\right)}\left(\cG,\Lambda_\star \left(\J^\i\right)^n\Lambda^\star \cF\right) &\simeq & \Map_{\QC\left(\cM\right)}\left(\cG,\underset{k_1,k_2,\ldots,k_n} \lim \left[\J^{k_1} \O_{\cM} \otimes \ldots \otimes\J^{k_n} \O_{\cM} \otimes \cF\right]\right)\\
&\simeq & \underset{k_1,k_2,\ldots,k_n} \lim \Map_{\QC\left(\cM\right)}\left(\cG, \J^{k_1} \O_{\cM} \otimes \ldots \otimes\J^{k_n} \O_{\cM} \otimes \cF\right)\\
&\simeq & \underset{k_1,k_2,\ldots,k_n} \lim \Map_{\QC\left(\cM\right)}\left(\cG, \Mod_{\QC\left(\cM\right)}\left(\left(\J^{k_1} \O_{\cM}\right)^\vee \otimes \ldots \otimes\left(\J^{k_n} \O_{\cM}\right)^\vee,\cF\right)\right)\\
&\simeq & \underset{k_1,k_2,\ldots,k_n} \lim \Map_{\QC\left(\cM\right)}\left(\left(\J^{k_1} \O_{\cM}\right)^\vee \otimes \ldots\otimes \left(\J^{k_n} \O_{\cM}\right)^\vee \otimes \cG,\cF\right)\\
&\simeq & \Map_{\QC\left(\cM\right)}\left(\underset{k_1,k_2,\ldots,k_n} \colim\left[\left(\J^{k_1} \O_{\cM}\right)^\vee \otimes \ldots\otimes \left(\J^{k_n} \O_{\cM}\right)^\vee \otimes \cG\right], \cF\right)\\
&\simeq & \Map_{\QC\left(\cM\right)}\left(\left[\underset{k_1} \colim\left(\J^{k_1} \O_{\cM}\right)^\vee\right] \otimes \ldots\otimes \left[\underset{k_n} \colim \left(\J^{k_n} \O_{\cM}\right)^\vee\right] \otimes \cG, \cF\right)\\
&\simeq & \Map_{\QC\left(\cM\right)}\left(\cD_{\cM} \otimes \ldots\otimes \cD_{\cM}  \otimes \cG, \cF\right)\\
&\simeq & \Map_{\QC\left(\cM\right)}\left(\De^n\left(\cG\right), \cF\right)\\
&\simeq & \Map_{\QC\left(\cM\right)}\left(\cG, \left(\Lambda_\star \J^\i \Lambda^\star\right)^n\left(\cF\right)\right).
\end{eqnarray*}

\begin{theorem}\label{thm:DMod}
Let $\cM$ be a supermanifold. Then there is a canonical equivalence
$$\QC\left(\cM_{dR}\right) \simeq \DMod\left(\cM\right).$$
\end{theorem}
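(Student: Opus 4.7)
The plan is to chain together descent and pullback identifications to move from $\QC(\cM_{dR})$ to $\DMod(\cM)$. First, because $\cM$ is a supermanifold, Corollary \ref{cor:mfdsmth} shows $\cM$ is formally smooth, so the unit $\eta\colon \cM\to \cM_{dR}$ is an effective epimorphism in $\bH$. Corollary \ref{cor:pbqc} then identifies $\QC(\cM_{dR})$ with the pullback $\QC(\cM)\times_{\Mod_{\O_\cM}} \Mod_{\O_{\cM_{dR}}}$, while Corollary \ref{cor:modcom} identifies $\Mod_{\O_{\cM_{dR}}}\simeq \CoAlg_{\J^\i}(\Mod_{\O_\cM})$ via the comonadicity of $\eta^\star \dashv \eta_\star$. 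Combining these,
$$\QC(\cM_{dR})\;\simeq\;\QC(\cM)\times_{\Mod_{\O_\cM}} \CoAlg_{\J^\i}(\Mod_{\O_\cM}).$$

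Next I descend the comonad $\J^\i$ from $\Mod_{\O_\cM}$ along the coreflective inclusion $\Lambda^\star\colon \QC(\cM)\hookrightarrow \Mod_{\O_\cM}$ of Proposition \ref{prop:lamlam}. I apply Theorem \ref{thm:odpsec} with $L=\Lambda^\star$, $R=\Lambda_\star$, and $J=\J^\i$. The hypothesis of that theorem -- the compatibility of the counit of $\Lambda^\star \dashv \Lambda_\star$ with iterated applications of $\J^\i$ -- follows inductively from the content of Lemma \ref{lem:finn}, which asserts $(\Lambda_\star \J^\i \Lambda^\star)^n(\cF)\simeq \Lambda_\star (\J^\i)^n \Lambda^\star \cF$ for all $n$. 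The theorem then equips $\Lambda_\star \J^\i \Lambda^\star$ with a canonical comonad structure on $\QC(\cM)$ and produces a pullback
$$\CoAlg_{\Lambda_\star \J^\i \Lambda^\star}(\QC(\cM))\;\simeq\;\QC(\cM)\times_{\Mod_{\O_\cM}} \CoAlg_{\J^\i}(\Mod_{\O_\cM})\;\simeq\;\QC(\cM_{dR}).$$

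It remains to identify $\CoAlg_{\Lambda_\star \J^\i \Lambda^\star}(\QC(\cM))$ with $\DMod(\cM)$. By Remark \ref{rmk:bees} the adjunction $b^*\dashv b_*$ is flanked by a further left adjoint $b_!$, so the restriction functor $b^*\colon \DMod(\cM)\to \QC(\cM)$ preserves all limits and colimits and is conservative; the dual of Theorem \ref{thm:BarrBeck} then yields comonadicity, i.e.\ $\DMod(\cM)\simeq \CoAlg_{b^*b_*}(\QC(\cM))$. Finally Proposition \ref{prop:lamadj} combined with Remark \ref{rmk:bees} identifies $\Lambda_\star \J^\i \Lambda^\star$ with $b^*b_*$ as comonads on $\QC(\cM)$, since both are right adjoint to the monad $\De_\cM=\cD_\cM\otimes_{\O_\cM}(-)$ and hence share a canonically determined comonad structure by uniqueness of adjoints. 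Composing all identifications gives the desired equivalence $\QC(\cM_{dR})\simeq \DMod(\cM)$.

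The hard part is the verification of the hypothesis of Theorem \ref{thm:odpsec} via Lemma \ref{lem:finn}: one must show that the iterated comonad $(\J^\i)^n$, pushed into $\QC(\cM)$ via $\Lambda_\star$, is computed by an inverse limit over $(k_1,\ldots,k_n)$ of the multi-jet completions $\Ci(\cM_{(k_1)}\times_\cM\cdots\times_\cM\cM_{(k_n)})\otimes_{\Ci(\cM)} \cF(\cM)$, and that the diagonals of the comonad structure on $\J^\i$ match the evident projections among these multi-jet algebras. This in turn leverages Proposition \ref{prop:bTloc}, which writes $\bT^\i \cM$ as the colimit of the infinitesimal neighborhoods $\cM_{(k)}$, together with Lemma \ref{lem:suppsh} identifying the function algebras of these neighborhoods with sections of the jet bundles. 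Once this finite-order compatibility is in place, Theorem \ref{thm:odpsec} performs the categorical heavy lifting of descending the coalgebra structure from $\Mod_{\O_\cM}$ to $\QC(\cM)$, and the remaining identifications are formal.
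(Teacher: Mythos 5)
Your proposal is correct and follows essentially the same route as the paper's own proof: comonadicity of $\eta^\star \dashv \eta_\star$ via Corollary \ref{cor:modcom}, the pullback identification of Corollary \ref{cor:pbqc}, descent of the comonad $\J^\i$ along $\Lambda^\star \dashv \Lambda_\star$ via Theorem \ref{thm:odpsec} with hypothesis supplied by Lemma \ref{lem:finn}, and the final identification of $\Lambda_\star \J^\i \Lambda^\star$ with $b^*b_*$ and of its coalgebras with $\DMod(\cM)$ via the triple adjunction $b_! \dashv b^* \dashv b_*$. The only cosmetic difference is that you invoke uniqueness of adjoints (to $\De_\cM$) to match the two comonad structures where the paper appeals to direct inspection; both amount to the same comparison.
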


\begin{proof}
Since $\cM$ is formally smooth (Corollary \ref{cor:mfdsmth}), the adjunction $$\eta^\star \dashv \eta_\star$$ is comonadic by Corollary \ref{cor:modcom}. Hence there is a canonical equivalence
$$\Mod_{\O_{\cM_{dR}}} \simeq \CoAlg_{\J^\i}\left(\Mod_{\O_\cM}\right).$$
Consider the adjunction
$$\xymatrix@C=2cm{\QC\left(\cM\right) \ar@<-0.5ex>@{^{(}->}[r]_-{\Lambda^\star} & \Mod_{\O_\cM} \ar@<-0.5ex>[l]_-{\Lambda_\star}}$$
with $\Lambda^\star \dashv \Lambda_\star,$ with $\Lambda^\star$ the fully faithful inclusion. Recall that by Corollary \ref{cor:pbqc} that there is a pullback diagram
$$\xymatrix@C=2cm{\QC\left(\cM_{dR}\right) \ar@{^{(}->}[r]^-{\Lambda_{dR}^\star} \ar[d] & \Mod_{\O_{\cM_{dR}}} \ar[d]^{\eta^\star}\\
\QC\left(\cM\right) \ar@{^{(}->}[r]_-{\Lambda^\star} & \Mod_{\O_\cM}.}$$
Putting these two facts together implies that there is a pullback diagram
$$\xymatrix@C=2cm{\QC\left(\cM_{dR}\right) \ar@{^{(}->}[r]^-{\Lambda_{dR}^\star} \ar[d] &\CoAlg_{\J^\i}\left(\Mod_{\O_\cM}\right) \ar[d]^-{U_{\cJ^\i}}\\
\QC\left(\cM\right) \ar@{^{(}->}[r]_-{\Lambda^\star} & \Mod_{\O_\cM}.}$$
By Lemma \ref{lem:finn}, the adjunction $\Lambda^\star \dashv \Lambda_\star$ and the comonad $\J^\i$ satisfy the hypothesis of Theorem \ref{thm:odpsec}. It follows that $\J^\i$ induces comonad structure on $\Lambda^\star \cJ^\i \Lambda_\star$--- which by direct inspection agrees with the one coming from the identification
$$\Lambda^\star \cJ^\i \Lambda_\star \simeq b^*b_*$$ of Remark \ref{rmk:bees}--- and a pullback diagram
$$\xymatrix@C=2cm{\CoAlg_{\Lambda^\star \cJ^\i \Lambda_\star}\left(\QC\left(\cM\right)\right) \ar@{^{(}->}[r] \ar[d]_-{U_{\Lambda^\star \J^\i \Lambda_\star}} &\CoAlg_{\J^\i}\left(\Mod_{\O_\cM}\right) \ar[d]^-{U_{\cJ^\i}}\\
\QC\left(\cM\right) \ar@{^{(}->}[r]_-{\Lambda^\star} & \Mod_{\O_\cM}.}$$
Hence, we must have $$\QC\left(\cM_{dR}\right) \simeq \CoAlg_{\Lambda^\star \cJ^\i \Lambda_\star}\left(\QC\left(\cM\right)\right) \simeq \CoAlg_{b_*b^*}\left(\QC\left(\cM\right)\right).$$
Notice that there is a triple of adjunctions
$$\xymatrix@C=1.5cm{\DMod\left(\cM\right) \ar[d]|{b^\star}\\
\QC\left(\cM\right). \ar@<+2ex>[u]^-{b_!} \ar@<-2ex>[u]_-{b_\ast}}$$
The functor $b^*$ is both a left and right adjoint and clearly is conservative, hence $$b_! \dashv b^*$$ is monadic and $$b^* \dashv b_*$$ is comonadic. In particular, 
$$\CoAlg_{b_*b^*}\left(\QC\left(\cM\right)\right) \simeq \DMod\left(\cM\right).$$
This completes the proof.
\end{proof}

\begin{remark}
We can draw several observations from the proof of Theorem \ref{thm:DMod}. Firstly, under the equivalence
$$\QC\left(\cM_{dR}\right) \simeq \DMod\left(\cM\right),$$
we have an identification of 
$$b^*:\DMod\left(\cM\right) \to \QC\left(\cM\right)$$
with 
$$\eta_{\QC}^\star:\QC\left(\cM_{dR}\right) \to \QC\left(\cM\right),$$
and by uniqueness of adjoints, an identification of $\eta^{\QC}_\star$ with $b_*,$ where $b_*$ is the functor sending an $\tilde \O_{\cM}$-module to its \emph{co}-induced (left) $\mathcal{D}_{\cM}$-module. This means as a $\mathcal{D}_{\cM}$-module, we have
$$\eta^{\QC}_\star \cF = \Mod_{\O_{\cM}}\left(\mathcal{D}_{\cM},\cF\right),$$ with the obvious $\mathcal{D}_{\cM}$-module structure. This is commonly called the \emph{jet module} of $\cF$. Notice that since $\mathcal{D}_{\cM} \simeq \underset{k} \colim \left(\J^k\O_\cM\right)^\vee,$ we have the equivalent description $\underset{k} \lim \left(\J^k\O_\cM \otimes \cF\right).$

Moreover, since $b^*$ admits a left adjoint $b_!$, the \emph{induced module functor}, $\eta_{\QC}^\star$ also admits a left adjoint $\eta_!^{\QC},$ which can be identified with $b_!$ under the above equivalence. Moreover, the adjunction 
$$\eta_!^{\QC} \dashv \eta^\star_{\QC}$$
is monadic and there is a canonical identification of monads
$$\eta^\star_{\QC}\circ \eta_!^{\QC}   \simeq \De_{\cX} \simeq \mathcal{D}_{\cM} \underset{\tilde \O_{\cM}} \otimes \left(\blank\right)$$
\end{remark}

\bibliography{research}

\begin{thebibliography}{10}

\bibitem{qcglob}
Leovigildo Alonso~Tarrio, Ana Jeremias~Lopez, Marta Perez~Rodriguez, and
  Maria~J. Vale~Gonsalves.
\newblock A functorial formalism for quasi-coherent sheaves on a geometric
  stack.
\newblock {\em Expo. Math.}, 33(4):452--501, 2015.

\bibitem{Batchelor}
Marjorie Batchelor.
\newblock The structure of supermanifolds.
\newblock {\em Trans. Amer. Math. Soc.}, 253:329--338, 1979.

\bibitem{dgder}
Kai Behrend, Hsuan-Yi Liao, and Ping Xu.
\newblock Derived differentiable manifolds.
\newblock available at
  \href{https://arxiv.org/abs/2006.01376}{arXiv:2006.01376}.

\bibitem{Be}
Julia~E. Bergner.
\newblock Rigidification of algebras over multi-sorted theories.
\newblock {\em Algebr. Geom. Topol.}, 6:1925--1955, 2006.

\bibitem{vino2}
A.~V. Bocharov, V.~N. Chetverikov, S.~V. Duzhin, N.~G. Khorkova, I.~S.
  Krasilshchik, A.~V. Samokhin, Yu.~N. Torkhov, A.~M. Verbovetsky, and A.~M.
  Vinogradov.
\newblock {\em Symmetries and conservation laws for differential equations of
  mathematical physics}, volume 182 of {\em Translations of Mathematical
  Monographs}.
\newblock American Mathematical Society, Providence, RI, 1999.
\newblock Edited and with a preface by Krasishchik and Vinogradov, Translated
  from the 1997 Russian original by Verbovetsky [A. M. Verbovetskii and
  Krasilshchik.

\bibitem{derivjustden}
Dennis Borisov and Justin Noel.
\newblock Simplicial approach to derived differential manifolds.
\newblock available at \href{https://arxiv.org/abs/1212.1153}{arXiv:1112.0033}.

\bibitem{higherme}
David Carchedi.
\newblock Higher orbifolds and {D}eligne-{M}umford stacks as structured
  infinity topoi.
\newblock {\em Memoirs of the AMS}, 2017.
\newblock to appear.

\bibitem{dgmander}
David Carchedi.
\newblock Derived manifolds as differential graded manifolds, 2023,
  arXiv:2303.11140.

\bibitem{dg2}
David Carchedi and Dmitry Roytenberg.
\newblock Homological algebra for superalgebras of differentiable functions.
\newblock available at \href{https://arxiv.org/abs/1212.3745}{arXiv:1212.3745}.

\bibitem{dg1}
David Carchedi and Dmitry Roytenberg.
\newblock On theories of superalgebras of differentiable functions.
\newblock {\em Theory and App. of Cats.}, 28(30):1022--1098, 2013.

\bibitem{univ}
David Carchedi and Pelle Steffens.
\newblock On the universal property of derived manifolds.
\newblock available at
  \href{https://arxiv.org/abs/1905.06195}{arXiv:1905.06195}.

\bibitem{cinfsch}
Eduardo~J. Dubuc.
\newblock {$C\sp{\infty }$}-schemes.
\newblock {\em Amer. J. Math.}, 103(4):683--690, 1981.

\bibitem{GaitsRoz}
Dennis Gaitsgory and Nick Rozenblyum.
\newblock Crystals and {D}-modules.
\newblock {\em Pure Appl. Math. Q.}, 10(1):57--154, 2014.

\bibitem{Nuiten}
Nuiten Joost.
\newblock {\em Lie algebroids in derived differential topology}.
\newblock PhD thesis, Utrecht University, 2018.

\bibitem{joycesch}
Dominic Joyce.
\newblock Algebraic geometry over ${C}^\infty$-rings, 2009.
\newblock available at
  \href{https://arxiv.org/abs/arXiv:1001.0023}{arXiv:1001.0023}.

\bibitem{joyce}
Dominic Joyce.
\newblock An introduction to d-manifolds and derived differential geometry.
\newblock In {\em Moduli spaces}, volume 411 of {\em London Math. Soc. Lecture
  Note Ser.}, pages 230--281. Cambridge Univ. Press, Cambridge, 2014.

\bibitem{jett}
Igor Khavkine and Urs Schreiber.
\newblock Synthetic geometry of differential equations: I. jets and comonad
  structure.
\newblock available at
  \href{https://arxiv.org/abs/1701.06238}{arXiv:1701.06238}.

\bibitem{SYN}
Anders Kock.
\newblock {\em Synthetic Geometry of Manifolds}.
\newblock Cambridge Tracts in Mathematics. Cambridge University Press, 2009.

\bibitem{mk}
Andreas Kriegl and Peter~W. Michor.
\newblock {\em The convenient setting of global analysis}, volume~53 of {\em
  Mathematical Surveys and Monographs}.
\newblock American Mathematical Society, Providence, RI, 1997.

\bibitem{LurieHA}
Jacob Lurie.
\newblock Higher {A}lgebra.
\newblock available at
  \url{http://www.math.harvard.edu/~lurie/papers/higheralgebra.pdf}.

\bibitem{LurieSAG}
Jacob Lurie.
\newblock Spectral {A}lgebraic {G}eometry.
\newblock available at
  \url{http://www.math.harvard.edu/~lurie/papers/SAG-rootfile.pdf}.

\bibitem{htt}
Jacob Lurie.
\newblock {\em Higher topos theory}, volume 170 of {\em Annals of Mathematics
  Studies}.
\newblock Princeton University Press, Princeton, NJ, 2009.

\bibitem{dagviii}
Jacob Lurie.
\newblock Derived {A}lgebraic {G}eometry {VII}:{Q}uasi-{C}oherent {S}heaves and
  {T}annaka {D}uality {T}heorems.
\newblock 2011.
\newblock available at
  \href{http://people.math.harvard.edu/~lurie/papers/DAG-VIII.pdf}{http://people.math.harvard.edu/~lurie/papers/DAG-VIII.pdf}.

\bibitem{ringlocmr}
I.~Moerdijk and G.~E. Reyes.
\newblock Rings of smooth functions and their localizations. {I}.
\newblock {\em J. Algebra}, 99(2):324--336, 1986.

\bibitem{MSIA}
Ieke Moerdijk and Gonzalo~E. Reyes.
\newblock {\em Models for smooth infinitesimal analysis}.
\newblock Springer-Verlag, New York, 1991.

\bibitem{pardon}
John Pardon.
\newblock Representability in non-linear elliptic fredholm analysis.
\newblock available at
  \href{https://arxiv.org/abs/2401.00184}{arXiv:/2401.00184}.

\bibitem{RiehlVerityMon}
Emily Riehl and Dominic Verity.
\newblock Homotopy coherent adjunctions and the formal theory of monads.
\newblock {\em Adv. Math.}, 286:802--888, 2016.

\bibitem{Schmitt}
Thomas Schmitt.
\newblock Regular sequences in {$Z_2$}-graded commutative algebra.
\newblock {\em J. Algebra}, 124(1):60--118, 1989.

\bibitem{Urs}
Urs Schreiber.
\newblock Differential cohomology in an ($\infty$,1)-topos.
\newblock available at \href{https://arxiv.org/abs/1310.7930}{arXiv:1310.7930}.

\bibitem{schwede}
Stefan Schwede.
\newblock Spectra in model categories and applications to the algebraic
  cotangent complex.
\newblock {\em J. Pure Appl. Algebra}, 120(1):77--104, 1997.

\bibitem{schwedeship}
Stefan Schwede and Brooke Shipley.
\newblock Equivalences of monoidal model categories.
\newblock {\em Algebr. Geom. Topol.}, 3:287--334, 2003.

\bibitem{simpson}
CARLOS SIMPSON and CONSTANTIN TELEMAN.
\newblock De rham’s theorem for $\i$-stacks.
\newblock available at
  \href{https://math.berkeley.edu/~teleman/math/simpson.pdf}{https://math.berkeley.edu/~teleman/math/simpson.pdf}.

\bibitem{spivak}
David~I. Spivak.
\newblock Derived smooth manifolds.
\newblock {\em Duke Math. J.}, 153(1):55--128, 2010.

\bibitem{steffens1}
Pelle Steffens.
\newblock Derived $\Ci$-geometry i: Foundations.
\newblock available at
  \href{https://arxiv.org/abs/2304.08671}{arXiv:2304.08671}.

\bibitem{ellst}
Pelle Steffens.
\newblock Representability of elliptic moduli problems in derived
  $\Ci$-geometry.
\newblock available at
  \href{https://arxiv.org/abs/2404.07931}{arXiv:2404.07931}.

\bibitem{pb}
Danny~Stevenson Thomas~Nikolaus, Urs~Schreiber.
\newblock Principal infinity-bundles -- general theory.
\newblock available at \href{https://arxiv.org/abs/1207.0248}{arXiv:1207.0248}.

\bibitem{ToeVez}
Bertrand To\"en and Gabriele Vezzosi.
\newblock Homotopical algebraic geometry. {II}. {G}eometric stacks and
  applications.
\newblock {\em Mem. Amer. Math. Soc.}, 193(902):x+224, 2008.

\bibitem{vino1}
A.~M. Vinogradov.
\newblock The geometry of nonlinear differential equations.
\newblock In {\em Problems in geometry, {V}ol. 11 ({R}ussian)}, Itogi Nauki i
  Tekhniki, pages 89--134, 243. Akad. Nauk SSSR, Vsesoyuz. Inst. Nauchn. i
  Tekhn. Inform., Moscow, 1980.

\bibitem{white}
David White.
\newblock Model structures on commutative monoids in general model categories.
\newblock {\em J. Pure Appl. Algebra}, 221(12):3124--3168, 2017.

\end{thebibliography}
\bibliographystyle{hplain}

\end{document}